\numberwithin{equation}{section}
\let\al=\alpha
\let\d=\delta
\let\e=\varepsilon
\let\f=\frac
\let\na=\nabla
\let\pa=\partial
\def\cA{{\cal A}}
\def\cB{{\cal B}}
\def\bM{\bar M}
\def\tF{\tilde F}
\def\Im{\mathrm{Im}}
\def\Re{\mathrm{Re}}
\def\cA{\mathcal{A}}
\def\cB{\mathcal{B}}
\def\dv{\mbox{div}}
\def\eqdef{\buildrel\hbox{\footnotesize def}\over =}
\newcommand{\beq}{\begin{equation}}
\newcommand{\eeq}{\end{equation}}
\newcommand{\ben}{\begin{eqnarray}}
\newcommand{\een}{\end{eqnarray}}
\newcommand{\beno}{\begin{eqnarray*}}
\newcommand{\eeno}{\end{eqnarray*}}
\newtheorem{theorem}{Theorem}[section]
\newtheorem{lemma}[theorem]{Lemma}
\newtheorem{proposition}[theorem]{Proposition}
\newtheorem{remark}[theorem]{Remark}
\begin{document}

\title{Mack modes in supersonic boundary layer} 

\author[N. Masmoudi]{Nader Masmoudi}
\address{NYUAD Research Institute, New York University Abu Dhabi, Saadiyat Island, Abu Dhabi, P.O. Box 129188, United Arab Emirates, Courant Institute of Mathematical Sciences, New York University, 251 Mercer Street New York, NY 10012 USA}
\email{masmoudi@cims.nyu.edu}

\author[Y. Wang]{Yuxi Wang}
\address{School of Mathematics, Sichuan University, Chengdu 610064, P. R. China, NYUAD Research Institute, New York University Abu Dhabi, Saadiyat Island, Abu Dhabi, P.O. Box 129188, United Arab Emirates}
\email{wangyuxi@scu.edu.cn}

\author[D. Wu]{Di Wu}
\address{School of Mathematics, South China University of Technology, Guangzhou, 510640,  P. R. China}
\email{wudi@scut.edu.cn}

\author[Z. Zhang]{Zhifei Zhang}
\address{School of Mathematical Sciences, Peking University, 100871, Beijing, P. R. China}
\email{zfzhang@math.pku.edu.cn}

\maketitle

\begin{abstract}
Understanding the transition mechanism of boundary layer flows is of great significance in physics and engineering, especially due to the current development of supersonic and hypersonic aircraft.  In this paper, we  construct multiple unstable acoustic modes so-called {\bf Mack modes}, which play a crucial role during the early stage of transition in the supersonic boundary layer. To this end, we develop an inner-outer gluing iteration to solve a hyperbolic-elliptic mixed type and singular system.  
\end{abstract}

\setcounter{tocdepth}{1}

%\tableofcontents

\section{Introduction}

\subsection{Physical background}
The investigation of  the transition mechanism of boundary layer flows is of great significance in physics and engineering, especially due to the current development of supersonic and hypersonic aircraft.  The early stage of boundary layer transition often involves the instability caused by viscous disturbance waves, commonly referred to as Tollmien–Schlichting (T–S) waves. The origins of T-S waves can be traced back to influential contributions by Heisenberg, Tollmien, Schlichting, C. C. Lin, etc.  \cite{Drazin-Reid, Lin, Sch}, who found lower and upper marginal stability branches. See a curve of neutral stability of type (b) in FIGURE 1. In a breakthrough work  \cite{GGN-DMJ}, Grenier, Guo and Nyugen first gave a rigorous mathematical construction of  T-S waves of temporal mode by developing the Rayleigh-Airy iteration scheme( see Remark \ref{rem: comparison iterations} for more explanation). 
Chen and the last two authors \cite{CWZ} confirm the existence of neutral curve for T-S wave mathematically, and readers also refer to  \cite{BG-2023}. Let us refer to \cite{BG-2024,CWZ1, GM, GMM-duke, GMM-arxiv,  GN} and references therein for more works on nonlinear stability/instability of boundary layer flows.  
\begin{figure}[H]
	\centering
	\includegraphics[height=5.0cm,width=8cm]{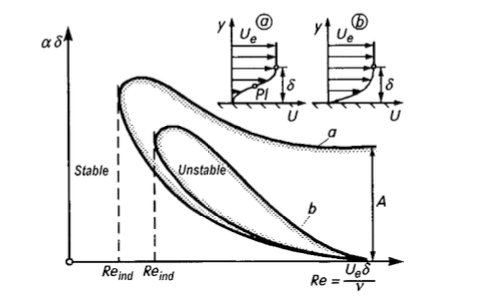}
	\caption{Curves of neutral stability of a plane boundary layer for 2D incompressible perturbations. (a)“inviscid” instability; (b)“viscous” instability. This picture is from Fig. 15.9. in Chapter 15 in \cite{Sch}. }\label{figure1}
\end{figure}
For the subsonic flow, Yang and Zhang \cite{YZ} gave the first rigorous construction of the compressible T-S waves of temporal mode, by developing a quasi-compressible and Stokes iteration. Recently, for the whole subsonic regime, we \cite{MWWZ} constructed unstable T-S waves by introducing a new quasi-incompressible-compressible iteration scheme. It should be noted that  there is only a single unstable solution for a given phase velocity. This type of unstable mode is referred to as a single mode.

In the supersonic case, the situation becomes much more complicated, and instability phenomenon is different from the subsonic case. It is documented in the physical literature by physicist Mark (see \cite{Sch}) that

{\it  Using a numerical computation shows that there is an infinite number of neutral wave numbers or modes with the same phase velocity if supersonic regime arises in the boundary layer. These perturbations can be viscous or inviscid and do not depend on whether the boundary layer has a generalized point of inflection.}

\noindent Readers refer to  \cite{Mack 1984} for an extensive summary by Mack. The multiple unstable modes are also called  {\bf Mack}(acoustic) {\bf modes}.
According to physical observation, the multiple unstable acoustic modes of supersonic boundary layer correspond to the inviscid instability and driven by the effect of {\it compressibility} instead of the profile of basic flow. This is completely different from  T-S waves in the subsonic case, where the instability mechanisms are caused by the {\it viscous sublayer}.  Smith \cite{Smith 1989}  found that T-S waves do not exist in the 2D supersonic flow. In the 3D supersonic flow, both T-S waves and acoustic modes are present, with acoustic modes exhibiting the strongest instability when $M_a\geq 2.2$ \cite{Mack 1984}. See the appearance of acoustic modes in FIGURE 2.
 Mack \cite{Mack 1984} pointed out that this instability is primarily generated by the inviscid part,  with viscosity playing a stabilizing role. We refer to \cite{LL, Lin 2003, Ray} for the inviscid instability results in the incompressible and subsonic cases.  
\begin{figure}[H]
	\centering
	\includegraphics[height=5.0cm,width=12cm]{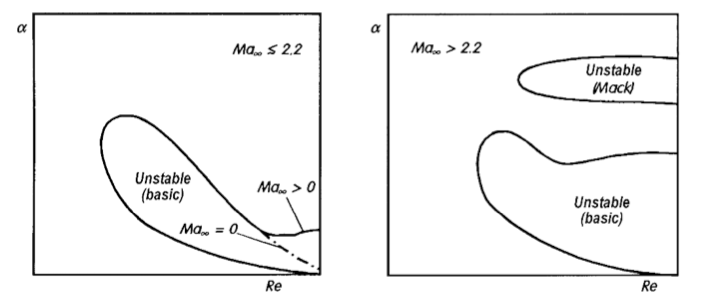}
	\caption{Appearance of acoustic modes. This picture is from Fig. 15.33. in Chapter 15 in \cite{Sch}. }\label{figure1}
\end{figure}

\subsection{Problem and Main results}

As mentioned in the previous subsection, the inviscid instability predominates in the supersonic case. Thus,  we consider the 2D non-isentropic Euler equations in a half-plane $(X, Y)\in \mathbb{R}\times \mathbb{R}_{+}$:
\begin{align}\label{eq: CEuler}
\left\{
\begin{aligned}
&\pa_t \rho+\dv(\rho\mathbf{ u})=0,\\
&\rho(\pa_t\mathbf{ u}+\mathbf{ u}\cdot \na \mathbf{ u})+\gamma^{-1} M_a^{-2}\na p=0,\\
&\rho(\pa_t \mathcal T+\mathbf{ u}\cdot \na \mathcal T)=-(\gamma-1) p~\dv \mathbf{ u},\\
&p=\rho \mathcal T,\\
&v(X, 0)=0.
\end{aligned}
\right.
\end{align}
Here $\mathbf{ u}=(u,v)$ is the velocity, $\rho$ is the density, $\mathcal T$ is the temperature,  and $p$ is the pressure respectively.  $\gamma>1$ is the ratio of the specific heats and  $M_a>0$ is the Mach number. In this paper, we take $\gamma=1.4$ in accordance with the specifications from both physical papers and numerical papers, as indicated in \cite{Mack 1984, Smith 1989}. 

Inspired by Orr-Sommerfeld approach, we explore the instability of supersonic flow by considering the linearized non-isentropic Euler equation around the basic flow $(\rho_0(Y), u_0(Y), 0, T_0(Y))$, which is given by
\begin{align}\label{eq: LCEuler}
\left\{
\begin{aligned}
&\pa_t \rho+\pa_X(u_0\rho+u\rho_0)+\pa_Y(v\rho_0)=0,\\
&\rho_0(\pa_tu+u_0\pa_X u+v\pa_Y u_0)+\gamma^{-1} M_a^{-2}\pa_X p=0,\\
&\rho_0(\pa_tv+u_0\pa_X v)+\gamma^{-1} M_a^{-2}\pa_Y p=0,\\
&\rho_0(\pa_t \mathcal T+u_0\pa_X \mathcal T+v\pa_Y T_0)=\gamma^{-1}(\gamma-1)(\pa_t p+u_0\pa_X p),\\
&p=\rho T_0+\mathcal T\rho_0,\\
&v(0)=\rho(\infty)=u(\infty)=v(\infty)=\mathcal T(\infty)=0.
\end{aligned}
\right.
\end{align}
Here the basic flow $(\rho_0(Y), u_0(Y), 0, T_0(Y))$, a steady  solution to \eqref{eq: CEuler},  satisfies 
\begin{align}
\rho_0(Y)  T_0(Y)=1, \quad  T_0(Y)=1+\f12(\gamma-1)M_a^2(1-u_0^2(Y)),
\end{align}
where the temperature $T_0(Y)$ is considered with a thermally insulating wall. We introduce the following structure assumptions on $U_B(Y)=u_0(Y)$:
\begin{align}\label{assume: U_B}
\begin{split}
&U_B(0)=0,\quad \lim_{Y\to +\infty}U_B(Y)=1,\\
&0<U_B(Y)<1,\quad\mbox{for}\quad Y\in(0,+\infty),\quad \pa_YU_B(Y)>0,\quad \mbox{for}\quad Y\in[0,+\infty),\\
&\sup_{Y\geq 0}|e^{\eta_0 Y}\pa_Y^k(U_B(Y)-1)|\leq C,\quad k=0,1,2,3.
\end{split}
\end{align}
Especially, the famous Blasius flow satisfies \eqref{assume: U_B}.

We construct  the solution of \eqref{eq: LCEuler} with the form
\begin{align*}
(\rho, u,v, \mathcal T, p)=(\varrho, U,V, T, P)(Y)e^{i\al (X-c t)}+c.c.,
\end{align*}
where $c\in\mathbb{C}$ and $\al>0$ and $(\varrho, U,V, T, P)(Y)$ satisfies the system
\begin{align}\label{eq: LCEuler-1}
\left\{
\begin{aligned}
&i\al(U_B-c)\varrho+i\al \rho_0 U+\pa_Y(V \rho_0)=0,\\
&i\al(U_B-c)(U\rho_0)+\rho_0V\pa_Y U_B+i\al \gamma^{-1} M_a^{-2}P=0,\\
&i\al(U_B-c)(V\rho_0)+\gamma^{-1} M_a^{-2}\pa_Y P=0,\\
&i\al(U_B-c)(\rho_0 T)+\rho_0 V\pa_Y T_0=\gamma^{-1}(\gamma-1)  i\al(U_B-c)P,\\
&P=\varrho T_0+T\rho_0,
\end{aligned}
\right.
\end{align}
with boundary condition
\begin{align}\label{BC: (U,V, rho,T)}
V(0)=\varrho(\infty)=U(\infty)=V(\infty)=T(\infty)= 0.
\end{align}
If the boundary value problem \eqref{eq: LCEuler-1}-\eqref{BC: (U,V, rho,T)} has a non-trivial solution for some $ c=c_r+i c_i$ with $c_i>0$ and wave number $\al>0$, then the basic flow is spectrally unstable, and the mode $(\al,c)$ is called an unstable mode. The goal of this work is to construct unstable modes to the system \eqref{eq: LCEuler-1} -\eqref{BC: (U,V, rho,T)} for $c_r\in(1-M_a^{-1},1)$ and $0<c_i\ll c_r$.

We introduce the relative Mach number $\bar{M}(Y)$ defined by
\begin{align}\label{def:r-M}
\bar{M}(Y)=\f{M_a(U_B(Y)-c)}{T_0^\f12(Y)}=\bar{M}_r(Y)+\mathrm{i}\bar{M}_i(Y).
\end{align}
Since the Mach number $M_a>1$ and $0<c_i\ll c_r$, we notice that $|\bar{M}_r|>1$ near the boundary and $|\bar{M}_r|<1$ near infinity, which indicate that the system \eqref{eq: LCEuler-1} is hyperbolic near the boundary and elliptic near infinity, see Section \ref{sec:equation of pressure }.

To state our main results, we introduce
a set $\mathcal{H}(U_B, M_a)$, which is a restriction on the background flow $U_B$ and Mach number $M_a>1$:
\begin{align}\label{set: H}
\mathcal{H}(U_B, M_a)=\left\{c_r\in[T_0^\f12(0) M_a^{-1}, 1+M_a^{-1}]: J(c_r; M_a)\neq 0\right\},
\end{align}
where the integral $J(c_r; M_a)$ is given by
\begin{align*}
J(c_r; M_a)=\int_0^{Y_0(c_r)}(\bar{M}_r^2-1)^{-\f12}\tilde Q_1(Z; c_r, M_a)dZ.
\end{align*}
Here $Y_0$ is the unique turning point satisfying $\bar{M}_r(Y)=1$ and 
\begin{align*}
\tilde Q_1(Y; c_r, M_a)=\tilde Q_1(Y; Y_0(c_r), c_r, M_a)=\f{\pa_Y^2 \bM_r}{\bM_r}-\f{2(\pa_Y\bM_r)^2}{\bM_r^2}-\f34\f{(\pa_Y^2\eta)^2}{(\pa_Y\eta)^2}-\f{\pa_Y^3 \eta}{2\pa_Y\eta},
\end{align*}
where $\eta(Y)$ satisfies $	\partial_Y (\bar{M}_r^2-1)(Y_0)\eta(Y)(\partial_Y\eta(Y))^2=(\bar{M}_r^2(Y)-1)$ with $\eta(Y_0)=0$.
\medskip

Our main results  are stated as follows.

\begin{theorem}[General monotone flow]\label{thm: main-1}
	Assume that $U_B(Y)$ satisfies the structure assumption \eqref{assume: U_B} and  $\mathcal{H}(U_B, M_a)\cap(1-M_a^{-1},1)\neq \emptyset$ for some Mach number $M_a>1$. Then for any $c_r\in\mathcal{H}(U_B, M_a)\cap(1-M_a^{-1},1)$, there exists  a sequence  $\{(\al_k, c_{i,k})\}_{k=1}^{\infty}$  with $c_{i,k}=c_i(\al_k)$  such that the linearized Euler system \eqref{eq: LCEuler} has the solutions $(\rho_k, u_k,v_k, \mathcal T_k, p_k)(t,X,Y)$ with the form 
	\begin{align*}
	(\rho_k, u_k,v_k, \mathcal T_k, p_k)(t,X,Y)=(\varrho_k, U_k,V_k, T_k, P_k)(Y)e^{\mathrm i\alpha_k X}e^{-\mathrm i\alpha_k c_k t},
	\end{align*}
	where $(\varrho_k, U_k,V_k, T_k, P_k)(Y)\in L^\infty(\mathbb R_+)\times W^{1,\infty}(\mathbb R_+)^2\times L^\infty(\mathbb R_+)\times W^{2,\infty}(\mathbb R_+)$ are solutions to \eqref{eq: LCEuler-1} for the pair $(\alpha_k, c_k)$ with $c_k=c_r+\mathrm ic_{i,k}$. Moreover, there exists $\al_0\gg1$ such that for any $k\in \mathbb{N}_+$
	\begin{align*}
	&\al_k\geq \al_0,\quad \f32 \pi<C_1(c_r)(\al_{k+1}-\al_k)<\f52 \pi,\\
	&0<c_{i,k}\sim\al_k^{-3}e^{-C(c_r)\al_k },
	\end{align*}
	where $C_1(c_r)$ and $C(c_r)$ are  positive constants dependent on $c_r$, but independent of $\alpha$. 
	\end{theorem}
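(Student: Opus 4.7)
Standard elimination in \eqref{eq: LCEuler-1}, using the equation of state together with the $Y$-momentum equation to express $V$, $\varrho$, $T$ and $U$ in terms of $P$, reduces the boundary value problem to a single second order ODE for the pressure $P(Y)$ of schematic form
\begin{align*}
\pa_Y\!\left( \tfrac{T_0}{U_B - c}\, \pa_Y P \right) - \tfrac{\al^2 (1 - \bar M^2)}{U_B - c}\, P = 0,
\end{align*}
supplemented by the Neumann condition $\pa_Y P(0) = 0$ coming from $V(0) = 0$ and by the decay $P(\infty) = 0$. Since $|\bar M_r| > 1$ near the wall and $|\bar M_r| < 1$ near infinity, this equation is hyperbolic on $[0, Y_0)$ and elliptic on $(Y_0, \infty)$, with a simple turning point at $Y_0$; the whole construction is organised around this singularity.

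The plan is to build outer Liouville--Green (WKB) solutions away from $Y_0$: a unique exponentially decaying mode $P^{\mathrm{out}}_+ \sim (1 - \bar M_r^2)^{-1/4} \exp\!\big(-\al\!\int_{Y_0}^{Y}\!\sqrt{1-\bar M_r^2}\, T_0^{-1/2}\, dZ\big)$ in the elliptic region, and two oscillatory modes $P^{\mathrm{out}}_{\pm} \sim (\bar M_r^2 - 1)^{-1/4} \exp\!\big(\pm \mathrm i \al\!\int_{Y}^{Y_0}\!\sqrt{\bar M_r^2 - 1}\, T_0^{-1/2}\, dZ\big)$ in the hyperbolic region. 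Each leading profile is promoted to an exact solution by a Picard iteration in weighted $L^\infty$ spaces adapted to the decay of $U_B - 1$ in \eqref{assume: U_B}. Near $Y_0$ the WKB ansatz blows up, so I would switch to the Langer variable $\eta(Y)$ introduced just before \eqref{set: H}, which by construction straightens the turning point; rescaling $\eta$ by $\al^{2/3}$ converts the ODE into a perturbed Airy equation
\begin{align*}
\pa_\zeta^2 \Phi - \zeta\, \Phi = \al^{-2/3}\, q(\zeta; \al^{-1})\, \Phi,
\end{align*}
whose leading correction encodes exactly $\tilde Q_1$. An inner solution $P^{\mathrm{in}}$ is then built as a contraction perturbation of $\mathrm{Ai}$ and $\mathrm{Bi}$.

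The inner--outer gluing iteration couples the inner and outer solutions in overlapping annuli on each side of $Y_0$ via the Airy connection formulae, yielding a global solution parametrised by two complex constants. Propagating $P^{\mathrm{out}}_+$ from $Y = \infty$ across $Y_0$ produces a combination $a(\al, c) P^{\mathrm{out}}_+ + b(\al, c) P^{\mathrm{out}}_-$ in the hyperbolic region, and the wall condition $\pa_Y P(0) = 0$ then becomes a dispersion relation
\begin{align*}
\cD(\al, c) := \cos\!\Big( \al\, C_1(c_r) - \tfrac{\pi}{4} + \Phi_{\mathrm{sub}}(\al, c) \Big) + r(\al, c) = 0,
\end{align*}
where $C_1(c_r) = \int_0^{Y_0}\sqrt{\bar M_r^2 - 1}\, T_0^{-1/2}\, dZ$, $\Phi_{\mathrm{sub}}$ collects subprincipal WKB phases controlled by $\tilde Q_1$ and hence by $J$, and $|r(\al, c)| = \cO(\al^{-3} e^{-C(c_r)\al})$ records the Stokes switch across the complex turning point.

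For $c = c_r \in \mathcal H(U_B, M_a) \cap (1 - M_a^{-1}, 1)$ and $\al$ large, the cosine has real zeros $\al_k$ with consecutive spacing $C_1(c_r)(\al_{k+1} - \al_k)$ strictly between $3\pi/2$ and $5\pi/2$, the slack reflecting crude bounds on $\Phi_{\mathrm{sub}}$. The hypothesis $J(c_r; M_a) \neq 0$ ensures that $\pa_c \cD(\al_k, c_r)$ is non-degenerate, so the implicit function theorem produces a unique small correction $c_{i,k} > 0$ solving $\cD(\al_k, c_r + \mathrm i c_{i,k}) = 0$; since the cosine already vanishes at $\al_k$, the remainder $r$ must be balanced by $\mathrm i\, c_{i,k}\, \pa_c \cD$, forcing $c_{i,k} \sim \al_k^{-3} e^{-C(c_r)\al_k}$. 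The main obstacle is the rigorous execution of the inner--outer gluing at $Y_0$: the outer WKB profiles are singular there, the Airy matching has to be uniform in $\al \to \infty$, and above all the exponentially small imaginary contribution must be tracked through this matching without being overwhelmed by the algebraic remainders. This is precisely where the Langer reduction and the non-degeneracy $J(c_r; M_a) \neq 0$ enter in tandem.
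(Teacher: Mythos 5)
Your reduction to the pressure equation and the Langer--Airy treatment of the turning point $Y_0$ match the paper's outer analysis, but there is a genuine gap: the critical layer is entirely absent from your argument, and it is the actual source of the instability. Besides the turning point $Y_0$ (where $\bar M_r^2=1$), the equation \eqref{eq:ray-P1} has a second distinguished point $Y_c>Y_0$, with $U_B(Y_c)=c_r$ and $Y_c-Y_0\gtrsim 1$, lying inside the subsonic region, where the coefficient $2\partial_Y\bar M/\bar M$ is of size $c_i^{-1}$ and solutions behave like $(U_B-c)\log(U_B-c)$. In the paper the ``inner'' problem is the Rayleigh-type equation \eqref{eq: Rayleigh-nonhomo} near $Y_c$, while the Langer--Airy construction (which already handles $Y_0$) is the ``outer'' problem; your gluing, by contrast, is the classical WKB/Airy matching at $Y_0$ only. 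With $c=c_r$ real and no critical-layer singularity in the picture, all coefficients in your construction are real, so your dispersion relation $\cD(\alpha,c_r)=0$ is real and its roots are neutral: there is no mechanism in your scheme that produces a sign-definite nonzero $c_{i,k}$. The exponentially small imaginary part is not a ``Stokes switch across the complex turning point''; it is $\mathrm{Im}\,\mathcal K_B[\varphi_0]$, generated at $Y_c$ by the logarithmic phase jump quantified by the term $-\pi\,\partial_Y^2\bar M_r(Y_c)/(\partial_Y\bar M_r(Y_c))^3$ in Lemma \ref{lem: int-phi^(-2)}, and it is of size $e^{-\alpha w_0(\mathring Y)}$ precisely because this critical-layer information must tunnel through the elliptic region from $Y_c$ back to the supersonic side before it can influence $\partial_Y P(0)$ (this is the role of the non-local pressure $P_{0,1}$ and the reverse wave $\tilde P_{0,2}^{(2)}$).

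Two further points would also need repair even granting a correct imaginary mechanism. First, the hypothesis $J(c_r;M_a)\neq 0$ is not used as a non-degeneracy of $\partial_c\cD$: in the paper it yields the lower bound $|\Re(\mathcal B_0^1(0))|\sim\alpha^{-1}$ (Lemma \ref{lem: estimate B_0^1(0)}), i.e. it guarantees that the coefficient $D_3$ in the real part of the dispersion relation \eqref{formula: c_r-1} is of order one, so that the equation can be solved for the prescribed $c_r$ at a sequence of wave numbers. Second, positivity of $c_{i,k}$ is not automatic from an implicit-function step: at consecutive roots of the real dispersion relation the factor $\cos\Theta(-\kappa\eta(0))$ alternates in sign, so $c_i(\alpha_k)$ alternates in sign as well, and the paper must pass to every other root to enforce $c_{i,k}>0$ --- this sign-selection step is exactly why the spacing satisfies $\tfrac32\pi<C_1(c_r)(\alpha_{k+1}-\alpha_k)<\tfrac52\pi$ rather than the $\tfrac{\pi}{2}$-to-$\tfrac32\pi$ spacing of all roots, whereas your proposal attributes the spacing to crude bounds on the subprincipal phase and omits the sign issue altogether.
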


\begin{remark}

Based on our construction, we know that the condition $\mathcal{H}(U_B, M_a)\cap(1-M_a^{-1},1)= \emptyset$ is related to the neutral modes.
We point out that the set of $\mathcal{H}(U_B, M_a)\cap(1-M_a^{-1},1)$ is not an empty set. Indeed, the condition $\mathcal{H}(U_B, M_a)\cap(1-M_a^{-1},1)\neq \emptyset$ holds when $U_B(Y)$ is an analytic function. The precise statement is given in Proposition \ref{rem: Non-empty condition}. 
\end{remark}

\begin{remark}
When the wave number $\al$ lies in the middle regime, the instability is stronger. For this case, the framework for constructing the unstable modes will be completely different, and we will study this subject in our forthcoming paper.
\end{remark}

 We can derive the following results from Theorem \ref{thm: main-1} for some analytic basic flows.
 
\begin{theorem}[Analytic flow]\label{thm: main-2}
	Assume that $U_B(Y)$ is an analytic function and satisfies the structure assumption \eqref{assume: U_B}. Let $M_a\geq M_0$, where $M_0>1$  is given in Proposition \ref{rem: Non-empty condition}. Then there exists $C_1(M_a),\cdots, C_N(M_a)$ with $1-M_a^{-1}<C_1<\cdots<C_N<1$, 
	such that for any $c_r\in (C_1, C_2)\cup (C_3, C_4)\cup \cdots (C_{N-1}, C_N)$, the results in Theorem \ref{thm: main-1} hold.

\end{theorem}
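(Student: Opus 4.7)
The plan is to derive Theorem \ref{thm: main-2} from Theorem \ref{thm: main-1} by exhibiting a finite disjoint family of open intervals $(C_1, C_2), (C_3, C_4), \dots, (C_{N-1}, C_N)$ contained in $\mathcal{H}(U_B, M_a) \cap (1 - M_a^{-1}, 1)$. The key new ingredient beyond Theorem \ref{thm: main-1} is real-analyticity of the map $c_r \mapsto J(c_r; M_a)$ on $(1 - M_a^{-1}, 1)$, which combines with the non-vanishing input of Proposition \ref{rem: Non-empty condition} to produce the desired intervals.

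First I would show that the turning point $Y_0(c_r)$ and the Langer-type variable $\eta(Y; c_r)$ depend real-analytically on $c_r$. Since $U_B$ is real-analytic and $T_0(Y) = 1 + \tfrac{1}{2}(\gamma - 1) M_a^2 (1 - U_B^2(Y))$, the relative Mach number $\bar{M}_r(Y; c_r) = M_a(U_B(Y) - c_r)/T_0^{1/2}(Y)$ is real-analytic in $(Y, c_r)$. The turning-point equation $|\bar{M}_r(Y_0; c_r)| = 1$ has a non-degenerate $Y$-derivative by \eqref{assume: U_B}, so the implicit function theorem gives $Y_0(c_r)$ real-analytic. The relation $\pa_Y(\bar{M}_r^2 - 1)(Y_0)\, \eta\, (\pa_Y \eta)^2 = \bar{M}_r^2 - 1$ can be integrated explicitly via Langer's trick, yielding $\eta(Y; c_r) \sim Y - Y_0(c_r)$ near $Y_0$ with $\pa_Y \eta$ bounded away from zero there; hence $\tilde{Q}_1$ is analytic in $c_r$ and smooth in $Y$ up to $Y = Y_0(c_r)$.

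Next I would resolve the integrable $(\bar{M}_r^2 - 1)^{-1/2}$ singularity at $Y = Y_0(c_r)$ in the definition of $J$. Introducing the change of variable $\sigma = \sqrt{Y_0(c_r) - Y}$ on $(0, Y_0(c_r))$ rewrites
\begin{align*}
J(c_r; M_a) = 2\int_0^{\sqrt{Y_0(c_r)}} \sigma\, [\bar{M}_r^2(Y_0(c_r) - \sigma^2; c_r) - 1]^{-1/2}\, \tilde{Q}_1(Y_0(c_r) - \sigma^2; c_r, M_a)\, d\sigma.
\end{align*}
Since $\bar{M}_r^2(Y_0(c_r) - \sigma^2; c_r) - 1 = a(c_r)\sigma^2 + O(\sigma^4)$ with $a(c_r) = |\pa_Y(\bar{M}_r^2 - 1)(Y_0(c_r); c_r)| > 0$ real-analytic in $c_r$, the factor $\sigma[\bar{M}_r^2 - 1]^{-1/2}$ extends to a real-analytic function of $(\sigma, c_r)$ across $\sigma = 0$. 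The integrand is then jointly analytic, the upper limit $\sqrt{Y_0(c_r)}$ is analytic, and $J(\cdot; M_a)$ is real-analytic on $(1 - M_a^{-1}, 1)$. By Proposition \ref{rem: Non-empty condition}, $J(\cdot; M_a)$ is not identically zero there for $M_a \geq M_0$; its zero set is therefore discrete, and for small $\delta > 0$ the finitely many zeros in $[1 - M_a^{-1} + \delta, 1 - \delta]$ can be enclosed in $\delta$-neighborhoods whose complement is a finite disjoint union $[C_1, C_2] \cup \cdots \cup [C_{N-1}, C_N]$ on which $|J|$ is uniformly bounded below. Theorem \ref{thm: main-1} applied at every $c_r$ in any $(C_{2k-1}, C_{2k})$ then produces the claimed Mack-mode sequences.

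The main obstacle is the analyticity of $J$: both the turning point and the integrand's square-root singularity vary with $c_r$, so analytic dependence is not manifest from the formula. The Langer-type substitution $\sigma = \sqrt{Y_0(c_r) - Y}$ absorbs the singularity and reveals the analytic structure, reducing the problem to an integral of an analytic function over an interval with analytic endpoint.
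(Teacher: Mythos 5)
Your proposal is correct and follows essentially the same route as the paper: Theorem \ref{thm: main-2} is obtained by combining Theorem \ref{thm: main-1} with Proposition \ref{rem: Non-empty condition}, whose proof likewise uses that $J(\cdot;M_a)$ is analytic and not identically zero (hence has finitely many isolated zeros, whose complement yields the intervals $(C_1,C_2),\dots,(C_{N-1},C_N)$). The only difference is that you spell out the analyticity of $c_r\mapsto J(c_r;M_a)$ (via the analytic turning point $Y_0(c_r)$ and the substitution $\sigma=\sqrt{Y_0(c_r)-Y}$ resolving the square-root singularity), a step the paper simply asserts from the analyticity of $U_B$.
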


For the Blasius flow, we can weaken the lower bound on the Mach number $M_a$.

\begin{theorem}[Blasius flow]\label{thm: main-3}
	Let $U_B(Y)$ be the Blasius profile and $M_a\geq 3$,  then there exists $C_1(M_a),\cdots, C_N(M_a)$ with $1-M_a^{-1}<C_1<\cdots<C_N<1$, 
	such that for any $c_r\in (C_1, C_2)\cup (C_3, C_4)\cup \cdots (C_{N-1}, C_N)$, the results in Theorem \ref{thm: main-1} hold. 
\end{theorem}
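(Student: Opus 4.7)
The plan is to invoke Theorem \ref{thm: main-1} directly, so the whole task is to verify its hypotheses for the Blasius profile, the only substantive point being non-emptiness of $\mathcal H(U_B,M_a)\cap(1-M_a^{-1},1)$ and the fact that it contains an open component, once $M_a\geq 3$. The Blasius profile $U_B=f'$ where $f$ solves $2f'''+ff''=0$, $f(0)=f'(0)=0$, $f'(\infty)=1$, is real-analytic, strictly increasing, and $1-U_B$ decays exponentially with all derivatives, so the structural hypothesis \eqref{assume: U_B} holds automatically.

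Once this is in place, the problem reduces to analyzing $J(c_r;M_a)$ as a function of $c_r\in(1-M_a^{-1},1)$. I would first observe that, since $U_B$ is analytic and the turning point $Y_0(c_r)$ (where $\bM_r=1$) varies analytically with $c_r$ by the implicit function theorem (the derivative of $\bM_r$ being nonzero at $Y_0$ by monotonicity of $U_B$), and since $\eta(Y)$ is defined by an ODE-like relation that preserves analyticity away from $Y_0$, the integrand and the integral $J(\,\cdot\,;M_a)$ are analytic in $c_r$. Consequently, provided $J(\,\cdot\,;M_a)\not\equiv 0$, its zero set is discrete in $(1-M_a^{-1},1)$, and the open complement decomposes into finitely many intervals $(C_1,C_2)\cup(C_3,C_4)\cup\cdots\cup(C_{N-1},C_N)$ on which $J\neq 0$, which is precisely the content of the theorem after an application of Theorem \ref{thm: main-1}.

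The real content therefore lies in showing $J(\,\cdot\,;M_a)\not\equiv 0$ with the effective threshold $M_a\geq 3$. For the general analytic flow (Theorem \ref{thm: main-2}) Proposition \ref{rem: Non-empty condition} supplies a non-effective bound $M_a\geq M_0$, obtained by tracking the leading-order term in $\tilde Q_1$ as $M_a\to\infty$. For Blasius one gains quantitative control by using the ODE $2f'''+ff''=0$ to eliminate all derivatives of $U_B$ of order $\geq 3$ in terms of $f,f',f''$; this reduces $\tilde Q_1$ to an explicit rational expression in the Blasius variables and $\bM_r$, for which sharp classical bounds ($f''(0)\approx 0.332$, monotone decay of $f''$, quantitative exponential decay of $1-f'$) yield effective upper and lower bounds on the numerator and denominator of the integrand. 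I would isolate a distinguished value of $c_r$, for instance $c_r$ close to $1$ (so that $Y_0(c_r)$ is large and the integral is dominated by an explicitly computable outer region), and show that the leading contribution to $J$ is bounded away from zero there as soon as $M_a\geq 3$.

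The hard part, and essentially the only place where the constant $3$ enters, is propagating this constant through the asymptotic expansion: in the analytic case one only needs a qualitative smallness of the error term, whereas here one needs a genuine quantitative comparison between the main and error terms, obtained by exploiting the Blasius ODE to express every quantity through $f, f', f''$ and then combining with the known sharp numerical features of the Blasius profile. Once a single $c_r^*\in(1-M_a^{-1},1)$ with $J(c_r^*;M_a)\neq 0$ has been produced for every $M_a\geq 3$, analyticity of $J$ in $c_r$ immediately upgrades this to a union of open intervals $(C_1,C_2)\cup\cdots\cup(C_{N-1},C_N)$ and Theorem \ref{thm: main-1} then supplies the infinite sequence of unstable Mack modes on each of them, completing the proof.
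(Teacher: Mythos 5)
The overall skeleton of your argument (verify \eqref{assume: U_B} for Blasius, note that $J(\cdot;M_a)$ is analytic in $c_r$, conclude that its zero set is finite and isolated so that the complement is a finite union of intervals on which Theorem \ref{thm: main-1} applies) coincides with the paper's reduction in Proposition \ref{rem: Non-empty condition}. The gap is in the only substantive step: you never actually establish that $J(\cdot;M_a)\not\equiv 0$ for $M_a\geq 3$, you only describe an intention to do so, and the route you sketch is problematic. First, your distinguished choice ``$c_r$ close to $1$, so that $Y_0(c_r)$ is large and $J$ is dominated by an explicitly computable outer region'' rests on a false premise: as $c_r\to 1$ it is the critical point $Y_c$ (where $U_B(Y_c)=c_r$) that escapes to infinity, while the turning point $Y_0(c_r)$, determined by $U_B(Y_0)=\frac{2c_r-\sqrt{\Delta}}{\gamma+1}$, stays of order one; since $J$ integrates only over $[0,Y_0(c_r)]$, no large outer region appears and no dominant explicitly computable contribution is identified. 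Second, you give no mechanism by which the threshold $M_a\geq 3$ would emerge from ``sharp classical bounds'' on $f''$; the integrand $(\bar M_r^2-1)^{-\frac{1}{2}}\tilde Q_1$ has no a priori sign on $[0,Y_0]$, so a genuine quantitative cancellation analysis would be required, and none is supplied. As it stands the proposal proves the easy reductions but not the theorem.

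For comparison, the paper's device is quite different and much more economical: it evaluates $\tilde Q_1$ at the single point $Y=0$ for the degenerate phase speed $c_r=T_0^{\frac{1}{2}}(0)M_a^{-1}$, at which the turning point sits on the wall ($Y_0=0$). The Blasius wall identities $U_B(0)=\partial_Y^2U_B(0)=\partial_Y^3U_B(0)=0$ (consequences of $2f'''+ff''=0$ at $Y=0$) collapse $\tilde Q_1(0)$ to the explicit expression $C_0(M_a)\,T_0(0)^{-1}M_a^2(\partial_YU_B(0))^2$ with $C_0(M_a)<0$, so $\tilde Q_1\neq 0$ at this corner; continuity then gives $J(c_r)\neq 0$ for $c_r$ slightly above $T_0^{\frac{1}{2}}(0)M_a^{-1}$, and analyticity of $J$ on the whole interval $[T_0^{\frac{1}{2}}(0)M_a^{-1},1+M_a^{-1}]$ transfers the nonvanishing to all but finitely many points of $(1-M_a^{-1},1)$ — note in particular that the nonvanishing point need not itself lie in $(1-M_a^{-1},1)$, contrary to what your last paragraph assumes is required. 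If you want to salvage your approach you should replace the ``$c_r$ near $1$'' step by this wall evaluation (or by some other concrete, checkable evaluation of $J$), since that is where the special structure of Blasius is actually used.
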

The lower bound for $M_a\geq 3$ in Theorem \ref{thm: main-3} is not optimal. Our result to the Blasius flow is also valid for $M_a\geq 2.2$ as the physical literature \cite{Mack 1984,Sch} said. For $M_a\in(1, 2.2)$, the most unstable mode is the T-S waves (for 3D) and vorticity modes (for 2D), see  \cite{Mack 1984,Sch}.

\subsection{Notations} Throughout this paper, we denote by $C$ a constant independent of $\al$, which may be different from line to line. $A\lesssim B$  means that there exists a  constant C independent of $\al$ such that $A\leq CB$. Similar definitions hold for $A\gtrsim B$.  Moreover, we use the notation $A\sim B$ if $A\lesssim B$ and $A\gtrsim B$. The weighted Sobolev space $W^{k,\infty}_{w_0}$($k\in \mathbb{N}$ ) has the norm $\|f\|_{W^{k,\infty}_{w_0}}=\sum_{\ell=0}^{k}\|\pa_Y^\ell f\|_{L^\infty_{w_0}(\mathbb{R}_{+})}$, where the definition of norm $\| f\|_{L^\infty_{w_0}(\mathbb{R}_{+})}$ is given by \eqref{def: w_0}.

\section{Main ideas and sketch of proof}\label{sec: sketch of proof}

\subsection{The equation of the pressure}\label{sec:equation of pressure }

To solve the complicated eigenvalue system \eqref{eq: LCEuler-1}, we first drive an equation for the pressure, which reflects the main characteristic of the compressible effect.  

\subsubsection{Refomulation via the pressure}
From the equations of velocity in  \eqref{eq: LCEuler-1}, we can write 
\begin{align*}
&V\rho_0=-\f{\partial_Y P}{\mathrm{i}\al(U_B-c)\gamma M_a^{2}},\quad\partial_Y(V\rho_0)=-\f{\partial^2_Y P}{\mathrm{i}\al(U_B-c)\gamma M_a^2}+\f{\partial_YU_B \partial_Y P}{\mathrm{i}\al(U_B-c)^2\gamma M_a^2},\\
&\mathrm{i}\al U\rho_0=\f{\partial_YU_B\partial_YP}{\mathrm{i}\al(U_B-c)^2\gamma M_a^2}-\f{\mathrm{i}\al P}{(U_B-c)\gamma M_a^2},
\end{align*}
which along with the continuity equation implies 
\begin{align}\label{eq:for-P1}
\mathrm{i}\al(U_B-c)\varrho+\f{2\partial_YU_B\partial_YP}{\mathrm{i}\al(U_B-c)^2\gamma M_a^2}-\f{\mathrm{i}\al P}{(U_B-c)\gamma M_a^2}-\f{\partial^2_Y P}{\mathrm{i}\al(U_B-c)\gamma M_a^2}=0.
\end{align}
On the other hand, from the last two equations in  \eqref{eq: LCEuler-1}, we obtain 
\begin{align*}
\mathrm{i}\al(U_B-c)\varrho=-\f{\mathrm{i}\al(U_B-c)(T\rho_0)}{T_0}+\f{\mathrm{i}\al(U_B-c)P}{T_0},
\end{align*}
and
\begin{align*}
\mathrm{i}\al(U_B-c)(T\rho_0)=\gamma^{-1}(\gamma-1)\mathrm{i}\al(U_B-c)P+\f{\partial_YT_0\partial_Y P}{\mathrm{i}\al(U_B-c)\gamma M_a^{2}},
\end{align*}
which implies
\begin{align}\label{eq:for-P2}
\mathrm{i}\al(U_B-c)\varrho=-\f{\partial_YT_0\partial_Y P}{\mathrm{i}\al(U_B-c)\gamma M_a^{2}T_0}+\f{\mathrm{i}\al(U_B-c)P}{\gamma T_0}.
\end{align}
Therefore, by \eqref{eq:for-P1} and \eqref{eq:for-P2}, we obtain 
\begin{align*}
\partial_Y^2 P-\big(\f{2\partial_YU_B}{U_B-c}-\f{\partial_Y T_0}{T_0}\big)\partial_YP-\al^2\big(1-\f{M_a^2(U_B-c)^2}{T_0} \big)P =0,
\end{align*}
which along with $\bar{M}=M_a(U_B-c)/T_0^\f12$, $F(Y)=1-\bar{M}^2$ and the fact that
\begin{align*}
\f{\partial_Y\bar{M}}{\bar{M}}=\f{\partial_YU_B}{U_B-c}-\f{\partial_Y T_0}{2T_0},
\end{align*}
implies 
\begin{align*}
\partial_Y^2P-\frac{2\partial_Y\bar M}{\bar M}\partial_Y P-\alpha^2F(Y) P=0.
\end{align*}
The boundary condition \eqref{BC: (U,V, rho,T)} is equivalent to 
\begin{align}
\partial_YP(0)=\lim_{Y\to\infty} \partial_Y^j P(Y)=0,\quad j=0,1,2.
\end{align}

Once we obtain a solution $P(Y)$, we can rebuild $(\varrho,U,V,T)(Y)$ by the following formulas  
\begin{align}\label{eq: rebuilt1}
\begin{split}
&V(Y)=\frac{\mathrm i T_0\partial_Y P}{\gamma M_a^2\alpha(U_B-c)},\quad U(Y)=-\frac{T_0 P}{\gamma M_a^2(U_B-c)}-\frac{T_0\partial_YP \partial_YU_B}{\gamma M_a^2\alpha^2(U_B-c)^2},\\
&\varrho(Y)=\frac{ P}{\gamma M_a^2(U_B-c)^2}+\frac{\partial_YP \partial_YU_B}{\gamma M_a^2\alpha^2(U_B-c)^3}-\f{1}{\gamma M_a^2\al (U_B-c)^2}\partial_Y(\frac{\partial_Y P}{U_B-c}),\\
&T(Y)= PT_0-\varrho T_0^2.
\end{split}
\end{align}
Therefore, we only need to construct a non-zero solution $P$  to the following equation 
\begin{align}\label{eq:ray-P1}
\left\{
\begin{aligned}
&\partial_Y^2 P(Y)-\frac{2\partial_Y \bar{ M}}{\bar{M}}\partial_Y P(Y)-\alpha^2F(Y)P(Y)=0,\quad Y>0,\\
&\partial_Y P(0)=P(\infty)=0,
\end{aligned}
\right.
\end{align}
where 
\begin{align}\label{def(bM, T_0)1}
\bar{M}(Y)=\f{M_a(U_B-c)}{\sqrt{T_0}},\quad T_0(Y)=1+\f12(\gamma-1)M_a^2(1-U_B^2(Y)),\quad F(Y)=1-\bar{M}^2(Y),
\end{align}
and $c=c_r+\mathrm{i}c_i$ with $0<c_i\ll 1$.

\subsubsection{Mixed type and singularity}
 We denote
\begin{align*}
\mathcal{L}[P]:=\partial_Y^2 P(Y)-\frac{2\partial_Y \bar{ M}}{\bar{M}}\partial_Y P(Y)-\alpha^2F(Y)P(Y),\quad F(Y)=F_r(Y)+F_i(Y),
\end{align*}
and 
\begin{align}\label{def:F_r}
F_r(Y)=1-\bar{M}_r^2=1-\f{M_a^2(U_B-c_r)^2}{T_0(Y)},\quad F_i(Y)=\f{M_a^2(\mathrm{i}c_i(U_B-c_r)+c_i^2)}{T_0(Y)}.
\end{align}
For $(\al,c) $ belonging  $\mathbb H_0$ (see the definition  \eqref{def: H_0}), there are two crucial points related to $Y$. \smallskip

\textbf{$\bullet$ Turning point $Y_0$: changing the type of operator $\mathcal{L}$.}\smallskip

There exists a unique $Y_0\sim1$ such that $F_r(Y_0)=0$. Moreover,  $F_r(Y)<0$ and $F_r(Y)>0$ for $0\leq Y<Y_0$ and $Y_0<Y<+\infty$ respectively.
Indeed, by the definition of $F_r(Y)$, we have
\begin{align*}
F_r(Y_0)=0\Longleftrightarrow M_a^2(U_B(Y_0)-c_r)^2=T_0(Y_0)=1+\f12(\gamma-1)M_a^2(1-U_B^2(Y_0)),
\end{align*}
which implies that $U_B(Y_0)$ is a solution to the following quadratic equation
\begin{align*}
\f12(\gamma+1)M_a^2x^2-2M_a^2c_r x+M_a^2(c_r^2-\f12(\gamma-1))-1=0.
\end{align*}
It is easy to see that $\Delta=(\gamma^2-1)+2(\gamma+1)M_a^{-2}-2(\gamma-1)c_r^2> (\gamma+1-2c_r)^2$. Then we use the fact $0\leq U_B(Y)\leq 1$ for any $Y\geq 0$  to deduce	    
\begin{align*}
U_B(Y_0)=\frac{2c_r-\sqrt{\Delta}}{\gamma+1},\quad  Y_0\sim1.
\end{align*}	
By the definition of $F_r(Y)$, we know that 
\begin{align*}
&Y< Y_0 \Longleftrightarrow F_r(Y)<0\Longleftrightarrow |\bar{M}_r|>1: \text{supersonic regime},\\
&Y>Y_0 \Longleftrightarrow F_r(Y)>0\Longleftrightarrow |\bar{M}_r|<1: \text{subsonic regime}.
\end{align*}
Then we find that
\begin{align*}
\mathcal{L}[P]\sim\left\{
\begin{aligned}
&(\partial_Y^2 -\alpha^2 |F_r(Y)|)P-\frac{2\partial_Y\bar M}{\bar M}\partial_Y P,\quad \underbrace{Y>Y_0}_{\text{subsonic regime}}:\text{ elliptic type operator},\\
&(\partial_Y^2 +\alpha^2 |F_r(Y)|)P-\frac{2\partial_Y\bar M}{\bar M}\partial_Y P,\quad \underbrace{0\leq Y<Y_0}_{\text{supersonic regime}}:\text{ hyperbolic type operator}.
\end{aligned}
\right.
\end{align*}
Therefore, $\mathcal{L}[\cdot]$ is a hyperbolic-elliptic mixed-type operator.\smallskip

\textbf{$\bullet$ The singularity: critical layer.}\smallskip

There exists a critical layer located at $Y_c$, where $U_B(Y_c)=c_r$. By the definition of $\bar{M}$, we have
\begin{align*}
\bar{M}(Y_c)=\f{-\mathrm{i} M_a c_i}{T_0^\f12(Y_c)}\sim-\mathrm{i}M_a c_i\Longrightarrow \f{\partial_Y\bar{M}(Y_c)}{\bar{M}(Y_c)}\sim\mathrm{i}c_i^{-1}M_a^{-1}.
\end{align*}
Therefore, in a neighborhood of $Y_c$, the second term of $\mathcal{L}[\cdot]$ is singular, that is,
\begin{align}
\f{2\partial_Y \bar{M}}{\bar{M}}\partial_Y P\sim\mathrm{i}c_i^{-1}M_a^{-1}\partial_Y P.
\end{align}
In a small neighborhood of $Y_c$, we know that a solution $P$ to $\mathcal{L}[P]=0$ behaves like
\begin{align*}
\partial_Y P\sim \bar{M}\log \bar{M}\sim (U_B-c)\log(U_B-c)\Longrightarrow \partial_YV\sim\partial_Y^2 P\sim\log(U_B-c),
\end{align*}
which is singular when $Y$ passes the critical point $Y_c$. 

Let's note that the critical layer lies in the subsonic regime. That is $Y_c>Y_0$.

\subsection{Inner-outer approximation}
For any $(\al, c) $ belonging to a suitable subset of $\mathbb H_0$, we first construct a homogeneous solution $P_{\alpha, c}(Y)$ to the following equation
\begin{align}\label{eq:ray-F-nbv1}
\left\{
\begin{aligned}
&\partial_Y^2P-\frac{2\partial_Y\bar M}{\bar M}\partial_Y P-\alpha^2F(Y) P=0,\\
&P(\infty)=0.
\end{aligned}
\right.
\end{align}

\subsubsection{Outer approximation}
In this part, we show how to find an approximation of \eqref{eq:ray-F-nbv1} for all $Y$ away from the critical layer. 
We start with the following asymptotic analysis of the toy model  \begin{align}\label{eq:toy1}
\mathcal{L}_{toy}[P]= (\partial_Y^2-\alpha^2F_r(Y))P=0.
\end{align}
In a small neighborhood of $Y_0$, we can write 
\begin{align*}
\mathcal{L}_{toy}[P](Y)=\underbrace{(\partial_Y^2-\al^2\partial_YF(Y_0)(Y-Y_0))P}_{\text{change type of equation}}+\underbrace{\mathcal{O}(\al^2|Y-Y_0|^2)P}_{\text{small terms}}.
\end{align*} 
Thus, in a small neighborhood of $Y_0$, \eqref{eq:toy1} can be treated as
\begin{align*}
(\partial_Y^2-\al^2\partial_YF(Y_0)(Y-Y_0))P_{toy}(Y)=0: \text{Airy type equation}.
\end{align*}
We know that $Ai(\kappa(Y-Y_0))$ and $Bi(\kappa(Y-Y_0))$ are two linear independent solutions to the above equation, where $\kappa=\partial_YF(Y_0)^\f13\alpha^{\f23}$. Therefore, \eqref{eq:toy1} can be approximated by the Airy equation in the layer of changing the type of equation, and the thickness of this layer is of order $\al^{-\f23}$.

To use the Airy function as the building block to construct the approximate solution, we need to make the following key modifications:
\begin{itemize}
	\item Applying the Langer transformation to zoom small layer of $Y_0$ to a larger domain. Here the Langer transformation $\eta(Y)$  is the solution to $
	\partial_Y F_r(Y_0)\eta(Y)(\partial_Y\eta(Y))^2=F_r(Y)$ with $\eta(Y_0)=0$. Then we have
	\begin{align*}
	&\mathcal{L}[Ai(\kappa\eta)](Y)=\Big(-2\f{\partial_Y\bar{M}}{\bar{M}}+\f{\partial_Y^2\eta}{\partial_Y\eta}\Big)\partial_Y(Ai(\kappa\eta)),\\
	&\mathcal{L}[Bi(\kappa\eta)](Y)=\Big(-2\f{\partial_Y\bar{M}}{\bar{M}}+\f{\partial_Y^2\eta}{\partial_Y\eta}\Big)\partial_Y(Bi(\kappa\eta)).
	\end{align*}
	We notice that $Ai(\kappa\eta(Y))$ is already a good approximation in the subsonic regime except around the critical layer. However, $Ai(\kappa\eta(Y))$ is not a good approximation in the supersonic regime. Hence, we need to make a further modification.
	
	\item Introducing  a function of amplitude $E(Y)=\bar{M}/(\partial_Y\eta)^\f12$ to cancel singular terms, which is a solution to 
	\begin{align*}
	\f{\partial_Y E}{E}=\f{\partial_Y\bar{M}}{\bar{M}}-\f{\partial_Y^2\eta}{2\partial_Y\eta}.
	\end{align*}
	Then we introduce 
	\begin{align}\label{def: (A, B)}
	A(Y)=E(Y)Ai(\kappa\eta(Y)),\quad B(Y)=E(Y)Bi(\kappa\eta(Y)),
	\end{align}
	which satisfy 
	\begin{align*}
	\mathcal{L}[A]=(Q_1(Y)+Q_2(Y))A,\quad \mathcal{L}[B]=(Q_1(Y)+Q_2(Y))B.
	\end{align*}
	Here $|Q_1(Y)|\sim 1$ and $Q_2(Y)\sim c_i$ when $Y$ is away from $Y_c$. Hence, $(Q_1(Y)+Q_2(Y))A$ and $(Q_1(Y)+Q_2(Y))B$ can be treated as small errors. Therefore,
\beno
\mathcal{L}_{app}[P]=\mathcal{L}[P]-(Q_1+Q_2)P
\eeno 
is a good approximation of $\mathcal{L}[P]$ in the $\mathbb{R}_+$ except around the critical layer.     
\end{itemize}

To construct the exact solution to \eqref{eq:ray-F-nbv1}, we need to solve the non-homogeneous equation for a given $f$
\begin{align}\label{eq:non-L}
\mathcal{L}[P_{non,f}]=f,\quad P_{non,f}(\infty)=0.
\end{align}
For this, we solve the following approximate equation
\begin{align}\label{sys:mix-type}
\mathcal L_{app}[P_{non,f}](Y)=f,\quad P_{non,f}(\infty)=0,
\end{align}
where $P_{non,f}$ can be written as
\begin{align}\label{eq:airy-non-f}
\begin{split}
P_{non,f}(Y)=&-\kappa^{-1}\pi A(Y)\int_0^YB(Z)\bar M^{-2}f(Z)dZ-\kappa^{-1}\pi B(Y)\int_Y^{+\infty}A(Z)\bar M^{-2}f(Z)dZ,
\end{split}
\end{align}
and write $P_{non,f}=\mathcal L_{app}^{-1}(f)$ for convenience.

\subsubsection{Inner approximation}

In the critical layer, $\mathcal{L}_{app}[\cdot]=\mathcal{L}[\cdot]-(Q_1+Q_2)$ is not a good approximation of $\mathcal{L}[\cdot]$ due to $Q_1(Y)\sim (|Y-Y_c|+c_i)^{-2}\gg\al^2$.  Instead, we introduce $\varphi=\partial_Y P/\bar{M}$ to transform the equation of pressure to one related to the vertical velocity, that is, 
\begin{align*}
\mathcal{L}[P]=0\Longleftrightarrow \mathcal{L}_{cr}[\varphi]=\bar M\left(\partial_Y\left(\frac{\partial_Y}{F(Y)}\right)-\alpha^2\right)\varphi-\partial_Y\left(\frac{\partial_Y\bar M}{F(Y)}\right)\varphi=0.
\end{align*}
For $Y$ near $Y_c$, two linearly independent solutions behave as
\begin{align*}
\bar{M}(Y)\text{ and }\bar{M}\log\bar{M}(Y) \text{ in the critical layer}.
\end{align*}
Besides, we know that $\varphi\sim e^{-\al Y}$ away from the critical layer. Hence, we can use the following localized equation as a good approximation in the critical layer
\begin{align}\label{eq:ray-local}
\mathcal{L}_{cr}[\varphi]=0,\quad\varphi(\bar Y_1^*)=\varphi(\bar Y_2^*)=0,
\end{align}
where $Y_c\in [\bar{Y}_1^*,\bar{Y}_2^*]$ lies in the subsonic regime. This is a Rayleigh-type equation.

We also need to solve the following non-homogeneous Rayleigh type equation
\begin{align}\label{sys:citical}
\mathcal{L}_{cr}[\varphi]=\partial_Y\left(\frac{f}{1-\bar M^2}\right),\quad\varphi(\bar Y_1^*)=\varphi(\bar Y_2^*)=0,\quad\varphi_{non,f}=\partial_YP_{non,f}/\bar{M},
\end{align}
and write $\varphi=\mathcal{L}_{cr}^{-1}(f)$ for convenience.

\subsection{Inner-outer gluing iteration scheme} \label{sec:IN}
Based on the above inner-outer approximation, we shall construct a solution $P(Y)$ to 
\eqref{eq:ray-F-nbv1} via an inner-outer gluing procedure. We denote
\begin{align*}
\mathcal{G}_1(P):\eqdef-\int_Y^{+\infty}\chi_1(Z)\partial_ZP(Z)dZ.
\end{align*}
 Since the way passing the information in the critical layer to the supersonic regime is crucial, we recover the corresponding pressure in two different ways
\begin{align*}
&P^{nloc}(Y):=-\int_Y^{+\infty}\chi_2\varphi(Z)dZ:\eqdef\mathcal{G}_{2,nloc}\circ\varphi, \\
&P^{loc}(Y):=-\chi_3\int_Y^{+\infty}\chi_2\varphi(Z)dZ:\eqdef\mathcal{G}_{2,loc}\circ\varphi,
\end{align*}
where $\chi_2$ and $\chi_3$ are smooth cut-off functions such that $\mathrm{supp}(1-\chi_1)\subset\mathrm{supp}(\chi_2)\subset[\bar Y_1^*,\bar Y_2^*]\subset\mathrm{supp}(\chi_3)\subset[Y_0+\delta,+\infty)$. Then $\mathrm{supp}(P^{nloc})\cap[0,Y_0]\neq\emptyset$ and $\mathrm{supp}(P^{loc})\cap[0,Y_0]=\emptyset$. 

The non-local pressure $P^{nloc}$ is crucial to construct the leading order term of the pressure, and in the subsequent iteration, the local pressure $P^{loc}$ is crucial to ensure that the pressure related to the critical layer does not affect the leading order term.

\subsubsection{Construction of leading order term}
According to the argument in the previous part, we know that 
\begin{align*}
\mathcal L[A](Y)=(Q_1(Y)+Q_2(Y))A(Y),\quad A(\infty)=0,
\end{align*} 
where
\begin{align*}
Q_1(Y)\sim 1\text{ for $Y$ away from $Y_c$ and }Q_1(Y)\sim c_i^{-2}\text{ for $Y$ near $Y_c$}. 
\end{align*}
Hence, $A(Y)$ is the approximation of $P(Y)$ in the regime away from the critical layer. However, $A(Y)$ can not be treated as the leading order term. 

Now we show main steps constructing the leading order term $P^{(0)}(Y)=P_{0,0}(Y)+P_{0,1}(Y)+P_{0,2}(Y)$.
\begin{enumerate}
	\item We first define
	\begin{align*}
	P_{0,0}(Y):=-\int_Y^{+\infty}\chi_1(Z)\partial_ZA(Z)dZ:\eqdef\mathcal{G}_{1}(A).
	\end{align*}
	Then we have
	\begin{align*}
	\mathcal L [P_{0,0}]=Err_{0,0}^{cr}+Err^{ncr}_{0,0}.
	\end{align*}
	
	\item We next define 
	\begin{align}\label{def: P_0,1}
	P_{0,1}(Y):=\mathcal{G}_{2,nloc}\circ\mathcal{L}^{-1}_{cr}(-Err_{0,0}^{cr}).
	\end{align}
	Notice that $\mathrm{supp}(P_{0,1})\cap[0,Y_0]\neq\emptyset$, which means that the information in the critical layer is conveyed by non-local pressure $P_{0,1}$ to the supersonic regime. Then we have
	\begin{align*}
	\mathcal{L}[P_{0,1}]=-Err_{0,0}^{cr} + Err_{0,1}^{cr} +Err_{0,1}^{ncr}+Err_{0,1}^{sup},
	\end{align*}
	where $Err_{0,1}^{sup}$ supported in the supersonic regime, carries the key information (containing the term like $(U_B-c)\log(U_B-c)$) from the critical layer. The wave generated by $Err_{0,1}^{sup}$ can lead to  the instability.
	
	\item We finally construct $P_{0,2}(Y)$, which is used to correct the errors from the previous steps
	\begin{align*}
	P_{0,2}(Y):=\mathcal{G}_{1}\circ\underbrace{\mathcal L_{app}^{-1}(-Err_{0,0}^{ncr}-Err_{0,1}^{ncr})}_{\tilde{P}_{0,2}^{(1)}}+\bar\chi_1  \tilde P_{0,2}^{(2)}(Y),
	\end{align*}
	where $\bar\chi_1$ is a cut-off function supported in the supersonic regime 
	and $\tilde P_{0,2}^{(2)}$ satisfies 
	\begin{align}\label{eq: tilde P_(0,2)^(2)1}
	\mathcal L[\tilde P_{0,2}^{(2)}]=-Err_{0,1}^{sup}+(Q_1+Q_2)\tilde P_{0,2}^{(2)},
	\end{align}
	which is given by 
	\begin{align}\label{formula: tilde P_(0,2)^(2)1}
	\begin{split}
	\tilde P_{0,2}^{(2)}(Y)=&-\kappa^{-1}\pi A(Y)\int_Y^{+\infty} B(Z)\bar M^{-2}Err_{0,1}^{sup}dZ-\kappa^{-1}\pi B(Y)\int_0^{Y}A(Z)\bar M^{-2}Err_{0,1}^{sup}dZ.
	\end{split}
	\end{align}
	In fact, $\tilde P_{0,2}^{(2)}(Y)$ is a solution to $\mathcal{L}_{app}^{-1}[Err_{0,1}^{sup}]$ with $\tilde P_{0,2}^{(2)}(Y)\sim e^{\al Y}\to\infty$ as $Y\to\infty$. We find that
	\begin{align*}
	\mathcal{L}[P_{0,2}]=-Err_{0,0}^{ncr}-Err_{0,1}^{ncr}-Err_{0,1}^{sup}+Err_{0,2}^{cr}+E_{0,2}^{ncr}.
	\end{align*}
	Then we have $P^{(0)}=P_{0,0}(Y)+P_{0,1}(Y)+P_{0,2}(Y)$ with
	\begin{align*}
	\mathcal L[P^{(0)}]=Err_{cr}^{(0)}+Err_{ncr}^{(0)},
	\end{align*} 
	where 
	\begin{align*}
	Err_{cr}^{(0)}=Err_{0,1}^{cr}+Err_{0,2}^{cr}\text{ and }Err_{ncr}^{(0)}=Err_{0,2}^{ncr}.
	\end{align*}
\end{enumerate} 

\begin{figure}[h]\label{fi:1}
	\begin{tikzpicture} 
	\matrix(tree)[%
	matrix  of nodes, 
	minimum size=0.8cm, 
	column sep=2.5cm, 
	row sep=0.8cm, 
	]
	{
		$P_{0,0}$ & $P_{0,1}$ &&$P_{0,2}$\\ 
		& & $\tilde P_{0,2}^{(2)}$& \\
		& &$\tilde P_{0,2}^{(1)}$&\\
	}; 
	\draw [->](tree-1-1)-- (tree-1-2)node [midway,above]{$Err_{0,0}^{cr}$}; 
	\draw [->] (tree-1-1)--(tree-3-3)node [midway,below]{$Err_{0,0}^{ncr}$}; 
	\draw [->](tree-1-2) -- (tree-2-3)node [midway,above]{$Err_{0,1}^{sup}$}; 
	\draw [->] (tree-1-2)-- (tree-3-3)node [midway,right]{$Err_{0,1}^{ncr}$};   
	\draw[->](tree-2-3)-- (tree-1-4)node [midway,right]{};
	\draw[->](tree-3-3)-- (tree-1-4)node [midway,right]{};
	\end{tikzpicture}
	\caption{Construction of leading order term}
\end{figure}
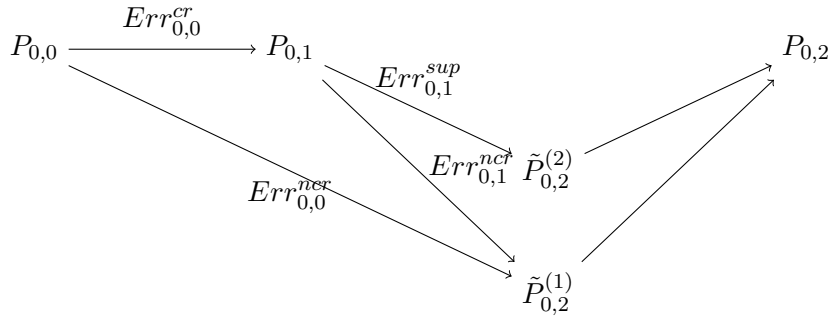

\subsubsection{Iteration scheme}
We show the iteration scheme of the construction. Assume that for $j\geq 0$, the error of $P^{(j)}(Y)$ can be written as 
\begin{align*}
\mathcal{L}[P^{(j)}](Y)=Err_{cr}^{(j)}+Err_{ncr}^{(j)}:=Err^{(j)}.
\end{align*}
We define 
\begin{align}\label{def: P_j+1,1}
P_{j+1,1}(Y)=\mathcal{G}_{2,loc}\circ\mathcal L_{cr}^{-1}(-Err_{cr}^{(j)}).
\end{align}
Then we have 
\begin{align*}
\mathcal L[P_{j+1,1}]
=&-Err_{cr}^{(j)}+Err_{j+1,1}^{cr}+Err_{j+1,1}^{ncr},
\end{align*}
Next we define 
\begin{align*}
P_{j+1,2}(Y)=\mathcal{G}_1\circ\mathcal L_{app}^{-1}(-Err_{ncr}^{(j)}-Err_{j+1,1}^{ncr}).
\end{align*}
Therefore, $P^{(j+1)}=P_{j+1,1}+P_{j+1,2}$ satisfies 
\begin{align*}
\mathcal L[P^{(j+1)}]=&-Err_{cr}^{(j)}-Err_{ncr}^{(j)}+\underbrace{Err_{cr}^{(j+1)}+Err_{ncr}^{(j+1)}}_{:=Err^{(j+1)}}\\
=&-Err^{(j)}+Err^{(j+1)},
\end{align*}
where 
\begin{align*}
Err_{cr}^{(j+1)}=Err_{j+1,1}^{cr}+Err_{j+1,2}^{cr}\text{ and }Err_{ncr}^{(j+1)}=Err_{j+1,2}^{ncr}.
\end{align*}
Thus, $\mathcal L[\sum_{j=0}^N P^{(j)}]=Err^{(N)}(Y)$ and $P=\sum_{j=0}^{\infty} P^{(j)}$ solves \eqref{eq:ray-F-nbv1} formally. 

\begin{remark}\label{rem: comparison iterations}

The inner-outer gluing iteration scheme developed in this paper has some similarities with the Rayleigh-Airy iteration developed by Grenier-Guo-Nyugen \cite{GGN-DMJ}. Both iteration methods  are based on the fact that the original equations can be approximated by the Rayleigh equation and Airy equation under different scales. However, the principles of these two methods are fundamentally different. 

The Rayleigh-Airy iteration solves the Orr-Sommerfeld (OS) equation, utilizing the Rayleigh equation and the Airy equation at low and high frequencies, respectively. Specifically, at low frequencies, the viscous term acts as a small perturbation, allowing the OS equation to be approximated by the Rayleigh equation. Conversely, at high frequencies, the nonlocal term becomes a small perturbation, leading to the approximation of the OS equation by the Airy equation. This method constitutes a gluing iterative approach from the standpoint of frequency space.

In contrast, the inner-outer gluing iteration scheme solves the system  \eqref{eq:ray-F-nbv1} from the standpoint of physical space. 
More precisely, in the region around the critical layer, \eqref{eq:ray-F-nbv1} consistently exhibits elliptic-type behavior. Thus, within this interval, we employ the Rayleigh-type equation to approximate the solution of \eqref{eq:ray-F-nbv1}. Conversely, in the region away from the critical layer, \eqref{eq:ray-F-nbv1} undergoes a change in type, prompting us to approximate them with the Airy-type equation. The Airy-type equation also effectively captures the decaying behavior at infinity. Subsequently, cut-off functions and a gluing method in physical space are introduced to construct the solution on the whole physical space (i.e., $Y\in[0,+\infty)$).

\end{remark}

\subsection{The dispersion relation}
After we construct the solution $P$ to \eqref{eq:ray-F-nbv1}, we need to match the boundary value and derive the dispersion relation.  The asymptotic formula of $\partial_Y P(0)$  takes as follows
\begin{align*}
\partial_Y P(0)={\partial_YA(0)+C_0\alpha^{-1}\partial_Y B(0)}+\mathrm iC_1\alpha c_i\partial_YB(0)+C_2\mathcal K_{cr}\partial_YA(0)\partial_YB(0)=H(\al,c).
\end{align*}
Then the dispersion relation is equivalent to $H(\al,c)=0$. Taking the real part will give
\begin{align}\label{eq:dis-cr}
c_r\sim\frac{1}{D_0\alpha\tan(\theta_0\alpha-\pi/4)+D_1},
\end{align}
where $D_0,\theta_0\neq 0$ are independent of $\alpha$. Taking the imaginary part will give
\begin{align}\label{eq:dis-ci}
c_i\sim\Im(\mathcal K_{cr})\alpha^{-\f76}c_r\cos(\theta_0\alpha-\pi/4).
\end{align}
For a fixed $c_r$, by \eqref{eq:dis-cr}, there is a sequence $\{\al_k\}_{k=1}^{+\infty}$ such that 
\begin{align*}
\al_k\tan(\theta_0\alpha_k-\pi/4)=\mbox{nonzero constant}.
\end{align*}
By using  \eqref{eq:dis-ci} and the different period of functions $\cos (\theta_0\alpha-\pi/4)$ and $\tan (\theta_0\alpha-\pi/4)$, we deduce
\begin{align*}
\mathrm{Sign}\left( \frac{c_i(\alpha_k)}{c_i(\alpha_{k+1})}\right)=\mathrm{Sign}\left( \frac{\cos(\theta_0\alpha_{k}-\pi/4)}{\cos(\theta_0\alpha_{k+1}-\pi/4)}\right)=-1.
\end{align*}
Thus, we can find a subsequence $\{\al_{n_k}\}_{k=1}^{+\infty}\subset\{\al_k\}_{k=1}^{+\infty}$  such that
\begin{align}\label{key: sign}
\mathrm{Sign}(\cos(\theta_0\alpha_{n_k}-\pi/4))=\mathrm{Sign}(\Im(\mathcal K_{cr})),
\end{align}
which implies
\begin{align*}
0<c_{i,k}=c_i(\al_{n_k})\sim \al^{-\f76}\Im(\mathcal K_{cr}).
\end{align*}

\subsection{Organization}
The paper is organized as follows. In Section 3, we introduce Langer transform and solve the mixed-type outer system \eqref{sys:mix-type}. In Section 4, we solve the inner singular system \eqref{sys:citical}. In Section 5, we construct a solution of system \eqref{eq:ray-F-nbv1} via inner-outer gluing iteration scheme. In Section 6, we give the expansion of boundary values of the solution constructed in Section 5. In Section 7, we deduce the dispersion relation and construct multiple unstable modes. Finally, we give the proof of  Theorem \ref{thm: main-1}-\ref{thm: main-3}.

\section{The hyperbolic-elliptic mixed type system}

This section  is devoted to solving the approximate hyperbolic-elliptic mixed-type
system in the regime away from the critical layer, which takes as follows
\begin{align}\label{sys:app-f-1}
	\left\{
	\begin{aligned}
		&\mathcal L_{app}[P]= \partial_Y^2 P-\frac{2\partial_Y \bar{M}}{\bar{M}}\partial_Y P-\alpha^2 \tilde F(Y) P-Q_1(Y) P=f,\\
		&P(\infty)=0.
	\end{aligned}
	\right.
\end{align}
 Here $\tilde F(Y)$ is a small modification of $F_r(Y)$ given by
\begin{align}
 \tilde F(Y)=&F_r(Y)+(1-\chi_0(Y))F_i(Y),\label{def: tF}
\end{align}
where  $\chi_0$ is a smooth cut-off function satisfying $\chi_0=1$ and $\chi_0=0$ for $Y\in[0,\bar{Y}_2^*+\delta_0]$ and $Y\in[\bar{Y}_2^*+2\delta_0,+\infty)$ respectively. We point out that on both sides of $Y_0$, the function $\tilde F(Y)$ changes the sign, so \eqref{sys:app-f-1} is a mixed-type equation. In addition, $Q_1(Y)$ is given by
\begin{align}\label{def: Q_1}
	Q_1(Y)=\f{\pa_Y^2 \bM}{\bM}-\f34\f{(\pa_Y^2\eta)^2}{(\pa_Y\eta)^2}-\f{\pa_Y^3 \eta}{2\pa_Y\eta}-\f{2(\pa_Y\bM)^2}{\bM^2}.
\end{align}
Moreover, $\eta(Y)$ is  the solution to 
\begin{align}\label{eq: eta}
	\partial_Y \tilde F_r(Y_0)\eta(Y)(\partial_Y\eta(Y))^2=\tilde F_r(Y)\quad\mbox{ with}\quad \eta(Y_0)=0,
\end{align}
where $Y_0$ is the turning point satisfying $F_r(Y_0)=0$ with $Y_0\sim1$. It is easy to see $\partial_Y \tF(Y_0)=\partial_Y F_r(Y_0)$. By solving  the equation \eqref{eq: eta}, we obtain
\begin{align}\label{def: Langer}
\eta(Y):=\left\{
\begin{aligned}
&\Big(\f32\int_{Y_0}^Y\Big(\f{\tilde F(Z)}{\partial_Y \tF(Y_0)}\Big)^\f12dZ\Big)^\f23,\quad Y\geq Y_0,\\
&-\Big(\f32\int_{Y_0}^Y\Big(\f{-\tilde F(Z)}{\partial_Y \tF(Y_0)}\Big)^\f12dZ\Big)^\f23,\quad Y\leq Y_0.
\end{aligned}
\right.
\end{align}
Let's mention that by the definition of $\tF(Y)$, we know that 
\begin{align}\label{est: Im tF}
\begin{split}
&\mathrm{Im}(\tF(Y))=0,\quad \tF(Y)=F_r(Y),\quad  Y\leq \bar{Y}_2^*+\delta_0,\\
&|\tF(Y)-F_r(Y)|\leq C c_i,\quad Y> \bar{Y}_2^*+\delta_0.
\end{split}
\end{align}
We infer from \eqref{def: Langer}-\eqref{est: Im tF} that
\begin{align}\label{eq:eta-i}
\Im \eta(Y)=0,\quad\forall Y\leq \bar{Y}_2^*+\delta_0\quad \text{ and }\quad |\pa_Y^k\eta(Y)|\leq C|Y-Y_0|^{\f23-k},\quad k\geq 0, \quad\forall Y> \bar{Y}_2^*+\delta_0.
\end{align}

In this section, we always assume that $(\alpha,c)\in\mathbb H_0$, where
 {\small
 \begin{align}\label{def: H_0}
	\mathbb H_0:=\{(\al,c)\in\mathbb R_+ \times\mathbb C: c_r\in(1-M_a^{-1},1),~ Y_c-Y_0\gtrsim 1, 0<c_i\ll \al^{-n}, \forall n>0\text{ and }\alpha\geq \alpha_0\gg 1\}.
\end{align}
}
As in Section 2.2.1, we introduce
\begin{align}\label{def: (A,B)}
A(Y)= E(Y)Ai(\kappa\eta(Y)),\quad B(Y)=E(Y)Bi(\kappa\eta(Y)),\quad E(Y)=\bM(Y)/(\pa_Y\eta(Y))^\f12,
\end{align}
where 
\begin{align}\label{relation kappa and alpha}
\kappa^3=\alpha^2\partial_Y\tF(Y_0)=\alpha^2\partial_YF_r(Y_0)\quad \mbox{ is a real number with relation}\quad \al\sim \kappa^{\f32}.
\end{align} 
Here $Ai(z)$ and $Bi(z)$ are two Airy functions satisfying the equation $f''-zf=0$, with asymptotic expansions presented in Lemma \ref{lem: Airy-original}. It is easy to check that $\mathcal L_{app}[A](Y)=\mathcal L_{app}[B](Y)=0$ and the Wronskians
\begin{align*}
\partial_Y A(Y) B(Y)-A(Y)\partial_Y B(Y)
	=&-\kappa\pi^{-1}\partial_Y\eta E(Y)^2
	=-\kappa\pi^{-1}\bar{M}^2.
\end{align*} 

For any given suitable source term $f(Y)$ with $f(Y)\to0$ as $Y\to\infty$,
\begin{align}\label{eq:app-f-green-1}
		P(Y)=&\mathcal{A}_f(Y)A(Y)+\mathcal{B}_f(Y)B(Y)
\end{align}
 is a solution to the system \eqref{sys:app-f-1}, where 
 \begin{align}\label{def: cA_f, cB_f}
&\mathcal{A}_f(Y)=-\kappa^{-1}\pi \int_0^YB(Z)f(Z)\bar{M}^{-2}(Z)dZ,\quad \mathcal{B}_f(Y)=-\kappa^{-1}\pi \int_Y^{+\infty}A(Z)f(Z)\bar{M}^{-2}(Z)dZ.
\end{align}

We introduce a real function
\begin{align}\label{def: w_0}
w_0(Y)=\mathbf{ 1}_{\{Y\geq Y_{0}\}}(Y)\int_{Y_0}^Y\Re(\tF^\f12)(Z)dZ,
%\quad w_c(Y)=1_{\{Y\geq \bar Y_1^*\}}(Y)\int_{Y_c}^YF_r^\f12(Z)dZ,
\end{align}
and the weighted norms
\begin{align}\label{def: norm w_0}
&\|f\|_{L^\infty_{w_0}}=\|e^{\al w_0} f\|_{L^\infty([0,\infty))},\quad \|f\|_{L^\infty_{w_0}([a,b])}=\|e^{\al w_0} f\|_{L^\infty([a,b])}.
%\quad \|f\|_{L^\infty_{w_c}}=\|e^{\al |w_c|} f\|_{L^\infty([\bar Y_1^*, \bar Y_2^*])}.
\end{align}
We see from \eqref{def: w_0} that
\begin{align*}
%&|\al w_0(Y)|\leq C\quad \mbox{for}\quad Y\in[Y_0, Y^{+}],\quad 
w_0(Y)=\int_{Y_0}^YF_r^\f12(Z)dZ \quad \mbox{for}\quad  Y\in[Y_0, \bar Y_2^*+\d_0].
\end{align*}

 We will construct a solution to \eqref{sys:app-f-1} via \eqref{def: cA_f, cB_f} for a source term
 \begin{align}\label{def: general source term}
f(Y)=a(Y)A(Y)+b(Y)B(Y) \quad\mbox{with}\quad \|(1+Y)^2a\|_{L^\infty}+\|(1+Y)^2b\|_{L^\infty_{2w_0}}\leq C,
 \end{align}
  and  the following four kinds of special source terms:
\begin{align}\label{def: special source terms}
\begin{split}
		&(1-\chi)\int_Y^{+\infty}g(Z)dZ,\quad f,\quad (1-\chi)\int_Y^{\infty}f(Z)dZ,\quad (1-\chi)h\int_Y^{\infty}f(Z)dZ.
		\end{split}
	\end{align}
Five kinds of source terms involve different information, which is essential to guarantee convergence in the iteration scheme and capture the information of the imaginary part in the dispersion relation. Here $\chi$ is a smooth cut-off function such that $\chi=1$ for $Y\in[Y_1-\d_0, Y_2+\d_0]$ and $\chi=0$ for $Y\in[0,Y_1-2\d_0]\cup[Y_2+2\d_0,+\infty)$, and the supports  of functions $f, g, h$ are given by
	\begin{align*}
		&\mathrm{supp}(g)\subseteq[Y_1, Y_1+\d_0]\cup[Y_2-\d_0, Y_2],\quad \mathrm{supp}(f)\subseteq[Y_1^*-\d_0, Y_1^*]\cup [Y_2^*, Y_2^*+\d_0],\\
		& \mathrm{supp}(h)\subseteq [\bar Y_1^{**},+\infty),\quad \|g\|_{L^\infty_{w_0}}+\| f\|_{L^\infty_{w_0}}+\|h\|_{L^\infty}\leq C.
	\end{align*}
Moreover, the relations for points are given by	
\begin{align*}
Y_0<\bar Y_1^{**}<Y_1^*-\d_0<Y_1^*<Y_1-2\d_0<Y_1+2\d_0<Y_c<Y_2-2\d_0<Y_2+2\d_0<Y_2^*<Y_2^*+\d_0<\bar Y_2^*,
\end{align*}
and $Y_c$ is the critical point satisfying $U_B(Y_c)=c_r$.

The following decomposition is used frequently in this section. For a complex-valued function $f$ in the formula \eqref{def: cA_f, cB_f}, we divide $f=\Re f+i\Im f$, and define $\mathcal{A}_{\Re f}$, $\mathcal{A}_{\Im f}$, $\mathcal{B}_{\Re f}$, $\mathcal{B}_{\Im f}$, which are four complex-valued functions, as
\begin{align}\label{relation: (cA, cB)}
\mathcal{A}_f=\mathcal{A}_{\Re f}+i\mathcal{A}_{\Im f},\quad \mathcal{B}_f=\mathcal{B}_{\Re f}+i \mathcal{B}_{\Im f}.
\end{align}
Thus we have
\begin{align}\label{relation: (cA^1, cB^1)-(cA^2, cB^2)}
\begin{split}
&|\mathcal{A}_f|\leq |\mathcal{A}_{\Re f}|+|\mathcal{A}_{\Im f}|,\quad |e^{2\al w_0}\mathcal{B}_f|\leq |e^{2\al w_0}\mathcal{B}_{\Re f}|+|e^{2\al w_0}\mathcal{B}_{\Im f}|,\\
&\Im \mathcal{A}_f=\Im \mathcal{A}_{\Re f}-\Re \mathcal{A}_{\Im f},\quad \Im \mathcal{B}_f=\Im \mathcal{B}_{\Re f}-\Re \mathcal{B}_{\Im f}.
\end{split}
\end{align}
As a result, to get the estimates of $\cA_f$ and $\cB_f$ with a complex-valued function $f$, we first assume $f$ is a real function and use the decomposition \eqref{relation: (cA, cB)} and \eqref{relation: (cA^1, cB^1)-(cA^2, cB^2)} to get the results for general $f$.

\subsection{Langer transformation and Airy function}

We first show the estimates of the Langer transformation $\eta(Y)$.
\begin{lemma}\label{lem:est-eta}
	 There exists a small positive constant $L\in(0,1)$ such that  $\eta(Y)\in C^{\infty}(\mathbb{R}_{+})$ satisfies the following properties: for any $Y\geq 0$, we have
\begin{align*}
|\pa_Y^k\eta(Y)|\leq C(1+|Y-Y_0|)^{\f23-k},\quad k=0,1,2,3.
\end{align*}
Moreover, for any $|Y-Y_0|\leq L$, we have
\begin{align*}
		|\eta(Y)-(Y-Y_0)|\leq C|Y-Y_0|^2,\quad |\partial_Y \eta(Y)-1|\leq C|Y-Y_0|,
		\end{align*}
and for any $|Y-Y_0|\geq L$, we have
\begin{align*}
|\pa_Y^k\eta(Y)|\geq C^{-1}(1+|Y-Y_0|)^{\f23-k},\quad k=0,1.
\end{align*}
\end{lemma}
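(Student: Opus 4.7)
The plan is to read off all the estimates from the explicit Langer formula \eqref{def: Langer} together with the single observation that $\tilde F$ has a simple zero at $Y_0$ with $\partial_Y \tilde F(Y_0) = \partial_Y F_r(Y_0) > 0$. By Hadamard's lemma write $\tilde F(Y) = \partial_Y F_r(Y_0)(Y-Y_0)\phi(Y)$ with $\phi$ smooth and $\phi(Y_0) = 1$; near $Y_0$ the function $\phi$ is positive and real-valued, so $\phi^{1/2}$ admits a smooth Taylor expansion $\phi(Y_0 + s)^{1/2} = 1 + a_1 s + a_2 s^2 + \cdots$. For $Y > Y_0$ close to $Y_0$, substituting and integrating term by term yields
$$\frac{3}{2}\int_{Y_0}^Y \left(\frac{\tilde F(Z)}{\partial_Y \tilde F(Y_0)}\right)^{1/2} dZ = (Y-Y_0)^{3/2}\bigl(1 + h(Y-Y_0)\bigr),$$
with $h$ smooth and $h(0) = 0$. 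Raising to the $2/3$ power (the base is positive) gives $\eta(Y) = (Y-Y_0)\bigl(1 + H(Y-Y_0)\bigr)$ for some smooth $H$ with $H(0) = 0$; the analogous computation for $Y < Y_0$ is identical modulo sign conventions. This immediately produces $|\eta(Y) - (Y-Y_0)| \leq C|Y-Y_0|^2$ and $|\partial_Y \eta(Y) - 1| \leq C|Y-Y_0|$ on $|Y-Y_0|\leq L$, and also shows $\eta$ is $C^\infty$ across $Y_0$.

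On the complement $|Y-Y_0|\geq L$ one first checks that $\tilde F(Y)$ is bounded and bounded away from zero: on the compact set $[0, Y_0 - L]$ and the bounded set $[Y_0 + L, \bar Y_2^* + \delta_0]$ this is clear from $\tilde F = F_r$ having a simple zero only at $Y_0$; at infinity the exponential decay of $U_B(Y)-1$ from \eqref{assume: U_B} together with $c_r \in (1 - M_a^{-1}, 1)$ yields $\Re \tilde F(\infty) = 1 - M_a^2(1-c_r)^2 \in (0,1)$, while the imaginary piece $(1 - \chi_0) F_i$ is of order $c_i$ and hence a harmless perturbation that keeps $\tilde F$ inside a region where the principal square root is smooth. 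Consequently the integrand in \eqref{def: Langer} is of order $1$ as $Z \to \infty$, so for $Y \geq Y_0 + L$,
$$\frac{3}{2}\int_{Y_0}^Y \left(\tilde F/\partial_Y \tilde F(Y_0)\right)^{1/2} dZ \sim C(Y-Y_0),$$
with constants bounded both above and below, which yields $|\eta(Y)| \sim (1+|Y-Y_0|)^{2/3}$ on the whole range $|Y-Y_0|\geq L$, proving both the upper bound and the lower bound for $k=0$.

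The derivative estimates follow by differentiating the defining identity \eqref{eq: eta}; in particular, for $Y \neq Y_0$ one has the clean formula $\partial_Y \eta = (\tilde F/\partial_Y \tilde F(Y_0))^{1/2}/\eta^{1/2}$, whence $|\partial_Y \eta| \sim (1+|Y-Y_0|)^{-1/3}$ on $|Y-Y_0|\geq L$, giving the $k=1$ lower bound. Differentiating once more and using $|\partial_Y \tilde F(Y)| \lesssim e^{-\eta_0 Y}$ for large $Y$ (again from \eqref{assume: U_B}), the dominant contribution to $\partial_Y^2 \eta$ is $-\tfrac{1}{2}\eta^{-3/2}\partial_Y \eta \cdot (\tilde F/\partial_Y \tilde F(Y_0))^{1/2}$, which is of order $(1+|Y-Y_0|)^{-4/3}$; a further differentiation gives the analogous $(1+|Y-Y_0|)^{-7/3}$ bound for $\partial_Y^3 \eta$. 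The matching near-$Y_0$ derivative bounds $|\partial_Y^k \eta| \lesssim 1$ for $k=1,2,3$ come directly from the local expansion $\eta = (Y-Y_0)(1 + H(Y-Y_0))$; since the two regimes overlap on $|Y-Y_0|\sim L$ where both estimates are of order $1$, one gets the global $(1+|Y-Y_0|)^{2/3-k}$ bound. The only (minor) obstacle is ensuring the square root in the Langer formula stays on a single smooth branch in the thin sliver where $\tilde F$ acquires an $O(c_i)$ imaginary part, but the lower bound $|\tilde F|\gtrsim 1$ there makes this automatic.
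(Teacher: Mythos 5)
Your argument is correct and follows exactly the route the paper indicates for this lemma: the explicit Langer representation \eqref{def: Langer} together with the defining identity \eqref{eq: eta} (local Taylor/Hadamard factorization of $\tilde F$ near the simple zero $Y_0$, plus boundedness of $\tilde F$ away from $Y_0$ and the exponential decay of $\partial_Y U_B$ at infinity), which is the content of the omitted proof deferred to Lemma A.6 of \cite{MWWZ}. No essential difference in approach, and the minor points you flag (branch of the square root where $\tilde F$ picks up an $O(c_i)$ imaginary part, sign conventions for $Y<Y_0$) are handled as in the cited reference.
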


\begin{proof}
The proof is very similar to the process in Lemma A.6 in our previous paper \cite{MWWZ}, by using \eqref{def: Langer}-\eqref{eq:eta-i}, and we omit details here.  
\end{proof}

With the estimates of $\eta(Y)$ at hand, we present the estimate for $Q_1(Y)$ in the regime away from the critical layer.

\begin{lemma}\label{lem: Q_1}
In the regime away from the critical layer, we have 
\begin{align}
&|Q_1(Y)|+c_i^{-1}|\Im Q_1(Y)|\leq C(|\pa_Y^2 U_B|+|\pa_Y U_B|^2+(1+|Y-Y_0|)^{-2}).\label{est: Q_1}
%&|Q_2(Y)|\leq C\al^2 c_i,\quad \Re Q_2=0.\label{est: Q_2}
\end{align}

\end{lemma}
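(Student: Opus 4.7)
The plan is to decompose $Q_1=Q_1^{\bar M}+Q_1^\eta$, where
\begin{align*}
Q_1^{\bar M}(Y):=\frac{\partial_Y^2\bar M}{\bar M}-\frac{2(\partial_Y\bar M)^2}{\bar M^2},\qquad
Q_1^\eta(Y):=-\tfrac{3}{4}\frac{(\partial_Y^2\eta)^2}{(\partial_Y\eta)^2}-\frac{\partial_Y^3\eta}{2\partial_Y\eta},
\end{align*}
and to handle the two pieces by entirely separate mechanisms. The $\eta$-piece is purely geometric and should yield the decay $(1+|Y-Y_0|)^{-2}$, while the $\bar M$-piece should carry the profile-dependent $|\partial_Y U_B|^2+|\partial_Y^2 U_B|$ contribution; the imaginary-part bound in each case will then be obtained by exploiting that both pieces are real when $c_i=0$.

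For $Q_1^\eta$ I would apply Lemma \ref{lem:est-eta} directly: the upper bounds $|\partial_Y^k\eta|\lesssim(1+|Y-Y_0|)^{2/3-k}$, combined with the lower bound $|\partial_Y\eta|\gtrsim(1+|Y-Y_0|)^{-1/3}$ on $|Y-Y_0|\ge L$ and with $|\partial_Y\eta|\sim 1$ on $|Y-Y_0|\le L$, immediately give $|(\partial_Y^2\eta/\partial_Y\eta)^2|+|\partial_Y^3\eta/\partial_Y\eta|\lesssim(1+|Y-Y_0|)^{-2}$. For $Q_1^{\bar M}$ I would use that away from the critical layer $|\bar M|$ is bounded below by a positive constant (since $|U_B-c_r|$ stays positive and $T_0$ is smooth and comparable to $1$); from $\bar M=M_a(U_B-c)/\sqrt{T_0}$ together with $\partial_Y T_0=-(\gamma-1)M_a^2 U_B\partial_Y U_B$, two differentiations give $|\partial_Y\bar M|\lesssim|\partial_Y U_B|$ and $|\partial_Y^2\bar M|\lesssim|\partial_Y^2 U_B|+|\partial_Y U_B|^2$, whence $|Q_1^{\bar M}|\lesssim|\partial_Y^2 U_B|+|\partial_Y U_B|^2$.

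For the imaginary parts the key observation is that $Q_1$ is real at $c_i=0$ (all inputs $U_B$, $T_0$, $F_r$, $\eta$ are then real), so $\Im Q_1$ is first order in $c_i$. For the $\bar M$-piece this is transparent: $\Im\bar M=-M_ac_i/\sqrt{T_0}=O(c_i)$ and analogously for its derivatives, so expanding the two ratios around the real part while using $|\bar M|\gtrsim 1$ yields $|\Im Q_1^{\bar M}|\lesssim c_i(|\partial_Y^2 U_B|+|\partial_Y U_B|^2)$. For the $\eta$-piece, \eqref{eq:eta-i} says $\eta$ is real on $Y\le\bar Y_2^*+\delta_0$, so $\Im Q_1^\eta\equiv 0$ there; on the tail $Y>\bar Y_2^*+\delta_0$ the imaginary perturbation of $\tilde F$ is $O(c_i)$ by \eqref{est: Im tF}, and I would perturb the explicit Langer formula \eqref{def: Langer} to first order in $\Im\tilde F$, producing $|\Im\partial_Y^k\eta|\lesssim c_i(1+|Y-Y_0|)^{2/3-k}$ for $k=1,2,3$, which after insertion into the ratios gives $|\Im Q_1^\eta|\lesssim c_i(1+|Y-Y_0|)^{-2}$.

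The main technical obstacle is this last perturbative estimate for $\Im\partial_Y^k\eta$ in the tail $Y>\bar Y_2^*+\delta_0$: the Langer map \eqref{def: Langer} is an explicit but nonlinear function of $\tilde F$, and one must track carefully how the $O(c_i)$ imaginary shift of $\tilde F$ propagates through the square-root, the integration from $Y_0$, and the final $2/3$-power, while still producing the sharp $(1+|Y-Y_0|)^{2/3-k}$ weights inherited from the real Langer asymptotics. Once this is done, everything else reduces to Leibniz-rule bookkeeping under the uniform lower bound on $|\bar M|$ and the bounds of Lemma \ref{lem:est-eta}.
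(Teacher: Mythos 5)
Your proposal is correct and follows essentially the same route as the paper: split $Q_1$ into the $\bar M$-ratios, controlled by the lower bound $|\bar M|\gtrsim 1$ away from the critical layer and direct differentiation of $\bar M=M_a(U_B-c)/\sqrt{T_0}$, and the $\eta$-ratios, controlled by Lemma \ref{lem:est-eta}, with the imaginary parts gaining a factor $c_i$ because everything is real at $c_i=0$ (using \eqref{eq:eta-i} and \eqref{est: Im tF} for the Langer variable). The only difference is that you spell out the first-order perturbation of the Langer formula in $\Im\tilde F$ on the tail $Y>\bar Y_2^*+\delta_0$, a step the paper leaves implicit in its citation of Lemma \ref{lem:est-eta}; this is a harmless elaboration, not a different argument.
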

\begin{proof}
%By the definition of $Q_1$,  we have
%\begin{align*}
%Q_1(Y)\leq\left|\f{\pa_Y^2 \bM}{\bM}\right|+\left|\f34\f{(\pa_Y^2\eta)^2}{(\pa_Y\eta)^2}\right|+\left|\f{\pa_Y^3 \eta}{2\pa_Y\eta}\right|+\left|\f{2(\pa_Y\bM)^2}{\bM^2}\right|.
%	\end{align*}
%According to the definition of relative Mach number \eqref{def:r-M}, we have
%\begin{align*}
%&\pa_Y\bM=M_a(T_0^{-\f12}\pa_Y U_B-\f12T_0^{-\f32}\pa_Y T_0(U_B-c)),\quad \pa_YT_0=-(\gamma-1)M_a^2U_B\pa_Y U_B,\\
%&\pa_Y^2\bM=M_a(T_0^{-\f12}\pa_Y^2 U_B-T_0^{-\f32}\pa_Y T_0\pa_YU_B+\pa_Y^2(T_0^{-\f12})(U_B-c)),\\
%&\pa_Y^2 T_0=-(\gamma-1)M_a^2((\pa_Y U_B)^2+U_B\pa_Y^2 U_B).
%\end{align*}
In the regime away from the critical layer, we have $C^{-1}\leq |\bM(Y)|\leq C$, which uses a direct calculation to  give 
\begin{align}\label{est: Q_1-1}
\left|\f{\pa_Y^2 \bM}{\bM}\right|+\left|\f{(\pa_Y\bM)^2}{\bM^2}\right|+c_i^{-1}\left(\left|\Im\left(\f{\pa_Y^2 \bM}{\bM}\right)\right|+\left|\Im\left(\f{(\pa_Y\bM)^2}{\bM^2}\right)\right|\right)\leq&C\left(\left|\pa_Y^2 U_B\right|+\left|\pa_Y U_B\right|^2\right).
\end{align}
By Lemma \ref{lem:est-eta}, we have
\begin{align}\label{est: Q_1-3}
\left|\f{(\pa_Y^2\eta)^2}{(\pa_Y\eta)^2}\right|+\left|\f{\pa_Y^3 \eta}{2\pa_Y\eta}\right|+c_i^{-1}\left(\left|\Im\left(\f{(\pa_Y^2\eta)^2}{(\pa_Y\eta)^2}\right)\right|+\left|\Im\left(\f{\pa_Y^3 \eta}{2\pa_Y\eta}\right)\right|\right)\leq C(1+|Y-Y_0|)^{-2}.
\end{align}
Combining \eqref{est: Q_1-1}-\eqref{est: Q_1-3} and using the definition of $Q_1$ in \eqref{def: Q_1}, we get the estimate \eqref{est: Q_1}. 
\end{proof}

Next we present some asymptotic formulas of the Airy functions, which can be found in the book \cite{OLBC}. For convenience, we introduce a function
\begin{align}\label{def: Theta}
\Theta(z)=\f23 z^\f32-\f{\pi}{4}.
\end{align}

\begin{lemma}(\cite{OLBC})\label{lem: Airy-original}
As $z\to \infty$ the following asymptotic expansions are valid uniformly in the stated sectors. There exists a constant $M>0$ such that
\begin{enumerate}
\item for $|arg(z)|\leq\pi-\d$ with $\d>0$ and $|z|\geq M$,
\begin{align*}
&Ai(z)= \f{1}{2\sqrt\pi }z^{-\f14}e^{-\f23 z^\f32}(1+O(z^{-\f32})),\quad \pa_zAi(z)=- \f{1}{2\sqrt\pi }z^{\f14}e^{-\f23 z^\f32}(1+O(z^{-\f32})).
\end{align*}
\item for $|arg(z)|\leq\f13\pi-\d$ with $\d>0$ and $|z|\geq M$,
\begin{align*}
&Bi(z)=\f{1}{\sqrt\pi }z^{-\f14}e^{\f23 z^\f32}(1+O(z^{-\f32})),\quad \pa_zBi(z)= \f{1}{\sqrt\pi }z^{\f14}e^{\f23 z^\f32}(1+O(z^{-\f32})).
\end{align*}
\item for  $z\in \mathbb{R}$ and positive constants $0<a_0\neq b_0>0$ with $z<-M$,
\begin{align*}
&Ai(z)= \f{1}{\sqrt\pi (-z)^\f14}\Big(\cos\Theta(-z))+a_0(-z)^{-\f32}\sin(\Theta(-z))\Big)(1+O(z^{-3})),\\
&\pa_zAi(z)= \f{(-z)^\f14}{\sqrt\pi }\Big(\sin(\Theta(-z))+b_0(-z)^{-\f32}\cos(\Theta(-z))\Big)(1+O(z^{-3})),\\
&Bi(z)\sim \f{1}{\sqrt\pi (-z)^\f14}\Big(-\sin(\Theta(-z))+a_0(-z)^{-\f32}\cos(\Theta(-z))\Big)(1+O(z^{-3})),\\
&\pa_zBi(z)\sim \f{(-z)^\f14}{\sqrt\pi }\Big(\cos(\Theta(-z))-b_0(-z)^{-\f32}\sin(\Theta(-z))\Big)(1+O(z^{-3})).
\end{align*}
\item for  $|z|\leq M$,
\begin{align*}
|Ai(z)|+|Bi(z)|+|\pa_zAi(z)|+|\pa_zBi(z)|\leq C.
\end{align*}

\end{enumerate}

\end{lemma}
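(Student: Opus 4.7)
The plan is to derive these asymptotic expansions from the standard contour-integral representations of the Airy functions. Writing
\begin{equation*}
Ai(z)=\frac{1}{2\pi\mathrm{i}}\int_{\mathcal{C}_A}\exp\!\Big(\frac{t^3}{3}-zt\Big)\,dt,\qquad Bi(z)=\frac{1}{2\pi}\int_{\mathcal{C}_B}\exp\!\Big(\frac{t^3}{3}-zt\Big)\,dt,
\end{equation*}
where $\mathcal{C}_A$ runs from $\infty e^{-\mathrm{i}\pi/3}$ to $\infty e^{\mathrm{i}\pi/3}$ and $\mathcal{C}_B$ is the appropriate union of rays giving the second independent solution of $f''-zf=0$, I would treat each assertion by the method of steepest descent, with the power-series representation handling the remaining case.

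First, for (1) and (2), I rescale $t=z^{1/2}s$ so the exponent becomes $z^{3/2}(s^3/3-s)$, producing saddle points at $s=\pm1$. In the sector $|\arg(z)|\le \pi-\delta$ the contour $\mathcal{C}_A$ can be deformed through the saddle $s=+1$, where the exponent equals $-2z^{3/2}/3$; standard Laplace-type expansion around this saddle, with the local Gaussian width scaling like $z^{-3/4}$, yields the prefactor $z^{-1/4}/(2\sqrt{\pi})$ and the $O(z^{-3/2})$ remainder uniformly on that sector. For $\partial_zAi$ I would either differentiate the integral representation and re-expand, or use the fact that $\partial_zAi$ satisfies the same ODE with shifted initial behavior; either way the saddle-point contribution picks up an extra factor of $-z^{1/2}$. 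The assertion for $Bi$ in the narrower sector $|\arg(z)|\le \pi/3-\delta$ is handled identically, except that now the growing saddle $s=-1$ (exponent $+2z^{3/2}/3$) dominates, and the restriction to $|\arg(z)|\le\pi/3-\delta$ is imposed precisely so that the subdominant decaying contribution does not pollute the leading exponentially growing term.

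For (3), when $z\in\mathbb{R}$ with $z<-M$, I set $z=-y$ with $y>0$ large. The two saddles $s=\pm1$ become purely imaginary in the rotated picture, both lying on the steepest-descent contour and contributing on equal footing. Adding the two saddle contributions and using $e^{\pm\mathrm{i}(\frac{2}{3}y^{3/2}-\pi/4)}$ combines them into $\cos\Theta(y)$ and $\sin\Theta(y)$ with $\Theta$ as in \eqref{def: Theta}. The next term in each saddle's asymptotic series supplies the $(-z)^{-3/2}$ corrections with constants $a_0,b_0$, which come from the subleading coefficient of the Gaussian expansion (they differ between $Ai$ and $\partial_zAi$ because differentiation introduces an extra power of the saddle location into the expansion). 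Finally, (4) is immediate: $Ai,Bi$ are entire, so $Ai,Bi,\partial_zAi,\partial_zBi$ are continuous on the compact disk $|z|\le M$, hence bounded there.

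The one delicate point, and the real obstacle, is keeping track of the \emph{Stokes phenomenon}: the apparently mysterious difference in the admissible sectors for $Ai$ (namely $|\arg z|\le\pi-\delta$) and $Bi$ (only $|\arg z|\le\pi/3-\delta$) reflects which saddle is dominant on each Stokes ray. To make the expansions uniform up to the stated sector boundaries one must verify that the deformed contour genuinely stays on the steepest descent through the chosen saddle all the way to the boundary rays, and that the contribution of the other saddle can be absorbed into the $O(z^{-3/2})$ remainder. Since this lemma is quoted verbatim from \cite{OLBC}, the cleanest presentation is simply to invoke that reference; the sketch above indicates the route one would take to reproduce the statement from scratch.
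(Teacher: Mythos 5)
Your proposal is fine: the paper itself gives no proof of this lemma and simply quotes it from \cite{OLBC}, which is exactly what you conclude is the cleanest course. Your steepest-descent sketch (saddles at $s=\pm1$ after rescaling, dominant saddle in each sector, both saddles contributing the $\cos\Theta$/$\sin\Theta$ combination on the negative real axis, and the Stokes-phenomenon explanation of the narrower sector for $Bi$) is an accurate outline of the standard derivation behind the cited formulas, so there is nothing to correct.
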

We decompose $\mathbb R_+\cup\{0\}=\mathcal N^-\cup\mathcal N\cup\mathcal N^+$, where
\begin{align*}
		&\mathcal N^+=\{Y\geq Y_0:|\kappa\eta(Y)|\geq M\},\quad\mathcal N=\{Y:|\kappa\eta(Y)|\leq M\},\quad
		\mathcal N^-=\{0\leq Y\leq Y_0:|\kappa\eta(Y)|\geq M\},
\end{align*}
and $M>0$ is a large constant given in Lemma \ref{lem: Airy-original}.
Moreover, we shall rewrite intervals $\mathcal{N}^-,\mathcal{N}$ and $\mathcal{N}^+$ as 
\begin{align}\label{def: intervals N}
\mathcal{N}^-=[0,Y^-],\quad\mathcal{N}=[Y^-,Y^+]\quad\mathcal{N}^+=[Y^+,+\infty),
\end{align}
with the relation $1\sim Y^{-}<Y_0<Y^{+}\sim1$.

\begin{remark}
We point out that for $Y\in[Y_0,Y^+]$, $|\al w_0(Y)|\leq C$
and for $Y\geq Y^{+}$,
\begin{align*}
&\f23(\kappa\eta(Y))^\f32=\kappa^\f32\int_{Y_0}^Y\Big(\frac{\tF(Z)}{\partial_Y F(Z_0)}\Big)^\f12dZ=\alpha\int_{Y_0}^Y\tF^\f12(Z)dZ,\\&\Re\Big(\f23(\kappa\eta(Y))^\f32\Big)=\alpha\int_{Y_0}^Y\Re(\tF^\f12)(Z)dZ,\quad \Big|e^{\pm\f23(\kappa\eta(Y))^\f32}\Big|=e^{\pm \alpha\int_{Y_0}^Y\Re(\tF^\f12)(Z)dZ}.
\end{align*}
In particular, for $Y\in[ Y^{+}+\d, \bar Y_2^*]$ with $\d>0$ independent of $\al$, we deduce from \eqref{est: Im tF} that 
\begin{align*}
Ai(\kappa\eta(Y))\sim \alpha^{-\f16}e^{-\alpha\int_{Y_0}^YF_r^\f12(Z)dZ},\quad Bi(\kappa\eta(Y))\sim\alpha^{-\f16}e^{\alpha\int_{Y_0}^YF_r^\f12(Z)dZ}.
\end{align*}

\end{remark}

With asymptotic expansions of $Ai(z)$ and $Bi(z)$ presented in Lemma \ref{lem: Airy-original}, we can deduce the following result.

\begin{lemma}\label{lem: Ai, Bi}
For $L\in(0,1)$ given in Lemma \ref{lem:est-eta}, it holds that 
\begin{enumerate}
\item for $Y\in [0, Y^{-}]$, we have
\begin{align*}
|Ai(\kappa \eta) Bi(\kappa\eta)(\pa_Y\eta)^{-1}|+|Ai(\kappa \eta) ^2(\pa_Y\eta)^{-1}|+|Bi(\kappa\eta)^2(\pa_Y\eta)^{-1}|\leq C\kappa^{-\f12}\max\{L^{-\f12}, |Y-Y_0|^{-\f12}\}.
\end{align*}
 \item for $Y\in [Y^{-}, Y^{+}]$, we have
 \begin{align*}
|Ai(\kappa \eta) Bi(\kappa\eta)(\pa_Y\eta)^{-1}|+|Ai(\kappa \eta) ^2(\pa_Y\eta)^{-1}|+|Bi(\kappa\eta)^2(\pa_Y\eta)^{-1}|\leq C.
\end{align*}
\item for $Y\in [Y^{+},+\infty)$, we have
\begin{align*}
&|Ai(\kappa \eta) Bi(\kappa\eta)(\pa_Y\eta)^{-1}|+|e^{2\al w_0}Ai(\kappa \eta) ^2(\pa_Y\eta)^{-1}|\\
&+|e^{-2\al w_0}Bi(\kappa\eta)^2(\pa_Y\eta)^{-1}|\leq C\kappa^{-\f12}\max\{L^{-\f12}, |Y-Y_0|^{-\f12}\}.
\end{align*}
\item for $Y\in[0, \bar Y_2^*+\d_0]$, we have
\begin{align*}
 \Im (\eta(Y))=0,\quad\mbox{and}\quad  \eta(Y),~Ai(\kappa\eta), ~Bi(\kappa\eta) \quad \mbox{are real functions}.
\end{align*}

\end{enumerate}

\end{lemma}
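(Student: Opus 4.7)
\textbf{Proof plan for Lemma \ref{lem: Ai, Bi}.}

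The plan is to argue case by case using the three Airy asymptotic regimes in Lemma \ref{lem: Airy-original} together with the derivative estimates on $\eta$ from Lemma \ref{lem:est-eta}, then finish with the reality statement, which is the cleanest piece.

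First I would dispatch item (4). From \eqref{def: Langer} the Langer transform is built out of $\tilde F$, and \eqref{est: Im tF} gives $\tilde F(Y)=F_r(Y)\in\R$ on $[0,\bar Y_2^*+\delta_0]$; moreover $\partial_Y\tilde F(Y_0)=\partial_YF_r(Y_0)$ is real and (by the monotonicity of $|\bar M_r|$ through the turning point) positive. Hence the integrand in \eqref{def: Langer} is real and $\eta$ is real on $[0,\bar Y_2^*+\delta_0]$. Since $\kappa=(\alpha^{2}\partial_YF_r(Y_0))^{1/3}\in\R_{+}$ and $Ai,Bi$ are real on $\R$, the claim follows.

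Next I would handle item (2), the transition region $\mathcal N=\{|\kappa\eta|\le M\}$, which by \eqref{def: intervals N} is a small interval of length $\sim M/\kappa$ centered near $Y_0$. On this set case (4) of Lemma \ref{lem: Airy-original} gives $|Ai(\kappa\eta)|+|Bi(\kappa\eta)|\le C$, while Lemma \ref{lem:est-eta} gives $|\partial_Y\eta|\ge c>0$ (since $|Y-Y_0|\lesssim L$), so each of the three products is bounded by $C$.

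The substance is items (1) and (3), where $|\kappa\eta|\ge M$. In $\mathcal N^-$ we have $Y<Y_0$, $\eta<0$, $\kappa\eta$ real and large negative, so case (3) of Lemma \ref{lem: Airy-original} yields
\begin{equation*}
|Ai(\kappa\eta)|+|Bi(\kappa\eta)|\le C|\kappa\eta|^{-1/4},
\end{equation*}
and the oscillatory products $Ai\cdot Bi$, $Ai^{2}$, $Bi^{2}$ are each bounded by $C|\kappa\eta|^{-1/2}$. Combining with Lemma \ref{lem:est-eta}, which gives $|\eta|\sim|Y-Y_0|^{2/3}$ and $|\partial_Y\eta|\sim|Y-Y_0|^{-1/3}$ when $|Y-Y_0|\ge L$ (and $|\eta|\sim|Y-Y_0|$, $|\partial_Y\eta|\sim1$ when $|Y-Y_0|\le L$), I compute
\begin{equation*}
|\kappa\eta|^{-1/2}|\partial_Y\eta|^{-1}\le C\kappa^{-1/2}\max\bigl\{L^{-1/2},|Y-Y_0|^{-1/2}\bigr\},
\end{equation*}
which is item (1). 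For item (3) I split $\mathcal N^+$ into $[Y^{+},\bar Y_2^*+\delta_0]$, where $\eta$ is real positive by (4), and $(\bar Y_2^*+\delta_0,+\infty)$, where \eqref{eq:eta-i} together with $c_i\ll\alpha^{-n}$ keeps $\arg(\kappa\eta)$ inside any fixed sector $|\arg z|\le\pi/3-\delta$, so cases (1)--(2) of Lemma \ref{lem: Airy-original} apply. These give
\begin{equation*}
Ai(\kappa\eta)=\tfrac{1}{2\sqrt\pi}(\kappa\eta)^{-1/4}e^{-\frac{2}{3}(\kappa\eta)^{3/2}}(1+O(|\kappa\eta|^{-3/2})),
\end{equation*}
and the analogous formula for $Bi$ with $e^{+\frac{2}{3}(\kappa\eta)^{3/2}}$. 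The exponentials cancel in $Ai\cdot Bi$, leaving $|\kappa\eta|^{-1/2}|\partial_Y\eta|^{-1}$ as before. For $Ai^{2}$ and $Bi^{2}$ the crucial identification is
\begin{equation*}
\Re\!\left(\tfrac{2}{3}(\kappa\eta(Y))^{3/2}\right)=\alpha\int_{Y_0}^{Y}\Re(\tilde F^{1/2})(Z)\,dZ=\alpha w_0(Y),
\end{equation*}
using \eqref{def: w_0} and the fact that $\kappa^{3/2}(\partial_YF_r(Y_0))^{-1/2}=\alpha$ from \eqref{relation kappa and alpha}; therefore $|e^{\mp\frac{2}{3}(\kappa\eta)^{3/2}}|^{2}=e^{\mp2\alpha w_0(Y)}$, and the weighted bounds in item (3) reduce to exactly the same estimate $|\kappa\eta|^{-1/2}|\partial_Y\eta|^{-1}$ handled in item (1).

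The only mild obstacle I anticipate is the argument-control step in $\mathcal N^+$ past $\bar Y_2^*+\delta_0$: one must check that the small imaginary part of $\eta$ coming from the modification $(1-\chi_0)F_i$ in \eqref{def: tF} does not rotate $\kappa\eta$ out of the sector in which the $Bi$ asymptotic is valid. This is where the smallness $c_i\ll\alpha^{-n}$ from \eqref{def: H_0} enters: since $|\Im\eta|\lesssim c_i$ and $|\eta|\gtrsim|Y-Y_0|^{2/3}\gtrsim 1$ on this tail, $|\arg(\kappa\eta)|\lesssim c_i\ll 1$, so cases (1)--(2) of Lemma \ref{lem: Airy-original} are applicable uniformly, and the identification of the exponential with $e^{\pm\alpha w_0}$ remains valid up to a multiplicative $1+O(c_i)$ that is absorbed in the constant $C$.
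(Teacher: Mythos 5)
Your proposal is correct and follows essentially the same route as the paper, whose proof is simply a citation of the reality property \eqref{eq:eta-i}, the Langer estimates of Lemma \ref{lem:est-eta}, the Airy asymptotics of Lemma \ref{lem: Airy-original}, and the identity $\eta(\partial_Y\eta)^2=\tF(\partial_Y\tF(Y_0))^{-1}$ (equivalently the identification $\Re(\tfrac23(\kappa\eta)^{3/2})=\al w_0(Y)$ recorded in the remark after \eqref{def: intervals N}), all of which you invoke. Your extra care about the sector condition for $Y>\bar Y_2^*+\d_0$ is a sound filling-in of a detail the paper leaves implicit.
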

	
\begin{proof}
The above estimates are direct consequences by using \eqref{eq:eta-i}, Lemma \ref{lem:est-eta}, Lemma \ref{lem: Airy-original} and the fact $\eta(\partial_Y\eta)^2=\tF(\partial_Y \tF(Y_0))^{-1}$.
\end{proof}

\subsection{Solving the mixed type system with a source term $f=a(Y)A(Y)+b(Y)B(Y)$. }

 Let $\d>0$ be a suitable small constant independent of $\al$. The results are given as follows.

\begin{proposition}\label{pro: Airy-1}
	Let $(\alpha,c)\in\mathbb H_0$ and $a(Y)$, $b(Y)$ be two complex-valued functions with $(1+Y)^2(|a|, |e^{2\al w_0 }b|)\in L^\infty$. Then there exists a solution $\tilde P(Y)\in W^{2,\infty}_{w_0}$ to the system \eqref{sys:app-f-1} with the source term $f(Y)=a(Y)A(Y)+b(Y)B(Y)$. More precisely, there exist $\cA_f(Y)$ and $\cB_f(Y)$ with $(|\cA_f|, |e^{2\al w_0} \cB_f|)\in L^\infty$ such that 
\begin{align*}	
	\tilde P(Y)=\mathcal{A}_f(Y)A(Y)+\mathcal{B}_f(Y)B(Y),
\end{align*}
	 and the following estimates hold
	 
\begin{enumerate}
\item for $Y\in[0,+\infty)$, we have
\begin{align*}
&|\mathcal{A}_f(Y)|\leq C\alpha^{-1}\Big(\|(1+Y)^2a\|_{L^\infty([0,Y])}+\|(1+Y)^2b\|_{L^\infty_{2w_0}([0,Y])}\Big),\\
&|e^{2\alpha w_0(Y)}\mathcal{B}_f(Y)|\leq C\al^{-1}\Big(\|a\|_{L^\infty([0, Y^{+}+\d])}+\al^{-1}\|a\|_{L^\infty}+\|b\|_{L^\infty_{2 w_0}}\Big).
\end{align*}

%\begin{align*}
%|\partial_Y\mathcal{A}_f(Y)|+|e^{2\alpha w_0(Y)}\partial_Y\mathcal{B}_f(Y)|\leq C\alpha^{-\f23}\Big(\|a\|_{L^\infty}+\|(1+Y)^2b\|_{L^\infty_{2 w_0}}\Big).
%\end{align*}
\item  for $Y\in[0,\bar{Y}_2^*+\delta_0]$, we have
\begin{align*}
&|\mathrm{Im}(\mathcal{A}_f(Y))|\leq C\alpha^{-1}\Big(\|(1+Y)^2\Im a\|_{L^\infty([0,Y])}+\|(1+Y)^2\Im b\|_{L^\infty_{2w_0}([0,Y])}\Big),\\
&|e^{2\alpha w_0(Y)}\mathrm{Im}(\mathcal{B}_f(Y))|\leq C\al^{-2}e^{2\al w_0(Y)}e^{-2\al w_0(\bar Y_2^*+\d_0)}\Big(\|a\|_{L^\infty}+\|b\|_{L^\infty_{2 w_0}}\Big)\\
&\qquad\qquad\qquad\qquad\qquad+C\al^{-1}\Big(\|\Im a\|_{L^\infty([0, Y^{+}+\d])}+\al^{-1}\|\Im a\|_{L^\infty}+\|\Im b\|_{L^\infty_{2 w_0}}\Big).
%&|e^{2\alpha w_0(Y)}\mathrm{Im}(\mathcal{B}_f(Y))|\leq C\al^{-2}c_ie^{-2\al w_0(\bar Y_2^*+\d_0)}\Big(\|a\|_{L^\infty}+\|b\|_{L^\infty_{2 w_0}}\Big)\\
%&\qquad\qquad\qquad\qquad+ C\al^{-2}e^{-2\alpha w_0(Y_1^*)}\Big(\|\Im a\|_{L^\infty}+\|\Im b\|_{L^\infty_{2 w_0}}\Big)\\
%&\qquad\qquad\qquad+C\al^{-1}\Big(\|\Im a\|_{L^\infty([0, Y_1^*])}+\|\Im b\|_{L^\infty_{2w_0}([0, Y_1^*])}\Big),\quad Y\in [0, Y^{+}].
\end{align*}

\end{enumerate}

\end{proposition}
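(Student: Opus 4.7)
The plan is first to verify that $\tilde P = \mathcal{A}_f A + \mathcal{B}_f B$ with $(\mathcal{A}_f, \mathcal{B}_f)$ from \eqref{def: cA_f, cB_f} solves \eqref{sys:app-f-1}, and then to obtain the pointwise bounds by careful domain decomposition. Since $\mathcal{L}_{app}[A] = \mathcal{L}_{app}[B] = 0$ and the Wronskian equals $-\kappa\pi^{-1}\bar{M}^2$, variation of parameters immediately yields $\mathcal{L}_{app}[\tilde P] = f$; the decay $\tilde P(\infty) = 0$ follows because $\mathcal{A}_f(Y)A(Y) \sim (\text{const})\cdot e^{-\alpha w_0(Y)} \to 0$, while the $\mathcal{B}_f$-bound (carrying the weight $e^{2\alpha w_0}$) overcomes the growth $|B(Y)| \sim e^{\alpha w_0(Y)}$. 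Substituting $f = aA + bB$ and using $E^2 = \bar{M}^2/\partial_Y\eta$, the integrands reduce to combinations of $Ai(\kappa\eta)Bi(\kappa\eta)/\partial_Z\eta$ and $Bi(\kappa\eta)^2/\partial_Z\eta$ for $\mathcal{A}_f$, and $Ai(\kappa\eta)^2/\partial_Z\eta$ and $Ai(\kappa\eta)Bi(\kappa\eta)/\partial_Z\eta$ for $\mathcal{B}_f$, so that Lemma \ref{lem: Ai, Bi} applies directly.

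For $\mathcal{A}_f$ I would split $[0,Y]$ according to $\mathcal{N}^- \cup \mathcal{N} \cup \mathcal{N}^+$ from \eqref{def: intervals N}. On $\mathcal{N}^- \cup \mathcal{N}^+$, each Airy product divided by $\partial_Y\eta$ is bounded by $C\kappa^{-1/2}\max\{L^{-1/2},|Z-Y_0|^{-1/2}\}$, which is $L^1$ once combined with the weight $(1+Z)^{-2}$ absorbing growth at infinity. The term $b\,Bi(\kappa\eta)^2$ on $\mathcal{N}^+$, which grows like $e^{2\alpha w_0}$, is tamed by the $e^{-2\alpha w_0}$ weight hidden in $\|(1+Y)^2 b\|_{L^\infty_{2w_0}}$. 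On the transition zone $\mathcal{N}$ the Airy products are $O(1)$ but $|\mathcal{N}|\sim \kappa^{-1}$, so this interval contributes $O(\kappa^{-1})$. The net integral is $O(\kappa^{-1/2})$, and together with the prefactor $\kappa^{-1}\pi$ and the scaling $\kappa^{3/2}\sim \alpha$ this produces the claimed $\alpha^{-1}$ bound.

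For $e^{2\alpha w_0(Y)}\mathcal{B}_f(Y)$ the essential input is $|Ai(\kappa\eta)^2/\partial_Y\eta|\lesssim \kappa^{-1/2}e^{-2\alpha w_0}$ on $\mathcal{N}^+$ together with $\partial_Z w_0 = \Re(\tF^{1/2})\geq c > 0$ there, which yields the basic estimate $e^{2\alpha w_0(Y)}\int_Y^\infty e^{-2\alpha w_0(Z)}\,dZ \lesssim \alpha^{-1}$. For $Y\leq Y^+$ I would split $\int_Y^\infty = \int_Y^{Y^++\delta} + \int_{Y^++\delta}^\infty$: the first piece, where exponential decay is not yet effective, contributes $\alpha^{-1}\|a\|_{L^\infty([0,Y^++\delta])}$ via the estimates of the previous step; the tail piece contributes $\alpha^{-2}\|a\|_{L^\infty}$ from integrating the exponential weight. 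The $b$-contribution, carrying the extra $e^{-2\alpha w_0}$ built into $\|b\|_{L^\infty_{2w_0}}$, is handled in the same manner and absorbed into the $\alpha^{-1}\|b\|_{L^\infty_{2w_0}}$ term (with room to spare).

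For the imaginary-part estimates I would exploit Lemma \ref{lem: Ai, Bi}(4), which ensures that $\eta,\partial_Y\eta,Ai(\kappa\eta),Bi(\kappa\eta)$ are real on $[0,\bar Y_2^*+\delta_0]$. Hence, for $Y$ in this range, $\Im \mathcal{A}_f(Y)$ inherits its imaginary content only from $\Im a,\Im b$, and the bound follows by rerunning the $\mathcal{A}_f$ argument with $a\to \Im a,\ b\to \Im b$. For $\Im\mathcal{B}_f(Y)$ I would split $\int_Y^\infty = \int_Y^{\bar Y_2^*+\delta_0} + \int_{\bar Y_2^*+\delta_0}^\infty$: on the first interval the Airy factors are real so only $\Im a, \Im b$ contribute, producing the second line of the claimed estimate; on the second interval, the pointwise smallness $e^{-2\alpha w_0(\bar Y_2^*+\delta_0)}$ available on the integration domain, together with $|Ai(\kappa\eta)^2/\partial_Y\eta|\lesssim \kappa^{-1/2}e^{-2\alpha w_0}$ and $|Ai(\kappa\eta)Bi(\kappa\eta)/\partial_Y\eta|\lesssim \kappa^{-1/2}$, produces the first line. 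The principal obstacle throughout is securing the sharp $\alpha^{-1}$ rate rather than the naive $\kappa^{-1}\sim \alpha^{-2/3}$: this requires exploiting the fine scaling $|\mathcal{N}|\sim \kappa^{-1}$ of the transition zone and, for $\mathcal{B}_f$, the $\alpha^{-1}$ gain from integrating the exponential weight against $\partial_Z w_0 \geq c$, each of which supplies an extra factor of $\kappa^{-1/2}\sim \alpha^{-1/3}$.
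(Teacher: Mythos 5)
Your proposal is correct and follows essentially the same route as the paper's proof: the same reduction of the integrands to $Ai\,Bi/\partial_Y\eta$, $Ai^2/\partial_Y\eta$, $Bi^2/\partial_Y\eta$, the same splitting of the domain into $\mathcal N^-\cup\mathcal N\cup\mathcal N^+$ (with the tail split at $Y^{+}+\d$ for $\mathcal B_f$) using Lemma \ref{lem: Ai, Bi}, and the same realness argument on $[0,\bar Y_2^*+\d_0]$ plus the decomposition into $\Re a,\Im a,\Re b,\Im b$ for the imaginary-part bounds, with the constancy of $\Im\mathcal B_f$ below $\bar Y_2^*+\d_0$ yielding the first term of estimate (2).
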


\begin{proof}
By \eqref{def: (A,B)} and \eqref{def: cA_f, cB_f}, $\mathcal{A}_f(Y)$ and $\mathcal{B}_f(Y)$ with $f(Y)=a(Y)A(Y)+b(Y)B(Y)$ are given by 
\begin{align}\label{def: (cA_f, cB_f)}
\begin{split}
\mathcal{A}_f(Y)=-\kappa^{-1}\pi\int_0^YBi(\kappa\eta(Z))\big(a(Z) Ai(\kappa\eta(Z))+b(Z) Bi(\kappa\eta(Z))\big)\partial_Z\eta(Z)^{-1}dZ,\\
\mathcal{B}_f(Y)=-\kappa^{-1}\pi\int_Y^{+\infty}Ai(\kappa\eta(Z))\big(a(Z)Ai(\kappa\eta(Z))+b(Z)Bi(\kappa\eta(Z))\big)\partial_Z\eta(Z)^{-1}dZ.
\end{split}
\end{align}
For convenience, we drop subscript in $\mathcal{A}_f$ and $\mathcal{B}_f$, and write $\mathcal{A}$ and $\mathcal{B}$ for short, and  assume $a(Y)$ and $b(Y)$ are two real functions. For the case when $a(Y)$ and $b(Y)$ are complex functions, we use the decomposition \eqref{relation: (cA, cB)} and relation \eqref{relation: (cA^1, cB^1)-(cA^2, cB^2)} to get the final result.\smallskip

\underline{Estimates of $\mathcal{A}(Y)$. } For $Y\in[0, Y^{-}]$, we use the first statement in Lemma \ref{lem: Ai, Bi} to get
\begin{align}\label{eq:A_r-Y-}
\begin{split}
|\mathcal{A}(Y)|\leq& C\kappa^{-\f32}\int_0^Y\left(|a(Z)Ai(\kappa \eta(Z)) Bi(\kappa\eta(Z))|+|b(Z)Bi(\kappa\eta(Z))^2|\right)|(\pa_Y\eta)^{-1}|dZ\\
\leq& C\kappa^{-\f32}\int_0^Y(|a(Z)|+|b(Z)|)\max\{L^{-\f12}, |Z-Y_0|^{-\f12}\}dZ\\
\leq& C\alpha^{-1}(\|a\|_{L^\infty([0, Y])}+\|b\|_{L^\infty([0, Y])}),
\end{split}
\end{align}
by using the fact $\int_0^Y\max\{L^{-\f12}, |Z-Y_0|^{-\f12}\}dZ\leq C$ and \eqref{relation kappa and alpha} in the last line.

For any $Y\in[Y^{-}, Y^{+}]$, we notice that 
\begin{align*}
\mathcal{A}(Y)
=&\mathcal{A}(Y^{-})-\kappa^{-1}\pi\int_{Y^{-}}^YBi(\kappa\eta(Z))\left(a(Z) Ai(\kappa\eta(Z))+b(Z) Bi(\kappa\eta(Z))\right)\partial_Z\eta(Z)^{-1}dZ\\
=&\mathcal{A}(Y^{-})+I_1.
\end{align*}
From \eqref{eq:A_r-Y-}, we can obtain 
$
|\mathcal{A}(Y^{-})|
\leq C\alpha^{-1}(\|a\|_{L^\infty([0, Y^{-}])}+\|b\|_{L^\infty([0, Y^{-}])}).
$
According to Lemma \eqref{lem:est-eta}, the second statement in Lemma \ref{lem: Ai, Bi} and \eqref{relation kappa and alpha}, we get
\begin{align*}
|I_1|
\leq& C\kappa^{-1}\left(\|a Ai(\kappa \eta) Bi(\kappa\eta)\|_{L^\infty([Y^-,Y])}+\|bBi(\kappa\eta)^2\|_{L^\infty([Y^-,Y])}\right)\int_{Y^{-}}^{Y}1dZ\\
\leq& C\alpha^{-\f43}(\|a\|_{L^\infty([Y^{-},Y])}+\|b\|_{L^\infty([Y^{-},Y])}).
\end{align*}
Therefore, we infer that for any $Y\in[Y^{-}, Y^{+}]$,
\begin{align}\label{eq:A-N+1}
|\mathcal{A}(Y)|\leq C\alpha^{-1}(\|a\|_{L^\infty([0,Y])}+\|b\|_{L^\infty([0,Y])}).
\end{align}

For  $Y\in[Y^{+},+\infty)$, we notice that 
\begin{align*}
\mathcal{A}(Y)=&\mathcal{A}(Y^{+})
-\kappa^{-1}\pi\int_{Y^+}^YBi(\kappa\eta(Z))(a(Z) Ai(\kappa\eta(Z))+b(Z) Bi(\kappa\eta(Z)))\partial_Z\eta(Z)^{-1}dZ\\
=&\mathcal{A}(Y^{+})+I_2.
\end{align*}
According to \eqref{eq:A-N+1}, we first have 
$
|\mathcal{A}(Y^{+})|\leq
C\alpha^{-1}(\|a\|_{L^\infty([0, Y^{+}])}+\|b\|_{L^\infty([0, Y^{+}])}).
$
Using the third statement in Lemma \ref{lem: Ai, Bi}, we have 
\begin{align*}
|I_2|
\leq& C\kappa^{-\f32}\|(1+Y)^2(a, e^{2\al w_0}b)\|_{L^\infty([Y^{+}, Y])}\int_{Y^+}^{Y}(1+Z)^{-2}\max\{L^{-\f12}, |Z-Y_0|^{-\f12}\}dZ\\
\leq& C\alpha^{-1}\|(1+Y)^2(a, e^{2\al w_0}b)\|_{L^\infty([Y^{+}, Y])}.
\end{align*}
Therefore, we obtain that for any $Y\in[Y^+,+\infty)$,
\begin{align}\label{eq:A-N+}
|\mathcal{A}(Y)|\leq C\alpha^{-1}(\|(1+Y)^2a\|_{L^\infty([0, Y])}+\|(1+Y)^2b\|_{L^\infty_{2w_0}([0, Y])}).
\end{align}

We collect \eqref{eq:A_r-Y-}, \eqref{eq:A-N+1} and \eqref{eq:A-N+} together to deduce that for $Y\in[0, +\infty)$,
\begin{align*}
|\mathcal{A}(Y)|\leq C\alpha^{-1}(\|(1+Y)^2a\|_{L^\infty([0, Y])}+\|(1+Y)^2b\|_{L^\infty_{2w_0}([0, Y])}),
\end{align*}
which gives the first estimate in the statement (1).

By statement (4) in Lemma \ref{lem: Ai, Bi} and  real functions $a(Y),~b(Y)$, we obtain  $\Im(\mathcal{A}(Y))=0$ for $Y\in[0,\bar{Y}_2^*+\delta_0]$, {which along with \eqref{relation: (cA^1, cB^1)-(cA^2, cB^2)} implies the first estimate in result (2)}.\smallskip

\underline{Estimates of $\mathcal{B}(Y)$.}
For $Y\in[Y^{+}+\d,+\infty)$, we notice that 
\begin{align*}
\mathcal{B}(Y)=-\kappa^{-1}\pi\int_Y^{+\infty}Ai(\kappa\eta(Z))(a(Z)Ai(\kappa\eta(Z))+b(Z)Bi(\kappa\eta(Z)))\partial_Z\eta(Z)^{-1}dZ.
\end{align*}
By statement (3) in Lemma \ref{lem: Ai, Bi}, we have that for any $Z\in [Y^{+}+\delta,+\infty)$,
\begin{align*}
|Ai(\kappa \eta) Bi(\kappa\eta)(\pa_Z\eta)^{-1}|+|e^{2\al w_0}Ai(\kappa \eta) ^2(\pa_Z\eta)^{-1}|\leq C\kappa^{-\f12}.
\end{align*}
Thus, we can obtain that for any $Y\in [Y^{+}+\delta,+\infty)$,
\begin{align}\label{eq:B-Y2}
\begin{split}
|\mathcal{B}(Y)|\leq& C\kappa^{-\f32}\int_Y^{+\infty}\left(|a(Z)|e^{-2\alpha w_0(Z)}+|b(Z)|\right)dZ\\
\leq&C\kappa^{-\f32}\left(\|a\|_{L^\infty([Y, \infty))}+ \|e^{2\alpha w_0}b\|_{L^\infty([Y, \infty))}\right)\int_Y^{+\infty}e^{-2\alpha w_0(Z)}dZ\\
\leq&\alpha^{-2}e^{-2\alpha w_0(Y)}(\|a\|_{L^\infty}+ \|b\|_{L^\infty_{2 w_0}}).
\end{split}
\end{align}

For $Y\in[Y^{-},Y^{+}+\d]$, we notice that 
\begin{align*}
\mathcal{B}_f(Y)=&\mathcal{B}_f(Y^++\d)-\kappa^{-1}\pi\int^{Y^++\d}_YAi(\kappa\eta(Z))\left(a(Z)Ai(\kappa\eta(Z))+b(Z)Bi(\kappa\eta(Z))\right)\partial_Z\eta(Z)^{-1}dZ\\
=&\mathcal{B}_f(Y^++\d)+I_4.
\end{align*}
By results (2)-(3) in Lemma \ref{lem: Ai, Bi}, we know that for $Z\in[Y^{-},Y^{+}+\d]$,
\begin{align*}
&|Ai(\kappa\eta(Z))^2(\pa_Z\eta)^{-1}|+|Ai(\kappa\eta(Z))Bi(\kappa\eta(Z))(\pa_Z\eta)^{-1}|\leq C,
\end{align*}
which along with the fact $\int_{Y}^{Y^++\d}e^{-\al w_0(Z)}dZ\leq C\kappa^{-1}$ implies 
\begin{align*}
|I_4|
\leq& C\kappa^{-1}\|(a,b)\|_{L^\infty([Y, Y^{+}+\d])}\Big(\int_{Y}^{Y^++\d}e^{-\al w_0(Z)}dZ\Big)
\leq C\alpha^{-\f43}\|(a,b)\|_{L^\infty([Y, Y^{+}+\d_0])}.
\end{align*}
Then by \eqref{eq:B-Y2} and the above estimates, we obtain that for $Y\in[Y^{-}, Y^{+}+\d]$,
\begin{align}\label{eq:B-N}
|\mathcal{B}(Y)|\leq& C\al^{-2}\Big(\|a\|_{L^\infty}+\|b\|_{L^\infty_{2 w_0}}\Big)+C\al^{-\f43}\Big(\|a\|_{L^\infty([Y,Y^{+}+\d ])}+\|b\|_{L^\infty_{2 w_0}([Y,Y^{+}+\d])}\Big).
\end{align}

For $Y\in [0, Y^{-}]$, we  write 
\begin{align*}
\mathcal{B}(Y)=&\mathcal{B}(Y^-)-\kappa^{-1}\pi\int^{Y^-}_YAi(\kappa\eta(Z))(a(Z)Ai(\kappa\eta(Z))+b(Z)Bi(\kappa\eta(Z)))\partial_Z\eta(Z)^{-1}dZ\\
=&\mathcal{B}(Y^-)+I_5.
\end{align*}
By statement (1) in Lemma \ref{lem: Ai, Bi}, we obtain
\begin{align*}
|I_5|
\leq& C\kappa^{-\f32}(\|a\|_{L^\infty([0, Y^{-}])}+\|b\|_{L^\infty([0, Y^{-}])})|\eta(0)|^\f12
\leq C\alpha^{-1}(\|a\|_{L^\infty([0, Y^{-}])}+\|b\|_{L^\infty([0, Y^{-}])}),
\end{align*}
which along with \eqref{eq:B-N} implies that for $Y\in [0, Y^{-}]$,
\begin{align}\label{eq:B-N-}
|\mathcal{B}(Y)|\leq & C\al^{-2}\Big(\|a\|_{L^\infty}+\|b\|_{L^\infty_{2 w_0}}\Big)+C\al^{-1}\Big(\|a\|_{L^\infty([0, Y^{+}+\d])}+\|b\|_{L^\infty_{2 w_0}([0, Y^{+}+\d])}\Big).
\end{align}
Thus, we collect \eqref{eq:B-Y2}, \eqref{eq:B-N} and \eqref{eq:B-N-} to deduce that for $Y\in[0,+\infty)$,
\begin{align*}
|e^{2\alpha w_0(Y)}\mathcal{B}_f(Y)|\leq C\al^{-1}\Big(\|a\|_{L^\infty([0, Y^{+}+\d])}+\al^{-1}\|a\|_{L^\infty}+\|b\|_{L^\infty_{2 w_0}}\Big),
\end{align*}
which gives the second estimate in the statement (1). 

For $Y\in[0, \bar{Y}_2^*+\delta_0]$, we write 
\begin{align*}
\mathcal{B}(Y)=\mathcal{B}(\bar{Y}_2^*+\delta_0)-\kappa^{-1}\pi\int^{\bar{Y}_2^*+\delta_0}_YAi(\kappa\eta(Z))\left(a(Z)Ai(\kappa\eta(Z))+b(Z)Bi(\kappa\eta(Z))\right)\partial_Z\eta(Z)^{-1}dZ.
\end{align*}
According to Lemma \ref{lem: Ai, Bi}, the imaginary part of the second term above is zero, which implies $\Im(\mathcal{B}(Y))=\Im(\mathcal{B}(\bar{Y}_2^*+\delta_0))$. Then we  use \eqref{eq:B-Y2} to obtain that for $Y\in[0,\bar{Y}_2^*+\delta_0]$,
\begin{align*}
|\mathrm{Im}(\mathcal{B}(Y))|\leq C\alpha^{-2}e^{-2\alpha w_0(\bar{Y}_2^*+\delta_0)}(\|a\|_{L^\infty}+ \|b\|_{L^\infty_{2w_0}}),
\end{align*}
which along with \eqref{relation: (cA^1, cB^1)-(cA^2, cB^2)} implies the second estimate in the statement (2).
\end{proof}

\subsection{Solving the mixed type system with special source terms} 
First, we give estimates for the first kind of source term in \eqref{def: special source terms}.

\begin{proposition}\label{pro: Airy-2}
	Let $(\alpha, c)\in\mathbb H_0$ and $g(Y)$ be a complex-valued function with $e^{\al w_0}|g|\in L^\infty$ and  $\mathrm{supp}(g)\subseteq[Y_1, Y_1+\d_0]\cup[Y_2-\d_0, Y_2]$. Then there exists a solution $\tilde P\in W^{2,\infty}_{w_0}$ to the system \eqref{sys:app-f-1} with the source term $K_g(Y)=(1-\chi)\int_Y^{+\infty}g(Z)dZ$. More precisely, 
there exist $\mathcal{A}_{K_g}(Y)$ and $\mathcal{B}_{K_g}(Y)$ with $(|\cA_{Kg}|, |e^{2\al w_0} \cB_{Kg}|)\in L^\infty$ such that 
\begin{align*}
\tilde{P}(Y)=\mathcal{A}_{K_g}(Y)A(Y)+\mathcal{B}_{K_g}(Y)B(Y),
%\quad \partial_Y\tilde{P}(Y)=\mathcal{A}_{K_g}(Y)\partial_Y A(Y)+\mathcal{B}_{K_g}(Y)\partial_Y B(Y),
\end{align*}
and it holds that $\mathrm{supp}(\mathcal{B}_{K_g})=[0,Y_1-\delta_0]$ and
\begin{align*}
&|\mathcal{A}_{K_g}(Y)|+|e^{2\alpha w_0(Y)}\mathcal{B}_{K_g}(Y)|\leq C \al^{-\f{17}{6}}e^{-\alpha\int_{Y_1-\d_0}^{Y_1} F_r^\f12(Z)dZ}\|g\|_{L^\infty_{w_0}}, \\
&|\mathrm{Im}(\mathcal{A}_{Kg}(Y))|+|e^{2\alpha w_0(Y)}\mathrm{Im}(\mathcal{B}_{Kg}(Y))|\leq C\alpha^{-\f{17}{6}}e^{-\alpha\int_{Y_1-\d_0}^{Y_1} F_r^\f12(Z)dZ}(c_i\|g\|_{L^\infty_{w_0}}+\|\Im g\|_{L^\infty_{w_0}}).
\end{align*}
Moreover, for $Y\in[Y_1-\d_0,+\infty)$, it holds that $\mathcal{A}_{K_g}(Y)=\mathcal{A}_{K_g}(Y_1-\d_0).$

%\begin{enumerate}
%\item for $Y\in[0, +\infty)$, we have $\mathrm{supp}(\mathcal{B}_{K_g})=[0,Y_1-\delta_0]$ and
%\begin{align*}
%&|\mathcal{A}_{K_g}(Y)|+|e^{2\alpha w_0(Y)}\mathcal{B}_{K_g}(Y)|\leq C \al^{-\f{17}{6}}e^{-\alpha\int_{Y_1-\d_0}^{Y_1} F_r^\f12(Z)dZ}\|g\|_{L^\infty_{w_0}}, \\
%&|\mathrm{Im}(\mathcal{A}_{Kg}(Y))|+|e^{2\alpha w_0(Y)}\mathrm{Im}(\mathcal{B}_{Kg}(Y))|\leq C\alpha^{-\f{17}{6}}e^{-\alpha\int_{Y_1-\d_0}^{Y_1} F_r^\f12(Z)dZ}(c_i\|g\|_{L^\infty_{w_0}}+\|\Im g\|_{L^\infty_{w_0}}).
%\end{align*}
%
%\item moreover, for $Y\in[0,Y^{+}+\d]$,
%\begin{align*}
%&|\mathcal{A}_{K_g}(Y)|+|e^{2\al w_0(Y)}\mathcal{B}_{K_g}(Y)|\leq C(\al^{-\f{11}{6}}+\alpha^{-\f{7}{3}}e^{\al w_0(Y)})e^{-\al w_0(Y_1)}\|g\|_{L^\infty_{w_0}},\\
%&|\Im(\mathcal{A}_{K_g}(Y))|+|e^{2\al w_0(Y)}\mathrm{Im}(\mathcal{B}_{Kg}(Y))|\\
%&\quad\leq C(\al^{-\f{11}{6}}+\alpha^{-\f{7}{3}}e^{\al w_0(Y)})e^{-\al w_0(Y_1)}(c_i\|g\|_{L^\infty_{w_0}}+\|\Im g\|_{L^\infty_{w_0}}),
%\end{align*}
%and for $Y\in[Y^{+}+\d,Y_1-\d_0]$,
%\begin{align*}
%&|\mathcal{A}_{K_g}(Y)|+|e^{2\alpha w_0(Y)}\mathcal{B}_{K_g}(Y)|\leq C\Big(\al^{-\f{11}{6}}e^{-\al w_0(Y_1)}+\alpha^{-\f{7}{3}}e^{-\al \int_{Y}^{Y_1}F_r^\f12(Z)dZ}\Big)\|g\|_{L^\infty_{w_0}},\\
%&|\Im(\mathcal{A}_{K_g}(Y))|+|e^{2\alpha w_0(Y)}\mathrm{Im}(\mathcal{B}_{Kg}(Y))|\\
%&\quad\leq C\Big(\al^{-\f{11}{6}}e^{-\al w_0(Y_1)}+\alpha^{-\f{7}{3}}e^{-\al \int_{Y}^{Y_1}F_r^\f12(Z)dZ}\Big)(c_i\|g\|_{L^\infty_{w_0}}+\|\Im g\|_{L^\infty_{w_0}}),
%\end{align*}
%and for $Y\in[Y_1-\d_0,+\infty)$, $\mathcal{A}_{K_g}(Y)=\mathcal{A}_{K_g}(Y_1-\d_0).$
%\end{enumerate}
\end{proposition}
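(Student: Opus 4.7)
The plan is to apply the Green-type representation $P=\mathcal A_f A+\mathcal B_f B$ established in the previous subsection with $f=K_g$, and then exploit the compact support of $K_g$ together with Laplace-type estimates against the exponential factors carried by $A,B$. Since $\mathrm{supp}(g)\subseteq[Y_1,Y_1+\delta_0]\cup[Y_2-\delta_0,Y_2]$ and $(1-\chi)$ vanishes on $[Y_1-\delta_0,Y_2+\delta_0]$, the product $K_g(Y)=(1-\chi)(Y)\int_Y^{+\infty}g(Z)dZ$ is zero for $Y\geq Y_1-\delta_0$: either $\chi=1$ there, or for $Y>Y_2$ the $g$-integral vanishes. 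Hence $\mathrm{supp}(K_g)\subseteq[0,Y_1-\delta_0]$, and on $[0,Y_1-2\delta_0]$ the function $K_g$ reduces to the constant $G_1:=\int_{Y_1}^{Y_1+\delta_0}g+\int_{Y_2-\delta_0}^{Y_2}g$. A Laplace-type estimate on each $\delta_0$-piece (using that $w_0$ is increasing, so $e^{-\alpha w_0}$ is maximized at the left endpoint) yields
\[
|G_1|\leq C\alpha^{-1}\|g\|_{L^\infty_{w_0}}e^{-\alpha w_0(Y_1)},
\]
the $\alpha^{-1}$ being the essential gain that will later produce the sharp $\alpha^{-17/6}$ decay; a parallel bound for $\mathrm{Im}(G_1)$ replaces $\|g\|_{L^\infty_{w_0}}$ by $c_i\|g\|_{L^\infty_{w_0}}+\|\mathrm{Im}\,g\|_{L^\infty_{w_0}}$, because $\bar M^{-2}$ does not enter $K_g$ itself.

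The support of $\mathcal B_{K_g}$ and the constancy of $\mathcal A_{K_g}$ for $Y\geq Y_1-\delta_0$ are immediate from the defining integrals: for such $Y$ the integrand $AK_g\bar M^{-2}$ in $\mathcal B_{K_g}(Y)=-\kappa^{-1}\pi\int_Y^{+\infty}AK_g\bar M^{-2}$ vanishes, while in $\mathcal A_{K_g}(Y)=-\kappa^{-1}\pi\int_0^Y BK_g\bar M^{-2}$ the integration range collects no new mass past $Y_1-\delta_0$. Hence $\tilde P(Y)=\mathcal A_{K_g}(Y_1-\delta_0)A(Y)$ for large $Y$, and the decay of $A$ at infinity delivers $\tilde P(\infty)=0$, confirming that the constructed $\tilde P$ lies in $W^{2,\infty}_{w_0}$.

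For the amplitude bound on $\mathcal B_{K_g}$, I insert $|K_g|\leq|G_1|\mathbf 1_{[0,Y_1-\delta_0]}$ and apply the Airy asymptotics of the earlier lemma. The decisive case is $Y\in[Y^+,Y_1-\delta_0]$, where $|A(Z)|\lesssim\alpha^{-1/6}e^{-\alpha w_0(Z)}$ and a further Laplace estimate gives $\int_Y^{Y_1-\delta_0}e^{-\alpha w_0(Z)}dZ\lesssim\alpha^{-1}e^{-\alpha w_0(Y)}$; hence
\[
|\mathcal B_{K_g}(Y)|\lesssim\kappa^{-1}|G_1|\int_Y^{Y_1-\delta_0}|A(Z)|dZ\lesssim\alpha^{-2/3-1-7/6}\,e^{-\alpha w_0(Y_1)-\alpha w_0(Y)}\|g\|_{L^\infty_{w_0}}.
\]
Multiplying by $e^{2\alpha w_0(Y)}$ turns the exponential into $e^{\alpha(w_0(Y)-w_0(Y_1))}$, largest at $Y=Y_1-\delta_0$ where it equals $e^{-\alpha\int_{Y_1-\delta_0}^{Y_1}F_r^{1/2}}$, and the $\alpha$-powers combine to $\alpha^{-17/6}$. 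For $Y\in[0,Y^+]$ the weight $e^{2\alpha w_0(Y)}$ is uniformly bounded and only $\int|A|\lesssim\alpha^{-1/6}$ is available, but the exponentially small factor $e^{-\alpha w_0(Y_1-\delta_0)}$ separating $e^{-\alpha w_0(Y_1)}$ from the target weight absorbs the worse $\alpha$-power; the oscillatory region $\mathcal N^-$ is handled similarly. The $\mathcal A_{K_g}$ bound is symmetric: in the elliptic region $|B(Z)|\sim\alpha^{-1/6}e^{+\alpha w_0(Z)}$, and this growth cancels against the same Laplace decay to reproduce identical $\alpha$-powers. For the imaginary parts, since $\mathrm{supp}(K_g)\subset[0,\bar Y_2^*+\delta_0]$ the functions $A,B,\eta$ are real on the support by statement (4) of the Airy lemma, so $\mathrm{Im}(\mathcal A_{K_g})$ and $\mathrm{Im}(\mathcal B_{K_g})$ are driven only by $\mathrm{Im}(\bar M^{-2})=\mathcal O(c_i)$ and $\mathrm{Im}(K_g)$, producing the refined factor $c_i\|g\|_{L^\infty_{w_0}}+\|\mathrm{Im}\,g\|_{L^\infty_{w_0}}$.

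The main obstacle is the bookkeeping of $\alpha$-powers and exponential factors: the prefactor $\kappa^{-1}=\alpha^{-2/3}$, the Laplace gain $\alpha^{-1}$ on $G_1$, the Airy amplitude $\alpha^{-1/6}$ and the Laplace gain $\alpha^{-1}$ on the $Z$-integral must combine to $\alpha^{-17/6}$, and the exponential behaviour across the three Airy regimes $\mathcal N^-,\mathcal N,\mathcal N^+$ must be tracked uniformly. The dominant contribution always comes from the elliptic region $\mathcal N^+$ adjacent to $Y_1-\delta_0$; contributions from $\mathcal N^-\cup\mathcal N$ are absorbed in the exponential gap $e^{-\alpha w_0(Y_1-\delta_0)}$ between $e^{-\alpha w_0(Y_1)}$ and $e^{-\alpha\int_{Y_1-\delta_0}^{Y_1}F_r^{1/2}}$.
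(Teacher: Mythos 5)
Your proposal is correct and follows essentially the same route as the paper: reduce $K_g$ to $(1-\chi)$ times the constant $\int_{Y_1}^{Y_2}g\,dZ$ supported in $[0,Y_1-\delta_0]$ with the Laplace bound $C\alpha^{-1}e^{-\alpha w_0(Y_1)}\|g\|_{L^\infty_{w_0}}$, insert it into the Green representation \eqref{def: cA_f, cB_f}, and estimate region by region via Lemma \ref{lem: Ai, Bi}, with the elliptic region adjacent to $Y_1-\delta_0$ producing the $\alpha^{-17/6}e^{-\alpha\int_{Y_1-\delta_0}^{Y_1}F_r^{1/2}}$ rate and the contributions from $\mathcal N^-\cup\mathcal N$ absorbed through the exponentially small gap $e^{-\alpha w_0(Y_1-\delta_0)}$, exactly as in the paper. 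One cosmetic slip: $A$ and $B$ are not themselves real on $[0,Y_1-\delta_0]$ (they carry the complex amplitude $E\propto\bar M$); only $\eta$, $Ai(\kappa\eta)$, $Bi(\kappa\eta)$ are real there, but since the integrands reduce to these real factors times $\bar M^{-1}$, whose imaginary part is $O(c_i)$ away from the critical layer, your imaginary-part bound $c_i\|g\|_{L^\infty_{w_0}}+\|\Im g\|_{L^\infty_{w_0}}$ coincides with the paper's.
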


\begin{proof}
According to $\mathrm{supp}(g)=[Y_1, Y_1+\d_0]\cup[Y_2-\d_0, Y_2]$ and $\mathrm{supp}(1-\chi)=[0,Y_1-\delta_0]\cup[Y_2+\delta_0,+\infty)$, we have 
\begin{align*}
K_g(Y)=\left\{
\begin{aligned}
&(1-\chi)\int_{Y_1}^{Y_2}g(Z)dZ,\quad Y\in[0,Y_1-\delta_0],\\
&0,\quad Y\geq Y_1-\delta_0 .
\end{aligned}
\right.
\end{align*}
Then we can obtain that for $Y\in[0,Y_1-\delta_0]$,
\begin{align}\label{eq:g-nonlocal}
\begin{split}
\left|K_g(Y)\right|\leq& \int_{Y_1}^{Y_2}|g(Z)|dZ\leq C\|g\|_{L^\infty_{w_0}}\int_{Y_1}^{Y_2}e^{-\alpha w_0(Z)}dZ
\leq C\alpha^{-1}e^{-\alpha w_0(Y_1)}\|g\|_{L^\infty_{w_0}}.
\end{split}
\end{align}
By the definition in \eqref{def: cA_f, cB_f}, we write
\begin{align}\label{def: (cA_Kg, cB_Kg)}
\begin{split}
&\mathcal{A}_{Kg}(Y)=-\kappa^{-1}\pi\int_0^YBi(\kappa\eta(Z))K_g(Z)\bar M^{-1}(Z)\partial_Z\eta(Z)^{-\f12} dZ,\\
&\mathcal{B}_{Kg}(Y)=-\kappa^{-1}\pi\int_Y^{+\infty}Ai(\kappa\eta(Z))K_g(Z)\bar M^{-1}(Z)\partial_Z\eta(Z)^{-\f12}dZ.
\end{split}
\end{align}
For convenience, we drop subscript in $\mathcal{A}_{Kg}$ and $\mathcal{B}_{Kg}$, and write $\mathcal{A}$ and $\mathcal{B}$ for short, we assume $g$ is a real function. For the case when $g$ is a complex function, we use the similar decomposition \eqref{relation: (cA, cB)} and relation \eqref{relation: (cA^1, cB^1)-(cA^2, cB^2)} to get the final result.\smallskip

\underline{Estimates of $\mathcal{A}(Y)$.} For $Y\in[0, Y^{-}]$, we get by Lemma \ref{lem: Ai, Bi}  that
\begin{align}\label{est: Airy-2, Ai-1}
\begin{split}
|\mathcal{A}(Y)|\leq& C\kappa^{-1}\alpha^{-1}e^{-\alpha w_0(Y_1)}\|g\|_{L^\infty_{w_0}}\int_0^Y |Bi(\kappa\eta(Z))(\pa_Z\eta(Z))^{-\f12}|dZ\\
\leq& C\alpha^{-\f{11}{6}}e^{-\alpha w_0(Y_1)}\|g\|_{L^\infty_{w_0}}.
\end{split}
\end{align}
For $Y\in[Y^{-}, Y^{+}+\d]$, we write
\begin{align*}
\mathcal{A}(Y)=\mathcal{A}(Y^{-})-\kappa^{-1}\pi\int_{Y^{-}}^YBi(\kappa\eta(Z))K_g(Z)\bar M^{-1}(Z)\partial_Z\eta(Z)^{-\f12} dZ=\mathcal{A}(Y^{-})+I_0.
\end{align*}
By \eqref{est: Airy-2, Ai-1} and  Lemma \ref{lem: Ai, Bi}, we have
\begin{align*}
|\mathcal{A}(Y^{-})|\leq C\alpha^{-\f{11}{6}}e^{-\alpha w_0(Y_1)}\|g\|_{L^\infty_{w_0}},\quad |I_0|\leq C\alpha^{-\f{7}{3}}e^{-\alpha w_0(Y_1)}e^{\al w_0(Y)}\|g\|_{L^\infty_{w_0}},
\end{align*}
which gives that for $Y\in[Y^{-}, Y^{+}+\d]$,
\begin{align}\label{est: Airy-2, Ai-2}
|\mathcal{A}(Y)|\leq C\alpha^{-\f{7}{3}}e^{-\al \int_{Y^++\d}^{Y_1} F_r^\f12(Z)dZ}\|g\|_{L^\infty_{w_0}}.
\end{align}

For $Y\in[Y^{+}+\d,Y_1-\delta_0]$, we notice that 
\begin{align*}
\mathcal{A}(Y)=\mathcal{A}(Y^++\d)-\kappa^{-1}\pi \int_{Y^++\d}^YBi(\kappa\eta(Z))K_g(Z)\bar M^{-1}(Z)\partial_Z\eta(Z)^{-\f12}dZ=\mathcal{A}(Y^++\d)+I_1.
\end{align*}
By Lemma \ref{lem: Ai, Bi}, we know that for $Z\geq Y^{+}+\d$, $$|e^{-\al w_0(Z)}Bi(\kappa\eta(Z))(\pa_Z\eta)^{-\f12}|\leq C\kappa^{-\f14},$$
which along with \eqref{eq:g-nonlocal} implies 
\begin{align*}
|I_1|\leq&  C\kappa^{-\f54}\alpha^{-1}e^{-\alpha w_0(Y_1)}\|g\|_{L^\infty_{w_0}}\int_{Y^++\d}^Ye^{\alpha w_0(Z)} dZ
\leq C\alpha^{-\f{17}{6}}e^{-\alpha\int_Y^{Y_1}F_r^\f12(Z)dZ}\|g\|_{L^\infty_{w_0}}.
\end{align*}
Thus, we use  \eqref{est: Airy-2, Ai-2} to infer that for $Y\in[Y^{+}+\d,Y_1-\delta_0]$,
\begin{align}\label{est: Airy-2, Ai-3}
|\mathcal{A}(Y)|\leq C\alpha^{-\f{17}{6}}e^{-\alpha\int_{Y_1-\d_0}^{Y_1}F_r^\f12(Z)dZ}\|g\|_{L^\infty_{w_0}}.
\end{align}
Moreover, by \eqref{eq:g-nonlocal}, we know that $\mathcal{A}(Y)=\mathcal{A}(Y_1-\delta_0)$ for any $Y> Y_1-\delta_0$. From which and \eqref{est: Airy-2, Ai-1}-\eqref{est: Airy-2, Ai-3},  we deduce that for any $Y\geq 0$,
\begin{align*}
|\mathcal{A}(Y)|\leq C\alpha^{-\f{17}{6}}e^{-\alpha\int_{Y_1-\d_0}^{Y_1} F_r^\f12(Z)dZ}\|g\|_{L^\infty_{w_0}}.
\end{align*}

\underline{Estimates of $\mathcal{B}(Y)$.} Again by \eqref{eq:g-nonlocal}, we have that $\mathcal{B}(Y)\equiv0$ for any $Y>Y_1-\delta_0$. 
For $Y\in [Y^{+}+\d ,Y_1-\delta_0]$, we first notice that 
\begin{align*}
\mathcal{B}(Y)=-\kappa^{-1}\pi\int_Y^{Y_1-\d_0}Ai(\kappa\eta(Z))K_g(Z)\bar M^{-1}(Z)\partial_Z\eta(Z)^{-\f12}dZ.
\end{align*}
By Lemma \ref{lem: Ai, Bi}, we know that for $Z\in[Y^{+}+\d, Y_1-\d_0]$,
\begin{align*}
|e^{\al w_0}Ai(\kappa \eta(Z)) (\pa_Z\eta)^{-\f12}|\leq C\kappa^{-\f14},
\end{align*}
which along with \eqref{eq:g-nonlocal} implies that for $Y\in[Y^{+}+\d,Y_1-\delta_0]$,
\begin{align*}
|\mathcal{B}(Y)|\leq& C\kappa^{-\f54}\alpha^{-1}e^{-\al w_0(Y_1)}\|g\|_{L^\infty_{w_0}}\int_{Y}^{Y_1-\delta_0}e^{-\al w_0(Z)}dZ\\
\leq& C\alpha^{-\f{17}{6}} e^{-\alpha\int_Y^{Y_1}F_r^\f12(Z)dZ}e^{-2\al w_0(Y)}\|g\|_{L^\infty_{w_0}}.
\end{align*}

For $Y\in[Y^{-}, Y^{+}+\d]$, we can write 
\begin{align*}
\mathcal{B}(Y)=&\mathcal{B}(Y^++\d)-\kappa^{-1}\pi\int_Y^{Y^++\d}Ai(\kappa\eta(Z))K_g(Z)\bar M^{-1}(Z)\partial_Z\eta(Z)^{-\f12}dZ.
\end{align*}
By Lemma \ref{lem: Ai, Bi}  and similar procedure of $\mathcal{A}(Y)$ in $[Y^{-}, Y^{+}+\d]$,  we have for $Y\in[Y^{-}, Y^{+}+\d]$,
\begin{align*}
|\mathcal{B}(Y)|\leq &C\alpha^{-\f{17}{6}} e^{-\alpha\int_{Y^{+}+\d}^{Y_1}F_r^\f12(Z)dZ}e^{-2\al w_0(Y^{+}+\d)}\|g\|_{L^\infty_{w_0}}+C\al^{-\f73}e^{-\al w_0(Y_1)}e^{-\al w_0(Y)}\|g\|_{L^\infty_{w_0}}\\
\leq&C\al^{-\f73}e^{-\al w_0(Y_1)}e^{-\al w_0(Y)}\|g\|_{L^\infty_{w_0}}.
\end{align*}

For $Y\in[0, Y^{-}]$, we write 
\begin{align*}
\mathcal{B}(Y)=\mathcal{B}(Y^-)-\kappa^{-1}\pi\int_Y^{Y^-}Ai(\kappa\eta(Z))K_g(Z)\bar M^{-1}(Z)\partial_Z\eta(Z)^{-\f12}dZ.
\end{align*}
By Lemma \ref{lem: Ai, Bi} and a similar argument as above, we can obtain that for $Y\in[0, Y^{-}]$,
\begin{align*}
|\mathcal{B}(Y)|\leq C\kappa^{-1}\kappa^{-\f14}\al^{-1}e^{-\al w_0(Y_1)}\|g\|_{L^\infty_{w_0}}+
C\al^{-\f73}e^{-\al w_0(Y_1)}\|g\|_{L^\infty_{w_0}}\leq C\al^{-\f{11}{6}}e^{-\al w_0(Y_1)}\|g\|_{L^\infty_{w_0}}.
\end{align*}
Thus, we infer that for any $Y\geq 0$,
\begin{align*}
|\mathcal{B}(Y)|\leq C\al^{-\f{17}{6}}e^{-\alpha\int_{Y_1-\d_0}^{Y_1} F_r^\f12(Z)dZ}e^{-2\al w_0(Y)}\|g\|_{L^\infty_{w_0}}.
\end{align*}

On the other hand, by the definition of $\bar{M},~F(Y)$ and $K_g(Y)=0$ for $Y\geq Y_1-\delta_0$, we know that the imaginary part of $\mathcal{A}$ and $\mathcal{B}$  comes from the function $\bar{M}$. That is,
\begin{align*}
&\mathrm{Im}(\mathcal{A}(Y))=-\kappa^{-1}\pi \int_0^YBi(\kappa\eta(Z))K_g(Y)\mathrm{Im}(\bar M^{-1}(Z))\partial_Z\eta(Z)^{-\f12}dZ,\\
&\mathrm{Im}(\mathcal{B}(Y))=-\kappa^{-1}\pi\int_Y^{+\infty}Ai(\kappa\eta(Z))K_g(Y)\mathrm{Im}(\bar M^{-1}(Z))\partial_Z\eta(Z)^{-\f12}dZ.
\end{align*}
Moreover, for any $Y\in[0,Y_1-\delta_0]$, we have $|\mathrm{Im}(\bar{M}(Y)^{-1})|\leq Cc_i.$
Therefore, by a similar argument as above and the decomposition \eqref{relation: (cA, cB)}-\eqref{relation: (cA^1, cB^1)-(cA^2, cB^2)}, we  obtain the estimates for the imaginary part.
\end{proof}

 For the second kind of source term in \eqref{def: special source terms}, we have the following results.
 
\begin{proposition}\label{pro: Airy-3}
Let $(\alpha, c)\in\mathbb H_0$ and $f(Y)$ be a complex-valued function  with $e^{\al w_0}|f|\in L^\infty$ and $\mathrm{supp}(f)\subseteq[Y_1^*-\d_0, Y_1^*]\cup [Y_2^*, Y_2^*+\d_0]$. There exists a solution $\tilde P\in W^{2,\infty}_{w_0}$ to the system \eqref{sys:app-f-1} with the source term $f(Y)$. More precisely, there exist $\mathcal{A}_f(Y)$ and $\mathcal{B}_f(Y)$ with $(|\cA_f|, |e^{2\al w_0} \cB_f|)\in L^\infty$ such that 
\begin{align*}
\tilde{P}(Y)=\mathcal{A}_f(Y)A(Y)+\mathcal{B}_f(Y)B(Y),
\end{align*}
and the following estimates hold

\begin{enumerate}
\item
for $Y\geq 0$, we have
\begin{align*}
&|\mathcal{A}_f(Y)|\leq C\alpha^{-\f56}\|f\|_{L^\infty_{w_0}},\quad\mathrm{Supp}(\mathcal{A}_f)\subseteq[Y_1^*-\delta_0,+\infty),\\
&|\mathcal{B}_f(Y)|\leq C\alpha^{-\f{11}{6}}e^{-2\alpha w_0(Y)}\|f\|_{L^\infty_{w_0}},\quad\mathrm{Supp}(\mathcal{B}_f)\subseteq[0,Y_2^*+\delta_0].
\end{align*}

\item in particular, for $Y\in[0, Y_2^*]$, we have
\begin{align*}
&|\mathcal{A}_f(Y)|\leq C\alpha^{-\f56}\|f\|_{L^\infty_{w_0}([Y_1^*-\delta_0,Y^*_1])}.
\end{align*}

\item in particular,  for $Y\in[Y_1^*-\d_0,Y_2^*]$, we have
\begin{align*}
&|\mathcal{B}_f(Y)|\leq C\alpha^{-\f{11}{6}}e^{-2\alpha w_0(Y)}\Big(\|f\|_{L^\infty_{w_0}([Y_1^*-\delta_0,Y^*_1])}+e^{-2\al \int_{Y}^{Y_2^*} F_r^\f12(Z)dZ}\|f\|_{L^\infty_{w_0}}\Big),
\end{align*}
and for $Y\in[0, Y_1^*-\d_0]$, we have
\begin{align*}
&|\mathcal{B}_f(Y)|\leq C\alpha^{-\f{11}{6}}\Big(e^{-2\alpha w_0(Y_1^*-\delta_0)}\|f\|_{L^\infty_{w_0}([Y_1^*-\delta_0,Y^*_1])}+e^{-2\alpha w_0(Y_2^*)}\|f\|_{L^\infty_{w_0}}\Big).
\end{align*}
\end{enumerate}

\end{proposition}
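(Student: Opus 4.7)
The plan is to apply the explicit representations \eqref{def: cA_f, cB_f},
\begin{align*}
\mathcal A_f(Y)=-\kappa^{-1}\pi\int_0^Y Bi(\kappa\eta(Z))\,f(Z)\bar M^{-1}(Z)(\pa_Z\eta)^{-\f12}\,dZ,\qquad
\mathcal B_f(Y)=-\kappa^{-1}\pi\int_Y^{+\infty} Ai(\kappa\eta(Z))\,f(Z)\bar M^{-1}(Z)(\pa_Z\eta)^{-\f12}\,dZ,
\end{align*}
and read off all the bounds by combining three facts: (i) $\mathrm{supp}(f)\subset[Y_1^*-\delta_0,Y_1^*]\cup[Y_2^*,Y_2^*+\delta_0]$ sits entirely in the subsonic regime uniformly above $Y^+$, since $\bar Y_1^{**}>Y_0$; (ii) on this set $\bar M^{-1}$ and $(\pa_Y\eta)^{-\f12}$ are uniformly bounded, and by Lemma \ref{lem: Ai, Bi}(3), $|Ai(\kappa\eta)|\lesssim \alpha^{-\f16}e^{-\alpha w_0}$ while $|Bi(\kappa\eta)|\lesssim \alpha^{-\f16}e^{\alpha w_0}$; (iii) $w_0$ is increasing on the subsonic region with $F_r^{\f12}$ bounded below away from $Y_0$. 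The support properties $\mathrm{supp}(\mathcal A_f)\subset[Y_1^*-\delta_0,+\infty)$ and $\mathrm{supp}(\mathcal B_f)\subset[0,Y_2^*+\delta_0]$ are immediate from the integration limits together with $\mathrm{supp}(f)$.

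For $\mathcal A_f$ the integrand is controlled pointwise on $\mathrm{supp}(f)$ by $C\alpha^{-\f16}e^{\alpha w_0(Z)}\cdot e^{-\alpha w_0(Z)}\|f\|_{L^\infty_{w_0}}=C\alpha^{-\f16}\|f\|_{L^\infty_{w_0}}$; coupling this with $\kappa^{-1}\sim\alpha^{-\f23}$ and the $O(1)$ length of the support gives $|\mathcal A_f|\lesssim \alpha^{-\f56}\|f\|_{L^\infty_{w_0}}$, which is claim (1). For $Y\leq Y_2^*$ only the left piece $[Y_1^*-\delta_0,Y_1^*]$ enters the integral $\int_0^Y$, giving the sharpening in (2).

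The delicate step is $\mathcal B_f$: a naive $L^\infty$ bound would produce only $\alpha^{-\f56}$, whereas we need $\alpha^{-\f{11}{6}}$. The missing $\alpha^{-1}$ must come from the exponential weight. Using the $Ai$ asymptotic, on $\mathrm{supp}(f)$ the integrand is bounded by $C\kappa^{-1}\alpha^{-\f16}e^{-2\alpha w_0(Z)}\|f\|_{L^\infty_{w_0}}$. On each connected component of the support I change variables $\sigma=w_0(Z)-w_0(Z_0)$ with $d\sigma=F_r^{\f12}(Z)\,dZ$, which is legal because $F_r^{\f12}$ is bounded below on $[\bar Y_1^{**},Y_2^*+\delta_0]$; this converts $\int e^{-2\alpha w_0(Z)}dZ$ into at most $C\alpha^{-1}e^{-2\alpha w_0(Z_0)}$ where $Z_0$ is the left endpoint of integration on the component. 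The three cases of (3) are then bookkeeping: for $Y\in[Y_1^*-\delta_0,Y_1^*]$ I take $Z_0=Y$ on the left component, producing the factor $e^{-2\alpha w_0(Y)}$, while the right-component contribution anchored at $Y_2^*$ is written as $e^{-2\alpha w_0(Y_2^*)}=e^{-2\alpha w_0(Y)}e^{-2\alpha\int_Y^{Y_2^*}F_r^{\f12}(Z)\,dZ}$; for $Y\in[Y_1^*,Y_2^*]$ the left component no longer contributes and the same right-component expansion yields the stated form; for $Y\in[0,Y_1^*-\delta_0]$ both components contribute, anchored at $Y_1^*-\delta_0$ and $Y_2^*$ respectively, and since $Y$ may lie below $Y_0$ we no longer extract a prefactor $e^{-2\alpha w_0(Y)}$.

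The main obstacle is keeping the anchor choices consistent across the three cases and verifying that the change of variables really delivers the extra $\alpha^{-1}$; both hinge on the monotonicity of $w_0$ and the positive lower bound of $F_r^{\f12}$ on $[\bar Y_1^{**},Y_2^*+\delta_0]$. Once these geometric facts are in hand, each case reduces to a short computation parallel to the arguments already carried out in the proofs of Propositions \ref{pro: Airy-1} and \ref{pro: Airy-2}.
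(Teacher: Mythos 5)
Your proposal is correct and follows essentially the same route as the paper: both use the explicit representation \eqref{def: cA_f, cB_f}, the Airy asymptotics of Lemma \ref{lem: Ai, Bi} on $[Y^{+},+\infty)$ together with the boundedness of $\bar M^{-1}$ and $(\partial_Y\eta)^{-\f12}$ on $\mathrm{supp}(f)$ to get the $\kappa^{-\f54}\sim\al^{-\f56}$ prefactor, and the integration of $e^{-2\al w_0}$ (your change of variables is exactly the paper's bound $\int e^{-2\al w_0(Z)}dZ\lesssim \al^{-1}e^{-2\al w_0(Y)}$) to gain the extra $\al^{-1}$ for $\mathcal B_f$, with the same support bookkeeping and the observations $\mathcal A_f\equiv\mathcal A_f(Y_1^*)$ on $[Y_1^*,Y_2^*]$ and $\mathcal B_f\equiv\mathcal B_f(Y_1^*-\d_0)$ on $[0,Y_1^*-\d_0]$ for the refined cases (2) and (3).
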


\begin{proof}
By\eqref{def: cA_f, cB_f}, we have
\begin{align*}
&\mathcal{A}_f(Y)=-\kappa^{-1}\pi \int_0^YBi(\kappa\eta(Z))f(Z)\bar{M}^{-1}(Z)\partial_Z\eta(Z)^{-\f12} dZ,\\
&\mathcal{B}_f(Y)=-\kappa^{-1}\pi\int_Y^{+\infty}Ai(\kappa\eta(Z))f(Z)\bar{M}^{-1}(Z)\partial_Z\eta(Z)^{-\f12}dZ.
\end{align*}
Again we drop subscript in $\mathcal{A}_f$ and $\mathcal{B}_f$, and write $\mathcal{A}$ and $\mathcal{B}$ for short, and  assume $f$ is a real function.

Recall that $\mathrm{supp}(f)=[Y_1^*-\delta_0,Y_1^*]\cup[Y_2^*,Y_2^*+\delta_0]$. Then we have
\begin{align*}
\mathrm{supp}(\mathcal{A})\subseteq[Y_1^*-\delta_0,+\infty),\quad \mathrm{supp}(\mathcal{B})\subseteq[0,Y_2^*+\delta_0].
\end{align*}

\underline{Estimates of $\mathcal{A}(Y)$.} For $Y\in[Y_1^*-\delta_0,Y_1^*]$, we notice that 
\begin{align*}
\mathcal{A}(Y)=-\kappa^{-1}\pi \int_{Y_1^*-\delta_0}^YBi(\kappa\eta(Z))f(Z)\bar{M}^{-1}(Z)\partial_Z\eta(Z)^{-\f12} dZ.
\end{align*}
By the fact $[Y_1^*-\delta_0,Y_1^*]\subset[Y^{+}, +\infty)$ and Lemma \ref{lem: Ai, Bi}, we derive  that for $Y\in [Y_1^*-\delta_0,Y_1^*]$,
\begin{align*}
|\mathcal{A}(Y)|\leq& C\kappa^{-1}\kappa^{-\f14}\int_{Y_1^*-\delta_0}^Ye^{\alpha w_0(Z)}|f(Z)|dZ
\leq C\alpha^{-\f56}\|f\|_{L^\infty_{w_0}([Y_1^*-\delta_0,Y^*_1])}.
\end{align*}
Notice that $\mathcal{A}(Y)=\mathcal{A}(Y_1^*)$ for $Y\in[Y_1^*,Y_2^*]$. Then we infer that for any $Y\in[0,Y_2^*]$,
\begin{align}\label{eq:A-f-Y1*}
|\mathcal{A}(Y)|
\leq C\alpha^{-\f56}\|f\|_{L^\infty_{w_0}([Y_1^*-\delta_0,Y^*_1])}.
\end{align}

For $Y\in[Y_2^*,+\infty)$, we  write 
\begin{align*}
\mathcal{A}(Y)=\mathcal{A}(Y_2^*)-\kappa^{-1}\pi \int_{Y_2^*}^YBi(\kappa\eta(Z))f(Z)\bar{M}^{-1}(Z)\partial_Z\eta(Z)^{-\f12}dZ=\mathcal{A}(Y_2^*)+I_1.
\end{align*}
Again by Lemma \ref{lem: Ai, Bi}, we obtain that for $Y\in[Y_2^*,+\infty)$,
\begin{align*}
|I_1|
&\leq C\kappa^{-\f54}\int_{Y_2^*}^Ye^{\alpha w_0(Z)}|f(Z)|dZ\leq C\alpha^{-\f56}\|f\|_{L^\infty_{w_0}}.
\end{align*}
Therefore, we obtain that for $Y\in[Y_2^*,+\infty)$,
\begin{align}\label{eq:A-f-Y2*}
|\mathcal{A}(Y)|\leq C\alpha^{-\f56}\|f\|_{L^\infty_{w_0}}.
\end{align}
Then by \eqref{eq:A-f-Y1*} and \eqref{eq:A-f-Y2*}, we obtain that
\begin{align*}
|\mathcal{A}(Y)|\leq 
\left\{
\begin{aligned}
&C\alpha^{-\f56}\|f\|_{L^\infty_{w_0}},\quad Y\geq 0,\\
&C\alpha^{-\f56}\|f\|_{L^\infty_{w_0}([Y_1^*-\delta_0,Y^*_1])},\quad Y\in[0, Y_2^*],
\end{aligned}
\right.
\end{align*}

\underline{Estimates of $\mathcal{B}(Y)$.} For $Y\in[Y_2^*,Y_2^*+\delta_0]$, we have 
\begin{align*}
\mathcal{B}(Y)=-\kappa^{-1}\pi\int_Y^{Y_2^*+\delta_0}Ai(\kappa\eta(Z))f(Z)\bar{M}^{-1}(Z)\partial_Z\eta(Z)^{-\f12}dZ.
\end{align*}
By Lemma \ref{lem: Ai, Bi}, we have that for $Y\in[Y_2^*,Y_2^*+\delta_0]$,
\begin{align*}
|\mathcal{B}(Y)|\leq& C\kappa^{-\f54}\int_Y^{Y_2^*+\delta_0}e^{-\alpha w_0(Z)}|f(Z)|dZ\\
\leq& C\kappa^{-\f54}\|f\|_{L^\infty_{w_0}}\int_Y^{Y_2^*+\delta_0}e^{-2\alpha w_0(Z)}dZ
\leq C\alpha^{-\f{11}{6}}e^{-2\alpha w_0(Y)}\|f\|_{L^\infty_{w_0}}.
\end{align*}
Notice that $\mathcal{B}(Y)=\mathcal{B}(Y_2^*)$ for $Y\in[Y_1^*,Y_2^*]$. Then we have for $Y\in[Y_1^*,Y_2^*]$,
\begin{align*}
|\mathcal{B}(Y)|\leq&C\alpha^{-\f{11}{6}}e^{-2\alpha w_0(Y_2^*)}\|f\|_{L^\infty_{w_0}}
\leq C\alpha^{-\f{11}{6}}e^{-2\alpha w_0(Y)}e^{-2\al \int_{Y}^{Y_2^*} F_r^\f12(Z)dZ}\|f\|_{L^\infty_{w_0}}.
\end{align*}

For $Y\in[Y_1^*-\delta_0,Y_1^*]$, we can write 
\begin{align*}
\mathcal{B}(Y)=\mathcal{B}(Y_1^*)-\kappa^{-1}\pi\int_Y^{Y_1^*}Ai(\kappa\eta(Z))f(Z)\bar{M}^{-1}(Z)\partial_Z\eta(Z)^{-\f12})dZ=\mathcal{B}(Y_1^*)+I_2.
\end{align*}
Again by  Lemma \ref{lem: Ai, Bi}, we have that for $Y\in[Y_1^*-\delta_0,Y_1^*]$,
\begin{align*}
|I_2|
&\leq C\kappa^{-\f54}\|f\|_{L^\infty_{w_0}([Y_1^*-\delta_0,Y^*_1])}\int_{Y}^{Y_1^*}e^{-2\alpha w_0(Z)}dZ\leq C\alpha^{-\f{11}{6}}e^{-2\alpha w_0(Y)}\|f\|_{L^\infty_{w_0}([Y_1^*-\delta_0,Y^*_1])}.
\end{align*}
Then we obtain that for any $Y\in[Y_1^*-\delta_0,Y_1^*]$,
\begin{align*}
|\mathcal{B}(Y)|\leq& C\alpha^{-\f{11}{6}}e^{-2\alpha w_0(Y)}\|f\|_{L^\infty_{w_0}([Y_1^*-\delta_0,Y^*_1])}+C\alpha^{-\f{11}{6}}e^{-2\alpha w_0(Y_2^*)}\|f\|_{L^\infty_{w_0}}\\
\leq&C\alpha^{-\f{11}{6}}e^{-2\alpha w_0(Y)}(\|f\|_{L^\infty_{w_0}([Y_1^*-\delta_0,Y^*_1])}+e^{-2\al \int_{Y}^{Y_2^*} F_r^\f12(Z)dZ}\|f\|_{L^\infty_{w_0}}).
\end{align*}
Notice that $\mathcal{B}(Y)=\mathcal{B}(Y_1^*-\delta_0)$ for $Y\leq Y_1^*-\delta_0$. Thus, we deduce that for $Y\leq Y_1^*-\delta_0$,
\begin{align*}
|\mathcal{B}(Y)| \leq&C\alpha^{-\f{11}{6}}(e^{-2\alpha w_0(Y_1^*-\delta_0)}\|f\|_{L^\infty_{w_0}([Y_1^*-\delta_0,Y^*_1])}+e^{-2\alpha w_0(Y_2^*)}\|f\|_{L^\infty_{w_0}}).
\end{align*}

Gathering the above results about $\mathcal{B}(Y)$, we infer that for any $Y\geq 0$,
\begin{align*}
|\mathcal{B}(Y)|\leq C\alpha^{-\f{11}{6}}e^{-2\alpha w_0(Y)}\|f\|_{L^\infty_{w_0}}.
\end{align*}
In particular, for $Y\in[Y_1^*-\d_0, Y_2^*]$,
\begin{align*}
|\mathcal{B}(Y)|\leq C\alpha^{-\f{11}{6}}e^{-2\alpha w_0(Y)}(\|f\|_{L^\infty_{w_0}([Y_1^*-\delta_0,Y^*_1])}+e^{-2\al \int_{Y}^{Y_2^*} F_r^\f12(Z)dZ}\|f\|_{L^\infty_{w_0}}),
\end{align*}
and for $Y\in[0, Y_1^*-\d_0]$,
\begin{align*}
|\mathcal{B}(Y)| \leq&C\alpha^{-\f{11}{6}}(e^{-2\alpha w_0(Y_1^*-\delta_0)}\|f\|_{L^\infty_{w_0}([Y_1^*-\delta_0,Y^*_1])}+e^{-2\alpha w_0(Y_2^*)}\|f\|_{L^\infty_{w_0}}).
\end{align*}
\end{proof}

For the third kind of terms in \eqref{def: special source terms}, we obtain the following results.

\begin{proposition}\label{pro: Airy-4}
	Let $(\alpha,c)\in\mathbb H_0$ and $f(Y)$ be a complex-valued function with $e^{\al w_0}|f|\in L^\infty$ and $\mathrm{supp}(f)\subseteq[Y_1^*-\d_0, Y_1^*]\cup [Y_2^*, Y_2^*+\d_0]$. There exists a solution $\tilde P\in W^{2,\infty}_{w_0}$ to the system \eqref{sys:app-f-1} with the source term $K_f(Y)=(1-\chi)\int_Y^{+\infty}f(Z)dZ$. More precisely, 
there exist $\mathcal{A}_{K_f}(Y)$ and $\mathcal{B}_{K_f}(Y)$ with $(|\cA_{Kf}|, |e^{2\al w_0} \cB_{Kf}|)\in L^\infty$ such that 
\begin{align*}
\tilde{P}(Y)=\mathcal{A}_{K_f}(Y)A(Y)+\mathcal{B}_{K_f}(Y)B(Y),
\end{align*}
and the following estimates hold

\begin{enumerate}
\item for $Y\in[0, +\infty)$, we have $\mathrm{supp}(\mathcal B_{K_f})\subseteq[0,Y_2^*+\delta_0]$ and 
\begin{align}\label{est: Airy-41}
\begin{split}
&|\mathcal{A}_{K_f}(Y)|+\al e^{2\alpha w_0(Y)}|\mathcal B_{K_f}(Y)|\leq  C\alpha^{-\frac{11}{6}}\|f\|_{L^\infty_{w_0}}.
\end{split}
\end{align}
\item in particular, for $Y\in[0, Y_2^*]$, we have 
\begin{align}\label{est: Airy-42}
\begin{split}
&|\mathcal{A}_{K_f}(Y)|\leq C\al^{-\f{11}{6}}\Big(\|f\|_{L^\infty_{w_0}([Y_1^*-\delta_0,Y_1^*])}+e^{-\al w_0(Y_2^*)}e^{\al w_0(Y)}\|f\|_{L^\infty_{w_0}}\Big).
\end{split}
\end{align}

\item in particular,  for $Y\in[Y_1^*-\d_0,Y_2^*]$, we have
\begin{align*}
&|\mathcal{B}_f(Y)|\leq C\al^{-\f{17}{6}}e^{-2\al w_0(Y)}\Big(\|f\|_{L^\infty_{w_0}([Y_1^*-\d_0, Y_1^*])}+e^{-\al\int_{Y}^{Y_2^*}F_r^\f12(Z)dZ}\|f\|_{L^\infty_{w_0}}\Big),
\end{align*}
and for $Y\in[Y^{+}+\d,Y_1^*-\d_0]$, we have
\begin{align}\label{est: Airy-43}
\begin{split}
&|\mathcal{B}_f(Y)|\leq C\al^{-\f{17}{6}}e^{-\al w_0(Y)}\left(e^{-\al w_0(Y_1^*-\d_0)}\|f\|_{L^\infty_{w_0}([Y_1^*-\delta_0,Y_1^*])}+e^{-\al w_0(Y_2^*)}\|f\|_{L^\infty_{w_0}}\right),\end{split}
\end{align}
and for $Y\in[0, Y^{+}+\d]$, we have
\begin{align*}
	&|\mathcal B_{K_f}(Y)|\leq C\alpha^{-\f{11}{6}}\left(\|f\|_{L^\infty_{w_0}([Y_1^*-\delta_0,Y_1^*])}e^{-\alpha w_0(Y_1^*-\d_0)} +\|f\|_{L^\infty_{w_0}}e^{-\alpha w_0(Y_2^*)}\right).
\end{align*}

\end{enumerate}
\end{proposition}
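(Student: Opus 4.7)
The plan is to parallel the proof of Proposition \ref{pro: Airy-2} (which handles the source term $K_g$), adapted to the new geometry in which $f$ is supported both to the \emph{left} and to the \emph{right} of the region $\{\chi=1\}$. Consequently, unlike $K_g$, the function $K_f$ is nontrivial on \emph{two} disjoint intervals: the far-left piece $[0,Y_1-\delta_0]$ (where both halves of $\mathrm{supp}(f)$ contribute to the tail integral) and a far-right piece $\approx[Y_2+\delta_0,Y_2^*+\delta_0]$ (where only the right half of $\mathrm{supp}(f)$ contributes). The support statement $\mathrm{supp}(\mathcal B_{K_f})\subseteq[0,Y_2^*+\delta_0]$ is then immediate from the fact that $K_f(Y)=0$ for $Y>Y_2^*+\delta_0$.

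The first step is to derive pointwise bounds on $K_f$. Since $w_0$ is increasing on $[Y_0,\infty)$ with nonvanishing derivative, integrating $|f(Z)|\leq e^{-\alpha w_0(Z)}\|f\|_{L^\infty_{w_0}}$ over each half of $\mathrm{supp}(f)$ gives
\begin{align*}
\Bigl|\int_{Y_1^*-\delta_0}^{Y_1^*}f\,dZ\Bigr|&\lesssim \alpha^{-1}e^{-\alpha w_0(Y_1^*-\delta_0)}\|f\|_{L^\infty_{w_0}([Y_1^*-\delta_0,Y_1^*])},\\
\Bigl|\int_{Y_2^*}^{Y_2^*+\delta_0}f\,dZ\Bigr|&\lesssim \alpha^{-1}e^{-\alpha w_0(Y_2^*)}\|f\|_{L^\infty_{w_0}}.
\end{align*}
Combining with the cut-off $(1-\chi)$ and splitting into the three cases $Y\in[0,Y_1-\delta_0]$, $Y\in[Y_2+\delta_0,Y_2^*]$, $Y\in[Y_2^*,Y_2^*+\delta_0]$, one obtains sharp pointwise bounds on $K_f(Y)$: both source halves contribute in the first case, only the right half in the second, and in the third the bound acquires the local weight $e^{-\alpha w_0(Y)}$ since $Y$ lies inside $\mathrm{supp}(f)$'s right interval.

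The second step is to plug these pointwise bounds into the Airy representations \eqref{def: cA_f, cB_f} and run the same region-by-region decomposition $\mathcal N^-,\mathcal N,\mathcal N^+$ used in Proposition \ref{pro: Airy-2}. For $\mathcal A_{K_f}$, on $[0,Y^-]$ and $[Y^-,Y^++\delta]$ Lemma \ref{lem: Ai, Bi} gives bounded Airy factors and a $\kappa^{-1}$ prefactor, producing an $\alpha^{-1}\cdot\alpha^{-1}=\alpha^{-11/6}$ contribution (after combining with $\kappa\sim\alpha^{2/3}$). On $[Y^++\delta,Y_1-\delta_0]$ the Airy factor $|Bi(\kappa\eta)(\partial_Y\eta)^{-1/2}|\lesssim \kappa^{-1/4}e^{\alpha w_0(Z)}$ pairs against the weight $e^{-\alpha w_0}$ implicit in $K_f$; the resulting integration is executed exactly as in the proof of the bound for $\mathcal A$ in Proposition \ref{pro: Airy-2}, and $\mathcal A_{K_f}(Y)$ is constant for $Y\geq Y_1-\delta_0$ as long as $Y$ lies where $K_f=0$. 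The novel case $Y\in[Y_2+\delta_0,Y_2^*+\delta_0]$ is handled by exploiting the exponentially small size of $K_f$ there together with the subsonic growth of $Bi$, producing the balance $\alpha^{-1}e^{-\alpha w_0(Y_2^*)}\cdot e^{\alpha w_0(Z)}$ that integrates to give the claimed bound. For $\mathcal B_{K_f}$, the analogous split is run on $[Y,+\infty)\cap[0,Y_2^*+\delta_0]$, using now $|Ai(\kappa\eta)(\partial_Y\eta)^{-1/2}|\lesssim \kappa^{-1/4}e^{-\alpha w_0(Z)}$ in $\mathcal N^+$, which extracts the $e^{-2\alpha w_0(Y)}$ weight asked for in \eqref{est: Airy-41} and \eqref{est: Airy-43}.

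The main obstacle is not any single inequality, but the careful exponential bookkeeping: the final bounds distinguish $\alpha^{-11/6}$ from $\alpha^{-17/6}$, and the right-hand side naturally separates into a ``left'' contribution weighted $e^{-\alpha w_0(Y_1^*-\delta_0)}$ (or its variant) and a ``right'' contribution weighted $e^{-\alpha w_0(Y_2^*)}$, each multiplied by the appropriate $L^\infty_{w_0}$ norm. To keep the weights sharp one should (a) use the identity $\int_a^b e^{-\alpha w_0(Z)}\,dZ\sim \alpha^{-1}e^{-\alpha w_0(a)}$ one more time at every Airy-transition sub-integral (this is what upgrades each $\alpha^{-5/6}$ into $\alpha^{-11/6}$), and (b) track the sign of $w_0(Z)-w_0(Y)$ carefully when $Z$ and $Y$ lie on the same side of $Y_2^*$ so that the factor $e^{-\alpha\int_Y^{Y_2^*}F_r^{1/2}}$ appears precisely where claimed in statement (3). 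Once this bookkeeping is executed on each of the five subregions $[0,Y^-]$, $[Y^-,Y^++\delta]$, $[Y^++\delta,Y_1^*-\delta_0]$, $[Y_1^*-\delta_0,Y_2^*]$, $[Y_2^*,Y_2^*+\delta_0]$, the estimates \eqref{est: Airy-41}--\eqref{est: Airy-43} and the intermediate pointwise bounds in case (3) follow immediately.
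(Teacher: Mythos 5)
Your overall strategy is the paper's: compute $K_f$ pointwise, insert it into the representation \eqref{def: cA_f, cB_f}, and run the region-by-region Airy estimates of Lemma \ref{lem: Ai, Bi} exactly as in Proposition \ref{pro: Airy-2}. The problem lies in your ``first step''. You split the support of $K_f$ into only three cases, $[0,Y_1-\delta_0]$, $[Y_2+\delta_0,Y_2^*]$, $[Y_2^*,Y_2^*+\delta_0]$, and on the whole first interval you bound $K_f$ by the two constants $\alpha^{-1}e^{-\alpha w_0(Y_1^*-\delta_0)}\|f\|_{L^\infty_{w_0}([Y_1^*-\delta_0,Y_1^*])}$ and $\alpha^{-1}e^{-\alpha w_0(Y_2^*)}\|f\|_{L^\infty_{w_0}}$ (``both source halves contribute''). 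This is both imprecise (for $Y\in[Y_1^*,Y_1-\delta_0]$ the left half of $\mathrm{supp}(f)$ does not contribute at all, and for $Y\in[Y_1^*-\delta_0,Y_1^*]$ the left contribution is $\int_Y^{Y_1^*}f$, which carries the \emph{local} weight $e^{-\alpha w_0(Y)}$) and, more importantly, fatal for the claimed estimates. In the $\mathcal A_{K_f}$-integral over $[Y_1^*-\delta_0,Y_1-\delta_0]$ the factor $|Bi(\kappa\eta)(\partial_Y\eta)^{-1/2}|\sim\kappa^{-1/4}e^{\alpha w_0(Z)}$ must be cancelled \emph{pointwise} by the decay of $K_f$; with your endpoint-weighted bound the integrand grows like $e^{\alpha(w_0(Z)-w_0(Y_1^*-\delta_0))}$, and after integration you are left with an uncancelled factor $e^{\alpha(w_0(Y_1-\delta_0)-w_0(Y_1^*-\delta_0))}$ multiplying $\|f\|_{L^\infty_{w_0}([Y_1^*-\delta_0,Y_1^*])}$. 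That exponential loss already destroys \eqref{est: Airy-41} for $\mathcal A_{K_f}$, and a fortiori the refined statements \eqref{est: Airy-42} and (3), whose whole point is to separate the local norm on $[Y_1^*-\delta_0,Y_1^*]$ (with no extra growing weight) from the global norm (weighted by $e^{-\alpha w_0(Y_2^*)}$). The same coarse bound also spoils the $e^{-2\alpha w_0(Y)}$ weight in the $\mathcal B_{K_f}$-estimate on $[Y_1^*-\delta_0,Y_1^*]$.

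The fix is exactly the paper's five-case pointwise description of $K_f$ (see \eqref{est: K_f}): on $[0,Y_1^*-\delta_0]$ both halves contribute with the endpoint weights; on $[Y_1^*-\delta_0,Y_1^*]$ the left contribution decays like $e^{-\alpha w_0(Y)}\|f\|_{L^\infty_{w_0}([Y_1^*-\delta_0,Y_1^*])}$; on $[Y_1^*,Y_2^*]$ only the right half (weight $e^{-\alpha w_0(Y_2^*)}$) survives; on $[Y_2^*,Y_2^*+\delta_0]$ one has the local weight $e^{-\alpha w_0(Y)}$; and $K_f\equiv 0$ beyond. With that refinement your subsequent Airy bookkeeping (which is otherwise the paper's argument, including the treatment of the subsonic transition region and the novel right-hand piece $[Y_2^*,Y_2^*+\delta_0]$) goes through and yields (1)--(3) as stated; without it, the claimed powers $\alpha^{-11/6}$, $\alpha^{-17/6}$ and the exponential factors cannot be recovered.
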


\begin{proof}
By \eqref{def: cA_f, cB_f}, we have
\begin{align*}
&\mathcal{A}_{K_f}(Y)=-\kappa^{-1}\pi\int_0^YBi(\kappa\eta(Z))K_f(Z)\bar M^{-1}(Z)\partial_Z\eta(Z)^{-\f12} dZ,\\
&\mathcal{B}_{K_f}(Y)=-\kappa^{-1}\pi\int_Y^{+\infty}Ai(\kappa\eta(Z))K_f(Z)\bar M^{-1}(Z)\partial_Z\eta(Z)^{-\f12}dZ.
\end{align*}
Again we drop subscript in $\mathcal{A}_{K_f}$ and $\mathcal{B}_{K_f}$, and write $\mathcal{A}$ and $\mathcal{B}$ for short, and  assume $f$ is a real function.

Recall that $\mathrm{supp}(1-\chi)=[0,Y_1-\delta_0]\cup[Y_2+\delta_0,+\infty)$ and $\mathrm{supp}(f)\subseteq[Y_1^*-\delta_0,Y_1^*]\cup[Y_2^*,Y_2^*+\delta_0]$. Therefore, we have
\begin{align*}
K_f(Y)=\left\{
\begin{aligned}
&\int_{Y_1^*-\delta_0}^{Y_1^*}f(Z)dZ+\int_{Y_2^*}^{Y_2^*+\delta_0}f(Z)dZ,\quad Y\in[0,Y_1^*-\delta_0],\\
&\int_Y^{Y_1^*}f(Z)dZ+\int_{Y_2^*}^{Y_2^*+\delta_0}f(Z)dZ,\quad Y\in[Y_1^*-\delta_0,Y_1^*],\\
&(1-\chi(Y))\int_{Y_2^*}^{Y_2^*+\delta_0}f(Z)dZ,\quad Y\in [Y_1^*,Y_2^*],\\
&\int_Y^{Y_2^*+\delta_0}f(Z)dZ,\quad Y\in[Y_2^*,Y_2^*+\delta_0],\\
&0,\quad Y>Y_2^*+\delta_0,
\end{aligned}
\right.
\end{align*}
which implies
\begin{align}\label{est: K_f}
|K_f(Y)|\leq C\al^{-1} \left\{
\begin{aligned}
&e^{-\alpha w_0(Y_1^*-\delta_0)} \|f\|_{L^\infty_{w_0}([Y_1^*-\delta_0,Y_1^*])}+e^{-\alpha w_0(Y_2^*)}\|f\|_{L^\infty_{w_0}},\quad Y\in[0,Y_1^*-\delta_0],\\
&e^{-\alpha w_0(Y)} \|f\|_{L^\infty_{w_0}([Y_1^*-\delta_0,Y_1^*])}+e^{-\alpha w_0(Y_2^*)}\|f\|_{L^\infty_{w_0}},\quad Y\in[Y_1^*-\delta_0,Y_1^*],\\
&e^{-\alpha w_0(Y_2^*)}\|f\|_{L^\infty_{w_0}},\quad Y\in [Y_1^*,Y_2^*],\\
&e^{-\alpha w_0(Y)}\|f\|_{L^\infty_{w_0}},\quad Y\in[Y_2^*,Y_2^*+\delta_0],\\
&0,\quad Y>Y_2^*+\delta_0.
\end{aligned}
\right.
\end{align}

\underline{Estimates of $\mathcal{A}(Y)$.} For $Y\in[0,Y_1^*-\delta_0]$, we write
\begin{align*}
\mathcal{A}(Y)
=&-\kappa^{-1}\pi\Big(\int_{Y_1^*-\delta_0}^{Y_1^*}f(Z)dZ+\int_{Y_2^*}^{Y_2^*+\delta_0}f(Z)dZ\Big)\int_0^YBi(\kappa\eta(Z))\bar M^{-1}(Z)\partial_Z\eta(Z)^{-\f12} dZ.
\end{align*}
Then we  get by \eqref{est: K_f} and Lemma \ref{lem: Ai, Bi} that
\begin{align}\label{eq:A-int-f1}
\begin{split}
|\mathcal{A}(Y)|\leq& C\kappa^{-\f54}\al^{-1}\Big(e^{-\alpha w_0(Y_1^*-\delta_0)} \|f\|_{L^\infty_{w_0}([Y_1^*-\delta_0,Y_1^*])}+e^{-\alpha w_0(Y_2^*)}\|f\|_{L^\infty_{w_0}}\Big)\int_0^{Y}e^{\alpha w_0(Z)}dZ\\
\leq&C\al^{-\f{11}{6}}e^{\alpha w_0(Y)}\left(e^{-\alpha w_0(Y_1^*-\delta_0)} \|f\|_{L^\infty_{w_0}([Y_1^*-\delta_0,Y_1^*])}+e^{-\alpha w_0(Y_2^*)}\|f\|_{L^\infty_{w_0}}\right)\\
\leq& C\al^{-\f{11}{6}}\left(\|f\|_{L^\infty_{w_0}([Y_1^*-\delta_0,Y_1^*])}+e^{\alpha w_0(Y)}e^{-\alpha w_0(Y_2^*)}\|f\|_{L^\infty_{w_0}}\right).
\end{split}
\end{align}

For $Y\in[Y_1^*-\delta_0,Y_1^*]$, we write 
\begin{align*}
\mathcal{A}(Y)=&\mathcal{A}(Y_1^*-\delta_0)-\kappa^{-1}\pi\int_{Y_1^*-\delta_0}^YBi(\kappa\eta(Z))K_f(Z)\bar M^{-1}(Z)\partial_Z\eta(Z)^{-\f12} dZ\\
=&\mathcal{A}(Y_1^*-\delta_0)+I_1.
\end{align*}
By \eqref{eq:A-int-f1}, we have
\begin{align*}
|\mathcal{A}(Y_1^*-\delta_0)|
\leq&C\al^{-\f{11}{6}}\Big(\|f\|_{L^\infty_{w_0}([Y_1^*-\delta_0,Y_1^*])}+e^{-\al\int_{Y_1^*-\delta_0}^{Y_2^*}F_r^\f12(Z)dZ }\|f\|_{L^\infty_{w_0}}\Big).
\end{align*}
Then by  \eqref{est: K_f} and Lemma \ref{lem: Ai, Bi}, we  have 
\begin{align}\label{eq:A-int-f1-Y11}
\begin{split}
|I_1|
\leq& C\kappa^{-\f54}\alpha^{-1}\|f\|_{L^\infty_{w_0}([Y_1^*-\delta_0,Y_1^*])}\int_{Y_1^*-\delta_0}^Y1dZ\\
&+C\kappa^{-\f54}\alpha^{-1}e^{-\al w_0(Y_2^*)}\|f\|_{L^\infty_{w_0}}\int_{Y_1^*-\delta_0}^{Y}e^{\alpha w_0(Z')}dZ
\leq C\alpha^{-\frac{11}{6}}\|f\|_{L^\infty_{w_0}([Y_1^*-\delta_0,Y_1^*])}.
\end{split}
\end{align}
By \eqref{eq:A-int-f1} and \eqref{eq:A-int-f1-Y11}, we obtain that for $Y\in[Y_1^*-\delta_0,Y^*_1]$,
\begin{align}\label{eq:A-int-f2}
|\mathcal{A}(Y)|\leq  C\al^{-\f{11}{6}}\Big(\|f\|_{L^\infty_{w_0}([Y_1^*-\delta_0,Y_1^*])}+e^{-\al\int_Y^{Y_2^*}F_r^\f12(Z)dZ }f\|_{L^\infty_{w_0}}\Big).
\end{align}

For $Y\in[Y_1^*,Y_2^*]$, we  write 
\begin{align*}
\mathcal{A}(Y)=&\mathcal{A}(Y_1^*)-\kappa^{-1}\pi\int_{Y_1^*}^YBi(\kappa\eta(Z))K_f(Z)\bar M^{-1}(Z)\partial_Z\eta(Z)^{-\f12} dZ
=\mathcal{A}(Y_1^*)+I_2.
\end{align*}
According to \eqref{est: K_f} and Lemma \ref{lem: Ai, Bi}, we infer that for $Y\in[Y_1^*,Y_2^*]$,
\begin{align*}
|I_2|&\leq  C\kappa^{-\f54}\alpha^{-2}e^{-\al w_0(Y_2^*)}e^{\al w_0(Y)}\|f\|_{L^\infty_{w_0}}\leq C\alpha^{-\frac{17}{6}}e^{-\al\int_{Y}^{Y_2^*}F_r^\f12(Z)dZ }\|f\|_{L^\infty_{w_0}}.
\end{align*}
Hence, we obtain that for $Y\in[Y_1^*,Y_2^*]$,
\begin{align}\label{eq:A-int-f3}
|\mathcal{A}(Y)|\leq   C\al^{-\f{11}{6}}\left(\|f\|_{L^\infty_{w_0}([Y_1^*-\delta_0,Y_1^*])}+e^{-\al\int_{Y}^{Y_2^*}F_r^\f12(Z)dZ }\|f\|_{L^\infty_{w_0}}\right).
\end{align}
In particular, for $Y\in[0, Y_2^*]$, we have 
\begin{align}\label{eq:A-int-f4}
|\mathcal{A}(Y)|\leq  C\al^{-\f{11}{6}}\left(\|f\|_{L^\infty_{w_0}([Y_1^*-\delta_0,Y_1^*])}+e^{-\al w_0(Y_2^*)}e^{\al w_0(Y)}\|f\|_{L^\infty_{w_0}}\right).
\end{align}
\medskip

For $Y\in[Y_2^*,Y^*_2+\delta_0]$, we write 
\begin{align*}
\mathcal{A}(Y)=&\mathcal{A}(Y_2^*)-\kappa^{-1}\pi\int_{Y_2^*}^YBi(\kappa\eta(Z))K_f(Z)\bar M^{-1}(Z)\partial_Z\eta(Z)^{-\f12} dZ
=\mathcal{A}(Y_2^*)+I_3.
\end{align*}
We use \eqref{est: K_f} and Lemma \ref{lem: Ai, Bi} to get
\begin{align*}
|I_4|
&\leq C\kappa^{-\f54}\alpha^{-1}\|f\|_{L^\infty_{w_0}}\leq C\alpha^{-\frac{11}{6}}\|f\|_{L^\infty_{w_0}}.
\end{align*}
Therefore, for $Y\in[Y_2^*,Y^*_2+\delta_0]$,
\begin{align*}
|\mathcal{A}(Y)|\leq  C\alpha^{-\frac{11}{6}}\|f\|_{L^\infty_{w_0}},
\end{align*}
which along with the fact that $\mathcal{A}(Y)=\mathcal{A}(Y_2^*+\delta_0)$ for $Y>Y_2^*+\delta_0$ implies that for $Y\geq Y_2$,
\begin{align*}
|\mathcal{A}(Y)|\leq  C\alpha^{-\frac{11}{6}}\|f\|_{L^\infty_{w_0}}.
\end{align*}
The above result combined with \eqref{eq:A-int-f1}, \eqref{eq:A-int-f2}, \eqref{eq:A-int-f3} and \eqref{eq:A-int-f4} implies that for $Y\geq 0$,
\begin{align*}
|\mathcal{A}(Y)|\leq  C\alpha^{-\frac{11}{6}}\|f\|_{L^\infty_{w_0}}.
\end{align*}
In particular, 
for $Y\in[0, Y_2^*]$,  we have 
\begin{align*}
|\mathcal{A}(Y)|\leq  C\al^{-\f{11}{6}}\Big(\|f\|_{L^\infty_{w_0}([Y_1^*-\delta_0,Y_1^*])}+e^{-\al\int_{Y_1^*}^{Y_2^*}F_r^\f12(Z)dZ }\|f\|_{L^\infty_{w_0}}\Big).
\end{align*}
\medskip

\underline{Estimates of $\mathcal{B}(Y)$.} We first notice that $\mathcal{B}(Y)\equiv 0$ for any $Y\in[Y_2^*+\delta_0,+\infty)$. 
For $Y\in[Y_2^*,Y_2^*+\delta_0]$, we have
\begin{align*}
\mathcal{B}(Y)
=&-\kappa^{-1}\pi\int_Y^{Y_2^*+\delta_0}Ai(\kappa\eta(Z))K_f(Z)\bar M^{-1}(Z)\partial_Z\eta(Z)^{-\f12}dZ.
\end{align*}
On the other hand, by Lemma \ref{lem: Ai, Bi}, we know that for $Z\in[Y_1^*-\delta_0,Y_2^*+\delta_0]$,
\begin{align}\label{eq:Ai-Y1*-Y2*}
|Ai(\kappa\eta(Z))|\leq C\kappa^{-\f14}e^{-\alpha w_0(Z)}.
\end{align}
Then we use \eqref{est: K_f} to obtain that for $Y\in[Y_2^*,Y_2^*+\delta_0]$,
\begin{align}\label{est: B(Y)-(Y_2^*, Y_2^*+d_0)}
\begin{split}
|\mathcal{B}(Y)|\leq& C\kappa^{-\f54}\alpha^{-1}\|f\|_{L^\infty_{w_0}}\int_Y^{Y_2^*+\delta_0}e^{-2\alpha w_0(Z)}dZ
\leq  C\alpha^{-\frac{17}{6}}e^{-2\alpha w_0(Y)}\|f\|_{L^\infty_{w_0}}.
\end{split}
\end{align}

For $Y\in[Y_1^*,Y_2^*]$, we  write 
\begin{align*}
\mathcal{B}(Y)=&\mathcal{B}(Y_2^*)-\kappa^{-1}\pi\int_Y^{Y_2^*}Ai(\kappa\eta(Z))K_f(Z)\bar M^{-1}(Z)\partial_Z\eta(Z)^{-\f12}dZ
=\mathcal{B}(Y_2^*)+II_1.
\end{align*}
By \eqref{eq:Ai-Y1*-Y2*} and \eqref{est: K_f}, we deduce that for $Y\in[Y_1^*,Y_2^*]$,
\begin{align*}
\begin{split}
|II_1|\leq& C\kappa^{-1}\al^{-1}\al^{-\f76}e^{-\al w_0(Y_2^*)} e^{-\al w_0(Y)}\|f\|_{L^\infty_{w_0}}
\leq C \al^{-\f{17}{6}}e^{-\al\int_{Y}^{Y_2^*}F_r^\f12(Z)dZ }e^{-2\al w_0(Y)}\|f\|_{L^\infty_{w_0}},
\end{split}
\end{align*}
which along with \eqref{est: B(Y)-(Y_2^*, Y_2^*+d_0)} implies
\begin{align}\label{est: B(Y)-(Y_1^*, Y_2^*)}
|\mathcal{B}(Y)|\leq&C\alpha^{-\frac{17}{6}}e^{-2\alpha w_0(Y_2^*)}\|f\|_{L^\infty_{w_0}}+\al^{-\f{17}{6}}e^{-\al w_0(Y_2^*)} e^{\al w_0(Y)}e^{-2\al w_0(Y)}\|f\|_{L^\infty_{w_0}}\\
\nonumber
\leq &C\al^{-\f{17}{6}}e^{-\al\int_{Y}^{Y_2^*}F_r^\f12(Z)dZ }e^{-2\al w_0(Y)}\|f\|_{L^\infty_{w_0}}.
\end{align}

For $Y\in[Y_1^*-\delta_0,Y_1^*]$, we can write 
\begin{align*}
	\mathcal B(Y)	=& \mathcal B(Y_1^*)- \kappa^{-1}\pi\int_Y^{Y_1^*}Ai(\kappa\eta(Z))K_f(Z)\bar M^{-1}(Z)\partial_Z\eta(Z)^{-\f12}dZ
	=\mathcal B(Y_1^*)+II_2.
\end{align*}
We use  \eqref{est: K_f} and \eqref{eq:Ai-Y1*-Y2*} to imply that $Y\in[Y_1^*-\delta_0,Y_1^*]$,
\begin{align*}
	|II_2|
	&\leq C\kappa^{-\f54}\alpha^{-1}\Big( \|f\|_{L^\infty_{w_0}([Y_1^*-\delta_0,Y_1^*])}\int_{Y}^{Y_1^*} e^{-2\alpha w_0(Z)}dZ+ \|f\|_{L^\infty_{w_0}}e^{-\alpha w_0(Y_2^*)} \int_Y^{Y_1^*}e^{-\alpha w_0(Z)} dZ\Big)\\
	&\leq C\alpha^{-\frac{17}{6}}e^{-2\alpha w_0(Y)}\Big(\|f\|_{L^\infty_{w_0}([Y_1^*-\delta_0,Y_1^*])}+e^{-\al\int_{Y}^{Y_2^*}F_r^\f12(Z)dZ }\|f\|_{L^\infty_{w_0}}\Big).
\end{align*}
Then we use \eqref{est: B(Y)-(Y_1^*, Y_2^*)} to obtain that for $Y\in[Y_1^*-\delta_0,Y_1^*]$,
\begin{align}\label{est: B(Y)-[Y_1^*-d_0, Y_1^*]}
\begin{split}
		|\mathcal B(Y)|
		\leq&C\al^{-\f{17}{6}}e^{-2\alpha w_0(Y)}\Big(\|f\|_{L^\infty_{w_0}([Y_1^*-\delta_0,Y_1^*])}+e^{-\al\int_{Y}^{Y_2^*}F_r^\f12(Z) dZ}\|f\|_{L^\infty_{w_0}}\Big).
\end{split}
\end{align}

For $Y\in[0,Y_1^*-\delta_0]$, we write
\begin{align*}
	\mathcal B(Y)	=& \mathcal B(Y_1^*-\delta_0)-\kappa^{-1}\pi\int_Y^{Y_1^*-\delta_0}Ai(\kappa\eta(Z))K_f(Z)\bar M^{-1}(Z)\partial_Z\eta(Z)^{-\f12}dZ\\
	=&\mathcal B(Y_1^*-\delta_0)+II_3.
\end{align*}
By \eqref{est: B(Y)-[Y_1^*-d_0, Y_1^*]}, we have
\begin{align*}
|\mathcal B(Y_1^*-\delta_0)|\leq&C\al^{-\f{17}{6}}\left(e^{-\alpha w_0(Y_2^*)}e^{-\alpha w_0(Y_1^*-\delta_0)}\|f\|_{L^\infty_{w_0}}+e^{-2\alpha w_0(Y_1^*-\delta_0)}\|f\|_{L^\infty_{w_0}([Y_1^*-\delta_0,Y_1^*])}\right).
\end{align*}

On the other hand, according to Lemma \ref{lem: Ai, Bi}, we know that for $Z\in[0,Y_1^*-\delta_0]$,
\begin{align*}
	|Ai(\kappa\eta(Z))||\pa_Z\eta(Z)|^{-\f12}\leq C\min\{1, |\kappa\eta(Z)|^{-\f14}|\pa_Z\eta(Z)|^{-\f12} \}e^{-\alpha w_0(Z)},
\end{align*}
which along with \eqref{est: K_f} implies that for $Y\in[Y^++\d,Y_1^*-\delta_0]$,
\begin{align*}
\begin{split}
	|II_4|
	\leq& C\kappa^{-1}\alpha^{-1}\left(e^{-\al w_0(Y_1^*-\d_0)}\|f\|_{L^\infty_{w_0}([Y_1^*-\delta_0,Y_1^*])}+e^{-\al w_0(Y_2^*)}\|f\|_{L^\infty_{w_0}}\right)\int_Y^{Y_1^*-\delta_0}e^{-\al w_0(Z)} dZ\\
\leq& C\al^{-\f{17}{6}}e^{-\al w_0(Y)}\left(e^{-\al w_0(Y_1^*-\d_0)}\|f\|_{L^\infty_{w_0}([Y_1^*-\delta_0,Y_1^*])}+e^{-\al w_0(Y_2^*)}\|f\|_{L^\infty_{w_0}}\right),
\end{split}
\end{align*}
and for $Y\in[0,Y^++\d]$, 
\begin{align*}
|II_4|\leq& C\al^{-\f{17}{6}}\left(e^{-\al w_0(Y_1^*-\d_0)}\|f\|_{L^\infty_{w_0}([Y_1^*-\delta_0,Y_1^*])}+e^{-\al w_0(Y_2^*)}\|f\|_{L^\infty_{w_0}}\right)e^{-\al w_0(Y^{+}+\d)}\\
&+C\kappa^{-\f54}\alpha^{-1}\|f\|_{L^\infty_{w_0}([Y_1^*-\delta_0,Y_1^*])}e^{-\alpha w_0(Y_1^*-\d_0)} +C\kappa^{-\f54}\alpha^{-1}\|f\|_{L^\infty_{w_0}}e^{-\alpha w_0(Y_2^*)} \\
\leq& C\al^{-\f{11}{6}}\left(e^{-\al w_0(Y_1^*-\d_0)}\|f\|_{L^\infty_{w_0}([Y_1^*-\delta_0,Y_1^*])}+e^{-\al w_0(Y_2^*)}\|f\|_{L^\infty_{w_0}}\right).
\end{align*}
Therefore, we obtain that for $Y\in[Y^{+}+\d,Y_1^*-\delta_0]$,
\begin{align*}
	|\mathcal B(Y)|\leq& C\al^{-\f{17}{6}}e^{-\al w_0(Y)}\left(e^{-\al w_0(Y_1^*-\d_0)}\|f\|_{L^\infty_{w_0}([Y_1^*-\delta_0,Y_1^*])}+e^{-\al w_0(Y_2^*)}\|f\|_{L^\infty_{w_0}}\right),
\end{align*}
and for $Y\in[0, Y^{+}+\d]$,
\begin{align*}
	|\mathcal B(Y)|\leq C\alpha^{-\f{11}{6}}(\|f\|_{L^\infty_{w_0}([Y_1^*-\delta_0,Y_1^*])}e^{-\alpha w_0(Y_1^*-\d_0)} +\|f\|_{L^\infty_{w_0}}e^{-\alpha w_0(Y_2^*)}).
\end{align*}
Thus, we infer that $\mathrm{supp}(\mathcal B)\subseteq [0,Y_2^*+\delta_0]$ and for $Y\geq0$,
\begin{align*}
	|\mathcal B(Y)|\leq& C\alpha^{-\frac{17}{6}}e^{-2\alpha w_0(Y)}\|f\|_{L^\infty_{w_0}}.
\end{align*}
\end{proof}

The following results are about the fourth kind of source terms in \eqref{def: special source terms}.

\begin{proposition}\label{pro: Airy-5}
Under the assumption on $f(Y)$ in Proposition  \ref{pro: Airy-4},  there exists a solution $\tilde P\in W^{2,\infty}_{w_0}$ to the system \eqref{sys:app-f-1} with the source term $ hK_f(Y)$ for $\mathrm{supp}(h)\subseteq[\bar Y_1^{**}, +\infty)$. More precisely, 
there exist $\mathcal{A}_{hK_f}(Y)$ and $\mathcal{B}_{hK_f}(Y)$ with $(|\cA_{hKf}|, |e^{2\al w_0} \cB_{hKf}|)\in L^\infty$ such that 
\begin{align*}
\tilde{P}(Y)=\mathcal{A}_{hK_f}(Y)A(Y)+\mathcal{B}_{hK_f}(Y)B(Y),
\end{align*}
 $\mathrm{supp}(\mathcal{A}_{hK_f})\subset \mathrm{supp}(h)$, and the estimates \eqref{est: Airy-41}-\eqref{est: Airy-43} hold. Moreover, for $Y\in[0, \bar Y_1^{**}]$,
\begin{align*}
&|\mathcal B_{K_f}(Y)|\leq C\alpha^{-\f{11}{6}}e^{-\al w_0(\bar Y_1^{**})}\left(\|f\|_{L^\infty_{w_0}([Y_1^*-\delta_0,Y_1^*])}e^{-\alpha w_0(Y_1^*-\d_0)} +\|f\|_{L^\infty_{w_0}}e^{-\alpha w_0(Y_2^*)}\right).
\end{align*}

\end{proposition}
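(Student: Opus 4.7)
The proof follows the template of Proposition \ref{pro: Airy-4} almost verbatim, with $K_f$ replaced by $hK_f$ throughout. Starting from the Green's function representation \eqref{def: cA_f, cB_f}, we set
\begin{align*}
\mathcal{A}_{hK_f}(Y)=-\kappa^{-1}\pi\int_0^YBi(\kappa\eta(Z))h(Z)K_f(Z)\bar M^{-1}(Z)\partial_Z\eta(Z)^{-\f12}dZ,\\
\mathcal{B}_{hK_f}(Y)=-\kappa^{-1}\pi\int_Y^{+\infty}Ai(\kappa\eta(Z))h(Z)K_f(Z)\bar M^{-1}(Z)\partial_Z\eta(Z)^{-\f12}dZ.
\end{align*}
Since $\|h\|_{L^\infty}\le C$, the pointwise bound $|hK_f|\le C|K_f|$ holds, and every absolute-value estimate carried out in the proof of Proposition \ref{pro: Airy-4} transfers line by line. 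Hence the bounds \eqref{est: Airy-41}, \eqref{est: Airy-42} and \eqref{est: Airy-43} for $\mathcal{A}_{hK_f}$ and $\mathcal{B}_{hK_f}$ follow with the same constants.

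For the support statement, observe that $\mathrm{supp}(h)\subseteq[\bar Y_1^{**},+\infty)$ forces $h(Z)K_f(Z)\equiv 0$ for $Z\le \bar Y_1^{**}$, so the defining integral for $\mathcal{A}_{hK_f}(Y)$ vanishes on $[0,\bar Y_1^{**}]$; thus $\mathrm{supp}(\mathcal{A}_{hK_f})\subseteq[\bar Y_1^{**},+\infty)\subseteq\mathrm{supp}(h)$. The support of $\mathcal{B}_{hK_f}$ remains contained in $[0,Y_2^*+\delta_0]$ exactly as in Proposition \ref{pro: Airy-4}.

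For the new estimate on $[0,\bar Y_1^{**}]$, we exploit the same vanishing: for any $Y\in[0,\bar Y_1^{**}]$,
\begin{align*}
\mathcal{B}_{hK_f}(Y)=-\kappa^{-1}\pi\int_{\bar Y_1^{**}}^{+\infty}Ai(\kappa\eta(Z))h(Z)K_f(Z)\bar M^{-1}(Z)\partial_Z\eta(Z)^{-\f12}dZ,
\end{align*}
which is independent of $Y$. For $\alpha\ge\alpha_0$ sufficiently large we have $Y^++\delta<\bar Y_1^{**}$, so on the domain of integration $Z\ge\bar Y_1^{**}$ lies in the subsonic regime and Lemma \ref{lem: Ai, Bi} gives $|Ai(\kappa\eta(Z))(\partial_Z\eta(Z))^{-\f12}|\le C\kappa^{-\f14}e^{-\alpha w_0(Z)}$. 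Combining this with the pointwise bound \eqref{est: K_f} on $K_f$ and $|h|\le C$, then splitting the integral over $[\bar Y_1^{**},Y_1^*-\delta_0]$, $[Y_1^*-\delta_0,Y_1^*]$, $[Y_1^*,Y_2^*]$ and $[Y_2^*,Y_2^*+\delta_0]$ exactly as in the proof of Proposition \ref{pro: Airy-4}, each piece is controlled by $C\alpha^{-\frac{17}{6}}e^{-\alpha w_0(\bar Y_1^{**})}\bigl(e^{-\alpha w_0(Y_1^*-\delta_0)}\|f\|_{L^\infty_{w_0}([Y_1^*-\delta_0,Y_1^*])}+e^{-\alpha w_0(Y_2^*)}\|f\|_{L^\infty_{w_0}}\bigr)$, which is absorbed by the claimed (weaker) $\alpha^{-\frac{11}{6}}$ bound.

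There is no real obstacle here beyond bookkeeping: the single nontrivial new input is the support-induced identity $\mathcal{B}_{hK_f}(Y)=\mathcal{B}_{hK_f}(\bar Y_1^{**})$ on $[0,\bar Y_1^{**}]$, which avoids integrating $Ai(\kappa\eta)$ through the turning point layer $[Y^-,Y^++\delta]$ and therefore produces the extra decay factor $e^{-\alpha w_0(\bar Y_1^{**})}$ that would be absent in Proposition \ref{pro: Airy-4}. Everything else is a direct transfer of the previous proof.
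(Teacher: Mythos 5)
Your proposal is correct and follows essentially the same route as the paper: the paper's own proof simply says the argument of Proposition \ref{pro: Airy-4} carries over since $|hK_f|\le C|K_f|$, and that because $\mathrm{supp}(h)$ avoids the supersonic regime one gets $\mathcal{A}_{hK_f}\equiv 0$ and $\mathcal{B}_{hK_f}\equiv\mathcal{B}_{hK_f}(\bar Y_1^{**})$ on $[0,\bar Y_1^{**}]$, which is exactly the mechanism (integration only over $Z\ge \bar Y_1^{**}>Y^{+}+2\delta_0$, where $|Ai(\kappa\eta)(\partial_Z\eta)^{-\f12}|\lesssim\kappa^{-\f14}e^{-\al w_0(Z)}$) that you use to extract the extra factor $e^{-\al w_0(\bar Y_1^{**})}$. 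The only blemish is the written inclusion ``$[\bar Y_1^{**},+\infty)\subseteq\mathrm{supp}(h)$,'' which is backwards as stated, but this is immaterial since what is actually needed and proved (by you and by the paper) is the vanishing of $\mathcal{A}_{hK_f}$ on $[0,\bar Y_1^{**}]$.
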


\begin{proof}
The proof is quite similar to Proposition  \ref{pro: Airy-4}. We point out that  the support of $h$ does not touch the supersonic regime. It is easy to deduce that $\mathcal{A}_{hK_f}(Y)=0$ for $Y\in[0, \bar Y_1^{**}]$ and $\mathcal{B}_{hK_f}(Y)=\mathcal{B}_{hK_f}(\bar Y_1^{**})$ for  $Y\in[0, \bar Y_1^{**}]$.
\end{proof}

\section{The Rayleigh type equation}

In this section, we consider the approximate system in the regime near the critical layer, which can be described by  the following Rayleigh-type equation in a finite interval $Y\in[\bar Y_1^*,\bar Y_2^*]$:
 \begin{align}\label{eq: Rayleigh-nonhomo}
           	\left\{
           	\begin{aligned}
           		&\bar M\left(\partial_Y\left(\frac{\partial_Y}{F(Y)}\right)-\alpha^2\right)\varphi-\partial_Y\left(\frac{\partial_Y\bar M}{F(Y)}\right)\varphi=f,\quad Y\in(\bar Y_1^*,\bar Y_2^*),\\
           		&\varphi(\bar Y_1^*)=\varphi(\bar Y_2^*)=0,
           	\end{aligned}
           	\right.
           \end{align}
 where $f$ is a suitable function with the support away from the critical layer. Here the definitions of $F(Y)$ and $\bM(Y)$ are given in \eqref{def(bM, T_0)1}. For $\bM=(U_B-c)$ and $F=1$, the system \eqref{eq: Rayleigh-nonhomo} is reduced to the classical Rayleigh equation.

Let $Y_c\in(\bar Y_1^*,\bar Y_2^*)$ be the critical point which satisfies $U_B(Y_c)=c_r$. According to \eqref{def:r-M} and \eqref{def:F_r}, it is easy to see that
\begin{align}\label{BC: (F_r, M_r)}
\begin{split}
&F_r(Y_c)=1,\quad F_r'(Y_c)=0,\quad \bM_r(Y_c)=0,\quad \bM_r'(Y_c)=\f{M_a \pa_Y U_B(Y_c)}{T_0(Y_c)^\f12}.
\end{split}
\end{align}
Since $U_B(Y)$ is a monotone function and $[\bar Y_1^*,\bar Y_2^*]$ is a finite interval with length independent of $\al$,  there exists a constant $c_0>0$ such that 
\begin{align}\label{Low bound: pa_Y U_B}
\pa_Y U_B\geq c_0>0\quad \mbox {for}\quad Y\in[\bar Y_1^*,\bar Y_2^*].
\end{align}
Therefore, there exists a constant $C>1$ such that for $Y\in[\bar Y_1^*,\bar Y_2^*]$
\begin{align}
&C^{-1}\leq |F(Y)|\leq C,\quad C^{-1}\leq F_r(Y)\leq C,\quad |F_i(Y)|\leq C c_i|Y-Y_c|+c_i^2,\label{bound: F(Y)}\\
&C^{-1}(|Y-Y_c|+c_i)\leq|\bM|\leq C(|Y-Y_c|+c_i),\quad C^{-1}|Y-Y_c|\leq|\bM_r|\leq C|Y-Y_c|.\label{bound: bM}
\end{align}

 Let $\d>0$ be a suitable constant independent of $\al$, and  $\bar Y_1$ and $\bar Y_2$ are two points such that $\bar Y_1^*<\bar Y_1-\d<\bar Y_1<Y_c<\bar Y_2<\bar Y_2+\d<\bar Y_2^*$. Moreover, we assume that $|w_c(\bar Y_1^*)|\neq|w_c(\bar Y_2^*)|$, where 
 \begin{align*}
 w_c(Y)=\int_{Y_c}^YF_r^\f12(Z)dZ.
 \end{align*}

The main result of this section is presented as follows.

\begin{proposition}\label{pro: rayleigh-varphi}
Let $(\al,c)\in\mathbb{H}_0$ and $|w_c(\bar Y_1^*)|\neq|w_c(\bar Y_2^*)|$.
Assume $f$ is a complex-valued function with $\mathrm{supp}(f)\subset[\bar Y_1-\d, \bar Y_1 ]\cup[\bar Y_2, \bar Y_2+\d]$ and  $e^{\al w_0}|f|\in L^\infty$. Then there exists a solution $\varphi(Y)\in W^{2,\infty}$ to system \eqref{eq: Rayleigh-nonhomo}. Moreover,  we have 
\begin{align*}
&|\varphi(Y)|\leq C\al^{-1}e^{-\al w_0(Y)}\|f\|_{L^\infty_{w_0}},\quad Y\in[\bar Y_1^*, \bar Y_2^*],\\
&|\Im\varphi(Y)|\leq C\al^{-1}e^{-\al w_0(Y)}(\al^{-1}\|f\|_{L^\infty_{w_0}}+\|\Im f\|_{L^\infty_{w_0}}), \quad Y\in[\bar Y_1^*, \bar Y_1]\cup[\bar Y_2, \bar Y_2^*],
\end{align*}
and for $Y\in[\bar Y_1^*,\bar Y_1-\d]$, we have
\begin{align*}
&|\varphi(Y)|\leq C\al^{-2}e^{-2\al \int_{Y}^{\bar Y_1-\d}F_r^\f12(Y') dY'} e^{-\al w_0(Y)}\|f\|_{L^\infty_{w_0}}, \\
&|\Im\varphi(Y)|\leq C\al^{-2}e^{-2 \al  \int_{Y}^{\bar Y_1-\d} F_r(Y')dY'} e^{-\al w_0(Y)}(\al^{-1}\|f\|_{L^\infty_{w_0}}+\|\Im f\|_{L^\infty_{w_0}}).
\end{align*}
\end{proposition}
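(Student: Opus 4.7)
The plan is to build a Green's function for the two-point boundary value problem by producing two fundamental solutions adapted to the endpoints and then applying variation of parameters. A preliminary step is to put the equation in self-adjoint form: setting $\psi=\varphi/\bM$, a direct calculation turns \eqref{eq: Rayleigh-nonhomo} into
\begin{align*}
\pa_Y\Big(\f{\bM^2}{F}\pa_Y\psi\Big)-\al^2\bM^2\psi=\f{f}{\bM},
\end{align*}
for which the quantity $W_0:=F^{-1}(\varphi_-\pa_Y\varphi_+-\varphi_+\pa_Y\varphi_-)$ is constant on any pair of homogeneous solutions of \eqref{eq: Rayleigh-nonhomo}. This makes the Wronskian bookkeeping trivial and puts the natural integration kernel into the form $\bM^{-2}$, which matches the singular behaviour at $Y_c$.

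The main task is to construct two homogeneous solutions $\varphi_-,\varphi_+$ of \eqref{eq: Rayleigh-nonhomo} with $\varphi_-(\bar Y_1^*)=\varphi_+(\bar Y_2^*)=0$ and sharp asymptotics. On each side of $Y_c$, where $F_r$ is bounded below by \eqref{bound: F(Y)}, a Liouville--Green ansatz produces two WKB branches $\phi_\pm\sim F^{1/4}\al^{-1/2}e^{\pm\al w_c}$, promoted to exact homogeneous solutions via a Volterra fixed-point argument with $O(\al^{-1})$ relative errors. Through $Y_c$ the local Frobenius basis $\{\bM,\,\bM\int^Y F\bM^{-2}\}$ governs the matching between the left and right WKB bases, and it is at this step that the logarithm $\bM\log\bM$ enters the analysis; the connection matrix is of order one but depends on the phases $w_c(\bar Y_1^*)$ and $w_c(\bar Y_2^*)$, and the hypothesis $|w_c(\bar Y_1^*)|\neq|w_c(\bar Y_2^*)|$ rules out the particular symmetric configuration in which $|W_0|$ could vanish when $\varphi_-,\varphi_+$ are assembled from these branches. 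In this way $|W_0|$ admits a lower bound independent of $\al$.

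With these ingredients, variation of parameters yields
\begin{align*}
\varphi(Y)=\f{\varphi_+(Y)}{W_0}\int_{\bar Y_1^*}^Y\f{\varphi_-(Z)}{\bM^2(Z)}f(Z)\,dZ+\f{\varphi_-(Y)}{W_0}\int_Y^{\bar Y_2^*}\f{\varphi_+(Z)}{\bM^2(Z)}f(Z)\,dZ.
\end{align*}
Since $\mathrm{supp}(f)\subset[\bar Y_1-\d,\bar Y_1]\cup[\bar Y_2,\bar Y_2+\d]$ stays a fixed distance from $Y_c$, the kernel $\bM^{-2}$ is bounded on the effective integration region, and on $\mathrm{supp}(f)$ the WKB control gives $|\varphi_\pm(Z)|\lesssim \al^{-1/2}e^{\pm\al w_c(Z)}$. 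Combining with $|f(Z)|\le e^{-\al w_0(Z)}\|f\|_{L^\infty_{w_0}}$ and the identity $w_0(Y)=w_0(Y_c)+w_c(Y)$ on the relevant interval telescopes the exponential factors so that the product $e^{\pm\al w_c(Z)}e^{-\al w_0(Z)}$ collapses to $e^{-\al w_0(Y_c)}$, and tracking the WKB amplitudes $\al^{-1/2}$ in both $\varphi_+(Y)$ and $\varphi_-(Z)$ delivers $|\varphi(Y)|\leq C\al^{-1}e^{-\al w_0(Y)}\|f\|_{L^\infty_{w_0}}$ on $[\bar Y_1^*,\bar Y_2^*]$. On the subinterval $[\bar Y_1^*,\bar Y_1-\d]$ the first integral vanishes (its upper limit is strictly below $\mathrm{supp}(f)$), and the surviving term $\varphi_-(Y)W_0^{-1}\int_Y^{\bar Y_2^*}\cdots$ inherits the additional WKB decay of $\varphi_-(Y)\lesssim \al^{-1/2}e^{-\al|w_c(Y)|}$, producing the extra factor $e^{-2\al\int_Y^{\bar Y_1-\d}F_r^{1/2}(Z)\,dZ}$ together with a further $\al^{-1}$ from the narrow support of $f$.

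For the imaginary part, I would decompose $f=\Re f+\mathrm i\,\Im f$ and exploit that on $[\bar Y_1^*,\bar Y_1]\cup[\bar Y_2,\bar Y_2^*]$ the coefficients $\bM,F$ have imaginary parts of size $O(c_i)$ (cf. \eqref{bound: F(Y)}, \eqref{bound: bM}), so the imaginary parts of the WKB corrections to $\varphi_\pm$ and of $W_0$ are suppressed by an additional factor $c_i$, which in the regime $(\al,c)\in\mathbb{H}_0$ is much smaller than every power of $\al^{-1}$. Applying the Green's formula separately to $\Re f$ and $\Im f$ and collecting the $\al^{-1}$ and $c_i$ factors yields the stated imaginary-part bound, and the sharper version on $[\bar Y_1^*,\bar Y_1-\d]$ again follows from the extra WKB decay of $\varphi_-$. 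The main obstacle throughout is the construction and matching of $\varphi_\pm$ across $Y_c$: the Frobenius-to-WKB transition must be controlled with enough precision on both amplitude and phase that the lower bound on $|W_0|$ is sharp, since any loss there would degrade the $\al^{-1}$ factor in the final pointwise bounds.
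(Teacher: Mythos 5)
Your overall skeleton (two endpoint-normalized homogeneous solutions glued across the critical layer, then a Green's function) is a legitimate variant of the paper's scheme (which instead builds a single homogeneous solution $\phi$ normalized at $Y_c$ in Proposition \ref{pro: bound-(phi, phi_0)} and enforces the two boundary conditions through the constant $\mu_f(c)$ in \eqref{def: mu(c)}), but the two quantitative pillars of the argument are missing or wrong. First, the Wronskian bound. You assert that $|W_0|$ "admits a lower bound independent of $\al$" and that the hypothesis $|w_c(\bar Y_1^*)|\neq|w_c(\bar Y_2^*)|$ only rules out a symmetric configuration where $W_0$ could vanish. In your normalization $W_0\propto \varphi_-(\bar Y_2^*)$, which is generically \emph{exponentially large} in $\al$; to recover the stated $\al^{-1}e^{-\al w_0(Y)}$ bound you need a sharp lower bound at that exponential scale, and this is exactly the analogue of the paper's Lemma \ref{lem: int-phi^(-2)}, where $\big|\int_{\bar Y_1^*}^{\bar Y_2^*}F\phi^{-2}\big|\sim\al$ with leading real part proportional to $\al\,\big||w_c(\bar Y_1^*)|-|w_c(\bar Y_2^*)|\big|$. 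The hypothesis is what makes this non-resonance quantity of size $\al$ (if the two weights coincide it degenerates to $O(1)$ and the final estimate loses a factor $\al$); it is not needed to prevent bare vanishing, which for $c_i>0$ already follows from the $O(1)$ imaginary part. Nothing in your sketch establishes the needed quantitative bound — the Frobenius-to-WKB connection with uniform control of the singular coefficient $\bM^{-1}$ is precisely the content of the contraction argument behind Proposition \ref{pro: bound-(phi, phi_0)}, and it is asserted rather than carried out.

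Second, the imaginary part. Your claim that the imaginary corrections to $\varphi_\pm$ and $W_0$ are "suppressed by an additional factor $c_i$" is false: the passage through the critical layer carries the branch jump of the logarithm, so for instance $\arg\big(\phi(\bar Y_1^*)/\phi(\bar Y_2^*)\big)=\pi+\mathcal O(c_i)$ and $\Im\int_{\bar Y_1^*}^{\bar Y_2^*}F\phi^{-2}\,dY=-\pi\,\pa_Y^2\bM_r(Y_c)/\pa_Y\bM_r(Y_c)^3+o(1)$, an $O(1)$ quantity independent of $c_i$. The term $\al^{-1}\|f\|_{L^\infty_{w_0}}$ in the stated bound for $\Im\varphi$ arises because this $O(1)$ imaginary part enters divided by the $\sim\al$ real part; your reasoning, if valid, would prove the stronger (and false) bound with $c_i\|f\|_{L^\infty_{w_0}}$ in place of $\al^{-1}\|f\|_{L^\infty_{w_0}}$, which shows the mechanism has been missed — and this $\mathrm i\pi$ contribution is not a technicality, since it is what ultimately produces $\Im\frak{D}_0$ and the instability. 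A minor but real slip: with $\psi=\varphi/\bM$ the self-adjoint form of \eqref{eq: Rayleigh-nonhomo} reads $\pa_Y(\bM^2F^{-1}\pa_Y\psi)-\al^2\bM^2\psi=f$ (not $f/\bM$), so the variation-of-parameters kernel should be $\varphi_\pm(Z)f(Z)/\bM(Z)$ rather than $\varphi_\pm(Z)f(Z)/\bM^2(Z)$; since $\mathrm{supp}(f)$ avoids $Y_c$ this does not change sizes, but the formula as displayed does not solve the equation.
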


For convenience, we introduce the following two kinds of definitions of operators
\begin{align}
\mathcal{L}_{\xi}(\cdot):=\partial_Y\left(\f{\xi(Y)^2}{F(Y)}\partial_Y\left(\f{\cdot}{\xi}\right)\right)\text{ and }\tilde{\mathcal{L}}_{\xi}(\cdot):=\partial_Y\left(\f{\xi(Y)^2}{F(Y)}\partial_Y\right).
\end{align}
Then we can write that $\mathcal{L}_{cr}(\varphi)=\mathcal{L}_{\bar{M}}(\varphi)-\al^2\bar{M}\varphi.$ Moreover. we denote  $\mathcal{L}_1(\cdot)=\partial_Y(F(Y)^{-1}\partial_Y)$.

To solve the system \eqref{eq: Rayleigh-nonhomo}, we first construct a homogenous solution related to  the system \eqref{eq: Rayleigh-nonhomo}, which is expressed as follows
\begin{align}\label{eq: Rayleigh-homo}
           	\left\{
           	\begin{aligned}
           		&\mathcal{L}_{cr}(\phi)=\mathcal{L}_{\bar{M}}(\phi)-\al^2\bar{M}\phi=0,\quad Y\in(\bar Y_1^*,\bar Y_2^*),\\
           		&\phi|_{Y=Y_c}=\bM(Y_c),\quad \partial_Y\phi|_{Y=Y_c}=\pa_Y\bM(Y_c).
           	\end{aligned}
           	\right.
           \end{align} 
   
With $\phi$  at hand, the non-homogenous system \eqref{eq: Rayleigh-nonhomo} is equivalent to 
 \begin{align*}
 \mathcal{L}_{\phi}(\varphi)=\f{\phi f}{\bar{M}},\quad Y\in(\bar Y_1^*,\bar Y_2^*)\text{ and }\varphi(\bar Y_1^*)=\varphi(\bar Y_2^*)=0.
 \end{align*}

\subsection{Solving the homogeneous equation.}

The first step is to construct the solution of the homogeneous Rayleigh type equation in \eqref{eq: Rayleigh-homo}. 
%We notice that 
%\begin{align}\label{eq:phisim}
%\phi(Y)\sim\bar{M}(Y),\quad Y\sim Y_c\text{ and }\phi(Y)\sim \al^{-1}e^{\al|Y-Y_c|},\quad \text{$Y$ closed to $\bar{Y}_1^*$ and $\bar{Y}_2^*$}.
%\end{align}
To construct the solution $\phi$ to \eqref{eq: Rayleigh-homo}, we are going to find an accurate profile $\psi(Y)$ of $\phi(Y)$ such that $\phi(Y)=\psi(Y)\tilde{\phi}_0(Y)$, where $\tilde{\phi}_0(Y)$ is a small perturbation around $1$.
On the other hand, we notice for any function $\psi$,  the following identity holds: if $\phi(Y)$ is the solution to \eqref{eq: Rayleigh-homo}, then
\begin{align}\label{formula: (phi, psi)}
\begin{split}
\mathcal{L}_{\psi}(\phi)
=&\psi(\al^2\phi+\bM^{-1}\partial_Y(F^{-1}\partial_Y\bM)\phi)-\phi\mathcal{L}_1(\psi)\\
=&\phi(\al^2\psi-\mathcal{L}_1(\psi))\phi+\bM^{-1}\partial_Y(F^{-1}\partial_Y\bM)\phi\psi.
\end{split}
\end{align}
For the first term on the right-hand side of the above equality, there is a large factor $\al^2$. To eliminate this term, we choose function $\psi$ to approximatively solve the variable coefficient elliptic equation $\mathcal{L}_1(f)=\al^2f$. 
 Taking $\psi(Y)=F_r^\f14(Y)\sinh\al w_c(Y)$ or $\psi(Y)=F_r^\f14(Y)\cosh\al w_c(Y)$, it is easy to verify that
\begin{align*}
\mathcal{L}_1(\psi)=\al^2\psi+F_r^{-\f14}\partial_Y(F_r^{-1}\partial_Y(F_r^\f14))\psi+\partial_Y((F^{-1}-F_r^{-1})\partial_Y\psi),
\end{align*}
which gives an approximated solution to the equation $\mathcal{L}_1(f)=\al^2 f$. To match the boundary condition in \eqref{eq: Rayleigh-homo}, we finally choose 
\begin{align}\label{eq: choose psi}
\psi(Y)=&\al^{-1}\partial_Y\bM(Y_c)F_r^\f14(Y)\sinh\al w_c(Y)+ \bM(Y_c) F_r^\f14(Y)\cosh\al w_c(Y)\\
\nonumber
=&\psi_r(Y)+i\psi_i(Y).
\end{align}
It is easy to check that $\psi(Y_c)=\bar{M}(Y_c)$ and $\partial_Y\psi(Y)=\partial_Y\bar{M}(Y_c)$. 
%Moreover, we notice that by \eqref{eq:phisim}, for $Y\sim Y_c$,
%\begin{align*}
%\psi(Y)\sim\partial_Y\bar{M}(Y_c)(Y-Y_c)+\bar{M}(Y_c)\sim\partial_Y\bar{M}(Y_c)(Y-Y_c)+\mathrm{i}\mathrm{Im}(\bar{M})\sim\bar{M}(Y)\sim\phi(Y),
%\end{align*}
%and for $Y$ away from $Y_c$,
%\begin{align*}
%\psi(Y)\sim \al^{-1} e^{\al|Y-Y_c|}\sim\phi(Y).
%\end{align*}
Therefore, we have
\begin{align}\label{sim: psi}
|\psi(Y)|\sim \al^{-1}\sinh \al |w_c(Y)|+ c_i.
\end{align}

\begin{proposition}\label{pro: bound-(phi, phi_0)}
Let $(\al,c)\in \mathbb{H}_0$ and $\psi$ be defined as \eqref{eq: choose psi}. Then there exists a solution $\phi\in W^{2,\infty}([\bar Y_1^*,\bar Y_2^*])$ with the form of $\phi=\psi\tilde\phi_0$, where $\tilde\phi_0$ satisfies 
\begin{align*}
C^{-1}\leq\|\tilde\phi_0\|_{L^\infty}\leq C,\quad  \|\mathrm{Im}\tilde\phi_0\|_{L^\infty}\leq Cc_i(|\log c_i|+\alpha),\quad |1-\tilde{\phi}_0(Y)|\leq C\min\{\al^{-1}\log\al,|Y-Y_c|\}.
\end{align*}
Moreover, we have
\begin{align}
&C^{-1}\al^{-1}\Big(\sinh \al |w_c(Y)|+ \al c_i\Big)\leq|\phi(Y)|\leq C\al^{-1}\Big(\sinh \al |w_c(Y)|+ \al c_i\Big). \label{est: phi}
\end{align}
More precisely, we have
\begin{align}
&|\phi_r|\leq C(\al^{-1}\sinh \al |w_c(Y)|+c_i^2(|\log c_i|+\al)),\quad |\phi_i|\leq Cc_i e^{\al |w_c(Y)|},
\end{align}
where $\phi=\phi_r+i\phi_i$.
\end{proposition}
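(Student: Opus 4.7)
I plan to write $\phi=\psi\tilde\phi_0$ with $\psi$ defined in \eqref{eq: choose psi}; by construction $\psi(Y_c)=\bar M(Y_c)$ and $\partial_Y\psi(Y_c)=\partial_Y\bar M(Y_c)$, so the initial conditions in \eqref{eq: Rayleigh-homo} translate to $\tilde\phi_0(Y_c)=1$ and $\partial_Y\tilde\phi_0(Y_c)=0$. Inserting the ansatz into $\mathcal{L}_{cr}(\phi)=0$ and using the identity \eqref{formula: (phi, psi)} together with the direct computation
\[
\mathcal{L}_1(\psi)=\alpha^2\psi+\mathcal{R}_1\psi+\partial_Y((F^{-1}-F_r^{-1})\partial_Y\psi),\qquad \mathcal{R}_1=F_r^{-1/4}\partial_Y(F_r^{-1}\partial_YF_r^{1/4}),
\]
(which is exactly what motivated the choice of $\psi$), the leading $\alpha^2$-term cancels and the problem reduces to
\[
\partial_Y\!\left(\frac{\psi^2}{F}\partial_Y\tilde\phi_0\right)=\psi^2\mathcal{K}(Y)\tilde\phi_0,\qquad \tilde\phi_0(Y_c)=1,\ \partial_Y\tilde\phi_0(Y_c)=0,
\]
with $\mathcal{K}=-\mathcal{R}_1-\partial_Y((F^{-1}-F_r^{-1})\partial_Y\psi)/\psi+\bar M^{-1}\partial_Y(F^{-1}\partial_Y\bar M)$. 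Only the last summand of $\mathcal{K}$ is singular (as $|\bar M|^{-1}$ at the critical layer), but it is integrable against $\psi^2$.

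\textbf{Integral equation and fixed point.} Since $\psi(Y_c)\neq 0$, integrating twice from $Y_c$ yields
\[
\tilde\phi_0(Y)=1+\int_{Y_c}^{Y}\frac{F(Y')}{\psi^2(Y')}\int_{Y_c}^{Y'}\psi^2(Z)\mathcal{K}(Z)\tilde\phi_0(Z)\,dZ\,dY'=:1+\mathcal{T}[\tilde\phi_0](Y).
\]
The apparent singularity $|\psi(Y')|^{-2}\lesssim(|Y'-Y_c|+c_i)^{-2}$ is compensated by the matching vanishing of $\psi^2$ in the inner integral, so $\mathcal{T}$ maps $L^\infty([\bar Y_1^*,\bar Y_2^*])$ into itself with small operator norm. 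A Picard argument then produces a unique fixed point with $C^{-1}\leq\|\tilde\phi_0\|_{L^\infty}\leq C$.

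\textbf{Pointwise bounds and the main obstacle.} Obtaining the sharp estimates $|1-\tilde\phi_0(Y)|\leq C\min\{\alpha^{-1}\log\alpha,|Y-Y_c|\}$ and $\|\Im\tilde\phi_0\|_{L^\infty}\leq Cc_i(|\log c_i|+\alpha)$ is the main technical difficulty. The plan is to split $[\bar Y_1^*,\bar Y_2^*]$ into a sublayer $|Y-Y_c|\leq\alpha^{-1}$ and an exponential region $|Y-Y_c|\geq\alpha^{-1}$. In the sublayer, $|\psi|\sim|Y-Y_c|+c_i$, so $\int_{Y_c}^{Y'}\psi^2\mathcal{K}\,dZ\sim(|Y'-Y_c|+c_i)^2$ (taking the $|\bar M|^{-1}$-piece of $\mathcal{K}$ into account), dividing by $|\psi(Y')|^2$ gives an $O(1)$ integrand, and outer integration produces the $|Y-Y_c|$ bound. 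In the exponential region, $|\psi|\sim\alpha^{-1}\sinh\alpha|w_c|$; the inner integral evaluates to $O(\alpha^{-3}\sinh^2\alpha|w_c(Y')|)$ (after integration by parts), dividing by $|\psi(Y')|^2$ leaves an $O(\alpha^{-1})$ integrand, and the $\log\alpha$-factor arises from matching through the transition layer at $|Y-Y_c|\sim\alpha^{-1}$. For the imaginary part, $\Im\mathcal{K}=O(c_i)$ is inherited from $F_i$ and $\bar M_i$; the $|\log c_i|$ factor is produced by the outer integration through $|Y-Y_c|\lesssim c_i$, and the additional $\alpha$ comes from the exponential region.

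\textbf{Conclusion.} The two-sided bound \eqref{est: phi} is then immediate from $|\psi(Y)|\sim\alpha^{-1}\sinh\alpha|w_c(Y)|+c_i$ (see \eqref{sim: psi}) and $\|\tilde\phi_0\|_{L^\infty}\sim1$. For the refined real/imaginary split, decompose $\psi=\psi_r+i\psi_i$ with $|\psi_r|\sim\alpha^{-1}\sinh\alpha|w_c|$ and $|\psi_i|\lesssim c_i\cosh\alpha|w_c|\lesssim c_i e^{\alpha|w_c|}$, and $\tilde\phi_0=\tilde\phi_{0,r}+i\tilde\phi_{0,i}$ with $|\tilde\phi_{0,r}-1|\lesssim\alpha^{-1}\log\alpha$ and $|\tilde\phi_{0,i}|\lesssim c_i(|\log c_i|+\alpha)$. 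Expanding $\phi_r=\psi_r\tilde\phi_{0,r}-\psi_i\tilde\phi_{0,i}$ and $\phi_i=\psi_r\tilde\phi_{0,i}+\psi_i\tilde\phi_{0,r}$ and invoking the regime $c_i\ll\alpha^{-n}$ for all $n$ then yields the stated refined bounds $|\phi_r|\leq C(\alpha^{-1}\sinh\alpha|w_c|+c_i^2(|\log c_i|+\alpha))$ and $|\phi_i|\leq Cc_i e^{\alpha|w_c|}$.
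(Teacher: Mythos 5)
Your skeleton — the ansatz $\phi=\psi\tilde\phi_0$, the reduction to $\partial_Y(\psi^2F^{-1}\partial_Y\tilde\phi_0)=g\tilde\phi_0$ with $\tilde\phi_0(Y_c)=1$, $\partial_Y\tilde\phi_0(Y_c)=0$, the double-integral representation and a contraction argument — is exactly the paper's route, and your derivation of \eqref{est: phi} and of the refined $\phi_r,\phi_i$ bounds from the decomposition $\phi_r=\psi_r\tilde\phi_{0,r}-\psi_i\tilde\phi_{0,i}$, $\phi_i=\psi_r\tilde\phi_{0,i}+\psi_i\tilde\phi_{0,r}$ matches the paper as well. (Minor point: the statement asks for $\phi\in W^{2,\infty}$, so the fixed point should be run in a norm controlling $\partial_Y\tilde\phi_0$ and $\bar M\partial_Y^2\tilde\phi_0$, as the paper does with its space $X$, rather than in plain $L^\infty$; this is a routine addition.)

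The genuine gap is the estimate $\|\mathrm{Im}\,\tilde\phi_0\|_{L^\infty}\leq Cc_i(|\log c_i|+\alpha)$, which is the heart of the proposition (it is what later produces the size and sign of $c_i$ in the dispersion relation), and which you dispose of in one sentence resting on the claim "$\Im\mathcal K=O(c_i)$". That claim is false as stated: $\mathcal K$ contains $\bar M^{-1}\partial_Y(F^{-1}\partial_Y\bar M)$, and $\Im(\bar M^{-1})\sim c_i/(|Y-Y_c|^2+c_i^2)$, which is of size $c_i^{-1}$ at the critical point, not $O(c_i)$. Whether the desired bound survives depends on a cancellation (the combination $\psi^2\bar M^{-1}\approx\bar M$ near $Y_c$, because $\psi$ matches $\bar M$ to first order there) and, more seriously, on the fact that inside the critical sublayer $|Y-Y_c|\lesssim c_i$ the kernel $F\psi^{-2}$ has imaginary part comparable to its real part (since $\psi_r(Y_c)=0$ while $\psi_i(Y_c)\sim c_i$), so the "imaginary parts are a small perturbation of a real problem" philosophy breaks down exactly where the singularity sits. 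The paper's proof is structured around precisely this difficulty: it factorizes $\tilde\phi_0=\tilde\phi_{01}\tilde\phi_{02}$ with $\tilde\phi_{01}$ a genuinely real solution of the realified equation (built from $\psi_r$, $F_r$ and $g_r$ with the regularized factor $(\bar M^{-1})_r=\bar M_r/|\bar M|^2$, which vanishes at $Y_c$), proves the extra weighted derivative bound $|\partial_Y\tilde\phi_{01}|\leq Cc_i^{-1}|Y-Y_c|$ for $|Y-Y_c|\leq\alpha^{-1}$, and then estimates the correction $\tilde\phi_{02}$ through the explicit error terms $W_1,W_2$ (bounded by $Cc_i(\alpha^2+|\bar M|^{-2})|\psi|^2$ and similar), together with $\int_{Y_c}^{Y}F\psi^{-2}\sim c_i^{-1}$ against the initial datum $\bar M^2(Y_c)\partial_Y\tilde\phi_{01}(Y_c)\sim c_i^2$; the $c_i|\log c_i|$ and $c_i\alpha$ contributions emerge from these specific integrals. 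None of this appears in your proposal, and without it (or an equivalent argument exhibiting the $\psi^2\bar M^{-1}$ cancellation and controlling the imaginary part of the kernel through the sublayer) the key bound on $\mathrm{Im}\,\tilde\phi_0$ — and hence the refined bound $|\phi_r|\leq C(\alpha^{-1}\sinh\alpha|w_c(Y)|+c_i^2(|\log c_i|+\alpha))$ — is asserted, not proved.
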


\begin{proof}
We choose $\psi(Y)$ defined as \eqref{eq: choose psi}. To construct the solution $\phi$ to \eqref{eq: Rayleigh-nonhomo}, we are left with the construction with $\tilde{\phi}_0=\phi/\psi$, which satisfies  
\begin{align}\label{eq: tilde phi_0}
\tilde{\mathcal{L}}_{\psi}(\tilde{\phi}_0)=\partial_Y\left(\psi^2F^{-1}\partial_Y\tilde{\phi}_0\right) =\tilde\phi_0 g \text{ and }\tilde\phi_0(Y_c)=1,\quad\partial_Y\tilde{\phi}_0(Y_c)=0,
\end{align}
where 
\begin{align*}
g=\Big(\psi^2\bM^{-1}\partial_Y(F^{-1}\partial_Y\bM)-F_r^{-\f14}\partial_Y(F_r^{-1}\partial_Y(F_r^\f14))\psi^2-\psi\partial_Y((F^{-1}-F_r^{-1})\partial_Y\psi)\Big)=g_1+g_2+g_3.
\end{align*}
Then $\tilde{\phi}_0$ can be represented as
\begin{align}\label{form:tphi}
\tilde\phi_0(Y)=&1+\int_{Y_c}^Y (F\psi^{-2})(Y')\int_{Y_c}^{Y'}(g_1+g_2+g_3)(Y'')\tilde\phi_0(Y'')  dY''dY':\eqdef1+T\tilde\phi_0.
\end{align}
For $Y\in[\bar{Y}_1^*,\bar{Y}_2^*]$, we have 
\begin{align}\label{eq:est-g123}
|g_1(Y)|\leq \f{C|\psi|^2}{|\bar{M}|},\quad|g_2(Y)|\leq C|\psi|^2\text{ and }|g_3(Y)|\leq C\al^2c_i|\psi|^2.
\end{align}

To show the existence of $\tilde{\phi}_0$, we only need to show  $T$ is a contracted operator in $\|\cdot\|_{X}$, where 
\begin{align*}
\|f\|_X:=\|f\|_{L^\infty([\bar{Y}_1^*,\bar{Y}_2^*])}+\al^{-1}\|\partial_Y f\|_{L^\infty([\bar{Y}_1^*,\bar{Y}_2^*])}+\al^{-2}\|\bar{M}\partial_Y^2f\|_{L^\infty([\bar{Y}_1^*,\bar{Y}_2^*])}.
\end{align*}

For the first term in $Tf$ with $f\in X$, we use \eqref{bound: F(Y)}-\eqref{bound: bM}, \eqref{sim: psi} and \eqref{eq:est-g123} to  obtain
\begin{align*}
&\Big|\int_{Y_c}^Y (F\psi^{-2})(Y')\int_{Y_c}^{Y'}g_1(Y'')f(Y'')  dY''dY'\Big|\\
\leq&C\int_{Y_c}^Y \f{1}{|\psi(Y')|^2}\int_{Y_c}^{Y'}(|\psi|^2|\bM|^{-1})(Y'')dY''dY'\|f\|_{L^\infty}\\
\leq&C\int_{Y_c}^Y \f{\al^2 c_i|Y'-Y_c|}{\sinh^2 \al w_c(Y')+\al^2c_i^2 }dY'\|f\|_{L^\infty}\\
&+C\int_{Y_c}^Y \f{1}{\sinh^2 \al w_c(Y')+\al^2c_i^2 }\int_{Y_c}^{Y'}\f{\sinh^2 \al w_c(Y'')}{|Y''-Y_c|+c_i}dY''dY'\|f\|_{L^\infty}=I_1+I_2.
\end{align*}
For $I_1$, we have
\begin{align}\label{est: integral-1}
|I_1|\leq&C\al c_i\int_{|Y'-Y_c|\leq \al^{-1}} \f{1}{\al|Y'-Y_c|+\al c_i }dY'\|f\|_{L^\infty}\\
\nonumber
&+C\al c_i\int_{|Y'-Y_c|\geq \al^{-1}} \al|Y'-Y_c|e^{-2C\al|Y'-Y_c|}dY'\|f\|_{L^\infty}\leq Cc_i|\log c_i|\|\tilde\phi_0\|_{L^\infty}.
\end{align}
For $I_2$, we change the order of the integral to get
\begin{align}\label{est: integral-2}
|I_2|\leq&C\al^{-1}\int_{0}^{\al |w_c(Y)|} \f{dx}{\sinh ^2x}\int_{0}^{|x|}\f{\sinh ^2 z}{z}dz\|f\|_{L^\infty}\\
\nonumber
\leq&C\al^{-1}\int_{0}^{C\al } \f{\sinh ^2zdz}{z}\int_{|z|}^{+\infty}\sinh ^{-2} xdx\|\tilde\phi_0\|_{L^\infty}\\
\nonumber
\leq&C\al^{-1}\int_{0}^{C\al }\f{1-e^{-2z}}{z}  dz \|f\|_{L^\infty}
\leq C(\al^{-1}+\al^{-1}\log \al)\|f\|_{L^\infty}.
\end{align}
Collecting \eqref{est: integral-1} and \eqref{est: integral-2}, we obtain
\begin{align}\label{est: T_3 g_1}
\Big|\int_{Y_c}^Y (F\psi^{-2})(Y')\int_{Y_c}^{Y'}g_1(Y'')f(Y'')  dY''dY'\Big|
\leq C(c_i|\log c_i|+\al^{-1}+\al^{-1}\log \al)\|f\|_{L^\infty}.
\end{align}

For the second and the third term in $Tf$, the procedure is much easier than the first term because there is not a singular factor $\bM^{-1}$. As $|g_2|\leq C |\psi(Y)|^2$ and $|g_3|\leq C\al^2 c_i |\psi(Y)|^2$, we use \eqref{sim: psi} and similar argument in \eqref{est: integral-1} and \eqref{est: integral-2} to get
\begin{align}\label{est: T_3 g_2}
&\Big|\int_{Y_c}^Y (F\psi^{-2})(Y')\int_{Y_c}^{Y'}(g_2+g_3)(Y'')f(Y'')  dY''dY'\Big|\\
\nonumber
&\leq C  \int_{Y_c}^Y \f{1}{|\psi^2(Y')|} \int_{Y_c}^{Y'} |\psi(Y'')|^2dY''dY'\|f\|_{L^\infty}
\leq C\al^{-1}\|f\|_{L^\infty}.
\end{align}
%Due to $|g_3|\leq C\al^2 c_i |\psi(Y)|^2$, similar argument  in \eqref{est: T_3 g_1} and \eqref{est: T_3 g_2}  gives
%\begin{align}\label{est: T_3 g_3}
%&\left|\int_{Y_c}^Y (F\psi^{-2})(Y')\int_{Y_c}^{Y'}g_3(Y'')f(Y'')  dY''dY'\right|\\
%\nonumber
%\leq&C \al^2 c_i \int_{Y_c}^Y \f{1}{|\psi^2(Y')|} \int_{Y_c}^{Y'} |\psi(Y'')|^2dY''dY'\|f\|_{L^\infty}\leq C\al c_i\|f\|_{L^\infty},
%\end{align}

Combining \eqref{est: T_3 g_1}-\eqref{est: T_3 g_2} together, we arrive at 
\begin{align}\label{eq:Tf}
\|Tf\|_{L^\infty}\leq C(c_i|\log c_i|+\al^{-1}+\al^{-1}\log \al)\|f\|_{L^\infty}\leq \al^{-1}\log \al \|f\|_{X}
\end{align}
by using $(\al,c)\in \mathbb{H}_0$. 

Now we turn to focus on $\partial_YTf(Y)$. We have 
\begin{align*}
\partial_YTf(Y)=(F\psi^{-2})(Y)\int_{Y_c}^{Y}(g_1+g_2+g_3)(Y')f(Y')  dY'.
\end{align*}
By a similar argument as above, we have 
\begin{align*}
&\Big|(F\psi^{-2})(Y)\int_{Y_c}^{Y}g_1(Y')\tilde\phi_0(Y')  dY'\Big|\\
&\quad\leq C\f{\al^2 c_i|Y-Y_c|}{\sinh^2 \al w_c(Y)+\al^2c_i^2 }\|f\|_{L^\infty}+\f{C}{\sinh^2 \al w_c(Y)+\al^2c_i^2 }\int_{Y_c}^{Y}\f{\sinh^2 \al w_c(Y')}{|Y'-Y_c|+c_i}dY'\|f\|_{L^\infty}\\
&\quad\leq C\|f\|_{L^\infty}\Big(1+\f{\al}{\sinh^2 \al w_c(Y)+\al^2c_i^2 }\int_{Y_c}^{Y}\sinh \al w_c(Y')dY'\Big)\leq  C\|f\|_{L^\infty},
\end{align*}
\begin{align*}
&\Big|(F\psi^{-2})(Y)\int_{Y_c}^{Y}(g_2+g_3)(Y')\tilde\phi_0(Y')  dY'\Big|\leq \f{C}{|\psi^2(Y)|} \int_{Y_c}^{Y} |\psi(Y')|^2dY'\|f\|_{L^\infty}\leq C\|f\|_{L^\infty}.
\end{align*}
Therefore, we obtain 
\begin{align}\label{eq:d1-Tf}
\|\partial_Y Tf\|_{L^\infty}\leq C\|f\|_{L^\infty}\leq C\|f\|_{X}.
\end{align}

Next we consider $\partial_Y^2 Tf$. By the definition of $T$, we have 
\begin{align*}
\bar{M}\partial_Y^2 Tf(Y)=\f{g f\bar{M}F}{\psi^2}-\f{\bar{M}\psi^2\partial_Y Tf\partial_Y(\psi^{-2}F)}{F},
\end{align*}
which along with the fact
\begin{align*}
\Big|\f{gf\bar{M}F}{\psi^2}(Y)\Big|\leq C\|f\|_{L^\infty},\quad\Big|\f{\bar{M}\psi^2\partial_Y Tf\partial_Y(\psi^{-2}F)}{F(Y)}\Big|\leq C\al\|\partial_YT f\|_{L^\infty}\leq C\al\|f\|_{L^\infty},
\end{align*}
implies 
\begin{align}\label{eq:d2-Tf}
\|\bar{M}\partial_Y^2 Tf(Y)\|_{L^\infty}\leq C\al\|f\|_{L^\infty}\leq C\al\|f\|_{X}.
\end{align}
According to \eqref{eq:Tf}, \eqref{eq:d1-Tf} and \eqref{eq:d2-Tf}, we obtain  
\begin{align*}
\|Tf\|_{X}\leq C\al^{-1}\log\al\|f\|_{L^\infty}\leq C\al^{-1}\log\al\|f\|_X,
\end{align*}
which implies that
$I-T_3$ is invertible in X. Then we obtain that $\tilde\phi_0=(I-T)^{-1}1$. Moreover,
\begin{align*}
C^{-1}\leq\|\tilde\phi_0\|_{L^\infty}\leq C,\quad |1-\tilde{\phi}_0(Y)|\leq C\min\{\al^{-1}\log\al,|Y-Y_c|\}.
\end{align*}
Then we use \eqref{sim: psi} to obtain \eqref{est: phi}.

Next we give the estimates for the real part and imaginary part of $\tilde{\phi}_0$. To achieve that, we introduce $\tilde\phi_0=\tilde\phi_{01}\tilde\phi_{02}$, where $\tilde\phi_{01}$ is a real function solving the following system
\begin{align*}
\partial_Y\Big(\f{\psi_r^2}{F_r}\partial_Y\tilde{\phi}_{01}\Big)=\tilde\phi_{01}g_r,\text{ and }\tilde\phi_{01}(Y_c)=1,\quad (\psi_r^2\partial_Y \tilde\phi_{01})(Y_c)=0
\end{align*}
and $\tilde\phi_{02}$ satisfies
 \begin{align}\label{eq: tilde phi02}
\left\{
\begin{aligned}
&\tilde{\mathcal L}_{\psi}(\tilde\phi_{02})+\Big(\f{F_r\psi^2}{F \psi_r^2} g_r-g\Big)\tilde\phi_{01}^2\tilde\phi_{02}-(\f{\psi^2 F_r}{\psi^2_r F}-1)\partial_Y (F_r^{-1}\psi_r^2)\partial_Y \tilde\phi_{01}\tilde\phi_{01}\tilde\phi_{02}=0,\\
&\tilde\phi_{02}(Y_c)=1,\quad (\psi^2\partial_Y\tilde\phi_{02})(Y_c)= \bM^2(Y_c)\tilde\phi'_{01}(Y_c)\sim  c_i^2.
\end{aligned}
\right.
\end{align}
Here
\begin{align*}
g_r=\psi_r^2(\bM^{-1})_r\partial_Y (F^{-1}_r\partial_Y\bM_r)-F_r^{-\f14}\partial_Y(F_r^{-1}\partial_Y(F_r^\f14))\psi_r^2=g_{1r}+g_{2r}.
\end{align*}
Moreover, we have
\begin{align}
	|g_{1r}|\leq \frac{C|Y-Y_c||\psi_r|^2}{|Y-Y_c|^2+c_i^2}\text{ and }|g_{2r}|\leq C|\psi_r|^2.
\end{align}

It is easy to check that
\begin{align*}
\tilde\phi_{01}(Y)=1+\int_{Y_c}^Y F_r\psi_r^{-2}\int_{Y_c}^{Y'} (\tilde\phi_{01} g_r )(Y'')dY'' dY':\eqdef1+T_r\tilde\phi_{01}.
\end{align*}
Using  similar argument in $T$, we can deduce that for any $f\in X$,
\begin{align*}
\|T_rf\|_{X}\leq& C\al^{-1}(1+\log \al)\|f\|_{X},
\end{align*}
by taking $\al$ large. Then $I-T_r$ is invertible in $X$ and we construct $\tilde\phi_{01}$ by $\tilde\phi_{01}=(I-T_r)^{-1}1$, which gives 
\begin{align}\label{est: phi'_01-1}
C^{-1}\leq|\tilde\phi_{01}|\leq C,\quad \|\partial_Y\tilde\phi_{01}\|_{L^\infty}\leq C\text{ and }\|1-\tilde\phi_{01}\|_{L^\infty}\leq C\alpha^{-1}\log\alpha.
\end{align}
Moreover, we can obtain the estimates of $\partial_Y\tilde\phi_{01}$ for $Y$ close to $Y_c$. Indeed, we use the behavior $\psi_r(Y)\sim \alpha^{-1}\sinh \al w_c (Y)$  to have
\begin{align*}
&\Big|F_r\psi_r^{-2}\int_{Y_c}^{Y} (\tilde\phi_{01} g_{1r} )(Y')dY'\Big|
\leq\f{C}{\sinh^2\al w_c(Y)}\int_{Y_c}^{Y}\f{|Y'-Y_c|\sinh^2\al  w_c(Y')}{|Y'-Y_c|^2+c_i^2}dY'\|\tilde\phi_{01}\|_{L^\infty},
\end{align*}
which implies that for  $|Y-Y_c|\leq \al^{-1}$,
\begin{align*}
	\Big|F_r\psi_r^{-2}\int_{Y_c}^{Y} (\tilde\phi_{01} g_{1r} )(Y')dY'\Big|&\leq\f{C\al^2|Y-Y_c|^3}{\al^2|Y-Y_c|^2}\int_{Y_c}^{Y}\f{1}{|Y'-Y_c|^2+c_i^2}d Y'\|\tilde\phi_{01}\|_{L^\infty}\\&\leq C c_i^{-1}|Y-Y_c|\|\tilde\phi_{01}\|_{L^\infty}.
\end{align*}
Thus,  for $|Y-Y_c|\leq \al^{-1}$, we have 
\begin{align*}
\Big|F_r\psi_r^{-2}\int_{Y_c}^{Y} (\tilde\phi_{01} g_{1r} )(Y')dY'\Big|\leq C c_i^{-1}|Y-Y_c|\|\tilde\phi_{01}\|_{L^\infty}.
\end{align*}
We point out that for the estimate in the region $|Y-Y_c|\leq \al^{-1}$, we obtain a good factor $|Y-Y_c|$ at the price of factor $c_i^{-1}$, and {this estimate will be used to deal with $\tilde\phi_{0,2}$.}
Similar process for $g_{2r}$ gives 
\begin{align*}
\Big\|F_r\psi_r^{-2}\int_{Y_c}^{Y} (\tilde\phi_{01} g_{2r} )(Y')dY'\Big\|_{L^\infty}\leq&C|Y-Y_c|\|\tilde\phi_{01}\|_{L^\infty}\leq C\|\tilde\phi_{01}\|_{L^\infty}.
\end{align*}
The above three estimates ensure that for $|Y-Y_c|\leq \al^{-1}$,
\begin{align}\label{est: phi'_01-2}
|\pa_Y\tilde\phi_{01}|\leq c_i^{-1}|Y-Y_c|\|\tilde\phi_{01}\|_{L^\infty}\leq c_i^{-1}|Y-Y_c|.
\end{align}

We turn to give the  estimates for $\tilde\phi_{02}$. First, $\tilde\phi_{02}$ can be represented as
\begin{align*}
\tilde\phi_{02}(Y)=&1+\left(\al^2 \bM^2(Y_c)\tilde\phi'_{01}(Y_c)\right)\int_{Y_c}^Y (F\psi^{-2})(Y') dY'\\
&+\int_{Y_c}^Y (F\psi^{-2})(Y')\int_{Y_c}^{Y'}\left(W_1(Y'')\tilde\phi_{01}^2+W_2(Y'')\partial_Y\tilde\phi_{01}\tilde\phi_{01}\right)\tilde\phi_{02} dY'' dY'\\
\eqdef&1+\left(\al^2 \bM^2(Y_c)\tilde\phi'_{01}(Y_c)\right)\int_{Y_c}^Y (F\psi^{-2})(Y') dY'+T_i(\tilde\phi_{02}),
\end{align*}
where 
\begin{align*}
	W_1(Y)=-\Big(\f{F_r\psi^2}{F \psi_r^2} g_r-g\Big)\text{ and }W_2(Y)=(\f{\psi^2 F_r}{\psi^2_r F}-1)\partial_Y(F_r^{-1}\psi_r^2).
\end{align*}
For $W_1(Y)$, notice that 
\begin{align*}
|W_1(Y)|=&|F^{-1} \psi_r^{-2}(F_r\psi^2 g_{1r}-g_1F\psi_r^2)+F^{-1} \psi_r^{-2}(F_r\psi^2 g_{2r}-g_2F\psi_r^2)-g_3|\\
\leq&|F^{-1} \psi_r^{-2}(F_r\psi^2 g_{1r}-g_1F\psi_r^2)|+|F^{-1} \psi_r^{-2}(F_r\psi^2 g_{2r}|+|g_3|,
\end{align*}
which along with the facts 
\begin{align*}
&|F^{-1} \psi_r^{-2}(F_r\psi^2 g_{1r}-g_1F\psi_r^2)|=\left|F^{-1}\psi^2\left(F_r(\bM^{-1})_r\partial_Y(F_r^{-1}\partial_Y\bM_r)-F \bM^{-1}\partial_Y(F^{-1}\partial_Y\bM)\right)\right|\leq \f{Cc_i|\psi|^2}{|\bM|^2},\\
&|F^{-1} \psi_r^{-2}(F_r\psi^2 g_{2r}-g_2F\psi_r^2)|=|F^{-1}F_r^{-\f14}\partial_Y(F_r^{-1}\partial_Y(F_r^\f14))\psi^2(F_r-F)|\leq C c_i|\psi|^2,\quad |g_3|\leq C\al^2c_i|\psi|^2,
\end{align*}
implies 
\begin{align}\label{est:W1}
	|W_1(Y)|\leq Cc_i(\alpha^2+|\bar M(Y)|^{-2})|\psi|^2.
\end{align}
We also have 
\begin{align*}
|W_2(Y)|=\left|(\f{\psi^2 F_r}{\psi^2_r F}-1)\partial_Y(F_r^{-1}\psi_r^2)\right|\leq&\f{(|\psi_r|+|\psi_i|)|\psi_i||\partial_Y\psi_r|}{|\psi_r|}+C c_i |\psi_r\partial_Y\psi_r|.
\end{align*}
By the similar argument in \eqref{est: integral-2}, we have
\begin{align*}
&\int_{Y_c}^Y\f{1}{\sinh^2 \al w_c(Y')}\int_{Y_c}^{Y'}\f{\sinh^2 \al w_c(Y'')}{|Y-Y_c|^2+c_i^2} dY'' dY'\\
&\leq \int_{0}^{C\al} \f{\sinh^2 z}{z^2+\al ^2 c_i^2}\cdot \f{e^{-2z}}{1-e^{-2z}}dz\leq C(1+\log (\al c_i)),
\end{align*}
which along with \eqref{est: integral-1},\eqref{est: phi'_01-1}, \eqref{est:W1} deduce that 
\begin{align}\label{est: T_5-1}
&\left|\int_{Y_c}^Y (F\psi^{-2})(Y')\int_{Y_c}^{Y'}W_1(Y'')\tilde\phi_{01}^2f dY'' dY'\right|\\
\nonumber
\leq&C(c_i|\log c_i|+\al c_i+c_i\log (\al c_i))\|f\|_{L^\infty}
\leq Cc_i(|\log c_i|+\al)\|f\|_{L^\infty}.
\end{align}

For the second term in $T_if$, when $|Y-Y_c|\leq \al^{-1}$, we use $|\partial_Y\psi_r|\leq C$, $|\psi_i|\leq C c_i$ and \eqref{est: phi'_01-1} to infer
\begin{align*}
 c_i\int_{Y_c}^Y \f{1}{|\psi(Y')|^2}\int_{Y_c}^{Y'}|\partial_Y\psi_r\partial_Y\tilde\phi_{01} | dY'' dY'\leq&C c_i\int_{Y_c}^Y \f{|Y'-Y_c|}{|Y'-Y_c|^2+ c_i^2}dY'\leq C c_i|\log c_i|,
\end{align*}
and use \eqref{est: phi'_01-2} to obtain
\begin{align*}
&\int_{Y_c}^Y \f{1}{|\psi(Y')|^2}\int_{Y_c}^{Y'}\f{ c_i^2|\psi_r'|}{|\psi_r|}|\partial_Y\tilde\phi_{01} f| dY'' dY'\\
\leq&C\int_{Y_c}^Y \f{1}{|Y'-Y_c|^2+ c_i^2}\int_{Y_c}^{Y'}\f{ c_i|Y''-Y_c| }{|Y''-Y_c|} dY'' dY'\|f\|_{L^\infty}\\
\leq&Cc_i\int_{Y_c}^Y \f{1}{|Y'-Y_c|+c_i}dY'\|f\|_{L^\infty}\leq Cc_i |\log c_i|\|f\|_{L^\infty},
\end{align*}
and
\begin{align*}
&\int_{Y_c}^Y \f{1}{|\psi(Y')|^2}\int_{Y_c}^{Y'}c_i |\psi_r\partial_Y\psi_r|\partial_Y\tilde\phi_{01} f| dY'' dY'\\
\leq&C\int_{Y_c}^Y \f{1}{|Y'-Y_c|^2+ c_i^2}\int_{Y_c}^{Y'} c_i(|Y'-Y_c| + c_i)dY'' dY' \|f\|_{L^\infty}\leq C c_i \|f\|_{L^\infty}.
\end{align*}
When $|Y-Y_c|\geq \al^{-1}$,  we use $ |\partial_Y\psi_r(Y'')|\leq C e^{\al |w_c(Y'')|},~\psi_i(Y'')|\leq C c_i e^{\al| w_c(Y'')|}$ for $|Y''-Y_c|\geq \al^{-1}$ and $|\psi_i(Y'')|\leq C c_i $ for $|Y''-Y_c|\leq \al^{-1}$ to have the following estimates
 \begin{align*}
&\int_{Y_c}^Y \f{1}{|\psi(Y')|^2}\int_{Y_c}^{Y'}|\psi_i\partial_Y\psi_r\partial_Y\tilde\phi_{01}f| dY'' dY'\\
\leq&C c_i|\log c_i| \|f\|_{L^\infty}+C\al c_i\int_{|Y'-Y_c|\geq \al^{-1}}e^{-2\al |w_c(Y')|}\int_{Y_c}^{Y'}\al e^{2\al |w_c(Y'')|}dY''dY'\|f\|_{L^\infty}\\
 \leq& C c_i|\log c_i|\|f\|_{L^\infty}+C c_i\al \|f\|_{L^\infty}\leq C c_i(|\log c_i|+\al)\|f\|_{L^\infty},
 \end{align*}
and
\begin{align*}
&\int_{Y_c}^Y \f{1}{|\psi(Y')|^2}\int_{Y_c}^{Y'}\f{|\psi_i^2\partial_Y \psi_r|}{|\psi_r|}|\partial_Y\tilde\phi_{01}f | dY'' dY'\\
\leq&Cc_i|\log c_i| \|f\|_{L^\infty}+C\int_{|Y'-Y_c|\geq \al^{-1}} e^{-2\al |w_c(Y')|}\Big(\al c_i+\int_{|Y''-Y_c|\geq \al^{-1}}\al^3 c_i^2e^{2\al |w_c(Y'')|} dY''\Big) dY'\|f\|_{L^\infty}\\
\leq&C(c_i|\log c_i|+c_i+\al^2 c_i^2)\|f\|_{L^\infty}\leq Cc_i|\log c_i|\|f\|_{L^\infty},
\end{align*}
and
\begin{align*}
&\int_{Y_c}^Y \f{1}{|\psi(Y')|^2}\int_{Y_c}^{Y'}c_i |\psi_r\partial_Y\psi_r|\partial_Y\tilde\phi_{01} f | dY'' dY'\\
\leq&C c_i \|f\|_{L^\infty}+Cc_i\int_{|Y'-Y_c|\geq \al^{-1}} e^{-2\al |w_c(Y')|}\left(1+e^{2\al |w_c(Y')|}\right) dY' \|f\|_{L^\infty}\leq C c_i \|f\|_{L^\infty}.
\end{align*}
Then we obtain
\begin{align}\label{est: T_5-2}
\Big|\int_{Y_c}^Y (F\psi^{-2})(Y')\int_{Y_c}^{Y'}W_2(Y'')\partial_Y\tilde\phi_{01} \tilde\phi_{01}f dY'' dY'\Big|\leq C c_i(|\log c_i| +\al)\|f \|_{L^\infty}.
\end{align}
Combining \eqref{est: T_5-1} and \eqref{est: T_5-2}, we arrive at
\begin{align*}
|T_if(Y)|\leq Cc_i(|\log c_i|+\al)\|f\|_{L^\infty}.
\end{align*}
Thus, the operator $I-T_i$ is invertible in 
$L^\infty$ due to $(\al,c)\in \mathbb{H}_0$ such that $c_i(|\log c_i| +\al)\leq \f12$, 
and it holds that
\begin{align*}
\tilde\phi_{02}=(I-T_i)^{-1}\left(1+\left( \bM^2(Y_c)\partial_Y\tilde\phi_{01}(Y_c)\right)\int_{Y_c}^Y (F\psi^{-2})(Y') dY'\right).
\end{align*}

On the other hand, notice that 
\begin{align*}
\Big|\int_{Y_c}^Y (F\psi^{-2})(Y') dY'\Big|\leq& \int_{|Y'-Y_c|\leq \al^{-1}} \f{C}{|Y'-Y_c|^2+ c_i^2}dY'+C\al^2\int_{|Y'-Y_c|\geq \al^{-1}} e^{-2\al |w_c(Y')|}dY'\\
\leq& C(c_i^{-1}+\al)\leq Cc_i^{-1},
\end{align*}
which implies
\begin{align*}
\Big| \bM^2(Y_c)\partial_Y\tilde\phi_{01}(Y_c)\int_{Y_c}^Y (F\psi^{-2})(Y') dY'\Big|\leq C c_i.
\end{align*}
Thus, we obtain
\begin{align*}
|\tilde\phi_{02}-1|\leq C c_i(|\log c_i|+\al).
\end{align*}
According to $\tilde\phi_0=\tilde\phi_{01} \tilde\phi_{02}$ and $\tilde\phi_{01}$ a real function, we have
\begin{align*}
C^{-1}\leq|\tilde\phi_{0,r}|\leq C,\quad |\tilde\phi_{0,i}|\leq Cc_i(|\log c_i|+\al).
\end{align*}
Using the relation $\phi=\tilde\phi_0\psi=(\tilde\phi_{0,r}+i\tilde\phi_{0,i})(\psi_r+i \psi_i)$, we get 
\begin{align}\label{eq:phir-phii}
\phi_r=\tilde\phi_{0,r}\psi_r-\tilde\phi_{0,i}\psi_i,\quad \phi_i=\tilde\phi_{0,r}\psi_i+\tilde\phi_{0,i}\psi_r,
\end{align}
which gives 
\begin{align*}
|\phi_r|\leq&C\al^{-1}(\sinh \al |w_c(Y)|+c_i^2\al (|\log c_i| +\al)\cosh \al |w_c(Y)|)\\
\leq& C(\al^{-1}\sinh \al |w_c(Y)|+c_i^2(|\log c_i|+\al)), \\
|\phi_i|\leq& C\al^{-1}(\al c_i \cosh \al |w_c(Y)|+c_i(|\log c_i|+\al)\sinh \al |w_c(Y)|)
\leq Cc_i e^{\al |w_c(Y)|}.
\end{align*}

The proof is completed.
\end{proof}

\subsection{Solving the non-homogeneous equation.}

In this section, we first give the representation formula of the solution $\varphi(Y)$ to the non-homogeneous system \eqref{eq: Rayleigh-nonhomo} via the solution $\phi(Y)$ to homogeneous system   \eqref{eq: Rayleigh-homo} obtained in Section 4.1. Based on this formula, we prove Proposition \ref{pro: rayleigh-varphi}. 
For convenience, we let
\begin{align}\label{def: G(Y)}
G_f(Y)=\f{F(Y)}{\phi^2(Y)}\int_{Y_c}^{Y}\f{\phi(Y')}{\bM(Y')}f(Y')dY'=G_{f,r}(Y)+i G_{f,i}(Y).
\end{align}

Here we always assume that $\phi$ is the solution of the system \eqref{eq: Rayleigh-homo}  constructed in Proposition \ref{pro: bound-(phi, phi_0)}.

\begin{proposition}\label{pro: Rayleigh formula}
 We have the following representation formula for the solution of \eqref{eq: Rayleigh-nonhomo}:
\begin{align}\label{formula: varphi}
	\varphi(Y)=\phi(Y)\int_{\bar Y_1^*}^YG_f(Y')dY'+\mu_f(c)\phi(Y)\int_{\bar Y_1^*}^Y\f{F(Y')}{\phi^2(Y')}dY'
\end{align}
where $\mu_f(c)$ is a constant given by
\begin{align}\label{def: mu(c)}
	\mu_f(c)=-\Big(\int_{\bar Y_1^*}^{\bar Y_2^*}G_f(Y') dY'\Big)\Big(\int_{\bar Y_1^*}^{\bar Y_2^*}\f{F(Y)}{\phi^2(Y)}dY\Big)^{-1}.
\end{align}

\end{proposition}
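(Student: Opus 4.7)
The strategy is a standard reduction-of-order (variation of parameters) argument built on the homogeneous solution $\phi$ from Proposition \ref{pro: bound-(phi, phi_0)}. First I would establish the pointwise algebraic identity
$$\phi\,\partial_Y\!\big(F^{-1}\partial_Y\varphi\big)-\varphi\,\partial_Y\!\big(F^{-1}\partial_Y\phi\big)=\partial_Y\!\left(\frac{\phi^2}{F}\partial_Y\!\left(\frac{\varphi}{\phi}\right)\right),$$
which is a direct expansion. Combining this with $\mathcal L_{cr}(\phi)=0$ and $\mathcal L_{cr}(\varphi)=f$ (multiplying each by the other solution and subtracting), the terms carrying $\alpha^2\bM$ and $\partial_Y(F^{-1}\partial_Y\bM)$ cancel identically, leaving the reduced first-order-in-disguise equation
$$\partial_Y\!\left(\frac{\phi^2}{F}\partial_Y\!\left(\frac{\varphi}{\phi}\right)\right)=\frac{\phi f}{\bM}.$$

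I would then integrate this identity twice. Integrating from $Y_c$ to $Y$ and dividing by $\phi^2/F$ yields $\partial_Y(\varphi/\phi)(Y)=G_f(Y)+\mu\,F(Y)\phi^{-2}(Y)$ for a free constant $\mu$, which encodes the value of $(\phi^2/F)\partial_Y(\varphi/\phi)$ at $Y_c$. Integrating once more from $\bar Y_1^*$ to $Y$ and invoking $\varphi(\bar Y_1^*)=0$ (valid because $\phi(\bar Y_1^*)\neq 0$ by the lower bound \eqref{est: phi}) gives
$$\varphi(Y)=\phi(Y)\int_{\bar Y_1^*}^YG_f(Y')dY'+\mu\,\phi(Y)\int_{\bar Y_1^*}^Y\frac{F(Y')}{\phi^2(Y')}dY'.$$
Imposing the right boundary condition $\varphi(\bar Y_2^*)=0$ (using $\phi(\bar Y_2^*)\neq 0$ for the same reason) reduces to a single linear scalar equation whose unique solution is precisely $\mu=\mu_f(c)$ in \eqref{def: mu(c)}.

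The only step that requires more than bookkeeping is verifying that the denominator $D:=\int_{\bar Y_1^*}^{\bar Y_2^*}F\phi^{-2}\,dY$ is nonzero, so that $\mu_f(c)$ is well defined. Using $\phi=\psi\tilde\phi_0$ with $\tilde\phi_0\approx 1$ and the explicit form \eqref{eq: choose psi} of $\psi$, the integrand $F\phi^{-2}$ behaves like $F_r\psi^{-2}\sim \alpha^2 F_r^{1/2}/\bigl(\partial_Y\bM(Y_c)\sinh(\alpha w_c)+\alpha\bM(Y_c)\cosh(\alpha w_c)\bigr)^2$, which reduces via a $\cosh^{-2}$-type antiderivative to a combination of hyperbolic evaluations at the two endpoints plus an $O(1)$ contribution concentrated near $Y_c$. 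The asymmetry hypothesis $|w_c(\bar Y_1^*)|\neq |w_c(\bar Y_2^*)|$ prevents the two exponentially decaying endpoint contributions from producing an exact cancellation, so that $|D|$ is bounded below by a positive constant times $\alpha$, which is the main obstacle and the unique place where the geometric assumption enters. Once $D\neq 0$ is in hand, the formula \eqref{formula: varphi} follows immediately from the construction.
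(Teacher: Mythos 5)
Your argument is correct and is essentially the paper's own proof: the paper likewise observes that the boundary value problem is equivalent to $\mathcal L_{\phi}(\varphi)=\phi f/\bar M$ (your Lagrange-identity computation is precisely the justification of that equivalence), integrates twice, and fixes the single free constant by the two endpoint conditions, using that $\phi$ does not vanish for $c_i>0$. The nonvanishing of the denominator $\int_{\bar Y_1^*}^{\bar Y_2^*}F(Y)\phi^{-2}(Y)\,dY$ is not established inside the proposition's proof but is exactly the content of the paper's Lemma \ref{lem: int-phi^(-2)} (proved under $|w_c(\bar Y_1^*)|\neq|w_c(\bar Y_2^*)|$, with the size-$\alpha$ lower bound coming from the logarithmic term weighted by $\partial_Y^2\bar M_r(Y_c)$ rather than from endpoint cancellations), and your conclusion is consistent with it.
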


\begin{proof}
The nonhomogeneous Rayleigh-type equation
\begin{align*}
           	\left\{
           	\begin{aligned}
      		&\bM\left(\partial_Y\left(\frac{\partial_Y\varphi}{F(Y)}\right)-\alpha^2\varphi\right)-\partial_Y\left(\frac{\partial_Y\bar M}{F(Y)}\right)\varphi=f,\quad Y\in(\bar Y_1^*,\bar Y_2^*),\\
           		&\varphi(\bar Y_1^*)=\varphi(\bar Y_2^*)=0.
           	\end{aligned}
           	\right.
           \end{align*}
            is equivalent to
 \begin{align*}
 \mathcal L_{\phi}(\varphi)=\f{\phi f}{\bM},\quad Y\in(\bar Y_1^*,\bar Y_2^*)\text{ and }\varphi(\bar Y_1^*)=\varphi(\bar Y_2^*)=0.	
 \end{align*}
This gives our result by the integration and noting that $\mu_f(c)$ is well-defined for $c_i>0$.
\end{proof}

\begin{remark}\label{rem: Rayleigh formula}
There are two formulas for $\varphi$. 
For $Y\in[\bar Y_1^*, Y_c]$, we use the formula \eqref{formula: varphi}.
For $Y\in[Y_c, \bar Y_2^* ]$, we use the formula
\begin{align}\label{formula: phi-2}
\varphi(Y)=-\phi(Y)\int_Y^{\bar Y_2^*} G_f(Y')dY'-\mu_f(c)\phi(Y)\int^{\bar Y_2^*}_Y\f{F(Y')}{\phi^2(Y')}dY'.
\end{align}
\end{remark}

We first give estimates for the first term in \eqref{formula: varphi} and \eqref{formula: phi-2}.

\begin{lemma}\label{lem: est of G}
Assume that $f$ is a real function and  $\mathrm{supp}(f)\subset[\bar Y_1-\d, \bar Y_1 ]\cup[\bar Y_2, \bar Y_2+\d]$ with $e^{\al w_0}f\in L^\infty$.
It holds that
\begin{align*}
&\Big|\phi(Y)\int_{\bar Y_1^*}^Y G_f(Y') dY'\Big|\leq
\left\{
\begin{aligned}
&C\al^{-2}e^{-\al w_0(Y)}  e^{-2 \al  \int_{Y}^{\bar Y_1-\d} F_r(Y')dY'}  \|e^{\al w_0}f\|_{L^\infty},\quad Y\in[\bar Y_1^*, \bar Y_1-\d],\\
&C\al^{-1}e^{-\al w_0(Y)}  \|e^{\al w_0}f\|_{L^\infty},\quad Y\in[ \bar Y_1-\d, Y_c],
\end{aligned}
\right.
\\
&\Big|\Im\Big(\phi(Y)\int_{\bar Y_1^*}^Y G_f(Y') dY'\Big)\Big|\leq
\left\{
\begin{aligned}
&C\al^{-1}c_ie^{-\al w_0(Y)}  e^{-2 \al  \int_{Y}^{\bar Y_1-\d} F_r(Y')dY'}\|e^{\al w_0}f\|_{L^\infty},\quad Y\in[\bar Y_1^*, \bar Y_1-\d],\\
&C c_ie^{-\al w_0(Y)}  \|e^{\al w_0}f\|_{L^\infty},\quad Y\in[ \bar Y_1-\d, Y_c],
\end{aligned}
\right.
\end{align*}
and for $Y\in[Y_c, \bar Y_2^*]$,
\begin{align*}
	\alpha\Big|\phi(Y)\int_Y^{\bar Y_2^*} G_f(Y') dY'\Big|+c_i^{-1}\Big|\mathrm{Im}\Big(\phi(Y)\int_Y^{\bar Y_2^*} G_f(Y') dY'\Big)\Big|\leq Ce^{-\al w_0(Y)}  \|e^{\al w_0}f\|_{L^\infty}.
\end{align*}
Moreover, we have
\begin{align*}
	\Big|\int^{\bar Y_2^*}_{\bar Y_1^*} G_f(Y') dY'\Big|+(\alpha c_i)^{-1}\Big|\int^{\bar Y_2^*}_{\bar Y_1^*} G_{f,i}(Y') dY'\Big|\leq C e^{-\al w_0(Y_c)}  \|e^{\al w_0}f\|_{L^\infty}.
\end{align*}

\end{lemma}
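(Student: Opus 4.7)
The plan is to exploit the simple support structure of $f$ to write $G_f(Y')$ as a piecewise explicit expression, then apply Laplace-type evaluations of the outer $Y'$-integral using the sharp exponential asymptotics of $\phi$ and $F/\phi^2$ supplied by Proposition~\ref{pro: bound-(phi, phi_0)}. Since $\mathrm{supp}(f)\subseteq[\bar Y_1-\delta,\bar Y_1]\cup[\bar Y_2,\bar Y_2+\delta]$ and $f\equiv 0$ on $[\bar Y_1,\bar Y_2]\ni Y_c$, the inner integral $I(Y'):=\int_{Y_c}^{Y'}\phi\bar M^{-1}f\,dY''$ vanishes on $[\bar Y_1,\bar Y_2]$, equals the constant $-\int_{\bar Y_1-\delta}^{\bar Y_1}\phi\bar M^{-1}f$ on $[\bar Y_1^*,\bar Y_1-\delta]$, equals the partial integral $-\int_{Y'}^{\bar Y_1}\phi\bar M^{-1}f$ on $[\bar Y_1-\delta,\bar Y_1]$, with mirror expressions on the right. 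Hence $G_f=(F/\phi^2)I$ reduces to at most five explicit pieces and each outer integral becomes a handful of elementary integrals of the form $\int e^{-2\alpha|w_c|}dY'$.

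For the size estimates I use Proposition~\ref{pro: bound-(phi, phi_0)}: on $\mathrm{supp}(f)$ one has $|\bar M|\sim 1$ and $|\phi(Y'')|\sim\alpha^{-1}e^{\alpha|w_c(Y'')|}$. Combined with $|f(Y'')|\le e^{-\alpha w_0(Y'')}\|f\|_{L^\infty_{w_0}}$, a Laplace evaluation on the left support (peak at $\bar Y_1-\delta$, where $w_0$ is smallest) gives $|I(Y')|\lesssim\alpha^{-2}e^{\alpha|w_c(Y')|}e^{-2\alpha w_0(\bar Y_1-\delta)+\alpha w_0(Y_c)}\|f\|_{L^\infty_{w_0}}$; on the right support the exponents cancel to produce the trivial bound $|I(Y')|\lesssim\alpha^{-1}e^{-\alpha w_0(Y_c)}\|f\|_{L^\infty_{w_0}}$. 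For the outer integration, $|F/\phi^2|\sim\alpha^2 e^{-2\alpha|w_c|}$ away from $Y_c$, so a second Laplace step yields $\big|\int_{\bar Y_1^*}^Y F/\phi^2\big|\lesssim\alpha e^{-2\alpha|w_c(Y)|}$ with peak at the endpoint closer to $Y_c$. Multiplying by $|\phi(Y)|\lesssim\alpha^{-1}e^{\alpha|w_c(Y)|}$ and using $|w_c(Y)|=w_0(Y_c)-w_0(Y)$ for $Y<Y_c$ produces the weight $e^{-\alpha w_0(Y)}$; the extra factor $e^{-2\alpha\int_Y^{\bar Y_1-\delta}F_r^{1/2}dY'}$ on $[\bar Y_1^*,\bar Y_1-\delta]$ is the Laplace sub-leading contribution whose peak lies at $\bar Y_1-\delta$ with decay rate $F_r^{1/2}$. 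The full-range bound on $\int_{\bar Y_1^*}^{\bar Y_2^*}G_f$ follows by the same procedure applied on $[\bar Y_1^*,\bar Y_2^*]$: the two Laplace peaks straddle $Y_c$ and the exponents collapse to $e^{-\alpha w_0(Y_c)}$.

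For the imaginary parts, on $\mathrm{supp}(f)$ we have $|\bar M_i|\lesssim c_i$ and, by Proposition~\ref{pro: bound-(phi, phi_0)}, $|\phi_i|\lesssim c_i e^{\alpha|w_c|}$. Therefore
$\mathrm{Im}(\phi\bar M^{-1})=|\bar M|^{-2}(\phi_i\bar M_r-\phi_r\bar M_i)$
satisfies $|\mathrm{Im}(\phi\bar M^{-1})|\lesssim c_i|\phi|/|\bar M|$, namely the real-part bound multiplied by $c_i$. Since $f$ is real, $\mathrm{Im}(I)$ inherits this gain; similarly $|\mathrm{Im}(F/\phi^2)|\lesssim c_i|F/\phi^2|$ away from $Y_c$, so the pointwise imaginary bounds acquire the extra $c_i$ as asserted. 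For the full-range quantity $\int G_{f,i}$ the neighbourhood of $Y_c$ now contributes, and since $|F/\phi^2|$ is no longer exponentially small there, the gain becomes $\alpha c_i$ instead of $c_i$, matching the stated estimate. The complex case reduces to the real case by linear decomposition $f=\Re f+i\Im f$.

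The main technical difficulty is the bookkeeping of exponents: each Laplace evaluation produces a prefactor $\alpha^{-1}F_r^{-1/2}$ at an endpoint which must combine precisely with the weights $e^{\alpha|w_c(Y)|}$ from $\phi(Y)$ and $e^{-\alpha w_0(Y'')}$ from $f$ to reproduce the stated decay without parasitic $\alpha$ losses. A secondary subtlety is the transition strip $[\bar Y_1-\delta,\bar Y_1]$, where $I(Y')$ is not yet constant; there a uniform pointwise bound suffices because the strip has $\alpha$-independent length, but one must still extract the correct $\alpha^{-1}$ to reach the bound $|\phi(Y)\int_{\bar Y_1^*}^Y G_f|\leq C\alpha^{-1}e^{-\alpha w_0(Y)}\|f\|_{L^\infty_{w_0}}$ on $[\bar Y_1-\delta,Y_c]$.
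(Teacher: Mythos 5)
Your treatment of the real-part/absolute-value bounds is essentially the paper's own argument: exploit that $G_f$ vanishes on $[\bar Y_1,\bar Y_2]$ and is explicit on the remaining pieces, use $|\phi|\sim\alpha^{-1}e^{\alpha|w_c|}$, $|\phi/\bar M|\lesssim\alpha^{-1}e^{\alpha|w_c|}$ and $|F/\phi^2|\lesssim\alpha^{2}e^{-2\alpha|w_c|}$ away from $Y_c$, and do the two-step exponential bookkeeping with the identity $|w_c(Y)|=w_0(Y_c)-w_0(Y)$; that part is fine.

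The imaginary-part argument, however, has a genuine gap. You claim $|\mathrm{Im}(\phi\bar M^{-1})|\lesssim c_i|\phi|/|\bar M|$ and $|\mathrm{Im}(F/\phi^{2})|\lesssim c_i|F/\phi^{2}|$, i.e.\ a relative gain of $c_i$. Proposition \ref{pro: bound-(phi, phi_0)} only gives $|\phi_i|\leq Cc_ie^{\alpha|w_c(Y)|}$, while $|\phi|\sim\alpha^{-1}e^{\alpha|w_c(Y)|}$ away from the critical point, so $|\phi_i|$ can be of size $\alpha c_i|\phi|$ (the term $\tilde\phi_{0,r}\psi_i\sim c_i\cosh(\alpha w_c)$ really is that large); consequently $|\mathrm{Im}(\phi/\bar M)|\lesssim c_ie^{\alpha|w_c|}=\alpha c_i\,|\phi/\bar M|$ and $|\mathrm{Im}(\phi^{-2})|=2|\phi_r\phi_i|/|\phi|^4\lesssim\alpha^{3}c_ie^{-2\alpha|w_c|}=\alpha c_i\,|\phi^{-2}|$, exactly the relative factor $\alpha c_i$ the paper records. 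Your stated gain of $c_i$ is therefore not obtainable from the available estimates, and your explanation of the full-range bound is also off: you attribute the factor $\alpha c_i$ in $\int G_{f,i}$ to a contribution from a neighbourhood of $Y_c$, but $G_f\equiv0$ on $[\bar Y_1,\bar Y_2]$, so nothing near $Y_c$ ever enters; the $\alpha c_i$ is simply the correct relative size of the imaginary parts away from $Y_c$. The lemma itself only needs the $\alpha c_i$ gain, so the argument can be repaired: replace your two inequalities by the $\alpha c_i$ versions above and estimate the product via $\mathrm{Im}\big(\phi\int G_f\big)=\phi_r\,\mathrm{Im}\!\int G_f+\phi_i\,\mathrm{Re}\!\int G_f$, noting that only one factor carries the imaginary loss at a time; this reproduces the stated bounds (e.g.\ $c_ie^{-\alpha w_0(Y)}$ on $[\bar Y_1-\delta,Y_c]$, versus $\alpha^{-1}e^{-\alpha w_0(Y)}$ for the real part), which is precisely how the paper closes this step.
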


\begin{proof}
In view of the definition of $G_f(Y)$ in \eqref{def: G(Y)} and  $\mathrm{supp} (f)\subset[\bar Y_1-\d, \bar Y_1 ]\cup[\bar Y_2, \bar Y_2+\d]$, we know that $\mathrm{supp}(G_f)\subset [\bar Y_1^*, \bar Y_1 ]\cup[\bar Y_2, \bar Y_2^*]$  and $G_f(Y)$ has no singularity.
 The following inequalities are used frequently:
  for $Y\in [\bar Y_1^*, \bar Y_1]\cup[\bar Y_2, \bar Y_2^*]$, 
  \begin{align}\label{est: phi, phi_i- non singular}
  \begin{split}
&|Y-Y_c|\gtrsim 1,\quad  C^{-1}\leq|F(Y)|\leq C,\\
&C^{-1}\al^{-1}e^{\al | w_c(Y)|}\leq |\phi(Y)|, |\phi_r(Y)|\leq C\al^{-1}e^{\al  |w_c(Y)|},\quad|\phi_i(Y)|\leq C c_ie^{\al  |w_c(Y)|},
\end{split}
\end{align}
which follow from Proposition \ref{pro: bound-(phi, phi_0)}. From \eqref{est: phi, phi_i- non singular}, we know that for $|Y-Y_c|\gtrsim 1$,
\begin{align}\label{est: (G, G_i)}
\begin{split}
&\left|\f{\phi}{\bM}\right|+\left|\Re\left(\f{\phi}{\bM}\right)\right|\leq C|\phi|\leq C\al^{-1}e^{\al  |w_c(Y)|},\quad
\left|\Im\left(\f{\phi}{\bM}\right)\right|\leq C c_i e^{\al  |w_c(Y)|},\\
&|\phi^{-2}|+|\Re(\phi^{-2})|\leq C\al^2 e^{-2\al  |w_c(Y)|},\quad |\Im(\phi^{-2})|\leq C\al^3c_i e^{-2\al  |w_c(Y)|}.
\end{split}
\end{align}

\smallskip

\underline{Estimates of $G_f(Y)$ and $G_{f,i}(Y)$}: \smallskip

{\bf{Case 1.}} $Y\in[\bar Y_1-\d,\bar Y_1]$. By using \eqref{est: (G, G_i)} and $|w_c(Y)|=-w_c(Y)$ for $Y\in[\bar Y_1^*, Y_c]$, we  have
\begin{align}\label{est: G(Y)-1}
\begin{split}
|G_f(Y)|
\leq&C\al e^{-2\al |w_c(Y)|}\int^{\bar Y_1}_{Y}e^{ -\al  w_c(Y')}e^{-\al  w_0(Y')}dY'\|e^{\al w_0}f\|_{L^\infty}\\
\leq&C\al e^{2\al w_c(Y)}e^{\al w_0(Y_c)}\int^{\bar Y_1}_{Y}e^{-2 \al  w_0(Y')}dY'\|e^{\al w_0}f\|_{L^\infty}\\
\leq& C e^{-\al w_0(Y_c)}\|e^{\al w_0}f\|_{L^\infty},
\end{split}
\end{align}
and 
\begin{align}\label{est: G_i(Y)-1}
\begin{split}
|G_{f,i}(Y)|
\leq&\Big|\Re\Big(\f{F(Y)}{\phi^2(Y)}\Big)\int_{Y_c}^{Y}\Im\Big(\f{\phi(Y')}{\bM(Y')}\Big)f(Y')dY'\Big|\\
&+\Big|\Im\Big(\f{F(Y)}{\phi^2(Y)}\Big)\int_{Y_c}^{Y}\Re\Big(\f{\phi(Y')}{\bM(Y')}\Big)f(Y')dY'\Big|\\
\leq&C\al^2 c_i e^{-2\al |w_c(Y)|}\int^{\bar Y_1}_{Y}e^{ -\al  w_c(Y')}e^{-\al  w_0(Y')}dY'\|e^{\al w_0}f\|_{L^\infty}\\
\leq& C\al c_i e^{-\al w_0(Y_c)}\|e^{\al w_0}f\|_{L^\infty}.
\end{split}
\end{align}

{\bf{Case 2.}} $Y\in[\bar Y_1^*,\bar Y_1-\d]$. By using \eqref{est: (G, G_i)} again and similar argument  as above, and $\mathrm{supp} (f)\subset[\bar Y_1-\d, \bar Y_1]\cup[\bar Y_2, \bar Y_2+\d]$, we infer 
\begin{align}\label{est: G(Y)-2}
	\begin{split}
	|G_f(Y)|+(\al c_i)^{-1}|G_{f,i}(Y)|\leq&C\al e^{-2\al |w_c(Y)|}e^{\al w_0(Y_c)}\int^{\bar Y_1}_{\bar Y_1-\d}e^{-2 \al  w_0(Y')}dY'\|e^{\al w_0}f\|_{L^\infty}\\
\leq& C e^{-\al w_0(Y_c)}e^{-2 \al  \int_{Y}^{\bar Y_1-\d} F_r(Y')dY'}\|e^{\al w_0}f\|_{L^\infty}.	
	\end{split}
\end{align}

{\bf{Case 3.}} $Y\in[\bar Y_2,\bar Y_2+\d]$. Similar to Case 1, we have
\begin{align}\label{est: G(Y)-3}
	\begin{split}
	|G_f(Y)|+(\alpha c_i)^{-1}|G_{f,i}(Y)|	\leq&C\al e^{-2\al w_c(Y)}\int_{\bar Y_2}^{Y}e^{ \al  w_c(Y')}e^{-\al  w_0(Y')}dY'\|e^{\al w_0}f\|_{L^\infty}\\
\leq& C\al e^{-\al w_0(Y_c)}e^{-2\al w_c(Y)}|Y-\bar Y_2|\|e^{\al w_0}f\|_{L^\infty}.
	\end{split}
\end{align}

{\bf{Case 4.}} $Y\in[\bar Y_2+\d, \bar Y_2^*]$. Similar to Case 2, we have
\begin{align}\label{est: G(Y)-4}
	\begin{split}
	|G_f(Y)|+(\alpha c_i)^{-1}|G_{f,i}(Y)|\leq&
C\al e^{-2\al w_c(Y)}\int_{\bar Y_2}^{\bar Y_2+\d}e^{ \al  w_c(Y')}e^{-\al  w_0(Y')}dY'\|e^{\al w_0}f\|_{L^\infty}\\
\leq&C\al e^{-\al w_0(Y_c)}e^{-2\al w_c(Y)}|\bar Y_2+\d-\bar Y_2|\|e^{\al w_0}f\|_{L^\infty}.		
	\end{split}
\end{align}

\smallskip

\underline{Estimtes of $\phi(Y)\int_{\bar Y_1^*}^Y G_f(Y') dY'$ and $\Im\left(\phi(Y)\int_{\bar Y_1^*}^Y G_f(Y') dY'\right)$ for $Y\in[\bar Y_1^*, Y_c]$:} \smallskip

{\bf{Case 1.}} $Y\in[\bar Y_1^*, \bar Y_1-\d]$. By \eqref{est: G(Y)-2}, we have
\begin{align*}
\Big|\int_{\bar Y_1^*}^Y G_f(Y') dY'\Big|+(\alpha c_i)^{-1}\Big|\int_{\bar Y_1^*}^Y G_{f,i}(Y') dY'\Big|\leq&Ce^{-\al w_0(Y_c)}\int_{\bar Y_1^*}^{Y}  e^{-2 \al  \int_{Y'}^{\bar Y_1-\d} F_r(Y'')dY''}dY'\|e^{\al w_0}f\|_{L^\infty}\\
\leq&C\al^{-1}e^{-\al w_0(Y_c)}  e^{-2 \al  \int_{Y}^{\bar Y_1-\d} F_r(Y')dY'}\|e^{\al w_0}f\|_{L^\infty},	
\end{align*}
which along with Proposition \ref{pro: bound-(phi, phi_0)} and \eqref{est: phi, phi_i- non singular} implies
\begin{align*}
	&\Big|\phi(Y)\int_{\bar Y_1^*}^Y G_f(Y') dY'\Big|+(\alpha c_i)^{-1}\Big|\Im\Big(\phi(Y)\int_{\bar Y_1^*}^Y G_f(Y') dY'\Big)\Big|\\
	&\quad\leq C\al^{-2}e^{\al |w_c(Y)|}e^{-\al w_0(Y_c)}  e^{-2 \al  \int_{Y}^{\bar Y_1-\d} F_r(Y')dY'}\|e^{\al w_0}f\|_{L^\infty}\\
    &\quad \leq C\al^{-2}e^{-\al w_0(Y)}  e^{-2 \al  \int_{Y}^{\bar Y_1-\d} F_r(Y')dY'}\|e^{\al w_0}f\|_{L^\infty}.
\end{align*}

{\bf{Case 2.}} $Y\in[ \bar Y_1-\d, \bar Y_1]$. By \eqref{est: G(Y)-1}-\eqref{est: G(Y)-2} and the above result, we have
\begin{align}\label{est: int-G, case 2}
	\begin{split}
		&\Big|\int_{\bar Y_1^*}^Y G_f(Y') dY'\Big|+(\alpha c_i)^{-1}\Big|\int_{\bar Y_1^*}^Y G_{f,i}(Y') dY'\Big|\\
		&\quad\leq Ce^{-\al w_0(Y_c)}\|e^{\al w_0}f\|_{L^\infty}\Big(\int_{\bar Y_1^*}^{\bar Y_1-\d}   e^{-2 \al  \int_{Y'}^{\bar Y_1-\d} F_r(Y'')dY''}dY'+Ce^{-\al w_0(Y_c)}\int_{\bar Y_1-\d}^{Y}  dY'\Big)\\
&\quad \leq Ce^{-\al w_0(Y_c)}(\al^{-1}+|Y-(\bar Y_1-\d)|)  \|e^{\al w_0}f\|_{L^\infty}\leq Ce^{-\al w_0(Y_c)} \|e^{\al w_0}f\|_{L^\infty},
	\end{split}
\end{align}
which along with \eqref{est: phi, phi_i- non singular} implies
\begin{align*}
	&\Big|\phi(Y)\int_{\bar Y_1^*}^Y G_f(Y') dY'\Big|+(\alpha c_i)^{-1}\Big|\Im(\phi(Y)\int_{\bar Y_1^*}^Y G_f(Y') dY')\Big|\\
	&\quad\leq C\al^{-1}e^{\al |w_c(Y)|}e^{-\al w_0(Y_c)} \|e^{\al w_0}f\|_{L^\infty}
\leq C\al^{-1}e^{-\al w_0(Y)}  \|e^{\al w_0}f\|_{L^\infty}.
\end{align*}

{\bf{Case 3.}} $Y\in[\bar Y_1, Y_c]$. For $Y\in[\bar Y_1, Y_c-\al^{-1}]$, we have $\phi(Y)\sim \al^{-1}e^{\al |w_c(Y)|}=\al^{-1}e^{-\al w_c(Y)}$ and for $Y\in[Y_c-\al^{-1}, Y_c]$, we have $\phi(Y)\sim |Y-Y_c|+c_i$, which along with \eqref{est: int-G, case 2} for $Y=\bar Y_1$  gives 
\begin{align*}
\Big|\phi(Y)\int_{\bar Y_1^*}^Y G_f(Y') dY'\Big|\leq  C \al^{-1}e^{-\al w_c(Y)}\Big|\int_{\bar Y_1^*}^{\bar Y_1}G_f(Y') dY'\Big|
\leq C\al^{-1}e^{-\al w_0(Y)}  \|e^{\al w_0}f\|_{L^\infty},
\end{align*}
and 
\begin{align*}
\Big|\Im\Big(\phi(Y)\int_{\bar Y_1^*}^Y G_f(Y') dY'\Big)\Big|\leq &Cc_ie^{-\al w_0(Y)}  \|e^{\al w_0}f\|_{L^\infty}.
\end{align*}
\medskip

\underline{Estimtes of $\phi(Y)\int^{\bar Y_2^*}_Y G_f(Y') dY'$ and $\Im(\phi(Y)\int^{\bar Y_2^*}_Y G_f(Y') dY')$ for $Y\in[Y_c, \bar Y_2^*]$:}\smallskip  

{\bf{Case 1.}} $Y\in[\bar Y_2+\d, \bar Y_2^*]$. By \eqref{est: G(Y)-4}, we have
\begin{align*}
	&\Big|\int^{\bar Y_2^*}_Y G_f(Y') dY'\Big|+(\alpha c_i)^{-1}\Big|\int^{\bar Y_2^*}_Y G_{f,i}(Y') dY'\Big|\\
	&\quad\leq C\al e^{-\al w_0(Y_c)}|\bar Y_2+\d-\bar Y_2|\int^{\bar Y_2^*}_Y e^{-2\al w_c(Y')} dY' \|e^{\al w_0}f\|_{L^\infty}\\
&\quad\leq Ce^{-\al w_0(Y_c)}e^{-2\al w_c(Y)}\|e^{\al w_0}f\|_{L^\infty},
\end{align*}
which along with \eqref{est: phi, phi_i- non singular} deduces
\begin{align*}
	\Big|\phi(Y)\int^{\bar Y_2^*}_Y G_f(Y') dY'\Big|+(\alpha c_i)^{-1}\Big|\mathrm{Im}\Big(\phi(Y)\int^{\bar Y_2^*}_Y G_f(Y') dY'\Big)\Big|\leq& C\al^{-1}e^{-\al w_0(Y)}  \|e^{\al w_0}f\|_{L^\infty}.
\end{align*}

{\bf{Case 2.}} $Y\in[\bar Y_2,\bar Y_2+\d]$. By using the above result and \eqref{est: G(Y)-3}, we have
\begin{align}\label{est: int-G, case2-2}
\begin{split}
&\Big|\int^{\bar Y_2^*}_Y G_f(Y') dY'\Big|+(\alpha c_i)^{-1}\Big|\int^{\bar Y_2^*}_Y G_{f,i}(Y') dY'\Big|\\
&\quad\leq \Big|\int^{\bar Y_2^*}_{\bar Y_2+\d} G_f(Y') dY'\Big|+\Big|\int^{\bar Y_2+\d}_Y G_f(Y') dY'\Big|\\
&\quad\leq Ce^{-\al w_0(Y_c)}e^{-2\al w_c(\bar Y_2+\d)}\|e^{\al w_0}f\|_{L^\infty}\\
&\quad\quad+C\al e^{-\al w_0(Y_c)}\int^{\bar Y_2+\d}_Y e^{-2\al w_c(Y')}|Y'-\bar Y_2| dY'  \|e^{\al w_0}f\|_{L^\infty}\\
&\quad\leq Ce^{-\al w_0(Y_c)}e^{-2\al w_c(Y)}\|e^{\al w_0}f\|_{L^\infty},
\end{split}
\end{align}
which along with \eqref{est: phi, phi_i- non singular} implies
\begin{align*}
	\Big|\phi(Y)\int^{\bar Y_2^*}_Y G_f(Y') dY'\Big|+(\alpha c_i)^{-1}\Big|\Im\Big(\phi(Y)\int^{\bar Y_2^*}_Y G_f(Y') dY'\Big)\Big|\leq& C\al^{-1}e^{-\al w_0(Y)}  \|e^{\al w_0}f\|_{L^\infty}.
\end{align*}

{\bf{Case 3.}} $Y\in[Y_c,\bar Y_2]$. For $Y\in[Y_c+\al^{-1}, \bar Y_2]$, we have $\phi(Y)\sim \al^{-1}e^{\al w_c(Y)}$ and for $Y\in[Y_c, Y_c+\al^{-1}]$, we have $\phi(Y)\sim |Y-Y_c|+c_i$, which along with \eqref{est: int-G, case2-2} for $Y=\bar Y_2$  gives 
\begin{align*}
\Big|\phi(Y)\int^{\bar Y_2^*}_Y G_f(Y') dY'\Big|\leq  C \al^{-1}e^{\al w_c(Y)}\Big|\int^{\bar Y_2^*}_{\bar Y_2}G_f(Y') dY'\Big|
\leq C\al^{-1}e^{-\al w_0(Y)}  \|e^{\al w_0}f\|_{L^\infty},
\end{align*}
and 
\begin{align*}
\Big|\Im\Big(\phi(Y)\int^{\bar Y_2^*}_Y G_f(Y') dY'\Big)\Big|\leq &Cc_ie^{-\al w_0(Y)}  \|e^{\al w_0}f\|_{L^\infty}.
\end{align*}

From the above estimates and the fact $G_f(Y)=0$ for $Y\in[\bar Y_1,\bar Y_2]$, we deduce that 
\begin{align*}
\int^{\bar Y_2^*}_{\bar Y_1^*} G_f(Y') dY'=&\int^{\bar Y_2^*}_{\bar Y_2} G_f(Y') dY'+\int_{\bar Y_1^*}^{\bar Y_1} G_f(Y') dY'.
\end{align*}
Then we have
\begin{align*}
	\Big|\int^{\bar Y_2^*}_{\bar Y_1^*} G_f(Y') dY'\Big|+(\alpha c_i)^{-1}\Big|\int^{\bar Y_2^*}_{\bar Y_1^*} G_{f,i}(Y') dY'\Big|\leq&Ce^{-\al w_0(Y_c)}\|e^{\al w_0}f\|_{L^\infty}.
\end{align*}
\end{proof}

Next we present the estimates for the numerator of the second term in \eqref{formula: varphi} and \eqref{formula: phi-2}.

\begin{lemma}\label{lem: int-phi}
It holds that
\begin{align}
&\Big|\phi(Y)\int_{\bar Y_1^*}^Y \f{F(Y')}{\phi^{2}(Y')}dY'\Big|\leq Ce^{-\al |w_c(Y)|},\quad Y\in[\bar Y_1^*, Y_c ],\label{est: int-phi-1}\\
&\Big|\phi(Y)\int^{\bar Y_2^*}_Y \f{F(Y')}{\phi^{2}(Y')}dY'\Big|\leq Ce^{-\al |w_c(Y)|},\quad Y\in[Y_c, \bar Y_2^*]\label{est: int-phi-2}.
\end{align}
Moreover, for the imaginary part, we have
\begin{align}
&\Big|\Im\Big(\phi(Y)\int_{\bar Y_1^*}^Y\f{F(Y')}{\phi^{2}(Y')}dY'\Big)\Big|\leq C c_i\al e^{-\al |w_c(Y)|}, \quad Y\in[\bar Y_1^*, Y_c-\al^{-1} ],\label{est: int-Im phi-1}\\
&\Big|\Im\Big(\phi(Y)\int^{\bar Y_2^*}_Y \f{F(Y')}{\phi^{2}(Y')}dY'\Big)\Big|\leq C c_i\al e^{-\al |w_c(Y)|}, \quad Y\in[Y_c+\al^{-1}, \bar Y_2^*]\label{est: int-Im phi-2}.
\end{align}
\end{lemma}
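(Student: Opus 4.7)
I will focus on \eqref{est: int-phi-1} and \eqref{est: int-Im phi-1}; the two estimates on $[Y_c,\bar Y_2^*]$ are handled identically by symmetry. Set $\Phi(Y)=\int_{\bar Y_1^*}^Y F(Y')/\phi^2(Y')\,dY'$ so that the quantity to estimate is $\phi(Y)\Phi(Y)$. The key inputs are the two-sided bound $|\phi(Y)|\sim\alpha^{-1}\sinh(\alpha|w_c(Y)|)+c_i$ from Proposition~\ref{pro: bound-(phi, phi_0)}, together with the refined real/imaginary asymptotics $|\phi_r|\lesssim \alpha^{-1}\sinh(\alpha|w_c(Y)|)+c_i^2(|\log c_i|+\alpha)$ and $|\phi_i|\lesssim c_i e^{\alpha|w_c(Y)|}$. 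I split on $|Y-Y_c|\gtrless\alpha^{-1}$.

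In the exponential regime $Y\in[\bar Y_1^*,Y_c-\alpha^{-1}]$, the lower bound gives $|\phi(Y')|^2\sim\alpha^{-2}e^{2\alpha|w_c(Y')|}$ throughout $[\bar Y_1^*,Y]$, so $|F/\phi^2|\lesssim\alpha^2 e^{-2\alpha|w_c(Y')|}$. Since $|w_c(\cdot)|$ is strictly decreasing on $[\bar Y_1^*,Y_c]$, the change of variables $u=|w_c(Y')|$, $du=-F_r^{1/2}(Y')dY'$, converts the bound into $\int_{|w_c(Y)|}^{|w_c(\bar Y_1^*)|}\alpha^2 e^{-2\alpha u}F_r^{-1/2}\,du\lesssim \alpha e^{-2\alpha|w_c(Y)|}$. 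Multiplying by $|\phi(Y)|\sim\alpha^{-1}e^{\alpha|w_c(Y)|}$ produces $e^{-\alpha|w_c(Y)|}$, which is \eqref{est: int-phi-1} on this range.

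For $Y\in[Y_c-\alpha^{-1},Y_c]$ we have $e^{-\alpha|w_c(Y)|}\sim 1$ and $|\phi(Y)|\sim |Y-Y_c|+c_i\lesssim\alpha^{-1}$, so it suffices to show $|\Phi(Y)|\lesssim\alpha+(|Y-Y_c|+c_i)^{-1}$. I split $\Phi(Y)=\int_{\bar Y_1^*}^{Y_c-\alpha^{-1}}+\int_{Y_c-\alpha^{-1}}^Y$: the first piece is $\lesssim\alpha$ by the exponential-regime computation evaluated at $Y_c-\alpha^{-1}$, and the second is bounded by $\int_{Y_c-\alpha^{-1}}^Y dY'/(|Y'-Y_c|+c_i)^2\lesssim (|Y-Y_c|+c_i)^{-1}$. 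The desired product bound $|\phi(Y)\Phi(Y)|\lesssim \alpha(|Y-Y_c|+c_i)+1\lesssim 1$ follows since $|Y-Y_c|+c_i\lesssim\alpha^{-1}$, completing \eqref{est: int-phi-1}.

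For the imaginary-part estimate \eqref{est: int-Im phi-1}, restricted to $Y\in[\bar Y_1^*,Y_c-\alpha^{-1}]$, I write $\Im(\phi\Phi)=\phi_r\Im\Phi+\phi_i\Re\Phi$. A direct expansion gives $\Im(F/\phi^2)=[F_i(\phi_r^2-\phi_i^2)-2F_r\phi_r\phi_i]/|\phi|^4$; using $|F_i|\lesssim c_i$, $|F_r|\sim 1$, and the asymptotics above, the second term dominates and $|\Im(F/\phi^2)|\lesssim \alpha^3 c_i e^{-2\alpha|w_c(Y')|}$. Integrating by the same change of variables yields $|\Im\Phi(Y)|\lesssim \alpha^2 c_i e^{-2\alpha|w_c(Y)|}$, while the exponential-regime computation gives $|\Re\Phi(Y)|\lesssim\alpha e^{-2\alpha|w_c(Y)|}$. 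Combining, both $\phi_r\Im\Phi$ and $\phi_i\Re\Phi$ are of order $\alpha c_i e^{-\alpha|w_c(Y)|}$, which is \eqref{est: int-Im phi-1}. The main obstacle is this last step: tracking the cancellations in $F/\phi^2$ to extract a clean $c_i$ from the numerator (rather than absorbing it through the crude bound $|\Im\Phi|\leq|\Phi|$) is what forces the restriction $|Y-Y_c|\geq\alpha^{-1}$ and what produces the precise $\alpha c_i$ prefactor required for the dispersion analysis later on.
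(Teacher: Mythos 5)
Your proposal is correct and follows essentially the same route as the paper: the same split at $|Y-Y_c|\sim\al^{-1}$, the same use of the bounds $|\phi|\sim\al^{-1}e^{\al|w_c|}$ away from $Y_c$ and $|\phi|\sim|Y-Y_c|+c_i$ near it, and the same decomposition $\Im(\phi\Phi)=\phi_r\Im\Phi+\phi_i\Re\Phi$ with $|\Im(\phi^{-2})|\lesssim\al^3c_ie^{-2\al|w_c|}$ to extract the factor $\al c_i$. The only cosmetic difference is your explicit change of variables $u=|w_c(Y')|$ where the paper integrates directly, which changes nothing of substance.
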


\begin{proof}
We only give the proof for \eqref{est: int-phi-1} and \eqref{est: int-Im phi-1}. The proof for \eqref{est: int-phi-2} and \eqref{est: int-Im phi-2} are similar and we left details to readers.

For $Y\in[\bar Y_1^*, Y_c-\al^{-1} ]$, since the function $\phi^{-2}$ has no singularity, we get by \eqref{est: phi, phi_i- non singular} that
\begin{align*}
\Big|\phi(Y)\int_{\bar Y_1^*}^Y \f{F(Y')}{\phi^{2}(Y')}dY'\Big|\leq& C\al^{-1}e^{\al |w_c(Y)|}\int_{\bar Y_1^*}^Y \al^2e^{-2\al |w_c(Y')|}dY'
\leq Ce^{-\al |w_c(Y)|},
\end{align*}
and 
\begin{align*}
\Big|\Im\Big(\phi(Y)\int_{\bar Y_1^*}^Y \f{F(Y')}{\phi^{2}(Y')}dY'\Big)\Big|\leq& Cc_ie^{\al |w_c(Y)|}\int_{\bar Y_1^*}^Y \al^2e^{-2\al |w_c(Y')|}dY'
\\
&+C\al^{-1}e^{\al |w_c(Y)|}\int_{\bar Y_1^*}^Y c_ie^{\al |w_c(Y")|}\al^3e^{-3\al |w_c(Y')|}dY'\\
\leq&C\al c_ie^{-\al |w_c(Y)|}.
\end{align*}

For $Y\in[Y_c-\al^{-1}, Y_c]$, we get by Proposition \ref{pro: bound-(phi, phi_0)} that 
\begin{align*}
\Big|\phi(Y)\int_{\bar Y_1^*}^Y \f{F(Y')}{\phi^{2}(Y')}dY'\Big|\leq& C(|Y-Y_c|+ c_i)\Big(\int_{\bar Y_1^*}^{Y_c-\al^{-1}} \al^2 e^{-2\al |w_c(Y')|}dY'+\int_{Y_c-\al^{-1}}^Y \f{1}{|Y'-Y_c|^2+c_i^2}dY'\Big)\\
\leq&C\al (|Y-Y_c|+ c_i)+|Y-Y_c||Y-Y_c|^{-1}+ c_i c_i^{-1}\leq C\leq C e^{-\al |w_c(Y)|}.
\end{align*}
\end{proof}

The following results are related to $\mu_f(c)$.

\begin{lemma}\label{lem: int-phi^(-2)}
 Assume that $(\al,c)\in\mathbb{H}_0$ and $|w_c(\bar Y_1^*)|\neq|w_c(\bar Y_2^*)|$. Then there exists $C>1$ such that 
\begin{align*}
&C^{-1}\al\leq \Big|\int_{\bar Y_1^*}^{\bar Y_2^*}\f{F(Y)}{\phi^{2}(Y)}dY\Big|\leq C\al,\\
&\Im\Big(\int_{\bar Y_1^*}^{\bar Y_2^*}\f{F(Y)}{\phi^{2}(Y)}dY\Big)=-\f{\pa_Y^2 \bM_r(Y_c)\pi}{\pa_Y \bM_r(Y_c)^3}+O(\al^2c_i|\log c_i|).
\end{align*}

\end{lemma}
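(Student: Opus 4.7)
The plan is to exploit the decomposition $\phi=\psi\tilde\phi_0$ from Proposition \ref{pro: bound-(phi, phi_0)}, where
\[
\psi(Y)=F_r^{1/4}(Y)\bigl[\tilde a\sinh(\al w_c(Y))+\tilde b\cosh(\al w_c(Y))\bigr],
\]
with $\tilde a=\al^{-1}\pa_Y\bM(Y_c)$ (essentially real of size $\al^{-1}$) and $\tilde b=\bM(Y_c)$ (essentially purely imaginary of size $c_i$). The key observation is the exact antiderivative identity
\[
\frac{d}{dY}\left[-\frac{1}{\al(\tilde a+\tilde b)\,e^{\al w_c(Y)}[\tilde a\sinh(\al w_c(Y))+\tilde b\cosh(\al w_c(Y))]}\right]=\frac{F_r(Y)}{\psi^2(Y)},
\]
which furnishes a closed-form evaluation of $\int F_r/\psi^2\,dY$ modulo exponentially small errors. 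The magnitude $\sim\al$ comes directly from this identity, while the $O(1)$ imaginary part emerges from a Plemelj--Sokhotski jump for $1/\phi^2$ in the critical layer.

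For the magnitude, I would evaluate the antiderivative at the endpoints: at $\bar Y_2^*$ it is exponentially small ($\lesssim e^{-2\al|w_c(\bar Y_2^*)|}$), whereas at $\bar Y_1^*$ the asymptotics $\sinh u\sim-\cosh u\sim-e^{-u}/2$ as $u\to-\infty$ give the value $2/[\al(\tilde b^2-\tilde a^2)]$ modulo exponential errors. Since $\tilde a^2\sim\al^{-2}(\pa_Y\bM_r(Y_c))^2$ dominates $\tilde b^2\sim-c_i^2M_a^2/T_0(Y_c)$, this yields the main term $-2\al/(\pa_Y\bM_r(Y_c))^2+O(\al c_i)$; the hypothesis $|w_c(\bar Y_1^*)|\neq|w_c(\bar Y_2^*)|$ prevents accidental cancellation. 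Collecting the remaining corrections
\[
\Big|\int_{\bar Y_1^*}^{\bar Y_2^*}\tfrac{F_i}{\psi^2}dY\Big|=O(c_i|\log c_i|+\al c_i),\qquad \Big|\int_{\bar Y_1^*}^{\bar Y_2^*}\tfrac{F(\tilde\phi_0^{-2}-1)}{\psi^2}dY\Big|=O(\log\al+|\log c_i|),
\]
via the bounds $|F_i|\leq Cc_i(|Y-Y_c|+c_i)$, $|\tilde\phi_0^{-2}-1|\leq C\min\{\al^{-1}\log\al,|Y-Y_c|\}$, and $|\psi|^2\sim(|Y-Y_c|+c_i)^2$ near $Y_c$ with exponential decay away, yields the two-sided bound $C^{-1}\al\leq|\int F/\phi^2\,dY|\leq C\al$.

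For the imaginary part, note that the antiderivative calculation yields only $O(\al c_i)$ imaginary contribution since $\tilde a^2,\tilde b^2$ are nearly real. The $O(1)$ leading imaginary contribution must therefore originate from the critical layer $|Y-Y_c|\leq\rho$ (a small fixed $\rho$), where Proposition \ref{pro: bound-(phi, phi_0)} gives $\phi\approx\bM$. Taylor-expanding
\[
\bM(Y)=\lambda_r Z+\nu(Y-Y_c)^2+O(|Y-Y_c|^3+c_i|Y-Y_c|),\qquad Z:=Y-Y_c-i\tilde c,
\]
with $\lambda_r=\pa_Y\bM_r(Y_c)$, $\nu=\tfrac12\pa_Y^2\bM_r(Y_c)$, $\tilde c=c_iM_a/(T_0^{1/2}(Y_c)\lambda_r)>0$, one gets
\[
\frac{1}{\phi^2}=\frac{1}{\lambda_r^2Z^2}-\frac{2\nu(Y-Y_c)^2}{\lambda_r^3Z^3}+\text{l.o.t.}
\]
The first term integrates exactly to $-1/(\lambda_r^2Z)$ at the endpoints, which is real to leading order. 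For the second, the identity $(Y-Y_c)^2=Z^2+2i\tilde cZ-\tilde c^2$ reduces the integral modulo $O(c_i)$ to $-(2\nu/\lambda_r^3)\int dY/Z$; the Plemelj--Sokhotski formula gives $\int_{-\rho}^\rho dY/Z\to i\pi$ as $c_i\to0^+$ (since $Z$ remains in the lower half-plane), producing exactly the main term $-\pi\pa_Y^2\bM_r(Y_c)/(\pa_Y\bM_r(Y_c))^3$.

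The main technical obstacle is the sharp error bound $O(\al^2c_i|\log c_i|)$. The delicate point is that $\Im\tilde\phi_0=O(c_i(|\log c_i|+\al))$ in $L^\infty$, so when combined with the singular kernel $|\phi|^{-2}\sim c_i^{-2}$ near $Y_c$, it produces an a priori $O(\al)$-large contribution to the imaginary integral. One must exploit the vanishing $\tilde\phi_{02}(Y_c)=1$ inherited from the initial condition, Taylor-expanding $\tilde\phi_0$ near $Y_c$ to gain an extra factor $|Y-Y_c|$ that cancels the pointwise singularity; the $\al^2$ factor in the error then arises from the quadratic $O((\al w_c)^2)$ corrections in the Taylor expansions of $\sinh,\cosh$ across the critical layer of effective width $1/\al$, together with the $\al$-dependent error terms in the Rayleigh correction to $\phi$.
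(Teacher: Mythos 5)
Your route is genuinely different from the paper's. The paper splits $\int F/\phi^2=-\int\bar M^2/\phi^2+\int\phi^{-2}$ and decomposes $\int\phi^{-2}$ exactly around the point values $\partial_Y\phi(Y_c)$, $\partial_Y^2\phi(Y_c)$, so that both the magnitude and the term $-\pi\,\partial_Y^2\bar M_r(Y_c)/\partial_Y\bar M_r(Y_c)^3$ are read off from a single quantity proportional to $\log\bigl(\phi(\bar Y_2^*)/\phi(\bar Y_1^*)\bigr)$: its real part is $\sim\alpha\,\bigl||w_c(\bar Y_2^*)|-|w_c(\bar Y_1^*)|\bigr|$ (this is exactly where the hypothesis $|w_c(\bar Y_1^*)|\neq|w_c(\bar Y_2^*)|$ enters the lower bound), and the argument of the ratio is $\pi+O(c_i)$ (the same phase jump across the critical layer that you encode via Plemelj). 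You instead integrate $F_r/\psi^2$ in closed form (your antiderivative identity does check out), obtaining the endpoint-independent candidate main term $-2\alpha/\partial_Y\bar M_r(Y_c)^2$, and you extract the imaginary part from a local Laurent/Plemelj computation with $\phi\approx\bar M$ in the critical layer. Note, however, that in your scheme the hypothesis $|w_c(\bar Y_1^*)|\neq|w_c(\bar Y_2^*)|$ is never actually used (your remark that it ``prevents accidental cancellation'' is backed by no estimate), whereas it is the pivot of the paper's lower bound; you must either prove that your main term dominates every correction, or reconcile your leading term with the one the paper isolates.

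The genuine gaps are quantitative. (i) For the two-sided bound you control the $\tilde\phi_0$-correction by $\bigl|\int F(\tilde\phi_0^{-2}-1)/\psi^2\bigr|=O(\log\alpha+|\log c_i|)$; but on $\mathbb H_0$ the only constraint is $c_i\ll\alpha^{-n}$, and in the eventual application $c_i\sim\alpha^{-3}e^{-C\alpha}$, so $|\log c_i|\gtrsim\alpha$. Your error is therefore not $o(\alpha)$, and the lower bound $C^{-1}\alpha\leq\bigl|\int F/\phi^2\bigr|$ does not follow from what you wrote. (ii) For the imaginary part the target remainder is $O(\alpha^2c_i|\log c_i|)$, but Proposition \ref{pro: bound-(phi, phi_0)} only provides $|1-\tilde\phi_0|\leq C\min\{\alpha^{-1}\log\alpha,|Y-Y_c|\}$ and $\|\Im\tilde\phi_0\|_{L^\infty}\leq Cc_i(|\log c_i|+\alpha)$; fed into the $c_i^{-2}$-singular weight near $Y_c$ these give a much weaker bound, and you yourself concede that a second-order Taylor expansion of $\tilde\phi_0$ at $Y_c$ (with imaginary-part control of its coefficients) is required -- but you neither state nor prove such an expansion, and it is not available in the paper. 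The paper circumvents precisely this point by using the equation only at the single point $Y_c$ (namely $\partial_Y\phi_r(Y_c)=\partial_Y\bar M_r(Y_c)+O(c_i(\alpha+|\log c_i|))$ and $\partial_Y^2\phi(Y_c)=\partial_Y^2\bar M(Y_c)+O(\alpha^2c_i)$) together with modulus and argument estimates of $\phi(\bar Y_1^*)/\phi(\bar Y_2^*)$ and the auxiliary integrals $\int g/\phi^2$ and $\int\phi_i\partial_Y\phi/\phi^2$. Until (i) and (ii) are supplied, your proposal is a plausible outline of a different proof rather than a proof.
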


\begin{proof}
Let $I=\int_{\bar Y_1^*}^{\bar Y_2^*}\f{F(Y)}{\phi^{2}(Y)}dY$. We can write 
\begin{align*}
	I
	=-\int_{\bar Y_1^*}^{\bar Y_2^*}\frac{\bar M^2(Y)}{\phi^2(Y)}dY+\int_{\bar Y_1^*}^{\bar Y_2^*}\phi^{-2}(Y)dY=I_1+I_2.
\end{align*}
According to Proposition \ref{pro: bound-(phi, phi_0)}, we first have 
\begin{align}
	|I_1|=\Big|\int_{\bar Y_1^*}^{\bar Y_2^*}\frac{\bar M^2(Y)}{\phi^2(Y)}dY\Big|\leq C \int_{\bar Y_1^*}^{\bar Y_2^*}\frac{|Y-Y_c|^2}{(\alpha^{-1}\sinh (\alpha|Y-Y_c|)+c_i)^2}\leq C.
\end{align}
 Notice that 
\begin{align*}
	|\mathrm{Im}(\bar M^2/\phi^2)|\leq|\phi|^{-4}(|(U_B-c_r)^2-c_i^2||\phi_r\phi_i|+c_i(U_B-c)|\phi_r^2-\phi_i^2|),
\end{align*}
which implies 
\begin{align}
	|\mathrm {Im}(I_1)|\leq Cc_i\int_{\bar Y_1^*}^{\bar Y_2^*}\frac{1}{|Y-Y_c|+c_i}dY+Cc_i^3\int_{\bar Y_1^*}^{\bar Y_2^*}\frac{1}{(|Y-Y_c|+c_i)^3}dY\leq Cc_i|\log c_i|.
\end{align}

Now we turn to focus on $I_2$. We write 
\begin{align*}
I_2=\f{1}{ \pa_Y\phi(Y_c)}\int_{\bar Y_1^*}^{\bar Y_2^*}\f{ \pa_Y\phi(Y)}{\phi^2(Y)}dY+\f{1}{ \pa_Y\phi(Y_c)}\int_{\bar Y_1^*}^{\bar Y_2^*}\f{ \pa_Y\phi(Y_c)-\pa_Y\phi(Y)}{\phi^2(Y)}dY=I_{21}+I_{22}.	
\end{align*}
For $I_{21}$, we have
\begin{align*}
I_{21}=\f{1}{  \pa_Y\bM(Y_c)}\left(\f{1}{\phi(\bar Y_1^*)}-\f{1}{\phi(\bar Y_2^*)}\right),
\end{align*}
from which and \eqref{est: phi},  we deduce that $|I_{21}|\leq C$ and $ |\Im I_{21}|\leq C c_i.$
We introduce
\begin{align*}
g(Y)=\pa_Y\phi(Y)-\pa_Y\phi(Y_c)-\f{\pa_Y^2\phi(Y_c)}{\pa_Y\phi(Y_c)\pa_Y\phi_r(Y_c)}\pa_Y\phi(Y)\phi_r(Y),
\end{align*}
 which holds 
\begin{align*}
g(Y_c)=g'(Y_c)=0,\quad |g(Y)|\leq C|Y-Y_c|^2,\quad  |\Im g(Y)|\leq Cc_i|Y-Y_c|^2.
\end{align*}
Then we have
\begin{align*}
I_{22}=&-\f{1}{ \pa_Y\phi(Y_c)}\int_{\bar Y_1^*}^{\bar Y_2^*}\f{g(Y')}{\phi(Y)^2}dY-\f{\pa_Y^2\phi(Y_c)}{ (\pa_Y\phi(Y_c))^2\pa_Y\phi_r(Y_c)}\int_{\bar Y_1^*}^{\bar Y_2^*}\f{ \pa_Y\phi(Y)\phi_r(Y)}{\phi^2(Y)}dY\\
=&-\f{1}{ \pa_Y\phi(Y_c)}\int_{\bar Y_1^*}^{\bar Y_2^*}\f{g(Y')}{\phi(Y)^2}dY+\f{\pa_Y^2\phi(Y_c)}{ (\pa_Y\phi(Y_c))^2\pa_Y\phi_r(Y_c)}\log\f{\phi(\bar Y_2^*)}{\phi(\bar Y_1^*)}\\
&+i\f{\pa_Y^2\phi(Y_c)}{ (\pa_Y\phi(Y_c))^2\pa_Y\phi_r(Y_c)}\int_{\bar Y_1^*}^{\bar Y_2^*}\f{\phi_i(Y) \pa_Y \phi(Y)}{\phi^2(Y)}dY\\
=&I_{22}^1+I_{22}^2+I_{22}^3.
\end{align*}
For $I_{22}^1$, we get
\begin{align*}
|I_{22}^1|\leq C\int_{\bar Y_1^*}^{\bar Y_2^*}\f{|Y-Y_c|^2}{|Y-Y_c|^2+c_i^2}dY\leq C,
\end{align*}
and
\begin{align*}
|\Im I_{22}^1|\leq& C c_i\int_{\bar Y_1^*}^{\bar Y_2^*}\f{|g(Y')| |Y-Y_c|}{(|Y-Y_c|^2+c_i^2)^2}dY'+C \int_{\bar Y_1^*}^{\bar Y_2^*}\f{|\Im g(Y)| }{|Y-Y_c|^2+c_i^2}dY\\
\leq& Cc_i\int_{\bar Y_1^*}^{\bar Y_2^*}\f{|Y-Y_c|^3}{(|Y'-Y_c|^2+c_i^2)^2}dY+Cc_i\int_{\bar Y_1^*}^{\bar Y_2^*}\f{|Y-Y_c|^2}{|Y'-Y_c|^2+c_i^2}dY
\leq  C c_i |\log c_i|.
\end{align*}
For $I_{22}^2$, by \eqref{eq: Rayleigh-homo} and Proposition \ref{pro: bound-(phi, phi_0)}, $\partial_Y\phi_r(Y_c)=\partial_Y\bar M_r(Y_c)+\mathcal O(c_i(\al+|\log c_i|))$ and $\partial_Y^2\phi(Y_c)=\partial_Y^2\bar M(Y_c)+\mathcal O(\al^2c_i)$. Then we have
\begin{align}\label{eq:phi-Yc}
	\f{\pa_Y^2\phi(Y_c)}{ (\pa_Y\phi(Y_c))^2\pa_Y\phi_r(Y_c)}=\f{\pa_Y^2\bM(Y_c)}{ (\pa_Y\bM(Y_c))^2\pa_Y\bM_r(Y_c)}+\mathcal O(\al^2c_i).
\end{align}
Again by Proposition \ref{pro: bound-(phi, phi_0)}, we have 
\begin{align}\label{eq: fr-phi}
	\frac{\phi(\bar Y_1^*)}{\phi(\bar Y_2^*)}=\frac{\psi(\bar Y_1^*)\tilde\phi_0(\bar Y_1^*)}{\psi(\bar Y_2^*)\tilde\phi_0(\bar Y_2^*)}=\frac{\psi(\bar Y_1^*)}{\psi(\bar Y_2^*)}(1+(\frac{\tilde\phi_0(\bar Y_1^*)}{\tilde\phi_0(\bar Y_2^*)}-1)).
\end{align}
According to \eqref{sim: psi}, we have 
\begin{align}\label{eq:fr-psi}
	\Big|\frac{\psi(\bar Y_1^*)}{\psi(\bar Y_2^*)}\Big|\sim e^{\alpha |w_c(\bar Y_2^*)|}e^{-\alpha |w_c(\bar Y_1^*)|}\text{ and }\arg\Big(\frac{\psi(\bar Y_1^*)}{\psi(\bar Y_2^*)}\Big)=\pi+\mathcal O(c_i).
\end{align}
On the other hand, by Proposition \ref{pro: bound-(phi, phi_0)}, we get
\begin{align*}
	\Big|\frac{\tilde\phi_0(\bar Y_1^*)}{\tilde\phi_0(\bar Y_2^*)}-1\Big|\leq C\alpha^{-1}\log\alpha\text{ and }\Big|\Im\Big(\frac{\tilde\phi_0(\bar Y_1^*)}{\tilde\phi_0(\bar Y_2^*)}-1\Big)\Big|\leq Cc_i|\log c_i|,
\end{align*}
which implies 
\begin{align}\label{eq:fr-tphi}
	\Big|\frac{\tilde\phi_0(\bar Y_1^*)}{\tilde\phi_0(\bar Y_2^*)}\Big|\sim 1\text{ and }\arg\Big(\frac{\tilde\phi_0(\bar Y_1^*)}{\tilde\phi_0(\bar Y_2^*)}\Big)\sim c_i|\log c_i|.
\end{align}
By gathering \eqref{eq: fr-phi}, \eqref{eq:fr-psi} and \eqref{eq:fr-tphi}, we arrive at
\begin{align*}
	\Big|\frac{\phi(\bar Y_1^*)}{\phi(\bar Y_2^*)}\Big|\sim e^{\alpha |w_c(\bar Y_2^*)|}e^{-\alpha |w_c(\bar Y_1^*)|}\text{ and }\arg\Big(\frac{\phi(\bar Y_1^*)}{\phi(\bar Y_2^*)}\Big)= \pi+\mathcal O(c_i),
\end{align*}
which along with \eqref{eq:phi-Yc} implies 
\begin{align*}
|I_{22}^2|\sim \al||w_c(\bar Y_1^*)|-|w_c(\bar Y_2^*)||\sim\al,\quad \Im I_{22}^2=-\f{\pa_Y^2 \bM_r(Y_c)}{\pa_Y \bM_r(Y_c)^3}\pi+O(c_i).
\end{align*}

For $I_{22}^3$,  by Proposition \ref{pro: bound-(phi, phi_0)}, we have
\begin{align*}
\Big|\int_{\bar Y_1^*}^{\bar Y_2^*}\f{\phi_i(Y) \pa_Y \phi(Y)}{\phi^2(Y)}dY\Big|\leq& \Big|\int_{\bar Y_1^*}^{\bar Y_2^*}\f{\tilde\phi_{0,r}\psi_i\partial_Y\phi(Y)}{\phi^2(Y)}dY\Big|+\Big|\int_{\bar Y_1^*}^{\bar Y_2^*}\f{\tilde\phi_{0,i}\psi_r\partial_Y\phi(Y)}{\phi^2(Y)}dY\Big|\\
\leq& Cc_i|\log c_i|+\Big|\int_{\bar Y_1^*}^{\bar Y_2^*}\f{\partial_Y(\tilde{\phi}_{0,r}\psi_i)}{\phi(Y)}dY\Big|+ c_i\Big|\int_{\bar Y_1^*}^{\bar Y_2^*}\f{1}{|\phi(Y)|}dY\Big|\\
\leq& c_i(|\log c_i|+\al),
\end{align*}
which along with \eqref{eq:phi-Yc} implies that  $|I_{22}^3|\leq C c_i\al|\log c_i|$. Putting $I_{22}^1-I_{22}^3$ together, we get
\begin{align*}
|\Re I_{22}|\sim \alpha ,\quad \Im I_{22}=-\f{\pa_Y^2 \bM_r(Y_c)\pi}{\pa_Y \bM_r(Y_c)^3}+O(c_i\al^2 |\log c_i|).
\end{align*}

Finally,  we conclude that 
\begin{align*}
|\Re I_2|\sim \al,\quad \Im I_{2}=-\f{\pa_Y^2 \bM_r(Y_c)\pi}{\pa_Y \bM_r(Y_c)^3}+O(c_i\al^2 |\log c_i|).
\end{align*}
\end{proof}

\subsection{Proof of Proposition \ref{pro: rayleigh-varphi}}
With results in Lemma \ref{lem: est of G}-Lemma \ref{lem: int-phi^(-2)}, we are in a position to prove Proposition \ref{pro: rayleigh-varphi}.

\begin{proof}

For $Y\in[\bar Y_1^*, Y_c]$, we use the formula \eqref{formula: varphi} for $\varphi(Y)$. 
By Lemma \ref{lem: est of G}-\ref{lem: int-phi^(-2)} , we have
\begin{align*}
&\Big|\phi(Y)\int_{\bar Y_1^*}^Y G_f(Y')dY'\Big|\leq C\al^{-1}e^{-\al w_0(Y)}  \|e^{\al w_0}f\|_{L^\infty},\quad \Big|\phi(Y)\int_{\bar Y_1^*}^Y \f{F(Y')}{\phi^2(Y')}dY'\Big|\leq Ce^{-\al |w_c(Y)|},\\
&\Big|\int^{\bar Y_2^*}_{\bar Y_1^*} G_f(Y') dY'\Big|\leq Ce^{-\al w_0(Y_c)}\|e^{\al w_0}f\|_{L^\infty},\quad \Big|\int_{\bar Y_1^*}^{\bar Y_2^*}\f{F(Y')}{\phi^2(Y')}dY'\Big|\geq C^{-1}\al,
\end{align*}
and  for $Y\in[\bar Y_1^*, \bar Y_1-\d]$,
\begin{align*}
\Big|\phi(Y)\int_{\bar Y_1^*}^Y G_f(Y')dY'\Big|\leq&C\al^{-2}e^{-\al w_0(Y)} e^{-2 \al  \int_{Y}^{\bar Y_1-\d} F_r(Y')dY'}  \|e^{\al w_0}f\|_{L^\infty},
\end{align*}
from which, we infer that  for $Y\in[\bar Y_1^*, Y_c]$,
\begin{align*}
|\varphi(Y)|\leq C\al^{-1}e^{-\al w_0(Y)}  \|e^{\al w_0}f\|_{L^\infty}.
\end{align*}
Moreover, for $Y\in[\bar Y_1^*, \bar Y_1-\d]$, we have 
\begin{align*}
|\varphi(Y)|\leq& C\al^{-2}e^{-\al w_0(Y)} e^{-2 \al  \int_{Y}^{\bar Y_1-\d} F_r(Y')dY'}  \|e^{\al w_0}f\|_{L^\infty}\\
&+C\al^{-1}  e^{-\al |w_c(Y)|}e^{-\al w_0(Y_c)}\|e^{\al w_0}f\|_{L^\infty}\\
\leq&C\al^{-2}e^{-\al w_0(Y)}  e^{-2\al \int_{Y}^{\bar Y_1-\d}F_r^\f12(Y') dY'}\|e^{\al w_0}f\|_{L^\infty}.
\end{align*}
Similar procedure gives that for $Y\in[Y_c, \bar Y_2^*]$,
\begin{align*}
|\varphi(Y)|\leq&C\al^{-1}e^{-\al w_0(Y)}  \|e^{\al w_0}f\|_{L^\infty}.
\end{align*}

Next we give the estimates for $\Im \varphi$. Since $\varphi$ satisfies  $\mathcal{L}_{cr}[\varphi]=f$ with $f=\Re f+i\Im f$, we divide $\varphi=\varphi_1+i\varphi_2$, where $\mathcal{L}_{cr}[\varphi_1]=\Re f,~\mathcal{L}_{cr}[\varphi_2]=\Im f$. Then $\Im \varphi=\Im \varphi_1-\Re \varphi_2$. To estimate  $\Re \varphi_2$, we use the above estimates to obtain
\begin{align*}
&|\Re \varphi_2(Y)|\leq |\varphi_2(Y)|\leq C\al^{-1}e^{-\al w_0(Y)}  \|e^{\al w_0}\Im f\|_{L^\infty},\quad Y\in[\bar Y_1^*, \bar Y_2^*],\\
&|\Re \varphi_2(Y)|\leq |\varphi_2(Y)|\leq C\al^{-2}e^{-\al w_0(Y)}  e^{-2\al \int_{Y}^{\bar Y_1-\d}F_r^\f12(Y') dY'}\|e^{\al w_0}\Im f\|_{L^\infty},\quad Y\in[\bar Y_1^*, \bar Y_1-\d].
\end{align*}
Then we focus on the estimates for $\Im \varphi_1$. By Lemma \ref{lem: est of G}, for $Y\in[\bar Y_1^*, \bar Y_1-\d]$, we have
\begin{align}\label{est: Im varphi-1}
\Big|\Im\Big(\phi(Y)\int_{\bar Y_1^*}^Y G_{\Re f}(Y') dY'\Big)\Big|\leq&C\al^{-1}c_ie^{-\al w_0(Y)}  e^{-2 \al  \int_{Y}^{Y_1-2\d_0} F_r(Y')dY'}\|e^{\al w_0} \Re f\|_{L^\infty},
\end{align}
and for $Y\in [ \bar Y_1-\d, \bar Y_1]\cup [\bar Y_2, \bar Y_2^*]$,
\begin{align}\label{est: Im varphi-2}
\Big|\Im\Big(\phi(Y)\int_Y^{\bar Y_2^*} G_{\Re f}(Y') dY'\Big)\Big|\leq
&C c_ie^{-\al w_0(Y)}  \|e^{\al w_0} \Re f\|_{L^\infty},
\end{align}
and 
\begin{align*}
&\Big|\int^{\bar Y_2^*}_{\bar Y_1^*} G_{\Re f}(Y') dY'\Big|+(\al c_i)^{-1}\Big|\int^{\bar Y_2^*}_{\bar Y_1^*} G_{\Re f,i}(Y') dY'\Big|\leq Ce^{-\al w_0(Y_c)}  \|e^{\al w_0}\Re f\|_{L^\infty}.
\end{align*}
By Lemma \ref{lem: int-phi^(-2)}, we have
\begin{align*}
\Big|\Big(\int_{\bar Y_1^*}^{\bar Y_2^*}\f{F(Y')}{\phi^2(Y')}dY'\Big)^{-1}\Big|\leq C\al^{-1},\quad \Big|\Im\Big(\Big(\int_{\bar Y_1^*}^{\bar Y_2^*}\f{F(Y')}{\phi^2(Y')}dY'\Big)^{-1}\Big)\Big|\leq C\al^{-2}.
\end{align*}
The above estimates along with Lemma \ref{lem: int-phi} and \eqref{def: mu(c)} ensure that for $Y\in[\bar Y_1^*, \bar Y_1]$,
\begin{align}\label{est: Im varphi-3}
\begin{split}
\Big|\Im\Big(\mu_{Re f}(c)\phi(Y)\int_{\bar Y_1^*}^Y\f{F(Y')}{\phi^2(Y')}dY'\Big)\Big|\leq& C\al^{-2} e^{-2\al|w_c(Y)|}e^{-\al w_0(Y)}\|e^{\al w_0}\Re f\|_{L^\infty}\\
\leq&C\al^{-3}e^{-\al w_0(Y)}  e^{-2 \al  \int_{Y}^{\bar Y_1-\d} F_r(Y')dY'}\|e^{\al w_0} \Re f\|_{L^\infty}.
\end{split}
\end{align}
Similarly, for $Y\in[\bar Y_2, \bar Y_2^*]$, we have
\begin{align}\label{est: Im varphi-4}
&\Big|\Im\Big(\mu_{Re f}(c)\phi(Y)\int_{\bar Y_1^*}^Y\f{F(Y')}{\phi^2(Y')}dY'\Big)\Big|\leq C\al^{-2}e^{-\al w_0(Y)}\|e^{\al w_0}\Re f\|_{L^\infty},
\end{align}
which gives 
\begin{align*}
&|\Im \varphi_1(Y)| \leq C\al^{-2}e^{-\al w_0(Y)}  \|e^{\al w_0}\Re f\|_{L^\infty},\quad Y\in[\bar Y_1^*, \bar Y_1]\cup[\bar Y_2, \bar Y_2^*],\\
&|\Im \varphi_1(Y)|\leq C\al^{-3}e^{-\al w_0(Y)}  e^{-2 \al  \int_{Y}^{\bar Y_1-\d} F_r(Y')dY'}\|e^{\al w_0} \Re f\|_{L^\infty},\quad Y\in[\bar Y_1^*, \bar Y_1-\d].
\end{align*}

In the end, we combine the estimates of $\Re \varphi_2$ to obtain the results of $\Im\varphi$.
\end{proof}

\section{Inner-outer gluing iteration scheme} 

The goal of this section is to construct  a homogeneous solution $P(Y)$ to the following system 
\begin{align}\label{eq:ray-F-nbv}
	\left\{
	\begin{aligned}
		&\mathcal{L}[P]=\partial_Y^2P-\frac{2\partial_Y\bar M}{\bar M}\partial_Y P-\alpha^2(1-\bar M^2) P=0,\\
		&\lim_{Y\to +\infty}P(Y)=0,
	\end{aligned}
	\right.
\end{align}
such that $P(Y)$ contains the interaction between the supersonic regime and the critical layer in the subsonic regime. We point out that the solution to \eqref{eq:ray-F-nbv} doesn't give a restriction at $Y=0$. 

\subsection{Roadmap of our construction} We introduce  the inner-outer gluing iteration.\smallskip

 \textbf{$\bullet$ The initial step.} In this step, we give the leading order $P^{(0)}(Y)$ of the solution $P(Y)$. That is, $P^{(0)}=P_{0,0}+P_{0,1}+P_{0,2}$.
 The operators $\mathcal{G}_1$, $\mathcal G_{2,nloc}$ and $\mathcal{G}_{2,loc}$ related to the inner-outer gluing iteration are defined in Section \ref{sec:IN}. 
 We define that for any $Y\geq 0$,
	      \begin{align*}
	      	P_{0,0}(Y)=\mathcal{G}_{1}(A).
	      \end{align*}
	      We split $\mathcal L[P_{0,0}]=H_1[A]+H_2[A]$, where $H_1[A]$ and $H_2[A]$ are the errors in the regime near and far away from critical layer respectively. The exact formulas of $H_1[\cdot]$ and $H_2[\cdot]$ are given in \eqref{def: H_1-H_4}.
	      
We define 
	     \begin{align*}
	     	P_{0,1}(Y)=\mathcal{G}_{2,nloc}\circ\varphi_0,\quad \varphi_0=\mathcal{L}_{cr}^{-1}(-H_1[A]).
	     \end{align*}
	  Then we have
 \begin{align*}
\mathcal{L}[P_{0,1}]=-H_1[A] + H_3[\varphi_0] + H_4[\varphi_0],
\end{align*}
where $H_4[\varphi_0]$ and $H_3[\varphi_0]$ are the errors related to $\varphi_0$ in the regime near and far away from the critical layer respectively. The exact formulas of $H_3[\cdot]$ and $H_4[\cdot]$ are given in \eqref{def: H_1-H_4}.     Therefore, we obtain  
	     \begin{align*}
	     	\mathcal L[P_{0,0}+P_{0,1}]=H_2[A]+H_3[\varphi_0]+H_4[\varphi_0],
	     \end{align*}
We write  
	\begin{align*}
		H_3[\varphi_0]=H_3^{ncr}[\varphi_0]+H_3^{sup}[\varphi_0].
	\end{align*}

We define 
\begin{align}\label{eq: tilde P_(0,2)^(1)}
P_{0,2}(Y)=\mathcal{G}_{1}\circ\tilde{P}_{0,2}^{(1)}+\bar\chi_1  \tilde P_{0,2}^{(2)}(Y),\quad \tilde{P}_{0,2}^{(1)}=\mathcal L_{app}^{-1}(-H_2[A]-H_3^{ncr}[\varphi_0]),
\end{align}
where $\bar\chi_1$ is a smooth cut-off function supported in the supersonic regime, and the reverse wave $\tilde P_{0,2}^{(2)}$ satisfies 
\begin{align}\label{eq: tilde P_(0,2)^(2)}
\mathcal L[\tilde P_{0,2}^{(2)}]=-H_{3}^{sup}[\varphi_0]+(Q_1+Q_2)\tilde P_{0,2}^{(2)},
\end{align}
and is given by 
{\small
 \begin{align}\label{formula: tilde P_(0,2)^(2)}
\begin{split}
\tilde P_{0,2}^{(2)}(Y)=&-\kappa^{-1}\pi A(Y)\int_Y^{+\infty} B(Z)\bar M^{-2}H_{3}^{sup}[\varphi_0](Z)dZ-\kappa^{-1}\pi B(Y)\int_0^{Y}A(Z)\bar M^{-2}H_{3}^{sup}[\varphi_0](Z)dZ.
\end{split}
\end{align}}
We emphasize that $\tilde P_{0,2}^{(2)}(Y)$ is the key part to obtain the instability, which carries the effect of the critical layer and oscillates in the supersonic regime.
Then we have
 \begin{align*}
	     	\mathcal L[P_{0,2}]=-H_2[A]-H_3[\varphi_0]+H_1[\tilde P_{0,2}^{(1)}]+H_2[\tilde P_{0,2}^{(1)}]+\tilde H_{21}[\tilde P_{0,2}^{(2)}],
	     \end{align*}
which implies 
 \begin{align*}
	     	\mathcal L[P^{(0)}]=\underbrace{H_4[\varphi_0]+H_1[\tilde P_{0,2}^{(1)}]}_{\text{near critical layer}}+\underbrace{H_2[\tilde P_{0,2}^{(1)}]+\tilde H_{21}[\tilde P_{0,2}^{(2)}]}_{\text{away from critical layer}}.
	     \end{align*}

\textbf{$\bullet$ Iteration scheme.} For $j=1$, we define 
\begin{align*}
P_{1,1}(Y)=\mathcal{G}_{2,loc}\circ \varphi_1,\quad \varphi_1=\mathcal L_{cr}^{-1}(-H_1[\tilde P_{0,2}^{(1)}]-H_4[\varphi_{0}]).
\end{align*}
Then we have 
\begin{align*}
\mathcal L[P_{1,1}]
=&-H_4[\varphi_{0}]-H_1[\tilde P_{0,2}^{(1)}]+\tilde H_3[\varphi_{1}]+H_4[\varphi_{1}],
\end{align*}
where $\tilde H_3[\varphi_{1}]$ is the error related to $\varphi_1$ and the support of $\tilde H_3[\varphi_{1}]$ lies in the regime away from the critical layer and the supersonic regime.  Next we define  
	     \begin{align*}
	     	P_{1,2}(Y)=\mathcal{G}_{1}\circ\tilde P_{1,2},\quad \tilde P_{1,2}=\mathcal L_{app}^{-1}(-H_2[\tilde P_{0,2}^{(1)}]-\tilde H_{21}[\tilde P_{0,2}^{(2)}]-\tilde H_3[\varphi_{1}]).
	     \end{align*}
Then we denote $P^{(1)}=P_{1,1}+P_{1,2}$ and we obtain
\begin{align*}
\mathcal L[P^{(0)}+P^{(1)}]=H_1[\tilde P_{1,2}]+H_2[\tilde P_{1,2}]+H_4[\varphi_{1}]:=Err^{(1)}(Y).
\end{align*}
For any $j\geq 2$, we assume that $Err^{(j-1)}(Y)$ is the error from the $(j-1)-$th iteration with the form of $Err^{(j-1)}=H_4[\varphi_{j-1}]+H_1[\tilde P_{j-1,2}]+H_2[\tilde P_{j-1,2}].$
	     We define 
	     \begin{align}
	     	&P_{j,1}(Y)=\mathcal{G}_{2,loc}\circ\varphi_j,\quad \varphi_j=\mathcal L_{cr}^{-1}(-H_1[\tilde P_{j-1,2}]-H_4[\varphi_{j-1}]),\label{eq: Rayleigh varphi_j}\\
		&P_{j,2}(Y)=\mathcal{G}_{1}\circ\tilde P_{j,2},\quad \tilde P_{j,2}=\mathcal L_{app}^{-1}(-H_2[\tilde P_{j-1,2}]-\tilde H_3[\varphi_{j}])\label{eq: tilde P_(j,2)}.
	     \end{align}
	 	     Therefore, we obtain that $P^{(j)}=P_{j,1}+P_{j,2}$ satisfies
	     \begin{align*}
	     	\mathcal L[P^{(j)}]=&-H_4[\varphi_{j-1}]-H_1[\tilde P_{j-1,2}](Y)-H_2[\tilde P_{j-1,2}](Y)\\
	     	&+H_4[\varphi_{j}]+H_1[\tilde P_{j,2}](Y)+H_2[\tilde P_{j,2}](Y)
	     	=-Err^{(j-1)}(Y)+Err^{(j)}(Y).
	     \end{align*}
	     We can obtain iteratively that $\mathcal L[\sum_{j=0}^N P^{(j)}]=Err^{(N)}(Y)$. Then we know $P=\sum_{j=0}^{\infty} P^{(j)}$ solves \eqref{eq:ray-F-nbv} formally.

\begin{proposition}\label{pro: exist}
	Let $(\alpha,c)\in \mathbb H_0$. Then $\sum_{j=0}^{\infty} P^{(j)}$ converges in $W^{2,\infty}_{w_0}$ and $P=\sum_{j=0}^{\infty} P^{(j)}$ is a solution to \eqref{eq:ray-F-nbv}. Moreover, we have
	\begin{align*}
		&\partial_Y P(0)=\partial_Y A(0)+\partial_Y\tilde P_{0,2}^{(1)}(0)+\partial_Y\tilde P_{0,2}^{(2)}(0)+ \sum_{j=1}^{\infty}\partial_Y\tilde P_{j,2}(0).
	\end{align*}
\end{proposition}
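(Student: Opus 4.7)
The strategy is to combine the Rayleigh-type estimates of Proposition~\ref{pro: rayleigh-varphi} with the Airy-type estimates of Propositions~\ref{pro: Airy-1}--\ref{pro: Airy-5} to establish a geometric decay of the successive errors $Err^{(j)}$, and then to extract the boundary value at $Y=0$ from the support properties of the cut-offs $\chi_1,\chi_2,\chi_3$ and $\bar\chi_1$.

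\textbf{A priori setup.} First I would introduce a compound norm on the errors that reflects their support, e.g.
\begin{align*}
N(Err^{(j)}):=\|H_1[\tilde P_{j,2}]\|_{L^\infty_{w_0}([\bar Y_1^*,\bar Y_2^*])}+\|H_2[\tilde P_{j,2}]\|_{L^\infty_{w_0}}+\|H_4[\varphi_j]\|_{L^\infty_{w_0}([\bar Y_1^*,\bar Y_2^*])},
\end{align*}
and a corresponding norm on $\tilde P_{j,2}$ and $\varphi_j$ of the form $\|\cdot\|_{L^\infty_{w_0}}+\alpha^{-1}\|\partial_Y\cdot\|_{L^\infty_{w_0}}+\alpha^{-2}\|\bar M\partial_Y^2\cdot\|_{L^\infty_{w_0}}$. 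The initial error $Err^{(0)}=H_4[\varphi_0]+H_2[\tilde P_{0,2}^{(1)}]+\tilde H_{21}[\tilde P_{0,2}^{(2)}]$ is controlled in this norm by explicit bounds on $A(Y)$ and $Q_1,Q_2$ from Lemma~\ref{lem: Q_1} and Lemma~\ref{lem: Ai, Bi}.

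\textbf{Contraction of the iteration.} By Proposition~\ref{pro: rayleigh-varphi}, the map $\varphi_j=\mathcal L_{cr}^{-1}(-H_1[\tilde P_{j-1,2}]-H_4[\varphi_{j-1}])$ gains a factor of $\alpha^{-1}$ in the weighted $L^\infty$ norm. By Propositions~\ref{pro: Airy-1}--\ref{pro: Airy-5}, the outer solve $\tilde P_{j,2}=\mathcal L_{app}^{-1}(-H_2[\tilde P_{j-1,2}]-\tilde H_3[\varphi_j])$ gains a further factor of at least $\alpha^{-5/6}$ (and $\alpha^{-11/6}$ for the lifted parts coming from $(1-\chi)\int_Y^{+\infty}(\cdot)$). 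Since the error operators $H_1,H_2,H_4,\tilde H_3$ are bounded between the relevant weighted spaces uniformly in $\alpha$ (the singular behaviour of $Q_1\sim(|Y-Y_c|+c_i)^{-2}$ near $Y_c$ is handled inside $\mathcal L_{cr}^{-1}$ via the amplitude $\phi$, while $Q_2\lesssim c_i$ is harmless for $(\alpha,c)\in\mathbb H_0$), one obtains a geometric estimate
\begin{align*}
N(Err^{(j+1)})\leq \rho\, N(Err^{(j)}), \qquad \rho\leq C\alpha^{-1}\log\alpha<1,
\end{align*}
for $\alpha\geq\alpha_0$ sufficiently large. Summing yields that $P=\sum_{j\geq0}P^{(j)}$ converges in $W^{2,\infty}_{w_0}$ and the partial sums satisfy $\mathcal L[\sum_{j=0}^N P^{(j)}]=Err^{(N)}\to 0$, so $P$ solves \eqref{eq:ray-F-nbv} and the weight forces $P(\infty)=0$.

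\textbf{Boundary value at $Y=0$.} Since $\mathrm{supp}(\chi_2)\subset[\bar Y_1^*,\bar Y_2^*]$ with $\bar Y_1^*>0$ and $\mathrm{supp}(\chi_3)\subset[Y_0+\delta,+\infty)$, the maps $\mathcal G_{2,nloc}$ and $\mathcal G_{2,loc}$ produce functions that vanish to all orders at $Y=0$, so $\partial_Y P_{0,1}(0)=0$ and $\partial_Y P_{j,1}(0)=0$ for $j\geq 1$. Choosing $\chi_1\equiv 1$ and $\bar\chi_1\equiv 1$ on a neighbourhood of $Y=0$ (which is consistent with their stated supports in the supersonic regime), direct differentiation of $\mathcal G_1(Q)(Y)=-\int_Y^{+\infty}\chi_1(Z)\partial_Z Q(Z)\,dZ$ gives $\partial_Y\mathcal G_1(Q)(0)=\partial_Y Q(0)$. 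Applying this to $P_{0,0}$, $P_{0,2}$, and $P_{j,2}$ yields the claimed identity
\begin{align*}
\partial_Y P(0)=\partial_Y A(0)+\partial_Y\tilde P_{0,2}^{(1)}(0)+\partial_Y\tilde P_{0,2}^{(2)}(0)+\sum_{j=1}^{\infty}\partial_Y\tilde P_{j,2}(0),
\end{align*}
where the series converges absolutely by the geometric bound above.

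\textbf{Main obstacle.} The delicate point is the reverse-wave term $\tilde P_{0,2}^{(2)}$ defined by \eqref{formula: tilde P_(0,2)^(2)}: the factor $B(Y)$ grows like $e^{\alpha w_0(Y)}$, so one must verify that $H_3^{sup}[\varphi_0]$, which is generated by the non-local lift of the critical-layer data into the supersonic regime via $\mathcal G_{2,nloc}$, is small enough and sufficiently localized (away from the critical layer and supported near the Airy turning point $Y_0$) so that the integrals $\int_0^Y A\bar M^{-2}H_3^{sup}[\varphi_0]\,dZ$ remain bounded in the relevant weighted space. This is precisely where the assumption $0<c_i\ll\alpha^{-n}$ from $\mathbb H_0$ and the structural form of $\tilde H_3$ (supported in the overlap of $\mathrm{supp}(1-\chi_1)$ and $\mathrm{supp}(\chi_3)$) are used to absorb the exponentially growing factors and close the iteration.
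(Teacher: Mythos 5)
Your overall strategy is the same as the paper's: geometric decay of the errors produced by the inner (Rayleigh) and outer (Airy) solvers, followed by reading off $\partial_YP(0)$ from the supports of the cut-offs. The boundary-value step is correct and is exactly the paper's argument (modulo the slip that $\mathcal G_{2,nloc}\circ\varphi$ does \emph{not} vanish at $Y=0$ — it is a nonzero constant there; only its derivative vanishes near $Y=0$, since $\chi_2(0)=0$, which is all you need).

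The genuine gap is in your justification of the contraction. First, the error operators are not ``bounded uniformly in $\alpha$'': $H_1$ and $H_2$ contain the term $\alpha^2F\int_Y^{+\infty}\partial_Z\chi_1\tilde P\,dZ$ and the factor $Q_1\chi_1$, so they cost powers of $\alpha$. More seriously, with the plain (unlocalized) norms you propose, the loop does not close: from \eqref{eq: f_j} one has $\|f_j\|_{L^\infty_{w_0}}\lesssim\alpha^{11/6}\big(\|\mathcal A_{j-1}\|_{L^\infty([Y_1^*,Y_2^*])}+\|\mathcal B_{j-1}\|_{L^\infty_{2w_0}}\big)$, Proposition~\ref{pro: rayleigh-varphi} then gives $\|\varphi_j\|_{L^\infty_{w_0}}\lesssim\alpha^{-1}\|f_j\|_{L^\infty_{w_0}}$, and feeding $H_{31}[\varphi_j]=2\partial_Y\chi_2\bar M\varphi_j$ into Proposition~\ref{pro: Airy-3} gives a new coefficient of size $\alpha^{-5/6}\|\varphi_j\|_{L^\infty_{w_0}}$; the net factor is $\alpha^{11/6}\cdot\alpha^{-1}\cdot\alpha^{-5/6}=1$, i.e.\ no gain at all, so your claimed $\rho\le C\alpha^{-1}\log\alpha$ does not follow from the ingredients you cite. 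The paper closes this cycle only through interval-localized refinements that you never invoke: the improved bound for $\varphi_j$ on $[\bar Y_1^*,\bar Y_1-\delta]$ carrying the factor $e^{-2\alpha\int_Y^{\bar Y_1-\delta}F_r^{1/2}}$ in Proposition~\ref{pro: rayleigh-varphi}, and statements (2)--(3) of Propositions~\ref{pro: Airy-3}--\ref{pro: Airy-5}, which show that on $[0,Y_2^*]$ only the exponentially small portion of the source located below $Y_1^*$ contributes to $\mathcal A_j$. These are encoded in the weighted quantities $\mathcal E_j$ (note the extra factor $\alpha$ on $\|\mathcal A_j\|_{L^\infty([0,Y_2^*])}$ and $\|\mathcal B_j\|_{L^\infty_{2w_0}}$, and the weight $\alpha^{1/6}e^{2\alpha\int_{Y_1^*}^{Y_1-2\delta_0}F_r^{1/2}}$ on $\|\varphi_j\|_{L^\infty_{w_0}([\bar Y_1^*,Y_1^*])}$), after which Propositions~\ref{pro: iteration-varphi}--\ref{pro: interation-Airy} and Lemma~\ref{lem: cE_j} yield $\mathcal E_j\le C\alpha^{-1}\mathcal E_{j-1}$, and Lemmas~\ref{lem: Pj1}--\ref{lem: Pj2} convert this into $\|P^{(j)}\|_{\mathcal X}\lesssim\alpha^{-j+5/6}$ and hence convergence in $W^{2,\infty}_{w_0}$. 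You do correctly flag the reverse wave $\tilde P_{0,2}^{(2)}$ as delicate, but the quantitative mechanism that actually produces the geometric decay is this localized bookkeeping, not uniform boundedness of the error maps.
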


We give the definitions related to the errors in the process of iteration: 
\begin{align}\label{def: H_1-H_4}
\begin{split}
	      	H_1[\tilde P]=&\chi\left((Q_1+Q_2)\chi_1 \tilde P-\alpha^2F\int_Y^{+\infty}\partial_{Z}\chi_1\tilde PdZ\right)+\partial_Y\chi_1\partial_Y\tilde P ,\\
H_2[\tilde P]=&(1-\chi)\left((Q_1+Q_2)\chi_1 \tilde P-\alpha^2 F\int_Y^{+\infty}\partial_{Z}\chi_1\tilde PdZ\right)=:H_{21}[\tilde P]+H_{22}[\tilde P],\\
\tilde H_{21}[\tilde P]=&\left(\bar\chi_1(Q_1+Q_2)+\pa_Y^2\bar\chi_1-\f{2\pa_Y\bM}{\bM}\pa_Y \bar\chi_1\right)\tilde P+2\pa_Y\bar\chi_1\pa_Y\tilde P,\\
&\quad\mbox{with} \quad \mathrm{supp}(\tilde H_{21}[\tilde P])\subset[0, \f32 \bar Y]\subset[0, Y^{-}],\\
H_3[\varphi]=&2\partial_Y\chi_2\bar M\varphi+(1-\chi)(1-\bar\chi)(1-\bar M^2)\mathcal{K}_B[\varphi]+\bar\chi(1-\bar M^2)\mathcal{K}_B[\varphi]\\
=:&\underbrace{H_{31}[\varphi]+H_{32}^{(1)}[\varphi]}_{H_3^{ncr}[\varphi]} +\underbrace{H_{32}^{(2)}[\varphi]}_{H_3^{sup}[\varphi]} ,\\
\tilde H_3[\varphi]=&2\partial_Y\chi_2\bar M\varphi+\Big((1-\bM^2)\chi_3(1-\chi)+\al^{-2}(\f{2\pa_Y \bM}{\bM}\pa_Y \chi_3-\pa_Y^2 \chi_3)\Big)\mathcal K_B[\varphi]\\
=:&H_{31}[\varphi]+\tilde H_{32}[\varphi],\\
H_4[\varphi]=&\chi(1-\bar M^2)\mathcal{K}_B[\varphi].
\end{split}
\end{align}
Here 
\begin{align}\label{def:KB}
Q_2(Y)=-\al^2 F_i(Y)\chi_0(Y),\quad \mathcal K_B[f](Y)=\int_{Y}^{+\infty}\pa_Z\Big(\f{\pa_Z \chi_2 \bM^2}{1-\bM^2}\Big)\bM^{-1}f dZ.
\end{align}
\begin{remark}\label{rem: cut-off functions}
	We gather the relation of the support of these cut-off functions as follows. Let $\delta_0\sim1$ be a positive constant dependent of distance $Y_c-Y_0$, but independent of $\alpha$. Let $\bar Y$ lie in the supersonic regime such that $0<\f32\bar Y\leq \f{9}{10} Y_0<Y^{-}$.

\begin{enumerate}
	\item The smooth cut-off function $\chi_1(Y)$ satisfies $\chi_1(Y)= 1$  for $Y\in[0,Y_1]\cup[Y_2,+\infty)$ and   $\chi_1(Y)=0$ for  $Y\in[Y_1+\delta_0, Y_2-\delta_0]$, where 
\begin{align*}
Y_1+2\delta_0<Y_c< Y_2-2\delta_0.
\end{align*} 
In section 4, we take $\bar Y_1=Y_1+\d_0,~\bar Y_2=Y_2-\d_0$ and $\d=3\d_0$.  Thus we write  $\bar Y_1-\d=Y_1-2\d_0$.   
	   
\item The smooth cut-off function $\bar \chi_1(Y)$ satisfies $\bar\chi_1(Y)=1$ for $Y\in[0, \bar Y]$ and $\bar\chi_1(Y)=0$ for $Y\in[\f32 \bar Y,+\infty)$. The smooth cut-off function $\bar\chi(Y)$ satisifes $\mathrm{supp}( \bar\chi)=[0, \bar Y]$, which holds that $\bar\chi \bar\chi_1=\bar\chi$. 
	\item The smooth cut-off function $\chi(Y)$ satisfies $\chi(Y)=1$ for $Y\in[Y_1-\delta_0,Y_2+\delta_0]$ and $\chi(Y)=0$  for $Y\in[0,Y_1-2\delta_0]\cup[Y_2+2\delta_0,+\infty)$. 
		\item The smooth cut-off function $\chi_2(Y)$ satisfies $\chi_2(Y)= 1$ for $Y\in[Y_1^*,Y_2^*]$ and $\chi_2(Y)=0$ for $Y\in[0,Y_1^*-\delta_0]\cup[Y_2^*+\delta_0,+\infty)$, where 
	      \begin{align*}
	      	\bar Y_1^*+2\delta_0< Y_1^*<Y_1-3\delta_0\text{ and }Y_2+3\delta_0<Y_2^*<\bar Y_2^*-2\delta_0.
	      \end{align*}
	 \item The smooth cut-off function $\chi_3(Y)$ satisfies $\chi_3(Y)=1$ for $Y\geq \bar Y_1^{*}$ and $\chi_3(Y)=0$ for $Y\leq \bar Y_1^{**}$, where 
\begin{align*}
Y^{+}+2\d_0<\bar Y_1^{**}<\bar Y_1^{*}-2\d_0.
\end{align*}
	 \item The smooth cut-off function $\chi_0(Y)$ satisfies $\chi_0(Y)=1$  for $Y\in[0,\bar{Y}_2^*+\delta_0]$ and   $\chi_0(Y)=0$ for $Y\in[\bar{Y}_2^*+2\delta_0,+\infty)$. 
\end{enumerate}
\end{remark}

\subsection{Proof of Proposition \ref{pro: exist} }	   
By the construction of $P(Y)$ introduced above, we know that for $j=0$, $P^{(0)}(Y)=P_{0,0}(Y)+P_{0,1}(Y)+P_{0,2}(Y),$
where
 \begin{align}\label{def: P_0}
 \begin{split}
&P_{0,0}(Y)=\mathcal{G}_{1}(A),
	      	\quad P_{0,1}(Y)=\mathcal{G}_{2,nloc}\circ \varphi_0,\quad P_{0,2}(Y)=\mathcal{G}_{1}\circ \tilde{P}_{0,2}^{(1)}+\bar\chi_1  \tilde P_{0,2}^{(2)}(Y).
		\end{split}
	      \end{align}
For $j\geq 1$, we have $P^{(j)}(Y)=P_{j,1}(Y)+P_{j,2}(Y),$
where
\begin{align}\label{def: P_j}
 \begin{split}
P_{j,1}(Y)=\mathcal{G}_{2,loc}\circ \varphi_j
,\quad P_{j,2}(Y)=\mathcal{G}_{1}\circ\tilde P_{j,2}.
\end{split}
\end{align}

We introduce the following notations related to $\tilde P_{j,2}$.
For $j=0$, we denote 
\begin{align}\label{eq:notation-tilde P02}
	\tilde P_{0,2}(Y):=\tilde P_{0,2}^{(1)}(Y)=\mathcal{A}_0(Y) A(Y)+\mathcal{B}_0(Y) B(Y),\quad \tilde P_{0,2}^{(2)}(Y)=\tilde{\mathcal{A}_0}(Y) A(Y)+\tilde{\mathcal{B}_0}(Y) B(Y).
\end{align}
For $j\geq 1$, we denote
\begin{align}\label{eq:notation-tilde Pj2}
\tilde P_{j,2}(Y)=\mathcal{A}_j (Y)A(Y)+\mathcal{B}_j(Y) B(Y).
\end{align}
To avoid heavy notations, we put the definitions of $(\mathcal{A}_j(Y), \mathcal{B}_j(Y) ),~j\geq 0$ and $(\tilde{\mathcal{A}_0}(Y),\tilde{\mathcal{B}_0}(Y))$ in the next subsection (see the precise definitions in \eqref{eq:notation-A0-B0}-\eqref{eq:notation-A1-B1}).

We introduce the following functional space 
\begin{align*}
	\mathcal X:=\left\{u\in W^{2,\infty}_{w_0}:\|u\|_{\mathcal X}<+\infty\right\},
\end{align*}
where
\begin{align*}
	\|u\|_{\mathcal X}:=\alpha\|u\|_{L^\infty_{w_0}}+\|\partial_Y u\|_{L^\infty_{w_0}}+\alpha^{-1}\|\partial_Y^2 u\|_{L^\infty_{w_0}}.
\end{align*}

 Our goal is to prove  the convergence of  the series $\sum_{j=0}^\infty P^{(j)}$ in $\mathcal X$. For this purpose, we introduce that for $j\geq 1$,
 \begin{align*}
 \mathcal{E}_j=&\|\mathcal{A}_j\|_{L^\infty}+\al\|\mathcal{A}_{j}\|_{L^\infty([0, Y_2^*])}+\al\|\mathcal{B}_{j}\|_{L^\infty_{2w_0}}\\
 &+\al^{-\f56}\|\varphi_j\|_{L^\infty_{w_0}}+\al^{\f16}e^{2\al \int_{Y_1^*}^{Y_1-2\d_0}F_r^\f12(Z) dZ}\|\varphi_j\|_{L^\infty_{w_0}([\bar Y_1^*, Y_1^*])},
 \end{align*}
and for $j=0$,
\begin{align*}
\mathcal{E}_0=&\|\mathcal{A}_0\|_{L^\infty}+\al\|\mathcal{A}_{0}\|_{L^\infty([0, Y_2^*])}+\al\|\mathcal{B}_{0}\|_{L^\infty_{2w_0}}+\al e^{\al w_0(Y_1^*-\d_0)}(\|\tilde{\mathcal{A}}_0\|_{L^\infty([0, Y^{-}])}+\|\tilde{\mathcal{B}}_0\|_{L^\infty([0, Y^{-}])})\\
&+\al^{-\f56}\|\varphi_0\|_{L^\infty_{w_0}}+\al^{\f16}e^{2\al \int_{Y_1^*}^{Y_1-2\d_0}F_r^\f12(Z) dZ}\|\varphi_0\|_{L^\infty_{w_0}([\bar Y_1^*, Y_1^*])}.
\end{align*}

The following two propositions will be used frequently. The proof is left to subsequent subsections. 

\begin{proposition}\label{pro: iteration-varphi}
Let $(\al, c)\in  \mathbb H_0$. It holds that for $j=0$,
\begin{align*}
\|\varphi_0\|_{L^\infty_{w_0}}+\al e^{-2\al \int_{Y_1^*}^{Y_1-2\d_0}F_r^\f12(Z) dZ}\|\varphi_0\|_{L^\infty_{w_0}([\bar Y_1^*,  Y_1^*])} \leq C\al^{\f56},
\end{align*}
 and for $j\geq 1$,
 \begin{align*}
\|\varphi_j\|_{L^\infty_{w_0}}+\al e^{-2\al \int_{Y_1^*}^{Y_1-2\d_0}F_r^\f12(Z) dZ}\|\varphi_j\|_{L^\infty_{w_0}([\bar Y_1^*,  Y_1^*])}\leq& C\al^{-\f{1}{6}}\mathcal{E}_{j-1}.
\end{align*}
\end{proposition}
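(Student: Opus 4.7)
The plan is to apply Proposition \ref{pro: rayleigh-varphi} to the Rayleigh-type equations defining $\varphi_j$ and to estimate the corresponding source terms in the $L^\infty_{w_0}$ norm. For $j=0$, we have $\varphi_0=\mathcal L_{cr}^{-1}(-H_1[A])$. First I would verify that $\mathrm{supp}(H_1[A])\subset[Y_1-2\delta_0,Y_1+\delta_0]\cup[Y_2-\delta_0,Y_2+2\delta_0]$, which coincides with the support window $[\bar Y_1-\delta,\bar Y_1]\cup[\bar Y_2,\bar Y_2+\delta]$ required by Proposition \ref{pro: rayleigh-varphi} under the assignment $\bar Y_1=Y_1+\delta_0$, $\bar Y_2=Y_2-\delta_0$, $\delta=3\delta_0$ (Remark \ref{rem: cut-off functions}). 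Using the Airy asymptotics from Lemma \ref{lem: Ai, Bi} combined with Lemma \ref{lem:est-eta} gives $|A(Y)|\lesssim\alpha^{-1/6}e^{-\alpha w_0(Y)}$ and $|\partial_YA(Y)|\lesssim\alpha^{5/6}e^{-\alpha w_0(Y)}$ on this support, while Lemma \ref{lem: Q_1} gives $|Q_1+Q_2|\lesssim1$. The three terms of $H_1[A]$ then satisfy $\|H_1[A]\|_{L^\infty_{w_0}}\lesssim\alpha^{5/6}$, with the dominant contribution coming from $\partial_Y\chi_1\partial_YA$ and from the nonlocal term $\chi\alpha^2F\int_Y^\infty\partial_Z\chi_1\,A\,dZ$ (both of order $\alpha^{5/6}e^{-\alpha w_0}$ after using that the integral against $\partial_Z\chi_1$ produces an extra $\alpha^{-1}$ from the narrow support of $\partial_Z\chi_1$). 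Proposition \ref{pro: rayleigh-varphi} then yields the first bound $\|\varphi_0\|_{L^\infty_{w_0}}\lesssim\alpha^{-1}\cdot\alpha^{5/6}\leq C\alpha^{5/6}$, and the bound on $[\bar Y_1^*,Y_1^*]$ comes from the sharper decay estimate for $Y\in[\bar Y_1^*,\bar Y_1-\delta]$ in Proposition \ref{pro: rayleigh-varphi}, which supplies the additional exponential factor.

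For $j\geq1$, linearity of $\mathcal L_{cr}^{-1}$ splits the estimate into two pieces: the source $-H_1[\tilde P_{j-1,2}]$, whose support is again the narrow window $[\bar Y_1-\delta,\bar Y_1]\cup[\bar Y_2,\bar Y_2+\delta]$, and $-H_4[\varphi_{j-1}]$, whose support spans the full $\mathrm{supp}(\chi)$. For the first, the decomposition $\tilde P_{j-1,2}=\mathcal A_{j-1}A+\mathcal B_{j-1}B$ together with the information packaged in $\mathcal E_{j-1}$ (namely $|\mathcal A_{j-1}|\lesssim\alpha^{-1}\mathcal E_{j-1}$ on $[0,Y_2^*]$ and $|e^{2\alpha w_0}\mathcal B_{j-1}|\lesssim\alpha^{-1}\mathcal E_{j-1}$) yields $|\tilde P_{j-1,2}|\lesssim\alpha^{-7/6}\mathcal E_{j-1}e^{-\alpha w_0}$ and $|\partial_Y\tilde P_{j-1,2}|\lesssim\alpha^{-1/6}\mathcal E_{j-1}e^{-\alpha w_0}$ on the cut-off support, so that $\|H_1[\tilde P_{j-1,2}]\|_{L^\infty_{w_0}}\lesssim\alpha^{-1/6}\mathcal E_{j-1}$. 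Proposition \ref{pro: rayleigh-varphi} then produces the corresponding contribution to $\varphi_j$ both in the global $L^\infty_{w_0}$ norm and in the sharper $[\bar Y_1^*,Y_1^*]$ norm with the built-in exponential decay factor.

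The genuinely delicate piece is $\mathcal L_{cr}^{-1}(-H_4[\varphi_{j-1}])$, because the support of $H_4[\varphi_{j-1}]=\chi(1-\bar M^2)\mathcal K_B[\varphi_{j-1}]$ contains the critical point $Y_c$, so Proposition \ref{pro: rayleigh-varphi} does not apply as a black box. Here I would use the explicit representation formula from Proposition \ref{pro: Rayleigh formula}, exploiting the fact that $\mathcal K_B[\varphi_{j-1}]$ is smooth across $Y_c$: the factor $\bar M^{-1}$ in its integrand is multiplied by $\partial_Z(\partial_Z\chi_2\bar M^2/(1-\bar M^2))$, and $\partial_Z\chi_2$ is supported away from $Y_c$, which yields the pointwise bound $|\mathcal K_B[\varphi_{j-1}](Y)|\lesssim\alpha^{-1}e^{-\alpha w_0(Y)}\|\varphi_{j-1}\|_{L^\infty_{w_0}}$, hence $\|H_4[\varphi_{j-1}]\|_{L^\infty_{w_0}}\lesssim\alpha^{-1}\|\varphi_{j-1}\|_{L^\infty_{w_0}}\lesssim\alpha^{-1/6}\mathcal E_{j-1}$. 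Plugging into $\varphi=\phi\int G_f\,dY+\mu_f(c)\phi\int F\phi^{-2}\,dY$ and estimating each factor with Lemmas \ref{lem: est of G}--\ref{lem: int-phi^(-2)} — exactly the steps that lead to Proposition \ref{pro: rayleigh-varphi} itself — produces the desired bound. The main obstacle is precisely this handling of $H_4[\varphi_{j-1}]$ outside the narrow support window of Proposition \ref{pro: rayleigh-varphi}: it forces revisiting the representation formula and partially redoing the pointwise analysis of Section 4 for a source supported throughout the critical layer, rather than invoking Proposition \ref{pro: rayleigh-varphi} directly.
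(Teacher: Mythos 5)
There is a genuine gap, and it stems from a misreading of the operator $\mathcal L_{cr}^{-1}$. In this paper $\varphi=\mathcal L_{cr}^{-1}(f)$ denotes the solution of $\mathcal L_{cr}[\varphi]=\partial_Y\big(f/(1-\bar M^2)\big)$ (see \eqref{sys:citical}, \eqref{eq: Rayleigh varphi_j} and \eqref{def: (f_j, g_j)}), so the source to which Proposition \ref{pro: rayleigh-varphi} must be applied is $f_j=\partial_Y\big(g_j/(1-\bar M^2)\big)$ with $g_0=-H_1[A]$ and $g_j=-(H_1[\tilde P_{j-1,2}]+H_4[\varphi_{j-1}])$, not $g_j$ itself. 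This misreading breaks your argument in two places. First, your support verification is false: $H_1[A]$ is \emph{not} supported in $[Y_1-2\delta_0,Y_1+\delta_0]\cup[Y_2-\delta_0,Y_2+2\delta_0]$, because the nonlocal piece $\chi\,\alpha^2F(Y)\int_Y^{+\infty}\partial_Z\chi_1\,A\,dZ$ is generically nonzero on all of $[Y_1-2\delta_0,Y_2]$, in particular across the critical point $Y_c$; so even under your reading, Proposition \ref{pro: rayleigh-varphi} could not be invoked as a black box for the $H_1$ term. It is only after dividing by $1-\bar M^2=F$ and differentiating (and eliminating $\partial_Y^2A$ via $\mathcal L_{app}[A]=0$, resp.\ the equations for $\tilde P_{j-1,2}$) that the nonlocal part collapses to $\alpha^2\partial_Y\chi\int_Y^{+\infty}\partial_Z\chi_1 A\,dZ+\alpha^2\chi\,\partial_Y\chi_1 A$, which \emph{is} supported in the narrow window; this is precisely Lemma \ref{lem: rayleigh-source term}, and it also fixes the numerology: the true source sizes are $\|f_0\|_{L^\infty_{w_0}}\lesssim\alpha^{11/6}$ and $\|f_j\|_{L^\infty_{w_0}}\lesssim\alpha^{5/6}\mathcal E_{j-1}$ (one power of $\alpha$ larger than your $\|H_1[\cdot]\|_{L^\infty_{w_0}}$ because of the extra derivative), which after the $\alpha^{-1}$ gain of Proposition \ref{pro: rayleigh-varphi} give exactly the stated, saturated bounds $\alpha^{5/6}$ and $\alpha^{-1/6}\mathcal E_{j-1}$.

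Second, the piece you single out as genuinely delicate, $H_4[\varphi_{j-1}]$, is in fact the easiest one once the correct source is used: since $H_4[\varphi_{j-1}]/(1-\bar M^2)=\chi\,\mathcal K_B[\varphi_{j-1}]$ and $\chi\,\partial_Y\chi_2=0$, one has $\partial_Y\big(H_4[\varphi_{j-1}]/(1-\bar M^2)\big)=\partial_Y\chi\;\mathcal K_B[\varphi_{j-1}](Y_2^*)$, supported in $\mathrm{supp}(\partial_Y\chi)\subset[Y_1-2\delta_0,Y_1-\delta_0]\cup[Y_2+\delta_0,Y_2+2\delta_0]$, i.e.\ inside the admissible window away from $Y_c$, and bounded by $C\alpha^{-1}\|\varphi_{j-1}\|_{L^\infty_{w_0}}$. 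Hence no re-derivation of the Section 4 theory for sources crossing the critical layer is needed, and conversely the re-derivation you sketch is not carried out: Lemmas \ref{lem: est of G}--\ref{lem: int-phi^(-2)} are proved only for sources supported away from $Y_c$, and the final step of your third paragraph is asserted rather than proved. More fundamentally, even if you completed that analysis you would be estimating the solution of $\mathcal L_{cr}[\psi]=-H_1[\tilde P_{j-1,2}]-H_4[\varphi_{j-1}]$, which is a different function from the $\varphi_j$ defined by the iteration scheme; its bounds would neither control $\mathcal E_j$ nor be compatible with the error cancellation in the gluing construction. To repair the proof you should estimate $f_j$ as in Lemma \ref{lem: rayleigh-source term}, using the cut-off identities $\chi\partial_Y\chi_2=0$, $\partial_Y\chi_1(1-\chi)=\partial_Y\chi_1\partial_Y\chi_2=0$ and the equations satisfied by $A$ and $\tilde P_{j-1,2}$, and only then apply Proposition \ref{pro: rayleigh-varphi}.
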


\begin{proposition}\label{pro: interation-Airy} 
Let $(\al, c)\in  \mathbb H_0$. It holds that for $j=0$, 
\begin{align*}
&\|\mathcal{A}_0\|_{L^\infty}+\al\|\mathcal{A}_{0}\|_{L^\infty([0, Y_2^*])}+\al\|\mathcal{B}_{0}\|_{L^\infty_{2w_0}}+\al e^{\al w_0(Y_1^*-\d_0)}(\|\tilde{\mathcal{A}}_0\|_{L^\infty([0, Y^{-}])}+\|\tilde{\mathcal{B}}_0\|_{L^\infty([0, Y^{-}])})\leq C,
\end{align*}
and for $j\geq 1$, 
\begin{align*}
&\|\mathcal{A}_{j}\|_{L^\infty}+\al\|\mathcal{A}_{j}\|_{L^\infty([0, Y_2^*])}+\al\|\mathcal{B}_{j}\|_{L^\infty_{2w_0}}
\leq \al^{-1} \mathcal{E}_{j-1}.
\end{align*}

\end{proposition}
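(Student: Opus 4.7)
}

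The plan is to read off bounds for $\mathcal A_j, \mathcal B_j, \tilde{\mathcal A}_0, \tilde{\mathcal B}_0$ by decomposing each source term appearing in the construction of $\tilde P_{j,2}$ (and of $\tilde P_{0,2}^{(2)}$) into pieces that match one of the five standard source models used in Propositions \ref{pro: Airy-1}--\ref{pro: Airy-5}, then invoking those propositions one by one. The essential point is that every error term manufactured in the iteration scheme, namely $H_2[\cdot]$, $H_3^{ncr}[\cdot]$, $\tilde H_3[\cdot]$ and $H_3^{sup}[\cdot]$, has a structure dictated by the cut-off functions $\chi$, $\chi_1$, $\chi_2$, $\chi_3$, $\bar\chi$ (see Remark \ref{rem: cut-off functions}) which exactly matches either the form $aA+bB$ of \eqref{def: general source term} or one of the four special forms in \eqref{def: special source terms}.

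For the case $j=0$, I would first decompose $\tilde P_{0,2}^{(1)}=\mathcal L_{app}^{-1}(-H_2[A]-H_3^{ncr}[\varphi_0])$. The part $H_{21}[A]=(1-\chi)(Q_1+Q_2)\chi_1 A$ is a source of the form $a A$ with $|a|\lesssim 1$ away from $Y_c$ (using Lemma \ref{lem: Q_1}), so Proposition \ref{pro: Airy-1} gives the contribution $|\mathcal A_0|\lesssim\alpha^{-1}$, $|e^{2\alpha w_0}\mathcal B_0|\lesssim\alpha^{-1}$. The part $H_{22}[A]=-\alpha^2 F(1-\chi)\int_Y^{+\infty}\partial_Z\chi_1 A\, dZ$ has the source form of Proposition \ref{pro: Airy-2} (with $g=-\alpha^2 F\,\partial_Z\chi_1 A$, whose $L^\infty_{w_0}$ norm is $O(\alpha^2)$ since $\partial_Z\chi_1$ is supported near $Y_1,Y_2$), yielding contributions of order $\alpha^{2-17/6}e^{-\alpha\int_{Y_1-\delta_0}^{Y_1}F_r^{1/2}}\ll\alpha^{-1}$. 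The local piece $H_{31}[\varphi_0]=2\partial_Y\chi_2\bar M\varphi_0$ is supported in $[Y_1^*-\delta_0,Y_1^*]\cup[Y_2^*,Y_2^*+\delta_0]$, so it fits Proposition \ref{pro: Airy-3}; using the bound on $\varphi_0$ from Proposition \ref{pro: iteration-varphi} ($\|\varphi_0\|_{L^\infty_{w_0}([\bar Y_1^*,Y_1^*])}\lesssim \alpha^{5/6}e^{2\alpha\int_{Y_1^*}^{Y_1-2\delta_0}F_r^{1/2}}$) gives the desired $\alpha^{-1}$ factor and, via the extra decay in that region, the improved bound on $[0,Y_2^*]$. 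The nonlocal piece $H_{32}^{(1)}[\varphi_0]=(1-\chi)(1-\bar\chi)(1-\bar M^2)\mathcal K_B[\varphi_0]$ is of the form $(1-\chi)\int_Y^{+\infty} f\, dZ$ (up to the smooth factor $(1-\bar\chi)(1-\bar M^2)$), hence Proposition \ref{pro: Airy-4} (or Proposition \ref{pro: Airy-5} when the cut-off $(1-\bar\chi)$ enters) applies and yields the same order. Summing these contributions and using the convention $\|e^{2\alpha w_0}\mathcal B_0\|_{L^\infty}\lesssim\alpha^{-1}$ produces the asserted bound for $j=0$.

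For $\tilde P_{0,2}^{(2)}$ I would work directly from the explicit formula \eqref{formula: tilde P_(0,2)^(2)}, which identifies
\[
\tilde{\mathcal A}_0(Y)=-\kappa^{-1}\pi\!\int_Y^{+\infty}\!\! B(Z)\bar M^{-2}H_3^{sup}[\varphi_0]\,dZ,\qquad \tilde{\mathcal B}_0(Y)=-\kappa^{-1}\pi\!\int_0^{Y}\!\! A(Z)\bar M^{-2}H_3^{sup}[\varphi_0]\,dZ,
\]
with the integrals restricted by $\mathrm{supp}(H_3^{sup}[\varphi_0])\subset[0,\bar Y]\subset[0,Y^-]$. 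In that supersonic range Lemma \ref{lem: Ai, Bi}\,(1) gives $|A(Z)B(Z)(\partial_Z\eta)^{-1}|+|A(Z)^2(\partial_Z\eta)^{-1}|\lesssim\kappa^{-1/2}$, while $|\mathcal K_B[\varphi_0](Z)|\lesssim e^{-\alpha w_0(Y_1^*-\delta_0)}\|\varphi_0\|_{L^\infty_{w_0}}$ because the kernel $\partial_Z(\partial_Z\chi_2\bar M^2/(1-\bar M^2))$ is supported near $Y_1^*, Y_2^*$ where one pays an exponential weight $e^{-\alpha w_0(Y_1^*)}$. Combining with the bound $\|\varphi_0\|_{L^\infty_{w_0}}\lesssim \alpha^{5/6}$ produces $\|\tilde{\mathcal A}_0\|_{L^\infty([0,Y^-])}+\|\tilde{\mathcal B}_0\|_{L^\infty([0,Y^-])}\lesssim\alpha^{-1}e^{-\alpha w_0(Y_1^*-\delta_0)}$, which is exactly the claim.

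For the inductive step $j\geq 1$, the argument mirrors the $j=0$ case with $A$ replaced throughout by $\tilde P_{j-1,2}=\mathcal A_{j-1} A+\mathcal B_{j-1} B$ and with $H_3^{ncr}$ replaced by $\tilde H_3$. The only new ingredient is that $H_2[\tilde P_{j-1,2}]$ now contributes a genuine $aA+bB$ source with $\|(1+Y)^2 a\|_{L^\infty}+\|(1+Y)^2 b\|_{L^\infty_{2w_0}}\lesssim\|\mathcal A_{j-1}\|_{L^\infty}+\|\mathcal B_{j-1}\|_{L^\infty_{2w_0}}$, so Proposition \ref{pro: Airy-1} returns a gain of $\alpha^{-1}$; together with the $\mathcal E_{j-1}$-controlled bounds on $\varphi_j$ from Proposition \ref{pro: iteration-varphi} and the Airy estimates for $\tilde H_3[\varphi_j]$ (which retains the structure of $H_3^{ncr}$ plus a harmless $\chi_3$-supported perturbation), one obtains $\|\mathcal A_j\|_{L^\infty}+\alpha\|\mathcal A_j\|_{L^\infty([0,Y_2^*])}+\alpha\|\mathcal B_j\|_{L^\infty_{2w_0}}\lesssim\alpha^{-1}\mathcal E_{j-1}$. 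The main obstacle is bookkeeping: ensuring that every piece of $H_2$, $\tilde H_3$, $H_3^{sup}$ produced in each iteration is really of one of the five template types, with the correct supports relative to the critical point and the turning point, so that the sharp exponential weights $e^{-\alpha w_0(Y_1^*)}$, $e^{-\alpha w_0(Y_2^*)}$ supplied by Propositions \ref{pro: Airy-3}--\ref{pro: Airy-5} line up to close the iteration with a genuine gain.
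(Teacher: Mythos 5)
Your proposal follows essentially the same route as the paper: decompose each $\mathcal A_j,\mathcal B_j$ according to the pieces $H_{21}$, $H_{22}$, $H_{31}$, $H_{32}^{(1)}$ (resp. $\tilde H_{32}$) of the error, match each piece with one of the five template sources, and invoke Propositions \ref{pro: Airy-1}--\ref{pro: Airy-5} together with the $\varphi_j$-bounds of Proposition \ref{pro: iteration-varphi}; the treatment of $\tilde{\mathcal A}_0,\tilde{\mathcal B}_0$ directly from the explicit formula for $\tilde P_{0,2}^{(2)}$, using Lemma \ref{lem: Ai, Bi} in the supersonic range and the constancy of $\mathcal K_B[\varphi_0]$ on $[0,Y_1^*-\d_0]$, is also the paper's argument, and your exponents work out once you include the factor $\al^{-1}$ coming from integrating $e^{-\al w_0}$ over the support of $\partial_Z\chi_2$ in the bound for $\mathcal K_B[\varphi_0](Y_1^*-\d_0)$ (as written, your intermediate bound for $\mathcal K_B$ drops this factor, though your stated conclusion is the correct one).

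The one concrete omission is in the step $j=1$: by \eqref{eq:notation-A1-B1} and \eqref{def: (h_0^(1), h_0^(2))}, the error fed into $\tilde P_{1,2}$ contains, besides $H_2[\tilde P_{0,2}^{(1)}]$ and $\tilde H_3[\varphi_1]$, the term $\tilde H_{21}[\tilde P_{0,2}^{(2)}]$ carrying the reverse wave from the supersonic region, i.e. the contributions $\mathcal A_1^5,\mathcal B_1^5$. Your phrase ``mirrors the $j=0$ case'' skips it, yet it is precisely the reason the weighted quantity $\al e^{\al w_0(Y_1^*-\d_0)}(\|\tilde{\mathcal A}_0\|_{L^\infty([0,Y^-])}+\|\tilde{\mathcal B}_0\|_{L^\infty([0,Y^-])})$ is built into $\mathcal E_0$. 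The fix is short and uses only what you already proved: $\tilde H_{21}[\tilde P_{0,2}^{(2)}]$ is supported in $[0,\tfrac32\bar Y]\subset[0,Y^-]$ and bounded there by $C\al^{5/6}(\|\tilde{\mathcal A}_0\|_{L^\infty([0,Y^-])}+\|\tilde{\mathcal B}_0\|_{L^\infty([0,Y^-])})$, so inserting it into the Green's-function integrals and using Lemma \ref{lem: Ai, Bi} gives $|\mathcal A_1^5|+|\mathcal B_1^5|\lesssim \|\tilde{\mathcal A}_0\|_{L^\infty([0,Y^-])}+\|\tilde{\mathcal B}_0\|_{L^\infty([0,Y^-])}\lesssim \al^{-1}e^{-\al w_0(Y_1^*-\d_0)}\le \al^{-2}\mathcal E_0$, which closes the $j=1$ case; for $j\ge 2$ your argument is complete as stated.
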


 \subsubsection{Estimates of $P_{j,1}$}
Recall that $\varphi_j$ is the solution of the Rayleigh type equation on interval $[\bar Y_1^*, \bar Y_2^*]$:
\begin{align}\label{eq: Rayleigh varphi_j}
\mathcal L_{cr}[\varphi_j]=\pa_Y\Big(\f{g_{j}}{1-\bM^2}\Big),
\end{align}
where we denote
\begin{align}\label{def: (f_j, g_j)}
f_j=\pa_Y\Big(\f{g_{j}}{1-\bM^2}\Big),\quad j\geq 0, \quad  g_{0}=-H_1[A],\quad g_j=-(H_1[\tilde P_{j-1,2}]+H_4[\varphi_{j-1}]),\quad   j\geq 1,
\end{align}
and we extend $\varphi_j=0$ for $Y\in \mathbb{R}_{+}/[\bar Y_1^*, \bar Y_2^*]$. Then $\|\varphi_j\|_{L^\infty_{w_0}}=\|\varphi_j\|_{L^\infty_{w_0}([\bar Y_1^*, \bar Y_2^*])}$. It is easy to verify that $\tilde P_{j,1}$ satisfies the following equation on $[\bar Y_1^*, \bar Y_2^*]$:
\begin{align*}
\mathcal{L}[\tilde P_{j,1}]=g_j,\quad \pa_Y\tilde P_{j,1}=\bM \varphi_j.
\end{align*}
By the definition of $P_{j,1}$ in \eqref{def: P_0}, we have 
\begin{align}\label{def: pa_YP_0,1 }
\begin{split}
\pa_YP_{0,1}=&\chi_2\pa_Y \tilde P_{j,1}=\chi_2\bM \varphi_0,\\
\pa_Y^2P_{0,1}=&\chi_2(2\pa_Y\bM \varphi_0+\al^2(1-\bM^2)\tilde P_{0,1}+g_0)+\pa_Y\chi_2\bM \varphi_0,
\end{split}
\end{align}
and for $j\geq1$,
\begin{align}\label{def: pa_YP_j,1 }
\begin{split}
\pa_YP_{j,1}=&\chi_2\pa_Y \tilde P_{j,1}-\pa_Y\chi_3\int_Y^{+\infty}\chi_2 \bM \varphi_jdZ,\\
\pa_Y^2P_{j,1}=&\chi_2(2\pa_Y\bM \varphi_j+\al^2(1-\bM^2)\tilde P_{j,1}+g_j)+\pa_Y\chi_2\bM \varphi_j-\pa_Y^2\chi_3\int_Y^{+\infty}\chi_2 \bM \varphi_jdZ.
\end{split}
\end{align}
\begin{lemma}\label{lem: Pj1}
Let $(\al,c)\in\mathbb{H}_0$. It holds that for $j\geq 0$,
\begin{align*}
\|P_{j,1}\|_{\mathcal X}\leq C\al^{-\f16}\mathcal{E}_{j-1},
\end{align*}
where we define $\mathcal{E}_{-1}=\al.$
\end{lemma}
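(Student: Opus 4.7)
The plan is to control each piece of $\|P_{j,1}\|_{\mathcal X}$ directly from the representations \eqref{def: pa_YP_0,1 }--\eqref{def: pa_YP_j,1 } together with Proposition \ref{pro: iteration-varphi}. The core observation is that, in both the $j=0$ and $j\geq 1$ cases, $P_{j,1}(Y)$ is (up to the outer cut-off $\chi_3$) equal to $-\int_Y^{+\infty}\chi_2\bar M\varphi_j\,dZ$, while $\partial_Y P_{j,1}$ and $\partial_Y^2P_{j,1}$ are given explicitly in the listed formulas. Since $\mathrm{supp}(\chi_2)\subset[Y_1^*-\delta_0,Y_2^*+\delta_0]\subset[Y_0,+\infty)$, on this interval $w_0(Y)=\int_{Y_0}^Y F_r^{1/2}(Z)\,dZ$ is non-negative and strictly increasing, and the change of variable $u=w_0(Z)$ yields
\begin{equation*}
\int_{\max(Y,\,Y_1^*-\delta_0)}^{Y_2^*+\delta_0}e^{-\alpha w_0(Z)}\,dZ\leq C\alpha^{-1}e^{-\alpha w_0(\max(Y,\,Y_1^*-\delta_0))}.
\end{equation*}
Combined with $|\bar M|\leq C$ on $\mathrm{supp}(\chi_2)$ and $|\varphi_j(Z)|\leq e^{-\alpha w_0(Z)}\|\varphi_j\|_{L^\infty_{w_0}}$, this immediately gives $\|P_{j,1}\|_{L^\infty_{w_0}}\leq C\alpha^{-1}\|\varphi_j\|_{L^\infty_{w_0}}$, so $\alpha\|P_{j,1}\|_{L^\infty_{w_0}}\leq C\|\varphi_j\|_{L^\infty_{w_0}}$.

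For the first derivative, formulas \eqref{def: pa_YP_0,1 }--\eqref{def: pa_YP_j,1 } give $\partial_YP_{j,1}=\chi_2\bar M\varphi_j$ plus (when $j\geq1$) a nonlocal term $-\partial_Y\chi_3\int_Y^{+\infty}\chi_2\bar M\varphi_j\,dZ$. The pointwise piece is bounded by $C\|\varphi_j\|_{L^\infty_{w_0}}$; the nonlocal piece, thanks to the spatial separation of $\mathrm{supp}(\partial_Y\chi_3)$ from $\mathrm{supp}(\chi_2)$ and the integration estimate above, is in fact smaller by a factor $\alpha^{-1}$. Hence $\|\partial_YP_{j,1}\|_{L^\infty_{w_0}}\leq C\|\varphi_j\|_{L^\infty_{w_0}}$.

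The second derivative is the delicate piece. Using \eqref{def: pa_YP_0,1 }--\eqref{def: pa_YP_j,1 } and absorbing the lower-order terms coming from $\partial_Y\chi_2$ and $\partial_Y^2\chi_3$, the principal contribution to $\partial_Y^2P_{j,1}$ on $\mathrm{supp}(\chi_2)$ is $\chi_2(2\partial_Y\bar M\,\varphi_j+\alpha^2(1-\bar M^2)\tilde P_{j,1}+g_j)$. The identity $\mathcal L[\tilde P_{j,1}]=g_j$ combined with $\partial_Y\tilde P_{j,1}=\bar M\varphi_j$ yields the algebraic relation
\begin{equation*}
\alpha^2(1-\bar M^2)\tilde P_{j,1}=-\partial_Y\bar M\,\varphi_j+\bar M\partial_Y\varphi_j-g_j,
\end{equation*}
so that the $\alpha^2\tilde P_{j,1}$ term reduces to $\chi_2\partial_Y(\bar M\varphi_j)-\chi_2g_j$. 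Therefore it suffices to control $\|\chi_2\bar M\partial_Y\varphi_j\|_{L^\infty_{w_0}}$, which by the Rayleigh equation \eqref{eq: Rayleigh varphi_j} and the fact that $\bar M$ tames the logarithmic singularity of $\partial_Y\varphi_j$ at $Y_c$ is bounded by $C\alpha\|\varphi_j\|_{L^\infty_{w_0}}$ plus $L^\infty_{w_0}$-norms of $f_j$ (hence of $g_j$). For the remaining $\chi_2 g_j$ contribution, $g_j=-H_1[\tilde P_{j-1,2}]-H_4[\varphi_{j-1}]$ (with the convention $\tilde P_{-1,2}=A$, $\varphi_{-1}=0$ for $j=0$); inspection of \eqref{def: H_1-H_4} using that $\chi_1$ vanishes in a neighborhood of $Y_c$ removes the singularity of $Q_1$ on the intersection of supports, and the Airy asymptotics of Section 3 together with Propositions \ref{pro: iteration-varphi}--\ref{pro: interation-Airy} yield $\|g_j\|_{L^\infty_{w_0}}\leq C\alpha^{5/6}\mathcal E_{j-1}$. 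Altogether,
\begin{equation*}
\alpha^{-1}\|\partial_Y^2P_{j,1}\|_{L^\infty_{w_0}}\leq C\|\varphi_j\|_{L^\infty_{w_0}}+C\alpha^{-1/6}\mathcal E_{j-1}.
\end{equation*}
Finally, Proposition \ref{pro: iteration-varphi} replaces $\|\varphi_j\|_{L^\infty_{w_0}}$ by $C\alpha^{-1/6}\mathcal E_{j-1}$ — valid uniformly in $j\geq0$ under the convention $\mathcal E_{-1}=\alpha$ — and the lemma follows.

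The main obstacle will be the $\alpha^2(1-\bar M^2)\tilde P_{j,1}$ term in $\partial_Y^2P_{j,1}$, where at face value two powers of $\alpha$ are lost: one factor is recovered via the algebraic trade that reexpresses $\alpha^2(1-\bar M^2)\tilde P_{j,1}$ in terms of $\bar M\partial_Y\varphi_j$ and $g_j$, and the other is absorbed by the $\alpha^{-1}$ weight in $\|\cdot\|_{\mathcal X}$. Without this cancellation the stated exponent $\alpha^{-1/6}$ could not be achieved, and the geometric nature of the iteration would break down.
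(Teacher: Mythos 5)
Your overall architecture is the paper's: bound $\al\|P_{j,1}\|_{L^\infty_{w_0}}$ and $\|\pa_YP_{j,1}\|_{L^\infty_{w_0}}$ by $C\|\varphi_j\|_{L^\infty_{w_0}}$ via the $\al^{-1}$ gain from integrating $e^{-\al w_0}$ on $\mathrm{supp}(\chi_2)$, control $\|\chi_2 g_j\|_{L^\infty_{w_0}}$ through the formulas \eqref{def: H_1-H_4} (Airy asymptotics for $j=0$, the $H_1$/$H_4$ bounds and Lemma \ref{lem: Im-int} for $j\geq1$), and close with Proposition \ref{pro: iteration-varphi}. The place where you deviate is exactly the term you single out as the "main obstacle," and there your argument has a gap. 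The paper handles $\al^2(1-\bM^2)\tilde P_{j,1}$ with no cancellation at all: since $\pa_Y\tilde P_{j,1}=\bM\varphi_j$ and the weight $e^{-\al w_0}$ is decreasing to the right of $Y_0$, one has $\|\tilde P_{j,1}\|_{L^\infty_{w_0}}\leq C\al^{-1}\|\varphi_j\|_{L^\infty_{w_0}}$, so $\al^{-1}\cdot\al^2\|\tilde P_{j,1}\|_{L^\infty_{w_0}}\leq C\al\cdot\al^{-1}\|\varphi_j\|_{L^\infty_{w_0}}\leq C\al^{-\f16}\mathcal{E}_{j-1}$. Your closing claim that "without this cancellation the stated exponent $\al^{-1/6}$ could not be achieved" is therefore false; the direct integration bound already does it.

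Your substitute route is, moreover, not self-contained. The identity $\al^2(1-\bM^2)\tilde P_{j,1}=-\pa_Y\bM\,\varphi_j+\bM\pa_Y\varphi_j-g_j$ is correct, but it is nothing more than a rearrangement of $\mathcal{L}[\tilde P_{j,1}]=g_j$ with $\pa_Y\tilde P_{j,1}=\bM\varphi_j$: estimating its right-hand side is literally equivalent to estimating its left-hand side, so you cannot use the identity itself to supply the needed bound. What you actually invoke is $\|\chi_2\bM\pa_Y\varphi_j\|_{L^\infty_{w_0}}\lesssim\al\|\varphi_j\|_{L^\infty_{w_0}}+\|f_j\|_{L^\infty_{w_0}}$, justified only by "the Rayleigh equation tames the logarithmic singularity." No such derivative estimate for $\varphi_j$ is proved in the paper — Propositions \ref{pro: rayleigh-varphi} and \ref{pro: iteration-varphi} give only weighted bounds on $\varphi_j$ and $\Im\varphi_j$ — and obtaining it honestly would require differentiating the representation formula \eqref{formula: varphi} and redoing the critical-layer analysis, i.e.\ a nontrivial extra argument you have not supplied. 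The fix is simple: drop the detour and bound $\tilde P_{j,1}$ directly as above, after which your estimates for $\|\chi_2 g_j\|_{L^\infty_{w_0}}$ and the application of Proposition \ref{pro: iteration-varphi} (with $\mathcal{E}_{-1}=\al$) complete the proof exactly as in the paper.
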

\begin{proof}
For $j=0$,
according to the definition of $P_{0,1}$ in \eqref{def: P_0} and  \eqref{def: pa_YP_0,1 } , we have
\begin{align*}
&\|P_{0,1}\|_{L^\infty_{w_0}}\leq C\|\varphi_0\|_{L^\infty_{w_0}}\int_{Y}^{+\infty}\chi_2|\bM| e^{-\al w_0(Z)} dZ\leq C\al^{-1}\|\varphi_0\|_{L^\infty_{w_0}},\\
&\| \pa_Y P_{0,1}\|_{L^\infty_{w_0}}\leq \|\chi_2\pa_Y \tilde P_{0,1}\|_{L^\infty_{w_0}}\leq C\|\varphi_0\|_{L^\infty_{w_0}},\\
&\|\pa_Y^2P_{0,1}\|_{L^\infty_{w_0}}\leq C\|\varphi_0\|_{L^\infty_{w_0}}+C\al^2\|\tilde P_{0,1}\|_{L^\infty_{w_0}}+C\|\chi_2 f_0\|_{L^\infty_{w_0}}+C\|\pa_Y\chi_2\bM \varphi_0\|_{L^\infty_{w_0}}\\
&\qquad \qquad\qquad \leq C\al \|\varphi_0\|_{L^\infty_{w_0}}+C\|\chi_2 g_0\|_{L^\infty_{w_0}}.
\end{align*}
For $j\geq1$, we use \eqref{def: P_0},  \eqref{def: pa_YP_j,1 } and the fact $|\int_Y^{+\infty}\chi_2 \bM \varphi_jdZ|\leq C\al^{-1}\|\varphi_j\|_{L^\infty_{w_0}}$ to imply
\begin{align*}
&\|P_{j,1}\|_{L^\infty_{w_0}}\leq  C\al^{-1}\|\varphi_j\|_{L^\infty_{w_0}},\\
&\| \pa_Y P_{j,1}\|_{L^\infty_{w_0}}\leq  C\|\varphi_j\|_{L^\infty_{w_0}}+C\al^{-1}\|\varphi_j\|_{L^\infty_{w_0}}\leq C\|\varphi_j\|_{L^\infty_{w_0}},\\
&\|\pa_Y^2P_{j,1}\|_{L^\infty_{w_0}}
\leq C\al \|\varphi_j\|_{L^\infty_{w_0}}+C\|\chi_2 g_j\|_{L^\infty_{w_0}}.
\end{align*}
Then we are left with the estimates of $\|\chi_2 g_j\|_{L^\infty_{w_0}}$. For $j=0$, we use $\chi_2 H_1[A]= H_1[A]$ with the definition in \eqref{def: H_1-H_4} to  get
\begin{align*}
\|\chi_2g_0\|_{L^\infty_{w_0}}=&\|H_1[A]\|_{L^\infty_{w_0}}
\leq C\al\|A\|_{L^\infty_{w_0}([Y_1^*, Y_2^*])}+C\|\pa_YA\|_{L^\infty_{w_0}([Y_1^*, Y_2^*])}\leq C\al^{\f56},
\end{align*}
where we have used  $|A(Y)|+\al^{-1}|\pa_Y A(Y)|\leq C\al^{-\f16}e^{-\al w_0(Y)}$ for $Y\in [Y_1^*, Y_2^*]$ in the last step. Then we get by Proposition \ref{pro: iteration-varphi} that
 \begin{align*}
\|P_{0,1}\|_{\mathcal X}\leq C\|\varphi_0\|_{L^\infty_{w_0}}+C\al^{-1}\|\chi_2g_0\|_{L^\infty_{w_0}}\leq C\|\varphi_0\|_{L^\infty_{w_0}}+C\al^{-\f16}\leq C\al^{\f56}.
 \end{align*}

For $j\geq1$, we have
\begin{align*}
\chi_2 g_j=-\chi_2(H_1[\tilde P_{j-1,2}]+H_4[\varphi_{j-1}])=-(H_1[\tilde P_{j-1,2}]+H_4[\varphi_{j-1}]).
\end{align*}
Notice that
\begin{align*}
\|H_1[\tilde P_{j-1,2}]\|_{L^\infty_{w_0}}\leq& \|\chi(Q_1+Q_2)\chi_1  \tilde P_{j-1,2}\|_{L^\infty_{w_0}}+\al^2\Big\|\chi F(Y)\int_Y^{+\infty}\partial_{Z}\chi_1(Z)\tilde P_{j-1,2}(Z)dZ\Big\|_{L^\infty_{w_0}}\\
&+\|\pa_Y \chi_1 \pa_YP_{j-1,2} \|_{L^\infty_{w_0}}\\
\leq& C\al\| \tilde P_{j-1,2}\|_{L^\infty_{w_0}([Y_1^*, Y_2^*])}+C\| \pa_Y\tilde P_{j-1,2}\|_{L^\infty_{w_0}([Y_1^*, Y_2^*])}\\
\leq& C\al^{-\f16}(\al\|\mathcal{A}_{j-1}\|_{L^\infty([0, Y_2^*])}+\al\|\mathcal{B}_{j-1}\|_{L^\infty_{2w_0}})\leq C\al^{-\f16}\mathcal{E}_{j-1},
\end{align*}
and by Lemma \ref{lem: Im-int}, we have
\begin{align*}
\|H_4[\varphi_{j-1}]\|_{L^\infty_{w_0}}\leq& \left\|\chi(1-\bar M^2)\mathcal{K}_{B}[\varphi_{j-1}]\right\|_{L^\infty_{w_0}}
\leq C\al^{-1}\| \varphi_{j-1}\|_{L^\infty_{w_0}}\leq C\al^{-\f16}\mathcal{E}_{j-1},
\end{align*}
which give
\begin{align*}
\|\chi_2 g_j\|_{L^\infty_{w_0}}\leq&C\al^{-\f16}\mathcal{E}_{j-1}.
\end{align*}
Thus, by Proposition \ref{pro: iteration-varphi}, we infer that for $j\geq 1$,
\begin{align*}
\|P_{j,1}\|_{\mathcal X}
\leq& C\|\varphi_j\|_{L^\infty_{w_0}}+C\al^{-1}\|\chi_2g_j\|_{L^\infty_{w_0}}\leq C\al^{-\f16}\mathcal{E}_{j-1}.
\end{align*}
\end{proof}

 \subsubsection{Estimates about $P_{j,2}$}
According to the definition of $P_{j,2}$, we have
\begin{align}\label{eq: P_(0,2)}
\begin{split}
&P_{0,2}=\chi_1 \tilde P_{0,2}+\int_Y^{+\infty}\pa_Z\chi_1\tilde P_{0,2}(Z)dZ+\bar\chi_1 \tilde P_{0,2}^{(2)},\\
&\pa_YP_{0,2}=\chi_1\pa_Y\tilde P_{0,2}+\bar\chi_1 \pa_Y\tilde P_{0,2}^{(2)}+\pa_Y\bar\chi_1 \tilde P_{0,2}^{(2)},\\
&\pa_Y^2P_{0,2}=\chi_1\left(2\f{\pa_Y\bM}{\bM}\pa_Y\tilde P_{0,2} +\al^2(1-\bM^2)\tilde P_{0,2}+h_0^{(1)}\right)+\pa_Y\chi_1\pa_Y\tilde P_{0,2}\\
&\qquad\qquad+\bar\chi_1\left(2\f{\pa_Y\bM}{\bM}\pa_Y\tilde P_{0,2}^{(2)} +\al^2(1-\bM^2)\tilde P_{j,2}^{(2)}+h_0^{(2)}\right)+2\pa_Y\bar\chi_1 \pa_Y\tilde P_{0,2}^{(2)}+\pa_Y^2\bar\chi_1\tilde P_{0,2}^{(2)} .
\end{split}
\end{align}
and
for $j\geq1$,
\begin{align}\label{eq: P_(j,2)}
\begin{split}
P_{j,2}=&\chi_1 \tilde P_{j,2}+\int_Y^{+\infty}\pa_Z\chi_1\tilde P_{j,2}(Z)dZ,\quad \pa_YP_{j,2}=\chi_1 
\pa_Y\tilde P_{j,2},\\
\pa_Y^2P_{j,2}=&\chi_1\left(2\f{\pa_Y\bM}{\bM}\pa_Y\tilde P_{j,2} +\al^2(1-\bM^2)\tilde P_{j,2}+h_j\right)+\pa_Y\chi_1\pa_Y \tilde P_{j,2},
\end{split}
\end{align}
where 
\begin{align}\label{def: (h_0^(1), h_0^(2))}
\begin{split}
&h_0^{(1)}=-H_2[A]-H_{31}[\varphi_0]-H_{32}^{(1)}[\varphi_0],\quad h_0^{(2)}=-H_{32}^{(2)}[\varphi_0],\\
& h_1=-H_2[\tilde P_{0,2}^{(1)}]-\tilde H_{21}[\tilde P_{0,2}^{(2)}]-\tilde H_3[\varphi_{1}],\quad h_j=-H_2[\tilde P_{j-1,2}]-\tilde H_3[\varphi_{j}],\quad j\geq2.
\end{split}
\end{align}

\begin{lemma}\label{lem: Pj2}
Let $(\al,c)\in\mathbb{H}_0$. It holds that for $j\geq 0$,
\begin{align*}
\|P_{j,2}\|_{\mathcal X}\leq C\al^{-\f16}\mathcal{E}_{j-1},
\end{align*}
where we define $\mathcal{E}_{-1}=\al.$

\end{lemma}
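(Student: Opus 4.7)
The plan is to estimate the three pieces of $\|P_{j,2}\|_{\mathcal X}$ separately, working directly from the explicit formulas \eqref{eq: P_(0,2)}--\eqref{eq: P_(j,2)} and the decomposition $\tilde P_{j,2}=\mathcal A_jA+\mathcal B_jB$ (and $\tilde P_{0,2}^{(2)}=\tilde{\mathcal A_0}A+\tilde{\mathcal B_0}B$) introduced in \eqref{eq:notation-tilde P02}--\eqref{eq:notation-tilde Pj2}. The building blocks are: (i) the Airy bounds $\|A\|_{L^\infty_{w_0}}+\alpha^{-1/3}\|\partial_YA\|_{L^\infty_{w_0}}\lesssim\alpha^{-1/6}$ together with the symmetric versions for $B$ against $e^{-\alpha w_0}$, all immediate from Lemma \ref{lem: Ai, Bi}; and (ii) the inductive bounds supplied by Propositions \ref{pro: iteration-varphi} and \ref{pro: interation-Airy}. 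Since $Y_c\in[Y_1+2\delta_0,Y_2-2\delta_0]$ the critical layer is absent from $\mathrm{supp}(\chi_1)$, so $|\partial_Y\bar M/\bar M|+|Q_1+Q_2|\lesssim1$ throughout the relevant region.

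\textbf{Zeroth and first order pieces.} Splitting $[0,\infty)=[0,Y_2^*]\cup[Y_2^*,\infty)$ and using the refined local bound $\|\mathcal A_j\|_{L^\infty([0,Y_2^*])}\le\alpha^{-2}\mathcal E_{j-1}$ on the first piece together with the global bound $\|\mathcal A_j\|_{L^\infty}\le\alpha^{-1}\mathcal E_{j-1}$ on the second (and similarly for $\mathcal B_j$ through the weight $e^{2\alpha w_0}$), I obtain $\|\tilde P_{j,2}\|_{L^\infty_{w_0}}\lesssim\alpha^{-7/6}\mathcal E_{j-1}$, whence $\alpha\|P_{j,2}\|_{L^\infty_{w_0}}\lesssim\alpha^{-1/6}\mathcal E_{j-1}$. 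The Wronskian identity $\partial_Y\mathcal A_j\cdot A+\partial_Y\mathcal B_j\cdot B=0$, immediate from \eqref{def: cA_f, cB_f}, reduces $\partial_Y\tilde P_{j,2}=\mathcal A_j\partial_YA+\mathcal B_j\partial_YB$ and yields $\|\partial_YP_{j,2}\|_{L^\infty_{w_0}}\lesssim\alpha^{-5/6}\mathcal E_{j-1}$, comfortably below the target.

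\textbf{Second derivative and the source $h_j$.} Substituting $\mathcal L_{app}[\tilde P_{j,2}]=h_j$ from \eqref{def: (h_0^(1), h_0^(2))} into \eqref{eq: P_(j,2)} (the difference $\tilde F-F=-\chi_0F_i$ produces only the harmless correction $(Q_1+Q_2)\tilde P_{j,2}$), the dominant contribution $\alpha^{-1}\|\chi_1\alpha^2(1-\bar M^2)\tilde P_{j,2}\|_{L^\infty_{w_0}}\lesssim\alpha\|\tilde P_{j,2}\|_{L^\infty_{w_0}}\lesssim\alpha^{-1/6}\mathcal E_{j-1}$ exactly saturates the target, while the transport term $\chi_1\,2(\partial_Y\bar M/\bar M)\partial_Y\tilde P_{j,2}$ and the $(Q_1+Q_2)$ correction are a factor $\alpha^{-1}$ smaller. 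It remains to establish $\|\chi_1 h_j\|_{L^\infty_{w_0}}\lesssim\alpha^{5/6}\mathcal E_{j-1}$. The local piece $H_{21}[\tilde P_{j-1,2}]=(1-\chi)(Q_1+Q_2)\chi_1\tilde P_{j-1,2}$ is bounded directly by $\|\tilde P_{j-1,2}\|_{L^\infty_{w_0}}\lesssim\alpha^{-1/6}\mathcal E_{j-1}$. For the nonlocal piece $H_{22}[\tilde P_{j-1,2}]$ carrying the dangerous $\alpha^2$ prefactor, I exploit the strict support separation $\mathrm{supp}(1-\chi)\cap\mathrm{supp}(\partial_Z\chi_1)=\emptyset$ from Remark \ref{rem: cut-off functions}: combined with the refined bound $\|\tilde P_{j-1,2}\|_{L^\infty_{w_0}([0,Y_2^*])}\lesssim\alpha^{-7/6}\mathcal E_{j-1}$, the integral produces the exponential factor $e^{-\alpha(w_0(Y_1)-w_0(Y))}\le e^{-c\alpha}$ that absorbs $\alpha^2$ with room to spare. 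Finally $H_{31}[\varphi_j]=2\partial_Y\chi_2\bar M\varphi_j$ is bounded by $\|\varphi_j\|_{L^\infty_{w_0}}\lesssim\alpha^{-1/6}\mathcal E_{j-1}$ via Proposition \ref{pro: iteration-varphi}, and $\tilde H_{32}[\varphi_j]$ gains the extra $\alpha^{-1}$ from the kernel estimate $\|\mathcal K_B[\varphi_j]\|_{L^\infty_{w_0}}\lesssim\alpha^{-1}\|\varphi_j\|_{L^\infty_{w_0}}$ (Lemma \ref{lem: Im-int}).

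\textbf{Reverse wave and obstacle.} For $j=0$ the extra term $\bar\chi_1\tilde P_{0,2}^{(2)}$ is supported in $[0,\tfrac32\bar Y]\subset[0,Y^-]$, inside the supersonic regime where $w_0\equiv0$; the dedicated estimate $\alpha e^{\alpha w_0(Y_1^*-\delta_0)}(\|\tilde{\mathcal A_0}\|_{L^\infty([0,Y^-])}+\|\tilde{\mathcal B_0}\|_{L^\infty([0,Y^-])})\le C$ from Proposition \ref{pro: interation-Airy}, combined with the supersonic pointwise bounds $|A|,|B|\lesssim\alpha^{-1/6}$ and $|\partial_YA|,|\partial_YB|\lesssim\alpha^{5/6}$, the Wronskian cancellation for $\partial_Y\tilde P_{0,2}^{(2)}$, and the equation \eqref{eq: tilde P_(0,2)^(2)} with $\|H_3^{sup}[\varphi_0]\|_{L^\infty}\lesssim\alpha^{5/6}e^{-\alpha w_0(Y_1^*)}$ for the second derivative, yields $\|\bar\chi_1\tilde P_{0,2}^{(2)}\|_{\mathcal X}\lesssim\alpha^{-1/6}e^{-\alpha w_0(Y_1^*-\delta_0)}$, exponentially smaller than the $j=0$ target $\alpha^{5/6}=\alpha^{-1/6}\mathcal E_{-1}$. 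The main obstacle is the $H_{22}$ estimate: neither the refined local bound on $\tilde P_{j-1,2}$ nor the geometric exponential gain alone defeats the $\alpha^2$ prefactor, so both must be invoked together with careful tracking of the supports listed in Remark \ref{rem: cut-off functions}.
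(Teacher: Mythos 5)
Your proposal is correct and follows essentially the same route as the paper's proof: it works from the explicit formulas for $P_{j,2}$, controls the zeroth- and first-order pieces through the $(\mathcal A_j,\mathcal B_j)$ bounds of Propositions \ref{pro: iteration-varphi}--\ref{pro: interation-Airy} together with the Airy asymptotics, recovers $\partial_Y^2 P_{j,2}$ from the equation with the source $h_j$ estimated term by term (using the support separation and exponential gain for the nonlocal $H_{22}$ piece), and treats the reverse wave $\bar\chi_1\tilde P_{0,2}^{(2)}$ via the dedicated $\tilde{\mathcal A}_0,\tilde{\mathcal B}_0$ estimate, exactly as the paper does. Two minor points that do not affect the conclusion: for $Y\ge Y_2^*$ only the global bound $\|\mathcal A_j\|_{L^\infty}\le\alpha^{-1}\mathcal E_{j-1}$ is available, so the first-derivative estimate is $\|\partial_Y P_{j,2}\|_{L^\infty_{w_0}}\lesssim\alpha^{-1/6}\mathcal E_{j-1}$ (at the target, not $\alpha^{-5/6}\mathcal E_{j-1}$), and for $j=1$ the source $h_1$ also contains $\tilde H_{21}[\tilde P_{0,2}^{(2)}]$, which is bounded by the same supersonic estimates on $\tilde P_{0,2}^{(2)}$ and $\partial_Y\tilde P_{0,2}^{(2)}$ that you already establish.
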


\begin{proof}
For $j=0$,  by  \eqref{eq: P_(0,2)} and Proposition \ref{pro: interation-Airy}, we get 
\begin{align*}
\|P_{0,2}\|_{L^\infty_{w_0}}\leq&C\|\chi_1 \tilde P_{0,2}\|_{L^\infty_{w_0}}+C\left\|\int_Y^{+\infty}\pa_Z\chi_1\tilde P_{0,2}(Z)dZ\right\|_{L^\infty_{w_0}}+\|\bar\chi_1 \tilde P_{0,2}^{(2)}\|_{L^\infty_{w_0}}\\
\leq&C(\|\mathcal{A}_{0}A\|_{L^\infty_{w_0}([0, Y_2^*])}+\|\mathcal{A}_{0}A\|_{L^\infty_{w_0}([Y_2^*,\infty))}+\|\mathcal{B}_{0}B\|_{L^\infty_{w_0}})\\
&+C(\|\tilde{\mathcal{A}_0} A\|_{L^\infty([0, \f32 \bar Y])}+\|\tilde{\mathcal{B}_0} B\|_{L^\infty([0, \f32 \bar Y])})\\
\leq&C(\|\mathcal{A}_{0}\|_{L^\infty_{w_0}([0, Y_2^*])}+\al^{-\f16}\|\mathcal{A}_{0}\|_{L^\infty_{w_0}}+\|\mathcal{B}_{0}\|_{L^\infty_{w_0}})\\
&+C\al^{-\f16}(\|\tilde{\mathcal{A}_0}\|_{L^\infty([0, Y^{-}])}+\|\tilde{\mathcal{B}_0}\|_{L^\infty([0, Y^{-}])})\leq C\al^{-\f16},
\end{align*}
and
\begin{align*}
\|\pa_YP_{0,2}\|_{L^\infty_{w_0}}\leq&C\|\chi_1 \pa_Y\tilde P_{0,2}\|_{L^\infty_{w_0}}+\|\bar\chi_1 \pa_Y\tilde P_{0,2}^{(2)}\|_{L^\infty_{w_0}}+\|\pa_Y\bar\chi_1 \tilde P_{0,2}^{(2)}\|_{L^\infty_{w_0}}\\
\leq&C\al(\|\mathcal{A}_{0}\|_{L^\infty_{w_0}([0, Y_2^*])}+\al^{-\f16}\|\mathcal{A}_{0}\|_{L^\infty_{w_0}}+\|\mathcal{B}_{0}\|_{L^\infty_{w_0}})\\&+C\al^{\f56}(\|\tilde{\mathcal{A}_0}\|_{L^\infty([0, Y^{-}])}+\|\tilde{\mathcal{B}_0}\|_{L^\infty([0, Y^{-}])})
\leq C\al^{\f56},
\end{align*}
and 
\begin{align*}
\|\pa_Y^2 P_{0,2}\|_{L^\infty_{w_0}}\leq& C(\|\chi_1\pa_Y\tilde P_{0,2}\|_{L^\infty_{w_0}}+\al^2\|\tilde P_{0,2}\|_{L^\infty_{w_0}}+\|\chi_1 g_0^{(1)}\|_{L^\infty_{w_0}})+C\|\pa_Y\chi_1\pa_Y \tilde P_{0,2}\|_{L^\infty_{w_0}}\\
&+ C(\|\bar\chi_1\pa_Y\tilde P_{0,2}^{(2)}\|_{L^\infty_{w_0}}+\al^2\|\bar\chi_1\tilde P_{0,2}^{(2)}\|_{L^\infty_{w_0}}+\|\bar\chi_1 f_0^{(2)}\|_{L^\infty_{w_0}})\\
&+C(\|\pa_Y\bar\chi_1\pa_Y \tilde P_{0,2}^{(2)}\|_{L^\infty_{w_0}}+\|\pa_Y^2\bar\chi_1 \tilde P_{0,2}^{(2)}\|_{L^\infty_{w_0}})\\
\leq&C\al^{\f{11}6}+C\|\chi_1 h_0^{(1)}\|_{L^\infty_{w_0}}+C\|\bar\chi_1 h_0^{(2)}\|_{L^\infty_{w_0}}.
\end{align*}

Using the similar argument as above and \eqref{eq: P_(j,2)}, we infer that for $j\geq1$,
\begin{align*}
&\|P_{j,2}\|_{L^\infty_{w_0}}\leq C\al^{-\f16}(\al\|\mathcal{A}_{j}\|_{L^\infty_{w_0}([0, Y_2^*])}+\|\mathcal{A}_{j}\|_{L^\infty_{w_0}}+\al\|\mathcal{B}_{j}\|_{L^\infty_{w_0}})
\leq C\al^{-\f76}\mathcal{E}_{j-1},\\
&\|\pa_YP_{j,2}\|_{L^\infty_{w_0}}\leq C\al^{\f56}(\al\|\mathcal{A}_{j}\|_{L^\infty_{w_0}([0, Y_2^*])}+\|\mathcal{A}_{j}\|_{L^\infty_{w_0}}+\al\|\mathcal{B}_{j}\|_{L^\infty_{w_0}})
\leq C\al^{-\f16}\mathcal{E}_{j-1},\\
&\|\pa_Y^2 P_{j,2}\|_{L^\infty_{w_0}}\leq C\al^{\f{11}{6}}(\al\|\mathcal{A}_{j}\|_{L^\infty_{w_0}([0, Y_2^*])}+\|\mathcal{A}_{j}\|_{L^\infty_{w_0}}+\al\|\mathcal{B}_{j}\|_{L^\infty_{w_0}})+C\|\chi_1 h_j\|_{L^\infty_{w_0}}\\
&\qquad\qquad\qquad\leq C\al^{\f{5}6}\mathcal{E}_{j-1}+C\|\chi_1 h_j\|_{L^\infty_{w_0}}.
\end{align*}

Next we focus on $\|\chi_1 h_0^{(1)}\|_{L^\infty_{w_0}}$, $\|\bar\chi_1 h_0^{(2)}\|_{L^\infty_{w_0}}$ and $\|\chi_1 h_j\|_{L^\infty_{w_0}}$ for $j\geq 1$, where $h_0^{(1)}$, $ h_0^{(2)}$ and $h_j~(j\geq 1)$ are given in \eqref{def: (h_0^(1), h_0^(2))}. For $j=0$, we infer from Proposition \ref{pro: iteration-varphi} that
\begin{align*}
\|H_2[A]\|_{L^\infty_{w_0}}\leq& C\|(1-\chi)(Q_1+Q_2)\chi_1 A\|_{L^\infty_{w_0}}+C\alpha^2\left\|F(Y)\int_Y^{+\infty}\partial_{Z}\chi_1(Z)A(Z)dZ\right\|_{L^\infty_{w_0}}\\
\leq& C+C\al^{\f56}\leq C\al^{\f56},\\
\|H_{31}[\varphi_0]\|_{L^\infty_{w_0}}\leq&C\|\partial_Y\chi_2\bar M\varphi_0\|_{L^\infty_{w_0}}\leq C\al^{\f56},\\
\|H_{32}^{(1)}[\varphi_0]\|_{L^\infty_{w_0}}\leq&C\|(1-\chi)(1-\bar\chi_1)(1-\bar M^2)\mathcal{K}_B[\varphi_0]\|_{L^\infty_{w_0}}\leq C\al^{-1}\|\varphi_0\|_{L^\infty_{w_0}}\leq C\al^{-\f16},\\
\|H_{32}^{(2)}[\varphi_0]\|_{L^\infty_{w_0}}\leq&C\|\bar\chi_1(1-\bar M^2)\mathcal{K}_B[\varphi_0]\|_{L^\infty_{w_0}}\leq C\al^{-1}\|\varphi_0\|_{L^\infty_{w_0}}\leq C\al^{-\f16},
\end{align*}
which imply 
\begin{align*}
\|\chi_1 h_0^{(1)}\|_{L^\infty_{w_0}}\leq C\al^\f56,\quad \|\bar\chi_1 h_0^{(2)}\|_{L^\infty_{w_0}}\leq C\al^{-\f16}.
\end{align*}
Thus, we obtain 
\begin{align*}
\|P_{0,2}\|_{\mathcal X}\leq C\al^\f56.
\end{align*}

For $j\geq1$, we get by Proposition \ref{pro: iteration-varphi}  that
\begin{align*}
\|H_2[\tilde P_{j-1,2}]\|_{L^\infty_{w_0}}\leq&C\|(1-\chi)(Q_1+Q_2)\chi_1 \tilde P_{j-1,2}\|_{L^\infty_{w_0}}\\&+C\alpha^2\|F(Y)\int_Y^{+\infty}\partial_{Z}\chi_1(Z)\tilde P_{j-1,2}(Z)dZ\|_{L^\infty_{w_0}}\\
\leq&C(\al^{\f56}\|\mathcal{A}_{j-1}\|_{L^\infty([0, Y_2^*])}+\al^{-\f16}\|\mathcal{A}_{j-1}\|_{L^\infty}+\al^{\f56}\|\mathcal{B}_{j-1}\|_{L^\infty_{2w_0}})
\leq C\al^{-\f16}\mathcal{E}_{j-1},
\end{align*}
and
\begin{align*}
\|\tilde H_3[\varphi_j]\|_{L^\infty_{w_0}}\leq&C\|\partial_Y\chi_2\bar M\varphi_j\|_{L^\infty_{w_0}}+C\|\mathcal{K}_B[\varphi_j]\|_{L^\infty_{w_0}}
\leq C\|\varphi_j\|_{L^\infty_{w_0}}\leq C\al^{-\f16}\mathcal{E}_{j-1},
\end{align*}
and 
\begin{align*}
\|\tilde H_{21}[\tilde P_{0,2}^{(2)}]\|_{L^\infty_{w_0}}\leq& C(\|\tilde P_{0,2}^{(2)}\|_{L^\infty([0, \f32 \bar Y])}+\|\pa_Y\tilde P_{0,2}^{(2)}\|_{L^\infty([0, \f32 \bar Y])})\\
\leq& C\al^{\f56}(\|\tilde{\mathcal{A}_0}\|_{L^\infty([0, Y^{-}])}+\|\tilde{\mathcal{B}_0}\|_{L^\infty([0, Y^{-}])})\leq C\al^{-\f16} \mathcal{E}_{0},
\end{align*}
which imply  
\begin{align*}
&\|\chi_1 h_1\|_{L^\infty_{w_0}}\leq C\al^{-\f16}\mathcal{E}_{0},\quad 
\|\chi_1 h_j\|_{L^\infty_{w_0}}\leq C\al^{-\f16}\mathcal{E}_{j-1},\quad j\geq2.
\end{align*}
Thus, we conclude that  for $j\geq 1$,
\begin{align*}
\|P_{j,2}\|_{\mathcal X}\leq& C\al^{-\f16}\mathcal{E}_{j-1}+C\al^{-1}\|\chi_1 h_j\|_{L^\infty_{w_0}}
\leq C\al^{-\f16}\mathcal{E}_{j-1}.
\end{align*}

\end{proof}

\subsubsection{Convergence of the iteration}
In this part, we give the proof of the convergence of  the series $\sum_{j=0}^\infty P^{(j)}$ in $\mathcal X$. To achieve this, we need the following lemma related to the iteration scheme.
\begin{lemma}\label{lem: cE_j}
	Let $(\al,c)\in\mathbb{H}_0$,  then we have for $j\geq 0$,
	\begin{align*}
	\mathcal{E}_j\leq C\al^{-1}\mathcal{E}_{j-1}\leq C\al^{-j},
	\end{align*}
	where we define $\mathcal{E}_{-1}=\al$.
\end{lemma}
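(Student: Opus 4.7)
The plan is to prove the estimate by strong induction on $j$. The first inequality $\mathcal{E}_j\leq C\al^{-1}\mathcal{E}_{j-1}$ is the essential gain and the chain $\mathcal{E}_j\leq C\al^{-j}$ follows by iterating it, using the convention $\mathcal{E}_{-1}=\al$. Throughout, I would just track each summand in the definition of $\mathcal{E}_j$ and match it against the output of Propositions \ref{pro: iteration-varphi} and \ref{pro: interation-Airy}; no new analysis is required.

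For the base case $j=0$, Proposition \ref{pro: interation-Airy} bounds the four Airy-side contributions
$\|\mathcal{A}_0\|_{L^\infty}+\al\|\mathcal{A}_0\|_{L^\infty([0,Y_2^*])}+\al\|\mathcal{B}_0\|_{L^\infty_{2w_0}}+\al e^{\al w_0(Y_1^*-\d_0)}(\|\tilde{\mathcal{A}}_0\|_{L^\infty([0,Y^-])}+\|\tilde{\mathcal{B}}_0\|_{L^\infty([0,Y^-])})$
by $C$. Proposition \ref{pro: iteration-varphi} gives $\|\varphi_0\|_{L^\infty_{w_0}}\leq C\al^{5/6}$ and the localized weighted piece is controlled by the same proposition's second summand; multiplying by the prefactors $\al^{-5/6}$ and $\al^{1/6}$ that appear in the definition of $\mathcal{E}_0$ each yields a contribution of size $C$. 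Summing gives $\mathcal{E}_0\leq C=C\al^{-1}\cdot\al=C\al^{-1}\mathcal{E}_{-1}$, establishing the base case.

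For the inductive step, assume the estimate holds up to $j-1$, so in particular $\mathcal{E}_{j-1}\leq C\al^{-(j-1)}$. Proposition \ref{pro: interation-Airy} for $j\geq 1$ gives immediately
\begin{align*}
\|\mathcal{A}_j\|_{L^\infty}+\al\|\mathcal{A}_j\|_{L^\infty([0,Y_2^*])}+\al\|\mathcal{B}_j\|_{L^\infty_{2w_0}}\leq \al^{-1}\mathcal{E}_{j-1},
\end{align*}
which is exactly the Airy-side of $\mathcal{E}_j$. Proposition \ref{pro: iteration-varphi} yields $\|\varphi_j\|_{L^\infty_{w_0}}\leq C\al^{-1/6}\mathcal{E}_{j-1}$ and the corresponding estimate on $[\bar Y_1^*,Y_1^*]$; multiplying these by the prefactors $\al^{-5/6}$ and $\al^{1/6}$ built into the definition of $\mathcal{E}_j$ produces two contributions each bounded by $C\al^{-1}\mathcal{E}_{j-1}$. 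Adding everything up gives $\mathcal{E}_j\leq C\al^{-1}\mathcal{E}_{j-1}$, and iterating closes the induction.

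The reason there is no real obstacle is structural: the weights defining $\mathcal{E}_j$ were chosen precisely so that the $\al^{-1/6}$ loss in the Rayleigh estimates of Proposition \ref{pro: iteration-varphi} and the $\al^{-1}$ gain in the Airy estimates of Proposition \ref{pro: interation-Airy} are converted to a uniform geometric factor $\al^{-1}$ across every term. In this sense Lemma \ref{lem: cE_j} is a bookkeeping statement consolidating the two previous propositions into the single contraction rate needed to deduce (via Lemmas \ref{lem: Pj1} and \ref{lem: Pj2}) that $\sum_{j\geq 0}\|P^{(j)}\|_\mathcal{X}$ converges, thereby completing the proof of Proposition \ref{pro: exist}.
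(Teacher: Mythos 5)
Your proposal is correct and is essentially the paper's own argument: Lemma \ref{lem: cE_j} is proved there by directly invoking Propositions \ref{pro: iteration-varphi} and \ref{pro: interation-Airy} to bound each group of terms in $\mathcal{E}_j$ by $C\al^{-1}\mathcal{E}_{j-1}$ and then iterating down to $\mathcal{E}_{-1}=\al$. The only cosmetic difference is that you frame it as a strong induction, whereas no inductive hypothesis is actually needed since the two propositions already produce the contraction $\mathcal{E}_j\leq C\al^{-1}\mathcal{E}_{j-1}$ term by term.
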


\begin{proof}
	
	By Proposition \ref{pro: iteration-varphi} and Proposition \ref{pro: interation-Airy}, we have $\mathcal{E}_0\leq C$, 
	and for $j\geq 1$,
	\begin{align*}
	&\|\mathcal{A}_j\|_{L^\infty}+\al\|\mathcal{A}_{j}\|_{L^\infty([0, Y_2^*])}+\al\|\mathcal{B}_{j}\|_{L^\infty_{2w_0}}\leq C\al^{-1}\mathcal{E}_{j-1},\\
	&\al^{-\f56}\|\varphi_j\|_{L^\infty_{w_0}}+\al^{\f16}e^{2\al \int_{Y_1^*}^{Y_1-2\d_0}F_r^\f12(Z) dZ}\|\varphi_0\|_{L^\infty_{w_0}([\bar Y_1^*, Y_1^*])}\leq C\al^{-1}\mathcal{E}_{j-1}.
	\end{align*}
	Hence, for $j\geq 0$, we have
	\begin{align*}
	\mathcal{E}_{j}\leq C\al^{-1}\mathcal{E}_{j-1}\leq C\al^{-j}.
	\end{align*}
	\end{proof}
	
Now we are in a position to prove Proposition \ref{pro: exist}.

\begin{proof}[Proof of Proposition \ref{pro: exist} ]
	Recall that \begin{align*}
	P^{(0)}(Y)=P_{0,0}(Y)+P_{0,1}(Y)+P_{0,2}(Y),
\end{align*}
and it is easy to get
$\|P_{0,0}\|_{\mathcal X}\leq C \alpha^{\f56}.$
This along with Lemma  \ref{lem: Pj1} and  Lemma  \ref{lem: Pj2} deduces that for $j=0$,
\begin{align*}
	\|P^{(0)}\|_{\mathcal X}\leq C \alpha^{\f56}+C \al^{-\f16}\mathcal{E}_{-1}\leq C\alpha^{\f56}.
\end{align*}
For $j\geq 1$, recall that $P^{(j)}=P_{j,1}+P_{j,2}$, which along with Lemma  \ref{lem: Pj1}-\ref{lem: cE_j} implies 
\begin{align*}
	\|P^{(j)}\|_{\mathcal X}\leq C \al^{-\f16}\mathcal{E}_{j-1}\leq C\alpha^{-j+\f56}.
\end{align*}	
Therefore, we have 
\begin{align*}
	\|\sum_{j=0}^\infty P^{(j)}\|_{\mathcal X}\leq \sum_{j=0}^{\infty}\|P^{(j)}\|_{\mathcal X}\leq C\alpha^\f56+\sum_{j=1}^\infty\alpha^{-j+\f56}\leq C\alpha^\f56.
\end{align*}
 By the definition of $P^{(j)}(Y)$, we know that 
\begin{align*}
	\partial_YP^{(0)}(0)=\partial_YA(0)+\partial_Y\tilde P_{0,2}^{(1)}(0)+\partial_Y\tilde P_{0,2}^{(2)}(0),\quad \partial_Y P^{(j)}(0)=\partial_Y \tilde P_{j,2}(0),
\end{align*}
which implies 
\begin{align*}
		\partial_Y P(0)=\partial_Y A(0)+\partial_Y\tilde P_{0,2}^{(1)}(0)+\partial_Y\tilde P_{0,2}^{(2)}(0)+ \sum_{j=1}^{\infty}\partial_Y\tilde P_{j,2}(0).
	\end{align*}

\end{proof}

\subsection{Proof of Proposition \ref{pro: iteration-varphi}}

Since $\varphi_j,~j\geq 0$ satisfies Rayleigh type equation \eqref{eq: Rayleigh varphi_j}, we finish the proof of Proposition \ref{pro: iteration-varphi} by using Proposition \ref{pro: rayleigh-varphi} and Lemma \ref{lem: rayleigh-source term} directly.

\begin{lemma}\label{lem: rayleigh-source term}
For $(\al, c)\in  \mathbb H_0$, it holds that
\begin{align*}
&\left\|f_0\right\|_{L^\infty_{w_0}}\leq C\al^{\f{11}{6}},\quad \left\|f_j\right\|_{L^\infty_{w_0}}\leq C\al^{\f56}\mathcal{E}_{j-1},\quad j\geq 1,
\end{align*}
with
$ \mathrm{supp}(f_j)\subset  [Y_1-2\d_0, Y_1+\d_0]\cup[ Y_2-\d_0,Y_2+2\d_0]$ for $ j\geq 0.
$
Here $f_j$ is given in \eqref{def: (f_j, g_j)}.

\end{lemma}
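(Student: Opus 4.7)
The plan is to verify the two claims (support localization and the $L^\infty_{w_0}$ bound) by expanding $f_j=\pa_Y(g_j/F)$ with $F=1-\bM^2$, exploiting the non-overlapping structure of the cut-off functions recorded in Remark \ref{rem: cut-off functions} to track which terms survive, and then estimating each surviving term term-by-term using the decay profile of the Airy solutions $A,B$ from Lemma \ref{lem: Ai, Bi} on the support, where $|F|$ is bounded away from $0$.

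\textbf{Support.} For the $H_4$ piece, since $F=1-\bM^2$ one has $H_4[\varphi]/F=\chi\,\cK_B[\varphi]$, so
\beno
\pa_Y(H_4[\varphi]/F)=\pa_Y\chi\cdot \cK_B[\varphi]+\chi\cdot \pa_Y\cK_B[\varphi].
\eeno
The second term vanishes identically: $\pa_Y\cK_B[\varphi]$ is supported in $\mathrm{supp}(\pa_Y\chi_2)\subset[Y_1^*-\d_0,Y_1^*]\cup[Y_2^*,Y_2^*+\d_0]$, while $\chi\equiv 0$ there because $Y_1^*<Y_1-3\d_0$ and $Y_2^*>Y_2+3\d_0$. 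The first term is supported in $\mathrm{supp}(\pa_Y\chi)\subset[Y_1-2\d_0,Y_1-\d_0]\cup[Y_2+\d_0,Y_2+2\d_0]$. For the $H_1$ piece, the term $\chi(Q_1+Q_2)\chi_1\tilde P/F$ sits in $\mathrm{supp}(\chi\chi_1)=[Y_1-2\d_0,Y_1+\d_0]\cup[Y_2-\d_0,Y_2+2\d_0]$; the integral term $-\al^2\chi\int_Y^{+\infty}\pa_Z\chi_1\tilde P\,dZ$ (where the $F$'s cancel) produces after $\pa_Y$ one contribution at $\mathrm{supp}(\pa_Y\chi)$ and one equal to $\al^2\chi\pa_Y\chi_1\tilde P$, supported in $\mathrm{supp}(\pa_Y\chi_1)=[Y_1,Y_1+\d_0]\cup[Y_2-\d_0,Y_2]$; finally $\pa_Y\chi_1\pa_Y\tilde P/F$ and its derivative stay in $\mathrm{supp}(\pa_Y\chi_1)$. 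All pieces lie in $[Y_1-2\d_0,Y_1+\d_0]\cup[Y_2-\d_0,Y_2+2\d_0]$.

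\textbf{Norm bound.} On this support one is in the subsonic regime, away from $Y_0$ and from $Y_c$, so $|F|\gtrsim 1$, $|Q_1|+|Q_2|\lesssim 1$, $|\pa_Y\bM/\bM|\lesssim 1$, and by Lemma \ref{lem: Ai, Bi} and Lemma \ref{lem:est-eta} one has the pointwise Airy bounds $|\pa_Y^k A|+|e^{2\al w_0}\pa_Y^k B|\lesssim \al^{k-1/6}e^{-\al w_0}$ for $k=0,1,2$. For $j=0$, the dominant contribution to $f_0$ comes from $\pa_Y(\pa_Y\chi_1\pa_Y A/F)$, specifically the piece $\pa_Y\chi_1\,\pa_Y^2 A/F$, giving $\|f_0\|_{L^\infty_{w_0}}\lesssim\al^{11/6}$. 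For $j\geq 1$, writing $\tilde P_{j-1,2}=\cA_{j-1}A+\cB_{j-1}B$ and using $\cL_{app}[\tilde P_{j-1,2}]=\text{source}$ from \eqref{eq: tilde P_(j,2)} to re-express $\pa_Y^2\tilde P_{j-1,2}$ algebraically, the analogous pointwise estimate $e^{\al w_0}|\pa_Y^k\tilde P_{j-1,2}|\lesssim\al^{k-1/6}(\|\cA_{j-1}\|_{L^\infty([0,Y_2^*])}+\|\cB_{j-1}\|_{L^\infty_{2w_0}})$ holds on $\mathrm{supp}(f_j)\subset[0,Y_2^*]$, yielding
\beno
\|\pa_Y(H_1[\tilde P_{j-1,2}]/F)\|_{L^\infty_{w_0}}\lesssim \al^{11/6}\bigl(\|\cA_{j-1}\|_{L^\infty([0,Y_2^*])}+\|\cB_{j-1}\|_{L^\infty_{2w_0}}\bigr)\leq \al^{5/6}\cE_{j-1},
\eeno
where the last inequality uses $\cE_{j-1}\geq \al\|\cA_{j-1}\|_{L^\infty([0,Y_2^*])}+\al\|\cB_{j-1}\|_{L^\infty_{2w_0}}$. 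For the $H_4$ contribution, on $\mathrm{supp}(\pa_Y\chi)$ one has $Y_1^*<Y<Y_2^*$, so only the $[Y_2^*,Y_2^*+\d_0]$ piece of the kernel contributes to $\cK_B[\varphi_{j-1}](Y)$, giving $|\cK_B[\varphi_{j-1}](Y)|\lesssim\al^{-1}e^{-\al w_0(Y_2^*)}\|\varphi_{j-1}\|_{L^\infty_{w_0}}$, and hence $\|\pa_Y\chi\cdot\cK_B[\varphi_{j-1}]\|_{L^\infty_{w_0}}\lesssim\al^{-1}\|\varphi_{j-1}\|_{L^\infty_{w_0}}\lesssim\al^{-1/6}\cE_{j-1}$, which is absorbed into $\al^{5/6}\cE_{j-1}$.

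\textbf{Main obstacle.} The proof is essentially bookkeeping rather than conceptually hard; the delicate points are (i) verifying that the integral term in $H_1$, whose support a priori extends across $[Y_1+\d_0,Y_2-\d_0]$, produces after differentiation only contributions supported in the narrow strips near the edges of $\mathrm{supp}(\pa_Y\chi_1)$ and $\mathrm{supp}(\pa_Y\chi)$, and (ii) absorbing the worst $\al^{11/6}$ scaling coming from $\pa_Y^2\tilde P_{j-1,2}$ into $\al^{5/6}\cE_{j-1}$ by critically exploiting that $\mathrm{supp}(f_j)\subset[0,Y_2^*]$, so the finer norm $\|\cA_{j-1}\|_{L^\infty([0,Y_2^*])}$ (carrying the $\al$ weight in $\cE_{j-1}$) suffices in place of the global $\|\cA_{j-1}\|_{L^\infty}$.
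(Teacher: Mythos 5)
Your proposal is correct and follows essentially the same route as the paper: expand $f_j=\pa_Y\big(g_j/(1-\bM^2)\big)$, use the disjointness of the cut-offs (in particular $\chi\,\pa_Y\chi_2=0$ and $\pa_Y\chi_1(1-\chi)=0$) to localize the support in $[Y_1-2\d_0,Y_1+\d_0]\cup[Y_2-\d_0,Y_2+2\d_0]$ and to express $\pa_Y^2\tilde P_{j-1,2}$ through the (source-free, on $\mathrm{supp}\,\pa_Y\chi_1$) approximate equation, and then combine the Airy decay $A\sim\al^{-\f16}e^{-\al w_0}$, $\pa_YA\sim\al^{\f56}e^{-\al w_0}$ with the restricted norm $\al\|\cA_{j-1}\|_{L^\infty([0,Y_2^*])}\le\cE_{j-1}$ and the constancy of $\mathcal K_B[\varphi_{j-1}]$ on $\mathrm{supp}\,\pa_Y\chi$, exactly as in the paper's proof. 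Only cosmetic points: the weight in your Airy bound for $B$ should read $e^{-2\al w_0}\pa_Y^kB$ rather than $e^{2\al w_0}\pa_Y^kB$, and the $k=2$ pointwise bound on $\pa_Y^2\tilde P_{j-1,2}$ is only needed (and is indeed source-free, by the same cut-off identities the paper records) on $\mathrm{supp}\,\pa_Y\chi_1$.
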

\begin{proof}
We first estimate $f_0$. Recalling the definition of $H_1(A)$, a direct calculation gives 
\begin{align*}
f_0
=&\pa_Y\chi_1\left(\al^2 A+\f{\pa_Y^2A}{1-\bM^2}\right)+\pa_Y\left(\f{\pa_Y \chi_1}{1-\bM^2}\right)\pa_Y A+\pa_Y\left(\f{\chi \chi_1 (Q_1+Q_2)A}{1-\bM^2}\right)\\
&+\al^2 \pa_Y\chi \int_Y^{+\infty}\pa_Z \chi_1 AdZ.
\end{align*}
According to  $\mathcal{L}[A]=(Q_1+Q_2)A$, we get
\begin{align*}
\pa_Y^2 A=2\f{\pa_Y\bM}{\bM}\pa_Y A+\al^2(1-\bM^2)A+(Q_1+Q_2)A,
\end{align*}
which gives 
\begin{align}\label{eq: f_0}
\begin{split}
f_0=&2\al^2\pa_Y\chi_1 A+\f{\pa_Y\chi_1}{1-\bM^2}\left(\f{2\pa_Y\bM}{\bM}\pa_Y A+(Q_1+Q_2)A\right)\\
&+\pa_Y\left(\f{\pa_Y \chi_1}{1-\bM^2}\right)\pa_Y A+\pa_Y\left(\f{\chi \chi_1 (Q_1+Q_2)A}{1-\bM^2}\right)+\al^2 \pa_Y\chi \int_Y^{+\infty}\pa_Z \chi_1 AdZ.
\end{split}
\end{align}
By Lemma \ref{lem: Q_1}  and $A(Y)\sim \al^{-\f16} e^{-\al w_0(Y)},~\pa_Y A(Y)\sim \al^{\f56} e^{-\al w_0(Y)}$ for $Y\in [Y_1-2\d_0,Y_2+2\d_0],$ and the facts that 
\begin{align}
&|(1+Y)^2Q_2|\leq C\al^2|F_i|\leq C\al^2 c_i,\label{est: Q_2}\\
\nonumber
&\Big\| \pa_Y\chi \int_Y^{+\infty}\pa_Z \chi_1 AdZ\Big\|_{L^\infty_{w_0}}\leq C\al^{-\f16}e^{-\al \int_{Y_1-\d_0}^{Y_1} F_r^\f12(Y) dY},
\end{align}
we infer that
\begin{align*}
\left\|f_0\right\|_{L^\infty_{w_0}}\leq& C\al^{-\f16}(\al^2+\al^2 c_i+\al^2 e^{-\al \int_{Y_1-\d_0}^{Y_1} F_r^\f12(Y) dY})\leq C\al^{\f{11}{6}}.
\end{align*}

Recalling the definition of $H_4(\varphi_{j-1})$ for $j\geq 1$, we have
\begin{align*}
\pa_Y\Big(\f{H_4(\varphi_{j-1})}{1-\bM^2}\Big)=&\pa_Y\Big(\chi\mathcal{K}_{B}[\varphi_{j-1}]\Big)
=\pa_Y\chi ~\mathcal{K}_{B}[\varphi_{j-1}](Y)=\pa_Y\chi~\mathcal{K_{B}}[\varphi_{j-1}](Y_2^*),
\end{align*}
by using $\chi\pa_Y \chi_2=0$ and $ \mathrm{supp}(\pa_Y\chi)\subset[Y_1-2\d_0, Y_1-\d_0]\cup[Y_2+\d_0, Y_2+2\d_0]$. 
Using a similar argument of $f_0$, we get for $j\geq 1$,
\begin{align*}
f_j
=&\pa_Y\chi_1\Big(\al^2 \tilde P_{j-1,2}+\f{\pa_Y^2\tilde P_{j-1,2}}{1-\bM^2}\Big)+\pa_Y\Big(\f{\pa_Y \chi_1}{1-\bM^2}\Big)\pa_Y \tilde P_{j-1,2}\\
&+\pa_Y\Big(\f{\chi \chi_1 (Q_1+Q_2)\tilde P_{j-1,2}}{1-\bM^2}\Big)+\al^2 \pa_Y\chi \int_Y^{+\infty}\pa_Z \chi_1 \tilde P_{j-1,2}dZ+\pa_Y\chi~\mathcal{K_{B}}[\varphi_{j-1}](Y_2^*).
\end{align*}
By the facts $\pa_Y\chi_1 (1-\chi)=0$ and $\pa_Y\chi_1\pa_Y\chi_2=0$, we have $\pa_Y \chi_1H_2[\cdot]=\pa_Y \chi_1H_{31}[\cdot]=\pa_Y \chi_1H_{32}^{(1)}[\cdot]=\pa_Y \chi_1\tilde H_{32}[\cdot]=\pa_Y \chi_1\tilde H_{21}[\cdot]=0,$ which combines the equations of $\tilde P_{0,2}:=\tilde P_{0,2}^{(1)}$ and $\tilde P_{j,2} (j\geq 1)$ in \eqref{eq: tilde P_(0,2)^(1)} and \eqref{eq: tilde P_(j,2)} to get
\begin{align*}
\pa_Y\chi_1\pa_Y^2\tilde P_{j-1,2}=\pa_Y\chi_1\Big(\f{2\pa_Y\bM}{\bM}\pa_Y\tilde P_{j-1,2}+\al^2(1-\bM^2)\tilde P_{j-1,2}+(Q_1+Q_2)\tilde P_{j-1,2}\Big).
\end{align*}
Hence, we deduce
{\small
\begin{align}\label{eq: f_j}
\begin{split}
f_j=&2\al^2\pa_Y\chi_1 \tilde P_{j-1,2}+\f{\pa_Y\chi_1}{1-\bM^2}\Big(\f{2\pa_Y\bM}{\bM}\pa_Y \tilde P_{j-1,2}+(Q_1+Q_2)\tilde P_{j-1,2}\Big)+\pa_Y\chi~\mathcal{K_{B}}[\varphi_{j-1}](Y_2^*)\\
&+\pa_Y\Big(\f{\pa_Y \chi_1}{1-\bM^2}\Big)\pa_Y \tilde P_{j-1,2}+\pa_Y\Big(\f{\chi \chi_1 (Q_1+Q_2)\tilde P_{j-1,2}}{1-\bM^2}\Big)+\al^2 \pa_Y\chi \int_Y^{+\infty}\pa_Z \chi_1 \tilde P_{j-1,2}dZ.
\end{split}
\end{align}}
For $Y\in [Y_1-2\d_0,Y_2+2\d_0]\subset[Y_1^*, Y_2^*]$ and $\tilde P_{j,2}=\mathcal{A}_jA+\mathcal{B}_j B$, we have
\begin{align*}
&|\tilde P_{j-1,2}|\leq |\mathcal{A}_{j-1} | |A|+|\mathcal{B}_{j-1} | |B|\leq C\al^{-\f16}e^{-\al w_0(Y)}(|\mathcal{A}_{j-1}|+e^{2\al w_0(Y)}|\mathcal{B}_{j-1}|),\\
&|\pa_Y\tilde P_{j-1,2}|\leq |\mathcal{A}_{j-1} | |\pa_YA|+|\mathcal{B}_{j-1} | |\pa_YB|\leq C\al^{\f56}e^{-\al  w_0(Y)}(|\mathcal{A}_{j-1}|+e^{2\al w_0(Y)}|\mathcal{B}_{j-1}|),
\end{align*}
which along with $\|\pa_Y\chi~\mathcal{K_{B}}[\varphi_{j-1}](Y_2^*)\|_{L^\infty_{w_0}}\leq C\al^{-1}\|\varphi_{j-1}\|_{L^\infty_{w_0}}$ show 
\begin{align*}
\left\|f_j\right\|_{L^\infty_{w_0}}\leq&C\al^{\f{11}{6}}\left(\|\mathcal{A}_{j-1}\|_{L^\infty([Y_1^*, Y_2^*])}+\|\mathcal{B}_{j-1}\|_{L^\infty_{2w_0}([Y_1^*, Y_2^*])} \right)+\al^{-1}\|\varphi_{j-1}\|_{L^\infty_{w_0}}\leq C\al^{\f56}\mathcal{E}_{j-1}.
\end{align*}

From \eqref{eq: f_0} and \eqref{eq: f_j} and the definition of $\chi,~\chi_1$ in Remark \ref{rem: cut-off functions}, we infer that
\begin{align*}
& \mathrm{Supp}(f_j)\subset [Y_1-2\d_0, Y_1+\d_0]\cup[ Y_2-\d_0,Y_2+2\d_0],\quad j\geq 0.
\end{align*}
\end{proof}

\subsection{Proof of Proposition \ref{pro: interation-Airy} }

Let's first  introduce the following notations. 
\begin{align}\label{eq:notation-A0-B0}
\begin{split}
&\mathcal{A}_0=\sum_{k=1}^4\mathcal{A}_0^k=-\mathcal{A}_{H_{21}[A]}-\mathcal{A}_{H_{22}[A]}-\mathcal{A}_{H_{31}[\varphi_0]}-\mathcal{A}_{H_{32}^{(1)}[\varphi_0]},\quad \tilde{\mathcal{A}_0}=-\tilde{\mathcal{A}}_{H_{32}^{(2)}[\varphi_0]},\\
& \mathcal{B}_0=\sum_{k=1}^4\mathcal{B}_0^k=-\mathcal{B}_{H_{21}[A]}-\mathcal{B}_{H_{22}[A]}-\mathcal{B}_{H_{31}[\varphi_0]}-\mathcal{B}_{H_{32}^{(1)}[\varphi_0]},\quad \tilde{\mathcal{B}_0}=-\tilde{\mathcal{B}}_{H_{32}^{(2)}[\varphi_0]}.
\end{split}
\end{align}
and for $j\geq 1$,
\begin{align}\label{eq:notation-A1-B1}
\begin{split}
&\mathcal{A}_j=\sum_{k=1}^5 \mathcal{A}_j^k=-\mathcal{A}_{H_{21}[\tilde P_{j-1,2}]}-\mathcal{A}_{H_{22}[\tilde P_{j-1,2}]}-\mathcal{A}_{H_{31}[\varphi_j]}-\mathcal{A}_{\tilde H_{32}[\varphi_j]}-\mathbf{ 1}_{j=1}\mathcal{A}_{\tilde H_{21}[\tilde P_{0,2}^{(2)}]},\\
&\mathcal{B}_j=\sum_{k=1}^5 \mathcal{B}_j^k=-\mathcal{B}_{H_{21}[\tilde P_{j-1,2}]}-\mathcal{B}_{H_{22}[\tilde P_{j-1,2}]}-\mathcal{B}_{H_{31}[\varphi_j]}-\mathcal{B}_{\tilde H_{32}[\varphi_j]}-\mathbf{ 1}_{j=1}\mathcal{B}_{\tilde H_{21}[\tilde P_{0,2}^{(2)}]}.
\end{split}
\end{align}
where $\cA_j^5=\cB_j^5=0$ for $j\geq 2$ and 
\begin{align*}
&\mathcal{A}_f(Y)=-\kappa^{-1}\pi \int_0^YB(Z)f(Z)\bar{M}^{-2}(Z)dZ,\quad \tilde{\mathcal{A}}_f(Y)=\kappa^{-1}\pi \int_Y^{+\infty}B(Z)f(Z)\bar{M}^{-2}(Z)dZ,\\
& \mathcal{B}_f(Y)=-\kappa^{-1}\pi \int_Y^{+\infty}A(Z)f(Z)\bar{M}^{-2}(Z)dZ,\quad  \tilde{\mathcal{B}}_f(Y)=\kappa^{-1}\pi \int_0^YA(Z)f(Z)\bar{M}^{-2}(Z)dZ.
\end{align*}

\smallskip

\begin{proof}[Proof of Proposition \ref{pro: interation-Airy}]
We first prove the case $j=0$. 
For $\mathcal{A}_0^1$ and  $\mathcal{B}_0^1$,
we use Proposition \ref{pro: Airy-1} and take $a=(1-\chi)(Q_1+Q_2)\chi_1$ and $b=0$. By Lemma \ref{lem: Q_1} and \eqref{est: Q_2}, we have
\begin{align*}
\|(1+Y)^2a\|_{L^\infty}\leq C(1+\al^2 c_i)\leq C,
\end{align*}
which implies
\begin{align*}
&\|\mathcal{A}_0^1\|_{L^\infty}+\|\mathcal{B}_0^1\|_{L^\infty_{2w_0}}\leq C\al^{-1}\|(1+Y)^2a\|_{L^\infty}\leq C\al^{-1}.
\end{align*}

For
 $\mathcal{A}_0^2$ and  $\mathcal{B}_0^2$,
we apply Proposition \ref{pro: Airy-2} by taking $g=-\al^2 \pa_Y \chi_1 A$ with $\|g\|_{L^\infty_{w_0}}\leq C\al^{\f{11}{6}},$ 
 to obtain
\begin{align*}
&\|\mathcal{A}_0^2\|_{L^\infty}+\|\mathcal{B}_0^2\|_{L^\infty_{2w_0}}\leq C \al^{-\f{17}6}e^{-\alpha\int_{Y_1-\d_0}^{Y_1} F_r^\f12(Z)dZ}\|g\|_{L^\infty_{w_0}}\leq C\alpha^{-1}e^{-\alpha\int_{Y_1-\d_0}^{Y_1} F_r^\f12(Z)dZ}.
\end{align*}

For  $\mathcal{A}_0^3$ and  $\mathcal{B}_0^3$, we take $f=\pa_Y\chi_2 \bM \varphi_0$ in Proposition \ref{pro: Airy-3}  and use Proposition \ref{pro: iteration-varphi} to get
\begin{align*}
\|\mathcal{A}_0^3\|_{L^\infty}+\al\|\mathcal{B}_0^3\|_{L^\infty_{2w_0}}\leq& C\al^{-\f56}\|f\|_{L^\infty_{w_0}}\leq C\al^{-\f56}\|\varphi_0\|_{L^\infty_{w_0}}\leq C,
\end{align*}
and for $Y\in[0, Y_2^*]$,
\begin{align*}
&|\mathcal{A}_0^3(Y)|\leq C\alpha^{-\f56}\|f\|_{L^\infty_{w_0}([Y_1^*-\delta_0,Y^*_1])}\leq C\alpha^{-\f56}\| \varphi_0\|_{L^\infty_{w_0}([Y_1^*-\delta_0,Y^*_1])}
\leq Ce^{-2\al \int_{Y_1^*}^{Y_1-2\d_0}F_r^\f12(Z) dZ}.
\end{align*}

For  $\mathcal{A}_0^4$ and  $\mathcal{B}_0^4$, we take $f=\partial_Y\left(\frac{\partial_Y\chi_2(Y)\bar M^2(Y)}{1-\bar M^2(Y)} \right)\bar M^{-1}\varphi_0(Y)$ in Proposition \ref{pro: Airy-4} and use Proposition \ref{pro: iteration-varphi} to obtain
\begin{align*}
&\|\mathcal{A}_0^4\|_{L^\infty}+\al\|\mathcal{B}_0^4\|_{L^\infty_{2w_0}}\leq C\alpha^{-\frac{11}{6}}\|f\|_{L^\infty_{w_0}}\leq C\alpha^{-\frac{11}{6}}\|\varphi_0\|_{L^\infty_{w_0}}\leq C\al^{-1}.
\end{align*}

Gathering the above estimates, we arrive at
\begin{align*}
\|\mathcal{A}_0\|_{L^\infty}+\al\|\mathcal{A}_0\|_{L^\infty([0, Y_2^*])}+\al \|\mathcal{B}_0\|_{L^\infty_{2w_0}}\leq C.
\end{align*}

For  $\tilde{\mathcal{A}}_0$ and  $\tilde{\mathcal{B}}_0$,  by Lemma \ref{lem: Ai, Bi} and the fact $|\mathcal{K}_{B}[\varphi_0](Y_1^*-\d_0)|\leq C\al^{-1}e^{-\al w_0(Y_1^*-\d_0)}$, we infer that for $Y\in[0, Y^{-}]$,
\begin{align*}
|\tilde{\mathcal{A}}_0(Y)|=&|\tilde{\mathcal{A}}_{H_{32}^{(2)}[\varphi_0]}(Y)|\leq\kappa^{-1}\pi \int_Y^{+\infty}|B\bar{M}^{-2}\bar\chi(1-\bar M^2)\mathcal K_B[\varphi_0](Z)|dZ\\
\leq&C\kappa^{-1}|\mathcal{K}_{B}[\varphi_0](Y_1^*-\d_0)|\int_0^{\f32 \bar Y}|B(Z)|dZ\\
\leq&C\kappa^{-\f54}\al^{-1} e^{-\al w_0(Y_1^*-\d_0)}\|\varphi_0\|_{L^\infty_{w_0}}\leq C\al^{-1}e^{-\al w_0(Y_1^*-\d_0)},
\end{align*}
and
 \begin{align*}
|\tilde{\mathcal{B}}_0(Y)|=&|\tilde{\mathcal{B}}_{H_{32}^{(2)}[\varphi_0]}(Y)|\leq \kappa^{-1}\pi \int_0^{Y}|A\bar{M}^{-2}\bar\chi(1-\bar M^2)\mathcal K_B[\varphi_0](Z)|dZ\\
\leq&C\kappa^{-1}|\mathcal{K}_{B}[\varphi_0](Y_1^*-\d_0)|\int_0^{\f32 \bar Y}|A(Z)|dZ\leq C\al^{-1}e^{-\al w_0(Y_1^*-\d_0)},
\end{align*}
which gives 
\begin{align*}
&\|\tilde{\mathcal{A}}_0\|_{L^\infty([0, Y^{-}])}+\|\tilde{\mathcal{B}}_0\|_{L^\infty([0, Y^{-}])}\leq C\al^{-1}e^{-\al w_0(Y_1^*-\d_0)}.
\end{align*}
Thus, we conclude that
\begin{align*}
\|\mathcal{A}_0\|_{L^\infty}+\al\|\mathcal{A}_{0}\|_{L^\infty([0, Y_2^*])}+\al\|\mathcal{B}_{0}\|_{L^\infty_{2w_0}}+\al e^{\al w_0(Y_1^*-\d_0)}(\|\tilde{\mathcal{A}}_0\|_{L^\infty([0, Y^{-}])}+\|\tilde{\mathcal{B}}_0\|_{L^\infty([0, Y^{-}])})\leq C. 
\end{align*}

Next we consider the case $j\geq 1$. For $\mathcal{A}_j^{1}$ and $\mathcal{B}_j^{1}$, we use Proposition \ref{pro: Airy-1} and take $a=(1-\chi)(Q_1+Q_2)\chi_1 \mathcal{A}_{j-1}$ and $b=(1-\chi)(Q_1+Q_2)\chi_1 \mathcal{B}_{j-1}$.
By Lemma \ref{lem: Q_1} and \eqref{est: Q_2}, we get
\begin{align*}
&\|(1+Y)^2 a\|_{L^\infty([0,Y])}=\|(1+Y)^2 (1-\chi)(Q_1+Q_2)\chi_1 \mathcal{A}_{j-1}\|_{L^\infty([0,Y])}\leq C\|\mathcal{A}_{j-1}\|_{L^\infty([0,Y])},\\
& \|(1+Y)^2b\|_{L^\infty_{2w_0}}=\|(1+Y)^2(1-\chi)(Q_1+Q_2)\chi_1 \mathcal{B}_{j-1}\|_{L^\infty_{2w_0}}\leq C\|\mathcal{B}_{j-1}\|_{L^\infty_{w_0}}.
\end{align*}
Then we  obtain 
\begin{align*}
&\|\mathcal{A}_{j}^{1}\|_{L^\infty}\leq C\al^{-1}(\|\mathcal{A}_{j-1}\|_{L^\infty}+\|\mathcal{B}_{j-1}\|_{L^\infty_{2w_0}})\leq \al^{-1} \mathcal{E}_{j-1},\\
&\|\mathcal{B}_{j}^1\|_{L^\infty_{2w_0}}\leq C\al^{-1}(\|\mathcal{A}_{j-1}\|_{L^\infty([0, Y_2^*])}+\al^{-1}\|\mathcal{A}_{j-1}\|_{L^\infty}+\|\mathcal{B}_{j-1}\|_{L^\infty_{2w_0}})\leq \al^{-2} \mathcal{E}_{j-1}.
\end{align*}
Thanks to $Y_2^*>Y^{+}+\d$, we have
\begin{align*}
&\|\mathcal{A}_{j}^{1}\|_{L^\infty([0, Y_2^*])}\leq C\al^{-1}(\|\mathcal{A}_{j-1}\|_{L^\infty([0, Y_2^*])}+\|\mathcal{B}_{j-1}\|_{L^\infty_{2w_0}})\leq \al^{-2} \mathcal{E}_{j-1}.
\end{align*}

For $\mathcal{A}_j^2$ and  $\mathcal{B}_j^2$,
by applying Proposition \ref{pro: Airy-2} with  $g=-\al^2\pa_Y\chi_1 \tilde P_{j-1,2}$ and the fact 
\begin{align*}
&\|g\|_{L^\infty_{w_0}}\leq C\al^2\|\pa_Y\chi_1\tilde P_{j-1,2}\|_{L^\infty_{w_0}}\leq C\al^{\f{11}{6}}(\|\mathcal{A}_{j-1}\|_{L^\infty}+\|\mathcal{B}_{j-1}\|_{L^\infty_{2w_0}}),
\end{align*}
we obtain 
\begin{align*}
\|\mathcal{A}_{j}^{2}\|_{L^\infty}+\|\mathcal{B}_{j}^{2}\|_{L^\infty_{2w_0}}\leq& C\alpha^{-\f{17}6}e^{-\alpha\int_{Y_1-\d_0}^{Y_1} F_r^\f12(Z)dZ}\|g\|_{L^\infty_{w_0}}\\
\leq& C\alpha^{-1}e^{-\alpha\int_{Y_1-\d_0}^{Y_1} F_r^\f12(Z)dZ}(\|\mathcal{A}_{j-1}\|_{L^\infty}+\|\mathcal{B}_{j-1}\|_{L^\infty_{2w_0}})\leq \al^{-2} \mathcal{E}_{j-1}.
\end{align*}

For $\mathcal{A}_j^3$ and $\mathcal{B}_j^3$, we use Proposition \ref{pro: Airy-3}, Proposition \ref{pro: iteration-varphi} and take $f=\pa_Y\chi_2 \bM \varphi_j$ to have 
\begin{align*}
\|\mathcal{A}_{j}^{3}\|_{L^\infty}+\al\|\mathcal{B}_{j}^{3}\|_{L^\infty_{2w_0}}\leq& C\alpha^{-\f56}\|f\|_{L^\infty_{w_0}}\leq C\alpha^{-\f56}\|\varphi_j\|_{L^\infty_{w_0}}\leq C\al^{-1}\mathcal{E}_{j-1},
\end{align*}
and 
\begin{align*}
\|\mathcal{A}_j^3\|_{L^\infty([0, Y_2^*])}\leq& C\alpha^{-\f56}\|f\|_{L^\infty_{w_0}([Y_1^*-\delta_0,Y^*_1])}\leq C\alpha^{-\f56}\|\varphi_j\|_{L^\infty_{w_0}([Y_1^*-\delta_0,Y^*_1])}\leq C\al^{-2}\mathcal{E}_{j-1}.
\end{align*}

For $\mathcal{A}_j^4$ and $\mathcal{B}_j^4$, we use Proposition \ref{pro: Airy-5}, Proposition \ref{pro: iteration-varphi} and take $f=\partial_Y\left(\frac{\partial_Y\chi_2(Y)\bar M^2(Y)}{1-\bar M^2(Y)} \right)\bar M^{-1}\varphi_j(Y)$ to infer that for $Y\geq 0$,
\begin{align*}
\|\mathcal{A}_{j}^{4}\|_{L^\infty}+\al\|\mathcal{B}_{j}^{4}\|_{L^\infty_{2w_0}}\leq& C\alpha^{-\frac{11}{6}}\|f\|_{L^\infty_{w_0}}\leq C\alpha^{-\frac{11}{6}}\|\varphi_j\|_{L^\infty_{w_0}}\leq C\al^{-2}\mathcal{E}_{j-1}.
\end{align*}
In particular, for $j=1$, we have 
\begin{align}\label{est: iteration j=1-1}
\begin{split}
&\sum_{k=1}^4(\|\mathcal{A}_1^k\|_{L^\infty}+\al\|\mathcal{A}_1^k\|_{L^\infty([0, Y_2^*])}+\al\|\mathcal{B}_1^k\|_{L^\infty_{2w_0}})\leq \al^{-1}\mathcal{E}_0,
\end{split}
\end{align}
and for $j\geq 2$, we have
\begin{align*}
&\|\mathcal{A}_{j}\|_{L^\infty}+\al\|\mathcal{A}_{j}\|_{L^\infty([0, Y_2^*])}+\al\|\mathcal{B}_{j}\|_{L^\infty_{2w_0}}\leq \al^{-1}\mathcal{E}_{j-1}.
\end{align*}

It remains to estimate $\mathcal{A}_{1}^5$ and $\mathcal{B}_{1}^5$. Recall that 
\begin{align*}
&\tilde H_{21}[\tilde P_{1,2}^{(2)}]=(\bar\chi_1(Q_1+Q_2)+\pa_Y^2\bar\chi_1-\f{2\pa_Y\bM}{\bM}\pa_Y \bar\chi_1)\tilde P_{1,2}^{(2)}+2\pa_Y\bar\chi_1\pa_Y\tilde P_{1,2}^{(2)},\\
&\tilde P_{1,2}^{(2)}=\tilde{\mathcal{A}_0} A+\tilde{\mathcal{B}_0} B,\quad \pa_Y\tilde P_{1,2}^{(2)}=\tilde{\mathcal{A}_0} \pa_YA+\tilde{\mathcal{B}_0} \pa_YB.
\end{align*}
By Lemma \ref{lem: Ai, Bi}, we have 
\begin{align*}
&\|\tilde P_{0,2}^{(2)}\|_{L^\infty([0, Y^{-}])}\leq C\al^{-\f16}(\|\tilde{\mathcal{A}}_0\|_{L^\infty([0, Y^{-}])}+\|\tilde{\mathcal{B}}_0\|_{L^\infty([0, Y^{-}])}),\\
&\|\pa_Y\tilde P_{0,2}^{(2)}\|_{L^\infty([0, Y^{-}])}\leq C\al^{\f56}(\|\tilde{\mathcal{A}}_0\|_{L^\infty([0, Y^{-}])}+\|\tilde{\mathcal{B}}_0\|_{L^\infty([0, Y^{-}])}),
\end{align*}
which gives 
\begin{align*}
\|\tilde H_{21}[\tilde P_{1,2}^{(2)}]\|_{L^\infty([0, Y^{-}])}\leq C \al^{\f56}(\|\tilde{\mathcal{A}}_0\|_{L^\infty([0, Y^{-}])}+\|\tilde{\mathcal{B}}_0\|_{L^\infty([0, Y^{-}])}).
\end{align*}

By using the argument in Proposition \ref{pro: Airy-1}, we have
\begin{align*}
|\mathcal{A}_{1}^5|\leq& C\kappa^{-1}\|\tilde H_{21}[\tilde P_{1,2}^{(2)}]\|_{L^\infty([0, Y^{-}])}\int_{0}^{\f32 \bar Y}|B(Z)|dY\leq C\kappa^{-\f54}\|\tilde H_{21}[\tilde P_{1,2}^{(2)}]\|_{L^\infty([0, Y^{-}])}\\
\leq& C(\|\tilde{\mathcal{A}}_0\|_{L^\infty([0, Y^{-}])}+\|\tilde{\mathcal{B}}_0\|_{L^\infty([0, Y^{-}])}).
\end{align*}
Similarly, we obtain
\begin{align*}
|\mathcal{B}_{1}^5|\leq& C(\|\tilde{\mathcal{A}}_0\|_{L^\infty([0, Y^{-}])}+\|\tilde{\mathcal{B}}_0\|_{L^\infty([0, Y^{-}])}).
\end{align*}
Along with \eqref{est: iteration j=1-1}, we show that
\begin{align*}
\|\mathcal{A}_{1}\|_{L^\infty}+\al\|\mathcal{A}_{1}\|_{L^\infty([0, Y_2^*])}+\al\|\mathcal{B}_{1}\|_{L^\infty_{2w_0}}\leq \al^{-1}\mathcal{E}_0+C(\|\tilde{\mathcal{A}}_0\|_{L^\infty([0, Y^{-}])}+\|\tilde{\mathcal{B}}_0\|_{L^\infty([0, Y^{-}])})\leq C\al^{-1}\mathcal{E}_0.
\end{align*}
As a result, we conclude that for $j\geq 1$,
\begin{align*}
\|\mathcal{A}_{j}\|_{L^\infty}+\al\|\mathcal{A}_{j}\|_{L^\infty([0, Y_2^*])}+\al\|\mathcal{B}_{j}\|_{L^\infty_{2w_0}}\leq \al^{-1}\mathcal{E}_{j-1}.
\end{align*}

\end{proof}

\section{Asymptotic expansion of boundary value}
  At the boundary $Y=0$, by the notations in \eqref{eq:notation-tilde P02} and \eqref{eq:notation-tilde Pj2} and the fact $\cA_j(0)=\tilde\cB_0(0)=0$ for any $j\geq 0$, we get
\begin{align*}
\partial_Y\tilde P_{0,2}^{(1)}(0)=\mathcal B_0(0)\partial_Y B(0),\quad \partial_Y\tilde P_{0,2}^{(2)}(0)=\tilde{\mathcal A}_0(0)\partial_Y A(0),\quad \partial_Y\tilde P_{j,2}(0)=\mathcal B_j(0)\partial_Y B(0),\quad j\geq 1,
\end{align*}
which along with the results in Proposition \ref{pro: exist} implies
\begin{align}\label{expansion: pa_Y P(0)}
\partial_YP(0)=(1+\tilde{\mathcal{A}}_0(0))\partial_YA(0)+\Big(\mathcal B_0(0)+\sum_{j=1}^{+\infty}\mathcal B_j(0)\Big)\pa_Y B(0).
\end{align}

To state our main result in this section, we introduce the following definitions and notations.
First, we introduce a set of $(\al,c)$
\begin{align*}
\mathbb{H}=\mathbb{H}_0\cap\{(\al,c)\in \mathbb{R}_{+}\times \mathbb{C}: \kappa^{\f32}\tan \Theta(-\kappa\eta(0))\in(C_1, C_2),~0<c_i\ll \al^n e^{-\al w_0( \tilde {Y}) }\},
\end{align*}
for some $n>0$, where
$\kappa^3=\alpha^2\partial_Y\tF(Y_0)=\alpha^2\partial_YF_r(Y_0)$ and $\tilde{Y}$ is given in Lemma \ref{lem:real-phi(0)}.

For a fixed Mach number $M_a>1$, we introduce a function on interval $[T_0^\f12(0) M_a^{-1}, 1+M_a^{-1}]$
{\small
\begin{align}\label{def: J(c_r)}
J(c_r; M_a)=\int_0^{Y_0(c_r)}(-\tilde F(Z))^{-\f12}\tilde Q_1(Z; c_r, M_a)dZ,\quad \mbox{with}\quad \tilde Q_1(Z; c_r, M_a)=\tilde Q_1(Z; Y_0(c_r), c_r, M_a),
\end{align}
}
where 
\begin{align}\label{def: tilde Q_1}
\tilde Q_1(Y; Y_0(c_r), c_r, M_a)=\f{\pa_Y^2 \bM_r}{\bM_r}-\f{2(\pa_Y\bM_r)^2}{\bM_r^2}-\f34\f{(\pa_Y^2\eta)^2}{(\pa_Y\eta)^2}-\f{\pa_Y^3 \eta}{2\pa_Y\eta}
\end{align}
is a real function independent of $c_i$ for $Y\leq Y_0$, and  $Y_0(c_r)$ is determined by the relation $M_a (c_r-U_B(Y_0))=T_0^\f12(Y_0)$  involved in Langer tansform $\eta(Y)$.  It is easy to see $|Q_1-\tilde Q_1|\leq C c_i$.  

Moreover, we introduce
 a set $\mathcal{H}(U_B, M_a)$, which is determined by the background flow $U_B$ and Mach number $M_a>1$
\begin{align}\label{set: H}
\mathcal{H}(U_B, M_a)=\Big\{c_r\in[T_0^\f12(0) M_a^{-1}, 1+M_a^{-1}]: J(c_r; M_a)\neq 0\Big\}.
\end{align}

In addition, we denote 
\begin{align}\label{def: D_0}
\frak{D}_0:=\frak{D}_0(\al,c)\eqdef\kappa^{-1}\al^{-2}\pi \mathcal K_B[\varphi_0](\bar Y_1^*),
\end{align}
which is an important quantity in obtaining the precise expansion of boundary value. Here  $\varphi_0=\mathcal{L}_{cr}^{-1}(-H_1[A])$.
\medskip

The main result of  this section is presented as follows.

\begin{proposition}\label{pro: expan-dispersion}
Let $(\alpha,c)\in\mathbb H$ and $Y_1^*, Y^*_2$ be chosen in Lemma \ref{lem:real-phi(0)}.  Suppose that $P(Y)$ is the solution to \eqref{eq:ray-F-nbv} constructed in Proposition \ref{pro: exist}. Then  we have
\begin{align*}
\partial_YP(0)=(1+\tilde{\mathcal{A}}_0(0))\partial_YA(0)+\Big(\mathcal B_0(0)+\sum_{j=1}^{+\infty}\mathcal B_j(0)\Big)\pa_Y B(0).
\end{align*}
Moreover,
\begin{itemize}
	\item it holds that
	\begin{align}
	&\mathcal B_0(0)= d_0^1(\al ,c )~\al^{-1}+i d_0^2(\al, c)~\al c_i+\frak{D}_0(\al,c)~d_0^3(\al, c),\label{expansion: B_0(0)}\\
	&\tilde{\mathcal{A}}_0(0)=\frak{D}_0(\al,c)\Big(\partial_YB(0)\bar M(0)^{-2}+d_0^4(\al, c)\Big), \label{expansion: tA_0(0)}\\
	&\sum_{j=1}^{+\infty}\mathcal B_j(0)=\al^{-1}\left(d_1^1(\al ,c )~\al^{-1}+i d_1^2(\al, c)~\al c_i\right)\label{expansion: tB_1(0)}\\
	\nonumber
	&\qquad\qquad\qquad+\frak{D}_0(\al,c)\left(d_1^3(\al, c)+\partial_YA(0)\bar M^{-2}(0)(C_{P,1}+d_1^4(\al, c)\right),
	\end{align}
	where 
	\begin{align}\label{est: d_0^1-1}
	|d_0^1(\al,c)|\lesssim 1+\al^{-1},
	\end{align}
	and
	\begin{align}\label{est: d_0^2-d_1^4}
	\begin{split}
	& |d_0^2(\al,c)|\sim 1+\al^{-1},\\
	& |d_1^j(\al,c)|\lesssim 1+\al^{-1},\quad |\pa_{c_i}d_0^j(\al,c)+|\pa_{c_i}d_1^j(\al,c)|\lesssim1+\al^{-1},\quad j=1,2,\\
	&|d_0^j(\al,c)|+|d_1^j(\al,c)|\lesssim \al^{-\f23},\quad |\pa_{c_i}d_0^j(\al,c)|+|\pa_{c_i}d_1^j(\al,c)|\lesssim \al^{-\f23},\quad j=3, 4,\\
	&C_{P,1}=\kappa^{-1}\pi\int_0^{\bar Y_1^*}A(Z)\bar M^{-2}(Z)\partial_Z\bar\chi_1(Z)\partial_ZB(Z)dZ,\quad |C_{P,1}|+c_i^{-1}|\mathrm {Im}C_{P,1}|\lesssim 1 .
	\end{split}
	\end{align}
	\item
	in particular, for $(\alpha,c)\in\mathbb H$ with $c_r\in\mathcal{H}(U_B, M_a)\cap(1-M_a^{-1},1)$, \eqref{expansion: tA_0(0)}-\eqref{expansion: tB_1(0)} and \eqref{est: d_0^2-d_1^4} still hold with \eqref{est: d_0^1-1} replaced by 
	\begin{align}\label{est: d_0^1-2}
	|d_0^1(\al,c)|\sim 1+\al^{-1}.
	\end{align}
\end{itemize}

\end{proposition}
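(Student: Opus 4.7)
The plan is to read off the expansion of $\partial_YP(0)$ directly from the formula
$\partial_YP(0)=\partial_YA(0)+\partial_Y\tilde P_{0,2}^{(1)}(0)+\partial_Y\tilde P_{0,2}^{(2)}(0)+\sum_{j\geq1}\partial_Y\tilde P_{j,2}(0)$
provided by Proposition \ref{pro: exist}, and then to expand each coefficient using the integral representations behind $\mathcal A_f,\mathcal B_f,\tilde{\mathcal A}_f,\tilde{\mathcal B}_f$. Using the Wronskian identity $\partial_YA\cdot B-A\cdot\partial_YB=-\kappa\pi^{-1}\bM^2$, the two point-source derivatives $\partial_Y\mathcal A_j(0)A(0)$ and $\partial_Y\mathcal B_j(0)B(0)$ cancel for every $j$, so that $\partial_Y\tilde P_{j,2}(0)=\mathcal A_j(0)\partial_YA(0)+\mathcal B_j(0)\partial_YB(0)$; the same computation applies to $\tilde P_{0,2}^{(2)}$. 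Since $\mathcal A_j(0)=0$ and $\tilde{\mathcal B}_0(0)=0$ by construction, the first claim follows and it remains to expand $\mathcal B_0(0)$, $\tilde{\mathcal A}_0(0)$ and $\sum_{j\geq1}\mathcal B_j(0)$.

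For $\mathcal B_0(0)$, I would split it via \eqref{eq:notation-A0-B0} into the four contributions associated with $H_{21}[A]$, $H_{22}[A]$, $H_{31}[\varphi_0]$ and $H_{32}^{(1)}[\varphi_0]$. The first piece $\mathcal B_0^1(0)=\kappa^{-1}\pi\int_0^{+\infty}A(Z)\bM^{-2}(Z)(1-\chi)(Q_1+Q_2)\chi_1A(Z)\,dZ$ is controlled by the Airy asymptotics of Lemma \ref{lem: Ai, Bi} and the bounds on $Q_1$ of Lemma \ref{lem: Q_1}: its real part yields the $d_0^1\alpha^{-1}$ term, while $Q_2=-\alpha^2F_i\chi_0$ with $F_i\sim\mathrm ic_i(U_B-c_r)$ produces a non-vanishing imaginary contribution of order $\alpha c_i$, which is $\mathrm id_0^2\alpha c_i$. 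The piece $\mathcal B_0^2(0)$ from $H_{22}[A]$ involves a non-local source supported away from $0$, hence is exponentially small and absorbed into $d_0^{1,2}$. The remaining pieces $\mathcal B_0^{3,4}(0)$ carry the non-local transport from the critical layer: since $\mathcal K_B[\varphi_0](Z)$ is constant on the supersonic complement of $\mathrm{supp}(\partial_Y\chi_2)$ and equals $\mathcal K_B[\varphi_0](\bar Y_1^*)$ there, one factors $\frak D_0$ out and obtains the $\frak D_0\,d_0^3$ contribution.

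For $\tilde{\mathcal A}_0(0)$, the key identity is
$\tilde{\mathcal A}_0(0)=-\kappa^{-1}\pi\int_0^{+\infty}B(Z)\bM^{-2}(Z)\bar\chi(Z)(1-\bM^2(Z))\mathcal K_B[\varphi_0](Z)\,dZ$,
where $\mathrm{supp}(\bar\chi)\subset[0,\bar Y]\subset[0,Y^-]$ lies in the supersonic regime on which $\mathcal K_B[\varphi_0]\equiv\mathcal K_B[\varphi_0](\bar Y_1^*)$. Factoring this out and integrating by parts by rewriting $(1-\bM^2)\bM^{-2}B$ using the equation $\mathcal L_{app}[B]=0$ (equivalently using $\partial_Y^2B-2\bM^{-1}\partial_Y\bM\,\partial_YB=\alpha^2(1-\bM^2)B$) produces the boundary term $\frak D_0\partial_YB(0)\bM^{-2}(0)$ plus a lower-order remainder absorbed into $\frak D_0\,d_0^4$. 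For $\sum_{j\geq1}\mathcal B_j(0)$, Lemma \ref{lem: cE_j} gives geometric decay $\mathcal E_j\lesssim\alpha^{-j}$, so only $\mathcal B_1(0)$ carries leading behaviour. Its extra source $\tilde H_{21}[\tilde P_{0,2}^{(2)}]$, combined with the formula \eqref{formula: tilde P_(0,2)^(2)} and the leading part $\tilde{\mathcal A}_0\sim\frak D_0\partial_YB\,\bM^{-2}$ on $\mathrm{supp}(\partial_Y\bar\chi_1)$, yields after integration by parts the term $\frak D_0\partial_YA(0)\bM^{-2}(0)(C_{P,1}+d_1^4)$, while the remaining summands reproduce the scheme of $\mathcal B_0$ with $A$ replaced by $\tilde P_{0,2}^{(1)}$ whose amplitudes are already $O(\alpha^{-1})$, giving the extra factor of $\alpha^{-1}$ in $d_1^{1,2}$.

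The main obstacle will be the identification and sharp lower bound $|d_0^2|\sim1$, which requires showing that the imaginary contributions to $\mathcal B_0^1(0)$ coming from $Q_2$ and from $\mathrm{Im}(\bM^{-2})$ on $[Y_0,Y_c]$ do not cancel to lower order; this hinges on the strict monotonicity \eqref{Low bound: pa_Y U_B} and the sign of $U_B-c_r$ across $\mathrm{supp}(\chi_0)$. A secondary delicate point is the upgrade from $|d_0^1|\lesssim1$ to $|d_0^1|\sim1$ under the assumption $c_r\in\mathcal H(U_B,M_a)\cap(1-M_a^{-1},1)$: one has to recognize that, modulo errors of order $c_i$, the leading real part of $\mathcal B_0^1(0)$ reduces to a constant multiple of the integral $J(c_r;M_a)$ defined in \eqref{def: J(c_r)}, through the identification of $Q_1$ with its $c_i$-independent analogue $\tilde Q_1$ and the change of variables $Z\mapsto\eta(Z)$ over the supersonic interval $[0,Y_0]$, so that the hypothesis $J(c_r;M_a)\neq0$ delivers the lower bound.
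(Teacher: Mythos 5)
Your route for the $j=0$ part coincides with the paper's: the same splitting $\mathcal B_0(0)=\sum_{k=1}^4\mathcal B_0^k(0)$, the factoring of $\frak{D}_0(\al,c)$ out of the $\mathcal K_B[\varphi_0]$ contributions using its constancy on $[0,Y_1^*-\d_0]$, the integration by parts against $\mathcal L[B]=(Q_1+Q_2)B$ (note your displayed ODE for $B$ omits the $(Q_1+Q_2)B$ correction, which is precisely what produces the $\al^{-2/3}$ remainders $d_0^3,d_0^4$), and the reduction of $\Re\,\mathcal B_0^1(0)$ to a multiple of $J(c_r;M_a)$ for the upgrade \eqref{est: d_0^1-2}. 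Two small slips of labels, not of substance: the cancellation of the coefficient-derivative terms at $Y=0$ follows directly from the variation-of-parameters structure $\pa_Y\cA_f\,A+\pa_Y\cB_f\,B=0$ rather than from the Wronskian identity, and in the $C_{P,1}$ computation the relevant leading coefficient on $\mathrm{supp}(\pa_Y\bar\chi_1)$ (where $\bar\chi=0$) is $\tilde{\mathcal B}_0\approx\frak{D}_0\,\pa_YA(0)\bM^{-2}(0)$, not $\tilde{\mathcal A}_0$.

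The genuine gap is your treatment of $\sum_{j\ge1}\mathcal B_j(0)$. You invoke only the geometric decay $\mathcal E_j\lesssim\al^{-j}$, i.e. $|\mathcal B_j(0)|\lesssim\al^{-j-1}$, and conclude that only $\mathcal B_1(0)$ matters. That controls absolute size but not the imaginary part, and \eqref{expansion: tB_1(0)} is above all a statement about the imaginary part: after subtracting $\frak{D}_0\,\pa_YA(0)\bM^{-2}(0)C_{P,1}$, the remainder must have imaginary part of size $c_i$, $\al^{-2/3}|\Im\frak{D}_0|$, $\al^{-1}|\Im(\frak{D}_0\pa_YA(0)\bM^{-2}(0))|$ --- quantities that are exponentially small in $\al$ on $\mathbb H$ (recall $c_i\ll\al^{-n}$ for every $n$ and $|\frak{D}_0|\lesssim e^{-\al w_0(Y_1^*-\d_0)}$), whereas the crude bound only yields $|\Im\sum_{j\ge2}\mathcal B_j(0)|\lesssim\al^{-3}$, which is enormously larger and would destroy the coefficient bound on $d_1^2$ and hence the $c_i$-equation of the dispersion relation in Section 7. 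The paper closes this by a separate induction propagating refined imaginary-part bounds through every step of the gluing iteration: the quantities $\mathcal E_{near}$, $\mathcal E_{far}$, $\tilde{\mathcal E}_{near}$, the pointwise estimates \eqref{assume: induction-1}--\eqref{assume: induction-2}, Lemmas \ref{lem: A11A15}--\ref{lem: Aj Bj} and Proposition \ref{pro: BC B_j(0)}. Even at $j=1$, the contributions from $H_{21}[\tilde P_{0,2}^{(1)}]$, $H_{31}[\varphi_1]$ and $\tilde H_{32}[\varphi_1]$ need their imaginary parts estimated against $\mathcal E_{near}(0)$ (which in turn requires the refined pointwise bounds on $\Im\mathcal A_0,\Im\mathcal B_0$ and on $\tilde{\mathcal A}_0,\tilde{\mathcal B}_0$ of Lemmas \ref{lem:A01B01}--\ref{lem: tildeA0B0} and Proposition \ref{Prop: P01P02}); your proposal addresses only the $\tilde H_{21}[\tilde P_{0,2}^{(2)}]$ term. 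This machinery is what your argument is missing.
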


We first present the bound for $\mathcal K_B[\varphi_0](Y)$, which is used frequently in the following arguments.
\begin{lemma}\label{lem: Im-int}
	Let $(\alpha, c)\in \mathbb H_0$ and $Y_1^*, Y^*_2$ be given in Lemma \ref{lem:real-phi(0)} and $\varphi_0=\mathcal{L}_{cr}^{-1}(-H_1[A])$. Then we have
	\begin{align*}
	&\mathcal K_B[\varphi_0](Y)\equiv 0,\quad Y\geq Y_2^*+\delta_0,\quad \mathcal K_B[\varphi_0](Y)\equiv\mathcal K_B[\varphi_0](Y^*_1-\delta_0),\quad Y\in[0,Y_1^*-\delta_0],\\
	&|\mathcal K_B[\varphi_0](Y)|\leq C\alpha^{\f56}e^{-\alpha w_0(Y)},\quad Y\in[Y_1^*-\delta_0,Y_2^*+\delta_0],\\
	&|\mathcal K_B[\varphi_0](Y)|\leq C\al^{-\f{1}{6}}\Big(e^{-\al w_0(Y_1^*-\d_0)}e^{-2\al \int_{Y_1^*}^{Y_1-2\d_0}F_r^\f12(Y') dY'}+\al e^{-\al w_0(Y_2^*)}\Big),\quad Y\in[0, Y_1^*-\d_0].
	\end{align*}
	
	In addition, if  $c_i\ll\al^n e^{-\al w_0(\tilde Y)}$ for some $n>0$, then  there exists $\mathring{Y}\in[Y_c,  \tilde Y]$ such that 
	\begin{align*}
	|\mathrm{Im}(\mathcal K_B[\varphi_0])(Y_1^*-\delta_0)|\sim \al^{\f56}e^{-\al w_0(\mathring Y)},\quad \mathrm{Sign}(\mathrm{Im}(\mathcal K_B[\varphi]))=\mathrm{Sign}(\partial_Y^2\bar M(Y_c))\mathrm{Sign}\mathcal M,
	\end{align*}
	where $\tilde Y$ is determined in Lemma \ref{lem:real-phi(0)}.
	
\end{lemma}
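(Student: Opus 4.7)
First I would dispose of the two support identities by reading off $\mathcal K_B$ from \eqref{def:KB}. The kernel $\partial_Z\big(\partial_Z\chi_2\,\bar M^2/(1-\bar M^2)\big)\bar M^{-1}$ is supported in $[Y_1^*-\delta_0,Y_1^*]\cup[Y_2^*,Y_2^*+\delta_0]$, so for $Y\geq Y_2^*+\delta_0$ the integrand vanishes on $[Y,\infty)$ and $\mathcal K_B[\varphi_0](Y)\equiv 0$, while for $Y\in[0,Y_1^*-\delta_0]$ the integrand vanishes on $[Y,Y_1^*-\delta_0]$ so the integral coincides with its value at $Y_1^*-\delta_0$. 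For the bound on $[Y_1^*-\delta_0,Y_2^*+\delta_0]$ I would plug in the global estimate $\|\varphi_0\|_{L^\infty_{w_0}}\lesssim\alpha^{5/6}$ from Proposition \ref{pro: iteration-varphi} and use boundedness of the kernel, pulling the monotone factor $e^{-\alpha w_0(Y)}$ outside to obtain $|\mathcal K_B[\varphi_0](Y)|\lesssim\alpha^{5/6}e^{-\alpha w_0(Y)}$.

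For the refined bound on $[0,Y_1^*-\delta_0]$ I would split the integral defining $\mathcal K_B[\varphi_0](Y_1^*-\delta_0)$ into the two components of $\mathrm{supp}\,\partial_Z\chi_2$. On $[Y_2^*,Y_2^*+\delta_0]$ the crude $L^\infty_{w_0}$-bound yields a contribution of size $\alpha^{5/6}e^{-\alpha w_0(Y_2^*)}=\alpha^{-1/6}\cdot\alpha\, e^{-\alpha w_0(Y_2^*)}$. On $[Y_1^*-\delta_0,Y_1^*]$ I invoke the \emph{sharper} bound in Proposition \ref{pro: iteration-varphi}, namely $\|\varphi_0\|_{L^\infty_{w_0}([\bar Y_1^*,Y_1^*])}\lesssim\alpha^{-1/6}e^{2\alpha\int_{Y_1^*}^{Y_1-2\delta_0}F_r^{1/2}(Z)dZ}$, which upon integrating against the kernel produces the term $\alpha^{-1/6}e^{-\alpha w_0(Y_1^*-\delta_0)}e^{-2\alpha\int_{Y_1^*}^{Y_1-2\delta_0}F_r^{1/2}(Z)dZ}$ (the weight pulled out combines with the exponential growth). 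Adding the two pieces gives exactly the stated estimate.

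The main obstacle is the final size/sign assertion for $\mathrm{Im}\,\mathcal K_B[\varphi_0](Y_1^*-\delta_0)$, since one has to isolate a leading imaginary contribution that neither cancels nor is dominated by the higher-order errors. My plan is to start from the representation \eqref{formula: varphi} for $\varphi_0$ and split $\mathrm{Im}\,\varphi_0$ into two kinds of terms: the contribution from $\mathrm{Im}\,G_{f_0}$, which by Lemma \ref{lem: est of G} is $O(\alpha c_i)$ and hence negligible under the smallness assumption $c_i\ll \alpha^n e^{-\alpha w_0(\tilde Y)}$, and the contribution from $\mathrm{Im}\,\mu_{f_0}(c)$ multiplied by the real homogeneous profile $\phi\int(F/\phi^2)$. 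The key input is Lemma \ref{lem: int-phi^(-2)}, which identifies the leading imaginary part of $\big(\int_{\bar Y_1^*}^{\bar Y_2^*}F/\phi^2\,dY\big)^{-1}$ as a nonzero real multiple of $\partial_Y^2\bar M_r(Y_c)/[\partial_Y\bar M_r(Y_c)]^3$; combined with the (real) leading imaginary part of $\int G_{f_0}$ computed from the structure of $f_0$ near the turning geometry, this forces $\mathrm{Im}\,\varphi_0(Z)$ to take the form $\mathrm{sign}(\partial_Y^2\bar M(Y_c))\cdot\phi_r(Z)\cdot(\mbox{strictly positive factor})$ on $[\bar Y_1^*,\bar Y_2^*]$.

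Finally I would apply $\mathcal K_B$ to this leading imaginary part. On $[Y_2^*,Y_2^*+\delta_0]$ the contribution is exponentially small (of order $e^{-\alpha w_0(Y_2^*)}$) and absorbed into the error, while on $[Y_1^*-\delta_0,Y_1^*]$ the integrand is a real function whose signed integral against the Rayleigh cutoff defines the quantity $\mathcal M$; a mean-value argument on $[Y_c,\tilde Y]$ then produces the point $\mathring Y\in[Y_c,\tilde Y]$ for which $|\mathrm{Im}\,\mathcal K_B[\varphi_0](Y_1^*-\delta_0)|\sim\alpha^{5/6}e^{-\alpha w_0(\mathring Y)}$, with sign equal to $\mathrm{sign}(\partial_Y^2\bar M(Y_c))\cdot\mathrm{sign}\,\mathcal M$. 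The delicate point throughout is ensuring that the explicit leading term survives all cancellations, which is why the hypothesis $|w_c(\bar Y_1^*)|\neq |w_c(\bar Y_2^*)|$ enters (via Lemma \ref{lem: int-phi^(-2)}) to keep $|\int F/\phi^2|\sim\alpha$.
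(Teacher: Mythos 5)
Your treatment of the support identities, the crude bound on $[Y_1^*-\delta_0,Y_2^*+\delta_0]$, and the refined bound on $[0,Y_1^*-\delta_0]$ (splitting over the two components of $\mathrm{supp}(\partial_Z\chi_2)$ and invoking the sharper estimate for $\varphi_0$ on $[\bar Y_1^*,Y_1^*]$ from Proposition \ref{pro: iteration-varphi}) matches the paper's argument and is fine. The gap is in the final, delicate part. You propose to estimate $\mathrm{Im}\,\mathcal K_B[\varphi_0](Y_1^*-\delta_0)$ by pairing the leading imaginary part of $\varphi_0$ (coming from $\mathrm{Im}\,\mu_{f_0}(c)$ times the real profile $\phi\int F/\phi^2$) directly against the kernel $\partial_Z\bigl(\tfrac{\partial_Z\chi_2\,\bar M^2}{1-\bar M^2}\bigr)\bar M^{-1}$, and you assert that the resulting signed integral over $[Y_1^*-\delta_0,Y_1^*]$ ``defines the quantity $\mathcal M$.'' It does not: by \eqref{def: cM}, $\mathcal M$ is an integral over the \emph{whole} interval $[\bar Y_1^*,\bar Y_2^*]$ with weight $\chi_2\,\bar M\,\phi\int F/\phi^2$, not an integral over the thin sets where $\partial_Z\chi_2\neq0$. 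In the paper $\mathcal M$ only appears after the key step you are missing: one first uses the equation $\mathcal L_{\bar M}[\varphi_0]=\alpha^2\bar M\varphi_0+f_0$ together with integration by parts to rewrite
\begin{align*}
\mathcal K_B[\varphi_0](Y_1^*-\delta_0)=\int_{Y_1^*-\delta_0}^{Y_2^*+\delta_0}\chi_2\Big(\frac{\bar M^2}{1-\bar M^2}\Big(\frac{\varphi_0}{\bar M}\Big)'\Big)'\,dY
=\int \chi_2\bigl(\alpha^2\bar M\varphi_0-2\alpha^2\partial_Y\chi_1 A+R_1\bigr)\,dY,
\end{align*}
so that the leading imaginary contribution becomes $-\alpha^2\mu_{f_0}(c)\,\mathcal M$, to which the upper and lower bounds and sign information of Lemma \ref{lem:real-phi(0)} apply.

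Without that reduction your plan breaks down at the lower bound: on each component of $\mathrm{supp}(\partial_Z\chi_2)$ the kernel is an exact derivative of a function vanishing at both endpoints, so pairing it with the slowly varying real profile $\phi_r\int F_r/\phi_r^2$ produces strong cancellation, and you obtain at best an upper bound, not a quantity comparable to $\alpha^{5/6}e^{-\alpha w_0(\tilde Y)}$ from below. Moreover, Lemma \ref{lem:real-phi(0)} — whose careful choice of $Y_1^*$, $Y_2^*$ and of the point $\tilde Y$ is exactly what prevents cancellation between the contributions on either side of $Y_c$ — is a statement about $\mathcal M$ as defined in \eqref{def: cM}, so it cannot be invoked for the different integral your route produces. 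To repair the argument you must insert the integration-by-parts/equation step above (noting $\chi_2\partial_Y\chi_1$ and $R_1$ contribute only $O(\alpha^{5/6}c_i e^{-\alpha w_0(Y_1^*-\delta_0)})$ to the imaginary part), split $\varphi_0=\varphi_{01}+\varphi_{02}$ as you intended, identify $\int\chi_2\alpha^2\bar M\varphi_{02}=-\alpha^2\mu_{f_0}(c)\mathcal M$, and combine $-\mathrm{Im}\,\mu_{f_0}(c)\sim\alpha^{-1/6}\partial_Y^2\bar M(Y_c)e^{-\alpha w_0(Y_c)}$ (from Lemmas \ref{lem: est of G}, \ref{lem: int-phi^(-2)}, \ref{lem: G-sim}) with the two-sided bounds and sign of $\Re\,\mathcal M$ from Lemma \ref{lem:real-phi(0)}; the point $\mathring Y\in[Y_c,\tilde Y]$ then comes from interpolating between the resulting upper bound $\alpha^{5/6}e^{-\alpha w_0(Y_c)}$ and lower bound $\alpha^{5/6}e^{-\alpha w_0(\tilde Y)}$, as you anticipated.
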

The proof of Lemma \ref{lem: Im-int} is based on Lemma \ref{lem: G-sim}-\ref{lem:real-phi(0)}. For the readability of the article, we leave the proof  in Appendix \ref{appendix: CL}.

\subsection{Expansion of boundary value for $j=0$.}We adopt notations in \eqref{eq:notation-A0-B0} with the form of $(\mathcal{A}_0(Y),\mathcal{B}_{0}(Y))=\sum_{k=1}^4(\mathcal{A}_0^k(Y),\mathcal{B}_0^k(Y))$. In details, we can write
\begin{align*}
\cB_0^1(0)=\kappa^{-1}\pi\int_0^{+\infty}A(Y)^2\bar M^{-2}(Y)(1-\chi)(Q_1(Y)+Q_2(Y))dY=I_{Q_1}+I_{Q_2}. 
\end{align*}
where $Q_1(Y)$ is given in \eqref{def: Q_1} and $Q_2(Y)=-\al^2 F_i(Y)\chi_0(Y)$. To obtain the precise dispersion relation, we need to get the upper and lower bound of  $\Re(\cB_0^1(0)) $ and $\Im (\mathcal B_0^1(0))$.

\begin{lemma}\label{lem: estimate B_0^1(0)}
The following three results hold.
\begin{enumerate}
\item Let $(\alpha,c)\in\mathbb H_0$. It holds that
\begin{align*}
		 |\Re(\cB_0^1(0)) |\lesssim\al^{-1},\quad\Im(\mathcal B_0^1(0))\sim \al c_i.
	\end{align*}
\item In particular, let $(\alpha,c)\in\mathbb H_0$ with $c_r\in\mathcal{H}(U_B, M_a)\cap(1-M_a^{-1},1)$.  It holds that
\begin{align*}
		 |\Re(\cB_0^1(0)) |\sim \al^{-1}.
	\end{align*}
\end{enumerate}
\end{lemma}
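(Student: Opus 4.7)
The strategy is to exploit the identity $A(Y)^2\bar M^{-2}(Y)=Ai(\kappa\eta(Y))^2(\partial_Y\eta(Y))^{-1}$, which follows from $A=E\cdot Ai(\kappa\eta)$ with $E=\bar M/(\partial_Y\eta)^{1/2}$. By Lemma \ref{lem: Ai, Bi}(4) this integrand is \emph{real} on $[0,\bar Y_2^*+\delta_0]$, so all complex dependence in $I_{Q_1}+I_{Q_2}$ comes from $Q_1+Q_2$ on that set, and from an exponentially small tail beyond it. I would split the domain of integration into the supersonic oscillatory window $[0,Y^-]$, the turning--point layer $[Y^-,Y^+]$ of width $\sim\alpha^{-2/3}$, and the subsonic evanescent window $[Y^+,Y_1-2\delta_0]\cup[Y_2+2\delta_0,\infty)$ (recalling $(1-\chi)$ is supported in the last two complements of the critical layer).

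\textbf{Main (real) term.} On $[0,Y^-]$, Lemma \ref{lem: Airy-original}(3) gives the oscillatory expansion
\[
Ai(\kappa\eta)^2=\frac{1}{2\pi(-\kappa\eta)^{1/2}}\bigl(1+\sin(\tfrac{4}{3}(-\kappa\eta)^{3/2})\bigr)+O((-\kappa\eta)^{-2}).
\]
The non-oscillatory ``$1$''-piece combined with $\eta(\partial_Y\eta)^2=\tilde F/\partial_Y\tilde F(Y_0)$ and $\kappa^{3/2}=\alpha\,\partial_Y\tilde F(Y_0)^{1/2}$ yields the identity
\[
\kappa^{-1}\pi\cdot\frac{1}{2\pi\,(-\kappa\eta)^{1/2}\,\partial_Y\eta}=\frac{1}{2\alpha\,(-\tilde F)^{1/2}},
\]
so that, using $|Q_1-\tilde Q_1|\lesssim c_i$ (Lemma \ref{lem: Q_1}),
\[
\Re I_{Q_1}\bigl|_{[0,Y^-]}^{\mathrm{main}}=\frac{1}{2\alpha}\int_0^{Y_0}\frac{\tilde Q_1(Z;c_r,M_a)}{(-\tilde F(Z))^{1/2}}\,dZ+O(\alpha^{-1}c_i)=\frac{J(c_r;M_a)}{2\alpha}+o(\alpha^{-1}).
\]
The oscillatory $\sin(\tfrac{4}{3}(-\kappa\eta)^{3/2})$-piece is handled by one integration by parts, using $\partial_Y[\tfrac{4}{3}(-\kappa\eta)^{3/2}]=-2\alpha(-\tilde F)^{1/2}\sim\alpha$, which gains a factor $\alpha^{-1}$ and produces an $O(\alpha^{-2})$ contribution.

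\textbf{Error regions.} The turning--point layer $[Y^-,Y^+]$ has length $\sim\alpha^{-2/3}$, on which $|Ai(\kappa\eta)^2(\partial_Y\eta)^{-1}|\lesssim 1$ (Lemma \ref{lem: Ai, Bi}(2)) and $|Q_1+Q_2|\lesssim 1$; the resulting bound is $\kappa^{-1}\cdot\alpha^{-2/3}=O(\alpha^{-4/3})$. On the evanescent subsonic pieces, Lemma \ref{lem: Ai, Bi}(3) gives $\kappa^{-1}\pi\,Ai(\kappa\eta)^2(\partial_Y\eta)^{-1}\lesssim \alpha^{-1}e^{-2\alpha w_0(Y)}$, producing an exponentially small remainder $\lesssim \alpha^{-2}e^{-2\alpha w_0(Y^++\delta)}$. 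Combining gives $|\Re\mathcal B_0^1(0)|\lesssim\alpha^{-1}$, and $|\Re\mathcal B_0^1(0)|\sim\alpha^{-1}$ whenever $J(c_r;M_a)\neq 0$, i.e.\ on $\mathcal H(U_B,M_a)\cap(1-M_a^{-1},1)$. This proves statement (1) for the real part and statement (2).

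\textbf{Imaginary part.} By Lemma \ref{lem: Q_1}, $|\Im Q_1|\lesssim c_i$, so the same decomposition used for $\Re I_{Q_1}$ gives $|\Im I_{Q_1}|\lesssim\alpha^{-1}c_i$. The dominant imaginary contribution comes from $I_{Q_2}$: $Q_2=-\alpha^2 F_i\chi_0$ and $\Im F_i=M_a^2c_i(U_B-c_r)/T_0$, together with $\chi_0\equiv 1$ on $[0,\bar Y_2^*+\delta_0]$, yield
\[
\Im I_{Q_2}\bigl|_{[0,Y^-]}^{\mathrm{main}}=-\frac{\alpha c_i}{2}\int_0^{Y_0}\frac{M_a^2(U_B(Z)-c_r)/T_0(Z)}{(-\tilde F(Z))^{1/2}}\,dZ+o(\alpha c_i).
\]
Since $U_B<c_r$ on $[0,Y_0]$ (because $Y_0<Y_c$), the integrand has a definite sign, so $\Im I_{Q_2}\sim\alpha c_i$ with an explicit sign. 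Combined with $|\Im I_{Q_1}|\lesssim\alpha^{-1}c_i\ll\alpha c_i$ this yields $\Im\mathcal B_0^1(0)\sim\alpha c_i$, completing statement (1).

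\textbf{Main obstacle.} The delicate point is making the Airy asymptotic analysis rigorous across the three matching regions, in particular ensuring that the stationary--phase cancellation of the $\sin(\tfrac{4}{3}(-\kappa\eta)^{3/2})$-term survives the loss of the standard asymptotic expansion near the turning point $Y_0$ where $\tilde F(Y_0)=0$. This requires either a uniform Langer comparison or a careful interpolation between the Lemma \ref{lem: Airy-original}(1)(3) asymptotics through the width-$\alpha^{-2/3}$ turning-point band handled by part (2) of that lemma; all the other steps (boundary-layer evanescent tail, estimation of $\Im Q_1$, sign analysis of the $J$- and $U_B-c_r$ integrals) are routine once the identity $A^2\bar M^{-2}=Ai(\kappa\eta)^2/\partial_Y\eta$ is exploited.
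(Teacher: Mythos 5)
Your proposal follows essentially the same route as the paper's proof: the same identity $A^2\bar M^{-2}=Ai(\kappa\eta)^2(\partial_Y\eta)^{-1}$, the same splitting into $[0,Y^-]$, the turning-point band and the evanescent region, extraction of the non-oscillatory Airy term to produce $J(c_r;M_a)/\alpha$ for the real part (with the oscillatory piece killed by integration by parts), and the sign-definite integral of $(U_B-c_r)/\bigl(T_0(-\tilde F)^{1/2}\bigr)$ on the supersonic side giving $\Im I_{Q_2}\sim\alpha c_i$, with $|\Im I_{Q_1}|\lesssim\alpha^{-1}c_i$ from $|\Im Q_1|\lesssim c_i$. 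The only minor quantitative overstatement is the claimed $O(\alpha^{-2})$ size of the oscillatory remainder: because the phase derivative $2\alpha(-\tilde F)^{1/2}$ degenerates near $Y_0$ (precisely the obstacle you flag), the careful bound, as in the paper, is $O(\alpha^{-4/3})$, which is still $o(\alpha^{-1})$ and leaves the conclusions unchanged.
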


\begin{proof}
By Lemma \ref{lem: Q_1}, we know that 
     \begin{align*}
     	\|(1-\chi) Q_1\|_{L^\infty}\leq C\text{ and }\|\mathrm{Im}((1-\chi)Q_1)\|_{L^\infty}\leq Cc_i.
     \end{align*}
Then we use the fact $|Q_2|\leq C\al^2 c_i$ and Proposition \ref{pro: Airy-1} to deduce
\begin{align}\label{eq:I-Q1}
|\mathrm{Re}(I_{Q_1})|\leq C\al^{-1},\quad |\mathrm{Im}(I_{Q_1})|\leq C\alpha^{-1}c_i,\quad |I_{Q_2}|\leq C\al c_i.
	\end{align}

\underline{Proof of (1).} From \eqref{eq:I-Q1}, it is easy to deduce 
\begin{align*}
		\left|I_{Q_1}\right|\lesssim\al^{-1},\quad |\Re(\cB_0^1(0)) |\lesssim\al^{-1}.
	\end{align*}
About $I_{Q_2}$, we can  write
\begin{align*}
I_{Q_2}=&- \alpha^2\kappa^{-1}\pi\int_0^{+\infty}A(Y)^2\bar M^{-2}\chi_0(1-\chi)F_i(Y)dY\\
=&-2\mathrm iM_a^2\alpha^2\kappa^{-1}\pi c_i\Big(\int_0^{Y^-}+\int_{Y^{-}}^{Y^{+}}+\int_{Y^+}^{+\infty}\Big)\f{Ai(\kappa\eta(Y))^2(U_B(Y)-c_r)\chi_0(1-\chi)}{\partial_Y\eta(Y)T_0(Y)}dY\\
&-M_a^2\alpha^2\kappa^{-1}\pi c_i^2\int_0^{+\infty}\f{Ai(\kappa\eta(Y))^2\chi_0(1-\chi)}{\partial_Y\eta(Y)T_0(Y)}dY\\
=&-2\mathrm iM_a^2\alpha^2\kappa^{-1}\pi c_i(\tilde I_1+\tilde I_2+\tilde I_3)-M_a^2\alpha^2\kappa^{-1}\pi c_i^2 \tilde I_4.
\end{align*}
For $\tilde I_2,~\tilde I_3 $	and $\tilde I_4$, we get by Lemma \ref{lem: Ai, Bi} that
\begin{align}\label{est: tilde I_1-I_2}
|\tilde I_2|+|\tilde I_3|\leq&C\kappa^{-1}\leq C\al^{-\f23},\quad |\tilde I_4|\leq C\kappa^{-\f12}\leq C\al^{-\f13}.
\end{align}
Then we focus on $\tilde I_1$. We first notice that $\mathrm {Im}(\tilde I_1)=0$ and $\tilde I_1<0$. Moreover, we have
\begin{align*}
|\tilde I_1|>\int_0^{Y_0/2}Ai(\kappa\eta(Y))^2\partial_Y\eta(Y)^{-1}|U_B(Y)-c_r|T_0^{-1}dY\geq C^{-1}\int_0^{Y_0/2}(-\eta(Y))^{\f12}Ai(\kappa\eta(Y))^2dY,
\end{align*}
due to the fact $(-\eta(Y))^{-\f12}\partial_Y\eta(Y)^{-1}|U_B(Y)-c_r|T_0^{-1}\geq C^{-1}$ for $Y\in[0, Y_0/2]$.
By Lemma \ref{lem: Airy-original}, we have
\begin{align*}
\int_0^{Y_0/2}(-\eta(Y))^{\f12}Ai(\kappa\eta(Y))^2dY=&\frac{1}{8\pi\kappa^\f12}\int_0^{Y_0/2}1dY+\frac{1}{8\pi\kappa^\f12}\int_0^{Y_0/2}(\cos(2\Theta(-\kappa\eta(Y)))dY+\mathcal O(\kappa^{-2})\\
\geq&C^{-1}\alpha^{-\f13}-C\al^{-\f43}\geq C^{-1}\alpha^{-\f13},
\end{align*}
%Notice that 
%\begin{align}\label{eq:B01-cos}
%\frac{1}{8\pi\kappa^\f12}\int_0^{Y_0/2}1dY= C^{-1}\alpha^{-\f13},\quad \Big|\frac{1}{8\pi\kappa^\f12}\int_0^{Y_0/2}(\cos(2\Theta(-\kappa\eta(Y)))dY\Big|
%\leq C\kappa^{-2}\leq C\al^{-\f43}.
%\end{align}
%Then by \eqref{eq:B01-cos}, we obtain 
%\begin{align*}
%\int_0^{Y_0/2}(-\eta(Y))^{\f12}Ai(\kappa\eta(Y))^2dY\geq C^{-1}\al^{-\f13},
%\end{align*}
which implies 
\begin{align}\label{eq:til-I_1}
|\tilde I_1|\geq  C^{-1}\alpha^{-\f13}.
\end{align}
This along with \eqref{est: tilde I_1-I_2} shows that $\Im (I_{Q_2})>0$ and
\begin{align*}
\Im (I_{Q_2})\geq C^{-1}c_i\al^2\kappa^{-1}(|\tilde I_1|-|\tilde I_2|-|\tilde I_3|)-C^{-1}\al^2\kappa^{-1}c_i^2|\tilde I_4|\geq C^{-1}c_i\al ,
\end{align*}
from which and \eqref{eq:I-Q1}, we deduce $\Im(I_{Q_2})\sim \al c_i$. Then due to $|\mathrm{Im}(I_{Q_1})|\leq C\alpha^{-1}c_i$ in \eqref{eq:I-Q1},   we get $\Im(\mathcal B_0^1(0))\sim \al c_i.$

\underline{Proof of (2).} In this case, we shall prove that for 	$(\alpha,c)\in\mathbb H_0$ with $c_r\in\mathcal{H}(U_B, M_a)\cap(1-M_a^{-1},1)$, we can get improved estimates (lower bound estimates) compared to (1).
Using the fact $A(Y)^2\bar M^{-2}(Y)=Ai(\kappa\eta(Y))^2(\pa_Y\eta)^{-1}$, we write 
\begin{align*}
I_{Q_1}
=&\kappa^{-1}\pi\int_0^{Y^{-}}Ai(\kappa\eta(Y))^2(\pa_Y\eta)^{-1}Q_1(Y)dY+\mathcal{R}_1,
\end{align*}
where
\begin{align*}
\mathcal{R}_1=\kappa^{-1}\pi\int_{Y^{-}}^{+\infty}Ai(\kappa\eta(Y))^2(\pa_Y\eta)^{-1}(1-\chi)\chi_1 Q_1(Y)dY.
\end{align*}
By Lemma \ref{lem: Ai, Bi}, it is easy to see $|\mathcal{R}_1|\leq C\kappa^{-2}\leq C\al^{-\f43}$. So, we focus on the first term and apply Lemma \ref{lem: Airy-original} and \eqref{eq: eta} to write
\begin{align*}
&\int_0^{Y^{-}}Ai(\kappa\eta(Y))^2(\pa_Y\eta)^{-1}Q_1(Y)dZ\\
&=\kappa^{-\f12}( \pa_Y\tilde F(Y_0))^{\f12}\int_0^{Y^{-}}(- \tilde F(Y))^{-\f12}\left(\cos(\Theta(-\kappa\eta(Y)))+a_0(-\kappa \eta(Y))^{-\f32}\sin(\Theta(-\kappa\eta(Y)))\right)^2\\
&\qquad\qquad\qquad\qquad\qquad\qquad\cdot(1+\mathcal O((-\kappa\eta(Y))^{-3}))Q_1(Y)dZ\\
&=\kappa^{-\f12}( \pa_Y \tilde F(Y_0))^{\f12}\int_0^{Y^{-}}(-\tilde F(Y))^{-\f12}\cos(\Theta(-\kappa\eta(Y)))^2Q_1(Y)dZ+\mathcal{R}_2.
\end{align*}
By $|Q_1(Y)|\leq C$ for $Y\in[0, Y^{-}]$ and Lemma \ref{lem:est-eta}, we have
\begin{align*}
|\mathcal{R}_2|\leq C\kappa^{-2}\int_0^{Y^{-}}\f1{|Y-Y_c|^2}dY\leq C\kappa^{-1}.
\end{align*}
We notice that 
\begin{align*}
&\kappa^{-\f12}( \pa_Y \tilde F(Y_0))^{\f12}\int_0^{Y^{-}}(-\tilde F(Y))^{-\f12}\cos(\Theta(-\kappa\eta(Y)))^2Q_1(Y)dY\\
&=\f12\kappa^{-\f12}( \pa_Y \tilde F(Y_0))^{\f12}\int_0^{Y^{-}}(-\tilde F(Y))^{-\f12}Q_1(Y)dY+\mathcal{R}_3.
\end{align*}
About the remainder $\mathcal{R}_3$, we get by integration by parts that 
\begin{align*}
|\mathcal{R}_3|
%=&\Big|\f12\kappa^{-\f12}( \pa_Y \tilde F(Y_0))^{\f12}\int_0^{Y^{-}}(-\tilde F(Y))^{-\f12}\cos(2\Theta(-\kappa\eta(Y)))Q_1(Y)dY\Big|\\
%=&\Big|\f12\kappa^{-\f12}( \pa_Y \tilde F(Y_0))^{\f12}\int_0^{Y^{-}}(-\tilde F(Y))^{-\f12}\kappa^{-1}(\kappa\eta )^{-\f12}(\pa_Y\eta )^{-1}Q_1(Y)d\sin(2\Theta(-\kappa\eta(Y)))\Big|\\
\leq&C\kappa^{-2}\Big|(-\tilde F(Y))^{-1}\sin(2\Theta(-\kappa\eta(Y))) Q_1(Y)\Big|_{Y=0}^{Y=Y^{-}}\Big|\\
&+C\kappa^{-2}\Big|\int_0^{Y^{-}}\pa_Y((-\tilde F(Y))^{-1}Q_1)\sin(2\Theta(-\kappa\eta(Y))) dY\Big|\leq C\kappa^{-1}.
\end{align*}
Thus, we infer from the above analysis and notations \eqref{def: J(c_r)}-\eqref{def: tilde Q_1} that
\begin{align}\label{equiv: I_Q1}
I_{Q_1}=&\f{\pi \kappa^{-\f32}}2( \pa_Y \tilde F(Y_0))^{\f12}\int_0^{Y^{-}}(-\tilde F(Y))^{-\f12}Q_1(Y)dY+\mathcal O(\al^{-\f43})\\
\nonumber
=&\f{\pi \kappa^{-\f32}}2( \pa_Y \tilde F(Y_0))^{\f12}\int_0^{Y_0}(-\tilde F(Y))^{-\f12}\tilde Q_1(Y)dY+\mathcal O(\al^{-\f43}+\al^{-1}c_i)\\
\nonumber
=&\f{\pi \kappa^{-\f32}}2( \pa_Y \tilde F(Y_0))^{\f12} J(c_r; M_a)+\mathcal O(\al^{-\f43}),
\end{align}
by using $(\al,c)\in \mathbb{H}_0$ in the last line. 
For  any $c_r\in\mathcal{H}(U_B, M_a)\cap(1-M_a^{-1},1)$, it holds $J(c_r; M_a)\neq 0$. According to \eqref{equiv: I_Q1} , we get $|I_{Q_1}|\sim \al^{-1}$.  By \eqref{eq:I-Q1} and $(\al,c)\in \mathbb{H}_0$, we obtain estimates in (2).
\end{proof}

Then we give the pointwise upper bound for $\mathcal{A}_0^k$ and $\mathcal{B}_0^k$, $k=1,2,3$ in the interval $[0, Y_2^*]$, which shall be used in the iteration scheme for $j\geq 1$.

\begin{lemma}\label{lem:A01B01}
 Let $(\alpha, c)\in \mathbb H_0$. Then it holds that for any $Y\in[0, Y_2^*]$,
	\begin{align*}
		&\left(|\mathcal A_0^1(Y)|+e^{2\alpha w_0(Y)}|\mathcal B_0^1(Y)|\right)+e^{\alpha\int_{Y_1-\d_0}^{Y_1} F_r^\f12(Z)dZ}\left(|\mathcal A_0^2(Y)|+e^{2\alpha w_0(Y)}|\mathcal B_0^2(Y)|\right)\leq C\alpha^{-1},\\
		&\left(|\mathrm{Im}(\mathcal A_0^1(Y)|+e^{2\al w_0(Y)}|\mathrm{Im}(\mathcal B_0^1(Y)|\right)\\
		&\qquad\qquad +\al^{2}e^{\alpha\int_{Y_1-\d_0}^{Y_1} F_r^\f12(Z)dZ}\left(|\mathrm{Im}(\mathcal A_0^2(Y))|+e^{2\al w_0(Y)}|\mathrm{Im}(\mathcal B_0^2(Y))|\right)\leq C\alpha c_i,\\
		&|\mathcal A_0^3(Y)|\leq C\al^{-1}e^{-2\al \int_{Y_1^*}^{Y_1-2\d_0}F_r^\f12(Y') dY'}\mathbf 1_{[Y_1^*-\delta_0,Y_2^*]},\\
		&|\mathcal B_0^3(Y)|\leq
		\left\{
		\begin{aligned}
		&C\al^{-2}e^{-2\alpha w_0(Y_1^*-\delta_0)}e^{-2\al \int_{Y_1^*}^{Y_1-2\d_0}F_r^\f12(Z)dZ}+\al^{-1}e^{-2\al w_0(Y_2^*)},\quad Y\in[0, Y_1^*-\d_0],\\
		&C\al^{-2}e^{-2\al w_0(Y)}e^{-2\al \int_{Y_1^*}^{Y_1-2\d_0}F_r^\f12(Z)dZ}\mathbf 1_{[Y_1^*-\delta_0,Y_1^*]}+\al^{-1}e^{-2\al w_0(Y_2^*)},\quad Y\in[Y_1^*-\d_0, Y_2^*].
		\end{aligned}
		\right.
	\end{align*}

		\end{lemma}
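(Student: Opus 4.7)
The plan is to identify each pair $(\mathcal A_0^k,\mathcal B_0^k)$ with one of the canonical source-term classes of Section~3 and invoke the corresponding Green's representation estimate from Propositions~\ref{pro: Airy-1}--\ref{pro: Airy-3}, feeding in the \emph{a priori} bounds on $A$, $Q_1$, $Q_2$, and $\varphi_0$ already established earlier in the paper.

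For $k=1$, the source $H_{21}[A]=(1-\chi)(Q_1+Q_2)\chi_1\,A$ is of the form $a(Y)A(Y)+b(Y)B(Y)$ required by Proposition~\ref{pro: Airy-1}, with $a=(1-\chi)(Q_1+Q_2)\chi_1$ and $b\equiv 0$. Lemma~\ref{lem: Q_1} together with the identity $Q_2=-\alpha^2 F_i\chi_0$ (and $|F_i|\lesssim c_i$) yields $\|(1+Y)^2 a\|_{L^\infty}\lesssim 1+\alpha^2 c_i\lesssim 1$ and $\|(1+Y)^2\operatorname{Im}a\|_{L^\infty}\lesssim c_i+\alpha^2 c_i\lesssim \alpha^2 c_i$. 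Plugging these into statements (1) and (2) of Proposition~\ref{pro: Airy-1} gives the pointwise bounds on $\mathcal A_0^1,\mathcal B_0^1$ of order $\alpha^{-1}$ and their imaginary-part bounds of order $\alpha c_i$.

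For $k=2$, the source $H_{22}[A]=-\alpha^2 F(1-\chi)\int_Y^{+\infty}\partial_Z\chi_1\,A\,dZ$ matches the nonlocal template $K_g(Y)=(1-\chi)\int_Y^{+\infty}g(Z)\,dZ$ of Proposition~\ref{pro: Airy-2} with the choice $g=-\alpha^2\partial_Y\chi_1\,A$, whose support $\mathrm{supp}(\partial_Y\chi_1)\subseteq[Y_1,Y_1+\delta_0]\cup[Y_2-\delta_0,Y_2]$ fulfils the hypothesis. Since $|A|\lesssim \alpha^{-1/6}e^{-\alpha w_0}$ on this support we get $\|g\|_{L^\infty_{w_0}}\lesssim \alpha^{11/6}$; moreover, by item~(4) of Lemma~\ref{lem: Ai, Bi}, $A$ is real on $[0,\bar Y_2^*+\delta_0]$, so $\operatorname{Im}g\equiv 0$ there and the imaginary-part bound of Proposition~\ref{pro: Airy-2} collapses to the $c_i\|g\|_{L^\infty_{w_0}}$ contribution. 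This produces the claimed gap factor $e^{-\alpha\int_{Y_1-\delta_0}^{Y_1}F_r^{1/2}}$ with the size $\alpha^{-1}$ (resp.\ $\alpha^{-1}c_i$ for the imaginary part), so that once multiplied by the weight $\alpha^2 e^{\alpha\int_{Y_1-\delta_0}^{Y_1}F_r^{1/2}}$ as in the statement the imaginary-part estimate becomes $\lesssim \alpha c_i$.

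For $k=3$, the source $H_{31}[\varphi_0]=2\partial_Y\chi_2\,\bar M\varphi_0$ is of the type treated in Proposition~\ref{pro: Airy-3}, its support being $\mathrm{supp}(\partial_Y\chi_2)\subseteq[Y_1^*-\delta_0,Y_1^*]\cup[Y_2^*,Y_2^*+\delta_0]$. We invoke Proposition~\ref{pro: iteration-varphi} to control $\varphi_0$ by the global bound $\|\varphi_0\|_{L^\infty_{w_0}}\lesssim\alpha^{5/6}$ and by the sharper localized bound $\|\varphi_0\|_{L^\infty_{w_0}([\bar Y_1^*,Y_1^*])}\lesssim\alpha^{-1/6}e^{-2\alpha\int_{Y_1^*}^{Y_1-2\delta_0}F_r^{1/2}}$. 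Inserting these into statements (2)--(3) of Proposition~\ref{pro: Airy-3}, the $\mathcal A_0^3$ estimate reduces to a single contribution coming from the left support $[Y_1^*-\delta_0,Y_1^*]$ and yields the factor $\alpha^{-1}e^{-2\alpha\int_{Y_1^*}^{Y_1-2\delta_0}F_r^{1/2}}$ on $[Y_1^*-\delta_0,\infty)$. For $\mathcal B_0^3$ we must add two contributions: one from the left support (producing the term with $e^{-2\alpha\int_{Y_1^*}^{Y_1-2\delta_0}F_r^{1/2}}$ factor, controlled by the sharp localized norm of $\varphi_0$) and one from the right support near $Y_2^*$ (producing the $\alpha^{-1}e^{-2\alpha w_0(Y_2^*)}$ term, controlled only by the global norm of $\varphi_0$), and track them separately in each of the regions $Y\in[0,Y_1^*-\delta_0]$ and $Y\in[Y_1^*-\delta_0,Y_2^*]$ as dictated by statements (2)--(3) of Proposition~\ref{pro: Airy-3}.

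The mechanical invocation of the Section~3 propositions is straightforward, so the only delicate point is the bookkeeping for $\mathcal B_0^3$, where the source has two disjoint supports on opposite sides of the critical layer contributing with different weights; combining them correctly in each of the two subregions and keeping track of the regional norm versus the global norm of $\varphi_0$ is the part that requires the most care. Everything else follows by direct substitution of the available pointwise/weighted bounds on $A$, $Q_1$, $Q_2$, and $\varphi_0$ into the respective estimates of Propositions~\ref{pro: Airy-1}--\ref{pro: Airy-3}.
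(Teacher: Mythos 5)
Your proposal is correct and follows essentially the same route as the paper: $(\mathcal A_0^1,\mathcal B_0^1)$ via Proposition \ref{pro: Airy-1} with $a=(1-\chi)(Q_1+Q_2)\chi_1$, $b=0$; $(\mathcal A_0^2,\mathcal B_0^2)$ via Proposition \ref{pro: Airy-2} with $g=-\alpha^2\partial_Y\chi_1 A$; and $(\mathcal A_0^3,\mathcal B_0^3)$ via Proposition \ref{pro: Airy-3} with $f=2\partial_Y\chi_2\bar M\varphi_0$, splitting the two support components and using the localized bound on $\varphi_0$ near $Y_1^*$ versus the global one near $Y_2^*$, exactly as in the paper. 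One small correction: item (4) of Lemma \ref{lem: Ai, Bi} only says $\eta$, $Ai(\kappa\eta)$, $Bi(\kappa\eta)$ are real on $[0,\bar Y_2^*+\delta_0]$, while $A=E\,Ai(\kappa\eta)$ with $E=\bar M/(\partial_Y\eta)^{1/2}$ has an imaginary part of size $c_i$, so $\mathrm{Im}\,g\not\equiv 0$; the paper instead uses $\|\mathrm{Im}\,g\|_{L^\infty_{w_0}}\lesssim c_i\alpha^{11/6}$, which is of the same size as the $c_i\|g\|_{L^\infty_{w_0}}$ term, so your final estimate is unaffected.
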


\begin{proof}
The first estimate is a direct result of the procedure in Proposition \ref{pro: interation-Airy}. So, we focus on the estimates for the last three ones.\smallskip

\underline{Estimates of $\Im \mathcal{A}_0^1$ and $\Im \mathcal{B}_0^1$.}	We use Proposition \ref{pro: Airy-1} and take $a(Y)=(1-\chi)(Q_1+Q_2)\chi_1$ and $b(Y)=0$. By Lemma \ref{lem: Q_1}, we have
	\begin{align*}
		|(1+Y)^2a(Y)|\leq C \text{ and } |(1+Y)^2\mathrm{Im}(a(Y))|\leq C\alpha^2 c_i,
	\end{align*}
	which implies that for any $Y\in[0, Y_2^*]$,
	\begin{align*}
		|\mathrm{Im}(\mathcal A_0^1(Y))|+|e^{2\al w_0(Y)}\mathrm{Im}(\mathcal B_0^1(Y))|\leq C\alpha c_i.
	\end{align*}
	
\underline{Estimates of $\Im \mathcal{A}_0^2$ and $\Im \mathcal{B}_0^2$.}	We use Proposition \ref{pro: Airy-2} and take $g(Y)=-\al^2 \pa_Y \chi_1 A(Y)$ with $\|g\|_{L^\infty_{w_0}}+c_i^{-1}\|\Im g\|_{L^\infty_{w_0}}\leq C\al^{\f{11}{6}},$ 
 to obtain 
 \begin{align*}
 \|\Im\mathcal{A}_0^2\|_{L^\infty}+\|\Im \mathcal{B}_0^2\|_{L^\infty_{2w_0}}\leq& C \al^{-\f{17}{6}}e^{-\alpha\int_{Y_1-\d_0}^{Y_1} F_r^\f12(Z)dZ}(c_i\|g\|_{L^\infty_{w_0}}+\|\Im g\|_{L^\infty_{w_0}})
\\
\leq& Cc_i\al^{-1}e^{-\alpha\int_{Y_1-\d_0}^{Y_1} F_r^\f12(Z)dZ}\ll C\alpha c_i.
 \end{align*}

\underline{Estimates of $\mathcal{A}_0^3$ and $\mathcal{B}_0^3$.}	By the definition, we know 
	\begin{align*}
		\mathcal A_0^3(Y)=&2\kappa^{-1}\pi\int_0^YB(Z)\bar M(Z)^{-2}\partial_Z\chi_2\bar M(Z)\varphi_0(Z)dZ,\\
		\mathcal{B}_0^3(Y)=&	2\kappa^{-1}\pi\int_Y^{+\infty}A(Z)\bar M(Z)^{-2}\partial_Z\chi_2\bar M\varphi_0(Z)dZ,
	\end{align*}
	which implies 
	\begin{align*}
		&\mathcal A_0^3(Y)\equiv 0 \text{ on }[0,Y_1^*-\delta_0] \text{ and }\mathcal A_0^3(Y)\equiv\mathcal A_0^3( Y_1^*)\text{ on }[Y_1^*,Y_2^*],\\
		&\mathcal B_0^3(Y)\equiv\mathcal B_0^3( Y_2^*)\text{ on }[ Y_1^*, Y_2^*],\quad\mathcal B_0^3(Y)\equiv\mathcal B_0^3( Y_1^*-\delta_0)\text{ on }[0,  Y_1^*-\delta_0].
	\end{align*}
	On the other hand,  notice that 
	\begin{align*}
		&|\varphi_0(Y)|\leq C\al^{-\f16}e^{-2\al \int_{Y}^{Y_1-2\d_0}F_r^\f12(Y') dY'} e^{-\al w_0(Y)},\quad Y\in[Y_1^*-\delta_0,Y_1^*],\\
		&|\varphi_0(Y)|\leq C\al^{\f56}e^{-\al w_0(Y)},\quad Y\in[Y_2^*, Y_2^*+\d_0]
	\end{align*}
	which along with Proposition \ref{pro: Airy-3}  implies
		\begin{align*}
		|\mathcal A_0^3(Y)|\leq& C\al^{-1}e^{-2\al \int_{Y_1^*}^{Y_1-2\d_0}F_r^\f12(Y') dY'}\mathbf 1_{[Y_1^*-\delta_0,Y_2^*]},\quad Y\in[0, Y_2^*],\\
		|\mathcal B_0^3(Y)|\leq&
		\left\{
		\begin{aligned}
		&C\al^{-2}e^{-2\alpha w_0(Y_1^*-\delta_0)}e^{-2\al \int_{Y_1^*}^{Y_1-2\d_0}F_r^\f12(Z)dZ}+\al^{-1}e^{-2\al w_0(Y_2^*)},\quad Y\in[0, Y_1^*-\d_0],\\
		&C\al^{-2}e^{-2\al w_0(Y)}e^{-2\al \int_{Y_1^*}^{Y_1-2\d_0}F_r^\f12(Z)dZ}\mathbf 1_{[Y_1^*-\delta_0,Y_1^*]}+\al^{-1}e^{-2\al w_0(Y_2^*)},\quad Y\in[Y_1^*-\d_0, Y_2^*].
				\end{aligned}
		\right.
	\end{align*}
	
\end{proof}

We introduce an inequality which is used frequently in the following part. Let $\bar Y_1^{**}\in (Y^{+},\bar Y_1^*)$ with
$2w_0(\bar Y_1^{**})>w_0(Y_1+\d)+w_0(Y^{+}+\d)$ for some suitable small $\d>0$  but independent of $\al$. Then by Lemma \ref{lem:real-phi(0)} and Lemma \ref{lem: Im-int}, it holds that
\begin{align}\label{est: bdd by Im K_B}
\begin{split}
&e^{\al w_0(Y^{+}+\d_0)}e^{-\alpha w_0(\bar Y_1^{**})}\Big(e^{-\al w_0(Y_1^*-\d_0)}e^{-2\al \int_{Y_1^*}^{Y_1-2\d_0}F_r^\f12(Y') dY'}+\al e^{-\al w_0(Y_2^*)}\Big)\\
&\ll \al^{-\f23}\cdot \al^{-\f{11}{6}}e^{-\al w_0(\tilde Y)}\leq \al^{-\f23}|\Im(\frak{D}_0(\al,c))|.
\end{split}
\end{align}

Next we give pointwise estimates for $\mathcal{A}_0^4(Y)$ and $\mathcal{B}_0^4(Y)$ in the interval $Y\in[0, Y_2^*]$.

\begin{lemma}\label{lem:A04B04}
Let $(\alpha, c)\in\mathbb H$ and $Y_1^*, Y^*_2$ be given as in Lemma \ref{lem:real-phi(0)}.	Then there holds  for any $Y\in[0,Y^{+}+\d]$,
\begin{align}
		\mathcal A_0^4(Y)=&\frak{D}_0(\al,c)(\partial_YB(Y)\bar M(Y)^{-2}(1-\bar\chi)\label{est: A_0^4-near}+B(Y) M(Y)^{-2}\pa_Y\bar \chi+\mathcal R_{\mathcal{A}_0^4}(Y)),\\
    	\mathcal B_0^4(Y)=&\frak{D}_0(\al,c)(-\partial_Y A(Y)\bar M^{-2}(Y)(1-\bar\chi)\label{est: B_0^4-near}-A(Y) M(Y)^{-2}\pa_Y\bar \chi+\mathcal{R}_{\mathcal{B}_0^4}(Y)),
    \end{align}
    where 
    \begin{align*}
		&|\mathcal{R}_{\mathcal{A}_0^4}(Y)|+c_i^{-1}|\Im (\mathcal{R}_{\mathcal{A}_0^4}(Y))|\leq C\al^{-\f23}e^{\al w_0(Y)},\\
		&|\mathcal{R}_{\mathcal{B}_0^4}(Y)|+c_i^{-1}|\Im(\mathcal{R}_{\mathcal{B}_0^4}(Y))|\leq C\al^{-\f23}e^{-\al w_0(Y)}.
	\end{align*}
For any $Y\in[Y^{+}+\d,Y_2^*]$, we have
    	\begin{align}
		|\mathcal A_0^4(Y)|\leq& C\alpha^{-2}\Big(e^{-2\al \int_{Y_1^*}^{Y_1-2\d_0}F_r^\f12(Y') dY'}+\al e^{-\al \int_{Y}^{Y_2^*}F_r^\f12(Y') dY'}\Big),\label{est: A_0^4-far}\\
		|\mathcal{B}_0^4(Y)|
    	\leq&C\alpha^{-3}e^{-2\alpha w_0(Y)}\Big(e^{-2\al \int_{Y_1^*}^{Y_1-2\d_0}F_r^\f12(Y') dY'}+\al e^{-\al \int_{Y}^{Y_2^*}F_r^\f12(Y') dY'}\Big). \label{est: B_0^4-far}
	\end{align}
In particular, we have
\begin{align}
\mathcal B_0^4(0)=&\frak{D}_0(\al,c)\mathcal{R}_{\mathcal{B}_0^4}(0)\label{est: B_0^4(0)}.
\end{align}
	
	\end{lemma}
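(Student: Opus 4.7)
The plan is to exploit two structural facts. First, from Lemma~\ref{lem: Im-int}, $\mathcal K_B[\varphi_0](Z)$ is identically equal to the constant $\mathcal K_B[\varphi_0](\bar Y_1^*)$ for $Z\in[0,Y_1^*-\delta_0]$ and vanishes for $Z\geq Y_2^*+\delta_0$, so that pulling out this constant factor recovers exactly $\frak D_0=\kappa^{-1}\al^{-2}\pi\mathcal K_B[\varphi_0](\bar Y_1^*)$. Second, since $\mathcal L_{app}[A]=\mathcal L_{app}[B]=0$, the identity
$$\al^2\tilde F A\bar M^{-2}=\partial_Z(\partial_Z A/\bar M^2)-Q_1 A\bar M^{-2}$$
(and its $B$-analogue), together with $F-\tilde F=F_i=\mathcal O(c_i)$ on the relevant region where $\chi_0=1$, lets us trade the $\al^2 F$ weight in the integrand for a perfect derivative plus lower-order terms.

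For $Y\in[0,Y^++\delta]$, I will split the integrals defining $\mathcal A_0^4(Y)$ and $\mathcal B_0^4(Y)$ at $Y_1^*-\delta_0$. On $[0,Y_1^*-\delta_0]$ the cutoff $(1-\chi)$ equals $1$ and $\mathcal K_B[\varphi_0]$ is constant, so the truncated integrals become $\al^2\frak D_0\int B(1-\bar\chi)F\bar M^{-2}dZ$ (and analogously with $A$). Substituting the identity above and integrating by parts twice in $Z$ produces the boundary contributions at $Z=Y$ of the form $(1-\bar\chi(Y))\partial_Y B(Y)/\bar M^2(Y)$ and $\partial_Y\bar\chi(Y) B(Y)/\bar M^2(Y)$, which are exactly the two explicit leading terms in \eqref{est: A_0^4-near}--\eqref{est: B_0^4-near}. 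The boundary term at $Z=Y_1^*-\delta_0$, the two correction integrals generated by $Q_1 A\bar M^{-2}$ and $\al^2 F_i A\bar M^{-2}$, and the remaining tail contribution from $[Y_1^*-\delta_0,Y_2^*+\delta_0]$ are all absorbed into $\mathcal R_{\mathcal A_0^4}$ and $\mathcal R_{\mathcal B_0^4}$. Each such piece is bounded by $C\al^{-2/3}e^{\pm\al w_0(Y)}$ using: the Airy decay estimate $|\partial_Z A(Y_1^*-\delta_0)/\bar M^2(Y_1^*-\delta_0)|\lesssim\al^{5/6}e^{-\al w_0(Y_1^*-\delta_0)}$ from Lemma~\ref{lem: Ai, Bi}, which is sharply offset by the exponential factor in $\frak D_0$ through \eqref{est: bdd by Im K_B}; the pointwise bound on $Q_1$ from Lemma~\ref{lem: Q_1}; the trivial $|F_i|\lesssim c_i$; and on the tail the bound $|\mathcal K_B[\varphi_0](Z)|\leq C\al^{5/6}e^{-\al w_0(Z)}$ from Lemma~\ref{lem: Im-int} combined with the exponential decay of the Airy factors. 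The imaginary-part statement on $\mathcal R$ follows from $|\Im Q_1|\lesssim c_i$, the fact that $F_i$ itself is of order $c_i$, and the imaginary-part analysis of $\mathcal K_B[\varphi_0]$ already carried out in Lemma~\ref{lem: Im-int}.

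For $Y\in[Y^++\delta,Y_2^*]$ the integration-by-parts route no longer gives a useful formula; instead, I will estimate $\mathcal A_0^4$ and $\mathcal B_0^4$ directly. The source $H_{32}^{(1)}[\varphi_0]$ has the structural form $h(Z)K_g(Z)$ covered by Proposition~\ref{pro: Airy-4}/Proposition~\ref{pro: Airy-5} (with $h=(1-\bar\chi)F$ and $K_g=\mathcal K_B[\varphi_0]$), so applying those propositions together with the pointwise bound on $\mathcal K_B[\varphi_0]$ from Lemma~\ref{lem: Im-int} yields the polynomial bounds \eqref{est: A_0^4-far}--\eqref{est: B_0^4-far}. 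Finally, \eqref{est: B_0^4(0)} is immediate from \eqref{est: B_0^4-near} evaluated at $Y=0$: since $\bar\chi\equiv 1$ in a neighborhood of the origin by the support convention of Remark~\ref{rem: cut-off functions}, one has $(1-\bar\chi)(0)=\partial_Y\bar\chi(0)=0$, leaving only $\frak D_0\mathcal R_{\mathcal B_0^4}(0)$.

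The main technical hurdle will be verifying that the remainder bound $|\mathcal R_{\mathcal B_0^4}(Y)|\leq C\al^{-2/3}e^{-\al w_0(Y)}$ (and its imaginary-part analogue) correctly absorbs the leftover boundary term at $Z=Y_1^*-\delta_0$, which naively carries the large factor $\al^{5/6}e^{-\al w_0(Y_1^*-\delta_0)}$; the exponential $e^{-\al w_0(Y_1^*-\delta_0)}$ combined with the decay already built into $\frak D_0$ must furnish both the required $\al^{-2/3}$ gain and the weight $e^{-\al w_0(Y)}$, which is precisely the content of the inequality \eqref{est: bdd by Im K_B}. Tracking the imaginary part through these cancellations, in particular confirming that each remainder term carries at least one factor of $c_i$ beyond the overall $\frak D_0$ scaling, is the most delicate bookkeeping step.
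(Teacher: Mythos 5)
Your proposal is essentially the paper's own proof: on $[0,Y^{+}+\d]$ you pull out the constant $\mathcal K_B[\varphi_0](\bar Y_1^*)$ to produce $\frak{D}_0(\al,c)$, rewrite $\al^2 F A\bar M^{-2}$ (resp. $B$) as a total derivative plus $Q_1,Q_2$ corrections via $\mathcal L[A]=(Q_1+Q_2)A$, integrate by parts to extract the two boundary terms, and absorb everything else into $\mathcal R_{\mathcal A_0^4},\mathcal R_{\mathcal B_0^4}$ using Lemma \ref{lem: Ai, Bi}, Lemma \ref{lem: Q_1}, $|F_i|\lesssim c_i$ and \eqref{est: bdd by Im K_B}, while the far region follows from Proposition \ref{pro: Airy-4} together with the bounds on $\varphi_0$, and \eqref{est: B_0^4(0)} from $\bar\chi\equiv1$ near $Y=0$. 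The only slight mislabeling is where \eqref{est: bdd by Im K_B} enters: the boundary term at $Z=Y_1^*-\d_0$ sits inside the $\frak{D}_0$ bracket and is killed by the plain exponential gap $\al^{\f56}e^{-\al w_0(Y_1^*-\d_0)}\ll\al^{-\f23}e^{-\al w_0(Y)}$ for $Y\le Y^{+}+\d$, whereas \eqref{est: bdd by Im K_B} is what lets the tail contribution $\mathcal B_0^4(Y_1^*-\d_0)$ (which is not a priori a multiple of $\frak{D}_0$) be rewritten as $\frak{D}_0$ times an admissible remainder — a harmless bookkeeping point that does not affect the argument.
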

\begin{proof}
	By the definition, we write for any $Y\leq  Y^{+}+\d$,
	\begin{align*}
		&\mathcal A_0^4(Y)=\kappa^{-1}\pi\int_0^{Y}B(Z)\bar M(Z)^{-2}(1-\bar M(Z)^2)(1-\bar\chi)\mathcal K_B[\varphi_0](Z)dZ, \\
		&\mathcal B_0^4(Y)=\kappa^{-1}\pi\int_Y^{+\infty}A(Z)\bar M(Z)^{-2}(1-\bar M(Z)^2)(1-\chi)(1-\bar\chi)\mathcal K_B[\varphi_0](Z)dY.
	\end{align*}
	
	\underline{Estimates of $\mathcal A_0^4(Y)$ for any $Y\in[0, Y^{+}+\d]$.} By the fact  $\mathcal K_B[\varphi_0](Y)=\mathcal K_B[\varphi_0](\bar Y_1^*)$	for $Y\in[0, Y_1^*-\d_0]$, we first  write 
	\begin{align}\label{eq:A04(Y)}
		\begin{split}
				\mathcal A_0^4(Y)		=&\frak{D}_0(\al,c)\int_0^Y\alpha^2F(Z)B(Z)\bar M(Z)^{-2}(1-\bar\chi)dZ.
		\end{split}
	\end{align}
We use the  equation $\mathcal{L}[B]=(Q_1+Q_2)B$ and integration by parts to write
	\begin{align*}
		&\int_0^Y\alpha^2F(Z)B(Z)\bar M(Z)^{-2}(1-\bar\chi)dZ\\
		=&\int_0^{Y}(\partial_Z^2B-\frac{2\pa_Y\bar M}{M}\partial_Z B-(Q_1+Q_2)B)\bar M(Z)^{-2}(1-\bar\chi)dZ\\
		=&\partial_YB(Y)\bar M(Y)^{-2}(1-\bar\chi)+B(Y) M(Y)^{-2}\pa_Y\bar \chi-\int_0^YQ_1(Z)B(Z)\bar M(Z)^{-2}(1-\bar\chi)dZ\\
		&\quad-\int_0^YB(Z)\partial_Z(\bar M^{-2}\partial_Z\bar\chi)dZ-\int_0^YQ_2(Z)B(Z)\bar M(Z)^{-2}(1-\bar\chi)dZ\\
		=&\partial_YB(Y)\bar M(Y)^{-2}(1-\bar\chi)+B(Y) M(Y)^{-2}\pa_Y\bar \chi+I_1+I_2+I_3.
	\end{align*}
By Lemma \ref{lem: Ai, Bi} and \eqref{est: Q_2}, it is easy to see 
\begin{align*}
|I_3|\leq C\kappa^{-\f14} \al^2 c_i e^{\al w_0(Y)}\leq C\al^{\f{11}{6}}c_i e^{\al w_0(Y)}.
\end{align*}

Next, we give an estimate of $I_1$. For $Y\in [0, Y^{-}]$, we use the similar argument above to get
\begin{align*}
	I_1=&-\alpha^{-2}\partial_YBQ_1\bar M^{-2}F^{-1}(1-\bar\chi)\Big|_{Y=0}^{Y}+\alpha^{-2}\int_{0}^Y\partial_ZB(Z)\bar M^{-2}\partial_Z(Q_1F^{-1}(1-\bar\chi))dZ\\
	&\quad+\alpha^{-2}\int_{0}^Y(Q_1+Q_2)B(Z)Q_1\bar M^{-2}F^{-1}(1-\bar\chi)dZ=I_1^1+I_1^2+I_1^3.
	\end{align*}
According to Lemma \ref{lem: Ai, Bi}, we have
\begin{align*}
\sum_{k=1}^3(|I_1^k|+c_i^{-1}|\Im I_1^k|)\leq&C\al^{-2}\al^{\f56}|Y-Y_0|^{-\f34}\leq C\al^{-\f23},
\end{align*}
which implies that for $Y\in[0, Y^{-}]$, it holds that $|I_1|+c_i^{-1}|\Im I_1|\leq C\al^{-\f23}.$
For $Y\in[Y^{-},Y^{+}+\d]$, we have
\begin{align*}
I_1=-\int_0^{Y^{-}}Q_1(Z)B(Z)\bar M(Z)^{-2}(1-\bar\chi)dZ-\int_{Y^-}^YQ_1(Z)B(Z)\bar M(Z)^{-2}(1-\bar\chi)dZ=I_1^4+I_1^5,
\end{align*}
Similarly, we have
\begin{align*}
\sum_{k=4}^5(|I_1^k|+c_i^{-1}|\Im I_1^k|)\leq C\al^{-\f23} e^{\al w_0(Y)}.
\end{align*}
Then  for $Y\in[Y^{-},Y^{+}+\d]$, we have $|I_1|+c_i^{-1}|\Im I_1|\leq C\al^{-\f23}e^{\al w_0(Y)}.$
 Thus, for $Y\in[0, Y^{+}+\d]$, we arrive at
 \begin{align*}
 |I_1|+c_i^{-1}|\Im I_1|\leq C\al^{-\f23}e^{\al w_0(Y)}.
 \end{align*}

Using a similar argument in $I_1$, we also get
\begin{align*}
|I_2|+c_i^{-1}|\Im I_2|\leq &C\al^{-\f23}e^{\al w_0(Y)}.
\end{align*}

Gathering estimates about $I_1$, $I_2$ and $I_3$, we obtain
\begin{align*}
\int_0^Y\alpha^2F(Z)B(Z)\bar M(Z)^{-2}(1-\bar\chi)dZ=\partial_YB(Y)\bar M(Y)^{-2}(1-\bar\chi)+B(Y) M(Y)^{-2}\pa_Y\bar \chi+\mathcal{R}_{\mathcal{A}_0^4}(Y),
\end{align*}
where 
\begin{align*}
|\mathcal{R}_{\mathcal{A}_0^4}(Y)|+c_i^{-1}|\Im (\mathcal{R}_{\mathcal{A}_0^4}(Y))|\leq C\al^{-\f23}e^{\al w_0(Y)}.
\end{align*}
 Then \eqref{est: A_0^4-near} follows by putting the above estimate into \eqref{eq:A04(Y)}.\smallskip

\underline{Estimates of $\mathcal A_0^4(Y)$ for any $Y\in[Y^{+}+\d,Y_2^*]$.} 		
		 \eqref{est: A_0^4-far} follows by Proposition \ref{pro: Airy-4} and Proposition \ref{pro: rayleigh-varphi}.\smallskip
			
\underline{Estimates of $\mathcal B_0^4(Y)$ for any $Y\in[Y^{+}+\d,Y_2^*]$.} 	Again by the proof of Proposition \ref{pro: Airy-4} and Proposition \ref{pro: rayleigh-varphi}, we know that for any $Y\in[Y^{+}+\d,Y_2^*]$,
    \begin{align*}
    	|\mathcal{B}_0^4(Y)|
    	\leq&\alpha^{-3}e^{-2\alpha w_0(Y)}\Big(e^{-2\al \int_{Y_1^*}^{Y_1-2\d_0}F_r^\f12(Y') dY'}+\al e^{-\al \int_{Y}^{Y_2^*}F_r^\f12(Y') dY'}\Big),
    \end{align*}
which gives \eqref{est: B_0^4-far}.\smallskip

\underline{Estimates of $\mathcal B_0^4(Y)$ for any $Y\in[0, Y^{+}+\d]$.} 		We first  write for any $Y\in[0,Y^{+}+\d]$,
	\begin{align}\label{est: B_0^4-1}
		\mathcal B_0^4(Y)=&\mathcal B_0^4( Y_1^*-\d_0)+\frak{D}_0(\al,c)\int_Y^{  Y_1^*-\d_0}\al^2A(Z)\bar M(Z)^{-2}(1-\bar M(Z)^2)(1-\bar\chi)dY,
	\end{align}
where we used the fact $\mathcal K_B[\varphi_0](Y)=\mathcal K_B[\varphi_0](\bar Y_1^*)$	for $Y\in[0, Y_1^*-\d_0]$.
For the first term, we use  the proof of Proposition \ref{pro: Airy-4} and Proposition \ref{pro: rayleigh-varphi} to get
\begin{align*}
|\mathcal B_0^4( Y_1^*-\d_0)|\leq &C\alpha^{-3}e^{-\alpha w_0(Y_1^*-\d_0)}\Big(e^{-\al w_0(Y_1^*-\d_0)}e^{-2\al \int_{Y_1^*}^{Y_1-2\d_0}F_r^\f12(Y') dY'}+\al e^{-\al w_0(Y_2^*)}\Big)\\
\ll &\al^{-\f23}|\Im(\frak{D}_0(\al,c))| e^{-\al w_0(Y)},
\end{align*}
here we used \eqref{est: bdd by Im K_B} in the last line.

For the second term, we use a similar process in $\mathcal{A}_0^4(Y)$ for $Y\in[0, Y^{+}+\d]$. Indeed, by integration by parts, we have
	\begin{align*}
	&\int_Y^{Y_1^*-\d_0}\alpha^2 A(Z)F(Z)\bar M(Z)^{-2}(1-\bar\chi)dZ\\
	&=-\partial_Y A(Y)\bar M^{-2}(Y)(1-\bar\chi)-A(Y)\bar M^{-2}(Y)\pa_Y \bar \chi-\int_Y^{Y_1^*-\d_0}Q_1A(Z)\bar M^{-2}(Z)(1-\bar\chi)dZ\\
	&\quad-\int_Y^{ Y_1^*-\d_0}A(Z)\partial_Z(\bar M^{-2}\partial_Z\bar\chi)dZ-\int_Y^{ Y_1^*-\d_0}Q_2A(Z)\bar M^{-2}(Z)(1-\bar\chi)dZ\\
	&\quad+\partial_YA( Y_1^*-\d_0)\bar M( Y_1^*-\d_0)^{-2}\\
	&=-\partial_Y A(Y)\bar M^{-2}(Y)(1-\bar\chi)-A(Y)\bar M^{-2}(Y)\pa_Y \bar \chi+II_1+II_2+II_3+II_4.
	\end{align*}
Using a similar argument in $I_1$, we have
\begin{align*}
&|II_1|+|II_2|+c_i^{-1}(|\Im II_1|+|\Im II_2|)\leq C\al^{-\f23}e^{-\al w_0(Y)},\quad |II_3|\leq C\al^{\f{11}{6}}c_i e^{-\al w_0(Y)},\\
& |II_4|+c_i^{-1}|\Im II_4|\leq C\al^{\f56}e^{-\al w_0(Y_1^*-\d_0)}\leq C\al^{-\f23}e^{-\al w_0(Y)}.
\end{align*}
Thus, we obtain
\begin{align*}
\int_Y^{Y_1^*-\delta_0}\alpha^2 A(Z)F(Z)\bar M(Z)^{-2}(1-\bar\chi)dZ=-\partial_Y A(Y)\bar M^{-2}(Y)(1-\bar\chi)-A(Y)\bar M^{-2}(Y)\pa_Y \bar \chi+\mathcal{R}_{\mathcal{B}_0^4}(Y),
\end{align*}
where 
\begin{align*}
|\mathcal{R}_{\mathcal{B}_0^4}(Y)|+c_i^{-1}|\Im(\mathcal{R}_{\mathcal{B}_0^4}(Y))|\leq C\al^{-\f23}e^{-\al w_0(Y)}.
\end{align*}
This along with \eqref{est: B_0^4-1} shows that for $Y\in[0,Y^{+}+\d]$,
\begin{align*}
\mathcal B_0^4(Y)=&\frak{D}_0(\al,c)(-\partial_Y A(Y)\bar M^{-2}(Y)(1-\bar\chi)-A(Y)\bar M^{-2}(Y)\pa_Y \bar \chi+\mathcal{R}_{\mathcal{B}_0^4}(Y)),
\end{align*}
which gives \eqref{est: B_0^4-near}. In particular, taking $Y=0$, we obtain \eqref{est: B_0^4(0)}. 
\end{proof}

In the following, we shall give the estimates for $\tilde{\mathcal A}_0(Y)$ and $\tilde{\mathcal B}_0(Y)$ when $Y\in [0, Y^{-}]$.
\begin{lemma}\label{lem: tildeA0B0}
	Let $(\alpha, c)\in\mathbb H$ and $Y_1^*, Y^*_2$ be given as in Lemma \ref{lem:real-phi(0)}.	There holds for any $Y\in[0, Y^{-}]$,
\begin{align}\label{est: tilde A_0}
\tilde{\mathcal A}_0(Y)=&-\frak{D}_0(\al,c)\Big(-\partial_YB(Y)\bar M(Y)^{-2}\bar\chi(Y)+B(Y)\bar M(Y)^{-2}\partial_Y\bar\chi(Y)+\mathcal{R}_{\tilde{\mathcal{A}_0}}(Y)\Big),
\end{align}
and 
\begin{align}\label{est: tilde B_0}
\tilde{\mathcal B}_0(Y)=&-\frak{D}_0(\al,c)\Big(\partial_Y A(Y)\bar M^{-2}(Y)\bar\chi(Y)-\partial_Y A(0)\bar M^{-2}(0)-A(Y)\bar M(Y)^{-2}\partial_Y\bar\chi(Y)+\mathcal{R}_{\tilde{\mathcal{B}_0}}(Y)\Big),
\end{align}
	where 
	\begin{align*}
		&|\mathcal{R}_{\tilde{\mathcal{A}_0}}(Y)|+|\mathcal{R}_{\tilde{\mathcal{B}_0}}(Y)|+c_i^{-1}\left(|\Im(\mathcal{R}_{\tilde{\mathcal{A}_0}}(Y))+|\Im(\mathcal{R}_{\tilde{\mathcal{B}_0}}(Y))|\right)|\leq C\al^{-\f23}.
	\end{align*}
In particular, we have 
\begin{align}\label{est: tilde A_0(0)}
\tilde{\mathcal A}_0(0)=-\frak{D}_0(\al,c)\Big(-\partial_YB(0)\bar M(0)^{-2}+\mathcal{R}_{\tilde{\mathcal{A}_0}}(0)\Big).
\end{align}

\end{lemma}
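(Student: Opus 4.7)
The plan is to follow the integration-by-parts strategy used in the proof of Lemma \ref{lem:A04B04}, applied to the specific structure of $\tilde{\mathcal A}_0$ and $\tilde{\mathcal B}_0$. From \eqref{eq:notation-A0-B0} and $H_{32}^{(2)}[\varphi_0]=\bar\chi(1-\bar M^2)\mathcal K_B[\varphi_0]$, one has
\begin{align*}
\tilde{\mathcal A}_0(Y) = -\kappa^{-1}\pi\int_Y^{+\infty} B\,\bar M^{-2}\,\bar\chi\,F\,\mathcal K_B[\varphi_0]\,dZ, \qquad \tilde{\mathcal B}_0(Y) = -\kappa^{-1}\pi\int_0^{Y} A\,\bar M^{-2}\,\bar\chi\,F\,\mathcal K_B[\varphi_0]\,dZ.
\end{align*}
Since $\mathrm{supp}(\bar\chi)\subset[0,\bar Y]\subset[0,Y_1^*-\delta_0]$, Lemma \ref{lem: Im-int} gives $\mathcal K_B[\varphi_0](Z)\equiv\mathcal K_B[\varphi_0](\bar Y_1^*)$ on the support of the integrand; pulling this constant out and using $\kappa^{-1}\pi\mathcal K_B[\varphi_0](\bar Y_1^*)=\alpha^2\frak{D}_0(\alpha,c)$ from \eqref{def: D_0} reduces matters to analysing the model integrals $\int_Y^{+\infty}\alpha^2 F B\bar M^{-2}\bar\chi\,dZ$ and $\int_0^{Y}\alpha^2 F A\bar M^{-2}\bar\chi\,dZ$.

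Next I will extract the claimed leading boundary terms through two successive integrations by parts. Because $\mathcal L[X]=(Q_1+Q_2)X$ for $X\in\{A,B\}$ by construction of the Airy basis, dividing by $\bar M^2$ gives the key identity
\begin{align*}
\alpha^2 F X\bar M^{-2}=\partial_Y(\bar M^{-2}\partial_Y X)-(Q_1+Q_2)\bar M^{-2} X.
\end{align*}
Substituting and integrating by parts twice (first moving $\partial_Z$ off $\partial_Z(\bar M^{-2}\partial_Z X)$, then moving the resulting $\partial_Z$ onto $\partial_Z\bar\chi$) produces exactly the boundary structure in \eqref{est: tilde A_0} and \eqref{est: tilde B_0}: the $+\infty$ contributions vanish by compact support of $\bar\chi$, and at $Z=0$ (relevant only for $\tilde{\mathcal B}_0$) the identities $\bar\chi(0)=1$, $\partial_Y\bar\chi(0)=0$ from Remark \ref{rem: cut-off functions} leave exactly the constant $-\partial_YA(0)\bar M^{-2}(0)$. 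The remaining integrals take the form
\begin{align*}
\mathcal R_{\tilde{\mathcal A}_0}(Y) = \int_Y^{+\infty} B\,\partial_Z(\bar M^{-2}\partial_Z\bar\chi)\,dZ - \int_Y^{+\infty}(Q_1+Q_2)\bar M^{-2} B\bar\chi\,dZ,
\end{align*}
and analogously for $\mathcal R_{\tilde{\mathcal B}_0}$ with $A$ in place of $B$ and the interval $[0,Y]$. Finally, \eqref{est: tilde A_0(0)} follows by evaluating \eqref{est: tilde A_0} at $Y=0$ using $\bar\chi(0)=1$, $\partial_Y\bar\chi(0)=0$.

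The main technical obstacle is the sharp $\alpha^{-2/3}$ bound on the remainders, together with the sharpened $c_i\alpha^{-2/3}$ control on the imaginary parts. The supports of $\bar\chi$ and $\partial_Y\bar\chi$ lie in $[0,\bar Y]\subset[0,Y^-]$, strictly inside the supersonic regime at distance $\sim1$ from the turning point $Y_0$, so Lemma \ref{lem: Ai, Bi}(1) gives the oscillatory bounds $|A|+|B|\lesssim\alpha^{-1/6}$ and $|\partial_Y A|+|\partial_Y B|\lesssim\alpha^{1/6}$. The $Q_1$ contribution is handled by one further application of the same identity in the form $B\bar M^{-2}=\alpha^{-2}F^{-1}[\partial_Y(\bar M^{-2}\partial_Y B)-(Q_1+Q_2)\bar M^{-2}B]$, followed by one more integration by parts, exactly mirroring the treatment of the terms $I_1,I_2$ in the proof of Lemma \ref{lem:A04B04}; the $\alpha^{-2}$ gain balanced against $|\partial_Y B|\lesssim\alpha^{1/6}$ and the smoothness of $Q_1F^{-1}\bar\chi$ delivers the target $\alpha^{-2/3}$. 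The $Q_2$ contribution needs a finer argument since $|Q_2|\lesssim\alpha^2 c_i$ is large: writing $Q_2=\alpha^2 c_i\tilde q$ with $|\tilde q|\lesssim1$ explicitly factors out $c_i$, and iterating the ODE-plus-integration-by-parts a fixed finite number of times converts $\int\tilde q\,\bar M^{-2}B\bar\chi\,dZ$ into $\mathcal O(\alpha^{-K})$ for any prescribed $K$, which after multiplication by $c_i\alpha^2$ dominates $c_i\alpha^{-2/3}$ once $K\geq 8/3$. The imaginary-part sharpening is then automatic: by \eqref{eq:eta-i} and Lemma \ref{lem: Ai, Bi}(4), $\eta$, $Ai(\kappa\eta)$ and $Bi(\kappa\eta)$ are real on $[0,\bar Y_2^*+\delta_0]$, so every imaginary contribution in the integrand comes from $\bar M$, $E$, $F$, or $Q_1$, each of which is real modulo $\mathcal O(c_i)$ (cf.\ Lemma \ref{lem: Q_1}); pulling this $c_i$ outside the same IBP computation yields $c_i^{-1}(|\Im\mathcal R_{\tilde{\mathcal A}_0}|+|\Im\mathcal R_{\tilde{\mathcal B}_0}|)\lesssim\alpha^{-2/3}$.
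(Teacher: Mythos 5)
Your proposal reproduces the paper's own proof of this lemma: the same reduction to $-\frak{D}_0(\al,c)\int \al^2 F\,X\,\bar M^{-2}\bar\chi\,dZ$ for $X\in\{A,B\}$ (using that $\mathcal K_B[\varphi_0]$ is constant on $[0,Y_1^*-\delta_0]\supset\mathrm{supp}\,\bar\chi$ and the definition \eqref{def: D_0}), the same identity $\al^2 F X\bar M^{-2}=\partial_Y(\bar M^{-2}\partial_Y X)-(Q_1+Q_2)\bar M^{-2}X$ obtained from $\mathcal L[X]=(Q_1+Q_2)X$, the same two integrations by parts yielding exactly the boundary terms of \eqref{est: tilde A_0}--\eqref{est: tilde B_0} (with $\bar\chi(0)=1$, $\partial_Y\bar\chi(0)=0$ producing $-\partial_YA(0)\bar M^{-2}(0)$), and the same remainder, estimated as the terms $I_1$, $I_2$, $I_3$ in the proof of Lemma \ref{lem:A04B04}.

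Two local corrections to your error estimates. (i) On $[0,Y^{-}]$ one has $|A|+|B|\lesssim \al^{-1/6}$ but $|\partial_YA|+|\partial_YB|\lesssim \al^{5/6}$, not $\al^{1/6}$: the chain rule contributes a factor $\kappa\sim\al^{2/3}$ on top of $|Ai'(\kappa\eta)|\sim\kappa^{1/4}$. This is harmless, since $\al^{-2}\cdot\al^{5/6}\le\al^{-2/3}$ is precisely the bookkeeping the paper uses for the $Q_1$-piece. (ii) Your treatment of the $Q_2$-piece overclaims: iterating the ODE-plus-integration-by-parts cannot convert $\int \tilde q\,\bar M^{-2}B\bar\chi\,dZ$ into $O(\al^{-K})$ for an arbitrary prescribed $K$, because each pass leaves an endpoint contribution at $Z=Y$ of size $\al^{-2}|\partial_ZB|\sim\al^{-7/6}$ whose amplitude does not vanish and cannot be reduced further, and the assumption \eqref{assume: U_B} only controls three derivatives of $U_B$, so unlimited integrations by parts are not available in any case. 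The paper disposes of this term exactly as $I_3$ in Lemma \ref{lem:A04B04}: the crude bound $|Q_2|\le C\al^2c_i$ from \eqref{est: Q_2} together with Lemma \ref{lem: Ai, Bi} gives a contribution of size $C\al^{11/6}c_i$, which is $\ll\al^{-2/3}$ because $c_i\ll\al^{-n}$ for every $n$ on $\mathbb H_0$. You should replace your iteration scheme by this absorption; your instinct that the $c_i^{-1}|\Im(\cdot)|$ control of this particular piece is delicate is not unreasonable (the paper passes over it with the same crude bound), but the arbitrary-$K$ iteration is not a valid repair of that point.
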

\begin{proof}
	According to the definition of $\tilde {\mathcal A}_0(Y)$ and $\tilde{\mathcal B}_0(Y)$, we write for any $Y\in[0, Y^{-}]$,
	\begin{align*}
		&\tilde{\mathcal A}_0(Y)=-\kappa^{-1}\pi\int_Y^{+\infty}B(Z)\bar M(Z)^{-2}(1-\bar M(Z)^2)\bar\chi(Z)\mathcal K_B[\varphi_0](Z)dZ, \\
		&\tilde{\mathcal B}_0(Y)=-\kappa^{-1}\pi\int_0^{Y}A(Z)\bar M(Z)^{-2}(1-\bar M(Z)^2)\bar\chi(Z)\mathcal K_B[\varphi_0](Z)dZ.
	\end{align*}
	
\underline{Estimates of $\tilde{\mathcal A}_0(Y)$ for $Y\in[0, Y^{-}]$.}
	As $\mathrm{supp}(\bar\chi)=[0, \bar Y]\subset[0, Y^{-}]$, we know that for any $Y\in[0, Y^{-}]$,
	\begin{align}\label{def: tilde A_0}
		\tilde{\mathcal A}_0(Y)=-\frak{D}_0(\al,c)\int_Y^{Y^{-}}\alpha^2F(Z)B(Z)\bar M(Z)^{-2}\bar\chi(Z)dZ.
	\end{align}
Using the equation $\mathcal{L}[B]=(Q_1+Q_2)B$, we integrate by parts and write
		\begin{align*}
		&\int_Y^{Y^{-}}\alpha^2F(Z)B(Z)\bar M(Z)^{-2}\bar\chi(Z) dZ=\int_Y^{Y^{-}}(\partial_Z^2B-\frac{2\bar M}{M}\partial_Z B-(Q_1+Q_2)B)\bar M(Z)^{-2}\bar\chi(Z) dZ\\
		&=-\partial_YB(Y)\bar M(Y)^{-2}\bar\chi(Y)+B(Y)\bar M(Y)^{-2}\partial_Y\bar\chi(Y) -\int_Y^{Y^{-}}Q_1(Z)B(Z)\bar M(Z)^{-2}\bar\chi(Z) dZ\\
		&\quad+\int_Y^{Y^{-}}B(Z)\partial_Z(\bar M^{-2}\partial_Z\bar\chi)dZ-\int_Y^{Y^{-}}Q_2(Z)B(Z)\bar M(Z)^{-2}\bar\chi(Z) dZ\\
		&=-\partial_YB(Y)\bar M(Y)^{-2}\bar\chi(Y)+B(Y)\bar M(Y)^{-2}\partial_Y\bar\chi(Y)+\mathcal{R}_{\tilde{\mathcal{A}_0}}(Y).
	\end{align*}
	Using a similar argument in Lemma \ref{lem:A04B04}, we dedude
\begin{align*}
|\mathcal{R}_{\tilde{\mathcal{A}_0}}(Y)|+c_i^{-1}|\Im(\mathcal{R}_{\tilde{\mathcal{A}_0}}(Y))|\leq C\al^{-\f23}.
\end{align*}
Then  \eqref{est: tilde A_0} follows by plugging the above equality into \eqref{def: tilde A_0}. Taking $Y=0$ in \eqref{est: tilde A_0}, we obtain \eqref{est: tilde A_0(0)}.\smallskip

\underline{Estimates of $\tilde{\mathcal B}_0(Y)$ for $Y\in[0, Y^{-}]$.}	We first notice that 
	\begin{align}\label{def: tilde B_0}
		\tilde{\mathcal B}_0(Y)=-\frak{D}_0(\al,c)\int_0^{Y}\alpha^2F(Z)A(Z)\bar M(Z)^{-2}\bar\chi(Z)dZ,
	\end{align}
where
	\begin{align*}
	&\int_0^{ Y}\alpha^2 A(Z)F(Z)\bar M(Z)^{-2}\bar\chi(Z) dZ\\
	&=\partial_Y A(Y)\bar M^{-2}(Y)\bar\chi(Y)-\partial_Y A(0)\bar M^{-2}(0)-A(Y)\bar M(Y)^{-2}\partial_Y\bar\chi(Y) \\
	&\quad-\int_0^{Y}Q_1(Z)A(Z)\bar M^{-2}(Z)\bar\chi dZ+\int_0^{Y}A(Z)\partial_Z(\bar M^{-2}\partial_Z\bar\chi)dZ\\
	&\quad -\int_0^{Y}Q_2(Z)A(Z)\bar M^{-2}(Z)\bar\chi dZ\\
	&=\partial_Y A(Y)\bar M^{-2}(Y)\bar\chi(Y)-\partial_Y A(0)\bar M^{-2}(0)-A(Y)\bar M(Y)^{-2}\partial_Y\bar\chi(Y)+\mathcal{R}_{\tilde{\mathcal{B}_0}}(Y).
	\end{align*}
	By a similar argument, we know that 
	\begin{align*}
|\mathcal{R}_{\tilde{\mathcal{B}_0}}(Y)|+c_i^{-1}|\Im(\mathcal{R}_{\tilde{\mathcal{B}_0}}(Y))|\leq C\al^{-\f23}.
\end{align*}
Then \eqref{est: tilde B_0} follows by plugging the above equality into \eqref{def: tilde B_0}.
\end{proof}

For convenience, we denote
\begin{align*}
\mathcal A_{0}^{123}=\sum_{k=1}^3\mathcal A_{0}^{k},\quad \mathcal B_{0}^{123}=\sum_{k=1}^3\mathcal B_{0}^{k}.
\end{align*}

Finally, we present the pointwise estimates of $\tilde{P}_{0,2}^{(1)}$ and $\tilde{P}_{0,2}^{(2)}$ on the internal $[0, Y_2^*]$. Here $\tilde{P}_{0,2}^{(1)}$ and $\tilde{P}_{0,2}^{(2)}$ are defined in \eqref{eq:notation-tilde P02}.

\begin{proposition}\label{Prop: P01P02}
Let $(\alpha,c )\in\mathbb H$ and  $Y_1^*, Y^*_2$ be given as in Lemma \ref{lem:real-phi(0)}. Then 
\begin{enumerate}
\item  for  $Y\in[0,Y^{+}+\d]$, we have
\begin{align*}
	\tilde{P}_{0,2}^{(1)}(Y)=&\mathcal A_{0}^{123}(Y)A(Y)+\mathcal B_{0}^{123}(Y)B(Y)+\frak{D}_0(\al,c)\Big(\pi^{-1}\kappa(1-\bar\chi(Y))+\mathcal{R}_{\tilde{P}_{0,2}^{(1)}}(Y)\Big),
%	\\
%	&+\alpha^{-3}e^{\al w_0(Y)}e^{-2\alpha w_0(Y_1^*-\d_0)}\Big(e^{-2\al \int_{Y_1^*}^{Y_1-2\d_0}F_r^\f12(Y') dY'}+\al e^{-\al \int_{Y_1^*-\d_0}^{Y_2^*}F_r^\f12(Y') dY'}\Big),
\end{align*}
where 
\begin{align*}
	&\left|\mathcal A_{0}^{123}(Y)\right|+e^{2\alpha w_0(Y)}|\mathcal B_{0}^{123}(Y)|+(\al^2 c_i)^{-1}|\mathrm{Im}(\mathcal A_{0}^{123}(Y))|\leq C\alpha^{-1},\\
	& |\mathrm{Im}(\mathcal B_{0}^{123}(Y))|\leq C\Big(\alpha c_i e^{-2\al w_0(Y)}+\al^{-2}e^{-2\alpha w_0(Y_1^*-\delta_0)}e^{-2\al \int_{Y_1^*}^{Y_1-2\d_0}F_r(Z)dZ}+\al^{-1}e^{-2\al w_0(Y_2^*)}\Big),\\
	& \Big|\mathcal{R}_{\tilde{P}_{0,2}^{(1)}}(Y)\Big|+c_i^{-1}\Big|\mathrm{Im}(\mathcal{R}_{\tilde{P}_{0,2}^{(1)}}(Y))\Big|\leq C\alpha^{-\f23}.
\end{align*}
\item for $Y\in[Y^{+}+\d, Y_2^*]$, we have
\begin{align*}
\tilde{P}_{0,2}^{(1)}(Y)=\mathcal A_0(Y)A(Y)+\mathcal B_0(Y)B(Y),
\end{align*} 
where
\begin{align*}
		&|\mathcal A_0(Y)|+e^{2\alpha w_0(Y)}|\mathcal B_0(Y)|\leq C \alpha^{-1},\\
		&|\mathrm{Im}(\mathcal A_0(Y))|+e^{2\alpha w_0(Y)}|\mathrm{Im}(\mathcal B_0(Y))|\\
		&\quad\leq C\Big(\alpha c_i+\al^{-1}e^{-2\al \int_{Y^*_1}^{Y_1-2\d_0}F_r^\f12(Y') dY'}+\al^{-1}e^{-\al \int_{Y}^{Y_2^*}F_r^\f12(Y') dY'}\Big).
	\end{align*}
\item for $Y\in[0,Y^-]$, we have
\begin{align*}
		\tilde{P}_{0,2}^{(2)}(Y)
	=&\frak{D}_0(\al,c)\Big(\pi^{-1}\kappa\bar\chi+\partial_YA(0)\bar M(0)^{-2}B(Y) +\mathcal {R}_{\tilde P_{0,2}^{(2)}}(Y)\Big),\\
	\partial_Y\tilde{P}_{0,2}^{(2)}(Y)
	=&\frak{D}_0(\al,c)\Big(\pi^{-1}\kappa \partial_Y\bar\chi(Y)+\partial_Y B(Y)\partial_Y A(0)\bar M(0)^{-2}-\mathcal{R}_{\pa_Y\tilde{P}_{0,2}^{(2)}}(Y)\Big),
	\end{align*}
	where 
	\begin{align*}
		&\Big|\mathcal{R}_{\tilde{P}_{0,2}^{(2)}}(Y)\Big|+c_i^{-1}\Big|\mathrm{Im}(\mathcal{R}_{\tilde{P}_{0,2}^{(2)}}(Y))\Big|\leq C\alpha^{-\f23},\quad \Big|\mathcal{R}_{\pa_Y\tilde{P}_{0,2}^{(2)}}(Y)\Big|+c_i^{-1}\Big|\mathrm{Im}(\mathcal{R}_{\pa_Y\tilde{P}_{0,2}^{(2)}}(Y))\Big|\leq C\alpha^{\f16}.
	\end{align*}

\end{enumerate}

 \end{proposition}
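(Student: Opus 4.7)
The plan is to assemble the three assertions by combining the already-established pointwise expansions for the individual pieces $\mathcal{A}_0^k, \mathcal{B}_0^k$ and $\tilde{\mathcal{A}}_0, \tilde{\mathcal{B}}_0$ from Lemmas \ref{lem:A01B01}, \ref{lem:A04B04} and \ref{lem: tildeA0B0}, and then exploiting the Wronskian identity
\begin{align*}
A(Y)\partial_Y B(Y)-B(Y)\partial_Y A(Y)=\kappa\pi^{-1}\bar M(Y)^2
\end{align*}
to collapse the leading contributions into the claimed closed forms.

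For part (1), I would write $\tilde P_{0,2}^{(1)}=(\mathcal A_0^{123}+\mathcal A_0^4)A+(\mathcal B_0^{123}+\mathcal B_0^4)B$. The bounds on $\mathcal A_0^{123},\mathcal B_0^{123}$ and their imaginary parts follow directly by summing the estimates of Lemma \ref{lem: estimate B_0^1(0)} and Lemma \ref{lem:A01B01}. For the part involving $\mathcal A_0^4A+\mathcal B_0^4 B$, I plug in \eqref{est: A_0^4-near}--\eqref{est: B_0^4-near}; the $A\partial_Y B-B\partial_Y A$ combination multiplied by $\bar M^{-2}$ telescopes via the Wronskian into $\kappa\pi^{-1}$, while the $\partial_Y\bar\chi$ pieces cancel trivially since $AB-BA=0$. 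This yields the main term $\frak D_0\,\pi^{-1}\kappa(1-\bar\chi)$, and the remainders $A\mathcal R_{\mathcal A_0^4}+B\mathcal R_{\mathcal B_0^4}$ are absorbed into $\frak D_0\mathcal R_{\tilde P_{0,2}^{(1)}}$; bounding them in $L^\infty$ uses $|A|+|B|\lesssim \alpha^{-1/6}$ on $[0,Y^++\delta]$ together with the $\al^{-2/3}e^{\pm\al w_0}$ bounds from Lemma \ref{lem:A04B04}.

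For part (2), I note that $\bar\chi\equiv 0$ on $[Y^++\delta,Y_2^*]$, so $\tilde P_{0,2}^{(1)}=\mathcal A_0 A+\mathcal B_0 B$ with $\mathcal A_0,\mathcal B_0$ built directly from the four contributions. All claimed pointwise estimates (including on imaginary parts) follow by summing the estimates in Lemma \ref{lem:A01B01} and \eqref{est: A_0^4-far}--\eqref{est: B_0^4-far} in Lemma \ref{lem:A04B04}, together with the elementary bound $|\Im\mathcal B_0^k|\lesssim c_i\alpha^{-1}+\cdots$ tracked already in the proofs of those lemmas. For part (3), I insert the formulas \eqref{est: tilde A_0}--\eqref{est: tilde B_0} into $\tilde P_{0,2}^{(2)}=\tilde{\mathcal A}_0 A+\tilde{\mathcal B}_0 B$: the $\bar\chi$-terms combine as $\bar M^{-2}\bar\chi(A\partial_Y B-B\partial_Y A)=\kappa\pi^{-1}\bar\chi$ by the Wronskian, the $\partial_Y\bar\chi$-terms cancel by the same $AB-BA=0$ identity, and the extra boundary term $\partial_YA(0)\bar M^{-2}(0)B$ survives from the second piece of \eqref{est: tilde B_0}; this gives the claimed formula for $\tilde P_{0,2}^{(2)}$. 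For the derivative, by the variation-of-parameters structure one has $\partial_Y\tilde P_{0,2}^{(2)}=\tilde{\mathcal A}_0\partial_YA+\tilde{\mathcal B}_0\partial_YB$, and the same reduction with the Wronskian applied to $\partial_YA,\partial_YB$ (the nonvanishing combination is now $\bar M^{-2}\partial_Y\bar\chi(B\partial_YA-A\partial_YB)=-\kappa\pi^{-1}\partial_Y\bar\chi$) produces the stated formula with $\mathcal R_{\partial_Y\tilde P_{0,2}^{(2)}}=\partial_YA\,\mathcal R_{\tilde{\mathcal A}_0}+\partial_YB\,\mathcal R_{\tilde{\mathcal B}_0}$.

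The only delicate step is checking the size of the remainders: for the error bounds in part (3), the factor $|\partial_Y A|+|\partial_Y B|\lesssim\al^{5/6}$ on $[0,Y^-]$ competes with the $\al^{-2/3}$ bound on $\mathcal R_{\tilde{\mathcal A}_0},\mathcal R_{\tilde{\mathcal B}_0}$ from Lemma \ref{lem: tildeA0B0} to yield precisely the claimed $\al^{1/6}$ bound on $\mathcal R_{\partial_Y\tilde P_{0,2}^{(2)}}$; likewise the $c_i^{-1}|\Im\cdot|\lesssim\al^{1/6}$ estimate follows from the analogous imaginary-part bounds. For part (1), the $c_i^{-1}|\Im \mathcal R|$ bound relies on the imaginary-part estimates of $\mathcal R_{\mathcal A_0^4},\mathcal R_{\mathcal B_0^4}$ (bounded by $\al^{-2/3}c_i\cdot e^{\pm\al w_0}$) combined with $|A|,|B|\lesssim \al^{-1/6}$; the $|\Im(\mathcal B_0^{123})|$ bound comes from the $\Im$ estimates in Lemma \ref{lem:A01B01}. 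No further ingredients are needed—the work is essentially bookkeeping, with the one cancellation insight being the Wronskian collapse described above.
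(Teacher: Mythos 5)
Your proposal is correct and follows essentially the same route as the paper: decompose $\tilde P_{0,2}^{(1)}$ into the $\mathcal A_0^{123},\mathcal B_0^{123}$ and $\mathcal A_0^4,\mathcal B_0^4$ pieces (and $\tilde P_{0,2}^{(2)}$ into $\tilde{\mathcal A}_0 A+\tilde{\mathcal B}_0 B$), invoke Lemmas \ref{lem: estimate B_0^1(0)}, \ref{lem:A01B01}, \ref{lem:A04B04} and \ref{lem: tildeA0B0}, and collapse the leading terms via the Wronskian $A\partial_YB-B\partial_YA=\kappa\pi^{-1}\bar M^2$ with the $\partial_Y\bar\chi$-terms cancelling. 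The remainder bookkeeping you describe (including the $\al^{5/6}\cdot\al^{-2/3}=\al^{1/6}$ count for $\mathcal R_{\partial_Y\tilde P_{0,2}^{(2)}}$) matches the paper's argument.
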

\begin{proof}
According to the definition of $\tilde P_{0,2}^{(1)}(Y)$, we have
\begin{align*}
	\tilde P_{0,2}^{(1)}(Y)
	=&\mathcal A_0(Y)A(Y)+\mathcal B_0(Y)B(Y)=(\mathcal A_0^{123}(Y)+\mathcal A_0^4(Y))A(Y)+(\mathcal B_0^{123}(Y)+\mathcal B_0^4(Y))B(Y).
\end{align*}

\underline{Estimates of $\tilde P_{0,2}^{(1)}(Y)$ for $Y\in[0, Y^{+}+\d]$.}
By Lemma \ref{lem:A01B01}, we know that for any $Y\in[0,Y_2^*]$,
\begin{align}\label{est: P_02^(1)-1}
	|\mathcal A_0^{123}(Y)|+e^{2\alpha w_0(Y)}|\mathcal B_0^{123}(Y)|\leq C\alpha^{-1},
\end{align}
and for $Y\in[0,Y^{+}+\d]$,
\begin{align}
	& |\mathrm{Im}(\mathcal A_{0}^{123}(Y))|\leq C\al c_i,\label{est: P_02^(1)-2}\\
	& |\mathrm{Im}(\mathcal B_{0}^{123}(Y))|\leq C\Big(\alpha c_i e^{-2\al w_0(Y)}\label{est: P_02^(1)-3}\\
	\nonumber
	&\qquad\qquad\qquad\qquad\qquad+\al^{-2}e^{-2\alpha w_0(Y_1^*-\delta_0)}e^{-2\al \int_{Y_1^*}^{Y_1-2\d_0}F_r(Z)dZ}+\al^{-1}e^{-2\al w_0(Y_2^*)}\Big).
\end{align}
By Lemma \ref{lem:A04B04}, we  deduce that for $Y\in[0,Y^{+}+\d]$
\begin{align*}
	&\mathcal A_0^4(Y)A(Y)+\mathcal B_0^4(Y)B(Y)\\
	&=\frak{D}_0(\al,c)\Big(A(Y)\partial_YB(Y)-B(Y)\partial_YA(Y)\Big)\bar M(Y)^{-2}(1-\bar\chi)+\frak{D}_0(\al,c)\mathcal{R}_{\tilde{P}_{0,2}^{(1)}}(Y)          \\
	&=\frak{D}_0(\al,c)\Big(\pi^{-1}\kappa(1-\bar\chi(Y))+\mathcal{R}_{\tilde{P}_{0,2}^{(1)}}(Y)\Big),
\end{align*}
where 
\begin{align*}
\mathcal{R}_{\tilde{P}_{0,2}^{(1)}}(Y)=\mathcal R_{\mathcal{A}_0^4}(Y)A(Y)+\mathcal R_{\mathcal{B}_0^4}(Y)B(Y),
\end{align*}
with
\begin{align}\label{est: P_02^(1)-3}
 \Big|\mathcal{R}_{\tilde{P}_{0,2}^{(1)}}(Y)\Big|+c_i^{-1}\Big|\mathrm{Im}(\mathcal{R}_{\tilde{P}_{0,2}^{(1)}}(Y))\Big|\leq C\alpha^{-\f23}.
\end{align}

Thus, we obtain that for any $Y\in[0,Y^{+}+\d]$,
\begin{align*}
	\tilde{P}_{0,2}^{(1)}(Y)=&\mathcal A_{0}^{123}(Y)A(Y)+\mathcal B_{0}^{123}(Y)(Y)B(Y)+\frak{D}_0(\al,c)\Big(\pi^{-1}\kappa(1-\bar\chi(Y))+\mathcal{R}_{\tilde{P}_{0,2}^{(1)}}(Y)\Big),
\end{align*}
and \eqref{est: P_02^(1)-1}-\eqref{est: P_02^(1)-3} hold. This gives the result in (1).\smallskip

\underline{Estimates of $\tilde P_{0,2}^{(1)}(Y)$ for $Y\in[Y^{+}+\d, Y_2^*]$.}
We know by Lemma \ref{lem:A01B01} that 
\begin{align*}
	&|\mathrm{Im}(\mathcal A_0^{123}(Y)|+e^{2\alpha w_0(Y)}|\mathrm{Im}(\mathcal B_0^{123}(Y))|\\
	&\leq C\Big(\alpha c_i+\al^{-1}e^{-2\al \int_{Y^*_1}^{Y_1-2\d_0}F_r^\f12(Y') dY'}+\al^{-1}e^{-2\al \int_{Y}^{Y_2^*}F_r^\f12(Y') dY'}\Big).
\end{align*}
By Lemma \ref{lem:A04B04}, we know that for any $Y\in[Y^{+}+\d,Y_2^*]$,
\begin{align*}
		|\mathcal A_0^4(Y)|+\al e^{2\al w_0(Y)}|\mathcal{B}_0^4(Y)|\leq& C\alpha^{-2}\Big(e^{-2\al \int_{Y_1^*}^{Y_1-2\d_0}F_r^\f12(Y') dY'}+\al e^{-\al \int_{Y}^{Y_2^*}F_r^\f12(Y') dY'}\Big),
	\end{align*}
	which along with \eqref{est: P_02^(1)-1} gives that for $Y\in[Y^{+}+\d,Y_2^*]$,
	\begin{align*}
		&|\mathcal A_0(Y)|+e^{2\alpha w_0(Y)}|\mathcal B_0(Y)|\lesssim \alpha^{-1},\\
		&|\mathrm{Im}(\mathcal A_0(Y))|+e^{2\alpha w_0(Y)}|\mathrm{Im}(\mathcal B_0(Y))|\leq C \Big(\alpha c_i+\al^{-1}e^{-2\al \int_{Y^*_1}^{Y_1-2\d_0}F_r^\f12(Y') dY'}+\al^{-1}e^{-\al \int_{Y}^{Y_2^*}F_r^\f12(Y') dY'}\Big).
	\end{align*}
	This proves the result in (2).\smallskip

\underline{Estimates of $\tilde P_{0,2}^{(2)}(Y)$ and $\pa_Y\tilde P_{0,2}^{(2)}(Y)$for $Y\in[0, Y^{-}]$.}	
	By Lemma \ref{lem: tildeA0B0}, we obtain that for any $Y\in[0,Y^-]$,
	\begin{align*}
		\tilde{P}_{0,2}^{(2)}(Y)=&\tilde{\mathcal A}_0(Y)A(Y)+\tilde{\mathcal B}_0(Y)B(Y)\\
	=&\frak{D}_0(\al,c)\Big(\pi^{-1}\kappa\bar\chi+\partial_YA(0)\bar M(0)^{-2}B(Y) +\mathcal {R}_{\tilde P_{0,2}^{(2)}}(Y)\Big),\\
		\partial_Y\tilde{P}_{0,2}^{(2)}(Y)=&\tilde{\mathcal A}_0(Y)\partial_Y A(Y)+\tilde{\mathcal B}_0(Y)\partial_Y B(Y)\\
	=&\frak{D}_0(\al,c)\Big(\pi^{-1}\kappa \partial_Y\bar\chi(Y)+\partial_Y B(Y)\partial_Y A(0)\bar M(0)^{-2}-\mathcal{R}_{\pa_Y\tilde{P}_{0,2}^{(2)}}(Y)\Big),
	\end{align*}  
	where 
\begin{align*}
&\mathcal {R}_{\tilde P_{0,2}^{(2)}}(Y)=-A(Y)	\mathcal{R}_{\tilde{\mathcal{A}_0}}(Y)-B(Y)	\mathcal{R}_{\tilde{\mathcal{B}_0}}(Y),\quad
\mathcal {R}_{\pa_Y\tilde P_{0,2}^{(2)}}(Y)=-\pa_YA(Y)	\mathcal{R}_{\tilde{\mathcal{A}_0}}(Y)-\pa_YB(Y)	\mathcal{R}_{\tilde{\mathcal{B}_0}}(Y),\end{align*}
with
\begin{align*}
&\Big|\mathcal{R}_{\tilde{P}_{0,2}^{(2)}}(Y)\Big|+c_i^{-1}\Big|\mathrm{Im}(\mathcal{R}_{\tilde{P}_{0,2}^{(2)}}(Y))\Big|\leq C\alpha^{-\f23},\quad \Big|\mathcal{R}_{\pa_Y\tilde{P}_{0,2}^{(2)}}(Y)\Big|+c_i^{-1}\Big|\mathrm{Im}(\mathcal{R}_{\pa_Y\tilde{P}_{0,2}^{(2)}}(Y))\Big|\leq C\alpha^{\f16},
\end{align*}
which gives (3). 
\end{proof}

\subsection{Expansion of boundary values  for $j\geq 1$}
This subsection is devoted to giving the pointwise estimates for the imaginary part of $\mathcal A_j(Y)$ and $\mathcal B_j(Y)$ in the interval $[0, Y_2^*]$, which are used to deduce \eqref{expansion: tB_1(0)}. We first introduce  a function on the interval $Y\in[0, Y^{+}+\d]$
\begin{align}\label{def: cE_near}
\begin{split}
\mathcal{E}_{near}(Y)=c_i&+\al^{-\f23}e^{\alpha w_0(Y)}
	\left|\mathrm{Im}\left(\frak{D}_0(\al,c)\right)\right|+\alpha^{-1}\left|\mathrm{Im}\left(\frak{D}_0(\al,c)\partial_YA(0)\bar M^{-2}(0)\right)\right|,
	\end{split}
\end{align}
and a function on  the interval $Y\in[ Y^{+}+\d, Y_2^*]$
\begin{align}\label{def: cE_far}
\begin{split}
\mathcal{E}_{far}(Y)=&\mathcal{E}_{near}(Y^{+}+\d_0)+\al^{-2}e^{-2\al \int_{Y_1^*}^{Y_1-2\d_0}F_r^\f12(Y') dY'}\\
&+\al^{-2}e^{-\al \int_{Y}^{Y_2^*}F_r^\f12(Y') dY'}+\al ^{\f16}e^{-\al \int_{\min\{Y, Y_1-\d_0\}}^{Y_2^*}F_r^\f12(Y') dY'}.
\end{split}
\end{align}
In particular, for $Y=0$, we have
\begin{align*}
\mathcal{E}_{near}(0)=c_i&+\al^{-\f23}
	\left|\mathrm{Im}\left(\frak{D}_0(\al,c)\right)\right|+\alpha^{-1}\left|\mathrm{Im}\left(\frak{D}_0(\al,c)\partial_YA(0)\bar M^{-2}(0)\right)\right|.
\end{align*}

Moreover, we introduce a function defined in $[0, Y^{-}]$
\begin{align*}
\tilde{\mathcal{E}}_{near}(Y)=\left|\mathrm{Im}\left(\frak{D}_0(\al,c)\partial_YA(0)\bar M^{-2}(0)\right)\right|\mathbf 1_{[0, Y^{-}]}(Y),
\end{align*}
which appears in the estimate of  $\cB_1(Y)$ in the interval $Y\in[0, Y^{+}+\d]$. Besides,  $\tilde{\mathcal{E}}_{near}(Y)$ is larger than the third term in $\mathcal{E}_{near}(Y)$ when $Y\in [0, Y^{-}]$, and is also an essential term  in the estimate of $\cB_1(Y)$.

%We recall the result in Lemma \ref{lem: cE_j}, which is used frequently in this subsection. For any $j\geq 0$, it holds that
%\begin{align}\label{est: (A_j, B_j)}
%\begin{split}
%&\|\mathcal{A}_j\|_{L^\infty}+\al\|\mathcal{A}_{j}\|_{L^\infty([0, Y_2^*])}+\al\|\mathcal{B}_{j}\|_{L^\infty_{2w_0}}\\
% &\quad +\al^{-\f56}\|\varphi_j\|_{L^\infty_{w_0}}+\al^{\f16}e^{2\al \int_{Y_1^*}^{Y_1-2\d_0}F_r^\f12(Z) dZ}\|\varphi_j\|_{L^\infty_{w_0}([\bar Y_1^*, Y_1^*])}\leq C\al^{-j}.
% \end{split}
%\end{align}
%Moreover,  from Lemma \ref{lem: cE_j}, it is easy to deduce 
%\begin{align*}
%\sum_{j=1}^{+\infty}|\Re \cB_j(0)|\lesssim\al^{-2}.
%\end{align*}
Thus, we need to focus on the estimates for the imaginary part. The goal  is to obtain the following estimates:
 
 \begin{enumerate}
\item 
 For $Y\in[0,Y^{+}+\d]$, we have and for $j\geq1$
\begin{align}\label{assume: induction-1}
&\left|\Im \left(\mathcal{A}_j(Y)\right)\right|+e^{2\al w_0(Y)}\left|\Im \left(\mathcal{B}_j(Y)\right)\right|\lesssim \al^{-j+1}(\mathcal{E}_{near}(Y)+\tilde{\mathcal{E}}_{near}(Y)), 
\end{align}
\item For $Y\in[Y^{+}+\d, Y_2^*]$ and for $j\geq1$
\begin{align}\label{assume: induction-2}
&\left|\Im \left(\mathcal{A}_j(Y)\right)\right|+e^{2\al w_0(Y)}\left|\Im \left(\mathcal{B}_j(Y)\right)\right|\lesssim \al^{-j+1}\mathcal{E}_{far}(Y).
\end{align}
\end{enumerate}

We will use the induction to prove the above estimates \eqref{assume: induction-1}-\eqref{assume: induction-2}. We present the main result in this subsection.
\begin{proposition}\label{pro: BC B_j(0)}
Let $(\alpha,c)\in \mathbb H$ and $Y_1^*, Y^*_2$ be given as in Lemma \ref{lem:real-phi(0)}. Then the
estimates \eqref{assume: induction-1} and \eqref{assume: induction-2}  hold. Moreover, we have
\begin{align}\label{est: BC B(0)}
\begin{split}
&\Big|\mathrm{Im}\Big(\Big(\sum_{j=1}^{+\infty}\mathcal{B}_j(0)\Big)-\frak{D}_0(\al,c)\partial_YA(0)\bar M^{-2}(0)C_{P,1}\Big)\Big|\lesssim  \mathcal{E}_{near}(0),
\end{split}
\end{align}
where
\begin{align*}
C_{P,1}=\kappa^{-1}\pi\int_0^{\bar Y_1^*}A(Z)\bar M^{-2}(Z)\partial_Z\bar\chi_1(Z)\partial_ZB(Z)dZ,\quad |C_{P,1}|+c_i^{-1}|\mathrm {Im}C_{P,1}|\lesssim 1 .
\end{align*} 
\end{proposition}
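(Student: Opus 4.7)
\smallskip

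The plan is to combine an induction on $j$ with a precise extraction of the leading contribution to $\sum_{j=1}^{\infty}\mathcal B_j(0)$ coming from the single term $\tilde H_{21}[\tilde P_{0,2}^{(2)}]$ that is present only at $j=1$. Recall from \eqref{eq:notation-A1-B1} that
\[
\mathcal A_j=-\sum_{k=1}^{4}\mathcal A_{S_j^k}-\mathbf 1_{j=1}\mathcal A_{\tilde H_{21}[\tilde P_{0,2}^{(2)}]},\qquad \mathcal B_j=-\sum_{k=1}^{4}\mathcal B_{S_j^k}-\mathbf 1_{j=1}\mathcal B_{\tilde H_{21}[\tilde P_{0,2}^{(2)}]},
\]
with $S_j^1,\dots,S_j^4$ given by $H_{21},H_{22},H_{31},\tilde H_{32}$ applied to $\tilde P_{j-1,2}$ (resp.\ $\varphi_j$). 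Each of these four source terms fits exactly one of the four special templates handled by Propositions~\ref{pro: Airy-1}--\ref{pro: Airy-5}, and Proposition~\ref{pro: rayleigh-varphi} gives the corresponding estimate for $\varphi_j$.

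\smallskip
\textbf{Step 1 (Base step $j=1$).} Using the decomposition of $\tilde P_{0,2}^{(1)}$ in Proposition~\ref{Prop: P01P02}(1)--(2), I will plug it into $H_{21},H_{22}$ and apply Propositions~\ref{pro: Airy-1}--\ref{pro: Airy-2} \emph{only on the imaginary parts} (using the bounds on $\Im Q_1$, $Q_2$, and the imaginary parts of the coefficients $\mathcal A_0^{123},\mathcal B_0^{123}$); the outputs lie inside $\mathcal E_{\mathrm{near}}$ and $\mathcal E_{\mathrm{far}}$. The contribution of $\tilde H_3[\varphi_1]$ is handled analogously through Propositions~\ref{pro: Airy-3}--\ref{pro: Airy-4} and the imaginary‑part bounds of Proposition~\ref{pro: rayleigh-varphi} (whose source $f_1$ comes from $\tilde P_{0,2}^{(1)}$ and therefore inherits the same imaginary structure). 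For the genuinely new term $\tilde H_{21}[\tilde P_{0,2}^{(2)}]$, I substitute Proposition~\ref{Prop: P01P02}(3). Since $\mathrm{supp}(\partial_Y\bar\chi_1)\cap\mathrm{supp}(\bar\chi)=\emptyset$, the three pieces behave differently:
\begin{itemize}
\item[(a)] the $\pi^{-1}\kappa\bar\chi$ part is killed by $\bar\chi_1$ being identically $1$ on $\mathrm{supp}(\bar\chi)$, and only contributes through $\bar\chi_1(Q_1+Q_2)\tilde P_{0,2}^{(2)}$, whose imaginary part is $\mathcal O(c_i|\frak D_0|)$;
\item[(b)] the remainder $\mathcal R_{\tilde P_{0,2}^{(2)}}$, by Proposition~\ref{Prop: P01P02}(3), contributes at most $\al^{-2/3}|\Im\frak D_0|\le \mathcal E_{\mathrm{near}}$;
\item[(c)] the term $\frak D_0(\al,c)\partial_YA(0)\bar M^{-2}(0)B(Y)$ is the only non‑small source. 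Its contribution to $\mathcal B_1(0)$ is precisely
\[
\frak D_0(\al,c)\partial_YA(0)\bar M^{-2}(0)\cdot\kappa^{-1}\pi\int_{0}^{\bar Y_1^*}A(Y)\bar M^{-2}(Y)\,\tilde H_{21}[B](Y)\,dY,
\]
which, after one integration by parts exploiting $\mathcal L[A]=\mathcal L[B]=\mathcal O$(small) and the compact support of $\partial_Y\bar\chi_1$, reduces to $\frak D_0(\al,c)\partial_YA(0)\bar M^{-2}(0)C_{P,1}$ up to an error of order $\mathcal E_{\mathrm{near}}(0)$.
\end{itemize}
The bound $|C_{P,1}|+c_i^{-1}|\Im C_{P,1}|\lesssim 1$ follows from Lemma~\ref{lem: Ai, Bi} and the fact that $\bar M^{-2}$ has imaginary part $\mathcal O(c_i)$ on $\mathrm{supp}(\partial_Y\bar\chi_1)\subset\mathcal N^{-}$.

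\smallskip
\textbf{Step 2 (Inductive step $j\ge 2$).} Assuming \eqref{assume: induction-1}--\eqref{assume: induction-2} at level $j-1$, I estimate $\varphi_j$ first. Its source $f_j$ from \eqref{eq: f_j} is a linear combination of $\tilde P_{j-1,2}$, $\partial_Y\tilde P_{j-1,2}$, and $\mathcal K_B[\varphi_{j-1}]$. Using $\tilde P_{j-1,2}=\mathcal A_{j-1}A+\mathcal B_{j-1}B$ together with Lemma~\ref{lem: Ai, Bi} and the imaginary‑part statements in Proposition~\ref{pro: rayleigh-varphi}, the inductive hypothesis yields $\|\Im f_j\|_{L^\infty_{w_0}}\lesssim \alpha^{-j+\frac{11}{6}}(\mathcal E_{\mathrm{near}}+\tilde{\mathcal E}_{\mathrm{near}})$, hence a corresponding bound on $\Im\varphi_j$ with an extra gain $\alpha^{-1}$. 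Then, inserting this into Propositions~\ref{pro: Airy-1}--\ref{pro: Airy-4} applied to $H_{21},H_{22},H_{31},\tilde H_{32}$ against $\tilde P_{j-1,2}$ and $\varphi_j$, each call produces an extra factor $\al^{-1}$. This is the mechanism that transforms $\al^{-(j-1)+1}$ into $\al^{-j+1}$ and closes the induction both on $[0,Y^{+}+\delta]$ (giving \eqref{assume: induction-1}) and on $[Y^{+}+\delta,Y_2^*]$ (giving \eqref{assume: induction-2}). The role of $\tilde{\mathcal E}_{\mathrm{near}}$ is to absorb the $\bar\chi$‑supported contribution from $\tilde P_{0,2}^{(2)}$ which is strictly larger than the corresponding term in $\mathcal E_{\mathrm{near}}$ only on $[0,Y^-]$.

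\smallskip
\textbf{Step 3 (Summation at $Y=0$).} Since $A(0)=0$ one has $\mathcal A_j(0)=0$ and only the $\mathcal B_j$'s contribute. From Step 2 and $j\ge 2$, $|\Im\mathcal B_j(0)|\lesssim\alpha^{-j+1}\mathcal E_{\mathrm{near}}(0)$, so $\sum_{j\ge 2}|\Im\mathcal B_j(0)|\lesssim\alpha^{-1}\mathcal E_{\mathrm{near}}(0)$. Adding the $j=1$ decomposition from Step~1 gives
\[
\Im\Big(\sum_{j=1}^{\infty}\mathcal B_j(0)-\frak D_0(\al,c)\partial_YA(0)\bar M^{-2}(0)C_{P,1}\Big)=\mathcal O(\mathcal E_{\mathrm{near}}(0)),
\]
which is \eqref{est: BC B(0)}. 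The real part of the same expression is controlled by the analogous (easier) bounds of Propositions~\ref{pro: Airy-1}--\ref{pro: Airy-5} and Lemmas~\ref{lem:A01B01}--\ref{lem: tildeA0B0}, so the overall estimate follows once the imaginary part is bounded.

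\smallskip
\textbf{Main obstacle.} The delicate point is Step~1(c): isolating the leading coefficient $C_{P,1}$ from $\mathcal B_{\tilde H_{21}[\tilde P_{0,2}^{(2)}]}(0)$. One must integrate by parts in such a way that the boundary terms either vanish (thanks to the cut‑off $\bar\chi_1$) or cancel against the contributions of the $\bar\chi_1(Q_1+Q_2)$ and $\partial_Y^2\bar\chi_1-2\tfrac{\partial_Y\bar M}{\bar M}\partial_Y\bar\chi_1$ pieces of $\tilde H_{21}$, while simultaneously keeping track of the $\al^{-2/3}|\Im\frak D_0|$‑size of the remainder produced by $\mathcal R_{\tilde P_{0,2}^{(2)}}$ in Proposition~\ref{Prop: P01P02}(3). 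The oscillatory behavior of $A,B$ on the supersonic regime $\mathcal N^-$ where $\partial_Y\bar\chi_1$ lives is what yields the cancellation and the clean identification of $C_{P,1}$.
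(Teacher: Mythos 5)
Your proposal follows the same architecture as the paper's proof (a base analysis at $j=1$ isolating the $\tilde H_{21}[\tilde P_{0,2}^{(2)}]$ contribution and extracting $C_{P,1}$ from the $2\partial_Y\bar\chi_1\partial_Y\tilde P_{0,2}^{(2)}$ piece, an induction with an $\al^{-1}$ gain per step, and summation over $j\ge 2$), but Step 1 has a genuine gap at exactly the point where $\mathcal{E}_{near}$ is ``earned''. In item (a) you assert that the $\pi^{-1}\kappa\bar\chi$ part of $\tilde P_{0,2}^{(2)}$ contributes only through $\bar\chi_1(Q_1+Q_2)\tilde P_{0,2}^{(2)}$, ``whose imaginary part is $\mathcal O(c_i|\frak{D}_0|)$''. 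The support observation is correct, but the bound is false: $Q_1$ is real up to $O(c_i)$ while $\frak{D}_0$ is not, so the imaginary part of this source is essentially $\pi^{-1}\kappa\,Q_1\bar\chi\,\Im\frak{D}_0$, of size $\kappa|\Im\frak{D}_0|\gg c_i|\frak{D}_0|$. Its contribution to $\Im\mathcal B_1(0)$ is $\Im\frak{D}_0\cdot\int_0^{\bar Y}A\bar M^{-2}Q_1\bar\chi\,dZ$ up to $O(c_i)$, and with only the pointwise Airy bounds of Lemma \ref{lem: Ai, Bi} this integral is merely $O(\al^{-1/6})$; in the relevant regime (e.g.\ $c_i\sim\al^{-3}e^{-\al w_0(\mathring Y)}$, and $|\partial_YA(0)|\lesssim\kappa^{-1/4}$ on $\mathbb H$ because $\kappa^{3/2}\tan\Theta(-\kappa\eta(0))=O(1)$) the quantity $\al^{-1/6}|\Im\frak{D}_0|$ exceeds every term of $\mathcal E_{near}(0)$, whose budget is only $\al^{-2/3}|\Im\frak{D}_0|$. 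The same untreated source appears when you feed Proposition \ref{Prop: P01P02}(1) into $H_{21},H_{22}$: the piece $\frak{D}_0\,\pi^{-1}\kappa(1-\bar\chi)$ of $\tilde P_{0,2}^{(1)}$ carries the dominant imaginary part $\kappa\,\Im\frak{D}_0$, yet your Step 1 only invokes $\Im Q_1$, $Q_2$ and $\Im\mathcal A_0^{123},\Im\mathcal B_0^{123}$. What is missing is the refined estimate $\big|\int_0^{Y}A\bar M^{-2}(Q_1+Q_2)\,dZ\big|\lesssim\al^{-2/3}e^{\al w_0(Y)}$ (and its $B$-analogue), which the paper obtains by integrating by parts against the equations for $A,B$, i.e.\ by exploiting the Airy oscillation and the $\al^{-2/3}$ width of the turning-point layer; see the treatment of $I_1$ in Lemma \ref{lem:A04B04} and of $I_{J_1}^2,\ II_{J_1}^2$ in Lemma \ref{lem: A11A15}. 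Without this ingredient the bound by $\mathcal E_{near}(0)$ in Step 1 fails.

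Two further points. First, in Step 2 your claimed bound $\|\Im f_j\|_{L^\infty_{w_0}}\lesssim\al^{-j+\frac{11}{6}}(\mathcal E_{near}+\tilde{\mathcal E}_{near})$ mixes regions ($f_j$ is supported in the ``far'' zone, where the inductive control is $\mathcal E_{far}$), and routing everything through $\Im\varphi_j$ leaves the term $\al^{-2}\|f_j\|_{L^\infty_{w_0}}$ from Proposition \ref{pro: rayleigh-varphi}, which is only polynomially small; the paper instead bounds the \emph{full} size of $\mathcal A_{j+1}^3,\mathcal A_{j+1}^4,\mathcal B_{j+1}^3,\mathcal B_{j+1}^4$ using the exponential weights carried by $\varphi_{j+1}$ on $[Y_1^*-\d_0,Y_1^*]\cup[Y_2^*,Y_2^*+\d_0]$ together with the comparisons \eqref{est: bdd by Im K_B}--\eqref{est: bdd by Im K_B-2} and the choice of $Y_1^*,Y_2^*,\tilde Y$ in Lemma \ref{lem:real-phi(0)}; these comparisons are what make $\mathcal E_{near},\mathcal E_{far}$ stable under the iteration and should appear explicitly in your argument. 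Second, two smaller slips: $\mathcal A_j(0)=0$ holds because $\mathcal A_f(Y)$ is an integral from $0$ to $Y$, not because $A(0)=0$ (in general $A(0)\neq0$); and your reduction of $\kappa^{-1}\pi\int_0^{\bar Y_1^*}A\bar M^{-2}\tilde H_{21}[B]\,dZ$ to $C_{P,1}$ needs care, since by the Wronskian the full integral equals $-1+O(\al^{-1})=2C_{P,1}+O(\al^{-1})$, so which constant survives depends on which pieces of $\tilde H_{21}[B]$ you retain in the leading term versus relegate to the error (compare \eqref{est:B11B15-J_4-3} with statement (3) of Lemma \ref{lem: A11A15}).
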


In order to obtain the results in Proposition \ref{pro: BC B_j(0)}, we should deal with the imaginary part of  $\mathcal{A}_1^1(Y)+\mathcal A_1^5(Y)$ and $\mathcal{B}_1^1(Y)+\mathcal B_1^5(Y)$ in $[0, Y_2^*]$ firstly. {We emphasize that $\mathcal A_1^5(Y)$ and $\mathcal B_1^5(Y)$  related to $\tilde P_{0,2}^{(2)}$  only appear  for $j=1$.}

\begin{lemma}\label{lem: A11A15}
Let $(\alpha,c)\in \mathbb H$ and $Y_1^*, Y^*_2$ be given as in Lemma \ref{lem:real-phi(0)}. The following results hold

\begin{enumerate}
\item For any $Y\in[0,Y^{+}+\d]$, we have
\begin{align*}
|\mathrm{Im}(\mathcal{A}_1^1(Y)+\mathcal A_1^5(Y))|
	+e^{2\al w_0(Y)}|\mathrm{Im}( \mathcal{B}_1^1(Y)+\mathcal B_1^5(Y))|&\lesssim \mathcal{E}_{near}(Y)+\tilde{\mathcal{E}}_{near}(Y).
\end{align*}

\item For any $Y\in[Y^{+}+\d, Y_2^*]$, we have
\begin{align*}
|\mathrm{Im}(\mathcal{A}_1^1(Y)+\mathcal A_1^5(Y))|
	+e^{2\al w_0(Y)}|\mathrm{Im}( \mathcal{B}_1^1(Y)+\mathcal B_1^5(Y))|&\lesssim \mathcal{E}_{far}(Y).
\end{align*}

\item Moreover, we obtain 
\begin{align*}
	&\left|\mathrm{Im}\left(\mathcal{B}_1^1(0)+\mathcal B_1^5(0)-\kappa^{-1}\pi\alpha^{-2}\mathcal K_B[\varphi_0](\bar Y_1^*)\partial_YA(0)\bar M^{-2}(0)C_{P,1}\right)\right|\lesssim \mathcal{E}_{near}(0),
\end{align*}
where $C_{P,1}$ is given in Proposition \ref{pro: BC B_j(0)}.

\end{enumerate}

\end{lemma}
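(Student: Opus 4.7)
The strategy is to substitute the decompositions of $\tilde P_{0,2}^{(1)}$ and $\tilde P_{0,2}^{(2)}$ from Proposition \ref{Prop: P01P02} into the integral representations of $\mathcal A_1^1=-\mathcal A_{H_{21}[\tilde P_{0,2}^{(1)}]}$, $\mathcal B_1^1=-\mathcal B_{H_{21}[\tilde P_{0,2}^{(1)}]}$, $\mathcal A_1^5=-\mathcal A_{\tilde H_{21}[\tilde P_{0,2}^{(2)}]}$, $\mathcal B_1^5=-\mathcal B_{\tilde H_{21}[\tilde P_{0,2}^{(2)}]}$, and then estimate the resulting pieces separately, keeping track of where $\mathfrak D_0(\alpha,c)$ factors out so that the imaginary bounds of Lemma \ref{lem: Im-int} can be leveraged. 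I would first record the useful identity
\begin{align*}
\tilde H_{21}[\tilde P]=\mathcal L[\bar\chi_1\tilde P]-\bar\chi_1\mathcal L[\tilde P]+\bar\chi_1(Q_1+Q_2)\tilde P,
\end{align*}
which together with the equation $\mathcal L[\tilde P_{0,2}^{(2)}]=-H_{3}^{sup}[\varphi_0]+(Q_1+Q_2)\tilde P_{0,2}^{(2)}$ replaces $\tilde H_{21}[\tilde P_{0,2}^{(2)}]$ by $\mathcal L[\bar\chi_1\tilde P_{0,2}^{(2)}]+\bar\chi_1 H_3^{sup}[\varphi_0]$, which is more convenient because $\mathcal L[\bar\chi_1\tilde P_{0,2}^{(2)}]$ integrated against $A\bar M^{-2}$ or $B\bar M^{-2}$ (both annihilated by $\mathcal L_{app}$) can be handled through integration by parts that produces clean boundary terms at $Y=0$.

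For parts (1)--(2), I would split $\tilde P_{0,2}^{(1)}=\mathcal A_0^{123}A+\mathcal B_0^{123}B+\mathfrak D_0(\alpha,c)(\pi^{-1}\kappa(1-\bar\chi)+\mathcal R_{\tilde P_{0,2}^{(1)}})$ and similarly for $\tilde P_{0,2}^{(2)}$. The contributions from $\mathcal A_0^{123}A+\mathcal B_0^{123}B$ can be handled by applying Proposition \ref{pro: Airy-1} with $a(Y)=(1-\chi)(Q_1+Q_2)\chi_1\mathcal A_0^{123}(Y)$ and $b(Y)=(1-\chi)(Q_1+Q_2)\chi_1\mathcal B_0^{123}(Y)$; the imaginary bounds of Lemma \ref{lem:A01B01} then give $c_i$ contributions, absorbed into $\mathcal E_{near}$ and $\mathcal E_{far}$. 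The $\mathfrak D_0$ contributions factor out $\mathfrak D_0(\alpha,c)$, so their imaginary parts are dominated by $|\mathrm{Im}\,\mathfrak D_0|$ times an $\alpha$-dependent prefactor; the $(1-\bar\chi)$ and $\bar\chi$ pieces combine on the supersonic regime into a constant and produce, after integration by parts using $\mathcal L_{app}[A]=\mathcal L_{app}[B]=0$, a remainder that is either of order $\alpha^{-2/3}$ (giving $\alpha^{-2/3}e^{\alpha w_0(Y)}|\mathrm{Im}\,\mathfrak D_0|$, the middle term in $\mathcal E_{near}$) or bounded by the $\mathcal R_{\tilde P_{0,2}^{(i)}}$ estimates. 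The piece $\mathfrak D_0\partial_YA(0)\bar M^{-2}(0)B$ of $\tilde P_{0,2}^{(2)}$ contributes the $\tilde{\mathcal E}_{near}$ summand on $[0,Y^-]$ and is absorbed into the remainder on $[Y^-,Y^++\delta]$. On $[Y^++\delta,Y_2^*]$ the $\tilde H_{21}$ source is zero, so only $H_{21}[\tilde P_{0,2}^{(1)}]$ contributes and the extra decay factors $e^{-2\alpha\int_{Y_1^*}^{Y_1-2\delta_0}F_r^{1/2}}$ and $e^{-\alpha\int_Y^{Y_2^*}F_r^{1/2}}$ arise from the $\mathcal B_0^{123}$ estimates together with the bounds in Propositions \ref{pro: Airy-1}--\ref{pro: Airy-2}.

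For part (3), the $\mathcal E_{near}(0)$ error already accounts for everything except the distinguished term. Writing $\mathcal B_1^5(0)=\kappa^{-1}\pi\int_0^{\infty}A(Z)\bar M^{-2}(Z)\tilde H_{21}[\tilde P_{0,2}^{(2)}](Z)\,dZ$ and using the key identity above plus $\tilde P_{0,2}^{(2)}=\mathfrak D_0(\alpha,c)\pi^{-1}\kappa\bar\chi+\mathfrak D_0\partial_YA(0)\bar M^{-2}(0)B+\mathfrak D_0\mathcal R_{\tilde P_{0,2}^{(2)}}$, the $\mathcal L[\bar\chi_1\bar\chi]$ contribution is a bounded-support integral producing a real constant modulo $\mathcal O(c_i)$, while the $\mathcal L[\bar\chi_1 B]$ contribution simplifies after an integration by parts against $A\bar M^{-2}$; the surviving singular piece is exactly $\kappa^{-1}\pi\int_0^{\bar Y_1^*}A(Z)\bar M^{-2}(Z)\partial_Z\bar\chi_1(Z)\partial_Z B(Z)\,dZ=C_{P,1}$, multiplied by $\mathfrak D_0\partial_YA(0)\bar M^{-2}(0)=\kappa^{-1}\pi\alpha^{-2}\mathcal K_B[\varphi_0](\bar Y_1^*)\partial_YA(0)\bar M^{-2}(0)$. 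All other pieces coming from $\mathcal A_1^1(0)+\mathcal B_1^1(0)+\mathcal A_1^5(0)$ and from $\mathfrak D_0\mathcal R$-remainders either carry a $c_i$ factor in their imaginary part or extra $\alpha^{-1}$ decay, thus fit inside $\mathcal E_{near}(0)$. The bound $|C_{P,1}|+c_i^{-1}|\mathrm{Im}\,C_{P,1}|\lesssim 1$ follows from Lemma \ref{lem: Ai, Bi} since the integrand is supported in $[\bar Y,3\bar Y/2]\subset[0,Y^-]$ where $A,\partial_Y B,\bar M$ are $\mathcal O(1)$ with imaginary parts $\mathcal O(c_i)$ by item (4) of that lemma.

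\textbf{Expected main obstacle.} The hardest step is part (3): the three types of contributions hidden inside $\tilde H_{21}$, namely $\bar\chi_1(Q_1+Q_2)\tilde P$, $(\partial_Y^2\bar\chi_1-\tfrac{2\partial_Y\bar M}{\bar M}\partial_Y\bar\chi_1)\tilde P$, and $2\partial_Y\bar\chi_1\partial_Y\tilde P$, all have comparable pointwise size on the support of $\partial_Y\bar\chi_1$ (where $\partial_Y B\sim\alpha^{5/6}$ competes with $\alpha^2\bar\chi_1$), so the clean extraction of $C_{P,1}$ requires the algebraic identity $\tilde H_{21}[\tilde P]=\mathcal L[\bar\chi_1\tilde P]-\bar\chi_1\mathcal L[\tilde P]+\bar\chi_1(Q_1+Q_2)\tilde P$ and a careful integration by parts using $\mathcal L_{app}[A]=0$ to reduce everything to a single boundary-integral coefficient. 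Showing that the residual terms are really of order $\mathcal E_{near}(0)$ rather than $|\mathrm{Im}\,\mathfrak D_0|$ alone requires using the finer imaginary-part estimate $|\mathrm{Im}\,\mathcal R_{\tilde P_{0,2}^{(2)}}|\lesssim c_i\alpha^{-2/3}$ from Proposition \ref{Prop: P01P02}(3), and this is where the proof is most delicate.
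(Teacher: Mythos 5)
Your treatment of parts (1)--(2) follows essentially the same route as the paper: substitute the decompositions of $\tilde P_{0,2}^{(1)}$ and $\tilde P_{0,2}^{(2)}$ from Proposition \ref{Prop: P01P02} into the source $H_{21}[\tilde P_{0,2}^{(1)}]+\tilde H_{21}[\tilde P_{0,2}^{(2)}]$, handle the $\mathcal A_0^{123}A+\mathcal B_0^{123}B$ part through Proposition \ref{pro: Airy-1} and the imaginary-part bounds of Lemma \ref{lem:A01B01}, factor out $\frak D_0(\al,c)$ from the remaining pieces so that Lemma \ref{lem: Im-int} controls their imaginary parts, and observe that $\pi^{-1}\kappa(1-\bar\chi)$ and $\pi^{-1}\kappa\bar\chi$ recombine into a constant on the supersonic regime while $\tilde H_{21}$ is switched off beyond $Y^{-}$, so that on $[Y^{+}+\d,Y_2^*]$ only $H_{21}[\tilde P_{0,2}^{(1)}]$ survives. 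Your attribution of the $\tilde{\mathcal E}_{near}$ term to the $\frak D_0\partial_YA(0)\bar M^{-2}(0)B$ component of $\tilde P_{0,2}^{(2)}$ is also consistent with the paper (it enters through the $A$-weighted integral of $2\partial_Y\bar\chi_1\,\partial_Y\tilde P_{0,2}^{(2)}$, which is only $O(1)$, whereas the $B$-weighted integral gains $\al^{-1}$).

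For part (3), however, the mechanism you propose does not work as stated. The paper does not need (and does not use) the identity $\tilde H_{21}[\tilde P]=\mathcal L[\bar\chi_1\tilde P]-\bar\chi_1\mathcal L[\tilde P]+\bar\chi_1(Q_1+Q_2)\tilde P$: the constant $C_{P,1}$ is by definition the integral $\kappa^{-1}\pi\int_0^{\bar Y_1^*}A\bar M^{-2}\partial_Z\bar\chi_1\partial_ZB\,dZ$, and it is read off directly by inserting the expansion of $\partial_Y\tilde P_{0,2}^{(2)}$ from Proposition \ref{Prop: P01P02}(3) into the term $2\partial_Y\bar\chi_1\partial_Y\tilde P_{0,2}^{(2)}$ of $\tilde H_{21}$, with the companion terms $\bar\chi_1(Q_1+Q_2)\tilde P_{0,2}^{(2)}$ and $(\partial_Y^2\bar\chi_1-\tfrac{2\partial_Y\bar M}{\bar M}\partial_Y\bar\chi_1)\tilde P_{0,2}^{(2)}$ bounded into $\mathcal E_{near}(0)$ via $|AB|\lesssim\kappa^{-\f12}$ and the $c_i$-smallness of the imaginary parts. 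Your plan instead integrates by parts against $A\bar M^{-2}$ using the Lagrange structure of $\mathcal L$, and you then claim that ``the surviving singular piece is exactly'' $C_{P,1}$. It cannot be: once you integrate by parts, the Wronskian identity $A\partial_YB-B\partial_YA=\kappa\pi^{-1}\bar M^2$ makes the combination
\begin{align*}
\kappa^{-1}\pi\int_0^{+\infty}A\bar M^{-2}\Big(2\partial_Z\bar\chi_1\partial_ZB+\big(\partial_Z^2\bar\chi_1-\tfrac{2\partial_Z\bar M}{\bar M}\partial_Z\bar\chi_1\big)B\Big)dZ
=\int_0^{+\infty}\partial_Z\bar\chi_1\,dZ=-1,
\end{align*}
i.e.\ the integrand $A\bar M^{-2}\partial_Z\bar\chi_1\partial_ZB$ defining $C_{P,1}$ is exactly the object that your integration by parts eliminates, leaving a pure constant (plus Wronskian boundary terms at $Y=0$ and a $(Q_1+Q_2)$-volume term). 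So you cannot both perform that reduction and land on the stated formula with $C_{P,1}$; reconciling the two requires an extra $O(1)$ bookkeeping step that your write-up does not supply, and an $O(1)$ mismatch in the coefficient of $\frak D_0\partial_YA(0)\bar M^{-2}(0)$ is not absorbable into $\mathcal E_{near}(0)$ (which only contains $\al^{-1}|\mathrm{Im}(\frak D_0\partial_YA(0)\bar M^{-2}(0))|$). The fix is simply to drop the commutator detour and extract $C_{P,1}$ directly from the $2\partial_Y\bar\chi_1\partial_Y\tilde P_{0,2}^{(2)}$ term, as the paper does; your identification of this term as the delicate one is correct, but the tool you chose for it is the wrong one.
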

\begin{proof}
By Proposition \ref{Prop: P01P02}, for  $Y\in[0,Y^{+}+\d]$, we write
\begin{align*}
H_{21}[\tilde P_{0,2}^{(1)}](Y)+\tilde H_{21}[\tilde P_{0,2}^{(2)}](Y)
&=(Q_1+Q_2)\left(\mathcal A_{0}^{123}A+\mathcal B_{0}^{123}B+\alpha^{-2}\mathcal K_B[\varphi_0](\bar Y_1^*)\right)\\
&\quad+(Q_1+Q_2)\frak{D}_0(\al,c)\partial_YA(0)\bar M(0)^{-2}B\bar\chi_1  \\
&\quad+(Q_1+Q_2)\frak{D}_0(\al,c)\left(\mathcal{R}_{\tilde{P}_{0,2}^{(1)}}+\mathcal{R}_{\tilde{P}_{0,2}^{(2)}}\bar\chi_1\right)\\
&\quad+\left(\left(\pa_Y^2\bar\chi_1-\f{2\pa_Y\bM}{\bM}\pa_Y \bar\chi_1\right)\tilde P_{0,2}^{(2)}+2\pa_Y\bar\chi_1\pa_Y\tilde P_{0,2}^{(2)}\right)\\
&=J_1(Y)+\cdots+J_4(Y),
\end{align*}
and for any $Y\in[Y^{+}+\d,Y_2^*]$, we write
\begin{align*}
	&H_{21}[\tilde P_{0,2}^{(1)}](Y)+\tilde H_{21}[\tilde P_{0,2}^{(2)}](Y)=H_{21}[\tilde P_{0,2}^{(1)}](Y)=(1-\chi)\chi_1(Q_1+Q_2)(\mathcal A_0A+\mathcal B_0B)(Y).
\end{align*}

{\bf{\underline{Estimates of $\mathcal A_1^1(Y)+\mathcal A_1^5(Y)$. }}} We first give the estimates  for $Y\in[0,Y^{+}+\d]$.
We write for any $Y\in[0,Y^{+}+\d]$,
\begin{align}\label{eq:A11+A15}
	\mathcal A_1^1(Y)+\mathcal A_1^5(Y)=\sum_{j=1}^4\kappa^{-1}\pi\int_0^YB(Z)\bar M^{-2}(Z) J_j(Z)dZ=\sum_{j=1}^4I_{J_j}.
\end{align}
In the following, we give the estimates of $I_{J_j},~j=1,\cdots,4$ term by term.\smallskip

\underline{Estimates of $I_{J_1}$.} We write for  $Y\in[0,Y^{+}+\d]$,
\begin{align}\label{eq:A11A15-1}
	\begin{split}
		I_{J_1}
&=\kappa^{-1}\pi\int_0^YB(Z)\bar M^{-2}(Z)(Q_1+Q_2)(\mathcal A_{0}^{123}A+\mathcal B_{0}^{123}B)(Z)dZ\\
&\quad+\frak{D}_0(\al,c)\int_0^{Y}(Q_1+Q_2)B(Z)\bar M^{-2}(Z)dZ=I_{J_1}^1+\frak{D}_0(\al,c) I_{J_1}^2.	
	\end{split}
\end{align}
We notice that by Proposition \ref{Prop: P01P02}, Lemma \ref{lem: Q_1}  and $|Q_2|\leq C\al^2 c_i$ for $Y\in[0,Y^{+}+\d]$,
\begin{align}\label{est:Q1Q2A02B02}
\begin{split}
			&|(Q_1+Q_2)\mathcal A_{0}^{123}(Y)|+e^{2\alpha w_0(Y)}|(Q_1+Q_2)\mathcal B_{0}^{123}(Y)|\lesssim \alpha^{-1},\\
	&|\mathrm{Im}((Q_1+Q_2)\mathcal A_{0}^{123}(Y))|\lesssim \al c_i,\\
	& |\mathrm{Im}((Q_1+Q_2)\mathcal B_{0}^{123}(Y))|\\
	&\lesssim\Big(\alpha c_i e^{-2\al w_0(Y)}+\al^{-2}e^{-2\alpha w_0(Y_1^*-\delta_0)}e^{-2\al \int_{Y_1^*}^{Y_1-2\d_0}F_r(Z)dZ}+\al^{-1}e^{-2\al w_0(Y_2^*)}\Big),
	\end{split}
	\end{align}
which along with Proposition \ref{pro: Airy-1}  imply  that for any $Y\in[0,Y^{+}+\d]$,
\begin{align}\label{eq:A11A15-1}
	\begin{split}
		\Big|\mathrm{Im}I_{J_1}^1\Big|\lesssim &c_i+e^{2\al w_0(Y)}\Big(\al^{-3}e^{-2\alpha w_0(Y_1^*-\delta_0)}e^{-2\al \int_{Y_1^*}^{Y_1-2\d_0}F_r^\f12(Y') dY'}+\al^{-2}e^{-2\al w_0(Y_2^*)}\Big)\\
		\lesssim&\mathcal{E}_{near}(Y).
	\end{split}
\end{align}
Here we used \eqref{est: bdd by Im K_B} in the last line.

We also have that for  $Y\in[0,Y^{+}+\d]$,
\begin{align}\label{eq:A11A15-2}
	|I_{J_1}^2|+c_i^{-1}|\mathrm{Im}I_{J_1}^2|\leq C\alpha^{-\f23} e^{\alpha w_0(Y)},
\end{align}
which along with \eqref{eq:A11A15-1} and \eqref{eq:A11A15-2} implies that for any $Y\in[0,Y^{+}+\d]$,
\begin{align}\label{est:A11A15-3}
	\begin{split}
			\left|\mathrm{Im}I_{J_1}\right|\lesssim& \mathcal{E}_{near}(Y)+\alpha^{-\f23}e^{\alpha w_0(Y)}|\mathrm{Im}(\frak{D}_0(\al,c))|
			\lesssim \mathcal{E}_{near}(Y).
	\end{split}
\end{align}

\underline{Estimates of $I_{J_2}$.} We  write  for  $Y\in[0,Y^{+}+\d]$,
\begin{align*}
	I_{J_2}=\frak{D}_0(\al,c)\partial_YA(0)\bar M^{-2}(0)C_B(Y),
\end{align*}
where
\begin{align*}
	C_B(Y)=\kappa^{-1}\pi\int_0^Y(Q_1+Q_2)B(Z)^2\bar M^{-2}(Z)\bar\chi_1(Z)dZ,
\end{align*}
with $|C_B(Y)|\leq C\alpha^{-1}\text{ and }|\mathrm{Im}(C_B(Y))|\leq C\alpha c_i. $
Therefore, we  obtain  
\begin{align}\label{est:A11A15-4}
	\begin{split}
		&\left|\mathrm{Im}I_{J_2}\right|\lesssim \alpha^{-1}|\mathrm{Im}\left(\frak{D}_0(\al,c)\partial_YA(0)\bar M^{-2}(0)\right)|+c_i\lesssim \mathcal{E}_{near}(Y).
			\end{split}
\end{align}

\underline{Estimates of $I_{J_3}$.}
By a similar argument as \eqref{est:A11A15-3} , we obtain 
\begin{align}\label{est:A11A15-5}
	\begin{split}
		&\left|\mathrm{Im}I_{J_3}\right|\lesssim \alpha^{-\f32}e^{\alpha w_0(Y)}|\mathrm{Im}(\frak{D}_0(\al,c))|+c_i\lesssim \mathcal{E}_{near}(Y).
	\end{split}
\end{align}

\underline{Estimates of $I_{J_4}$.}
We first notice that 
\begin{align*}
	I_{J_4}=&\kappa^{-1}\pi\int_0^YB(Z)\bar M^{-2}(Z)\left(\pa_Z^2\bar\chi_1-\f{2\pa_Z\bM}{\bM}\pa_Z \bar\chi_1\right)\tilde P_{0,2}^{(2)}(Z)dZ\\
	&+2\kappa^{-1}\pi\int_0^YB(Z)\bar M^{-2}(Z)\partial_Z\bar\chi_1(Z)\partial_Z\tilde P_{0,2}^{(2)}(Z)dZ=I_{J_4}^1+I_{J_4}^2.
\end{align*}
On the other hand, we use Proposition \ref{Prop: P01P02} and the fact $\pa_Z\bar\chi_1\bar\chi=\pa_Z^2\bar\chi_1 \bar\chi=\pa_Z\bar\chi_1\pa_Z\bar\chi=0$ to write that 
\begin{align}\label{eq: tilde P_02^2-1}
\begin{split}
&\Big(\pa_Z^2\bar\chi_1-\f{2\pa_Z\bM}{\bM}\pa_Z \bar\chi_1\Big)\tilde P_{0,2}^{(2)}(Z)\\
&=\frak{D}_0(\al,c)\Big(\pa_Z^2\bar\chi_1-\f{2\pa_Z\bM}{\bM}\pa_Z \bar\chi_1\Big)\Big(\partial_Y A(0)\bar M(0)^{-2}B(Z)+\mathcal {R}_{\tilde P_{0,2}^{(2)}}(Z)	\Big),
\end{split}
\end{align}
and
\begin{align}\label{eq: tilde P_02^2-2}
\partial_Z\bar\chi_1(Z)\partial_Z\tilde P_{0,2}^{(2)}(Z)=\frak{D}_0(\al,c)\partial_Z\bar\chi_1\Big(\partial_Y A(0)\bar M(0)^{-2}\partial_Z B(Z)-\mathcal {R}_{\pa_Y\tilde P_{0,2}^{(2)}}(Z)	\Big).	
\end{align}
Therefore, by a similar argument as \eqref{est:A11A15-4}, we obtain
\begin{align*}
	\left|\mathrm{Im}I_{J_4}^1\right|+\left|\mathrm{Im}I_{J_4}^2\right|\lesssim& \alpha^{-1}|\mathrm{Im}\left(\frak{D}_0(\al,c)\partial_YA(0)\bar M^{-2}(0)\right)|+\alpha^{-\f32}|\mathrm{Im}\left(\frak{D}_0(\al,c)\right)|+c_i\lesssim \mathcal{E}_{near}(Y),
\end{align*}
where we used the estimate
\begin{align*}
&\Big|\kappa^{-1}\pi\int_0^Y\bar M^{-2}(Z)\pa_Z\bar \chi_1(Z)B(Z)\pa_ZB(Z)dZ\Big|\\
&\leq \f12\Big|\kappa^{-1}\pi\int_0^Y\pa_Z(\bar M^{-2}(Z)\pa_Z\bar \chi_1(Z))B^2(Z)dZ\Big|\\
&\qquad+\f12\Big|\kappa^{-1}\pi\bar M^{-2}(Z)\pa_Z\bar \chi_1(Z)B^2(Z)\Big|_{Z=0}^{Z=Y}\Big|\leq C\al^{-1}.
\end{align*}

Therefore, we obtain  
\begin{align}\label{est:A11A15-6}
	\begin{split}
		\left|\mathrm{Im}I_{J_4}(Y)\right|\lesssim& \mathcal{E}_{near}(Y).
	\end{split}
\end{align}

By gathering \eqref{eq:A11+A15}, \eqref{est:A11A15-3}-\eqref{est:A11A15-6}, we obtain that for any $Y\in[0,Y^{+}+\d]$,
\begin{align}\label{est: A11A15 near}
\begin{split}
		&|\mathrm{Im}(\mathcal{A}_1^1(Y)+\mathcal A_1^5(Y))|\leq \sum_{j=1}^4|\mathrm{Im}I_{J_j}(Y)|\lesssim \mathcal{E}_{near}(Y).
	\end{split}
\end{align}

Next we estimate $\mathcal A_1^1(Y)+\mathcal A_1^5(Y)$ for $Y\in[Y^{+}+\d, Y_2^*]$. We write
\begin{align*}
\mathcal A_1^1(Y)+\mathcal A_1^5(Y)=&\mathcal A_1^1(Y^{+}+\d)+\mathcal A_1^5(Y^{+}+\d)+\kappa^{-1}\pi\int_{Y^{+}+\d}^YB(Z)\bar M^{-2}(Z)H_{21}[\tilde P_{0,2}^{(1)}](Z)dZ\\
=&\mathcal A_1^1(Y^{+}+\d)+\mathcal A_1^5(Y^{+}+\d)+I_1,
\end{align*}
where
\begin{align*}
	H_{21}[\tilde P_{0,2}^{(1)}](Y)=(1-\chi)\chi_1(Q_1+Q_2)(\mathcal A_0A+\mathcal B_0B)(Y).
\end{align*}
By  Proposition \ref{pro: Airy-1} and Proposition \ref{Prop: P01P02}, we  obtain that for any $Y\in[Y^{+}+\d,Y_2^*]$,
\begin{align*}
		&\left|\mathrm{Im}I_1(Y)\right|\lesssim \Big( c_i+\al^{-2}e^{-2\al \int_{Y^*_1}^{Y_1-\d_0}F_r^\f12(Y') dY'}+\al^{-2}e^{-\al \int_{Y}^{Y_2^*}F_r^\f12(Y') dY'}\Big)\lesssim \mathcal{E}_{far}(Y),
\end{align*}
which along with \eqref{est: A11A15 near} for $Y=Y^{+}+\d$  shows that for any $Y\in[Y^{+}+\d,Y_2^*]$,
\begin{align}\label{est: A11A15 far}
\begin{split}
	|\mathrm{Im}(\mathcal{A}_1^1(Y)+\mathcal A_1^5(Y))|\lesssim \mathcal{E}_{near}(\bar Y_1^*)+\mathcal{E}_{far}(Y)\lesssim\mathcal{E}_{far}(Y).
	\end{split}
\end{align}

{\bf{\underline{Estimates of $\mathcal B_1^1(Y)+\mathcal B_1^5(Y)$. } }}For $Y\in[Y^{+}+\d,+\infty)$, it holds that
\begin{align*}
|H_{21}[\tilde P_{0,2}^{(1)}](Y)|\leq C\al^{-\f16}e^{-\al w_0(Y)}(\|\mathcal A_0\|_{L^\infty_{w_0}}+\|\mathcal B_0\|_{L^\infty_{2w_0}})\leq C\al^{-\f16}e^{-\al w_0(Y)},
\end{align*}
where
\begin{align*}
	H_{21}[\tilde P_{0,2}^{(1)}](Y)=(1-\chi)\chi_1(Q_1+Q_2)(\mathcal A_0A+\mathcal B_0B)(Y),\quad \tilde H_{21}[\tilde P_{0,2}^{(2)}](Y)=0.
\end{align*}
Then by Proposition \ref{pro: Airy-1}, we get
\begin{align*}
|\mathcal B_1^1(Y_2^*)|\leq&C\al^{-2}e^{-2\al w_0(Y_2^*)}= C\al^{-2}e^{-2\al \int_{Y}^{Y_2^*}F_r^\f12(Y') dY'} e^{-2\al w_0(Y)},\quad \mathcal B_1^5(Y)=0,\quad Y\in[Y^{-},+\infty).
\end{align*}
Thus, for $Y\in[0, Y_2^*]$, we write
\begin{align*}
\mathcal B_1^1(Y)+\mathcal B_1^5(Y)=\mathcal B_1^1(Y_2^*)+\kappa^{-1}\pi\int_{Y}^{Y_2^*}A(Z)\bar M^{-2}(Z)\left(H_{21}[\tilde P_{0,2}^{(1)}]+\tilde H_{21}[\tilde P_{0,2}^{(2)}]\right)dZ=\mathcal B_1^1(Y_2^*)+II.
\end{align*}
By Proposition \ref{pro: Airy-1} and Proposition \ref{Prop: P01P02}, we  obtain that for  $Y\in[Y^{+}+\d,Y_2^*]$,
\begin{align*}
		|\Im II|\lesssim &\Big( c_i+\al^{-2}e^{-2\al \int_{Y^*_1}^{Y_1-2\d_0}F_r^\f12(Y') dY'}+\al^{-2}e^{-\al \int_{Y}^{Y_2^*}F_r^\f12(Y') dY'}\Big)e^{-2\al w_0(Y)}\\
		\lesssim&\mathcal{E}_{far}(Y)e^{-2\al w_0(Y)},
\end{align*}
which gives that for $Y\in[Y^{+}+\d,Y_2^*]$,
\begin{align}\label{est: B11B15 far}
\begin{split}
&|\Im(\mathcal B_1^1(Y)+\mathcal B_1^5(Y))|\lesssim \mathcal{E}_{far}(Y)e^{-2\al w_0(Y)}.
\end{split}
\end{align}

For  $Y\in[0,Y^{+}+\d]$, we use $\cB_1^5(Y_1^*-\d_0)=0$ to write 
\begin{align*}
	\mathcal B_1^1(Y)+\mathcal B_1^5(Y)=&\kappa^{-1}\pi\sum_{j=1}^4\int_Y^{Y_1^*-\d_0}A(Z)\bar M^{-2}(Z)J_j(Z)dZ+\mathcal B_1^1(Y_1^*-\d_0)
	=\sum_{j=1}^4II_{J_j}+\mathcal B_1^1(Y_1^*-\d_0).
\end{align*}
By \eqref{est: B11B15 far}, we have
\begin{align}\label{eq:B11B15-1}
\begin{split}
&|\mathcal B_1^1(Y_1^*-\d_0)|\lesssim \mathcal{E}_{far}(Y_1^*-\d_0)e^{-2\al w_0(Y_1^*-\d_0)}\\
\lesssim&\Big( \mathcal{E}_{near}(Y^{+}+\d_0)+\al^{-2}e^{-2\al \int_{Y_1^*}^{Y_1-2\d_0}F_r^\f12(Y') dY'}+\al ^{\f16}e^{-\al \int_{Y_1^*-\d_0}^{Y_2^*}F_r^\f12(Y') dY'}\Big)e^{-2\al w_0(Y_1^*-\d_0)}\\
\lesssim& \Big(\al^{-2}e^{-\al w_0(Y_1^*-\d_0)}e^{-2\al \int_{Y_1^*}^{Y_1-2\d_0}F_r^\f12(Y') dY'}+\al ^{\f16}e^{-\al w_0(Y_2^*)}\Big)e^{-\al w_0(Y_1^*-\d_0)}\\
&\qquad+\mathcal{E}_{near}(Y)e^{-2\al w_0(Y)}
\lesssim \mathcal{E}_{near}(Y)e^{-2\al w_0(Y)},
\end{split}
\end{align}
here we used \eqref{est: bdd by Im K_B} in the last line.\smallskip

\underline{Estimates of $II_{J_1}$.}
We write 
\begin{align*}
	\begin{split}
II_{J_1}=&\kappa^{-1}\pi\int_Y^{Y_1^*-\d_0}A(Z)\bar M^{-2}(Z)(Q_1+Q_2)(\mathcal A_{0}^{123}(Z)A(Z)+\mathcal B_{0}^{123}(Z)B(Z))dZ\\
&\quad+\frak{D}_0(\al,c)\int_{Y}^{Y_1^*-\d_0}A(Z)\bar M^{-2}(Z)dZ=II_{J_1}^1+\frak{D}_0(\al,c)II_{J_1}^2.
	\end{split}
\end{align*}
By Proposition \ref{pro: Airy-1}, Lemma \ref{lem:A01B01} , \eqref{est: bdd by Im K_B} and \eqref{est:Q1Q2A02B02}, we obtain 
\begin{align*}
	\begin{split}
		\left|\mathrm{Im}II_{J_1}^1\right|\lesssim &\Big(c_i e^{-2\al w_0(Y)}+\al^{-3}e^{-2\alpha w_0(Y_1^*-\delta_0)}e^{-2\al \int_{Y_1^*}^{Y_1-2\d_0}F_r(Z)dZ}+\al^{-2}e^{-2\al w_0(Y_2^*)}\Big)\\
		\lesssim&  c_ie^{-2\al w_0(Y)}+\alpha^{-\f23}e^{-\alpha w_0(Y)}|\mathrm{Im}(\frak{D}_0(\al,c))|\lesssim \mathcal{E}_{near}(Y)e^{-2\al w_0(Y)}.
	\end{split}
\end{align*}
We also notice that 
\begin{align*}
\left|II_{J_1}^2\right|\lesssim \alpha^{-\f23} e^{-\alpha w_0(Y)},\quad \left|\mathrm{Im}II_{J_1}^2\right|\lesssim \alpha^{-\f23} c_ie^{-\alpha w_0(Y)},
\end{align*}
which implies that  for $Y\in[0,Y^{+}+\d]$,
\begin{align}\label{est:B11B15-J_1}
\begin{split}
\left|\mathrm{Im}II_{J_1}\right|
\lesssim&  c_i+\alpha^{-\f23}e^{-\alpha w_0(Y)}|\mathrm{Im}(\frak{D}_0(\al,c))|+ \mathcal{E}_{near}(Y)e^{-2\al w_0(Y)}
\lesssim  \mathcal{E}_{near}(Y)e^{-2\al w_0(Y)}.
\end{split}
\end{align}

\underline{Estimates of $II_{J_2}$.}
 For $Y\in[0,Y^{+}+\d]$, we notice that 
\begin{align*}
II_{J_2}=-\frak{D}_0(\al,c)\partial_YA(0)\bar M^{-2}(0)C_A(Y),
\end{align*}
where
\begin{align*}
C_A(Y)=\kappa^{-1}\pi\int_Y^{Y_1^*-\d_0}(Q_1+Q_2)A(Z)B(Z)\bar M^{-2}(Z)\bar\chi_1(Z)dZ,
\end{align*}
with $|C_A(Y)|\lesssim\alpha^{-1},~|\mathrm{Im}(C_A(Y))|\lesssim\alpha c_i. $
Then we obtain 
\begin{align}\label{est:B11B15-J_2}
\begin{split}
\left|\mathrm{Im}II_{J_2}\right|\lesssim& \left(c_i+\alpha^{-1}\left|\mathrm{Im}\left(\frak{D}_0(\al,c)\partial_YA(0)\bar M^{-2}(0)\right)\right|\right)\mathbf 1_{[0,Y^-]}
\lesssim \mathcal{E}_{near}(Y)e^{-2\al w_0(Y)}.
\end{split}
\end{align}

\underline{Estimates of $II_{J_3}$.}
By a similar argument as \eqref{est:B11B15-J_1} , we have
\begin{align}\label{est:B11B15-J_3}
\begin{split}
\left|\mathrm{Im}II_{J_3}\right|\lesssim& c_ie^{-2\al w_0(Y)}+\alpha^{-\f32}e^{-\alpha w_0(Y)}|\mathrm{Im}(\frak{D}_0(\al,c))|
\lesssim  \mathcal{E}_{near}(Y)e^{-2\al w_0(Y)}.
\end{split}
\end{align}

\underline{Estimates of $II_{J_4}$.}
 We  write 
\begin{align*}
II_{J_4}
&=\kappa^{-1}\pi\int_Y^{Y_1^*-\d_0}A(Z)\bar M^{-2}(Z)\Big(\pa_Z^2\bar\chi_1-\f{2\pa_Z\bM}{\bM}\pa_Z \bar\chi_1\Big)\tilde P_{0,2}^{(2)}(Z)dZ\\
&+2\kappa^{-1}\pi\int_Y^{Y_1^*-\d_0}A(Z)\bar M^{-2}(Z)\partial_Z\bar\chi_1(Z)\partial_Z\tilde P_{0,2}^{(2)}(Z)dZ=II_{J_4}^1+II_{J_4}^2.
\end{align*}
According to \eqref{eq: tilde P_02^2-1} and \eqref{eq: tilde P_02^2-2}, we use the similar argument in $I_{J_4}$ to deduce
\begin{align*}
\left|\mathrm{Im}II_{J_4}^1\right|
\lesssim&\alpha^{-1}\left|\mathrm{Im}\left(\frak{D}_0(\al,c)\partial_YA(0)\bar M^{-2}(0)\right)\right|\mathbf 1_{[0, Y^{-}]} \\
&\quad+\alpha^{-\f32}e^{-\alpha w_0(Y)}|\mathrm{Im}\left(\frak{D}_0(\al,c)\right)|\lesssim  \mathcal{E}_{near}(Y)e^{-2\al w_0(Y)},
\end{align*}
and 
\begin{align*}
\left|\mathrm{Im}II_{J_4}^2\right|
\lesssim &\left|\mathrm{Im}\left(\frak{D}_0(\al,c)\partial_YA(0)\bar M^{-2}(0)\right)\right|\mathbf 1_{[0, Y^{-}]}\\
&+\alpha^{-\f23}e^{-\alpha w_0(Y)}|\mathrm{Im}\left(\frak{D}_0(\al,c)\right)|\lesssim (\mathcal{E}_{near}(Y)+\tilde{\mathcal{E}}_{near}(Y))e^{-2\al w_0(Y)},
\end{align*}
where we used the estimate
\begin{align*}
|\kappa^{-1}\pi\int_0^Y\bar M^{-2}(Z)\pa_Z\bar \chi_1(Z)A(Z)\pa_ZB(Z)dZ|\leq C.
\end{align*}
We emphasize that $\tilde{\mathcal{E}}_{near}(Y)$ appears in $\mathrm{Im}II_{J_4}^2$. 
Then we obtain
\begin{align}\label{est:B11B15-J_4-1}
\begin{split}
\left|\mathrm{Im}II_{J_4}\right|
\lesssim& (\mathcal{E}_{near}(Y)+\tilde{\mathcal{E}}_{near}(Y))e^{-2\al w_0(Y)}.
\end{split}
\end{align}
In particular, we have
\begin{align*}
	II_{J_4}^2(0)
	=&2\frak{D}_0(\al,c)(\partial_YA(0)\bar M^{-2}(0)C_{P,1}+C_{P,2}),
\end{align*}
where
\begin{align*}
	&C_{P,1}=\kappa^{-1}\pi\int_0^{\bar Y_1^*}A(Z)\bar M^{-2}(Z)\partial_Z\bar\chi_1(Z)\partial_ZB(Z)dZ,\\
	 &C_{P,2}=\kappa^{-1}\pi\int_0^{\bar Y_1^*}A(Z)\bar M^{-2}(Z)\partial_Z\bar\chi_1(Z)\mathcal {R}_{\pa_Y\tilde P_{0,2}^{(2)}}(Z)dZ,
\end{align*}
with $|C_{P,1}|+c_i^{-1}|\mathrm {Im}C_{P,1}|\lesssim 1,\quad |C_{P,2}|+c_i^{-1}|\mathrm {Im}C_{P,2}|\lesssim \al^{-\f23}.
$
Then we  obtain  
\begin{align}\label{est:B11B15-J_4-3}
\begin{split}
	&\left|\mathrm{Im}\left(II_{J_4}(0)-2\frak{D}_0(\al,c)\partial_YA(0)\bar M^{-2}(0)C_{P,1}\right)\right|\lesssim\mathcal{E}_{near}(0).
	\end{split}
\end{align}
Therefore, we gather \eqref{eq:B11B15-1}, \eqref{est:B11B15-J_1}, \eqref{est:B11B15-J_2}, \eqref{est:B11B15-J_3} and \eqref{est:B11B15-J_4-1} to deduce that for $Y\in[0,Y^{+}+\d]$,
\begin{align}\label{est:B11B15 near}
	\begin{split}
	|\mathrm{Im}( \mathcal{B}_1^1(Y)+\mathcal B_1^5(Y))|\lesssim&  (\mathcal{E}_{near}(Y)+\tilde{\mathcal{E}}_{near}(Y))e^{-2\al w_0(Y)}.
		\end{split}
\end{align}
Moreover,  by \eqref{est:B11B15-J_4-3}, we have 
\begin{align*}
	&\left|\mathrm{Im}(\mathcal{B}_1^1(0)+\mathcal B_1^5(0)-\frak{D}_0(\al,c)\partial_YA(0)\bar M^{-2}(0)C_{p,1})\right|\lesssim \mathcal{E}_{near}(0).
\end{align*}
\end{proof}

\begin{lemma}\label{lem: Aj Bj}
Let $(\alpha, c)\in \mathbb H$ and $Y_1^*, Y^*_2$ be given as in Lemma \ref{lem:real-phi(0)}. 
 \begin{enumerate}
 \item Assume that \eqref{assume: induction-2} holds for the j-th ($j\geq 0$) step. Then we have 
 \begin{align*}
 |\Im(\mathcal A_{j+1}^2(Y))|&+e^{2\al w_0(Y)} |\Im(\mathcal B_{j+1}^2(Y))|\lesssim
\left\{
\begin{aligned}
&\al^{-j}\mathcal{E}_{near}(Y),\quad Y\in[0, Y^{+}+\d_0]\\
&\al^{-j}\mathcal{E}_{far}(Y),\quad Y\in[ Y^{+}+\d_0, Y_2^*].
\end{aligned}
\right.
 \end{align*}
 
 \item  For $j\geq 0$, we have
 {\small
 \begin{align*}
& (|\mathcal A_{j+1}^3(Y)|+|\mathcal A_{j+1}^4(Y)|)+e^{2\al w_0(Y)}|(\mathcal B_{j+1}^3(Y)|+|\mathcal B_{j+1}^4(Y)|)\lesssim
\left\{
\begin{aligned}
&\al^{-j}\mathcal{E}_{near}(Y),\quad Y\in[0, Y^{+}+\d_0]\\
&\al^{-j}\mathcal{E}_{far}(Y),\quad Y\in[ Y^{+}+\d_0, Y_2^*].
\end{aligned}
\right.
 \end{align*}
 }
\item  Assume that \eqref{assume: induction-1} -\eqref{assume: induction-2} hold for the j-th ($j\geq 1$) step. Then we have
 \begin{align*}
  |\Im(\mathcal A_{j+1}^1(Y))|&+e^{2\al w_0(Y)} |\Im(\mathcal B_{j+1}^1(Y))|\lesssim
\left\{
\begin{aligned}
&\al^{-j}(\mathcal{E}_{near}(Y)+\tilde{\mathcal{E}}_{near}(Y)),\quad Y\in[0, Y^{+}+\d_0],\\
&\al^{-j}\mathcal{E}_{far}(Y),\quad Y\in[ Y^{+}+\d_0, Y_2^*].
\end{aligned}
\right.
 \end{align*}

 \end{enumerate}

\end{lemma}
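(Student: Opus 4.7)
The three parts have parallel structure: each requires applying one of the four mixed-type solution formulas (Propositions~\ref{pro: Airy-1}--\ref{pro: Airy-5}) to a specific source term, then extracting imaginary-part information via the inductive hypothesis. I would handle them in the order (1), (2), (3).

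For part (1), the contribution $(\mathcal A_{j+1}^2,\mathcal B_{j+1}^2)$ arises from the non-local source $K_g$ with $g=-\alpha^2\partial_Y\chi_1\,\tilde P_{j,2}$, which is exactly the setting of Proposition~\ref{pro: Airy-2}. Since $\mathrm{supp}(\partial_Y\chi_1)\subset [Y_1,Y_1+\delta_0]\cup[Y_2-\delta_0,Y_2]$ lies in the far region $[Y^++\delta_0,Y_2^*]$, I would use the induction hypothesis \eqref{assume: induction-2} to estimate $\|\Im g\|_{L^\infty_{w_0}}$ in terms of $\alpha^{11/6-j}\mathcal E_{far}$ on this support, and then apply the refined imaginary-part bound in Proposition~\ref{pro: Airy-2} (second item) to obtain the claim in the near and far regions respectively.

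Part (2) is the most delicate and proceeds in two stages. First, I would derive a sharp bound on $\varphi_{j+1}=\mathcal L_{cr}^{-1}(-H_1[\tilde P_{j,2}]-H_4[\varphi_j])$ via Proposition~\ref{pro: rayleigh-varphi}, using Lemma~\ref{lem: rayleigh-source term} for the real-part size and the inductive hypotheses (on $\Im \tilde P_{j,2}$ via \eqref{assume: induction-1}--\eqref{assume: induction-2}, and on $\Im\varphi_j$ inherited from the previous step) to control $\|\Im f_{j+1}\|_{L^\infty_{w_0}}$ by a power of $\alpha^{-j}$ times quantities bounded by $\mathcal E_{near/far}$. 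This yields $\|\Im\varphi_{j+1}\|_{L^\infty_{w_0}}$ of the same structure. Second, I would insert $f=\partial_Y\chi_2\,\bar M\,\varphi_{j+1}$ into Proposition~\ref{pro: Airy-3} to obtain $(\mathcal A_{j+1}^3,\mathcal B_{j+1}^3)$, and $f=\partial_Y(\tfrac{\partial_Y\chi_2 \bar M^2}{1-\bar M^2})\bar M^{-1}\varphi_{j+1}$ into Propositions~\ref{pro: Airy-4}--\ref{pro: Airy-5} to obtain $(\mathcal A_{j+1}^4,\mathcal B_{j+1}^4)$. In each case the localization of $f$ in the far region, combined with the exponential gain $e^{-2\alpha\int_{Y_1^*}^{Y_1-2\delta_0}F_r^{1/2}}$ from $\varphi_{j+1}$ there, produces the $\mathcal E_{near/far}$-structured bound.

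Part (3) is modelled on Lemma~\ref{lem: A11A15}. Unlike the $j+1=1$ case, there is no explicit $\mathfrak D_0$-leading contribution since $\tilde P_{j,2}$ for $j\ge 1$ has no reverse-wave part. I would decompose $H_{21}[\tilde P_{j,2}]=(1-\chi)\chi_1(Q_1+Q_2)(\mathcal A_jA+\mathcal B_jB)$ and apply Proposition~\ref{pro: Airy-1}, bounding $\|\Im((1-\chi)\chi_1(Q_1+Q_2)\mathcal A_j)\|$ and $\|e^{2\alpha w_0}\Im((1-\chi)\chi_1(Q_1+Q_2)\mathcal B_j)\|$ via \eqref{assume: induction-1}--\eqref{assume: induction-2} together with $|\Im Q_1|\le C c_i$ and $|Q_2|\le C\alpha^2c_i$. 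The resulting bound splits naturally into the near and far contributions, yielding the stated estimates on $[0,Y^++\delta]$ and $[Y^++\delta,Y_2^*]$.

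\textbf{Main obstacle.} The hard part is the first stage of (2): producing an $\Im\varphi_{j+1}$ estimate that carries the correct $\mathcal E_{near}+\tilde{\mathcal E}_{near}$ structure (including the critical factor $\alpha^{-1}|\Im(\mathfrak D_0\,\partial_YA(0)\bar M^{-2}(0))|$) through the Rayleigh operator $\mathcal L_{cr}^{-1}$ on $[\bar Y_1^*,\bar Y_2^*]$. The source $f_{j+1}$, explicitly given in Lemma~\ref{lem: rayleigh-source term}, has non-trivial contributions from $(Q_1+Q_2)\tilde P_{j,2}$, from $\pa_Y^2\tilde P_{j-1,2}$-type boundary terms at $\partial_Y\chi_1$, and from $\mathcal K_B[\varphi_{j-1}](Y_2^*)$; the imaginary parts of these must be tracked sharply using the second (more refined) bound in Proposition~\ref{pro: rayleigh-varphi} on $[\bar Y_1^*,\bar Y_1-\delta]$ to ensure the exponential gain $e^{-2\alpha\int_{Y_1^*}^{Y_1-2\delta_0}F_r^{1/2}}$ is available when $\partial_Y\chi_2\bar M\varphi_{j+1}$ is fed back into Proposition~\ref{pro: Airy-3}.
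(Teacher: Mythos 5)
Your decomposition into the five pieces and the choice of which black box handles which piece ($K_g$ and Proposition \ref{pro: Airy-2} for $\mathcal A^2,\mathcal B^2$; $\varphi_{j+1}$ fed into Propositions \ref{pro: Airy-3}--\ref{pro: Airy-5} for $\mathcal A^{3,4},\mathcal B^{3,4}$; Proposition \ref{pro: Airy-1} for $\mathcal A^1,\mathcal B^1$) agrees with the paper, and for part (2) your key mechanism is the right one: the exponential gain $e^{-2\alpha\int_{Y_1^*}^{Y_1-2\delta_0}F_r^{1/2}}$ carried by $\|\varphi_{j+1}\|_{L^\infty_{w_0}([\bar Y_1^*,Y_1^*])}$ (encoded in $\mathcal E_j$) plus the support structure of $\mathcal K_B[\varphi_{j+1}]$. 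Note, however, that what you describe as the main obstacle is not one in the paper's scheme: for $j\geq 1$ no $\frak{D}_0$-structured estimate of $\Im\varphi_{j+1}$ is needed at all; absolute-value bounds suffice for part (2), which indeed holds for every $j\geq 0$ without any induction hypothesis.

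The genuine gap is in parts (1) and (3): you plan to use the \emph{statements} of Propositions \ref{pro: Airy-2} and \ref{pro: Airy-1} with weighted sup-norms of $\Im g$, $\Im a$, $\Im b$, and those stated bounds are exponentially too weak for the near-region target $\alpha^{-j}\mathcal{E}_{near}(Y)$, which is of size $\alpha^{-j}\bigl(c_i+\alpha^{-2/3}e^{\alpha w_0(Y)}|\Im\frak{D}_0|\bigr)$ with $|\Im\frak{D}_0|\sim\alpha^{-11/6}e^{-\alpha w_0(\mathring Y)}$, $\mathring Y\in[Y_c,\tilde Y]$. Concretely, in (1) the bound of Proposition \ref{pro: Airy-2} is uniform in $Y$ and carries only the short factor $e^{-\alpha\int_{Y_1-\delta_0}^{Y_1}F_r^{1/2}}$; feeding in $\|\Im g\|_{L^\infty_{w_0}}\lesssim\alpha^{17/6-j}\sup_{[Y_1,Y_2]}\mathcal{E}_{far}$ produces, for instance, the term $\alpha^{-j-2}e^{-\alpha\int_{Y_1-\delta_0}^{Y_1}F_r^{1/2}}e^{-2\alpha\int_{Y_1^*}^{Y_1-2\delta_0}F_r^{1/2}}$, which exceeds $\alpha^{-j}\mathcal{E}_{near}(0)$ by roughly $e^{\alpha w_0(Y_1-\delta_0)}$ up to powers of $\alpha$: the only available control on $\tilde Y$ is \eqref{est: bdd Im K_B}, whose margin is $O(\delta_0)$ in the exponent, not the order-one quantity $w_0(Y_1-\delta_0)$ you give away. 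Similarly, in (3) the term $C\alpha^{-1}\bigl(\alpha^{-1}\|\Im a\|_{L^\infty}+\|\Im b\|_{L^\infty_{2w_0}}\bigr)$ in Proposition \ref{pro: Airy-1} carries no exponential weight, and your induction hypotheses \eqref{assume: induction-1}--\eqref{assume: induction-2} control $\Im\mathcal A_j,\Im\mathcal B_j$ only on $[0,Y_2^*]$; beyond $Y_2^*$ one only has $|\Im\mathcal A_j|\le|\mathcal A_j|\lesssim\alpha^{-j-1}$, so the resulting bound $\sim\alpha^{-j-2}$ is nowhere near $\alpha^{-j}\mathcal{E}_{near}$. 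The paper instead estimates these integrals directly and regionally: for (1) it uses that $\int_Z^{+\infty}\partial_{Y'}\chi_1\tilde P_{j,2}\,dY'$ is a constant of size $\lesssim\alpha^{-1}e^{-\alpha w_0(Y_1)}(\cdots)+\alpha^{-1}e^{-\alpha w_0(Y_2^*)}(\cdots)$ for $Z\le Y_1-\delta_0$, and for (3) it splits $\mathcal B_{j+1}^1$ at $Y_2^*$ and $Y_1^*-\delta_0$ so that the uncontrolled tails come with the full weights $e^{-2\alpha w_0(Y_2^*)}$, $e^{-2\alpha w_0(Y_1^*-\delta_0)}$; only then can these terms be absorbed into $\alpha^{-2/3}e^{\alpha w_0(Y)}|\Im\frak{D}_0|\lesssim\mathcal{E}_{near}(Y)$ via Lemma \ref{lem:real-phi(0)}, Lemma \ref{lem: Im-int} and the comparison inequalities \eqref{est: bdd by Im K_B}--\eqref{est: bdd by Im K_B-2}. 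That absorption step is the heart of every near-region estimate in this lemma and is absent from your plan.
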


\begin{proof}
We first show the results in (1).

\underline{Estimates of $\mathcal{A}_{j+1}^2$}, By the definition, we  write 
\begin{align*}
	\mathcal A_{j+1}^2(Y)=\alpha^2\kappa^{-1}\pi\int_0^YB(Z)\bar M^{-2}(1-\chi)F(Z)\int_Z^{+\infty}\partial_{Y'}\chi_1\tilde P_{j,2}(Y')dY'dZ,
\end{align*}
where
$\tilde P_{j,2}(Y)=\mathcal{A}_j(Y) A(Y)+\mathcal{B}_j(Y) B(Y),$  $j\geq 0$ and $ \tilde P_{0,2}(Y)=\tilde P_{0,2}^{(1)}(Y)$.
By the definitions of $\chi$ and $\chi_1$, we know $\mathcal A_{j+1}^2(Y)=\mathcal A_{j+1}^2(Y_1-\d_0)$ for $Y\in[Y_1-\d_0, Y_2^*]$. For $Y\in[Y_1, Y_2]$, we use  \eqref{assume: induction-2}, Lemma \ref{lem: cE_j}, Proposition \ref{Prop: P01P02} and the fact
\begin{align*}
\al ^{\f16}e^{-\al \int_{\min\{Y, Y_1-\d_0\}}^{Y_2^*}F_r^\f12(Y') dY'}\lesssim\al^{-2}e^{-\al \int_{Y}^{Y_2^*}F_r^\f12(Y') dY'},
\end{align*}
 to obtain
\begin{align*}
&|\tilde P_{j,2}(Y)|\lesssim  \al^{-\f16}e^{-\al w_0(Y)}\left(|\mathcal A_j(Y)|+e^{2\alpha w_0(Y)}|\mathcal B_j(Y)|\right)\lesssim \al^{-j-\f76}e^{-\al w_0(Y)},\\
&|\Im (\tilde P_{j,2}(Y))|\lesssim \al^{-\f16}e^{-\al w_0(Y)}\left(c_i|\mathcal A_j(Y)|+c_ie^{2\alpha w_0(Y)}|\mathcal B_j(Y)|+|\Im(\mathcal A_j(Y))|+e^{2\alpha w_0(Y)}|\Im(\mathcal B_j(Y))|\right)\\
&\qquad\qquad\quad\lesssim e^{-\al w_0(Y)}\al^{-j+\f56}\Big(\mathcal{E}_{near}(Y^{+}+\d_0)+\al^{-2}e^{-2\al \int_{Y_1^*}^{Y_1-2\d_0}F_r^\f12(Y') dY'}\Big)+\al^{-j-\f{7}{6}}e^{-\al w_0(Y_2^*)},
\end{align*}
which show
\begin{align*}
&\int_{Y_1}^{Y_2} |\tilde P_{j,2}(Z)|dZ\lesssim \al^{-j-\f{13}{6}}e^{-\al w_0(Y_1)},\\
&\int_{Y_1}^{Y_2} |\Im \tilde P_{j,2}(Z)|dZ\lesssim e^{-\al w_0(Y_1)}\al^{-j-\f16}\Big(\mathcal{E}_{near}(Y^{+}+\d_0)+\al^{-2}e^{-2\al \int_{Y_1^*}^{Y_1-2\d_0}F_r^\f12(Y') dY'}\Big)\\
&\qquad\qquad\qquad\qquad\qquad+\al^{-j-\f76}e^{-\al w_0(Y_2^*)}.
\end{align*}
For $Y\in[0, Y_1-\d_0]$, by  Lemma \ref{lem:real-phi(0)} and  Lemma \ref{lem: Im-int}, we have
\begin{align}\label{est: Aj2 near}
\begin{split}
&|\Im(\mathcal A_{j+1}^2(Y))|\
\lesssim \al^2\kappa^{-1}\int_0^Y |B(Z)|\int_{Y_1}^{Y_2}\left(c_i|\tilde P_{j,2}(Y')+|\Im\tilde P_{j,2}(Y')|\right)dY' dZ\\
\lesssim&\al^2\kappa^{-1} c_i\int_0^Y e^{\al w_0(Z)}\al^{-j-\f{13}{6}}e^{-\al w_0(Y_1)}dZ+\al^2\kappa^{-1}\al^{-j-\f76}e^{-\al w_0(Y_2^*)}\int_0^Y e^{\al w_0(Z)}dZ\\
&+\al^2\kappa^{-1}\int_0^Y e^{\al w_0(Z)} e^{-\al w_0(Y_1)}\al^{-j-\f16}\Big(\mathcal{E}_{near}(Y^{+}+\d_0)+\al^{-2}e^{-2\al \int_{Y_1^*}^{Y_1-2\d_0}F_r^\f12(Y') dY'}\Big)dZ\\
\lesssim& \al^{-j}c_i+\al^{-j}e^{\al w_0(Y)}\Big(e^{-\al w_0(Y_1)}\al^{\f76}\Big(\mathcal{E}_{near}(Y^{+}+\d_0)+\al^{-2}e^{-2\al \int_{Y_1^*}^{Y_1-2\d_0}F_r^\f12(Y') dY'}\Big)+\al^{\f16}e^{-\al w_0(Y_2^*)}\Big)\\
\lesssim& \al^{-j}(c_i+\al^{-\f23}e^{\alpha w_0(Y)}|\mathrm{Im}(\frak{D}_0(\al,c))|)
\lesssim \al^{-j}\mathcal{E}_{near}(Y),
\end{split}
\end{align}

For $Y\in[Y^{+}+\d, Y_2^*]$, we have
\begin{align}\label{est: Aj2 far}
\begin{split}
|\Im(\mathcal A_{j+1}^2(Y))|\lesssim&\al^{-j}c_i+\al^{-j}\Big(\mathcal{E}_{near}(Y^{+}+\d_0)+\al^{-2}e^{-2\al \int_{Y_1^*}^{Y_1-2\d_0}F_r^\f12(Y') dY'}\\
&\qquad\qquad\qquad+\al^{\f16}e^{-\al \int_{\min\{Y, Y_1-\d_0\}}^{Y_2^*}F_r^\f12(Y') dY'}\Big)
\lesssim \al^{-j} \mathcal{E}_{far}(Y).
\end{split}
\end{align}

\underline{Estimates of $\mathcal{B}_{j+1}^2$}. The process is similar to $\mathcal A_{j+1}^2(Y)$. We have $\mathcal{B}_{j+1}^2(Y)=0$ for $Y\in[Y_1-\delta_0, Y_2^*] $ and for $Y\in[0, Y_1-\d_0]$, we have
 \begin{align*}
e^{2\al w_0(Y)}|\Im(\mathcal B_{j+1}^2(Y))|\lesssim  \al^{-j}\mathcal{E}_{near}(Y).
\end{align*}
Thus, we infer that for $Y\in[Y^{+}+\d, Y_2^*]$
\begin{align}\label{est: Bj2 far}
\begin{split}
|\Im(\mathcal B_{j+1}^2(Y))|
\lesssim\al^{-j} \mathcal{E}_{far}(Y)e^{-2\al w_0(Y)},
\end{split}
\end{align}
and for $Y\in[0, Y^{+}+\d]$, 
\begin{align}\label{est: Bj2 near}
|\Im(\mathcal B_{j+1}^2(Y))|\lesssim& \al^{-j}\mathcal{E}_{near}(Y)e^{-2\al w_0(Y)}.
\end{align}

Collecting \eqref{est: Aj2 near}, \eqref{est: Aj2 far}, \eqref{est: Bj2 far} and \eqref{est: Bj2 near}, we obtain the results in (1).\smallskip

Next we turn to prove the results in (2).\smallskip

\underline{Estimates of $\mathcal{A}_{j+1}^3$}. By Proposition \ref{pro: Airy-3},  \ref{pro: rayleigh-varphi}, \ref{pro: iteration-varphi} and Lemma \ref{lem: cE_j}, we have 
\begin{align}\label{est: Aj3 near}
	\mathcal A_1^3(Y)\equiv 0,\quad Y\in[0, Y^{+}+\d],
\end{align}
and for $Y\in[Y^{+}+\d, Y_2^*]$,
\begin{align}\label{est: Aj3 far}
\begin{split}
|\mathcal{A}_{j+1}^3(Y)|\lesssim& \al^{-\f56}\|\varphi_{j+1}\|_{L^\infty_{w_0}([Y_1^*-\d_0, Y_1^*])}
\lesssim \al^{-j}\al^{-2}e^{-2\al \int_{Y_1^*}^{Y_1-2\d_0}F_r^\f12(Z) dZ}.
\end{split}
\end{align}

\underline{Estimates of $\mathcal{A}_{j+1}^4$}. By the definition, we  write 
\begin{align*}
	\mathcal A_{j+1}^4(Y)=\kappa^{-1}\pi\int_0^YB(Z)\bar M^{-2}(Z)\tilde{H}_{32}[\varphi_{j+1}](Z)dZ,
\end{align*}
where
\begin{align}\label{formula: tilde H_32}
\begin{split}
	&\tilde H_{32}[\varphi_{j+1}](Z)=\Big((1-\bM^2)\chi_3+\al^{-2}(\f{2\pa_Z \bM}{\bM}\pa_Z \chi_3-\pa_Z^2 \chi_3)\Big)\mathcal K_B[\varphi_{j+1}](Z)\\
	&=\left\{
\begin{aligned}
	&0,\quad Z\in[0,\bar Y_1^{**}]\cup[Y_2^*+\d_0,+\infty)\\
	&\Big((1-\bM^2)\chi_3+\al^{-2}(\f{2\pa_Z \bM}{\bM}\pa_Z \chi_3-\pa_Z^2 \chi_3)\Big)\mathcal K_B[\varphi_{j+1}](\bar Y_1^*),\quad Z\in[\bar \bar Y_1^{**},\bar Y_1^*],\\
	&(1-\bM^2)\mathcal K_B[\varphi_{j+1}](Z),\quad Z\in[\bar Y_1^*,Y_1^*]\cup[Y_2^*, Y_2^*+\d_0],\\
	&(1-\bM^2)\mathcal K_B[\varphi_{j+1}](Y_2^*),\quad Z\in[Y_1^*,Y_2^*].
\end{aligned}
\right.
\end{split}
\end{align}
Hence, by  Proposition \ref{pro: Airy-5}, \eqref{est: bdd by Im K_B} and  Lemma \ref{lem: cE_j}, we infer that for
$Y\in[0,Y^{+}+\d]$
\begin{align}\label{est: Aj4 near}
\begin{split}
	|\mathcal A_{j+1}^4(Y)|=0,
	\end{split}
\end{align}
and for  $Y\in[Y^{+}+\d,Y_2^*]$,
\begin{align}\label{est: Aj4 far}
\begin{split}
	|\mathcal A_{j+1}^4(Y)|\lesssim& \alpha^{-j}\Big(\al^{-3}e^{-2\al \int_{Y_1^*}^{Y_1-2\d_0}F_r^\f12(Y') dY'}+\al^{-2} e^{-\al \int_{Y}^{Y_2^*}F_r^\f12(Y') dY'}\Big)\lesssim\al^{-j}\mathcal{E}_{far}(Y).
	\end{split}
\end{align}

\underline{Estimates of $\mathcal{B}_{j+1}^3$}. Notice that 
\begin{align*}
\mathcal B_{j+1}^3(Y)=\mathcal B_{j+1}^3(Y^*_1-\delta_0),\quad Y\in[0,Y^*_1-\delta_0]\text{ and } \mathcal B_{j+1}^3(Y)=0,\quad Y\geq Y_2^*+\d_0.
\end{align*}
By Proposition \ref{pro: Airy-3} and Lemma \ref{lem: cE_j}, we deduce that for  $Y\in[Y^{+}+\d, Y_2^*]$,
\begin{align}\label{est: Bj3 far}
\begin{split}
|\mathcal{B}_{j+1}^3(Y)|\lesssim&\alpha^{-\f{11}{6}}e^{-2\alpha w_0(Y)}\Big(\|\varphi_{j+1}\|_{L^\infty_{w_0}([Y_1^*-\delta_0,Y^*_1])}+e^{-2\al \int_{Y}^{Y_2^*} F_r^\f12(Z)dZ}\|\varphi_{j+1}\|_{L^\infty_{w_0}}\Big)\\
\lesssim&\al^{-j-3}e^{-2\alpha w_0(Y)}\Big(e^{-2\al \int_{Y_1^*}^{Y_1-2\d_0}F_r(Z)dZ}+\al e^{-2\al\int_Y^{Y_2^*}F_r^\f12(Z)dZ }\Big)\\
\lesssim& \al^{-j}\mathcal{E}_{far}(Y),
\end{split}
\end{align}
and for $Y\in[0, Y^{+}+\d]$
\begin{align}\label{est: Bj3 near}
\begin{split}
|\mathcal B_{j+1}^3(Y)|\lesssim& \al^{-j-3}e^{-2\al w_0(Y_1^*-\d_0)}e^{-2\al \int_{Y_1^*}^{Y_1-2\d_0}F_r(Y')dY'}+\al^{-j-2}e^{-2\alpha w_0(Y_2^*)}\\
\ll&\al^{-j}\al^{-\f23}e^{-\alpha w_0(Y)}|\mathrm{Im}\left(\kappa^{-1}\pi\alpha^{-2}\mathcal K_B[\varphi_0](\bar Y_1^*)\right)|\lesssim \al^{-j}\mathcal{E}_{near}(Y)e^{-2\al w_0(Y)},
\end{split}
\end{align}
by using \eqref{est: bdd by Im K_B}.

 \underline{Estimates of $\mathcal{B}_{j+1}^4$}.
We recall $\tilde H_{32}[\varphi_{j+1}](Z)$ in \eqref{formula: tilde H_32} and we know $\mathcal B_{j+1}^4(Y)=\mathcal B_{j+1}^4(\bar Y_1^{**})$ for $Y\in[0, \bar Y_1^{**}]$. By Proposition \ref{pro: Airy-5} and  Lemma \ref{lem: cE_j}, we have for  $Y\in[Y^{+}+\d,Y_2^*]$,
\begin{align}\label{est: Bj4 far}
	|\mathcal B_{j+1}^4(Y)|\lesssim&\alpha^{-j-3}\Big(e^{-2\al \int_{Y_1^*}^{Y_1-2\d_0}F_r^\f12(Y') dY'}+\al e^{-\al \int_{Y}^{Y_2^*}F_r^\f12(Y') dY'}\Big)e^{-2\alpha w_0(Y)}\\
	\nonumber
	\lesssim&\al^{-j}\mathcal{E}_{far}(Y)e^{-2\alpha w_0(Y)},
\end{align}
and for  $Y\in[ 0, Y^{+}+\d]$, we have
\begin{align}\label{est: Bj4 near}
	&|\mathcal B_{j+1}^4(Y)|=|\mathcal B_{j+1}^4(\bar Y_1^{**})|\\
	\nonumber
	\leq& C\alpha^{-3-j}\Big(e^{-\al w_0(Y_1^*-\d_0)}e^{-2\al \int_{Y_1^*}^{Y_1-2\d_0}F_r^\f12(Y') dY'}+\al e^{-\al w_0(Y_2^*)}\Big)e^{-\alpha w_0(\bar Y_1^{**})}\\
	\nonumber
	\ll&\alpha^{-\f23}|e^{-\alpha w_0(Y)}\mathrm{Im}(\frak{D}_0(\al,c))|\lesssim \al^{-j}\mathcal{E}_{near}(Y)e^{-2\al w_0(Y)},
\end{align}
and by a similar argument in \eqref{est: bdd by Im K_B}, we have
\begin{align}\label{est: bdd by Im K_B-2}
\begin{split}
&e^{\al w_0(Y^{+}+\d_0)}e^{-\alpha w_0(\bar Y_1^{**})}\Big(e^{-\al w_0(Y_1^*-\d_0)}e^{-2\al \int_{Y_1^*}^{Y_1-2\d_0}F_r^\f12(Y') dY'}+\al e^{-\al w_0(Y_2^*)}\Big)\\
&\ll e^{-\al w_0(\tilde Y)}\leq |\Im(\frak{D}_0(\al,c))|.
\end{split}
\end{align}

Collecting \eqref{est: Aj3 near}-\eqref{est: Bj4 near} , we obtain the results in (2).\smallskip

Finally, we prove the results in (3). Here $j\geq 1$.\smallskip

\underline{Estimates of $\mathcal{A}_{j+1}^1$}. By Proposition \ref{pro: Airy-1}, we have for $Y\in[0, Y_2^*]$,
\begin{align*}
\left|\Im\left(\mathcal{A}_{j+1}^1(Y)\right)\right|\lesssim \al^{-1}\left(\|\Im \mathcal{A}_{j}\|_{L^\infty([0, Y])}+\|\Im \mathcal{B}_{j}\|_{L^\infty_{2w_0}([0, Y])}\right).
\end{align*}
Hence, we use  \eqref{assume: induction-1} and \eqref{assume: induction-2} to deduce that for $Y\in[0, Y^{+}+\d]$,
\begin{align}\label{est: Aj1 near}
\left|\Im\left(\mathcal{A}_{j+1}^1(Y)\right)\right|\lesssim \al^{-j}\mathcal{E}_{near}(Y),
\end{align}
and for $Y\in[Y^{+}+\d, Y_2^*]$,
\begin{align}\label{est: Aj1 far}
|\Im(\mathcal{A}_{j+1}^1(Y))|\lesssim& \al^{-j}\Big(\mathcal{E}_{near}(Y^{+}+\d_0)+\al^{-2}e^{-2\al \int_{Y_1^*}^{Y_1-2\d_0}F_r^\f12(Y') dY'}\\
\nonumber
&\qquad\qquad+\al^{-2}e^{-\al \int_{Y}^{Y_2^*}F_r^\f12(Y') dY'}+\al ^{\f16}e^{-\al \int_{\min\{Y, Y_1-\d_0\}}^{Y_2^*}F_r^\f12(Y') dY'}\Big)\\
\nonumber
\lesssim&\al^{-j}\mathcal{E}_{far}(Y).
\end{align}

\underline{Estimates of $\mathcal{B}_{j+1}^1$}. By definition, we have
\begin{align*}
\mathcal{B}_{j+1}^1(Y)=\kappa^{-1}\pi\int_Y^\infty A(Z)\bM^{-2}(1-\chi)(Q_1+Q_2)\tilde P_{j, 2}(Z)dZ,
\end{align*}
where $\tilde P_{j, 2}=\mathcal{A}_{j} A+\mathcal{B}_{j} B$. 
For $Y\in [Y^{+}+\d, Y_2^*]$, we write
\begin{align*}
\mathcal{B}_{j+1}^1(Y)=\mathcal{B}_{j+1}^1(Y_2^*)+\kappa^{-1}\pi\int_Y^{Y_2^*} A(Z)\bM^{-2}(1-\chi)(Q_1+Q_2)\tilde P_{j, 2}(Z)dZ=\mathcal{B}_{j+1}^1(Y_2^*)+I_1.
\end{align*}
By Proposition \ref{pro: Airy-1} and Lemma \ref{lem: cE_j}, we have
\begin{align*}
|\mathcal{B}_{j+1}^1(Y_2^*)|\lesssim& \al^{-2}e^{-2\al w_0(Y_2^*)}(\|\mathcal{A}_{j}\|_{L^\infty}+\|\mathcal{B}_{j}\|_{L^\infty_{2w_0}})\\
\lesssim& \al^{-2-j}e^{-2\al w_0(Y_2^*)}\lesssim\al^{-j}e^{-2\al w_0(Y)}\al^{-2}e^{-2\al \int_Y^{Y_2^*}F_r^\f12(Y')dY'}\\
\lesssim& \al^{-j}\mathcal{E}_{far}(Y)e^{-2\al w_0(Y)}.
\end{align*}
By \eqref{assume: induction-2}, we have
\begin{align*}
|\Im(\tilde P_{j, 2}(Z))|\lesssim& \al^{-j+1}\al^{-\f16}\Big(e^{-\al w_0(Y)}\Big(\mathcal{E}_{far}(Y)-\al^{-2}e^{-\al \int_{Y}^{Y_2^*}F_r^\f12(Y') dY'}\Big)+\al^{-2} e^{-\al w_0(Y_2^*)}\Big)\\
&+\al^{-j-\f16} c_i,
\end{align*}
which implies
\begin{align*}
e^{2w_0(Y)}\left|\Im I_1\right|\lesssim& \al^{-j}c_i +\al^{-j-1}\mathcal{E}_{far}(Y)
\lesssim\al^{-j}\mathcal{E}_{far}(Y)e^{-2\al w_0(Y)}.
\end{align*}
Thus, we infer that for  $Y\in [Y^{+}+\d, Y_2^*]$
\begin{align}\label{est: Bj1 far}
|\Im(\mathcal{B}_{j+1}^1(Y))|\lesssim&\al^{-j}\mathcal{E}_{far}(Y)e^{-2\al w_0(Y)}.
\end{align}

For $Y\in[0, Y^{+}+\d]$, we write
\begin{align*}
\mathcal{B}_{j+1}^1(Y)=&\mathcal{B}_{j+1}^1(Y_1^*-\d_0)+\kappa^{-1}\pi\int_Y^{Y_1^*-\d_0} A(Z)\bM^{-2}(1-\chi)(Q_1+Q_2)\tilde P_{j, 2}(Z)dZ\\
=&\mathcal{B}_{j+1}^1(Y_1^*-\d_0)+I_2.
\end{align*}
By  taking $Y=Y_1^*-\d_0$ in \eqref{est: Bj1 far}, we get
\begin{align*}
|\Im(\mathcal{B}_{j+1}^1(Y_1^*-\d_0)|\lesssim& \al^{-j}\mathcal{E}_{far}(Y_1^*-\d_0)e^{-2\al w_0(Y_1^*-\d_0)}
\lesssim \al^{-j}\mathcal{E}_{near}(Y)e^{-2\al w_0(Y)},
\end{align*}
where we used \eqref{est: bdd by Im K_B-2} in the last line.

For $I_2$, we shall use \eqref{assume: induction-1} and \eqref{assume: induction-2} for the $j^{th}$ step. Indeed, we get by Proposition \ref{pro: Airy-1}  that
\begin{align*}
|\Im I_2|\lesssim& c_i\al(\|\mathcal{A}_j\|_{L^\infty[0, Y_1^*-\d_0]}+\al^{-1}\|\mathcal{A}_j\|_{L^\infty}+\|\mathcal{B}_j\|_{L^\infty_{2w_0}})e^{-2\al w_0(Y)}\\
&+ \kappa^{-1}\int_{Y^{+}+\d}^{Y_1^*-\d_0}|Ai(\kappa\eta(Z))|\left(|Ai(\kappa\eta(Z))|+|e^{-2\al w_0(Z)}Bi(\kappa\eta(Z))|\right)\al^{-j+1}\mathcal{E}_{far}(Z)dZ\\
&+ \kappa^{-1}\int_{Y}^{Y^{+}+\d}|Ai(\kappa\eta(Z))|\left(|Ai(\kappa\eta(Z))|+|e^{-2\al w_0(Z)}Bi(\kappa\eta(Z))|\right)\al^{-j+1}\mathcal{E}_{near}(Z)dZ\\
\lesssim&c_i\al^{-j}e^{-2\al w_0(Y)}+\al^{-j}\mathcal{E}_{near}(Y)e^{-2\al w_0(Y)}+\al^{-j-1} e^{-\al w_0(Y^{+}+\d)}e^{-\al w_0(Y_2^*)}\\
\lesssim&\al^{-j}\mathcal{E}_{near}(Y)e^{-2\al w_0(Y)},
\end{align*}
 which shows that for $Y\in[0, Y^{+}+\d]$,
 \begin{align}\label{est: Bj1 near}
 |\Im (\mathcal{B}_{j+1}^1(Y))|\lesssim&\al^{-j}\mathcal{E}_{near}(Y)e^{-2\al w_0(Y)}.
 \end{align}
 
 Collecting  \eqref{est: Aj1 near}-\eqref{est: Bj1 near} together, we get the results in (3).
 \end{proof}

We are in a  position to prove Proposition \ref{pro: BC B_j(0)}. 

\begin{proof}
By (2) in Proposition \ref{Prop: P01P02} and \eqref{est: bdd by Im K_B-2}, we have for $Y\in[Y^{+}+\d, Y_2^*]$,
\begin{align*}
&|\mathrm{Im}(\mathcal A_0(Y))|+e^{2\alpha w_0(Y)}|\mathrm{Im}(\mathcal B_0(Y))|\lesssim \al \mathcal{E}_{far}(Y),
\end{align*}
which deduces that \eqref{assume: induction-2} is valid for $j=0$. Then we use (1) in Lemma \ref{lem: Aj Bj} to get for $j\geq 1$,
\begin{align*}
 |\Im(\mathcal A_{j}^2(Y))|&+e^{2\al w_0(Y)} |\Im(\mathcal B_{j}^2(Y))|\lesssim
\left\{
\begin{aligned}
&\al^{-j+1}\mathcal{E}_{near}(Y),\quad Y\in[0, Y^{+}+\d],\\
&\al^{-j+1}\mathcal{E}_{far}(Y),\quad Y\in[ Y^{+}+\d, Y_2^*].
\end{aligned}
\right.
 \end{align*}
 By (2) in Lemma \ref{lem: Aj Bj}, we  get for $j\geq 1$,
  \begin{align*}
& (|\mathcal A_{j}^3(Y)|+|\mathcal A_{j}^4(Y)|)+e^{2\al w_0(Y)}|(\mathcal B_{j}^3(Y)|+|\mathcal B_{j}^4(Y)|)\lesssim
\left\{
\begin{aligned}
&\al^{-j+1}\mathcal{E}_{near}(Y),\quad Y\in[0, Y^{+}+\d],\\
&\al^{-j+1}\mathcal{E}_{far}(Y),\quad Y\in[ Y^{+}+\d, Y_2^*].
\end{aligned}
\right.
 \end{align*}
 Afterwards, we use Lemma \ref{lem: A11A15}, Lemma \ref{lem: Aj Bj} and the above results to obtain \eqref{assume: induction-1} and \eqref{assume: induction-2}.
 Taking $j\geq 2$ in \eqref{assume: induction-1}, we have
 \begin{align*}
 \Big|\Im\Big(\sum_{j=2}^{+\infty}\mathcal{B}_j(0)\Big)\Big|\lesssim \mathcal{E}_{near}(0)\sum_{j=2}^{+\infty} \al^{-j+1}\lesssim\al^{-1}(\mathcal{E}_{near}(0)+\tilde{\mathcal{E}}_{near}(0)) \lesssim \mathcal{E}_{near}(0),
 \end{align*}
which along with (3) in Lemma \ref{lem: A11A15} gives \eqref{est: BC B(0)}. 
\end{proof} 

\subsection{Proof of Proposition \ref{pro: expan-dispersion}.} With estimates obtained in the previous two subsections, we are in a position to prove Proposition \ref{pro: expan-dispersion}.
\begin{proof}
We write $\cB_0(0)=\sum_{k=1}^4\cB_0^k(0)$ and recall $\frak{D}_0(\al,c)=\kappa^{-1}\al^{-2}\pi \mathcal K_B[\varphi_0](\bar Y_1^*)$. 
By Lemma \ref{lem: estimate B_0^1(0)}-\ref{lem:A04B04}, for $(\alpha,c)\in\mathbb H$, we have 
\begin{align*}
&|\cB_0^1(0)|\lesssim \al^{-1}+i \al c_i,\quad |\cB_0^2(0)|\ll \al^{-1},\quad |\Im(\cB_0^2(0))|\ll\al c_i,\\
& |\cB_0^3(0)|\ll \al^{-\f23}|\Im \frak{D}_0(\al,c)|,\quad \mathcal B_0^4(0)=\frak{D}_0(\al,c)\mathcal{R}_{\mathcal{B}_0^4}(0),
\end{align*}
where we have used Lemma \ref{lem:real-phi(0)}, Lemma \ref{lem: Im-int} and \eqref{est: bdd by Im K_B} for $\cB_0^3(0)$. Thus, we get \eqref{expansion: B_0(0)} with bounds in \eqref{est: d_0^1-1} and \eqref{est: d_0^2-d_1^4}.

By Lemma \ref{lem: tildeA0B0}, we have
\begin{align*}
\tilde{\mathcal A}_0(0)=-\frak{D}_0(\al,c)\Big(-\partial_YB(0)\bar M(0)^{-2}+\mathcal{R}_{\tilde{\mathcal{A}_0}}(0)\Big),
\end{align*}
which gives \eqref{expansion: tA_0(0)}.
By Lemma \ref{lem: cE_j}, we have for $j\geq 1$, $|\cB_j(0)|\leq C\al^{-j-1}$, which gives $\sum_{j=1}^{+\infty}|\mathcal{B}_j(0)|\leq C\al^{-2}.$
 By Proposition \ref{pro: BC B_j(0)}, we have
\begin{align*}
&\Big|\mathrm{Im}\Big(\Big(\sum_{j=1}^{+\infty}\mathcal{B}_j(0)\Big)-\frak{D}_0(\al,c)\partial_YA(0)\bar M^{-2}(0)C_{P,1}\Big)\Big|\\
&\lesssim c_i+\al^{-\f23}
	\left|\Im \frak{D}_0(\al,c)\right|+\alpha^{-1}\left|\mathrm{Im}\left(\frak{D}_0(\al,c)\partial_YA(0)\bar M^{-2}(0)\right)\right|,
\end{align*}
which gives \eqref{expansion: tB_1(0)}. The above estimates show the results in (1).

In particular, for any $c_r\in\mathcal{H}(U_B, M_a)\cap(1-M_a^{-1},1)$, we use (2) in Lemma \ref{lem: estimate B_0^1(0)} to get \eqref{expansion: B_0(0)} with bound in \eqref{est: d_0^1-2} instead of \eqref{est: d_0^1-1}. This gives the result in (2).
\end{proof}

\section{The dispersion relation}

In Section 6, we have obtained the asymptotic expansion of boundary value in Proposition \ref{pro: expan-dispersion}. Then for any $(\alpha,c)\in\mathbb H$ with $c_r\in\mathcal{H}(U_B, M_a)\cap(1-M_a^{-1},1)$, according to Proposition \ref{pro: expan-dispersion}, the boundary condition $\pa_Y P(0)=0$ is equivalent to the following  dispersion relation
\begin{align}\label{expansion: pa_YP(0)-1}
\begin{split}
 & \pa_Y A(0)\left(1+\frak{D}_0(\al,c)d_0(\al, c)\right)\\
 &+\pa_Y B(0)\left(d_1(\al, c)~\al^{-1}+i d_2(\al,c)~\al c_i+\frak{D}_0(\al,c)\frak{D}_1(\al,c)\right)=0,
 \end{split}
\end{align}
where $\frak{D}_1(\al,c)=\pa_Y A(0)\bM^{-2}(0)(1+C_{P,1}+d_3(\al,c))+d_4(\al,c)$ and
\begin{align}\label{est: d_0-d_4}
\begin{split}
&|d_j(\al,c)|\sim 1+\al^{-1},\quad |\pa_{c_i}d_j(\al,c)|\lesssim1+\al^{-1},\quad j=1,2,\\
&|d_j(\al,c)|\lesssim \al^{-\f23},\quad |\pa_{c_i}d_j(\al,c)|\lesssim \al^{-\f23},\quad j=0, 3,4.
\end{split}
\end{align}
Here we take a suitable cut-off function $\bar\chi_1$ such that $|1+C_{P,1}|\neq 0$, where $C_{P,1}$ is given in  \eqref{est: d_0^2-d_1^4}.

The main result of the pressure system \eqref{eq:ray-P1} is presented as follows.

\begin{theorem}\label{thm: solve dispersion relation}
Assume that $U_B(Y)$ satisfies the structure assumption \eqref{assume: U_B} and $\mathcal{H}(U_B, M_a)\cap(1-M_a^{-1},1)\neq \emptyset$ for some Mach number $M_a>1$. Then for any $c_r\in\mathcal{H}(U_B, M_a)\cap(1-M_a^{-1},1)$,
 there exists  a sequence $\{(\al_k, c_{i,k})\}_{k=1}^{\infty}$ with $c_{i,k}=c_i(\al_k)$ such that the system \eqref{eq:ray-P1} contains a sequence of non-trivial solutions $P_k(Y)$ with $e^{\alpha_k w_0(Y)}P(Y)\in W^{2,\infty}(\mathbb R_+)$ for the pair  $(\alpha_k, c_k)$ with $c_k=c_r+\mathrm ic_{i,k}$. Moreover, we have $(\al_k, c_k)\in \mathbb{H}$, and there exists $\al_0\gg1$ such that for any $k\in \mathbb{N}_+$
\begin{align}\label{bound: c_i}
\al_k\geq \al_0,\quad \f32 \pi<C_1(c_r)(\al_{k+1}-\al_k)<\f52 \pi,\quad 0<c_{i,k}\sim \al_k^{-3}e^{-\al_k w_0(\mathring Y(c_r))},
\end{align}
where $C_1(c_r)$ and $\mathring Y=\mathring Y(c_r)$  are  positive constants dependent on $c_r$, but independent of $\alpha_k$.  Here $\mathring Y$ is given in Lemma \ref{lem: Im-int} 
\end{theorem}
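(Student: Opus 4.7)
The strategy is to reduce the boundary condition $\pa_Y P(0)=0$ encoded in \eqref{expansion: pa_YP(0)-1} to a decoupled pair of real and imaginary scalar equations and then extract a discrete sequence $(\al_k,c_{i,k})$ via a period/contraction argument. First, substituting the expansion of $\frak{D}_1$ into \eqref{expansion: pa_YP(0)-1} and dividing by $\pa_Y B(0)$ (nonzero for $(\al,c)\in\mathbb H$ by Lemma \ref{lem: Airy-original}) produces
\begin{align*}
\frac{\pa_Y A(0)}{\pa_Y B(0)}\bigl(1+\frak{D}_0 d_0+\pa_Y B(0)\frak{D}_0\bM^{-2}(0)(1+C_{P,1}+d_3)\bigr)+d_1\al^{-1}+id_2\al c_i+\frak{D}_0 d_4=0.
\end{align*}
By Lemma \ref{lem: Airy-original}(3) at the supersonic point $\kappa\eta(0)<0$, the ratio $\pa_Y A(0)/\pa_Y B(0)$ equals $\tan(\Theta(-\kappa\eta(0)))$ up to multiplicative $O(|\kappa\eta(0)|^{-3/2})=O(\al^{-1})$ corrections. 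Multiplying by $\kappa^{3/2}=\al$, and using $|\frak{D}_0|\lesssim\al^{-11/6}$ (from $\kappa^{-1}\al^{-2}\cdot\al^{5/6}$) and $|\pa_Y B(0)|\sim\al^{5/6}$, gives the normalized dispersion relation
\begin{align*}
\kappa^{3/2}\tan\bigl(\Theta(-\kappa\eta(0))\bigr)=\Phi(\al,c),\qquad |\Phi|\lesssim 1,
\end{align*}
which simultaneously verifies the condition $\kappa^{3/2}\tan\Theta\in(C_1,C_2)$ defining $\mathbb H$ once a solution is found.

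Next, write $\Theta(-\kappa\eta(0))=\Theta_r+i\Theta_i$ with $\Theta_r=\Theta(-\kappa\eta_r(0))$ real and $|\Theta_i|\lesssim\al c_i$, and Taylor-expand $\tan(\Theta)=\tan(\Theta_r)+i\Theta_i\sec^2(\Theta_r)+O((\al c_i)^2)$. The real and imaginary parts become
\begin{align*}
\kappa^{3/2}\tan(\Theta_r)=\Re\Phi(\al,c)+O(\al c_i),\qquad \kappa^{3/2}\Theta_i\sec^2(\Theta_r)=\Im\Phi(\al,c)+O((\al c_i)^2).
\end{align*}
A direct computation from $\Theta(z)=\tfrac23 z^{3/2}-\tfrac{\pi}{4}$ and $\kappa^3=\al^2\pa_Y F_r(Y_0)$ yields $\pa_\al\Theta(-\kappa\eta_r(0))=C_1(c_r)+o(1)$ with $C_1(c_r)>0$, so $\tan(\Theta_r)$ is strictly monotone on each branch of length $\pi/C_1(c_r)$; combined with $|\Re\Phi|\lesssim 1$, each branch contains exactly one root $\tilde\al_j$ of the real equation, and the solutions $\{\tilde\al_j\}$ satisfy $\tilde\al_{j+1}-\tilde\al_j=\pi/C_1(c_r)+o(1)$. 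For the imaginary equation, Lemma \ref{lem: Im-int} supplies
\begin{align*}
\Im\mathcal K_B[\varphi_0](\bar Y_1^*)\sim\mathrm{Sign}(\pa_Y^2\bM(Y_c))\,\mathrm{Sign}(\mathcal M)\,\al^{5/6}e^{-\al w_0(\mathring Y)},
\end{align*}
so $\Im\frak{D}_0$ has the same fixed sign and magnitude $\sim\al^{-11/6}e^{-\al w_0(\mathring Y)}$. Tracking the dominant imaginary contribution (which comes from $\Re(\al Y_2)\cdot\Im(Y_1)$ with $\Im(Y_1)\sim\pa_Y B(0)\Im\frak{D}_0\sim\al^{-1}e^{-\al w_0(\mathring Y)}$ carrying a factor $\cos(\Theta_r(\tilde\al_j))$ from the sign of $\pa_Y B(0)$), and using $\sec^2(\Theta_r)=1+O(\al^{-2})$ from $|\tan\Theta_r|\lesssim\al^{-1}$, one solves for $c_i$:
\begin{align*}
c_i\sim\mathrm{Sign}(\cos(\Theta_r(\tilde\al_j)))\,\mathrm{Sign}(\pa_Y^2\bM(Y_c))\,\mathrm{Sign}(\mathcal M)\,\al^{-3}e^{-\al w_0(\mathring Y)}.
\end{align*}
Since $\cos(\Theta_r(\tilde\al_j))\approx\cos(j\pi)=(-1)^j$ alternates sign across consecutive branches, exactly every other $\tilde\al_j$ yields $c_i>0$.

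Extracting that subsequence $\al_k=\tilde\al_{j_k}$ produces the target spacing $\tfrac{3\pi}{2}<C_1(c_r)(\al_{k+1}-\al_k)<\tfrac{5\pi}{2}$ (the nominal value $2\pi/C_1$ can fluctuate within this window because of the $O(\al^{-1})$ shifts in the $\tilde\al_j$ positions) and the asymptotic $c_{i,k}\sim\al_k^{-3}e^{-\al_k w_0(\mathring Y)}$. To close the implicit dependence of $\Phi$ on $c_i$, for each $k$ one runs a Banach contraction on the pair $(\al,c_i)$: alternately solve the real equation at frozen $c_i$ and the imaginary equation at frozen $\al$. The derivative bounds $|\pa_{c_i}d_j|\lesssim 1+\al^{-1}$ from \eqref{est: d_0-d_4}, combined with the exponential smallness $c_i\ll\al^{-n}$, give a contraction ratio $\ll 1$ on a neighbourhood of size $\al^{-3}e^{-\al w_0(\mathring Y)}$, yielding the unique fixed point $(\al_k,c_{i,k})$. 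The smallness of $c_{i,k}$ returns $(\al_k,c_k)\in\mathbb H$ a posteriori, so the pressure $P_k$ constructed by Proposition \ref{pro: exist} is a bona fide solution of \eqref{eq:ray-P1} with $e^{\al_k w_0(Y)}P_k\in W^{2,\infty}$. The main obstacle will be bookkeeping the many $O(\al^{-1})$ corrections so as to establish that (i) the dominant imaginary contribution to $\Phi$ has the precise sign $\mathrm{Sign}(\cos(\Theta_r(\tilde\al_j)))\mathrm{Sign}(\pa_Y^2\bM(Y_c))\mathrm{Sign}(\mathcal M)$ uniformly in $k$, and (ii) within the fixed-point iteration, perturbing $c_i$ shifts $\tan(\Theta_r)$ by at most $\al c_i\ll\al^{-2}$, which is strictly smaller than the branch separation $\pi/C_1$ and hence cannot flip $\mathrm{Sign}(\cos(\Theta_r))$ between iterates.
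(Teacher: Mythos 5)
Your proposal is correct and follows essentially the same route as the paper: the same reduction of $\pa_YP(0)=0$ to a real equation quantizing $\kappa^{\f32}\tan\Theta(-\kappa\eta(0))$ (roots spaced by $\pi/C_1(c_r)$ in $\al$), the same use of Lemma \ref{lem: Im-int} for the size and fixed sign of $\Im\frak{D}_0$, the same alternation of $\mathrm{Sign}(\cos\Theta)$ versus the period of $\tan\Theta$ to select every other root so that $c_i>0$ with $c_i\sim\al^{-3}e^{-\al w_0(\mathring Y)}$, and the same contraction to close the implicit $c_i$-dependence. The only differences are organizational (the paper runs the $c_i$-contraction first at fixed $\al$ and then brackets the real equation by an intermediate-value argument between two level sets, rather than your per-branch monotonicity plus alternating two-variable iteration), and your Taylor expansion in $\Theta_i$ is redundant since $\eta(0)$ and $\kappa$ are real, so $\Theta(-\kappa\eta(0))$ is already real.
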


We shall illustrate that $\mathcal{H}(U_B, M_a)\cap(1-M_a^{-1},1)$ is not an empty set.
\begin{proposition}\label{rem: Non-empty condition}
$\mathcal{H}(U_B, M_a)\cap(1-M_a^{-1},1)\neq \emptyset$ holds when $U_B(Y)$ is an analytic function. More precisely,

\begin{enumerate}
\item if $U_B(Y)$ is an analytic function, then there exists $M_0>1$ such that for  any $M_a\geq M_0$, it holds that $\mathcal{H}(U_B, M_a)\cap(1-M_a^{-1},1)\neq \emptyset$.

\item Furthermore, if $U_B(Y)$ is the Blasius profile, then for any $M_a\geq 3$, it holds that $\mathcal{H}(U_B, M_a)\cap(1-M_a^{-1},1)\neq \emptyset$.
\end{enumerate}
Moreover,  there exist $C_1(M_a),\cdots, C_N(M_a)$ with $1-M_a^{-1}<C_1<\cdots<C_N<1$, such that 
\begin{align*}
(C_1, C_2)\cup (C_3, C_4)\cup \cdots (C_{N-1}, C_N)\subset \mathcal{H}(U_B, M_a)\cap(1-M_a^{-1},1).
\end{align*}

\end{proposition}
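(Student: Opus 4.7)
The plan is to realize $\mathcal{H}(U_B, M_a) \cap (1-M_a^{-1}, 1)$ as the complement (in that interval) of the zero set of the function $c_r \mapsto J(c_r; M_a)$, to show that this zero set is discrete, and then to exhibit one value of $c_r$ where $J$ is nonzero. The alternating intervals $(C_1,C_2),(C_3,C_4),\ldots$ will then be (a subfamily of) the connected components of the complement.

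First I would verify that $J(\cdot\,; M_a)$ is real analytic on $(1-M_a^{-1}, 1)$ whenever $U_B$ is real analytic. The turning point $Y_0(c_r)$, defined implicitly by $\bar M_r(Y_0; c_r)=1$, depends real-analytically on $c_r$ since $\partial_Y\bar M_r(Y_0)\neq 0$ by the strict monotonicity in \eqref{assume: U_B}. The Langer variable $\eta$, given by the integral formula \eqref{def: Langer}, inherits analytic dependence on $c_r$, and after the substitution $Z = Y_0(c_r)\,s$ the integral defining $J$ has an integrable endpoint singularity $(1-s)^{-1/2}$ whose coefficient is real analytic in $(c_r,s) \in (1-M_a^{-1},1)\times[0,1]$. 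If $J(\cdot\,; M_a)\not\equiv 0$, its zero set is thus locally finite; ordering the zeros as $C_1(M_a)<\cdots<C_N(M_a)$ in any compact subinterval yields the stated decomposition into open intervals contained in $\mathcal{H}(U_B, M_a)\cap(1-M_a^{-1},1)$.

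The remaining step is to rule out $J\equiv 0$. For assertion (1), I would perform an asymptotic analysis as $M_a\to\infty$ at a fixed $c_r\in(0,1)$: since $T_0(Y)\sim\tfrac12(\gamma-1)M_a^2(1-U_B^2)$, the scaled relative Mach number $\bar M_r\sim(U_B-c_r)\bigl[\tfrac12(\gamma-1)(1-U_B^2)\bigr]^{-1/2}$ and the turning point $Y_0(c_r)$ both have finite $M_a$-independent limits, and $\tilde Q_1$ converges to an explicit functional of $U_B$, $\gamma$, and $c_r$. Generically in $c_r$ this limiting integral is nonzero, so picking such a $c_r^\ast$ gives $J(c_r^\ast; M_a)\neq 0$ for all $M_a\geq M_0$ sufficiently large, proving (1). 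For assertion (2), I would evaluate $J(c_r^\ast; M_a)$ for the Blasius profile numerically at $M_a=3$ and some fixed $c_r^\ast\in(2/3,1)$; non-vanishing at this test point, combined with joint continuity of $J$ in $(c_r,M_a)$ and real analyticity in $c_r$, extends to $\mathcal{H}(U_B,M_a)\cap(1-M_a^{-1},1)\neq\emptyset$ for all $M_a\geq 3$.

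The main obstacle is the explicit non-vanishing verification. The integrand of $J$ involves the Langer transform $\eta$ through the terms $(\partial_Y^2\eta)^2/(\partial_Y\eta)^2$ and $\partial_Y^3\eta/(2\partial_Y\eta)$, which have no closed form; differentiating the defining relation $\partial_Y\tilde F(Y_0)\,\eta\,(\partial_Y\eta)^2=\tilde F$ expresses these derivatives in terms of $\tilde F$, $\partial_Y\tilde F$, $\partial_Y^2\tilde F$, but the resulting expressions are singular at $Y_0$ and must be regularized using \eqref{def: Langer}. Both the large-$M_a$ asymptotic and, a fortiori, the Blasius verification thus require careful handling---either via matched asymptotics near the turning point or via a reliable numerical implementation---to certify that $J(c_r^\ast; M_a)\neq 0$ at the chosen test point.
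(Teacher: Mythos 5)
Your framework (realize $\mathcal{H}(U_B,M_a)\cap(1-M_a^{-1},1)$ as the complement of the zero set of the real-analytic function $c_r\mapsto J(c_r;M_a)$, so that it suffices to show $J\not\equiv 0$) is exactly the paper's, and that part is fine. But the crucial step — actually exhibiting a point where $J\neq 0$ — is not established in your proposal, and you concede as much by calling it "the main obstacle." For (1), the assertion that the large-$M_a$ limit of $J$ is "generically in $c_r$ nonzero" is precisely what needs proof: a priori the limiting integral could vanish identically, and nothing in your argument rules this out or produces a specific $c_r^\ast$. For (2), a numerical evaluation of $J$ at $M_a=3$ is not a proof, and even granting it, continuity in $M_a$ from a single test value only yields non-vanishing in a neighborhood of $M_a=3$, not for all $M_a\geq 3$ as claimed; you would need a uniform sign statement in $M_a$.

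The paper closes this gap by a different and much more economical device: instead of estimating the integral $J$, it evaluates the \emph{integrand} $\tilde Q_1$ at the degenerate endpoint $c_r=T_0^{1/2}(0)M_a^{-1}$, where the turning point $Y_0(c_r)$ collapses to the wall $Y=0$. There the Langer derivatives are explicit ($\partial_Y\eta(Y_0)=1$, $\partial_Y^2\eta(Y_0)=\tfrac15\partial_Y^2\tilde F/\partial_Y\tilde F$, $\partial_Y^3\eta(Y_0)=\tfrac17\partial_Y^3\tilde F/\partial_Y\tilde F-\tfrac{12}{175}(\partial_Y^2\tilde F/\partial_Y\tilde F)^2$), so $\tilde Q_1(0;T_0^{1/2}(0)M_a^{-1},M_a)$ reduces to an explicit analytic function $G(M_a)$ of the wall data of $U_B$; its zeros are isolated, giving an $M_0$ beyond which it is nonzero. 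Then, for $c_r$ slightly above this endpoint, $J$ is an integral over a short interval $[0,Y_0(c_r)]$ of an integrand of fixed sign, hence nonzero, so $J\not\equiv 0$ and analyticity finishes the argument. For the Blasius profile the wall conditions $U_B(0)=\partial_Y^2U_B(0)=\partial_Y^3U_B(0)=0$ make $G(M_a)=C_0(M_a)T_0(0)^{-1}M_a^2(\partial_YU_B(0))^2$ with $C_0(M_a)<0$ for all $M_a\geq 3$, which is exactly the uniform-in-$M_a$ sign statement your continuity argument lacks. To repair your proposal you would need to replace the genericity/numerics step by such an explicit evaluation (the near-turning-point regularization you mention is exactly what the endpoint evaluation circumvents).
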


\subsection{Proof of Theorem \ref{thm: solve dispersion relation}. } 
Before giving the proof of Theorem \ref{thm: solve dispersion relation}, we solve the dispersion relation \eqref{expansion: pa_YP(0)-1} under the estimates in \eqref{est: d_0-d_4} firstly.
  
The following lemma is the equivalence of dispersion relation \eqref{expansion: pa_YP(0)-1}.
\begin{lemma}\label{lem: (c_r, c_i)}
Let $(\al,c)\in\mathbb{H}$ and $c_r\in \mathcal{H}(U_B, M_a)\cap(1-M_a^{-1},1)\neq \emptyset$. Then the dispersion relation \eqref{expansion: pa_YP(0)-1} is equivalent to 
\begin{align}
&c_r=\f{D_3(\al, c)}{ D_1(\al,c)\kappa^{\f32} \tan \Theta(-\kappa\eta(0))+D_2(\al,c)},\label{formula: c_r-1}\\
&c_i=\al^{-1}c_r\Im(\frak{D}_0(\al,c))\Re((1+C_{P,1})\pa_Y A(0))(D_4+\mathcal{R}(\al,c)),\label{formula: c_i-1}
\end{align}
where $\kappa^3=\alpha^2\partial_Y\tF(Y_0)=\alpha^2\partial_YF_r(Y_0)$ and for $j=1,2,3$,
\begin{align}\label{est: D_j}
\begin{split}
&D_j(\al,c)\sim 1+\al^{-1}+\al c_i^2+\Re (\frak{D}_0(\al,c))+c_i \Im (\frak{D}_0(\al,c)),\\
&|\pa_{c_i}D_j(\al,c)|\lesssim \al c_i+(1+\al^2 |\log c_i|)\Re (\frak{D}_0(\al,c))+\Im (\frak{D}_0(\al,c))\lesssim e^{-\al w_0(Y_1^*-\d_0)},
\end{split}
\end{align}
and $D_4$ is a constant independent of $\al$ and $c$, and 
\begin{align*}
&\mathcal{R}(\al,c)\sim \al^{-1}+\al^2 c_i^2+\Re (\frak{D}_0(\al,c))+c_i \Im (\frak{D}_0(\al,c)),\\
&|\pa_{c_i}\mathcal{R}(\al,c)|\lesssim \al^2c_i+(1+\al^2 |\log c_i|)\Re (\frak{D}_0(\al,c))+\Im (\frak{D}_0(\al,c))\lesssim e^{-\al w_0(Y_1^*-\d_0)}.
\end{align*}

\end{lemma}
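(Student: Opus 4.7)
The starting point is \eqref{expansion: pa_YP(0)-1}. Since $(\alpha,c)\in\mathbb{H}$ forces $|\kappa\eta(0)|\geq M$ with $\eta(0)<0$, I would first apply Lemma \ref{lem: Airy-original}(3) to expand $\pa_Y A(0)$ and $\pa_Y B(0)$ explicitly in terms of $\sin\Theta$ and $\cos\Theta$ with $\Theta=\Theta(-\kappa\eta(0))$. The key algebraic identity comes from \eqref{eq: eta} evaluated at $Y=0$ (where $\tilde F=F_r$): combined with $\kappa^3=\alpha^2\pa_Y F_r(Y_0)$ it gives
\begin{align*}
\kappa\,\pa_Y\eta(0)\,(-\kappa\eta(0))^{1/2}=\alpha\sqrt{-F_r(0)},
\end{align*}
which is the source of the factor $\kappa^{3/2}$ in the denominator of \eqref{formula: c_r-1}. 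Using this, together with $E(0)=\bar M(0)/\sqrt{\pa_Y\eta(0)}$ (proportional to $c$ since $\bar M(0)=-M_a c/\sqrt{T_0(0)}$) and the fact that $\pa_YE(0)$ contains a $c$-independent piece arising from the $-\pa_Y U_B(0)/c$ term in $\pa_Y\bar M/\bar M$, one obtains
\begin{align*}
\pa_Y A(0)&=c\alpha\Lambda^s\sin\Theta+(\Lambda^c+c\tilde\Lambda^c)\cos\Theta+\mathcal O(\kappa^{-1}),\\
\pa_Y B(0)&=c\alpha\Lambda^s\cos\Theta-(\Lambda^c+c\tilde\Lambda^c)\sin\Theta+\mathcal O(\kappa^{-1}),
\end{align*}
with real constants $\Lambda^s,\Lambda^c,\tilde\Lambda^c$ depending only on $(c_r,M_a)$ and not on $c_i$ or $\alpha$.

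Next I would substitute these expansions into \eqref{expansion: pa_YP(0)-1}, divide by $\cos\Theta$ to convert $\sin\Theta/\cos\Theta$ into $\tan\Theta$, and collect terms by their $c$-dependence. The resulting identity can be written as
\begin{align*}
c\bigl[\tilde D_1(\alpha,c)\,\kappa^{3/2}\tan\Theta+\tilde D_2(\alpha,c)\bigr]=\tilde D_3(\alpha,c),
\end{align*}
where the $\tilde D_j$ are complex-valued and assembled from $d_0,\ldots,d_4,\frak{D}_0,\frak{D}_1,C_{P,1}$ and the constants $\Lambda^\bullet$. Since $\kappa$ and $\tan\Theta$ are real, taking the real part and absorbing the contributions $c_i\Im\tilde D_j$ into the right-hand side as perturbations (controlled by Proposition \ref{pro: expan-dispersion}) and then solving for $c_r$ yields \eqref{formula: c_r-1} with $D_j(\alpha,c)=\Re\tilde D_j(\alpha,c)$ up to the small corrections just absorbed. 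The hypothesis $c_r\in\mathcal H(U_B,M_a)\cap(1-M_a^{-1},1)$ combined with \eqref{est: d_0-d_4} forces $d_1\sim 1$, hence $|D_1(\alpha,c)|\sim 1+\alpha^{-1}$; the stated bounds on $D_2,D_3$ follow from \eqref{est: d_0-d_4} and Lemma \ref{lem: Im-int}, which controls $\Re\frak{D}_0,\Im\frak{D}_0$.

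Taking the imaginary part of the same rearranged identity and dividing by $D_1\kappa^{3/2}\tan\Theta+D_2$ produces \eqref{formula: c_i-1}. The dominant balance is between $id_2\alpha c_i\,\pa_Y B(0)$ on the left and $\Im(\frak{D}_0\frak{D}_1)\,\pa_Y B(0)$ on the right. Inserting $\frak{D}_1=\pa_Y A(0)\bar M^{-2}(0)(1+C_{P,1}+d_3)+d_4$ and using $|d_3|,|d_4|\lesssim\alpha^{-2/3}$, the leading imaginary contribution is $\Im(\frak{D}_0)\,\Re[(1+C_{P,1})\pa_Y A(0)\bar M^{-2}(0)]$. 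Extracting the real positive constant $\Re\bar M^{-2}(0)$ together with ratios of the $\Lambda^\bullet$'s as a single $D_4$ gives the stated formula, with $\mathcal R(\alpha,c)$ collecting all subleading terms of sizes controlled by \eqref{est: d_0-d_4}.

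\textbf{Main obstacle.} The real work lies not in the derivation itself but in establishing the sharp $\pa_{c_i}$-estimates in \eqref{est: D_j} and on $\mathcal R$. One must track the $c_i$-dependence of every ingredient: directly through $\bar M(0)$ (hence $E(0)$, $\bar M^{-2}(0)$, and the implicit dependence of $\Theta$ through $Y_0$), and indirectly through the $d_j$'s and through $\frak{D}_0$, whose $c_i$-derivative involves differentiating $\varphi_0$ and therefore the critical-layer construction. The log-divergent $\pa_{c_i}$ derivatives produced by the critical layer must be dominated by $e^{-\alpha w_0(Y_1^*-\delta_0)}$ using the smallness assumption $c_i\ll\alpha^n e^{-\alpha w_0(\tilde Y)}$ together with the bounds $|\Re\frak{D}_0|,|\Im\frak{D}_0|\lesssim e^{-\alpha w_0(Y_1^*-\delta_0)}$ furnished by Lemma \ref{lem: Im-int}.
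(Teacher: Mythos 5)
Your proposal follows essentially the same route as the paper's proof: expand $\pa_YA(0)$ and $\pa_YB(0)$ by the oscillatory Airy asymptotics of Lemma \ref{lem: Airy-original}, insert them into \eqref{expansion: pa_YP(0)-1}, obtain \eqref{formula: c_r-1} from the real part (the $\kappa^{\f32}\tan\Theta(-\kappa\eta(0))$ structure) and \eqref{formula: c_i-1} from the imaginary part via the balance between the $\mathrm{i}d_2\al c_i$ term and $\Im(\frak{D}_0\frak{D}_1)$, with the bounds on $D_j$, $\pa_{c_i}D_j$ and $\mathcal R$ inherited from \eqref{est: d_0-d_4}, Lemma \ref{lem: int-phi^(-2)} and Lemma \ref{lem: Im-int}. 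The only slip is cosmetic: the coefficients of $\sin\Theta$ and $\cos\Theta$ in $\pa_YA(0)$, $\pa_YB(0)$ have sizes $c\,\kappa^{\f54}$ and $\kappa^{-\f14}$ (not $c\alpha$ and $O(1)$), but since only their ratio $\sim c\,\kappa^{\f32}$ enters the real part and $\pa_YA(0)$ appears unnormalized in \eqref{formula: c_i-1}, this does not affect the argument.
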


\begin{proof}
According to Lemma \ref{lem: Ai, Bi}, a direct calculation gives
\begin{align}\label{expansion: pa_YA(0)}
\begin{split}
\pa_Y A(0)=&\pa_Y E(0)Ai(\kappa \eta(0))+ E(0)Ai'(\kappa \eta(0))\\
=&\Big(e_1E(0)\kappa^\f54\sin\Theta(-\kappa\eta(0)))\\
&\qquad+\kappa^{-\f14}\left(e_2\pa_YE(0)+e_3E(0))\cos\Theta(-\kappa\eta(0))\right)\Big)(1+O(\kappa^{-3})),
\end{split}
\end{align}
and
\begin{align}\label{expansion: pa_YB(0)}
\begin{split}
\pa_Y B(0)=&\pa_Y E(0)Ai(\kappa \eta(0))+ E(0)Ai'(\kappa \eta(0))\\
=&\Big(\tilde e_1E(0)\kappa^\f54\cos\Theta(-\kappa\eta(0)))\\
&\qquad+\kappa^{-\f14}(\tilde e_2\pa_YE(0)+\tilde e_3E(0))\sin\Theta(-\kappa\eta(0)))\Big)(1+O(\kappa^{-3})).
\end{split}
\end{align}
Here $E(0)=-\f{M_a c}{T_0(0)^\f12\pa_Y\eta(0)^\f12}$ and $e_j~\tilde e_j, j=1,2.3$ are non-zero constants independent of $\al$ and $c$.
By Lemma \ref{lem: int-phi^(-2)},  Lemma \ref{lem: Im-int} and $\frak{D}_0(\al,c)=\kappa^{-1}\al^{-2}\pi \mathcal K_B[\varphi_0](\bar Y_1^*)$, we have
\begin{align*}
&|\frak{D}_0(\al,c)|\lesssim \al^{-\f {17}6}e^{-\al w_0(Y_1^*-\d_0)},\quad |\Im\frak{D}_0(\al,c)|\sim \al ^{-\f{11}6}e^{-\al w_0(\mathring{Y})},\\
& |\pa_{c_i}\frak{D}_0(\al,c)|\leq C\al^2 |\log c_i||\frak{D}_0(\al,c)|,\quad |\frak{D}_1(\al,c)|+c_i^{-1}|\Im \frak{D}_1(\al,c)|\lesssim |\Re(\pa_YA(0))|+\al^{-\f23}.
\end{align*}
By taking the real part of \eqref{expansion: pa_YP(0)-1}, we use \eqref{expansion: pa_YA(0)}-\eqref{expansion: pa_YB(0)} and the above estimates to get
\begin{align*}
&D_1(\al,c)c_r\kappa^\f54\sin\Theta(-\kappa\eta(0)))+\kappa^{-\f14}(D_2(\al, c)c_r+ D_3(\al,c))\cos\Theta(-\kappa\eta(0))= 0,
\end{align*}
which gives 
\begin{align}
&c_r=\f{D_3(\al, c)}{ D_1(\al,c)\kappa^{\f32}\tan \Theta(-\kappa\eta(0))+D_2(\al,c)},\label{formula: c_r}
\end{align}
where for $j=1,2,3$,
\begin{align}\label{est: D_j}
\begin{split}
&D_j(\al,c)\sim 1+\al^{-1}+\al c_i^2+\Re (\frak{D}_0(\al,c))+c_i \Im (\frak{D}_0(\al,c)),\\
&|\pa_{c_i}D_j(\al,c)|\lesssim \al c_i+(1+\al^2 \log c_i)\Re (\frak{D}_0(\al,c))+\Im (\frak{D}_0(\al,c))\lesssim e^{-\al w_0(Y_1^*-\d_0)}.
\end{split}
\end{align}

Similarly, we use the above process to take the imaginary part of \eqref{expansion: pa_YP(0)-1} to derive
\begin{align}\label{formula: c_i}
c_i=\al^{-1}c_r\Im(\frak{D}_0(\al,c))\Re((1+C_{P,1})\pa_Y A(0))(D_4+\mathcal{R}(\al,c)),
\end{align}
where $D_4$ is a constant independent of $\al$ and $c$, and 
\begin{align*}
&\mathcal{R}(\al,c)\sim \al^{-1}+\al^2 c_i^2+\Re (\frak{D}_0(\al,c))+c_i \Im (\frak{D}_0(\al,c)),\\
&|\pa_{c_i}\mathcal{R}(\al,c)|\lesssim \al^2c_i+(1+\al^2 \log c_i)\Re (\frak{D}_0(\al,c))+\Im (\frak{D}_0(\al,c))\lesssim e^{-\al w_0(Y_1^*-\d_0)}.
\end{align*}
From the above process, the dispersion relation \eqref{expansion: pa_YP(0)-1} is equivalent to \eqref{formula: c_r} and \eqref{formula: c_i}. 
\end{proof}

Now we are in a position to prove Theorem \ref{thm: solve dispersion relation}.
\begin{proof}

We first prove the existence of sequence $\{(\al_k,c_{i,k})\}_{k=1}^{+\infty}$ to the dispersion \eqref{expansion: pa_YP(0)-1}. For any $\al\geq \al_0$ and a fixed $c_r\in\mathcal{H}(U_B, M_a)\cap(1-M_a^{-1},1)$, we define
\begin{align*}
c_i^{(j)}=\al^{-\f{17}6}c_r e^{-\al w_0(\tilde Y)}\mathrm{sign}(\Im(\frak{D}_0(\al,c_r+i c_i^{(0)})))\Re((1+C_{P,1})\pa_Y A(0))(D_4+\mathcal{R}(\al, c_r+i c_i^{(j-1)})),
\end{align*}
with $c_i^{(0)}=\al^{-3}e^{-\al w_0(\mathring Y) },$
by using Lemma \ref{lem: Im-int} to ensure $|\Im (\frak{D}_0(\al,c))|\sim \al^{-\f{11}6}e^{-\al w_0(\mathring Y)}(1+\tilde{\mathcal{R}}(\al,c))$ and putting $\tilde{\mathcal{R}}$ into $\mathcal{R}$ for convenience. Since  $|\pa_{c_i}\mathcal{R}(\al,c)|\leq C\al^2 |\log c_i|e^{-\al w_0(Y_1^*-\d_0)}+Cc_i\leq C\al^{-1}$, we have
\begin{align*}
|c_i^{(j+1)}-c_i^{(j)}|\leq&\al^{-\f{17}{6}}c_r|\Re((1+C_{P,1})\pa_Y A(0))||\mathcal{R}(\al, c_r+i c_i^{(j)})-\mathcal{R}(\al, c_r+i c_i^{(j-1)})|\\
\leq&C\al^{-\f{17}{6}-1}|c_i^{(j)}-c_i^{(j-1)}|\leq \f12 |c_i^{(j)}-c_i^{(j-1)}|,
\end{align*}
by taking $\al\geq \al_0$ large. Then we define $c_i=\lim_{j\to \infty}c_i^{(j)}$, which solves \eqref{formula: c_i-1}. Thus, we view $c_i$ a function of $\al$ and $c_r$. Since $c_r$ is a given number, we write $c_i=c_i(\al)$.

With $c_i=c_i(\al)$ solved, we write $D_j(\al,c)=D_j(\al),~j=1,2.3$, which is a function of  $\al$, with bound $|D_j(\al)|\sim 1,~j=1,2,3$ .  We define 
\begin{align*}
\mathcal{F}(\al)=c_r-\f{D_3(\al)}{ D_1(\al)\kappa^{\f32}\tan \Theta(-\kappa\eta(0))+D_2(\al)}=c_r-\f{D_3(1+O(\al^{-1}))}{ D_1\kappa^{\f32}\tan \Theta(-\kappa\eta(0))+D_2},
\end{align*}
where $D_j,~j=1,2,3$ are constants independent of $\al$ with $|D_j(\al)-D_j|\lesssim\al^{-1}$. To solve
\eqref{formula: c_r-1} is equivalent to finding a zero point of $\mathcal{F}(\al)$. We choose $C_1$, $C_2$  and take $\al\geq \al_0$ large  to hold  
\begin{align*} 
\f{D_3(1+O(\al^{-1}))}{ D_1C_1+D_2}<c_r,\quad \f{D_3(1+O(\al^{-1}))}{ D_1C_2+D_2}>c_r.
\end{align*}
For the equation $\kappa^\f32\tan \Theta(-\kappa\eta(0))=C_j,~j=1,2$, there exists a sequence $\{\kappa_{j,k}\}_{k=1}^{\infty},~j=1,2$ with $\kappa_{j,k}^{\f32}=\kappa(\al_{j,k})=\al_{j,k} (\partial_YF_r(Y_0))^\f12\gtrsim \al_0$ such that 
\begin{align*}
\kappa_{1,k}^\f32\tan \Theta(-\kappa_{1,k}\eta(0))=C_1,\quad \kappa_{2,k}^\f32\tan \Theta(-\kappa_{2,k}\eta(0))=C_2,
\end{align*}
which is equivalent to 
\begin{align*}
&\al_{1,k}\tan \Theta\left(\al_{1,k}w(Y_0)-\f{\pi}{4}\right)=C_1(\pa_Y F_r(Y_0))^{-\f12},\quad\al_{2,k}\tan \Theta\left(\al_{2,k}w(Y_0)-\f{\pi}{4}\right)=C_2(\pa_Y F_r(Y_0))^{-\f12},
\end{align*}
where we denote $w(Y_0)=\int_0^{Y_0} (-F_r(Z))^\f12dZ>0$ and  for any $k\in\mathbb{N}_{+}$,
\begin{align*}
w(Y_0)|\al_{1,k}-\al_{2,k}|\leq \pi,\quad \f{\pi}{2}< w(Y_0)(\al_{j,k+1}-\al_{j,k})<\f32\pi,\quad j=1,2.
\end{align*}
As a result, we deduce 
\begin{align*}
\mathcal{F}(\al_{1,k})>0,\quad \mathcal{F}(\al_{2,k})<0,\quad \kappa_{j,k}=\al_{j,k}^{\f23}(\pa_YF_r(Y_0))^\f13,\quad j=1,2.
\end{align*}
As the function $\mathcal{F}(\al)$ is continuous on $\al$, we find $\al _{k}\in(\al_{1,k}, \al_{2,k})$ ( or $\al_{k}\in(\al_{2,k}, \al_{1,k})$) such that $\mathcal{F}(\al_k)=0$. Meanwhile, we obtain a sequence $\{c_{i,k}\}_{k=1}^{+\infty}$ with $c_{i,k}=c_i(\al_k)$.
In conclusion,  given $c_r\in \mathcal{H}(U_B, M_a)\cap(1-M_a^{-1},1)$, we find a sequence $\{(\al_k, c_{i,k})\}_{k=1}^{+\infty}$, which solves \eqref{formula: c_r-1} and \eqref{formula: c_i-1}. Then by Lemma \ref{lem: (c_r, c_i)}, we obtain $\{(\al_k, c_{i,k})\}_{k=1}^{+\infty}$ solves \eqref{expansion: pa_YP(0)-1}.

Next, we  show  $(\al_k,c_k )\in \mathbb{H}$ and \eqref{bound: c_i} with $c_k=c(\al_k)=c_r+c_{i,k}$. According to  \eqref{formula: c_r} and \eqref{est: D_j}, we write 
\begin{align*}
\kappa^\f32\tan \Theta(-\kappa\eta(0))=\f{D_3(\al,c)}{c_r D_1(\al,c)}-\f{D_2(\al,c)}{D_1(\al,c)}=\f{D_3}{c_r D_1}-\f{D_2}{D_1}+O(\al^{-1})\in(C_1, C_2),
\end{align*}
by taking $\al$ large. Then we can get a sequence of $\{\kappa_k\}$ with   $\kappa_k=\al_k^{\f23}(\pa_YF_r(Y_0))^\f13$  such that $\kappa_k^\f32\tan \Theta(-\kappa_k\eta(0))=\bar C\in (C_1, C_2)$ and $\f{\pi}{2}< w(Y_0)(\al_{j,k+1}-\al_{j,k})<\f32\pi$. Moreover, we have $|\sin \Theta(-\kappa_k\eta(0))|\sim \kappa_{k}^{-\f32}$ and $|\cos \Theta(-\kappa_k\eta(0))|\sim 1$.  By using \eqref{formula: c_i} and  denoting $\frak{D}_0(\al_k)=\frak{D}_0(\al_k,c_r+c_i(\al_k)),~\mathcal{R}(\al_k)=\mathcal{R}(\al_k,c_r+c_i(\al_k))$, we have
\begin{align*}
c_{i,k}=c_i(\al_k)=&\al_k^{-1}c_r\Im(\frak{D}_0(\al_k)\Re((1+C_{P,1})\pa_Y A(0, \al_k))(D_4+\mathcal{R}(\al_k))\\
=&D_4\al_k^{-1}c_r\Im(\frak{D}_0(\al_k)\Re((1+C_{P,1}))\Re(\pa_Y A(0,\al_k))\\
=&D_4 \al_k^{-2}c_r\Im(\frak{D}_0(\al_k)\Re((1+C_{P,1}))\Re(\pa_Y B(0,\al_k))\\
=&  D_4\al_k^{-\f{7}{6}}\Im(\frak{D}_0(\al_k)\Re((1+C_{P,1}))\cos \Theta(-\kappa_k\eta(0))\neq 0.
\end{align*}
Since the different periods of functions $\cos \Theta(-\kappa_k\eta(0))$,  and $\tan \Theta(-\kappa_k\eta(0))$,
we derive
\begin{align*}
\mathrm{Sign}\Big(\f{c_{i}(\al_k)}{c_i(\al_{k+1})}\Big)=\mathrm{Sign}\Big(\f{\cos \Theta(-\kappa_k\eta(0))}{\cos \Theta(-\kappa_{k+1}\eta(0))}\Big)=-1.
\end{align*}
Thus, we can choose a subsequence $\{\kappa_{n_k}\}_{k=1}^{\infty}$ (it must be the odd terms or even terms of $\{\kappa_k\}_{k=1}^{\infty}$) from sequence $\{\kappa_k\}_{k=1}^{\infty}$ such that
\begin{align*}
\mathrm{sign}(\cos \Theta(-\kappa_{n_k}\eta(0)))=\mathrm{sign}(D_4)\Re((1+C_{P,1}))\Im(\frak{D}_0(\al_{n_k})).
\end{align*}
In the following, we still use notation $\{\kappa_k\}_{k=1}^{\infty} $ for convenience with distance 
\begin{align*}
\f32 \pi<w(Y_0)(\al_{k+1}-\al_k)<\f52 \pi.
\end{align*}
  By Lemma \ref{lem: Im-int}, we have
 \begin{align*}
 0<c_{i,k}\sim\al_k^{-\f76}|\Im(\frak{D}_0(\al_k)|\sim \al_k^{-3}e^{-\al_k w_0(\mathring Y(c_r))}.
 \end{align*} 
 Thus, we get $(\al_k,c_k)\in \mathbb{H}$ with $0<c_{i,k}\sim \al_k^{-3}e^{-\al_k w_0(\mathring Y(c_r))}$. Finally, we  construct the solution $P_k$ to \eqref{eq:ray-P1}. 
\end{proof}

\subsection{Proof of Proposition \ref{rem: Non-empty condition}}
\begin{proof}
We first compute the derivatives of $\eta$ at the turning point $Y_0$. By the Langer transform \eqref{def: Langer}, a direct calculation gives 
\begin{align*}
\pa_Y\eta(Y_0)=1, \quad \pa_Y^2\eta(Y_0)=\f15\f{\pa_Y^2\tilde F(Y_0)}{\pa_Y\tilde F(Y_0)},\quad \pa_Y^3\eta(Y_0)=\f17\f{\pa_Y^3\tilde F(Y_0)}{\pa_Y\tilde F(Y_0)}-\f{12}{175}\left(\f{\pa_Y^2\tilde F(Y_0)}{\pa_Y\tilde F(Y_0)}\right)^2.
\end{align*}
We recall 
\begin{align*}
\tilde Q_1(Y; c_r, M_a)=\f{\pa_Y^2 \bM_r}{\bM_r}-\f{2(\pa_Y\bM_r)^2}{\bM_r^2}-\f34\f{(\pa_Y^2\eta)^2}{(\pa_Y\eta)^2}-\f{\pa_Y^3 \eta}{2\pa_Y\eta},\quad Y\in[0,Y_0].
\end{align*}
Notice that $\tilde F(Y)= F_r(Y)$ for $Y\in[0, Y_0]$ and $F_r(Y)=1-\bM_r^2(Y)$. 
Taking $(Z, Y_0)=(0,0)$ with $c_r=T_0^\f12(0) M_a^{-1}$, we have $\bM_r(0)=-1, \pa_Y\eta(0)=1$ and
\begin{align*}
\tilde Q_1(0; T_0^\f12(0) M_a^{-1}, M_a)=&\left(\f{11}{14}-\f{3}{50}-\f{12}{175}\right)\f{\pa_Y^2 \bM_r(0)}{\bM_r(0)}-\left(\f{203}{100}+\f{6}{175}\right)\left(\f{\pa_Y\bM_r(0)}{\bM_r(0)}\right)^2\\
&-\left(\f{3}{100}+\f{6}{175}\right)\left(\f{\pa_Y^2\bM_r(0)}{\pa_Y\bM_r(0)}\right)^2-\f{1}{14}\f{\pa_Y^3\bM_r(0)}{\pa_Y\bM_r(0) }:=G(M_a).
\end{align*}
From the above equality, we know that $\tilde Q_1(0;T_0^\f12(0) M_a^{-1},  M_a)=G(M_a)$ is a nonzero  function of $M_a$ and is analytically related to $M_a$. Thus, the roots of $G(M_a)$ are isolated.\smallskip

\underline{$\bullet$ $U_B$ is  an analytic function.} Since the roots of $G(M_a)$ are isolated, there exists $M_0>0$  such that for  any $M_a\geq M_0$, it holds that $\tilde Q_1(0; T_0^\f12(0) M_a^{-1},  M_a)\neq 0$, where $M_0$ is the maximum of roots of $G(M_a)$ in the finite interval. Since $\tilde Q_1(Z; c_r, M_a)$ is continuos  in variable $(Z, c_r)$, there exists $\e_0>0$ such that $\tilde Q_1(Z, c_r, M_a)\neq 0$ for $0\leq Z+|c_r-T_0^\f12(0) M_a^{-1}|\leq 4\e_0$. Especially, we have $\tilde Q_1(Z; T_0^\f12(0) M_a^{-1}+\e_0, M_a)\neq 0$ for $Z\in[0, \e_0]$. Thus, we get $J(T_0^\f12(0) M_a^{-1}+\e_0)\neq 0$, which means that $J(c_r)$ is a nonzero function on $[T_0^\f12(0) M_a^{-1}, 1+M_a^{-1}]$. As  $U_B$ is analytic, we deduce  that $J(c_r)$ is also analytic, which implies that the roots of $J(c_r)$ are isolated and the number of roots is finite. As a result, there exist $C_1(M_a),\cdots, C_N(M_a)$ with $1-M_a^{-1}<C_1<\cdots<C_N<1$, such that 
\begin{align*}
(C_1, C_2)\cup (C_3, C_4)\cup \cdots (C_{N-1}, C_N)\subset \mathcal{H}(U_B, M_a)\cap(1-M_a^{-1},1).
\end{align*}
Thus, the statement in (1) is true.

\underline{$\bullet$ $U_B$ is the Blasius profile.} We claim that for any $M_a\geq 3$, it holds that $\tilde Q_1(0;T_0^\f12(0) M_a^{-1}, M_a)\neq 0$. For the Blasius profile, we have $U_B(0)=\pa_Y^2 U_B(0)=\pa_Y^3U_B(0)=0$. A direct calculation gives 
\begin{align*}
&\pa_Y T_0(0)=\pa_Y^3T_0(0)=0,\quad \pa_Y^2T_0=-(\gamma-1)M_a^2(\pa_Y U_B(0))^2,\\
&\bM(0)=-1, \quad \pa_Y \bM(0)=M_aT_0(0)^{-\f12}\pa_Y U_B(0),\quad \pa_Y^2 \bM(0)=\f12 T_0(0)^{-1}\pa_Y^2 T_0(0),\\
&\pa_Y^3 \bM(0)=-\f32M_aT_0(0)^{-\f32}\pa_Y^2 T_0(0)\pa_Y U_B(0).
\end{align*}
Thus, we obtain
\begin{align*}
\tilde Q_1(0,T_0^\f12(0) M_a^{-1}, M_a)=C_0(M_a)T_0(0)^{-1}M_a^2(\pa_Y U_B(0))^2,
\end{align*}
where for any $M_a\geq 3$
\begin{align*}
C_0(M_a)=\f27(\gamma-1)-(\gamma-1)(\f{3}{100}+\f{6}{175})-(\f{3}{100}+\f{6}{175})\f{1}{4 T_0(0)}-(\f{203}{100}+\f{6}{175})<0,
\end{align*}
which gives $\tilde Q_1(0;T_0^\f12(0) M_a^{-1}, M_a)<0$. Thus, the statement in (2) is true. 
\end{proof}

\subsection{Proof of the Theorem \ref{thm: main-1}, \ref{thm: main-2}-\ref{thm: main-3}}
With results in  Theorem \ref{thm: solve dispersion relation} and Proposition \ref{rem: Non-empty condition} at hand, we are in a position to prove main theorems.

\begin{proof}
In Theorem \ref{thm: solve dispersion relation}, we have constructed the multiple unstable modes $\{(\al_k, c_k)\}_{k=1}^{\infty}$ to the pressure system \eqref{eq:ray-P1}. By the relation \eqref{eq: rebuilt1}, we can rebuild $(\varrho_k,U_k,V_k,T_k)(Y)$ through $P_k(Y)$ and 
\begin{align*}
(\varrho_k,U_k,V_k,T_k, P_k)(Y)\in L^\infty(\mathbb R_+)\times W^{1,\infty}(\mathbb R_+)^2\times L^\infty(\mathbb R_+)\times W^{2,\infty}(\mathbb R_+)
\end{align*} 
which satisfies the system \eqref{eq: LCEuler-1}-\eqref{BC: (U,V, rho,T)}. Moreover, we have
\begin{align*}
	(\rho_k, u_k,v_k, \mathcal T_k, p_k)(t,X,Y)=(\varrho_k, U_k,V_k, T_k, P_k)(Y)e^{\mathrm i\alpha_k X}e^{-\mathrm i\alpha_k c_k t},
	\end{align*}
which are solutions to  the linearized Euler system  \eqref{eq: LCEuler} with $(\al_k, c_k)$ satisfying \eqref{bound: c_i}. The results in Theorem \ref{thm: main-2}-\ref{thm: main-3} are direct conclusions of Proposition \ref{rem: Non-empty condition}.
\end{proof}

\appendix

\section{Some estimates related to critical layer}\label{appendix: CL}

For this section, we are devoted to giving some important estimates related to $\frak{D}_0(\al,c)$, which are used to give the main part of the imaginary part through the dispersion relation.
\begin{lemma}\label{lem: G-sim}
Let $(\al, c)\in \mathbb{H}_0$ and  $f=\partial_Y\left(\frac{-H_1[A]}{1-\bar M^2}\right)$ in the formula of $G_f(Y)$ in  \eqref{def: G(Y)}. Then we have
	       \begin{align*}
0< \Re\Big(\int^{\bar Y_2^*}_{\bar Y_1^*} G_f(Y) dY\Big)\sim  \al^{\f{11}6} e^{-\al w_0(Y_c)}.
\end{align*}

\end{lemma}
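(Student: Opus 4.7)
\medskip
\noindent\textbf{Proof plan for Lemma A.1.}

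The plan is to extract the contribution coming from the leading piece of the source $f=f_0$ and show that it gives both the correct sign and the correct size. From \eqref{eq: f_0} one reads off a dominant term $f_0^{(0)}:=\pm 2\al^2\pa_Y\chi_1\cdot A$ with $\|f_0^{(0)}\|_{L^\infty_{w_0}}\sim\al^{11/6}$, while the remaining pieces $r_0:=f_0-f_0^{(0)}$ satisfy $\|r_0\|_{L^\infty_{w_0}}\lesssim\al^{5/6}$. The key device is Fubini: swapping the order in the double integral defining $\int_{\bar Y_1^*}^{\bar Y_2^*}G_f\,dY$ yields
\begin{align*}
\int_{\bar Y_1^*}^{\bar Y_2^*}G_f\,dY=\int_{Y_c}^{\bar Y_2^*}\frac{\phi f}{\bar M}(Y')\,I_2(Y')\,dY'-\int_{\bar Y_1^*}^{Y_c}\frac{\phi f}{\bar M}(Y')\,I_1(Y')\,dY',
\end{align*}
where $I_2(Y')=\int_{Y'}^{\bar Y_2^*}F/\phi^2\,dY$ and $I_1(Y')=\int_{\bar Y_1^*}^{Y'}F/\phi^2\,dY$. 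Since $\mathrm{supp}(f_0)$ lies away from $Y_c$, all the integrands are regular on the support of $f_0$.

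On this support, Proposition \ref{pro: bound-(phi, phi_0)} gives $\phi\approx\al^{-1}\pa_Y\bar M_r(Y_c)F_r^{1/4}\sinh(\al w_c)$ at leading order, so $|\phi(Y')|\sim\al^{-1}e^{\al|w_c(Y')|}$ and $|A(Y')|\sim\al^{-1/6}e^{-\al w_0(Y')}$ from Lemma \ref{lem: Airy-original}. A direct Laplace/endpoint computation for $I_1,I_2$ using that $F/\phi^2\sim\al^2 F_r e^{-2\al|w_c|}$ is largest near the endpoint closest to $Y_c$ yields $I_j(Y')\sim\al F_r^{1/2}(Y')e^{-2\al|w_c(Y')|}$ with positive real part. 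Combining these, on the $Y'<Y_c$ piece $[Y_1,Y_1+\d_0]$ one finds $|\phi f_0^{(0)}/\bar M\cdot I_1|\sim\al^{11/6}e^{-\al w_0(Y_c)}$ (the exponents $\al|w_c|-\al w_0$ for $\phi A$ and $-2\al|w_c|$ for $I_1$ telescope to $-\al w_0(Y_c)$ using $|w_c(Y')|=w_0(Y_c)-w_0(Y')$ on this side). Integrating over the order-one interval $[Y_1,Y_1+\d_0]$ gives the desired magnitude. For the symmetric piece $Y'\in[Y_2-\d_0,Y_2]$ the factor $|w_c(Y')|-w_0(Y')=-w_0(Y_c)$ and $I_2(Y')\sim\al e^{-2\al(w_0(Y')-w_0(Y_c))}$ conspire so that the combined integrand decays exponentially from $Y'=Y_2-\d_0$; its total contribution is $\sim\al^{5/6}e^{\al(w_0(Y_c)-2w_0(Y_2-\d_0))}$, exponentially smaller than the target because $w_0(Y_2-\d_0)>w_0(Y_c)$. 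Finally, the source perturbation $r_0$ contributes one power of $\al$ less through the same analysis, hence $\lesssim\al^{5/6}e^{-\al w_0(Y_c)}$.

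The principal obstacle is sign tracking. On $[Y_1,Y_1+\d_0]$ the quantities $\Re\phi$, $\Re\bar M$, $\Re A$ and $\pa_Y\chi_1$ are all negative while $\Re I_1>0$; one must verify that these signs compose so that $-\int_{\bar Y_1^*}^{Y_c}\Re\bigl((\phi f_0^{(0)}/\bar M)\,I_1\bigr)\,dY'$ is positive and that this dominant contribution is not cancelled either by the exponentially smaller $Y'>Y_c$ piece or by the lower-order remainder $r_0$. This asymmetry between the $Y'<Y_c$ and $Y'>Y_c$ contributions stems from the fact that on the lower side the two large exponents $e^{\al|w_c|-\al w_0}$ in $\phi A/\bar M$ and $e^{-2\al|w_c|}$ in $I_1$ align to produce a constant-size integrand, whereas on the upper side they do not; this is precisely where the choice of placement of $Y_1,Y_2$ relative to $Y_c$ and the weight $w_0$ enters. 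Once the sign is confirmed, the asymptotic matching described above delivers both the upper and lower bounds.
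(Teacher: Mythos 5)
Your route is essentially the paper's: the same splitting of the source into the dominant piece $2\al^2\pa_Y\chi_1A$ (up to sign) of size $\al^{11/6}$ plus a remainder one power of $\al$ smaller, the same observation that only the lower part $[Y_1,Y_1+\d_0]$ of $\mathrm{supp}(\pa_Y\chi_1)$ produces a contribution of size $\al^{\f{11}6}e^{-\al w_0(Y_c)}$ (your telescoping $|w_c(Y')|=w_0(Y_c)-w_0(Y')$ is exactly the mechanism), while $[Y_2-\d_0,Y_2]$ is exponentially smaller, and the same reduction to real parts. The only organizational difference is your Fubini swap together with the Laplace/endpoint evaluation of $I_1,I_2$; the paper instead bounds $G_{f_1}(Y)$ pointwise in the outer variable (upper bounds from Lemma \ref{lem: est of G}, a lower bound on a sub-interval where $|\pa_Y\chi_1|\geq c_0$) and then integrates. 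The two computations are equivalent, and your swap is legitimate. The paper also quantifies the complex-to-real reduction by comparing $G_{f_1}$ with a real model $\tilde G_{f_1}$, with error $\lesssim\al^{\f{17}6}c_ie^{-\al w_0(Y_c)}$, harmless since $c_i\ll\al^{-n}$ on $\mathbb H_0$; your ``leading order'' treatment should be quantified the same way, but this is routine.

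The one genuine loose end is the point you yourself flag: you leave the leading term as $\pm2\al^2\pa_Y\chi_1A$ and defer the sign composition, yet the positivity of $\Re\int G_f$ is precisely the part of the statement that is used later (it fixes the sign of $\Im\mu_f$ and ultimately of $c_i$), so it cannot be left as ``to be verified.'' The sign is a short computation from the definition of $H_1[A]$: since $F=1-\bM^2$, the nonlocal term of $-H_1[A]/(1-\bM^2)$ is $\al^2\chi\int_Y^{+\infty}\pa_Z\chi_1A\,dZ$, whose $Y$-derivative contributes $-\al^2\pa_Y\chi_1A$ (on $\mathrm{supp}(\pa_Y\chi_1)$ one has $\chi\equiv1$), and the $\al^2(1-\bM^2)A$ part of $\pa_Y^2A$ inside $-\pa_Y\chi_1\pa_YA/(1-\bM^2)$ contributes another $-\al^2\pa_Y\chi_1A$; hence the dominant piece is $-2\al^2\pa_Y\chi_1A$. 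With this sign, your inventory ($\pa_Y\chi_1\leq0$, $\phi_r<0$, $\Re A<0$, $\bM_r<0$, $\Re I_1>0$ on $[Y_1,Y_1+\d_0]$) does compose to give the claimed positivity, and since the integrand then has a fixed sign there is no cancellation against the lower-order remainder or the exponentially small upper-side piece — in agreement with the paper, which makes positivity manifest by checking that every factor of the real comparison integrand $\tilde G_{f_1}$ has a fixed sign on $[\bar Y_1^*,Y_c]$. Once you pin down this sign, your plan delivers both bounds.
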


\begin{proof}
As in Lemma \ref{lem: rayleigh-source term}, we write 
\begin{align*}
f=\partial_Y\left(\frac{-H_1[A]}{1-\bar M^2}\right)=f_1+R_1,\quad G_f(Y)=G_{f_1}(Y)+G_{R_1}(Y),
\end{align*}
where $f_1=-2\al^2\pa_Y\chi_1 A$ and $\|R_1\|_{L^\infty_{w_0}}\leq C\al^{\f56}$ with $\mathrm{supp}(R_1)\subset [Y_1-2\d_0,Y_1-\d_0]\cup[Y_2-\d_0, Y_2+2\d_0]$. By Lemma \ref{lem: est of G} (taking $\bar Y_1=Y_1+\d_0,~\bar Y_2=Y_2-\d_0, ~\d=3\d_0$), we get
\begin{align}\label{est: G_R1}
|G_{R_1}(Y)|\leq C\al^{\f56}e^{-\al w_0(Y_c)}. 
\end{align}

Taking $f=-2\al^2\partial_Y\chi_1 A(Y)$ in \eqref{def: G(Y)}, we have
\begin{align*}
G_{f_1} (Y)
=&-2\al^2\f{F(Y)}{\phi^2(Y)}\int_{Y_c}^{Y}\pa_{Y'} \chi_1\phi(Y')Ai(\kappa \eta(Y'))(\pa_{Y'}\eta(Y'))^{-\f12}dY'.
\end{align*}

For $Y\in \mathrm{supp} (\pa_Y \chi_1)$, it holds that $\eta(\pa_Y\eta)^2\sim 1$, which gives 
\begin{align}\label{sim: Ai}
(\pa_Y\eta(Y))^{-\f12}Ai(\kappa \eta(Y))\sim \f{1}{(\kappa \eta(Y))^\f14(\pa_Y\eta(Y))^{\f12}} e^{-\f23(\kappa \eta(Y))^\f32}\sim\al^{-\f16} e^{-\al w_0(Y)}.
\end{align}
 
Define a real function
\begin{align*}
\tilde G_{f_1}(Y)=\f{-2\al^2F_r(Y)\phi_r^2(Y)}{|\phi(Y)|^4}\int_{Y_c}^{Y}\pa_{Y'} \chi_1\phi_r(Y')Ai(\kappa \eta(Y'))(\pa_{Y'}\eta(Y'))^{-\f12}dY'.
\end{align*}
By Proposition \ref{pro: bound-(phi, phi_0)} and a similar argument in Lemma \ref{lem: est of G}, it is easy to get
\begin{align}\label{est: int-G-tilde G}
\Big|\int^{\bar Y_2^*}_{\bar Y_1^*} (G_{f_1}-\tilde G_{f_1}(Y)) dY\Big|\leq C\al^{\f{17}6}c_i e^{-\al w_0(Y_c)}.
\end{align}
Next, we focus on $\tilde G_{f_1}(Y)$. For $Y\in[\bar Y_1^*, \bar Y_c]$, we notice $\tilde G_{f_1}(Y)>0$, 
and  it follows from \eqref{est: G(Y)-1} and \eqref{est: G(Y)-2} that
\begin{align*}
0<\int^{Y_c}_{\bar Y_1^*}\tilde G_{f_1}(Y)dY\leq C\al^{\f{11}6}e^{-\al w_0(Y_c)}.
\end{align*}
On the other hand,  for $Y\in [Y_1+\f14\d_0,  Y_1+\f34 \d_0]$, it holds that $|\pa_Y\chi_1|\geq c_0 $. Then we deduce 
{\small
\begin{align*}
\tilde G_{f_1}(Y)\geq&C^{-1}\al^{\f{17}6} e^{-2\al |w_c(Y)|}\int_{Y}^{ Y_1+\f34 \d_0}e^{\al |w_c(Y')|}w^{-\al w_0(Y')}dY'
\geq C^{-1}\al^{\f{11}6}e^{-\al w_0(Y_c)}\Big(1-e^{-2\al \int_{Y}^{ Y_1+\f34 \d_0} F_r^\f12 dZ}\Big),
\end{align*}
}
which implies
\begin{align*}
\int^{Y_c}_{\bar Y_1^*}\tilde G_{f_1}(Y) dY\geq C^{-1}\al^{\f{11}6}e^{-\al w_0(Y_c)}.
\end{align*}
Thus, we have 
\begin{align}\label{est: tilde G-1}
0<\int^{Y_c}_{\bar Y_1^*} \tilde G_{f_1}(Y)dY\sim \al^{\f{11}6}e^{-\al w_0(Y_c)}.
\end{align}

By \eqref{est: G(Y)-3} and  \eqref{est: G(Y)-4},  we have  for $Y\in [ \bar Y_2, \bar Y_2^*] $: $\tilde G_{f_1}(Y)<0$ and 
\begin{align*}
 |\tilde G_{f_1}(Y)|\leq C\al^{\f{11}6} \al e^{-2\al w_c(Y)}e^{-\al w_0(Y_c)}\ll \al^{\f{11}6} e^{-\al w_0(Y_c)},
\end{align*}
which gives 
\begin{align}\label{est: tilde G-2}
\int_{Y_c}^{\bar Y_2^*} |\tilde G_{f_1}(Y)| dY\ll \al^{\f{11}6} e^{-\al w_0(Y_c)}.
\end{align}

It follows from  \eqref{est: tilde G-1} and \eqref{est: tilde G-2}  that
\begin{align*}
0<\int_{\bar Y_1^*}^{\bar Y_2^*} \tilde G_{f_1}(Y) dY\sim \al^{\f{11}6}e^{-\al w_0(Y_c)},
\end{align*}
which along with  \eqref{est: G_R1} and \eqref{est: int-G-tilde G} gives
\begin{align*}
0< \Re\Big(\int_{\bar Y_1^*}^{\bar Y_2^*}  G_f(Y) dY\Big)\sim \al^{\f{11}6} e^{-\al w_0(Y_c)}.
\end{align*}
\end{proof}

We introduce a quantity which is related to $\mathcal{K}_B[\varphi_0](Y)$
\begin{align}\label{def: cM}
    	\mathcal M=&-\int_{\bar Y_1^*}^{Y_c} \chi_2  \bM(Y) \phi(Y)\int_{\bar Y_1^*}^Y\f{F(Y')}{\phi^2(Y')}dY'dY\\
	\nonumber
&\qquad+\int_{ Y_c}^{\bar Y_2^*}  \chi_2 \bM(Y) \phi(Y)\int_Y^{\bar Y_2^*}\f{F(Y')}{\phi^2(Y')}dY'dY=I_1+I_2.
    \end{align}
To give the estimates for $\mathcal M$, both for the real part and the imaginary part, we set $F_r(Y)\geq \frac{9}{10}$ for any $Y\in[\bar Y_1^{**},Y_c+3\delta_0]$ and assume that $Y_c-Y_1=Y_2-Y_c=\delta_1>2\d_0$.

\begin{lemma}\label{lem:real-phi(0)}
Let $(\alpha,c)\in \mathbb H_0$ and $\phi$ be the solution to the system \eqref{eq: Rayleigh-homo} constructed in Proposition \ref{pro: bound-(phi, phi_0)}. Then there exist $Y_1^*\in [\bar Y_1^*+3\delta_0, Y_1-3\delta_0]$,  $Y_2^*\in [Y_2+3\delta_0,\bar Y_2^*-3\delta_0]$ and $\tilde Y\in(Y_c,Y_2^*-\delta_0)$ satisfying
 \begin{align}\label{est: bdd Im K_B}
e^{-\alpha w_0(Y_1)}e^{-2\alpha\int_{Y_1^*}^{Y_1-2\delta_0}F_r^\f12(Y)dY}+e^{-\alpha w_0(Y_2^*)}\ll e^{-\alpha w_0(\tilde Y)},
\end{align}
such that 
    \begin{align*}
    	&-C\al^{-1}\leq \mathrm{Re}(\mathcal M)\leq   -C^{-1}\al^{-1}e^{\alpha w_0(Y_c)}e^{-\alpha w_0(\tilde Y)}+\mathcal O(c_i^2|\log c_i|),
    \end{align*}
    or
    \begin{align*}
   &C\al^{-1}\geq \mathrm{Re}(\mathcal M)\geq   C^{-1}\al^{-1}e^{\alpha w_0(Y_c)}e^{-\alpha w_0(\tilde Y)}+\mathcal O(c_i^2|\log c_i|),
   \end{align*}
   where $\mathcal M$ is given in \eqref{eq:range-Y1*}.
       Moreover, we have 
    \begin{align*}
    	|\mathrm{Im}(\mathcal M)|\leq C c_i|\log c_i|.
    \end{align*}
   
\end{lemma}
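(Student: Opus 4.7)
My strategy is to use the explicit approximation of $\phi$ from Proposition \ref{pro: bound-(phi, phi_0)} in order to reduce $\mathcal{M}$ to a one-dimensional Laplace-type integral around the critical point $Y_c$, and then isolate the first non-vanishing contribution after the leading-order cancellation. Writing $\phi = \psi\tilde\phi_0$ with $\tilde\phi_0 = 1 + O(\alpha^{-1}\log\alpha)$ and $\psi$ the explicit $\sinh/\cosh$ combination of \eqref{eq: choose psi}, and approximating $F$ by $F_r$, the change of variable $dw_c = F_r^{1/2}\,dY$ together with $\int \mathrm{csch}^2 = -\coth$ yields
\[
\int \frac{F(Y')}{\phi^2(Y')}\,dY' = -\frac{\alpha}{m_c^2}\coth\bigl(\alpha w_c(Y)\bigr) + \mathrm{const} + O(\alpha^{-1}\log\alpha + c_i),\qquad m_c=\partial_Y\bar M_r(Y_c).
\]
Using the elementary identity $\sinh(x)\bigl(\coth(x)-1\bigr)=e^{-x}$, the inner factor $\phi(Y)\int F/\phi^2$ collapses to $\mp F_r^{1/4}(Y)\,m_c^{-1}\,e^{-\alpha|w_c(Y)|}$ on the two sides of $Y_c$, up to exponentially small boundary terms coming from evaluating $\coth$ at $\bar Y_1^*, \bar Y_2^*$. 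Because $\bar M_r$ changes sign at $Y_c$ in step with the branch change of $e^{\alpha w_c}$, the two halves of $\mathcal{M}$ combine coherently into the single Laplace integral
\[
\mathcal{M} \;\approx\; \int_{\bar Y_1^*}^{\bar Y_2^*} \chi_2(Y)\,\frac{\bar M_r(Y)F_r^{1/4}(Y)}{m_c}\,e^{-\alpha|w_c(Y)|}\,dY.
\]

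I will then Taylor expand this integrand around $Y_c$ and rescale by $u=\alpha F_r^{1/2}(Y_c)|Y-Y_c|$. The linear term of $\bar M_r$ produces an odd integrand against the symmetric factor $e^{-u}$, so its $O(\alpha^{-2})$ contribution cancels between the two sides exactly; the first non-vanishing term appears at order $\alpha^{-3}$ and is a sum of three explicit pieces, assembled from the quadratic coefficient of $\bar M_r$, the linear coefficient of $F_r^{1/4}$, and the quadratic asymmetry of $|w_c|$ in $(Y-Y_c)$. Each piece is computed with the elementary integrals $\int_0^\infty u^n e^{-u}\,du$, and the resulting combination is generically non-zero with a sign determined by the second-derivative data of $U_B, T_0$ at $Y_c$. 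The prescribed form of the lower bound is then obtained by selecting $\tilde Y\in(Y_c,Y_2^*-\delta_0)$ so that $\alpha\bigl(w_0(\tilde Y)-w_0(Y_c)\bigr)\sim 2\log\alpha$, converting the surviving $\alpha^{-3}$ bulk into the target $C^{-1}\alpha^{-1}e^{\alpha w_0(Y_c)-\alpha w_0(\tilde Y)}$; the admissible ranges of $Y_1^*, Y_2^*$ are exploited to drive the boundary contributions below this scale and simultaneously ensure \eqref{est: bdd Im K_B}.

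The upper bound $|\mathrm{Re}\,\mathcal{M}|\leq C\alpha^{-1}$ follows immediately from Lemma \ref{lem: int-phi} (giving $|\phi\int F/\phi^2|\lesssim e^{-\alpha|w_c|}$), $|\bar M|\lesssim 1$, and $\int e^{-\alpha|w_c|}\,dY \lesssim \alpha^{-1}$. For the imaginary-part bound, I use that $|\mathrm{Im}\,\phi|\lesssim c_i e^{\alpha|w_c|}$ and $|\mathrm{Im}\,\bar M|\lesssim c_i$ from Proposition \ref{pro: bound-(phi, phi_0)}, while $\mathrm{Im}\int F/\phi^2$ carries a logarithmic factor $|\log c_i|$ arising from the regularised double pole of $F/\phi^2$ at $Y_c$; combining these yields $|\mathrm{Im}\,\mathcal{M}|\lesssim c_i|\log c_i|$. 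The principal technical obstacle will be in the Laplace-method step of the second paragraph: verifying the complete cancellation of the $O(\alpha^{-2})$ contributions between $I_1$ and $I_2$ and isolating the $\alpha^{-3}$ surviving term with a controlled sign, while ensuring that the relative $O(\alpha^{-1}\log\alpha)$ error from $\tilde\phi_0-1$ does not contaminate the leading $\alpha^{-3}$ signal, which requires carrying the Taylor expansion and the $\tilde\phi_0$ correction to matching orders and using the sharp estimates on $\tilde\phi_0$ from Proposition \ref{pro: bound-(phi, phi_0)}.
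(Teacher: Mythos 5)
Your reduction of $\mathcal M$ to the Laplace-type integral $\int \chi_2\,\bar M_r F_r^{1/4} m_c^{-1} e^{-\al|w_c|}\,dY$ is plausible at leading order, but the step on which your whole lower bound rests — "the $O(\al^{-2})$ contributions cancel exactly and the surviving $O(\al^{-3})$ term is generically non-zero with a sign determined by second-derivative data at $Y_c$" — is a genuine gap, for two reasons. First, "generically non-zero" does not prove the lemma: the bound must hold for the given profile $U_B$, the given $M_a$ and every admissible $(\al,c)$, and nothing excludes that your $\al^{-3}$ coefficient vanishes (or degenerates) for the particular $c_r$ at hand; no genericity hypothesis is available. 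Second, even when that coefficient is non-zero you cannot isolate it with the estimates actually proved: Proposition \ref{pro: bound-(phi, phi_0)} gives only the upper bound $|1-\tilde\phi_0|\leq C\min\{\al^{-1}\log\al,|Y-Y_c|\}$, and in the critical region $|Y-Y_c|\lesssim \al^{-1}$ this correction enters multiplicatively at exactly the order $\al^{-3}$ of your claimed signal (the same is true of the $\tilde\phi_0$-corrections inside the inner integral $\int F/\phi^2$). Since the paper provides no asymptotic expansion of $\tilde\phi_0$ beyond this bound, your "carry the expansion to matching orders" step is not something you can execute; it would require a substantially finer analysis of the homogeneous Rayleigh solution than exists here.

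The paper's proof avoids this computation entirely, and the structure of the statement reflects that: the conclusion is an \emph{either/or} with only an exponentially small lower bound $\al^{-1}e^{\al w_0(Y_c)-\al w_0(\tilde Y)}$, precisely because the sign of $\Re(\mathcal M)$ is never determined. The argument is a dichotomy in the free parameters $Y_1^*, Y_2^*$: one introduces the real model quantity $\tilde{\mathcal M}(Y;\tilde Y_1)$ built from $\bM_r,\phi_r$, fixes $Y_2^*=2Y_c-\tilde Y_1+3\d_0$, and then, according to whether $\tilde{\mathcal M}(Y_2^*)\leq 0$ or $>0$, shifts $Y_1^*$ to $\tilde Y_1-\d_0$ or $\tilde Y_1+2\d_0$ so that an $O(\al^{-1}e^{-\al\int F_r^{1/2}})$ bulk contribution from an interval of length $\sim\d_0$ away from $Y_c$ dominates with a definite sign, while \eqref{eq:range-Y1*} guarantees the exponential hierarchy \eqref{est: bdd Im K_B}; the complex $\mathcal M$ is then compared to $\tilde I_1+\tilde I_2$ with error $O(c_i^2|\log c_i|)$, and $|\Im(\mathcal M)|\lesssim c_i|\log c_i|$ follows as in your sketch. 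Your upper bound $|\Re(\mathcal M)|\leq C\al^{-1}$ and the imaginary-part estimate are fine; it is the sign-definite lower bound that your route cannot deliver without new, quantitative information at $Y_c$ that the paper neither has nor needs.
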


\begin{proof}
	We recall that $\chi_2$ is a smooth cut-off function such that $\chi_2\equiv1$ for any $Y\in[Y_1^*,Y_2^*]$ and $\chi_2\equiv 0$ for any $Y\in [0,  Y_1^*-\delta_0]\cup[ Y_2^*+\delta_0,+\infty)$. Here $Y_1^*$ and $Y_2^*$ will be determined later. By Lemma \ref{lem: int-phi}, it is easy to obtain $|\mathcal M|\leq C\al^{-1}.$
	We introduce
\begin{align*}
&\tilde I_1=-\int_{\bar Y_1^*}^{Y_c}   \chi_2\bM_r(Z) \phi_r(Z)\int_{\bar Y_1^*}^Z\f{F_r(Y')\phi_r^2(Y')}{|\phi^4(Y')|}dY'dZ< 0,\\
&\tilde I_2=\int_{ Y_c}^{ \bar Y_2^*}     \chi_2\bM_r(Z) \phi_r(Z)\int^{\bar Y_2^*}_Z\f{F_r(Y')\phi_r^2(Y')}{|\phi^4(Y')|}dY'dZ> 0.
\end{align*}
We first choose $\tilde Y_1$ such that 
\begin{align}\label{eq:range-Y1*}
	2\int_{\tilde Y_1}^{Y_1-2\delta_0}F_r^\f12(Y)dY\geq \int_{Y_1}^{Y_c}F_r^{\f12}(Y)dY+\max\Big\{\int_{\tilde Y_1}^{Y_c}F_r^{\f12}(Y)dY,\int_{Y_c}^{2Y_c-\tilde Y_1}F_r^{\f12}(Y)dY\Big\}+4\delta_0,
\end{align}
which is guaranteed by the following: for any $Y_1-\tilde Y_1\geq 11\delta_0+3\d_1$,
\begin{align*}
2\int_{\tilde Y_1}^{Y_1-2\delta_0}F_r^\f12(Y)dY\geq&	\frac{9}{5}(Y_1-\tilde Y_1-2\delta_0)\geq (2Y_c-\tilde Y_1-Y_1)+4\delta_0\\
\geq& \int_{Y_1}^{Y_c}F_r^{\f12}(Y)dY+\max\Big\{\int_{\tilde Y_1}^{Y_c}F_r^{\f12}(Y)dY,\int_{Y_c}^{2Y_c-\tilde Y_1}F_r^{\f12}(Y)dY\Big\}+4\delta_0.
\end{align*}
From \eqref{eq:range-Y1*}, we can obtain that for any $Y\in[\bar Y_1^*,\tilde Y_1+2\delta_0]$,
\begin{align}
	e^{-\alpha w_0(Y_1)}e^{-2\alpha\int_Y^{Y_1-2\delta_0}F_r^\f12(Z)dZ}\leq e^{-\alpha w_0(Y_c)}\min\Big\{e^{-\alpha\int_Y^{Y_c}F_r^\f12(Z)dZ},e^{-\alpha\int_{Y_c}^{2Y_c-Y}F_r^\f12(Z)dZ}\Big\}e^{-4\delta_0\alpha.}
\end{align}

We introduce the following quantity $\tilde{\mathcal M}\in\mathbb R$,
{\small
\begin{align*}
	\tilde{\mathcal M}(Y;\tilde Y_1)=&\underbrace{-\int_{\tilde Y_1}^{Y_c}   \bM_r(Y) \phi_r(Y)\int_{\bar Y_1^*}^Y\f{F_r(Y')\phi_r^2(Y')}{|\phi^4(Y')|}dY'dY}_{<0}+\underbrace{\int_{ Y_c}^{Y} \bM_r(Z) \phi_r(Z)\int^{\bar Y_2^*}_Y\f{F_r(Y')\phi_r^2(Y')}{|\phi^4(Y')|}dY'dZ}_{>0} .
\end{align*}
}
We first notice that $\tilde {\mathcal M}(Y_c)<0$. Here we choose $Y_2^*=2Y_c-\tilde{Y}_1+3\delta_0$. Then we split our proof into the following two cases.\smallskip

\textbf{Case 1. $\tilde{\mathcal M}(Y_2^*)\leq 0$.} In this case, we take $Y_1^*=\tilde Y_1-\delta_0$. Then we have
%\begin{align*}
%	\tilde I_1=&-\int_{ Y_1^*-\delta_0}^{Y_c} \chi_2  \bM_r(Y) \phi_r(Y)\int_{\bar Y_1^*}^Y\f{F_r(Y')\phi_r^2(Y')}{|\phi^4(Y')|}dY'dY\\
%	=&-\int_{ Y_1^*-\delta_0}^{Y_1^*} \chi_2  \bM_r(Y) \phi_r(Y)\int_{\bar Y_1^*}^Y\f{F_r(Y')\phi_r^2(Y')}{|\phi^4(Y')|}dY'dY\\
%	&-\int_{ Y_1^*}^{\tilde Y_1}\bM_r(Y) \phi_r(Y)\int_{\bar Y_1^*}^Y\f{F_r(Y')\phi_r^2(Y')}{|\phi^4(Y')|}dY'dY\\
%	&-\int_{\tilde Y_1}^{Y_c} \bM_r(Y) \phi_r(Y)\int_{\bar Y_1^*}^Y\f{F_r(Y')\phi_r^2(Y')}{|\phi^4(Y')|}dY'dY,
%\end{align*}
%and 
%\begin{align*}
%	\tilde I_2=&\int_{ Y_c}^{Y_2^*} \bM_r(Y) \phi_r(Y)\int^{\bar Y_2^*}_Y\f{F_r(Y')\phi_r^2(Y')}{|\phi^4(Y')|}dY'dY\\
%	&+\int_{ Y_2^*}^{Y_2^*+\delta_0}\chi_2(Y) \bM_r(Y) \phi_r(Y)\int^{\bar Y_2^*}_Y\f{F_r(Y')\phi_r^2(Y')}{|\phi^4(Y')|}dY'dY,
%\end{align*}
%which imply
\begin{align}\label{eq:I1+I2-1}
	\begin{split}
		\tilde I_1+\tilde I_2
	\leq&-\int_{ Y_1^*}^{\tilde Y_1}\bM_r(Y) \phi_r(Y)\int_{\bar Y_1^*}^Y\f{F_r(Y')\phi_r^2(Y')}{|\phi^4(Y')|}dY'dY\\
	&-\int_{ Y_1^*-\delta_0}^{Y_1^*} \chi_2  \bM_r(Y) \phi_r(Y)\int_{\bar Y_1^*}^Y\f{F_r(Y')\phi_r^2(Y')}{|\phi^4(Y')|}dY'dY   \\
		&+\int_{ Y_2^*}^{Y_2^*+\delta_0}\chi_2(Y) \bM_r(Y) \phi_r(Y)\int^{\bar Y_2^*}_Y\f{F_r(Y')\phi_r^2(Y')}{|\phi^4(Y')|}dY'dY.
	\end{split}
\end{align}
On the other hand,  by Proposition \ref{pro: bound-(phi, phi_0)}, we have
\begin{align}\label{eq:I_1-1}
	\begin{split}
		0<&\int_{ Y_1^*}^{\tilde Y_1}\bM_r(Y) \phi_r(Y)\int_{\bar Y_1^*}^Y\f{F_r(Y')\phi_r^2(Y')}{|\phi^4(Y')|}dY'dY\\
	\sim&\alpha\int_{Y_1^*}^{\tilde Y_1}e^{\alpha |w_c(Y)|}\int_{\bar Y_1^*}^Ye^{-2\alpha |w_c(Y')|}dY'
	\sim \al^{-1}e^{-\alpha \int_{\tilde Y_1}^{Y_c} F_r^\f12 dY}.
	\end{split}
\end{align}
By a similar argument as above, we can obtain 
{\small
\begin{align}\label{eq:I_1-2}
	\Big|\int_{ Y_1^*-\delta_0}^{Y_1^*} \chi_2(Y)  \bM_r(Y) \phi_r(Y)\int_{\bar Y_1^*}^Y\f{F_r(Y')\phi_r^2(Y')}{|\phi^4(Y')|}dY'dY\Big|\lesssim\alpha^{-1}e^{-\alpha\int_{Y_1^*}^{Y_c}F_r^\f12dY}\ll\al^{-1}e^{-\alpha \int_{\tilde Y_1}^{Y_c} F_r^\f12 dY},
\end{align}
}
and 
{\small
\begin{align}\label{eq:I_1-3}
	\Big|\int_{ Y_2^*}^{Y_2^*+\delta_0}\chi_2(Y) \bM_r(Y) \phi_r(Y)\int^{\bar Y_2^*}_Y\f{F_r(Y')\phi_r^2(Y')}{|\phi^4(Y')|}dY'dY\Big|\lesssim\alpha^{-1}e^{-\alpha\int_{Y_c}^{Y_2^*}F_r^\f12}\ll \al^{-1}e^{-\alpha \int_{\tilde Y_1}^{Y_c} F_r^\f12 dY}.
\end{align}
}
Then by \eqref{eq:I1+I2-1}-\eqref{eq:I_1-3}, we arrive at 
\begin{align}
	\tilde I_1+\tilde I_2\leq -C\al^{-1}e^{-\alpha \int_{\tilde Y_1}^{Y_c} F_r^\f12 dY}<0,
\end{align}
which implies 
\begin{align*}
	e^{-\alpha w_0(Y_c)}|\tilde I_1+\tilde I_2|\geq& \al^{-1}e^{-\alpha w_0(Y_c)}e^{-\alpha \int_{\tilde Y_1}^{Y_c} F_r^\f12 dY}\geq \alpha^{-1}e^{-\alpha w_0(Y_1)}e^{-2\alpha\int_{\tilde Y_1}^{Y_1-2\delta_0}F_r^\f12(Y) dY}e^{4\alpha\delta_0}\\
	\geq&\alpha^{-1}e^{-\alpha w_0(Y_1)}e^{-2\alpha\int_{Y_1^*}^{Y_1-2\delta_0}F_r^\f12(Y)dY}e^{4\alpha\delta_0}\gg e^{-\alpha w_0(Y_1)}e^{-2\alpha\int_{Y_1^*}^{Y_1-2\delta_0}F_r^\f12(Y)dY},
\end{align*}
and 
\begin{align*}
	e^{-\alpha w_0(Y_c)}|\tilde I_1+\tilde I_2|\geq& \al^{-1}e^{-\alpha w_0(Y_c)}e^{-\alpha \int_{\tilde Y_1}^{Y_c} F_r^\f12 dY}\geq \al^{-1}e^{-\alpha w_0(Y_2^*)}e^{4\delta_0\alpha}\gg e^{-\alpha w_0(Y_2^*)}.
\end{align*}
Therefore, we conclude that there exits $\tilde Y\in (Y_c, Y_2^*-\delta_0)$ such that 
\begin{align}
	0>e^{-\alpha w_0(Y_c)}(\tilde I_1+\tilde I_2)\geq -\alpha^{-1}e^{-\alpha w_0(\tilde Y)},
	\end{align}
	with 
	\begin{align*}
		e^{-\alpha w_0(Y_1)}e^{-2\alpha\int_{Y_1^*}^{Y_1-2\delta_0}F_r^\f12}+e^{-\alpha w_0(Y_2^*)}\ll e^{-\alpha w_0(\tilde Y)}.
	\end{align*}

\textbf{Case 2. $\tilde {\mathcal M}(Y_2^*)>0$.} 
In this case, we take $Y_1^*=\tilde Y_1+2\delta_0$ and we get 
%\begin{align*}
%	\tilde I_1=&-\int_{ Y_1^*-\delta_0}^{Y_c}   \chi_2\bM_r(Y) \phi_r(Y)\int_{\bar Y_1^*}^Y\f{F_r(Y')\phi_r^2(Y')}{|\phi^4(Y')|}dY'dY\\
%	=&-\int_{ \tilde{Y}_1}^{Y_c} \bM_r(Y) \phi_r(Y)\int_{\bar Y_1^*}^Y\f{F_r(Y')\phi_r^2(Y')}{|\phi^4(Y')|}dY'dY\\
%	&+\int_{\tilde{Y}_1}^{Y_1^*-\delta_0}\bM_r(Y) \phi_r(Y)\int_{\bar Y_1^*}^Y\f{F_r(Y')\phi_r^2(Y')}{|\phi^4(Y')|}dY'dY\\
%	&+\int_{Y_1^*-\delta_0}^{Y_1^*}(1-\chi_2)\bM_r(Y) \phi_r(Y)\int_{\bar Y_1^*}^Y\f{F_r(Y')\phi_r^2(Y')}{|\phi^4(Y')|}dY'dY,
%\end{align*}
%and 
%\begin{align*}
%	\tilde I_2=&\int_{ Y_c}^{Y_2^*}\bM_r(Y) \phi_r(Y)\int^{\bar Y_2^*}_Y\f{F_r(Y')\phi_r^2(Y')}{|\phi^4(Y')|}dY'dY\\
%	&+\int_{ Y_2^*}^{Y_2^*+\delta_0}\chi_2(Y) \bM_r(Y) \phi_r(Y)\int^{\bar Y_2^*}_Y\f{F_r(Y')\phi_r^2(Y')}{|\phi^4(Y')|}dY'dY,
%\end{align*}
%which imply
\begin{align}\label{eq:I_1+I_2-2}
	\begin{split}
		\tilde I_1+\tilde I_2=& \tilde{\mathcal M}(Y_2^*)+\int_{\tilde{Y}_1}^{Y_1^*-\delta_0}\bM_r(Y) \phi_r(Y)\int_{\bar Y_1^*}^Y\f{F_r(Y')\phi_r^2(Y')}{|\phi^4(Y')|}dY'dY\\
		&+\int_{Y_1^*-\delta_0}^{Y_1^*}(1-\chi_2)\bM_r(Y) \phi_r(Y)\int_{\bar Y_1^*}^Y\f{F_r(Y')\phi_r^2(Y')}{|\phi^4(Y')|}dY'dY\\
		&+\int_{ Y_2^*}^{Y_2^*+\delta_0}\chi_2(Y) \bM_r(Y) \phi_r(Y)\int^{\bar Y_2^*}_Y\f{F_r(Y')\phi_r^2(Y')}{|\phi^4(Y')|}dY'dY\\
		\geq&\int_{\tilde{Y}_1}^{Y_1^*-\delta_0}\bM_r(Y) \phi_r(Y)\int_{\bar Y_1^*}^Y\f{F_r(Y')\phi_r^2(Y')}{|\phi^4(Y')|}dY'dY.
	\end{split}
\end{align}

On the other hand, by a similar argument as \eqref{eq:I_1-1}, we can deduce that 
\begin{align*}
	0<\int_{\tilde{Y}_1}^{Y_1^*-\delta_0} \bM_r(Y) \phi_r(Y)\int_{\bar Y_1^*}^Y\f{F_r(Y')\phi_r^2(Y')}{|\phi^4(Y')|}dY'dY\sim \al^{-1}e^{-\alpha \int_{\tilde{Y}_1+\delta_0}^{Y_c} F_r^\f12 dY},
\end{align*}
which along with \eqref{eq:I_1+I_2-2} implies
\begin{align*}
	\tilde I_1+\tilde I_2\geq  C\al^{-1}e^{-\alpha \int_{\tilde{Y}_1+\delta_0}^{Y_c} F_r^\f12(Y) dY}>0.
\end{align*}
Moreover, we have 
\begin{align*}
	e^{-\alpha w_0(Y_c)}|\tilde I_1+\tilde I_2|\geq& \al^{-1}e^{-\alpha w_0(Y_c)}e^{-\alpha \int_{\tilde Y_1+\delta_0}^{Y_c} F_r^\f12 dY}\geq \al^{-1}e^{-\alpha w_0(Y_c)}e^{-\alpha \int_{Y_1^*}^{Y_c} F_r^\f12 dY}e^{-\al\delta_0}\\
	\geq&\alpha^{-1}e^{-\alpha w_0(Y_1)}e^{-2\alpha\int_{Y_1^*}^{Y_1-2\delta_0}F_r^\f12(Y)dY}e^{\alpha\delta_0}\gg e^{-\alpha w_0(Y_1)}e^{-2\alpha\int_{Y_1^*}^{Y_1-2\delta_0}F_r^\f12(Y)dY},
\end{align*}
and 
\begin{align*}
	e^{-\alpha w_0(Y_c)}|\tilde I_1+\tilde I_2|\geq& \al^{-1}e^{-\alpha w_0(Y_c)}e^{-\alpha \int^{2Y_c-\tilde Y_1}_{Y_c} F_r^\f12 dY}\geq \al^{-1}e^{-\alpha w_0(Y_2^*)}e^{2\delta_0\alpha}\gg e^{-\alpha w_0(Y_2^*)}.
\end{align*}
Thus, we infer that  there exits $\tilde Y\in (Y_c, Y_2^*-\delta_0)$ such that 
\begin{align}
	0<e^{-\alpha w_0(Y_c)}(\tilde I_1+\tilde I_2)\leq \alpha^{-1}e^{-\alpha w_0(\tilde Y)},
	\end{align}
	with 
	\begin{align*}
		e^{-\alpha w_0(Y_1)}e^{-2\alpha\int_{Y_1^*}^{Y_1-2\delta_0}F_r^\f12}+e^{-\alpha w_0(Y_2^*)}\ll e^{-\alpha w_0(\tilde Y)}.
	\end{align*}

Now we turn to show the difference $I_1-\tilde I_1$ and $I_2-\tilde I_2$. We have
\begin{align*}
	|I_1-\tilde I_1|+c_i^{-1}|\mathrm{Re}I_1-\tilde I_1|\leq Cc_i\int_{Y_1^*-\delta_0}^{Y_c}|\phi|^{-1}(Y)dY\leq Cc_i|\log c_i|,
\end{align*}
Similarly, we  have 
\begin{align*}
	c_i|I_2-\tilde I_2|+|\Re I_2-\tilde I_2|\leq Cc_i^2|\log c_i|.
\end{align*}

The proof is completed.
\end{proof}

With results in Lemma \ref{lem: G-sim}-\ref{lem:real-phi(0)} at hand,
we are in a position to prove Lemma \ref{lem: Im-int}

\begin{proof}
	According to the definition of $\chi_2$, we first have 
	\begin{align*}
		\mathrm{supp}\left(\pa_Y\Big(\f{\pa_Y \chi_2 \bM^2}{1-\bM^2}\Big)\right)\subseteq\mathrm{supp}(\partial_Y\chi_2)=[Y_1^*-\delta_0,Y_1^*]\cup[Y_2^*,Y_2^*+\delta_0].
	\end{align*}
	Therefore, for any $Y\geq Y_2^*+\delta_0$, we have  $\mathcal K_B[\varphi]\equiv0$; for any $Y\in[Y_1^*-\delta_0,Y_2^*+\delta_0]$, we have 
	\begin{align*}
		|\mathcal K_B[\varphi_0](Y)|\leq C\int_Y^{\bar Y_2^*}|\varphi_0(Z)|dZ\leq C\alpha^{\f56}e^{-\alpha w_0(Y)},
	\end{align*}
and 	for any $Y\leq Y_1^*-\delta_0$, we have $\mathcal K_B[\varphi_0](Y)\equiv\mathcal K_B[\varphi_0](Y_1^*-\delta_0)$. Along with Proposition \ref{pro: rayleigh-varphi},  we obtain
\begin{align*}
|\mathcal K_B[\varphi](Y)|\leq&C\int_{Y_1^*-\d_0}^{ Y_1^*}|\varphi_0(Z)|dZ+C\int_{Y_2^*}^{Y_2^*+\d_0}|\varphi_0(Z)|dZ\\
\leq&Ce^{-\al w_0(Y_1^*-\d_0)}\|\varphi_0\|_{L^\infty_{w_0}([Y_1^*-\d_0, Y_1^*])}+Ce^{-\al w_0(Y_2^*)}\|\varphi_0\|_{L^\infty_{w_0}}\\
\leq&C\al^{-\f{1}{6}}\Big(e^{-\al w_0(Y_1^*-\d_0)}e^{-2\al \int_{Y_1^*}^{Y_1-2\d_0}F_r^\f12(Y') dY'}+\al e^{-\al w_0(Y_2^*)}\Big),
\end{align*}
by using Proposition \ref{pro: iteration-varphi}.
	
	Next we shall show the lower bound of $\mathrm{Im}(\mathcal K_B[\varphi_0](Y_1^*-\delta_0))$. For this purpose, we first notice that 
	\begin{align*}
\mathcal{L}_{\bar{M}}[\varphi_0]=\al^2 \bM \varphi_0+\partial_Y\Big(\frac{-H_1[A]}{1-\bar M^2}\Big)=\al^2 \bM \varphi_0-2\al^2\pa_Y \chi_1A+R_1,
\end{align*}
where $\|R_1\|_{L^\infty_{w_0}}+c_i^{-1}\|\Im R_1\|_{L^\infty_{w_0}}\leq C\al^{\f56}$. 
We get by integration by parts that
\begin{align*}
&\int_{Y_1^*-\delta_0}^{+\infty}\pa_Y\Big(\f{\pa_Y \chi_2 \bM^2}{1-\bM^2}\Big)\bM^{-1}\varphi_0(Y) dY=\int_{ Y_1^*-\delta_0}^{Y_2^*+\d_0} \chi_2 \Big(\f{\bM^2}{1-\bM^2}\Big(\f{\varphi_0(Y)}{\bM}\Big)'\Big)' dY\\
=&\int_{ Y_1^*-\delta_0}^{Y_2^*+\d_0}\chi_2 (\al^2 \bM \varphi_0(Y)-2\al^2\pa_Y \chi_1A(Y)+R_1(Y)) dY.
\end{align*}
On the other hand, it is easy to find that 
\begin{align}\label{est: Im-int-1}
	&\Big|\mathrm{Im}\Big(\int_{ Y_1^*-\delta_0}^{Y_2^*+\d_0}\al^2\partial_Y\chi_1A(Y)dY\Big)\Big|+\al \Big|\mathrm{Im}\Big(\int_{ Y_1^*-\delta_0}^{Y_2^*+\d_0}R_1(Y)dY\Big)\Big|\leq C\alpha^{\f{5}6}c_ie^{-\alpha w_0(Y_1^*-\delta_0)}.
\end{align}
Hence, we only need to pay attention to 
\begin{align*}
	\mathrm{Im}\Big(\int_{ Y_1^*-\delta_0}^{Y_2^*+\d_0}\chi_2 \al^2 \bM \varphi_0(Y)dY\Big).
\end{align*}

We denote $\varphi_0=\varphi_{01}+\varphi_{02}$, where 
\begin{align*}
&\varphi_{01}(Y)=
\left\{
\begin{aligned}
&\phi(Y)\int_{\bar Y_1^*}^Y G_f(Y') dY',\quad Y\in[\bar Y_1^*, Y_c],\\
&-\phi(Y)\int_Y^{\bar Y_2^*} G_f(Y') dY',\quad Y\in [Y_c, \bar Y_2^*],
\end{aligned}
\right.\\
&\varphi_{02}(Y)=
\left\{
\begin{aligned}
&\mu_f(c)\phi(Y)\int_{\bar Y_1^*}^Y\f{F(Y')}{\phi^2(Y')}dY',\quad Y\in[\bar Y_1^*, Y_c],\\
&-\mu_f(c) \phi(Y)\int_Y^{\bar Y_2^*}\f{F(Y')}{\phi^2(Y')}dY',\quad Y\in [Y_c, \bar Y_2^*],
\end{aligned}
\right.
\end{align*}
and $f=\partial_Y\left(\frac{-H_1[A]}{1-\bar M^2}\right)$ with $\|f\|_{L^\infty_{w_0}}\leq C\al^{\f{11}{6}}$. Here $\mu_f(c)$ is given in \eqref{def: mu(c)}.
By Lemma \ref{lem: est of G} and \eqref{sim: Ai}, we have for $Y\in [\bar Y_1^*,\bar Y_2^*]$,
\begin{align*}
|\varphi_{01}|\leq C\al^{\f56} e^{-\al w_0(Y)},\quad|\mathrm{Im}\varphi_{01}|\leq C\al^{\f{11}6}c_i e^{-\al w_0(Y)},
\end{align*}
which gives
\begin{align}\label{est: Im-int-2}
\Big|\mathrm{Im}\Big(\int_{\bar Y_1^*}^{Y_2^*+\d_0} \chi_2 \al^2 \bM \varphi_{01}(Y)dY\Big)\Big|\leq&C\al^{\f{17}6} c_ie^{-\al w_0(Y_1^*-\delta_0)}.
\end{align}
We focus on  $\varphi_{02}$. We write
\begin{align*}
\int_{ Y_1^*-\delta_0}^{Y_2^*+\d_0}\chi_2 \al^2 \bM \varphi_{02}(Y)dY=-\alpha^2\mu_f(c)\mathcal M,
\end{align*}
where $\mathcal M$ is defined in \eqref{def: cM}.  By Lemma \ref{lem: est of G}, Lemma \ref{lem: int-phi^(-2)} and \ref{lem: G-sim}, we obtain 
\begin{align*}
	|\Re (\mu_f(c))|\leq C\alpha^{\f56}e^{-\alpha w_0(Y_c)},\quad -\Im(\mu_f(c))\sim\alpha^{-\f{1}{6}}\partial_Y^2\bar M(Y_c)e^{-\alpha w_0(Y_c)},
\end{align*}
from which and Lemma \ref{lem:real-phi(0)}, we infer  
\begin{align*}
	\mathrm{Im}\Big(\int_{ Y_1^*-\delta_0}^{Y_2^*+\d_0}\chi_2 \al^2 \bM \varphi_{02}(Y)dY\Big)\sim\alpha^{\f{11}6}\partial_Y^2\bar M(Y_c)e^{-\alpha w_0(Y_c)}\mathrm{Re}(\mathcal M).
\end{align*}
This along with Lemma \ref{lem:real-phi(0)}  shows 
\begin{align*}
	C\al^{\f56}e^{-\alpha w_0(Y_c)}+C c_i^2|\log c_i|\geq &\Big|\mathrm{Im}\Big(\int_{ Y_1^*-\delta_0}^{Y_2^*+\d_0}\chi_2 \al^2 \bM \varphi_{02}(Y)dY\Big)\Big|\geq C^{-1}\alpha^{\f56}e^{-\alpha w_0(\tilde Y)}-C^{-1} c_i^2|\log c_i|,
\end{align*}
and 
\begin{align*}
	\mathrm{Sign}\Big(\mathrm{Im}\Big(\int_{ Y_1^*-\delta_0}^{Y_2^*+\d_0}\chi_2 \al^2 \bM \varphi_{02}(Y)dY\Big)\Big)=\mathrm{Sign}(\partial_Y^2\bar M(Y_c))\mathrm{Sign}\mathcal M.
\end{align*}

Therefore, we combine \eqref{est: Im-int-1}-\eqref{est: Im-int-2}	and use the assumption $c_i\ll\al^n e^{-\al w_0(\tilde Y)}$ for some $n>0$ to conclude  that
\begin{align*}
C\al^{\f56}e^{-\alpha w_0(Y_c)}\geq|\mathrm{Im}(\mathcal K_B[\varphi](Y_1^*-\d_0)|
\geq C^{-1}\alpha^{\f56}e^{-\alpha w_0(\tilde Y)}
\end{align*}
with 
\begin{align*}
\mathrm{Sign}(\mathrm{Im}(\mathcal K_B[\varphi]))=\mathrm{Sign}(\partial_Y^2\bar M(Y_c))\mathrm{Sign}\mathcal M.
\end{align*}
Then we choose $ \mathring{Y}\in[Y_c, \tilde Y]$ such that
\begin{align*}
|\mathrm{Im}(\mathcal K_B[\varphi](Y_1^*-\d_0)|\sim \al^{\f56}e^{-\al w_0(\mathring{Y})}.
\end{align*}

The proof is completed.	
\end{proof}

\section*{Acknowledgments}
N. Masmoudi is supported by NSF Grant DMS-1716466 and by Tamkeen under the NYU Abu Dhabi Research Institute grant of the center SITE. Y. Wang is partially supported by NSF of China under Grant 12101431. D. Wu is partially supported by NSF of China under Grant 12101245. Z. Zhang is partially supported by  NSF of China  under Grant 12288101.
%%%%%%%%%%%%%%%%%%%

\end{document}